\newcommand{\dual}[2]{\langle#1\hspace*{.5mm},#2\rangle}
\newcommand{\abs}[1]{\left\vert #1 \right\vert}
\newcommand{\norm}[3][]{#1\|#2#1\|_{#3}}
\newcommand{\snorm}[2]{|#1|_{#2}}
\newcommand{\R}{\ensuremath{\mathbb{R}}}
\newcommand{\N}{\ensuremath{\mathbb{N}}}
\newcommand{\wilde}{\widetilde}
\newcommand{\trace}{\gamma_0}
\newcommand{\supp}{\mathrm{supp}}
\newcommand{\n}{\mathbf{n}}
\newcommand{\dn}{\gamma_1}
\newcommand{\fsol}{\mathrm{G}}
\newcommand{\slo}{V}
\newcommand{\slp}{\wilde\slo}
\newcommand{\dlo}{K}
\newcommand{\dlp}{\wilde\dlo}
\newcommand{\adlo}{\dlo'}
\newcommand{\hyp}{W}
\newcommand{\mesh}{\mathcal{T}}
\newcommand{\el}{T}
\newcommand{\diam}{\mathrm{diam}}
\newcommand{\NN}{\mathcal{N}}
\newcommand{\edges}{\mathcal{E}}
\newcommand{\edge}{e}
\newcommand{\Pp}{\mathcal{P}}
\newcommand{\Sp}{\mathcal{S}}
\newcommand{\MM}{\mathcal{M}}
\newcommand{\loc}{\textrm{loc}}
\newcommand{\nablag}{\nabla_\Gamma}
\newcommand{\dist}{\textrm{dist}}
\newcommand\refine{{\tt{refine}}}
\newcommand\gen{{\rm gen}}
\newcommand\refel{{\rm ref}}
\newtheorem{assumption}[theorem]{Assumption}
\newtheorem{algorithm}[theorem]{Algorithm}
\newtheorem{mylemma}[theorem]{Lemma}
\newtheorem{myproposition}[theorem]{Proposition}
\newtheorem{mycorollary}[theorem]{Corollary}
\newtheorem{mydefinition}[theorem]{Definition}
\newcounter{constantsnumber}
\newcommand{\setc}[1]{
  \ifthenelse{\equal{#1}{cont}}{C_{\rm cnt}}{ % contraction 
  \ifthenelse{\equal{#1}{nvb}}{C_{\rm nvb}}{
  \ifthenelse{\equal{#1}{stab}}{C_{\rm stab}}{
  \ifthenelse{\equal{#1}{loc}}{C_{\rm loc}}{
  \ifthenelse{\equal{#1}{inv}}{C_{\rm inv}}{
  \ifthenelse{\equal{#1}{approx}}{C_{\rm apx}}{
  \ifthenelse{\equal{#1}{garding}}{C_{\rm g}}{
  \ifthenelse{\equal{#1}{ell}}{C_{\rm ell}}{
  \ifthenelse{\equal{#1}{cnt}}{C_{\rm cont}}{
  \ifthenelse{\equal{#1}{faermann:assumption:stab}}{C_{\alpha}}{
  \ifthenelse{\equal{#1}{rel}}{C_{\rm rel}}{
  \ifthenelse{\equal{#1}{eff}}{C_{\rm eff}}{
  \ifthenelse{\equal{#1}{sat}}{C_{\rm sat}}{ % saturation constant in sobolev norm
  \ifthenelse{\equal{#1}{sata}}{C_{\rm sata}}{ % saturation constant in energy norm
  \ifthenelse{\equal{#1}{opt:estred:aux}}{C_{\rm est}}
  { % estimating edge contributions
   \refstepcounter{constantsnumber}
   \label{const#1}C_{\theconstantsnumber}}}}}}}}}}}}}}}}}
\def\c#1{
  \ifthenelse{\equal{#1}{cont}}{C_{\rm cnt}}{
  \ifthenelse{\equal{#1}{nvb}}{C_{\rm nvb}}{
  \ifthenelse{\equal{#1}{stab}}{C_{\rm stab}}{
  \ifthenelse{\equal{#1}{loc}}{C_{\rm loc}}{
  \ifthenelse{\equal{#1}{inv}}{C_{\rm inv}}{
  \ifthenelse{\equal{#1}{approx}}{C_{\rm apx}}{
  \ifthenelse{\equal{#1}{ell}}{C_{\rm ell}}{
  \ifthenelse{\equal{#1}{cnt}}{C_{\rm cont}}{
  \ifthenelse{\equal{#1}{garding}}{C_{\rm g}}{
  \ifthenelse{\equal{#1}{faermann:assumption:stab}}{C_{\alpha}}{
  \ifthenelse{\equal{#1}{rel}}{C_{\rm rel}}{
  \ifthenelse{\equal{#1}{eff}}{C_{\rm eff}}{
  \ifthenelse{\equal{#1}{sat}}{C_{\rm sat}}{
  \ifthenelse{\equal{#1}{sata}}{C_{\rm sata}}{ % saturation constant in energy norm
  \ifthenelse{\equal{#1}{opt:estred:aux}}{C_{\rm est}}
  { % estimating edge contributions
    C_{\ref{const#1}}}}}}}}}}}}}}}}}}
\newcommand\ZZ{\mathcal Z}
\newcommand\YY{\mathcal Y}
\newcommand\TT{\mathcal T}
\newcommand\HH{\mathcal H}
\newcommand\OO{\mathcal O}
\newcommand\UU{\mathcal U}
\newcommand\VV{\mathcal V}
\newcommand{\form}[2]{b(#1\,,\,#2)}
\newcommand\XX{\mathcal{X}}
\newcommand\eps{\varepsilon}
\newcommand\A{\mathbb{A}}
\newcommand{\set}[2]{\{#1\,:\,#2\}}
\newcommand\RR[2]{\mathcal{R}(#1,#2)}
\newcommand{\est}[2]{\ifthenelse{\equal{#2}{}}{\eta_{#1}}{\eta_{#1}(#2)}}
\newcommand{\estd}[2]{\ifthenelse{\equal{#2}{}}{\widetilde{\eta}_{#1}}{\widetilde{\eta}_{#1}(#2)}}
\newcommand{\estt}[2]{\ifthenelse{\equal{#2}{}}{\widehat{\eta}_{#1}}{\widetilde{\eta}_{#1}(#2)}}
\newcommand{\muest}[2]{\ifthenelse{\equal{#2}{}}{\mu_{#1}}{\mu_{#1}(#2)}}
\newcommand{\rhoest}[2]{\ifthenelse{\equal{#2}{}}{\rho_{#1}}{\rho_{#1}(#2)}}
\newcommand{\data}[2]{\ifthenelse{\equal{#2}{}}{{\rm data}_{#1}}{{\rm data}_{#1}(#2)}}
\newcommand{\datad}[2]{\ifthenelse{\equal{#2}{}}{\widetilde{{\rm data}}_{#1}}{{\rm data}_{#1}(#2)}}
\newcommand\Ud{\widetilde{U}}
\newcommand\Phid{\widetilde{\Phi}}
\newcommand\phid{\widetilde{\phi}}
\newcommand\hhhalfweak{\mu}
\newcommand\hhhalfweaktilde{\widetilde\mu}
\newcommand\hhhalfhyp{\mu}
\newcommand\hhhalfhyptilde{\widetilde\mu}
\newcommand\estres{\eta}
\newcommand\massmat{\mathbf{M}}
\newcommand\massmathat{\widehat{\mathbf{M}}}
\newcommand\dimPp{M}
\newcommand\idmat{\mathbf{I}}
\newcommand\localmesh{\widehat\mesh_\el}
\newcommand\localgradinPP{\widehat{\mathbf{G}}}
\newcommand\localgradproj{\widehat{\mathbf{D}}}
\newcommand\elref{\el_{\rm ref}}
\newcommand\lagrangemat{\mathbf{L}}
\begin{document}

\title{Adaptive Boundary Element Methods
  \thanks{The research of MF, TF, and DP is supported by the Austrian
    Science Fund (FWF) through the research project \emph{Adaptive boundary
    element method}, funded under grant P21732, see
    \texttt{http://www.asc.tuwien.ac.at/abem/}.
    In addition, the authors MF and DP acknowledge support through
    the FWF doctoral program \emph{Dissipation and dispersion in nonlinear
    PDEs}, funded under grant W1245, see \texttt{http://npde.tuwien.ac.at/}.
    The research of NH is supported by CONICYT projects Anillo ACT1118 (ANANUM)
    and \emph{Non-conforming boundary elements and applications}, funded under grant
    Fondecyt 1110324.
    The research of MK is supported by the CONICYT project 
    \emph{Efficient adaptive strategies for nonconforming boundary element methods},
    funded under grant Fondecyt 3140614.
  }
}
\subtitle{A~posteriori error estimators, adaptivity, convergence, and implementation}

\titlerunning{Adaptive Boundary Element Methods}

\author{Michael Feischl \and
        Thomas F\"uhrer \and
	Norbert Heuer \and\\
        Michael Karkulik \and
	Dirk Praetorius
}

\authorrunning{M.~Feischl, T.~F\"uhrer, N.~Heuer, M.~Karkulik, D.~Praetorius}

\institute{M.~Feischl, T.~F\"uhrer, and D.~Praetorius \at
  Institute for Analysis and Scientific Computing \\
  Vienna University of Technology \\
  Wiedner Hauptstrasse 8-10, 1040 Wien, Austria \\
  \email{ \{michael.feischl,thomas.fuehrer,\\dirk.praetorius\}@tuwien.ac.at} \\
           \and
  N.~Heuer and M.~Karkulik \at
  Facultad de Matem\'aticas \\
  Pontificia Universidad Cat\'olica de Chile \\
  Avenida Vicu\~na Mackenna 4860, Santiago, Chile \\
  \email{ \{nheuer,mkarkulik\}@mat.puc.cl}
}

\date{Received: date / Accepted: date}
\maketitle
\begin{abstract}
  This paper reviews the state of the art and discusses very recent
  mathematical developments in the field of adaptive boundary element methods. 
  This includes an over\-view of available a~posteriori error estimates as well as a 
  state-of-the-art formulation of convergence and quasi-opti\-mality of 
  adaptive mesh-refining algorithms.
  \keywords{boundary element method\and 
  a~posteriori error estimate\and 
  adaptive mesh refinement\and
  convergence\and
  optimal complexity}
  \subclass{65N30\and
  65N38\and
  65N50\and
  65R20\and
  41A25}
\end{abstract}

\hfill
\begin{center}
\end{center}

\setcounter{tocdepth}{1}
\tableofcontents
%\clearpage
%%%%%%%%%%%%%%%%%%%%%%%%%%%%%%%%%%%%%%%%%%%%%%%%%%%%%%%%%%%%%%%%%%%%%%%%%%%%%%%%%%%%%%%%%%%%%%%%%%%%%%%
%%%%%%%%%%%%%%%%%%%%%%%%%%%%%%%%%%%%%%%%%%%%%%%%%%%%%%%%%%%%%%%%%%%%%%%%%%%%%%%%%%%%%%%%%%%%%%%%%%%%%%%
\section{Introduction}\label{section:introduction}
%%%%%%%%%%%%%%%%%%%%%%%%%%%%%%%%%%%%%%%%%%%%%%%%%%%%%%%%%%%%%%%%%%%%%%%%%%%%%%%%%%%%%%%%%%%%%%%%%%%%%%%
%%%%%%%%%%%%%%%%%%%%%%%%%%%%%%%%%%%%%%%%%%%%%%%%%%%%%%%%%%%%%%%%%%%%%%%%%%%%%%%%%%%%%%%%%%%%%%%%%%%%%%%
Many practically relevant PDEs\footnote{partial differential equation (PDE)} 
on bounded or unbounded domains $\Omega\subset\R^d$ can be equivalently formulated 
as integral equations on the $(d-1)$-dimensional boundary 
$\Gamma=\partial\Omega$.
This reformulation is then discretized and solved numerically by 
BEM\footnote{boundary element method (BEM)}. Striking advantages of BEM over 
FEM\footnote{finite element method (FEM)}
rely on the dimension reduction, the natural treatment of unbounded domains,
as well as a potentially high rate of convergence with respect to both, the
natural energy norm as well as the pointwise error. On the other hand, high
convergence rates are only achieved if the (given) data as well as the 
(unknown) exact solution are sufficiently smooth 
or if the possible singularities are appropriately resolved. In practice, one thus observes
a huge gap between the theoretically possible optimal rate and the empirical 
convergence behavior, if the meshes are refined uniformly. The remedy is to
use appropriately graded meshes which resolve the possible singularities of
data and exact solution. To this end, a~posteriori error estimation and 
related adaptive mesh-refine\-ment have themselves proven to be important tools 
for scientific computing, cf.~\cite{ao00,v13}. 
First, they allow to monitor the actual error and to stop the computation if the computed solution is
accurate enough. Second, they may also drive the problem-adapted discretization and thus the
appropriate resolution of the possible singularities. While the convergence and quasi-optimality of
AFEM\footnote{adaptive finite element method (AFEM)} has been mathematically 
analyzed within the last decade~\cite{bdd04,ckns,doerfler,mns00,steve07},
analogous results for ABEM\footnote{adaptive 
boundary element method (ABEM)}~\cite{affkp13-A,ffkmp13,ffkmp13-A,fkmp13,gantumur} have only
been achieved very recently, see also~\cite{dhs07,g08,ths07,ku12} for adaptive wavelet-based BEM.

%%%%%%%%%%%%%%%%%%%%%%%%%%%%%%%%%%%%%%%%%%%%%%%%%%%%%%%%%%%%%%%%%%%%%%%%%%%%%%%%%%%%%%%%%%%%%%%%%%%%%%%
\subsection{Galerkin BEM and C\'ea lemma}\label{section:cea}
%%%%%%%%%%%%%%%%%%%%%%%%%%%%%%%%%%%%%%%%%%%%%%%%%%%%%%%%%%%%%%%%%%%%%%%%%%%%%%%%%%%%%%%%%%%%%%%%%%%%%%%
Throughout, our main focus is on Galerkin BEM. Here, the mathematical frame
reads as follows: Let $\XX$ be a real Hilbert space with norm $\norm\cdot\XX$, 
which will be an appropriate Sobolev space in the applications in mind. Let 
$b:\XX\times\XX\to\R$ be a continuous and elliptic bilinear form, i.e.,
there are constants $C_{\rm cont},C_{\rm ell}>0$ such that
\begin{align}\label{intro:continuous}
 b(v,w) \le C_{\rm cont}\,\norm{v}{\XX}\norm{w}{\XX}
 \quad\text{for all }v,w\in\XX
\end{align}
and 
\begin{align}\label{intro:elliptic}
 b(v,v) \ge C_{\rm ell}\,\norm{v}{\XX}^2
 \quad\text{for all }v\in\XX.
\end{align}
Given a linear and continuous functional $F:\XX\to\R$,
the so-called weak formulation (or variational formulation) 
of the BIE\footnote{boundary integral equation (BIE)} reads: Find the 
exact solution $u\in\XX$ of 
\begin{align}\label{intro:weakform}
 b(u,v) = F(v)
 \quad\text{for all }v\in\XX.
\end{align}
Based on a triangulation $\TT$ of the underlying spatial domain, let 
$\XX_\TT\subset\XX$ be a finite-dimen\-sional subspace. The Galerkin BEM discretization
reads: Find $U\in\XX_\mesh$ such that
\begin{align}\label{intro:galerkin}
 b(U,V) = F(V)
 \quad\text{for all }V\in\XX_\mesh.
\end{align}
For both, the continuous as well as the discrete formulations~\eqref{intro:weakform}
and~\eqref{intro:galerkin}, the Lax-Milgram lemma applies and proves
the existence and uniqueness of $u\in\XX$ resp. $U\in\XX_\mesh$. Moreover,
a direct computation with the Galerkin orthogonality
\begin{align}\label{intro:orthogonality}
 b(u-U,V)=0
 \quad\text{for all }V\in\XX_\mesh,
\end{align}
provides the C\'ea lemma
\begin{align}\label{intro:cea}
 \frac{C_{\rm ell}}{C_{\rm cont}}\,\norm{u-U}{\XX}
 \le \min_{V\in\XX_\mesh}
 \norm{u-V}{\XX}
 \le \norm{u-U}{\XX}
\end{align}
i.e., the \emph{computable} Galerkin solution $U\in\XX_\mesh$ is
a quasi-best approximation of $u$ among all functions $V$ in the discrete space $\XX_\mesh$.

%%%%%%%%%%%%%%%%%%%%%%%%%%%%%%%%%%%%%%%%%%%%%%%%%%%%%%%%%%%%%%%%%%%%%%%%%%%%%%%%%%%%%%%%%%%%%%%%%%%%%%%
\subsection{Adaptive algorithm}\label{section:algorithm}
%%%%%%%%%%%%%%%%%%%%%%%%%%%%%%%%%%%%%%%%%%%%%%%%%%%%%%%%%%%%%%%%%%%%%%%%%%%%%%%%%%%%%%%%%%%%%%%%%%%%%%%
As a consequence of the C\'ea lemma~\eqref{intro:cea}, a natural question is how to choose
the discrete space $\XX_\mesh$ (resp. the mesh $\mesh$).
Ideally, one should choose the discrete space $\XX_\mesh$
such that the best approximation error in~\eqref{intro:cea}
is minimal with respect to the number of degrees of freedom.
Usually the necessary a priori knowledge is not available (even if the generic
singularities appear to be known), such that it is infeasible to address this question.
Another possibility is to choose a sequence of meshes such that
the best approximation error shows an ``optimal decay'' with increasing dimension
of $\XX_\mesh$.
Usually this question is empirically addressed by adaptive algorithms which start from
an initial mesh $\TT_0$ and generate a sequence of (locally) refined meshes $\TT_\ell$ for 
$\ell\in\N_0$ by iterating the loop
\begin{align}\label{intro:algorithm}
 \boxed{\texttt{solve}}
 \,\,\,\,\to\,\,\,\,
 \boxed{\texttt{estimate}}
 \,\,\,\,\to\,\,\,\,
 \boxed{\texttt{mark}}
 \,\,\,\,\to\,\,\,\,
 \boxed{\texttt{refine}}.
\end{align}
It provides a sequence of Galerkin solutions\- $U_\ell\in\XX_\ell := \XX_{\mesh_\ell}$ with
nested discrete spaces $\XX_\ell\subset\XX_{\ell+1}\subset\XX$ for all
$\ell\ge0$.
Adaptive algorithms thus work with a sequence of meshes and need to solve in every step.
Yet, it can be observed in model problems that they outperform algorithms
which uniformly refine a coarse mesh up to a given number of degrees of freedom
and finally solve only once. This superiority appears in terms
of memory versus error as well as time consumption versus error, cf.~\cite{hilbert,afgkmp12}.

The module $\boxed{\texttt{solve}}$ consists of the direct or iterative
solution of the linear system corresponding to~\eqref{intro:galerkin} to
compute the (approximate) Galerkin solution $U_\ell\in\XX_\ell$. Mathematical
questions arise from the fact that, first, BEM matrices are 
densely populated (i.e., the number of non-zero entries is roughly equivalent to the overall number of entries) and hence 
have to be 
treated by matrix compression techniques like 
FMM\footnote{fast multipole method (FMM)}~\cite{fmm,osw06},
$\HH$-matrices\footnote{hierarchical matrices ($\HH$-matrices)}~\cite{hackbusch99,hackbusch09},
panel clustering~\cite{hn89},
or ACA\footnote{adaptive cross approximation (ACA)}~\cite{bebendorf00,br03,bg06},
see also the monograph~\cite{rs07} on this subject. In particular, this 
prevents the use of direct solvers for problems of practical interest. 
Second, the condition number of BEM matrices grows if the mesh is 
refined, i.e., one needs cheap and effective preconditioners 
which build on the hierarchical structure of the nested discrete spaces.
Finally, the right-hand side $F$ in~\eqref{intro:weakform} often involves
evaluations of integral operators applied to the given data. Then, the
computation of the right-hand side in~\eqref{intro:galerkin} can hardly
be done analytically. Instead, appropriate and reliable data approximation 
and/or quadrature has to be employed, and this additional consistency error 
has to be controlled.

The module $\boxed{\texttt{estimate}}$ comprises the computation of a 
numerically computable a~posteriori error estimator 
\begin{align}\label{intro:eta}
 \eta_\ell = \Big(\sum_{T\in\TT_\ell}\eta_\ell(T)^2\Big)^{1/2}
\end{align}
whose local contributions $\eta_\ell(T)$ measure ---at least heuris\-ti\-cally---
the Galerkin error $u-U_\ell$ on an element $T$ of the current 
triangulation $\TT_\ell$.
For this purpose, different types of error estimators have been proposed 
in the literature which range from simple two-grid error estimators over
residual-based strategies to estimators which build on the BEM inherent 
Calder\'on system.

The module $\boxed{\texttt{mark}}$ uses the local refinement indicators 
$\eta_\ell(T)$ and selects certain elements for refinement, where refinement
can either be a geometric bisection of the element (so-called $h$-refinement)
or the increase of the local approximation order (so-called $p$-refinement). 

Finally, the module $\boxed{\texttt{refine}}$ uses the prior information to 
generate a new mesh $\TT_{\ell+1}$ as well as a related enriched space 
$\XX_{\ell+1}\supset\XX_\ell$.
For now, we denote this by\linebreak $\mesh_{\ell+1}\in\refine(\mesh_\ell)$.
In the later sections, this will be specified further.
Usually, the numerical analysis requires certain
care for the (otherwise simple) operation $\refine(\cdot)$ to ensure that, e.g., hanging nodes 
are avoided, the quotient of the diameters of neighboring elements does not deteriorate and neither do the elements' angles. In particular,
this leads to additional refinement of non-marked elements. As over-refinement
might affect observed convergence rates with respect to the degrees of
freedom, this requires mathematical care if it comes to the proof of optimal
convergence rates.

The design of an adaptive algorithm usually consists in making appropriate choices for the
different parts of the adaptive loop~\eqref{intro:algorithm}. For the $h$-version, which is the
focus of this work (although we will also briefly discuss
$hp$-versions,\linebreak where a mixture of $h$- and $p$-refinement takes place),
it is common to write the loop~\eqref{intro:algorithm} in pseudo-code in the following form:
\begin{algorithm}[Adaptive mesh refinement]\label{opt:algorithm}\ \linebreak
    \textsc{Input}: initial mesh $\mesh_0$ and adaptivity parameter $0<\theta\leq 1$.\\
  \textsc{Output}: sequence of solutions $(U_\ell)_{\ell\in\N_0}$, sequence of estimators $(\est{\ell}{})_{\ell\in\N_0}$, and sequence of meshes $(\mesh_\ell)_{\ell\in\N_0}$.\\
  \textsc{Iteration}: For all $\ell=0,1,2,3,\ldots$ do {\rm (i)}--{\rm (iv)}.
  \begin{itemize}
    \item[\rm(i)] Compute solution $U_\ell$ of~\eqref{intro:galerkin}.
    \item[\rm(ii)] Compute error indicators $\est{\ell}{\el}$ for all elements $\el\in\mesh_\ell$.
    \item[\rm(iii)] Find a set of (minimal) cardinality $\MM_\ell\subseteq\mesh_\ell$ such that
    \begin{align}\label{opt:bulkchasing}
    \theta\est{\ell}{}^2\leq \sum_{\el\in\MM_\ell}\est{\ell}{\el}^2.
    \end{align}
    \item[\rm(iv)] Refine at least the marked elements $T\in\MM_\ell$ to obtain the new mesh 
    $\mesh_{\ell+1}\in\refine(\mesh_\ell)$.
    \end{itemize}
\end{algorithm}

\begin{figure}[t]
\psfrag{Symms integral equation on slit}{}
\psfrag{error}[c][c]{\footnotesize error in energy norm}
\psfrag{number of elements}[c][c]{\footnotesize number of elements}
\psfrag{O12}{\scriptsize$\OO(N^{-1/2})$}
\psfrag{O32}{\hspace*{-5mm}\scriptsize$\OO(N^{-3/2})$}
\psfrag{O52}{\hspace*{-3mm}\scriptsize$\OO(N^{-5/2})$}
\psfrag{errorP0}{\scriptsize$p=0$, adaptive}
\psfrag{errorP1}{\scriptsize$p=1$, adaptive}
\psfrag{errorP0-unif}{\scriptsize$p=0$, uniform}
\psfrag{errorP1-unif}{\scriptsize$p=1$, uniform}
\centering
\includegraphics[width=0.46\textwidth]{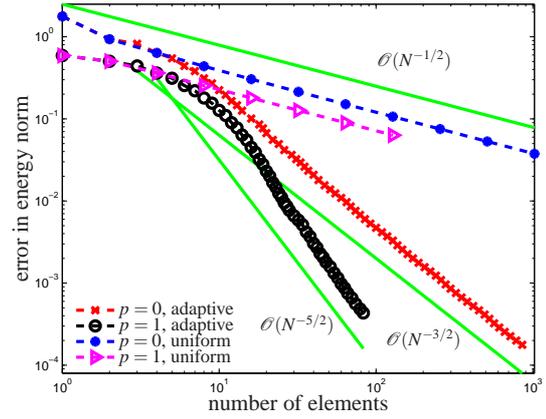}
\caption{BEM error for piecewise constants ($p=0$) and piecewise linears ($p=1$) on uniform and adaptive meshes for example~\eqref{intro:example}.}
\label{intro:2d:error}
\end{figure}

\begin{figure}[t]
\psfrag{Symms integral equation on slit p=1}{}
\psfrag{error, estimators}[c][c]{\footnotesize error resp.\ error estimator}
\psfrag{number of elements}[c][c]{\footnotesize number of elements}
\psfrag{O12}{\scriptsize$\OO(N^{-1/2})$}
\psfrag{O32}{\hspace*{-3mm}\scriptsize$\OO(N^{-3/2})$}
\psfrag{error}{\scriptsize error, $p=0$, adaptive}
\psfrag{est}{\scriptsize error, $p=0$, uniform}
\psfrag{error-unif}{\scriptsize estimator, $p=0$, adaptive}
\psfrag{est-unif}{\scriptsize estimator, $p=0$, uniform}
\centering
\includegraphics[width=0.46\textwidth]{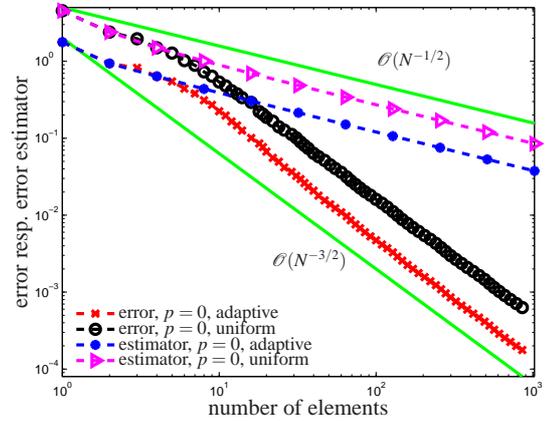}
\caption{BEM error and error estimator for piecewise constants ($p=0$)
on uniform and adaptive meshes for example~\eqref{intro:example}.}
\label{intro:2d:errest}
\end{figure}

\begin{figure*}[t]
 \centering
 \def\fig#1#2{\begin{minipage}[t]{33mm}%
   \centering\hspace*{-5.5mm}\includegraphics[height=30mm]{anisotrop_#1.eps}\\{\hspace*{-2mm}\scriptsize#2}%
 \end{minipage}}
 \fig{1}{initial mesh $\TT_0$}
 \fig{2}{$\TT_3$}
 \fig{4}{$\TT_6$}
 \fig{5}{$\TT_9$}
 \fig{9}{$\TT_{12}$}
  \caption{Sequence of adaptively generated meshes for 3D BEM with anisotropic
mesh refinement for example~\eqref{intro:example3d}.}
  \label{intro:3d:meshes}
\end{figure*}

\begin{figure*}[t]
 \centering
 \def\T#1{\psfrag{T#1}{\hspace*{-2mm}{$\boldsymbol{T_{#1}}$}}}
 \def\fig#1#2{\begin{minipage}[t]{40mm}%
   \centering\hspace*{-5.5mm}\includegraphics[height=37mm]{intro_#1.eps}\\[-4mm]{\scriptsize#2}%
 \end{minipage}}
 \T{} \T1 \T2 \T3 \T4
 \fig{T0}{marked element}
 \fig{T0unif}{isotropic refinement}
 \fig{T0vert}{vertical refinement}
 \fig{T0hori}{horizontal refinement}
  \caption{For 3D BEM, each marked rectangle $T\in\TT_\ell$ (left) is either 
refined isotropically into four elements 
or anisotropically into two elements.}
  \label{intro:3d:refinement}
\end{figure*}

%%%%%%%%%%%%%%%%%%%%%%%%%%%%%%%%%%%%%%%%%%%%%%%%%%%%%%%%%%%%%%%%%%%%%%%%%%%%%%%%%%%%%%%%%%%%%%%%%%%%%%%
\subsection{Mathematical questions}
%%%%%%%%%%%%%%%%%%%%%%%%%%%%%%%%%%%%%%%%%%%%%%%%%%%%%%%%%%%%%%%%%%%%%%%%%%%%%%%%%%%%%%%%%%%%%%%%%%%%%%%
To illustrate some of the mathematical questions which have to be addressed, 
we consider simple toy problems for the 2D and 3D Laplacian: First, we consider 
the weakly singular integral equation
\begin{align}\label{intro:example}
 \slo\phi = f
 \quad\text{on the slit }\Gamma = (-1,1)\times\{0\},
\end{align} 
where $\slo$ is the simple-layer integral operator of the 2D Laplacian
(see Section~\ref{section:bio} below).
For given $f(s,0) = s$, the unique solution of~\eqref{intro:example} is
known to be
\begin{align}\label{intro:example:solution}
 u(s,0) = 2s/\sqrt{1-s^2}.
\end{align}
We consider BEM with piecewise constant ansatz and test functions
($p=0$)
as well as with discontinuous piecewise linear ansatz and test functions
($p=1$).
The adaptive mesh refinement is driven by some $(h-h/2)$-type
error estimator (see Section~\ref{section:est:hh2} below). The initial mesh 
consists of one line segment of length $2$.

Figs.~\ref{intro:2d:error} and \ref{intro:2d:errest} show the outcome of the 
numerical computations, where we compare uniform vs.\ adaptive mesh-refine\-ment.
We plot the error (measured in the natural\linebreak $\widetilde H^{-1/2}$-norm) and the 
computed a~posteriori error estimator versus the number $N$ of elements. 
If $u$ was smooth, the generically optimal order of convergence would be\linebreak
$\OO(N^{-p-3/2})$, see~\cite{ss11}. However, in the present example, the exact 
solution has strong (generic) singularities at the tips of the slit and thus 
lacks the required regularity. For uniform mesh refinement, where all line 
segments are bisected to obtain $\TT_{\ell+1}$ from $\TT_\ell$, we observe 
a poor convergence rate of $\OO(N^{-1/2})$ for both, piecewise constants
and piecewise linears, see Fig.~\ref{intro:2d:error}.
Conse\-quent\-ly, the use of higher-order 
polynomials does not pay on uniform meshes. However, if we use the adaptive
algorithm which automatically enforces an appropriate grading of the mesh 
towards the singularities of $u$, we observe the optimal convergence 
behavior $\OO(N^{-3/2})$ for piecewise constants $p=0$ and
$\OO(N^{-5/2})$ for piecewise linears $p=1$. 

Mathematically, this observation gives rise to the following questions:
\begin{itemize}
  \item[\Large$\bullet$] Does the adaptive algorithm (Algorithm~\ref{opt:algorithm})
    guarantee\\convergence?
  More precisely, is it true that
  \begin{align}\label{eq:plain:conv}
    \norm{u - U_\ell}{\XX} \rightarrow 0 \quad\text{ as } \ell\rightarrow\infty?
  \end{align}
\end{itemize}
For uniform mesh refinement, the C\'ea lemma~\eqref{intro:cea} and appropriate 
approximation results for smooth functions guarantee that Galerkin BEM 
always lead to convergence $U_\ell \to u$, independently of the overall regularity 
or possible singularities of the unknown solution $u$. As observed, this convergence can be
slow. On the other hand, the adaptive algorithm does not guarantee that the
mesh-size will tend to zero as $\ell\to\infty$. Consequently, the convergence
analysis for uniform meshes does not carry over to adaptive meshes.
However, a~priori arguments guarantee that nestedness $\XX_\ell\subseteq\XX_{\ell+1}$ 
for all $\ell\ge0$ implies convergence of $U_\ell$ towards some limit
$U_\infty$ (see Lemma~\ref{estred:lem:convortho}), but raises the important
question whether we can identify 
$u=U_\infty$. In particular, the numerical check for convergence will always
be affirmative even if the adaptive algorithm does wrong, i.e., $u\neq U_\infty$.
\begin{itemize}
  \item[\Large$\bullet$] Empirically, the adaptive algorithm~\ref{opt:algorithm}
    does not only lead to convergence,but even ensures \emph{linear} convergence, i.e.,
    $\norm{u-U_{\ell+1}}\XX\le q\,\norm{u-U_\ell}\XX$ for some uniform 
    constant $0<q<1$.
\end{itemize}
For some symmetric and elliptic bilinear form $b(\cdot,\cdot)$ and the
induced norm $\norm{v}\XX = \sqrt{b(v,v)}$, the C\'ea lemma~\eqref{intro:cea}
holds with $C_{\rm ell} = 1 = C_{\rm cont}$. This implies at least
$\norm{u-U_{\ell+1}}\XX\le\norm{u-U_\ell}\XX$, and the numerical experiment 
also shows that pre-asymptotically even $q\approx1$ can be observed, see
Fig.~\ref{intro:2d:error}.
\begin{itemize}
  \item[\Large$\bullet$] Does the adaptive algorithm~\ref{opt:algorithm}
    recover the optimal rate of convergence?
\end{itemize}
Clearly, the last two questions are strongly related to the a~posteriori error 
estimator which drives the adaptive mesh refinement. As the adaptive
algorithm does not see the actual error, but only the error estimator,
an a~posteriori error estimator is called \emph{reliable}, if it provides
an upper bound for the unknown error
\begin{align}\label{intro:reliable}
 \norm{u-U_\ell}\XX \le C_{\rm rel}\,\eta_\ell
\end{align}
up to some generic constant $C_{\rm rel}>0$. If the adaptive algorithm thus
drives the error estimator to zero, this implies convergence of the overall
scheme. Conversely, $\eta_\ell$ is called \emph{efficient}, if it provides
a lower bound  for the unknown error
\begin{align}\label{intro:efficient}
 \eta_\ell \le C_{\rm eff}\, \norm{u-U_\ell}\XX
\end{align}
up to some generic constant $C_{\rm eff}>0$. If $\eta_\ell$ is both,
efficient and reliable, the adaptive algorithm monitors the convergence
behavior. 

\begin{figure}[t]
\psfrag{error}[c][c]{\footnotesize error in energy norm}
\psfrag{degrees of freedom}[c][c]{\footnotesize number of elements}
\psfrag{N14}{\scriptsize$\OO(N^{-1/4})$}
\psfrag{N34}{\hspace*{-6mm}\scriptsize$\OO(N^{-3/4})$}
\psfrag{uniform}{\scriptsize uniform}
\psfrag{isotropic}{\scriptsize isotropic}
\psfrag{anisotropic}{\scriptsize anisotropic}
\centering
\includegraphics[width=0.46\textwidth]{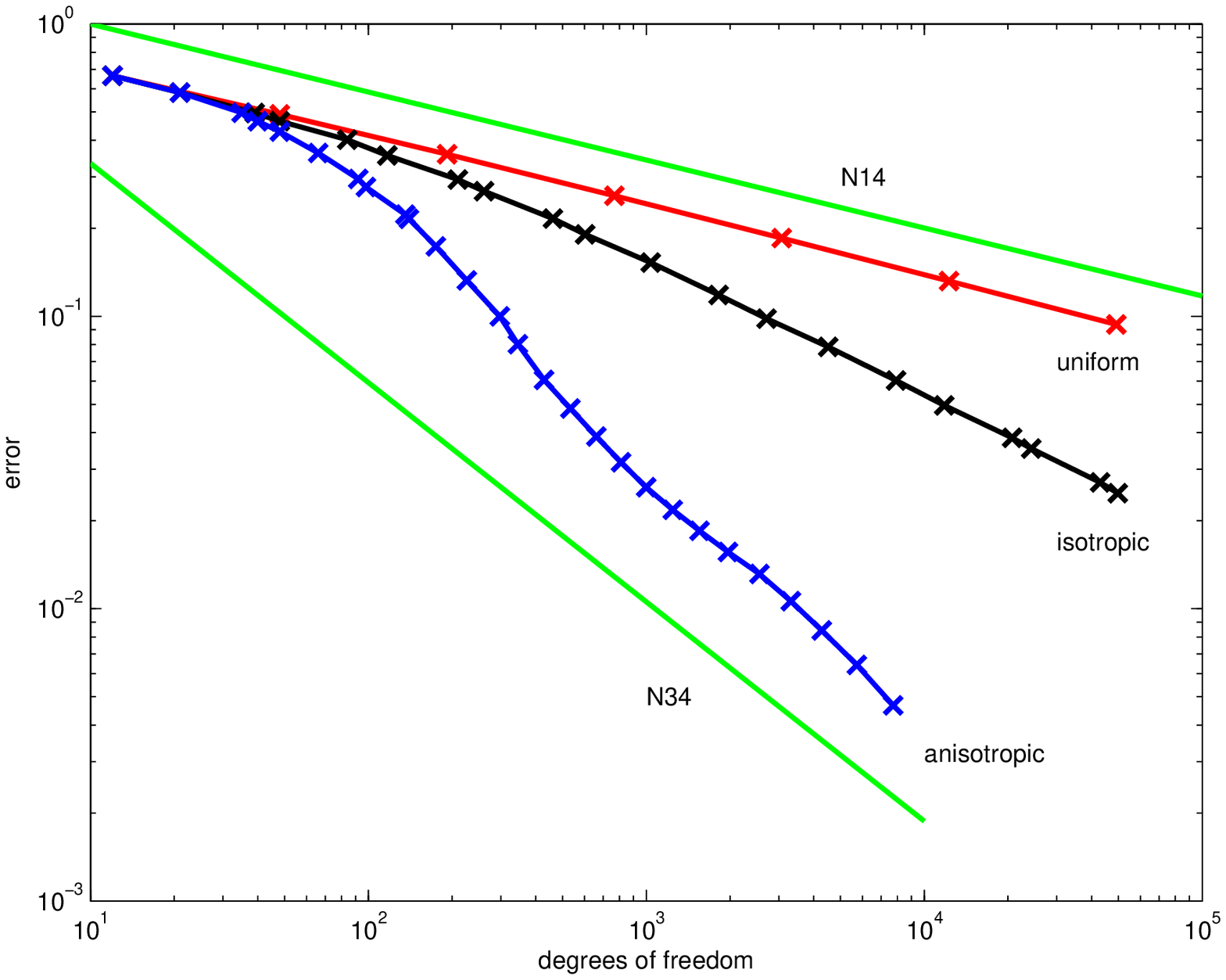}
\caption{BEM error for piecewise constants on uniform and adaptive meshes
for example~\eqref{intro:example3d} with isotropic and anisotropic
refinement.}
\label{intro:3d:error}
\end{figure}

\begin{figure}[t]
\psfrag{error}[c][c]{\footnotesize error resp.\ error estimator}
\psfrag{degrees of freedom}[c][c]{\footnotesize number of elements}
\psfrag{N14}{\scriptsize$\OO(N^{-1/4})$}
\psfrag{O34}{\scriptsize$\OO(N^{-3/4})$}
\psfrag{uniform}{\scriptsize uniform}
\psfrag{isotropic}{\scriptsize isotropic}
\psfrag{anisotropic}{\scriptsize anisotropic}
\centering
\includegraphics[width=0.46\textwidth]{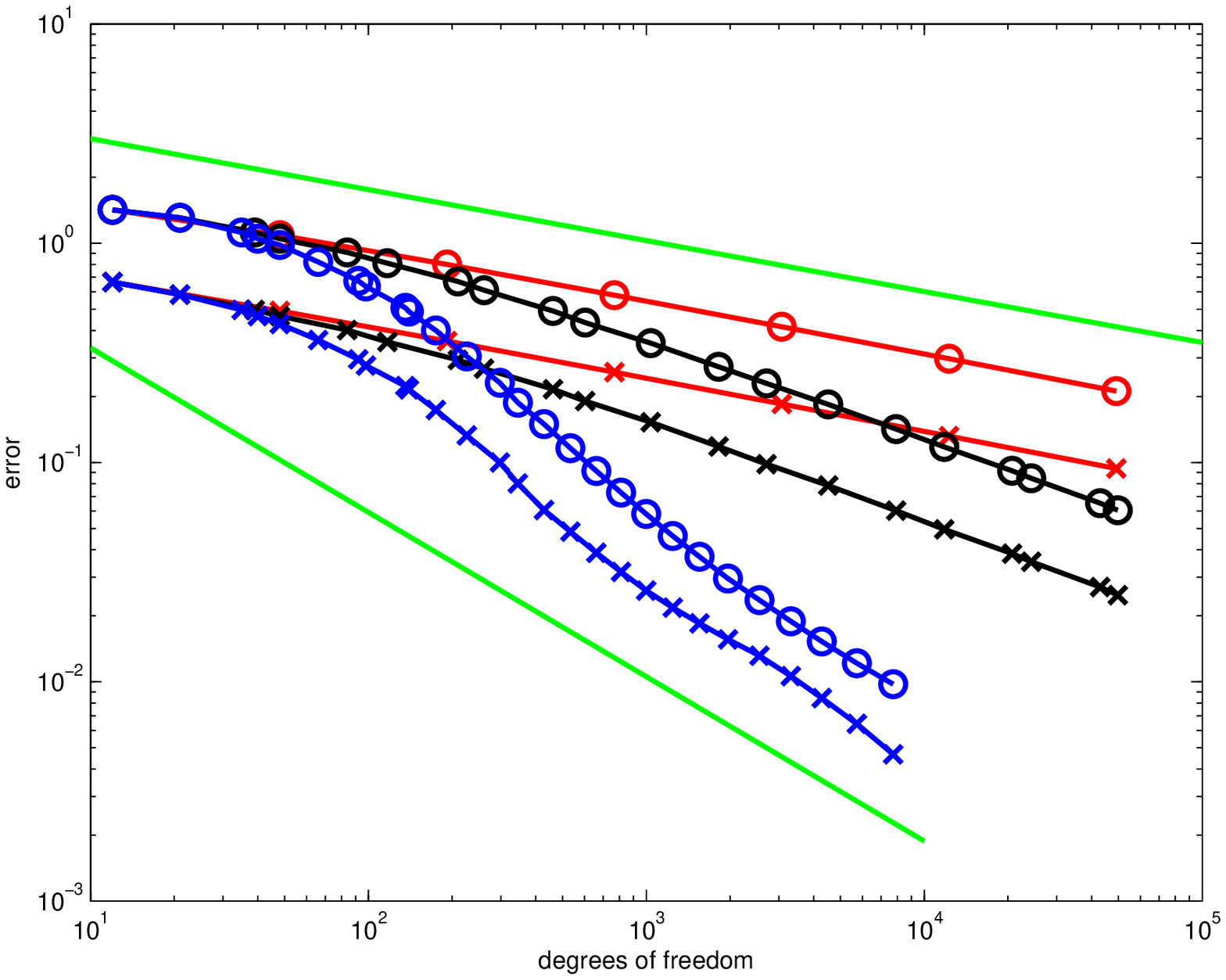}
\caption{BEM error and error estimator for piecewise constants
on uniform and adaptive meshes for example~\eqref{intro:example3d} with isotropic and anisotropic
refinement.}
\label{intro:3d:errest}
\end{figure}

Fig.~\ref{intro:2d:errest} displays error as well as $(h-h/2)$-type
error estimator for uniform and adaptive mesh refinement and lowest-order
elements $p=0$. As the curves of error and error estimator are parallel, 
independently of the mesh refinement, it is observed that 
$\eta_\ell$ is reliable and efficient. The same observation is 
obtained for higher-order polynomials (not displayed).
\begin{itemize}
\item[\Large$\bullet$] In which situations can it be mathematically guaranteed
that the BEM error is estimated reliably and efficiently?
\end{itemize}
Mathematical details (and restrictions) for the $(h-h/2)$-error estimator 
are discussed in Section~\ref{section:est:hh2} below.

Moreover, optimal convergence behavior is also constrained by the 
mesh refinement used. To illustrate this, we consider the weakly singular 
integral equation
\begin{align}\label{intro:example3d}
 \slo\phi = f
 \text{ on the L-screen }\Gamma = \bigl((-1,1)^2\backslash(0,1)^2\bigr)\times\{0\},
\end{align} 
where $\slo$ now is the simple-layer integral operator of the 3D Laplacian.
We consider lowest-order BEM with piecewise constant ansatz and test functions.
The adaptive mesh refinement is driven by some $(h-h/2)$-type
error estimator (see Section~\ref{section:est:hh2} below). The initial mesh 
consists of $12$ uniform squares with edge length $1/2$, see
Fig.~\ref{intro:3d:meshes} (left).

It is well-known that the solutions of~\eqref{intro:example3d} suffer from
edge singularities. For $f(x)=1$, we therefore compare uniform 
mesh refinement, where each
square is divided into four similar squares of half edge length, with
adaptive iso\-tro\-pic resp.\ anisotropic mesh refinement. In the isotropic
\linebreak case,
marked squares are refined into four similar squares of half edge length.
In the anisotropic case, we also allow that the (rectangular) elements are
only refined along one edge into two rectangles, see 
Fig.~\ref{intro:3d:refinement}. For the anisotropic refinement, some 
adaptively refined meshes are shown in Fig.~\ref{intro:3d:meshes}.

The overall outcome of these computations is visualized in 
Figs.~\ref{intro:3d:error} and~\ref{intro:3d:errest}, where we compare 
uniform and adaptive isotropic and anisotropic mesh refinement. We plot
the error (measured in the natural $\widetilde H^{-1/2}$-norm) and the computed
a~posteriori error estimator versus the number $N$ of elements. If $u$
was smooth, the generically optimal order of convergence would be $\OO(h^{3/2})$
for the uniform mesh-size $h$. For 3D BEM, this corresponds to an optimal
decay $\OO(N^{-3/4})$ with respect to the number of elements. However,
the exact solution exhibits generic singularities along the edges of $\Gamma$.
For uniform mesh refinement, where all elements are refined isotropically, 
we observe a poor rate of convergence $\OO(N^{-1/4})$ for the error.
For the adaptive strategy with aniso\-tropic elements, we observe the optimal
rate of convergence $\OO(N^{-3/4})$, while adaptive isotropic refinement
leads to approximately $\OO(N^{-1/2})$. We note that heuristic arguments
show that $\OO(N^{-1/2})$ is the optimal rate of convergence in the presence of
generic edge singularities, if one restricts to isotropic elements~\cite{cmps}.
This is also illustrated by the adaptive meshes shown in 
Fig.~\ref{intro:anisotropic3d} as well as Fig.~\ref{intro:isotropic3d}
which show adaptively generated anisotropic resp.\ isotropic meshes with
(almost) the same number of elements.

\begin{figure}
\centering
\includegraphics[width=0.44\textwidth]{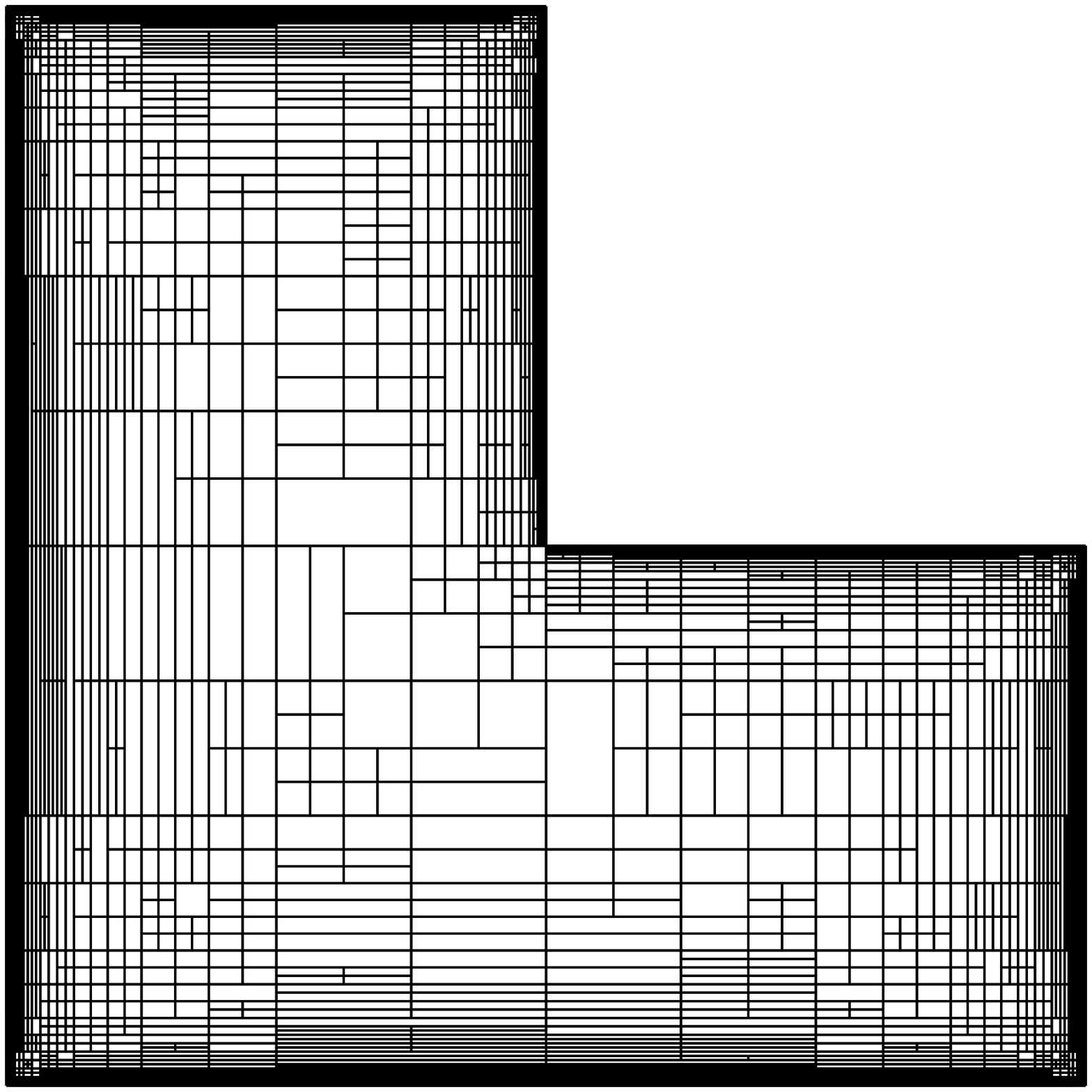}
\caption{Adaptively generated mesh with anisotropic elements.}
\label{intro:anisotropic3d}
\end{figure}

\begin{figure}
\centering
\includegraphics[width=0.44\textwidth]{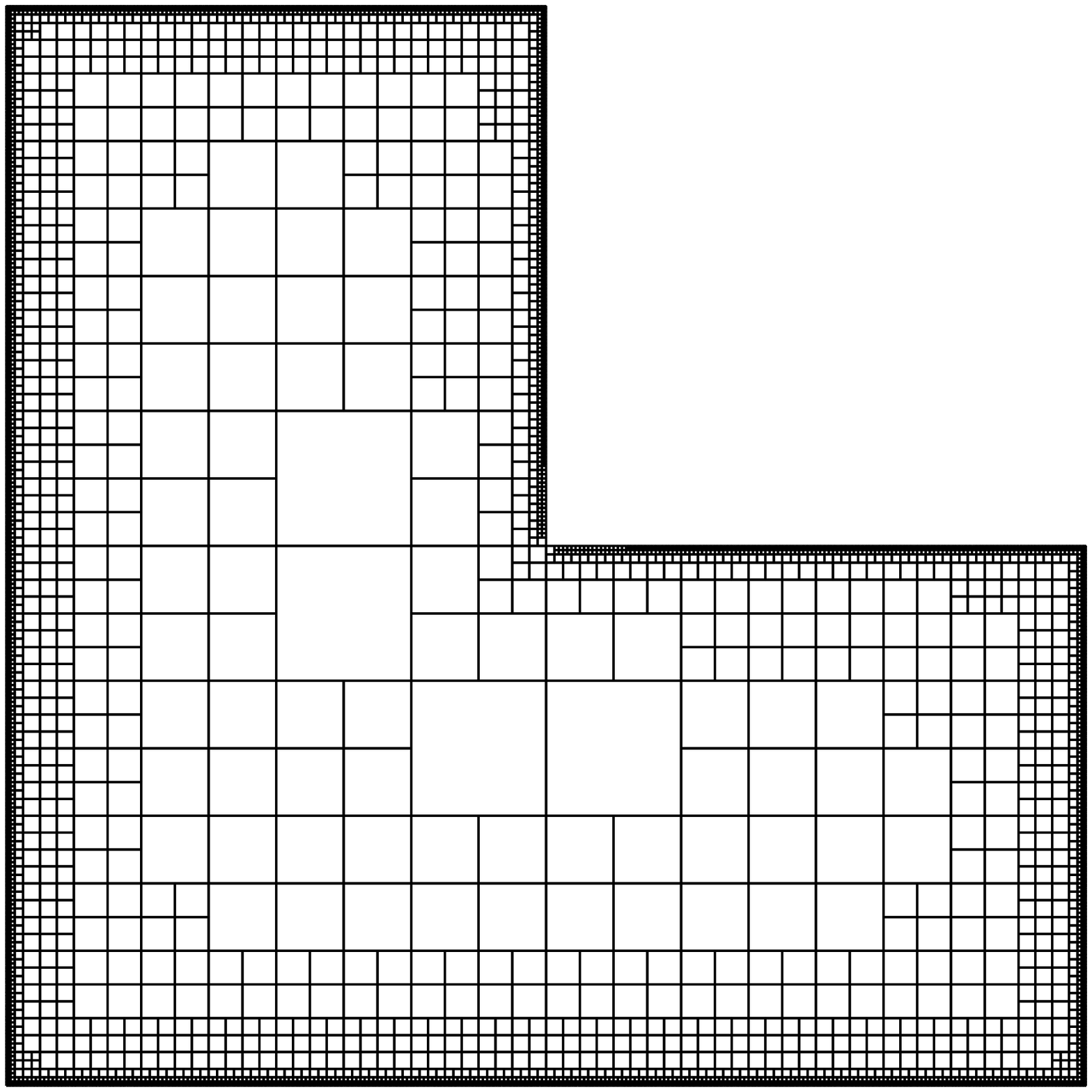}
\caption{Adaptively generated mesh with isotropic elements.}
\label{intro:isotropic3d}
\end{figure}

Fig.~\ref{intro:3d:errest} shows the BEM error as well as the $(h-h/2)$-type
error estimators. As for the 2D example~\eqref{intro:example}, we
observe that the a~posteriori error estimators used are 
reliable and efficient.

%%%%%%%%%%%%%%%%%%%%%%%%%%%%%%%%%%%%%%%%%%%%%%%%%%%%%%%%%%%%%%%%%%%%%%%%%%%%%%%%%%%%%%%%%%%%%%%%%%%%%%%
\subsection{Outline}
%%%%%%%%%%%%%%%%%%%%%%%%%%%%%%%%%%%%%%%%%%%%%%%%%%%%%%%%%%%%%%%%%%%%%%%%%%%%%%%%%%%%%%%%%%%%%%%%%%%%%%%
Essential ingredients of the mathematical theory of BEM will be collected
in Section~\ref{section:bem}. The fundamental function spaces in BEM are
Sobolev spaces, which will be introduced briefly in Section~\ref{section:sobolev}.
There will be no further explanations on the connection of BIEs and PDEs, but
in Section~\ref{section:bio} we will define the boundary integral operators that constitute
the equations that are to be solved (Sections~\ref{section:weaksing} and~\ref{section:hypsing}).
To emphasize the significance of local mesh refinement, Section~\ref{sec_reg} briefly summarizes
the regularity theory in the context of this work.
The terms associated with discrete spaces, such as meshes, piecewise polynomials, and so on, will
be defined in Section~\ref{section:bem:discrete}, and the resulting discrete equations are
given in Section~\ref{section:galerkin}.

As a basis for a~posteriori error estimation, Section~\ref{section:localization} focuses exclusively
on the localization techniques for fractional order Sobolev norms. The results of this section
will be used frequently in this work, and the two omnipresent approaches, localization by 
local fractional norms (Section~\ref{section:localization:slob}) and localization by
approximation (Section~\ref{section:localization:apx}), will be\linebreak
treated in particular.

Section~\ref{section:aposteriori} gives an overview of the different a~posteriori error estimators
for BEM that have been proposed in the mathematical literature. The estimators are
classified into five different groups:
\begin{itemize}
  \item Residual error estimators (Section~\ref{section:residual}),
  \item estimators based on space enrichment (Section~\ref{sec:enrich}),
  \item averaging on large patches (Section~\ref{section:averaging}),
  \item ZZ-type estimators (Section~\ref{section:zzest}),
  \item and estimators based on the Calder\'on sytem (Section~\ref{section:twoeq}).
\end{itemize}
In addition, Section~\ref{section:dataapproximation} deals with the question of how to estimate
data approximation errors. The estimators presented up to this point might serve also in
higher-order BEM, but are analyzed only with respect to mesh refinement.
In contrast, a~posteriori estimators and associated adaptive algorithms
for $p$ and $hp$-versions of the BEM are shown in Section~\ref{section:aposteriori:hp}.

The question of convergence of $h$-adaptive algorithms of the type~\eqref{eq:plain:conv} will be
addressed in Section~\ref{section:estred}, which deals with the so-called
\textit{estimator reduction principle}.
This is a rather general concept dealing with convergence of adaptive algorithms. This
will be explained in detail in Section~\ref{section:estred:principle}.
The results of Section~\ref{section:estred} are
tailored to certain concrete model problems and estimators which are
given in Sections~\ref{section:estred:hh2}--\ref{section:estred:res} for a~posteriori error estimation
with $(h-h/2)$, $ZZ$, and weighted residual estimators, as well as in
Sections~\ref{section:estred:data}\linebreak
and~\ref{section:estred:data:res} including data approximation.
We also comment on convergence in the presence of anisotropic mesh refinement in
Section~\ref{section:estred:anisotropic}.

The properties of module $\texttt{refine}(\cdot)$, responsible for mesh refinement, play an
important role in the analysis of optimal rates of adaptive mesh-refining algorithms.
Section~\ref{section:meshrefinement} explains the requirements for $\texttt{refine}(\cdot)$
and summarizes\linebreak available results from the
literature to account for local refinement in 2D BEM as well as 3D BEM.

Regarding the question of optimal convergence of Algorithm~\ref{opt:algorithm},
the following Section~\ref{section:convergence} introduces an abstract framework
that was recently laid out even in a more general setting in~\cite{axioms}.
At this point, we will have fixed all the parts of Algorithm~\ref{opt:algorithm}
except $\boxed{\texttt{estimate}}$. Based on certain assumptions
(called (A1)--(A4)) on the estimator $\eta_\ell$ that is employed in $\boxed{\texttt{estimate}}$,
convergence of Algorithm~\ref{opt:algorithm} will be shown in Section~\ref{opt:section:convergence},
generalising the results of Section~\ref{section:estred}. Optimal convergence
of Algorithm~\ref{opt:algorithm} within the abstract framework
will be shown in Section~\ref{opt:section:optimal}.
In\linebreak
Sections~\ref{opt:sec:example1}--\ref{opt:sec:example4}, it is shown
how to apply the abstract setting to concrete model problems, i.e., the assumptions (A1)--(A4) will be
checked for different error estimators. More precisely, we obtain linear convergence for
$(h-h/2)$-based estimators and optimal convergence for weighted residual estimators.
From Section~\ref{opt:section:data} on, we deal with convergence and optimality of ABEM including data
approximation. To that end, an extended algorithm (Algorithm~\ref{opt:algorithmd})
will be formulated that
differs from Algorithm~\ref{opt:algorithm} only in that Galerkin solutions are computed with respect
to an approximate right-hand side and that error control for data approximation is included in
the error estimation. For this extended error estimators, we again formulate
assumptions (called ($\widetilde{\rm A1}$)--($\widetilde{\rm A6}$)), and show not only
convergence (Section~\ref{opt:section:data:conv})
but also optimality (Section~\ref{opt:section:data:optimal})
of Algorithm~\ref{opt:algorithmd}.
We show how to apply this abstract framework to concrete model problems in
Sections~\ref{opt:sec:example2d} and~\ref{opt:sec:example4d}.

The final Section~\ref{section:implementation} is devoted to details in implementation.
We give detailed explanations how to implement the $L_2$-orthogonal projection
(Section~\ref{section:implementation:l2proj}) as well as the Scott-Zhang projection
(Section~\ref{section:implementation:sz}). Furthermore, we show how to implement
the two-level error estimator, the $(h-h/2)$ based error estimator, as well as the
weighted residual error estimator for $d=2$ and in the lowest order case.
The ideas that we present for implementation transfer immediately to $d=3$ and higher-order
polynomials.

Throughout the paper, $a\lesssim b$ means that $a\le cb$ with a generic constant $c>0$
that is independent of involved mesh parameters or functions. Similarly, the notation
$a\gtrsim b$ and $a\simeq b$ is used.

%\clearpage
%%%%%%%%%%%%%%%%%%%%%%%%%%%%%%%%%%%%%%%%%%%%%%%%%%%%%%%%%%%%%%%%%%%%%%%%%%%%%%%%%%%%%%%%%%%%%%%%%%%%%%%
%%%%%%%%%%%%%%%%%%%%%%%%%%%%%%%%%%%%%%%%%%%%%%%%%%%%%%%%%%%%%%%%%%%%%%%%%%%%%%%%%%%%%%%%%%%%%%%%%%%%%%%
\section{Mathematical foundation of the BEM}\label{section:bem}
%%%%%%%%%%%%%%%%%%%%%%%%%%%%%%%%%%%%%%%%%%%%%%%%%%%%%%%%%%%%%%%%%%%%%%%%%%%%%%%%%%%%%%%%%%%%%%%%%%%%%%%
%%%%%%%%%%%%%%%%%%%%%%%%%%%%%%%%%%%%%%%%%%%%%%%%%%%%%%%%%%%%%%%%%%%%%%%%%%%%%%%%%%%%%%%%%%%%%%%%%%%%%%%
This section briefly introduces the mathematical framework for boundary element methods.
Definitive books in this respect are~\cite{hw08,mclean,ned01}, which deal exclusively with
boundary integral equations and their analytical underpinning, and \cite{ss11,s08},
which focus to a great extent on boundary element discretizations.
Let us also note that the analysis of finite elements for the
discretization of boundary integral equations
of the first kind goes back to N\'ed\'elec and Planchard
\cite{np73}, and Hsiao and Wendland \cite{hw77}.

%%%%%%%%%%%%%%%%%%%%%%%%%%%%%%%%%%%%%%%%%%%%%%%%%%%%%%%%%%%%%%%%%%%%%%%%%%%%%%%%%%%%%%%%%%%%%%%%%%%%%%%
\subsection{Sobolev spaces}\label{section:sobolev}
%%%%%%%%%%%%%%%%%%%%%%%%%%%%%%%%%%%%%%%%%%%%%%%%%%%%%%%%%%%%%%%%%%%%%%%%%%%%%%%%%%%%%%%%%%%%%%%%%%%%%%%
For a rigorous treatise of Sobolev spaces, we refer to the standard reference~\cite{a75}.
For $\Omega\subset\R^d$ an open and bounded set and $p\in [1,\infty]$, $L_p(\Omega)$
denotes the space of all measurable functions $u:\Omega\rightarrow\R$ whose $p$-th power
is integrable, i.e.,
$\norm{u}{L_p(\Omega)}<\infty$, where
\begin{align*}
  \norm{u}{L_p(\Omega)} :=
  \begin{cases}
    \left(\int_\Omega \snorm{u}{}^p \right)^{1/p} &\text{ for } p<\infty,\\
    \inf_{\substack{M\subset\Omega\\\abs{M}=0}}
    \sup_{x\in\Omega\setminus M} \abs{u(x)} &\text{ for } p=\infty.
  \end{cases}
\end{align*}
The space $L_2(\Omega)$ is a Hilbert space with inner product and norm
\begin{align*}
  \dual{u}{w}_\Omega := \int_\Omega u w\,dx,\qquad \norm[]{u}{L_2(\Omega)} := \dual{u}{u}_\Omega^{1/2}.
\end{align*}
The space $C^\infty_0(\Omega)$ is the space of smooth $\phi\in C^\infty(\Omega)$ with
$\supp(\varphi)\subset\Omega$.
If, for $u\in L_2(\Omega)$,
a locally integrable function $w:\Omega\rightarrow\R^d$ exists such that, for all
$\varphi\in C^\infty_0(\Omega)$
\begin{align*}
  \int_\Omega u(x) \nabla \varphi(x)\,dx = - \int_\Omega w(x)\varphi(x)\,dx,
\end{align*}
then $w$ is called the \textit{weak gradient} of $u$, abbreviated by $\nabla u:=w$.
It follows from the fundamental lemma of calculus of variations that the weak gradient
is uniquely defined almost everywhere, and integration by parts shows that it therefore coincides
with the classical gradient of $u$ if it exists.
The space of all functions $u\in L_2(\Omega)$ with weak gradient $\nabla u\in L_2(\Omega)$
is the Sobolev space $H^1(\Omega)$.
This is again a Hilbert space with inner product and norm
\begin{align*}
  \dual{u}{w}_{H^1(\Omega)} &:= \dual{u}{w}_\Omega + \dual{\nabla u}{\nabla w}_\Omega,\\
  \norm{u}{H^1(\Omega)} &:= \dual{u}{u}_{H^1(\Omega)}^{1/2}.
\end{align*}
For $s\in(0,1)$, the fractional order Sobolev space $H^s(\Omega)$ consists
of all $u\in L_2(\Omega)$ with $\norm{u}{H^s(\Omega)} < \infty$, where
inner product and norm are
\begin{align*}
  \dual{u}{w}_{H^s(\Omega)} &:= \dual{u}{w}_{\Omega}\\
  &\quad+\int_\Omega\int_\Omega \frac{(u(x)-u(y))(w(x)-w(y))}{\abs{x-y}^{d+2s}}
  \,dx\,dy,\\
  \norm{u}{H^s(\Omega)} &:= \dual{u}{u}_{H^s(\Omega)}^{1/2}.
\end{align*}
Generally, for a non-empty set $\omega\subset\Omega$ and $s\in(0,1)$, the associated seminorm is denoted by
\begin{align*}
  \snorm{u}{H^s(\omega)}^2 := \int_\omega\int_\omega \frac{(u(x)-u(y))^2}{\abs{x-y}^{d+2s}}\,dx\,dy.
\end{align*}
From now on we assume that $\Omega$ is simply connected and has a Lipschitz
boundary $\partial\Omega$, i.e., local orthogonal coordinates may be introduced to
represent $\partial\Omega$ locally as a Lipschitz function over a $(d-1)$-dimensional domain.
Then, we can define Sobolev spaces $H^s(\Gamma)$ for $\Gamma\subseteq\partial\Omega$
and associated inner products and norms for $s\in[0,1)$ exactly as for $(d-1)$-dimensional
domains but using surface integrals instead of integrals over domains.
The definition of the surface integral does
not depend on the parametrization used, so neither does the space $H^s(\Gamma)$ and its inner
product or norm for $s\in[0,1)$.
Independently of the chosen parametrization of $\Gamma$, a weak surface gradient $\nablag$ can be
defined, cf.~\cite[Def.~1.9]{verchota} or~\cite[Appendix~A.3]{cwgls12}, and hence a space $H^1(\Gamma)$.
The surface gradient is tangential to $\Gamma$, and for smooth functions $u$ in $\R^d$ there holds
$\nabla u = \nablag u + (\n \cdot \nabla u) \cdot \n$.
For $d=2$, i.e., $\Gamma$ a one-dimensional curve, the notation $u^\prime$ will be used to
denote the gradient of $u$.
For $s\in[0,1]$, define
\begin{align*}
  \wilde H^s(\Gamma) := \left\{ u\in H^s(\partial\Omega)\mid \supp(u) \subset \overline\Gamma \right\}
\end{align*}
with norm
\begin{align}\label{eq:wilde:H:norm}
  \norm{u}{\wilde H^s(\Gamma)} := \norm{\wilde u}{H^s(\partial\Omega)},
\end{align}
where $\wilde u$ denotes the extension of $u$ by zero on $\partial\Omega$.
Clearly, if $\Gamma = \partial\Omega$ is the boundary
of a bounded Lipschitz domain, it holds that $\wilde H^s(\Gamma) = H^s(\Gamma)$ for all $s\in[0,1]$.
The same space can be defined for a domain $\Omega$ instead of $\Gamma$ by using
$\R^d$ instead of $\partial\Omega$.
A different characterization of the spaces
$\wilde H^s(\Omega)$ can be given; to that end, we introduce the \textit{trace operator} $\trace$, which
is defined for smooth functions $u$ as $\trace u := u|_{\partial\Omega}$. It can be shown that
$\norm{\trace u}{H^{s-1/2}(\partial\Omega)} \leq C_s \norm{u}{H^s(\Omega)}$ for $s\in (1/2,1]$,
hence $\trace$ can be extended to a linear and continuous operator from $H^s(\Omega)$ to
$H^{s-1/2}(\Gamma)$ for $s\in (1/2,1]$. It is known that
\begin{align*}
  \wilde H^s(\Omega) &= \left\{ u \in H^s(\Omega) \mid \trace u = 0 \right\}\quad&\text{ for }
  s\in(1/2,1],\\
  \wilde H^s(\Omega) &= H^s(\Omega)\quad&\text{ for } s\in [0,1/2).
\end{align*}
Furthermore, in these cases, the norms on $\wilde H^s(\Omega)$ from~\eqref{eq:wilde:H:norm}
and the norms on $H^s(\Omega)$ are equivalent, and the equivalence constants depend on $s$ and
$\Omega$, cf.~\cite[Lem.~1.3.2.6 and Thm.~1.4.4.4]{grisvard}.
If $s\in\left\{ 0,1 \right\}$, the norms coincide.
Note that the case $s=1/2$ is excluded.
Likewise, an operator $\dn$ can be defined which extends the (co-)normal derivative.

For a linear operator $B:\XX\rightarrow\YY$ between two normed linear spaces, denote its operator
norm by
\begin{align*}
  \norm{B}{\XX\rightarrow\YY} := \sup_{0 \neq x\in \XX}\frac{\norm{B(x)}{\YY}}{\norm{x}{\XX}}.
\end{align*}
The operator $B$ is called \textit{bounded} if
$\norm{B}{\XX\rightarrow\YY}<\infty$.
The dual space of a normed linear space $\XX$, denoted by $\XX'$, consists of all
linear and bounded operators (so-called \textit{functionals}) $f: \XX\rightarrow \R$. A norm on
$\XX'$ is given by
\begin{align*}
  \norm{f}{\XX'} := \norm{f}{\XX\rightarrow\R} = \sup_{0\neq x \in \XX}\frac{\abs{f(x)}}{\norm{x}{\XX}}.
\end{align*}
For the Sobolev space $H^s(\Gamma)$, the dual space can be characterized
by the concept of the so-called
\textit{Gelfand triple},\linebreak
cf.~\cite[Sec. 2.1.2.4]{ss11}, using the fact that for densely embedded
Hilbert spaces $\VV\subset \UU$, their dual spaces are also densely embedded, i.e., $\UU'\subset \VV'$.
Then, identifying $\UU$ with its dual $\UU'$, the scalar product $\dual{\cdot}{\cdot}_{\UU}$
can be extended to a duality pairing between $\VV$ and its dual $\VV'$. The space $\UU$ is called
\textit{pivot space}.
The described concept is used to define the duality pairing between $H^s(\Gamma)$ for $s>0$ and its dual
$\wilde H^{-s}(\Gamma) := H^s(\Gamma)'$, where $L_2(\Gamma)$ is used as pivot space. The consequence
is that, if $u\in H^s(\Gamma)$ and $v\in \wilde H^{-s}(\Gamma)$ are both in $L_2(\Gamma)$, then
$\dual{u}{v}_{H^s(\Gamma)\times \wilde H^{-s}(\Gamma)} = \dual{u}{v}_{\Gamma}$ coincides
with the $L_2(\Gamma)$ scalar product, and the last
expression will be used from now on to denote the duality pairing.
The dual space of $\wilde H^s(\Gamma)$ for $s>0$ will be denoted by $H^{-s}(\Gamma)$.

Certain equations that will be considered have a kernel, hence a quotient space will be needed
to solve them. For $s\in[-1,1]$, define
\begin{align*}
  H^s_0(\Gamma) := \left\{ u\in H^s(\Gamma) \mid \dual{u}{1}_\Gamma = 0 \right\}.
\end{align*}
\begin{remark}
  In the literature, cf.~\cite{ss11,s08}, Sobolev spaces are defined with a fixed
  parametrization and associated partition of unity on $\Gamma$. This gives an equivalent
  definition to ours, with constants that depend on the chosen parametrization. To see this,
  denote by $a$ a specific parametrization and partition of unity on $\Gamma$,
  and associated norms $\norm{\cdot}{s,a}$.
  It follows immediately that $\norm{\cdot}{s,a} \leq C_a \norm{\cdot}{H^s(\Gamma)}$,
  and the reverse inequality can be proven with the same arguments as
  in the proof of Theorem~\ref{thm:loc:slobodeckij:upper} below.
\end{remark}
%%%%%%%%%%%%%%%%%%%%%%%%%%%%%%%%%%%%%%%%%%%%%%%%%%%%%%%%%%%%%%%%%%%%%%%%%%%%%%%%%%%%%%%%%%%%%%%%%%%%%%%
\subsection{Boundary integral operators}\label{section:bio}
%%%%%%%%%%%%%%%%%%%%%%%%%%%%%%%%%%%%%%%%%%%%%%%%%%%%%%%%%%%%%%%%%%%%%%%%%%%%%%%%%%%%%%%%%%%%%%%%%%%%%%%
From now on, $\Omega\subset\R^d$ will always denote a bounded, simply connected,
$d$-dimensional domain with Lipschitz
boundary $\partial\Omega$ and outer normal vector $\n(y)$ for $y\in\partial\Omega$, and $\Gamma$ will
denote a $(d-1)$-dimensional subset $\Gamma\subset\partial\Omega$.
For simplicity, in the case $d=2$, we assume that ${\rm cap}(\partial\Omega)<1$, see~\cite{mclean},
which can always be fulfilled by scaling $\Omega$ such that its diameter is smaller than $1$.
In order to transform a given PDE into an equivalent boundary integral equation,
a fundamental solution of the PDE at hand needs to be available.
For the Laplace operator $-\Delta$, the fundamental solution is given by
\begin{align*}
  \fsol(z) :=
  \begin{cases}
    -\frac{1}{2\pi}\log\abs{z}\quad&\text{ for } d=2,\\
    \frac{1}{4\pi}\frac{1}{\abs{z}}\quad&\text{ for } d=3.
  \end{cases}
\end{align*}
For densities $\phi,v:\Gamma\rightarrow\R$ and $x\in\R^d\setminus\Gamma$,
define the following potentials:
\begin{itemize}
  \item the \textbf{single layer potential} of $\phi$ as
    \begin{align*}
      \slp \phi(x) := \int_\Gamma \fsol(x-y)\phi(y)\,d\Gamma(y),
    \end{align*}
  \item and the \textbf{double layer potential} of $v$ as
    \begin{align*}
      \dlp v(x) := \int_\Gamma \partial_{\n(y)}\fsol(x-y)v(y)\,d\Gamma(y).
    \end{align*}
\end{itemize}
At least for $\phi,v\in L_1(\Gamma)$, these operators are smooth away from $\Gamma$, i.e.,
$\slp\phi,\dlp v\in C^\infty(\R^d\setminus\Gamma)$, and also harmonic, i.e.,
$\Delta\slp\phi = 0 = \Delta\dlp v$ on $\R^d\setminus\Gamma$.
Starting from these definitions, boundary integral operators are defined as
\begin{align*}
  \slo &:= \trace \slp, &\dlo &:= 1/2 + \trace\dlp,\\
  \hyp &:= -\dn\dlp, &\adlo &:= -1/2 + \dn\slp.
\end{align*}
%------------------------------------------------------------------------------------------------------
The operator $\slo$ is called the \textit{single layer operator}, $\hyp$ the
\textit{hypersingular operator}, and $\dlo$ and $\adlo$ the \textit{double layer operator}
and its \textit{adjoint}, respectively.
The two following results recall the stability and ellipticity properties of these
boundary integral operators. For proofs and further references, we refer
to~\cite{Costabel_88_BIO,mclean,ss11,verchota}.
\begin{theorem}\label{thm:stability}
  For $\Gamma = \partial\Omega$ a Lipschitz boundary and\linebreak $s\in[-1/2,1/2]$,
  the boundary integral operators are bounded as mappings
  \begin{align*}
    \slo:&\,H^{-1/2+s}(\Gamma)\rightarrow H^{1/2+s}(\Gamma)\\
    \dlo:&\,H^{1/2+s}(\Gamma)\rightarrow H^{1/2+s}(\Gamma)\\
    \adlo:&\,H^{-1/2+s}(\Gamma)\rightarrow H^{-1/2+s}(\Gamma)\\
    \hyp:&\,H^{1/2+s}(\Gamma)\rightarrow H^{-1/2+s}(\Gamma).
  \end{align*}
  If $\Gamma\subset\partial\Omega$, $\partial\Omega$ again a Lipschitz boundary, it holds
  \begin{align*}
    \slo:&\,\wilde H^{-1/2+s}(\Gamma)\rightarrow H^{1/2+s}(\Gamma)\\
    \hyp:&\,\wilde H^{1/2+s}(\Gamma)\rightarrow H^{-1/2+s}(\Gamma).
  \end{align*}
\end{theorem}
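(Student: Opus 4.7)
The plan is to first establish the claim on the closed surface $\Gamma = \partial\Omega$ and then reduce the screen case $\Gamma \subset \partial\Omega$ to it via the zero extension built into the definition~\eqref{eq:wilde:H:norm}. The backbone on a Lipschitz closed boundary is the strategy of Costabel~\cite{Costabel_88_BIO}: treat the ``energy'' index $s=0$ directly, then upgrade to the endpoints $s=\pm 1/2$, and obtain everything in between by interpolation, with $K,K'$ handled either by Calder\'on identities or by $L_2$-duality.

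First I would treat $s=0$ by working with the potentials $\slp,\dlp$ as operators on $\mathbb{R}^d\setminus\Gamma$. Using the fundamental solution $\fsol$ together with the representation formula and an energy/Rellich computation in $\Omega$ and $\Omega^c$, one shows that $\phi\mapsto \slp\phi$ is bounded $H^{-1/2}(\partial\Omega)\to H^1_{\mathrm{loc}}(\mathbb{R}^d)$ and $v\mapsto \dlp v$ is bounded $H^{1/2}(\partial\Omega)\to H^1_{\mathrm{loc}}(\mathbb{R}^d\setminus\partial\Omega)$ (with appropriate decay at infinity; for $d=2$ the assumption $\mathrm{cap}(\partial\Omega)<1$ enters here). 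Composing with the continuity of $\trace$ and $\dn$ gives the $s=0$ boundedness of $\slo,\dlo,\adlo,\hyp$ at once.

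Next I would move to the endpoints. The hard endpoint is $\slo\colon L_2(\Gamma)\to H^1(\Gamma)$: this is Costabel's shift theorem on Lipschitz boundaries and rests on tangential Rellich identities; by $L_2$-duality it immediately yields $\hyp\colon L_2(\Gamma)\to H^{-1}(\Gamma)$. For $\dlo$ and $\adlo$ at $s=\pm 1/2$ I would use the Calder\'on identities (e.g.\ $\slo\hyp=(1/4)\,I - \dlo^2$ and $\hyp\slo=(1/4)\,I-(\adlo)^2$) to transport the regularity from $\slo$ and $\hyp$. Once the endpoints $s=-1/2,0,1/2$ are in hand, complex interpolation of the Sobolev scale delivers the full range $s\in[-1/2,1/2]$. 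The mapping property for $\adlo$ at any fixed $s$ can alternatively just be read off as the $L_2$-adjoint of $\dlo$ at index $-s$.

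Finally, for $\Gamma\subsetneq\partial\Omega$ I would exploit~\eqref{eq:wilde:H:norm}: if $\phi\in\wilde H^{-1/2+s}(\Gamma)$ then its extension $\wilde\phi\in H^{-1/2+s}(\partial\Omega)$ with equal norm, and $\slo\phi$ on $\Gamma$ is by definition the restriction of $\slo\wilde\phi\in H^{1/2+s}(\partial\Omega)$ to $\Gamma$, which lies in $H^{1/2+s}(\Gamma)$ by the intrinsic definition of fractional norms on subsets; the argument for $\hyp$ on $\wilde H^{1/2+s}(\Gamma)$ is identical. The main obstacle of the whole program is the endpoint step $\slo\colon L_2\to H^1$ on a mere Lipschitz boundary: this is where smooth-domain techniques (pseudodifferential calculus, symbol estimates) fail and genuine harmonic analysis in the spirit of Coifman--McIntosh--Meyer and the Rellich identity has to be invoked, which is precisely the reason the theorem is restricted to the window $|s|\le 1/2$ on Lipschitz surfaces.
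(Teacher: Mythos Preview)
The paper does not give its own proof of this theorem: immediately before the statement it says ``For proofs and further references, we refer to~\cite{Costabel_88_BIO,mclean,ss11,verchota}'' and then simply states the result. So there is nothing to compare against beyond those citations, and your outline is precisely the Costabel--Verchota program that those references carry out: the $s=0$ case via mapping properties of the layer potentials combined with the trace and conormal operators, the Lipschitz endpoint $s=\pm 1/2$ via Rellich/Coifman--McIntosh--Meyer techniques, interpolation in between, and the screen case by zero extension. In that sense your proposal is exactly what the paper intends the reader to consult.

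One small slip worth flagging: the sentence ``by $L_2$-duality it immediately yields $\hyp\colon L_2(\Gamma)\to H^{-1}(\Gamma)$'' is not quite right as written. Duality of the \emph{self-adjoint} operator $\slo$ from $\slo\colon L_2\to H^1$ gives you the other endpoint for $\slo$, namely $\slo\colon H^{-1}\to L_2$, not a statement about $\hyp$. The endpoint mapping for $\hyp$ is obtained either by a parallel Rellich argument for the double-layer potential (as Costabel does), or by duality from $\hyp\colon H^1\to L_2$ once that is established, or via the Calder\'on relations after $\dlo,\adlo$ are controlled. Similarly, extracting $\dlo$ at the endpoints from $\slo\hyp=\tfrac14 I-\dlo^2$ alone is circular; in the Lipschitz setting one really invokes the $L_2$-boundedness of the double-layer (Coifman--McIntosh--Meyer / Verchota) directly for $\dlo$ and $\adlo$, which you do acknowledge at the end. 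These are bookkeeping issues rather than gaps in strategy.
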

%------------------------------------------------------------------------------------------------------
\begin{theorem}\label{thm:ellipticity}
  If $\Gamma=\partial\Omega$ is the boundary of a Lipschitz domain $\Omega$,
  then there holds ellipticity
  \begin{align*}
    \dual{\slo\phi}{\phi}_\Gamma \geq \c{ell}\norm{\phi}{H^{-1/2}(\Gamma)}^2
    \quad\text{ for all } \phi\in H^{-1/2}(\Gamma),\\
    \dual{\hyp u}{u}_\Gamma + \dual{u}{1}_\Gamma^2 \geq \c{ell}\norm{u}{H^{1/2}(\Gamma)}^2
    \quad\text{ for all } u\in H^{1/2}(\Gamma).
  \end{align*}
  If $\Gamma\subsetneq\partial\Omega$ is only a subset, then there holds
  \begin{align*}
    \dual{\slo\phi}{\phi}_\Gamma &\geq \c{ell}\norm{\phi}{\wilde H^{-1/2}(\Gamma)}^2
    \quad\text{ for all } \phi\in\wilde H^{-1/2}(\Gamma),\\
    \dual{\hyp u}{u}_\Gamma &\geq \c{ell}\norm{u}{\wilde H^{1/2}(\Gamma)}^2
    \quad\text{ for all } u\in \wilde H^{1/2}(\Gamma).
  \end{align*}
  The constant $\c{ell}$ depends only on $\Gamma$.
\end{theorem}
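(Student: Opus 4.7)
The plan is to reduce each ellipticity statement to a statement about the energy of the associated potential via Green's identities, and then close the argument with Poincaré/Rellich type inequalities. Throughout, the key idea is that $\widetilde V \phi$ and $\widetilde K u$ are harmonic in $\mathbb{R}^d \setminus \Gamma$ with prescribed jumps across $\Gamma$, which couples the boundary bilinear form to an $H^1$-seminorm over the whole space.

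For the first inequality on a closed boundary $\Gamma=\partial\Omega$, I would start from the jump relations $[\gamma_0 \widetilde V \phi]=0$ and $[\gamma_1 \widetilde V \phi]=-\phi$ together with the decay of $\widetilde V \phi$ at infinity (which in $d=2$ is where ${\rm cap}(\partial\Omega)<1$ enters, ensuring that constants are not in the kernel of $V$). An application of Green's first identity in $\Omega$ and in $\Omega^c$, summed, yields
\begin{align*}
  \dual{\slo\phi}{\phi}_\Gamma = \norm{\nabla \slp\phi}{L_2(\R^d)}^2.
\end{align*}
The continuity of $\slp: H^{-1/2}(\Gamma) \to H^1_{\rm loc}(\R^d)$ and a suitable Poincaré/Hardy estimate (including the decay at infinity for $d=2$) bound the right-hand side from below by $\norm{\phi}{H^{-1/2}(\Gamma)}^2$, giving the first ellipticity estimate.

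For the hypersingular operator on $\Gamma=\partial\Omega$, the jumps are reversed: $[\gamma_1 \widetilde K u]=0$ and $[\gamma_0 \widetilde K u]=u$. The same Green argument produces
\begin{align*}
  \dual{\hyp u}{u}_\Gamma = \norm{\nabla \dlp u}{L_2(\R^d\setminus\Gamma)}^2,
\end{align*}
which vanishes precisely when $u$ is constant (since then $\widetilde K u$ is piecewise constant). To upgrade this to ellipticity on all of $H^{1/2}(\Gamma)$, I would split $u = u_0 + c$ with $u_0 \in H^{1/2}_0(\Gamma)$ and $c=\dual{u}{1}_\Gamma/\abs{\Gamma}$, use a Poincaré inequality on $\Gamma$ to control $\norm{u_0}{H^{1/2}(\Gamma)}$ by $\norm{\nabla \dlp u_0}{L_2(\R^d\setminus\Gamma)}$, and combine with the $\dual{u}{1}_\Gamma^2$ term to recover the full norm.

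For the open screen case $\Gamma \subsetneq \partial\Omega$, the strategy is extension by zero: any $\phi\in\widetilde H^{-1/2}(\Gamma)$ (respectively $u\in\widetilde H^{1/2}(\Gamma)$) defines an element of $H^{-1/2}(\partial\Omega)$ (resp.\ $H^{1/2}(\partial\Omega)$) of the same norm by~\eqref{eq:wilde:H:norm}, the duality pairing on the screen agrees with that on $\partial\Omega$ against the zero extension, and the boundary integral operators preserve this extension property. Applying the closed-boundary inequalities to the extensions then yields the screen versions, where the constant-kernel correction is no longer needed because the only constant in $\widetilde H^{1/2}(\Gamma)$ is zero. The main obstacle is the Poincaré/Hardy step: in $d=2$ one must carefully exploit the capacity assumption, and in both dimensions one must control the far-field behavior of $\widetilde V \phi$ and $\widetilde K u$ to turn the plain $H^1$-seminorm of the potential into a genuine fractional Sobolev norm of the density on $\Gamma$.
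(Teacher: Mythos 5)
The paper does not give its own proof of Theorem~\ref{thm:ellipticity}: it explicitly defers to the references~[Costabel~'88, McLean, Sauter--Schwab, Verchota]. So I will review your sketch against the standard argument given there.

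Your overall route is the right one and matches the literature: represent the bilinear form as the exterior+interior Dirichlet energy of the associated potential via Green's first identity and the jump relations, and then pass from the $H^1$-seminorm of the potential back to the fractional boundary norm of the density by trace, Poincar\'e, and compactness. Two steps, however, are treated too quickly and would need to be repaired before the argument closes.

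First, the identity $\dual{\slo\phi}{\phi}_\Gamma = \norm{\nabla\slp\phi}{L_2(\R^d)}^2$ is \emph{not} valid for general $\phi\in H^{-1/2}(\Gamma)$ when $d=2$. Unless $\dual{\phi}{1}_\Gamma=0$, the single-layer potential behaves like $-\frac{1}{2\pi}\dual{\phi}{1}_\Gamma\log|x|$ at infinity, so $\nabla\slp\phi\sim |x|^{-1}$ and the right-hand side is infinite, while the left-hand side is finite. The boundary term at infinity in the exterior Green identity does not vanish. The standard fix is to split $\phi$ against the natural equilibrium density $\psi_{\rm eq}$ solving $V\psi_{\rm eq}=\mathrm{const}$, apply the energy identity only to the mean-zero part, and invoke the scaling assumption ${\rm cap}(\partial\Omega)<1$ (which the paper makes a standing assumption in Section~3.2) to control the one-dimensional complement. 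Writing ``this is where the capacity condition enters, ensuring constants are not in the kernel'' undersells the difficulty: the capacity condition is needed to get a \emph{quantitative} lower bound, not merely injectivity, and for $d=2$ the Green's-identity step itself must be modified.

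Second, for the hypersingular operator on an open screen, the observation that the only constant in $\widetilde H^{1/2}(\Gamma)$ is zero gives positive \emph{definiteness} of $\dual{\hyp\cdot}{\cdot}_\Gamma$ but not ellipticity. To upgrade definiteness to a uniform lower bound you need the G\r{a}rding inequality
\begin{align*}
  \dual{\hyp u}{u}_\Gamma \;\ge\; \c{ell}\,\norm{u}{\wilde H^{1/2}(\Gamma)}^2 - \dual{u}{1}_\Gamma^2,
\end{align*}
which follows by zero-extension from the closed-boundary estimate, together with the compactness of the embedding $\wilde H^{1/2}(\Gamma)\hookrightarrow L_2(\Gamma)$ (Rellich) and a contradiction argument: a normalized sequence with vanishing energy would have a nonzero weak limit with zero energy, contradicting injectivity. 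This G\r{a}rding--Rellich--injectivity step is the actual content of the screen ellipticity; your sketch alludes to ``Poincar\'e/Rellich type inequalities'' at the start but never invokes the compactness where it is needed. The same remark applies to the Poincar\'e step in your closed-boundary hypersingular argument, where bounding $\norm{u_0}{H^{1/2}(\Gamma)}$ by $\norm{\nabla\dlp u_0}{L_2(\R^d\setminus\Gamma)}$ again goes through a trace estimate plus a compactness (or an explicit normalization to fix the $L_2$ part of the potential). With these two repairs the outline is a complete, standard proof.
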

%%%%%%%%%%%%%%%%%%%%%%%%%%%%%%%%%%%%%%%%%%%%%%%%%%%%%%%%%%%%%%%%%%%%%%%%%%%%%%%%%%%%%%%%%%%%%%%%%%%%%%%
\subsection{Weakly singular integral equations}\label{section:weaksing}
%%%%%%%%%%%%%%%%%%%%%%%%%%%%%%%%%%%%%%%%%%%%%%%%%%%%%%%%%%%%%%%%%%%%%%%%%%%%%%%%%%%%%%%%%%%%%%%%%%%%%%%
According to Theorems~\ref{thm:stability} and~\ref{thm:ellipticity},
$\dual{\slo\cdot}{\cdot}_\Gamma$ is a scalar product on $\wilde H^{-1/2}(\Gamma)$,
such that the Riesz representation theorem immediately yields solutions to the following
variational formulations.
\begin{myproposition}[Weakly singular integral equation]\label{prop:weaksing}
  Denote by $\Omega\subset\R^d$ a Lipschitz domain with ${\rm cap}(\partial\Omega)<1$ for $d=2$ and $\Gamma\subset\partial\Omega$.
  Given $f\in H^{1/2}(\Gamma)$, there is a unique solution $\phi\in\wilde H^{-1/2}(\Gamma)$ of the 
  variational problem
  \begin{align*}
    \dual{V\phi}{\psi}_\Gamma = \dual{f}{\psi}_\Gamma
    \quad\text{ for all }\psi\in \wilde H^{-1/2}(\Gamma).
  \end{align*}
\end{myproposition}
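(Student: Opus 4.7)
The plan is to deduce the proposition directly from Theorems \ref{thm:stability} and \ref{thm:ellipticity} via a standard Lax–Milgram argument (which, since $V$ is symmetric and positive, specialises to Riesz representation as the statement already hints).

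First I would set $\XX := \widetilde H^{-1/2}(\Gamma)$ and consider the bilinear form $b(\phi,\psi) := \dual{V\phi}{\psi}_\Gamma$. By Theorem \ref{thm:stability}, $V$ maps $\widetilde H^{-1/2}(\Gamma)$ boundedly into $H^{1/2}(\Gamma)$. Combined with the duality between $H^{1/2}(\Gamma)$ and $\widetilde H^{-1/2}(\Gamma)$ (with $L_2(\Gamma)$ as pivot space, as recalled in Section \ref{section:sobolev}), this gives
\begin{equation*}
  \abs{b(\phi,\psi)} \le \norm{V\phi}{H^{1/2}(\Gamma)}\,\norm{\psi}{\widetilde H^{-1/2}(\Gamma)}
  \le \norm{V}{}\,\norm{\phi}{\widetilde H^{-1/2}(\Gamma)}\,\norm{\psi}{\widetilde H^{-1/2}(\Gamma)},
\end{equation*}
i.e.\ $b$ is continuous on $\XX \times \XX$, matching \eqref{intro:continuous}. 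Ellipticity \eqref{intro:elliptic} is just the first inequality of Theorem \ref{thm:ellipticity} applied on the subset $\Gamma\subsetneq\partial\Omega$ (and likewise in the full-boundary case).

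Next I would check that the right-hand side $F(\psi) := \dual{f}{\psi}_\Gamma$ defines an element of $\XX'$. Since $f\in H^{1/2}(\Gamma)$ and $\psi\in\widetilde H^{-1/2}(\Gamma)$, the pairing $\dual{f}{\psi}_\Gamma$ is precisely the duality pairing from the Gelfand triple described in Section \ref{section:sobolev}, so
\begin{equation*}
  \abs{F(\psi)} \le \norm{f}{H^{1/2}(\Gamma)}\,\norm{\psi}{\widetilde H^{-1/2}(\Gamma)},
\end{equation*}
which yields $F\in\XX'$ with $\norm{F}{\XX'}\le\norm{f}{H^{1/2}(\Gamma)}$.

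With continuity of $b$, ellipticity of $b$, and continuity of $F$ in hand, Lax–Milgram (equivalently, Riesz representation on $\XX$ equipped with the equivalent inner product $b(\cdot,\cdot)$) delivers a unique $\phi\in\widetilde H^{-1/2}(\Gamma)$ solving $b(\phi,\psi)=F(\psi)$ for all $\psi\in\widetilde H^{-1/2}(\Gamma)$, which is the claim. There is no genuine obstacle here: the only point requiring a moment's attention is making sure the right-hand side is interpreted via the correct Gelfand duality pairing (so that continuity of $F$ on $\widetilde H^{-1/2}(\Gamma)$ holds uniformly in the norm used), but this is immediate from the framework recalled in Section \ref{section:sobolev}.
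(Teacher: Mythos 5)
Your argument is precisely the one the paper sketches just before the proposition: Theorems~\ref{thm:stability} and~\ref{thm:ellipticity} show that $\dual{V\cdot}{\cdot}_\Gamma$ is a continuous, equivalent scalar product on $\widetilde H^{-1/2}(\Gamma)$, and the Riesz representation theorem then yields existence and uniqueness. Your write-up simply spells out the continuity of the bilinear form and of the right-hand side functional in a bit more detail, but the route is the same.
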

\begin{myproposition}[Dirichlet problem]\label{prop:dirichlet}
  Denote by $\Omega\subset\R^d$ a\linebreak
  Lipschitz domain with ${\rm cap}(\partial\Omega)<1$ for $d=2$ and $\Gamma=\partial\Omega$.
  Given $f\in H^{1/2}(\Gamma)$, there is a unique solution $\phi\in H^{-1/2}(\Gamma)$ of the 
  variational problem
  \begin{align*}
    \dual{V\phi}{\psi}_\Gamma = \dual{(1/2+K)f}{\psi}_\Gamma
    \quad\text{ for all }\psi\in H^{-1/2}(\Gamma).
  \end{align*}
\end{myproposition}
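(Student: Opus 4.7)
The plan is to realize the equation as an abstract variational problem in the Hilbert space $H^{-1/2}(\Gamma)$ and invoke the Riesz representation theorem, exactly in the same spirit that the preamble of this subsection applies it to Proposition~\ref{prop:weaksing}. Concretely, I would set $b(\phi,\psi) := \dual{V\phi}{\psi}_\Gamma$ and $L(\psi) := \dual{(1/2+K)f}{\psi}_\Gamma$, and reduce the statement to showing that $b$ is a continuous, symmetric, elliptic bilinear form on $H^{-1/2}(\Gamma)$ and that $L$ is a continuous linear functional there.

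For the bilinear form, continuity is immediate from Theorem~\ref{thm:stability} with $s=0$: the mapping $V:H^{-1/2}(\Gamma)\to H^{1/2}(\Gamma)$ is bounded, and the duality pairing between $H^{1/2}(\Gamma)$ and $H^{-1/2}(\Gamma)$ (with $L_2(\Gamma)$ as pivot) is bounded, so $|b(\phi,\psi)|\lesssim \|\phi\|_{H^{-1/2}(\Gamma)}\|\psi\|_{H^{-1/2}(\Gamma)}$. Ellipticity is exactly the closed-boundary statement of Theorem~\ref{thm:ellipticity}: since $\Gamma=\partial\Omega$ (and $\mathrm{cap}(\partial\Omega)<1$ in the case $d=2$, which is precisely the assumption that rules out a kernel in the planar case), one has $b(\phi,\phi)\ge \c{ell}\,\|\phi\|_{H^{-1/2}(\Gamma)}^2$ for all $\phi\in H^{-1/2}(\Gamma)$. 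Symmetry of $b$ follows from the usual symmetry of the weak form of the single-layer operator. Together these make $b$ an inner product on $H^{-1/2}(\Gamma)$ that induces an equivalent norm.

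For the right-hand side, I first note that Theorem~\ref{thm:stability} (again with $s=0$) gives boundedness of $K:H^{1/2}(\Gamma)\to H^{1/2}(\Gamma)$, hence $(1/2+K)f\in H^{1/2}(\Gamma)$. The map $\psi\mapsto L(\psi)$ is therefore bounded on $H^{-1/2}(\Gamma)$ by the duality pairing estimate, with $\|L\|_{H^{-1/2}(\Gamma)'}\lesssim \|f\|_{H^{1/2}(\Gamma)}$. Applying the Riesz representation theorem to $L$ with respect to the inner product $b(\cdot,\cdot)$ produces a unique $\phi\in H^{-1/2}(\Gamma)$ satisfying $b(\phi,\psi)=L(\psi)$ for all test functions, which is the desired variational formulation.

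I do not expect any genuine obstacle: every ingredient is already provided by Theorems~\ref{thm:stability} and~\ref{thm:ellipticity}, and the case $\Gamma=\partial\Omega$ in fact avoids the subtleties associated with the tilde-space $\wilde H^{-1/2}(\Gamma)$ present in Proposition~\ref{prop:weaksing}. The only point where the hypotheses are genuinely used is the capacity condition for $d=2$, which is what allows ellipticity of $V$ on the full space $H^{-1/2}(\Gamma)$ rather than on the mean-zero subspace $H^{-1/2}_0(\Gamma)$; for $d=3$ no additional scaling is required.
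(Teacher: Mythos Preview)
Your proposal is correct and is exactly the argument the paper intends: the sentence preceding Propositions~\ref{prop:weaksing} and~\ref{prop:dirichlet} already states that $\dual{V\cdot}{\cdot}_\Gamma$ is a scalar product on $H^{-1/2}(\Gamma)$ by Theorems~\ref{thm:stability} and~\ref{thm:ellipticity}, whence Riesz yields the result. Your write-up simply spells out the details (continuity of $(1/2+K)f$ in $H^{1/2}(\Gamma)$, the role of the capacity assumption for $d=2$) that the paper leaves implicit.
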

%%%%%%%%%%%%%%%%%%%%%%%%%%%%%%%%%%%%%%%%%%%%%%%%%%%%%%%%%%%%%%%%%%%%%%%%%%%%%%%%%%%%%%%%%%%%%%%%%%%%%%%
\subsection{Hypersingular integral equations}\label{section:hypsing}
%%%%%%%%%%%%%%%%%%%%%%%%%%%%%%%%%%%%%%%%%%%%%%%%%%%%%%%%%%%%%%%%%%%%%%%%%%%%%%%%%%%%%%%%%%%%%%%%%%%%%%%
Likewise, Theorems~\ref{thm:stability} and~\ref{thm:ellipticity} state that
$\dual{\hyp\cdot}{\cdot}_\Gamma$ is a scalar product on $\wilde H^{1/2}(\Gamma)$ if
$\Gamma\subsetneq\partial\Omega$ is an open surface, while
$\dual{\hyp\cdot}{\cdot}_\Gamma + \dual{\cdot}{1}_\Gamma\dual{\cdot}{1}_\Gamma$
is a scalar product on $H^{1/2}(\Gamma)$ in case of $\Gamma=\partial\Omega$.
\begin{myproposition}[Hypersingular integral equation]\label{prop:hypsing}
  Denote\linebreak
  by $\Omega\subset\R^d$ a Lipschitz domain and $\Gamma\subsetneq\partial\Omega$ a simply
  connected, open surface. Given $\phi\in H^{-1/2}(\Gamma)$, there is a unique solution
  $u\in \wilde H^{1/2}(\Gamma)$ of the variational problem
  \begin{align*}
    \dual{\hyp u}{v}_\Gamma = \dual{\phi}{v}_\Gamma
    \quad\text{ for all }v\in \wilde H^{1/2}(\Gamma).
  \end{align*}
  If $\Gamma=\partial\Omega$, then
  there is a unique solution $u\in H^{1/2}(\Gamma)$ of the variational problem
  \begin{align*}
    \dual{\hyp u}{v}_\Gamma + \dual{u}{1}_\Gamma\dual{v}{1}_\Gamma = \dual{\phi}{v}_\Gamma
    \quad\text{ for all }v\in H^{1/2}(\Gamma).
  \end{align*}
  Provided that $\phi\in H^{-1/2}_0(\Gamma)$, the solution satisfies\linebreak
  $u\in H^{1/2}_0(\Gamma)$.
\end{myproposition}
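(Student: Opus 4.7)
The plan is to derive both existence/uniqueness statements as direct consequences of the Lax--Milgram lemma (equivalently, the Riesz representation theorem for the scalar products identified in the paragraph preceding the proposition), relying on the mapping properties from Theorem~\ref{thm:stability} and the ellipticity from Theorem~\ref{thm:ellipticity}. The final claim about $u\in H^{1/2}_0(\Gamma)$ will be extracted by testing with the constant function $v=1$ and invoking the identity $\hyp 1=0$ together with symmetry of $\hyp$.

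First I would treat the screen case $\Gamma\subsetneq\partial\Omega$. Theorem~\ref{thm:stability} gives $\hyp\colon\wilde H^{1/2}(\Gamma)\to H^{-1/2}(\Gamma)$ bounded, so $b(u,v):=\dual{\hyp u}{v}_\Gamma$ is a continuous bilinear form on $\wilde H^{1/2}(\Gamma)\times\wilde H^{1/2}(\Gamma)$ (interpreting the pairing through the $H^{-1/2}$--$\wilde H^{1/2}$ duality). By Theorem~\ref{thm:ellipticity} it is also $\wilde H^{1/2}(\Gamma)$-elliptic. Since $\phi\in H^{-1/2}(\Gamma)=\wilde H^{1/2}(\Gamma)'$, the functional $v\mapsto\dual{\phi}{v}_\Gamma$ is bounded on $\wilde H^{1/2}(\Gamma)$. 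Lax--Milgram then yields a unique $u\in\wilde H^{1/2}(\Gamma)$ solving the equation.

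For $\Gamma=\partial\Omega$, the operator $\hyp$ is only semi-elliptic because constants lie in its kernel, but Theorem~\ref{thm:ellipticity} shows that the stabilised bilinear form $b(u,v):=\dual{\hyp u}{v}_\Gamma+\dual{u}{1}_\Gamma\dual{v}{1}_\Gamma$ is $H^{1/2}(\Gamma)$-elliptic, while Theorem~\ref{thm:stability} provides continuity. Applying Lax--Milgram again furnishes the unique $u\in H^{1/2}(\Gamma)$. For the last assertion, assume $\phi\in H^{-1/2}_0(\Gamma)$ and insert $v=1\in H^{1/2}(\Gamma)$. Using symmetry of $\hyp$ and $\hyp 1=0$, one has $\dual{\hyp u}{1}_\Gamma=\dual{u}{\hyp 1}_\Gamma=0$, so the variational identity collapses to $\dual{u}{1}_\Gamma\,|\Gamma|=\dual{\phi}{1}_\Gamma=0$, whence $\dual{u}{1}_\Gamma=0$, i.e., $u\in H^{1/2}_0(\Gamma)$.

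The only genuine subtlety, and thus the main obstacle, is that the two ingredients needed for step three, namely $\hyp 1=0$ and the symmetry of $\hyp$ on $\partial\Omega$, are not explicitly recorded in the excerpt. Both are classical properties of the hypersingular operator (they follow from the representation $\hyp=-\dn\dlp$ applied to the constant potential, together with Green's second identity for harmonic single-layer and double-layer potentials) and can be cited from the standard references \cite{mclean,ss11,hw08}. Everything else is a routine verification that the hypotheses of Lax--Milgram hold in the two function-space settings.
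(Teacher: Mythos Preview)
Your proposal is correct and matches the paper's approach: the paper does not give a formal proof but only remarks, in the sentence preceding the proposition, that Theorems~\ref{thm:stability} and~\ref{thm:ellipticity} make $\dual{\hyp\cdot}{\cdot}_\Gamma$ (respectively the stabilised form) a scalar product, so the Riesz representation theorem applies---precisely your Lax--Milgram argument. The paper leaves the final assertion $u\in H^{1/2}_0(\Gamma)$ entirely implicit; your verification via $v=1$, symmetry of $\hyp$, and $\hyp 1=0$ is the standard way to fill this in, and you are right that these two facts must be imported from the references since they are not stated in the excerpt.
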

\begin{myproposition}[Neumann problem]\label{prop:neumann}
  Denote by $\Omega\subset\R^d$ a Lipschitz domain and $\Gamma=\partial\Omega$
  Given $\phi\in H^{-1/2}_0(\Gamma)$, there is a unique solution
  $u\in H_0^{1/2}(\Gamma)$ of the variational problem
  \begin{align*}
    \dual{\hyp u}{v}_\Gamma +\dual{u}{1}_\Gamma\dual{v}{1}_\Gamma= \dual{(1/2-\adlo)\phi}{v}_\Gamma\\
    \quad\text{ for all }v\in H^{1/2}(\Gamma).
  \end{align*}
\end{myproposition}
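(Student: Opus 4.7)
The plan is to apply the Lax--Milgram lemma to the bilinear form
$a(u,v):=\dual{\hyp u}{v}_\Gamma+\dual{u}{1}_\Gamma\dual{v}{1}_\Gamma$ and the linear functional $F(v):=\dual{(1/2-\adlo)\phi}{v}_\Gamma$ on the Hilbert space $H^{1/2}(\Gamma)$, and then to verify the additional property $\dual{u}{1}_\Gamma=0$ in a separate step.

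First I would check that all hypotheses of Lax--Milgram are in place. Continuity of $a(\cdot,\cdot)$ on $H^{1/2}(\Gamma)$ follows from Theorem~\ref{thm:stability}, which gives boundedness of $\hyp\colon H^{1/2}(\Gamma)\to H^{-1/2}(\Gamma)$, together with the duality pairing and the elementary continuity of the rank-one perturbation $(u,v)\mapsto\dual{u}{1}_\Gamma\dual{v}{1}_\Gamma$. Ellipticity of $a(\cdot,\cdot)$ on $H^{1/2}(\Gamma)$ is exactly the second assertion of Theorem~\ref{thm:ellipticity}. Continuity of $F$ on $H^{1/2}(\Gamma)$ follows since, by Theorem~\ref{thm:stability}, $\adlo$ maps $H^{-1/2}(\Gamma)$ into itself boundedly, so $(1/2-\adlo)\phi\in H^{-1/2}(\Gamma)$ pairs continuously with $H^{1/2}(\Gamma)$. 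Lax--Milgram then yields a unique $u\in H^{1/2}(\Gamma)$ solving the variational equation.

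The remaining task is to show $u\in H^{1/2}_0(\Gamma)$, for which I would test with $v=1$. On the left-hand side, $\dual{\hyp u}{1}_\Gamma=\dual{u}{\hyp 1}_\Gamma=0$, exploiting symmetry of $\hyp$ and the classical fact that $\hyp 1=0$: the double layer potential of the constant $1$ is constant on $\Omega$, hence has vanishing (co-)normal derivative. On the right-hand side, I would use adjointness between $\adlo$ and $\dlo$ to rewrite
\begin{align*}
\dual{(1/2-\adlo)\phi}{1}_\Gamma=\dual{\phi}{(1/2-\dlo)1}_\Gamma.
\end{align*}
On the closed surface $\Gamma=\partial\Omega$, $\dlo 1$ is a constant (by a direct evaluation of the double layer potential of $1$ via the representation formula applied to the harmonic function $1$), say $\dlo 1=c$, so the pairing equals $(1/2-c)\dual{\phi}{1}_\Gamma=0$ because $\phi\in H^{-1/2}_0(\Gamma)$. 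Combining both sides gives $\dual{u}{1}_\Gamma\abs{\Gamma}=0$, hence $\dual{u}{1}_\Gamma=0$.

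The main obstacle is not Lax--Milgram itself (both continuity and ellipticity are delivered by the quoted theorems) but the last step, where one needs the classical identities $\hyp 1=0$ and $\dlo 1=\text{const}$ on closed Lipschitz surfaces, together with the correct bookkeeping of jump conventions implicit in the definitions $\dlo=1/2+\trace\dlp$ and $\adlo=-1/2+\dn\slp$ so that the subtraction of the factor $1/2$ in $(1/2-\adlo)\phi$ does the right job against the constant $\dlo 1$. Once those are in place, the compatibility $\phi\in H^{-1/2}_0(\Gamma)$ is precisely what is needed to annihilate the right-hand side when tested against $1$.
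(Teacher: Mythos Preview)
Your proposal is correct and matches the paper's approach: the paper states (just before Proposition~\ref{prop:hypsing}) that $\dual{\hyp\cdot}{\cdot}_\Gamma+\dual{\cdot}{1}_\Gamma\dual{\cdot}{1}_\Gamma$ is a scalar product on $H^{1/2}(\Gamma)$ by Theorems~\ref{thm:stability} and~\ref{thm:ellipticity}, and then simply invokes Riesz, giving no further details. Your Lax--Milgram argument is the same thing, and your verification that $u\in H^{1/2}_0(\Gamma)$ via testing with $v=1$ (using $\hyp 1=0$ and that $\dlo 1$ is constant) correctly supplies the step the paper leaves implicit.
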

%%%%%%%%%%%%%%%%%%%%%%%%%%%%%%%%%%%%%%%%%%%%%%%%%%%%%%%%%%%%%%%%%%%%%%%%%%%%%%%%%%%%%%%%%%%%%%%%%%%%%%%
\subsection{Regularity of solutions}\label{sec_reg}
%%%%%%%%%%%%%%%%%%%%%%%%%%%%%%%%%%%%%%%%%%%%%%%%%%%%%%%%%%%%%%%%%%%%%%%%%%%%%%%%%%%%%%%%%%%%%%%%%%%%%%%
It is well known that solutions to BVPs\footnote{boundary value problem (BVP)} on non-smooth
domains have in general limited regularity, even for smooth data. For
polygonal/polyhedral domains and standard elliptic operators of second order
there exists a precise regularity theory that proves that this regularity
reduction is due to the presence of so-called corner singularities
(on polygons and polyhedra) and corner-edge singularities (on polyhedra).
In this paper we are studying the solution of integral equations of the first
kind where unknowns are Cauchy data of BVP. Therefore, through trace operations
(extended restriction and normal derivative), singular behavior of solutions
to BVP imply in a natural way singular behavior of solutions
to such integral equations.

For an overview of regularity theory for BVP on non-smooth domains we refer to
the monograph by Dauge \cite{Dauge_88_EBV}.
The singularity expressions by Dauge have been extensively studied by
Stephan and von Petersdorff
\cite{tvp:thesis,vonPetersdorffS_90_DEC,vonPetersdorffS_90_RMB}.
Their main contribution is tensor product expansions of singularities so that
they are accessible to approximation analysis by piecewise polynomial functions.
In this way, precise predictions can be made about convergence orders of
FE and BE approximations.
In two dimensions, the study of corner singularities goes back to the
seminal paper by Kondratiev \cite{Kondratiev67} and, of course, the structure
of the appearing singularities is much simpler.

We note that for an optimal error analysis of $hp$-methods with geometric mesh
refinement, a more specific regularity analysis based on countably normed
weighted spaces is in order. We refer to
\cite{HeuerS_98_BIO,HeuerS_00_PSO} for a corresponding regularity theory
of boundary integral equations on polygons. To our knowledge \cite{Maischak_pers},
Maischak and Stephan have a manuscript analyzing the case of the hypersingular
integral equation (governing the Laplacian) on polyhedral surfaces.

Low-order methods severely suffer from the presence of singularities.
They limit the order of convergence of the boundary element method when
quasi-uniform meshes are used. Adaptive methods refine meshes locally by using
information that stems from a~posteriori error estimation. In this way, adaptivity
aims at recovering the orders of convergence that one would obtain
for smooth solutions and quasi-uniform meshes.

In the following, for some typical cases, we recall what are the principal
singularities that one has to expect in solutions to the hypersingular and
weakly singular boundary integral equations. These results stem from the
previously mentioned publications
\cite{Kondratiev67,Dauge_88_EBV,vonPetersdorffS_90_DEC,vonPetersdorffS_90_RMB}.

\paragraph{Two space dimensions.}
Let $\Gamma$ be the boundary of a simply connected polygon with edges
$\Gamma^j$, vertices $t_j$ and angles $\omega_j$ at the vertices ($j=1,\ldots,J$).
We consider the weakly singular integral equation from Proposition~\ref{prop:weaksing}
(with solution $\phi$ and right-hand side function $f$) and the hypersingular integral equation
from Proposition~\ref{prop:hypsing} (with solution $u$ and right-hand side function $g$).
For piecewise analytic data $f$, $g$, the solutions $\phi$ and $u$
behave singularly at the corners of the polygon and are smooth elsewhere.

To be precise we consider a partition of unity
$(\chi_1,\ldots,\chi_J)$ where
$\chi_j$ is the restriction of a $C_0^\infty(\R^2)$ function to $\Gamma$
such that $\chi_j=1$ in a neighborhood of the vertex $t_j$
and $\supp (\chi_j) \subset \Gamma^{j-1}\cup\{t_j\}\cup\Gamma^j$
($\Gamma^0=\Gamma^J$).
In this way we may write any function $\varphi$ on $\Gamma$ like
\[
   \varphi=\sum_{j=1}^J (\varphi_-, \varphi_+)\chi_j
\]
where a pair $(\varphi_-, \varphi_+)$ corresponds
to $\varphi$ on $\Gamma^{j-1}\cup\{t_j\}\cup\Gamma^j$
with
\[
   \varphi_- = \varphi|_{\Gamma^{j-1}}
   \quad\mbox{and}\quad
   \varphi_+ = \varphi|_{\Gamma^j}.
\]
 From \cite{CostabelS_85_BIE,HeuerS_96_hpB} we cite the following result.

Let $\alpha_{jk} := k {\pi\over \omega_j}$ (integer $k\ge 1$, $j=1,\ldots,J$)
and, for $t\ge 1/2$, let $n$ be an integer with
$n+1 > {\omega_j \over \pi} (t-1/2)\ge n$.

\noindent (i)
If $f$ is a piecewise analytic function, then
there exists a function $\phi_0$ with
$\phi_0|_{\Gamma^j}\in H^{t-1}(\Gamma^j )$ such that, for the
solution $\phi$ of the weakly singular integral equation, there holds
\[
  \phi
  =
  \sum_{j=1}^J \sum_{k=1}^n
  \left((\phi_{jk})_-, (\phi_{jk})_+\right)\chi_j + \phi_0.
\]
Here,
\[
   (\phi_{jk})_\pm(x) = c|x-t_j|^{\alpha_{jk}- 1}
\]
if $\alpha_{jk}$ is not an integer and
\[
   (\phi_{jk})_\pm(x) = c_1|x-t_j|^{\alpha_{jk}- 1}
                      + c_2|x-t_j|^{\alpha_{jk}- 1}\log|x-t_j|
\]
if $\alpha_{jk}$ is an integer.

\noindent (ii)
If $g$ is a piecewise analytic function, then
there exists a function $u_0$ with
$u_0|_{\Gamma^j}\in H^t (\Gamma^j)$ such that, for the
solution $u$ of the hypersingular integral equation, there holds
\[
  u
  =
  \sum_{j=1}^J \sum_{k=1}^n
  \left((u_{jk})_-, (u_{jk})_+\right)\chi_j + u_0.
\]
Here,
\[
   (u_{jk})_\pm(x) = c|x-t_j|^{\alpha_{jk}}
\]
if $\alpha_{jk}$ is not an integer and
\[
   (u_{jk})_\pm(x) = c_1|x-t_j|^{\alpha_{jk}}
                   + c_2|x-t_j|^{\alpha_{jk}}\log|x-t_j|
\]
if $\alpha_{jk}$ is an integer.

The constants $c$, $c_1$ and $c_2$ above (in (i) and (ii)) are generic.

The representation of singularities above is valid also in the case of
open curves, by setting the angles $\omega_j=2\pi$ at the endpoints.
For example, $\Gamma$ being an interval in $\R^2$, the solution $\phi$ of
the weakly singular integral equation has (with $t_0$ being any endpoint
of $\Gamma$) singularities of the form
\[
   \phi(x)\sim |t_0-x|^{-1/2}\quad\mbox{($x$ close to $t_0$)}
\]
and the solution $u$ of the hypersingular integral equation behaves like
\[
   u(x)\sim |t_0-x|^{1/2}\quad\mbox{($x$ close to $t_0$)}.
\]
An illustration of both cases is given in
Figures~\ref{fig:sl:sing:2d}, \ref{fig:hy:sing:2d}.

Concluding, the solutions of the integral equations are smooth away from the corners
and have reduced regularity at the corners. In the case of the weakly singular
equation, the solution can be unbounded at corners and in the case of the
hypersingular equation, gradients (derivatives with respect to the arc-length)
can be unbounded there. In the extreme case of an open polygon,
the singularity $|\cdot -t_0|^{-1/2}$ prevents $\phi$ from being an
$L_2(\Gamma)$-function and, similarly, $u$ with its
$|\cdot -t_0|^{1/2}$-singularity is not an element of $H^1(\Gamma)$.

\begin{figure}[t]
\begin{center}
\includegraphics[width=0.25\textwidth]{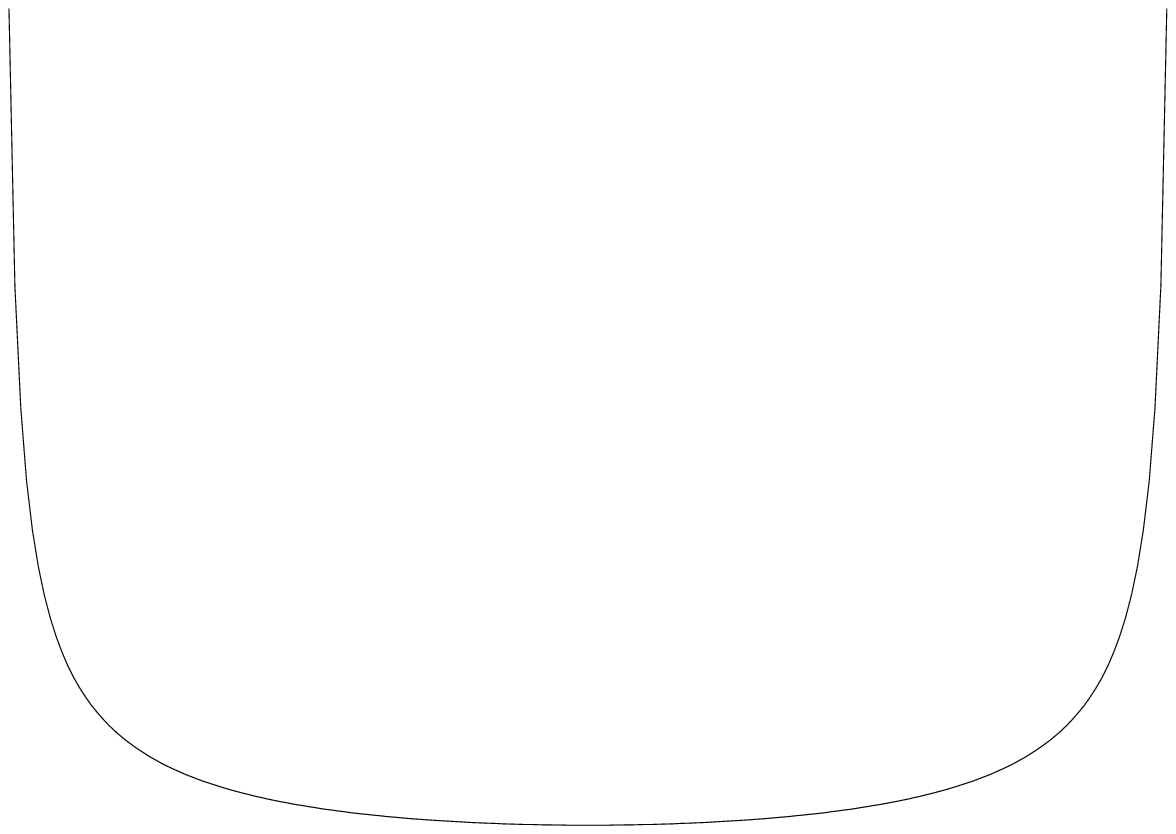}
\end{center}
\caption{Typical singular solution of the weakly singular integral equation on
         an interval}
\label{fig:sl:sing:2d}
\end{figure}

\begin{figure}[t]
\begin{center}
\includegraphics[width=0.25\textwidth]{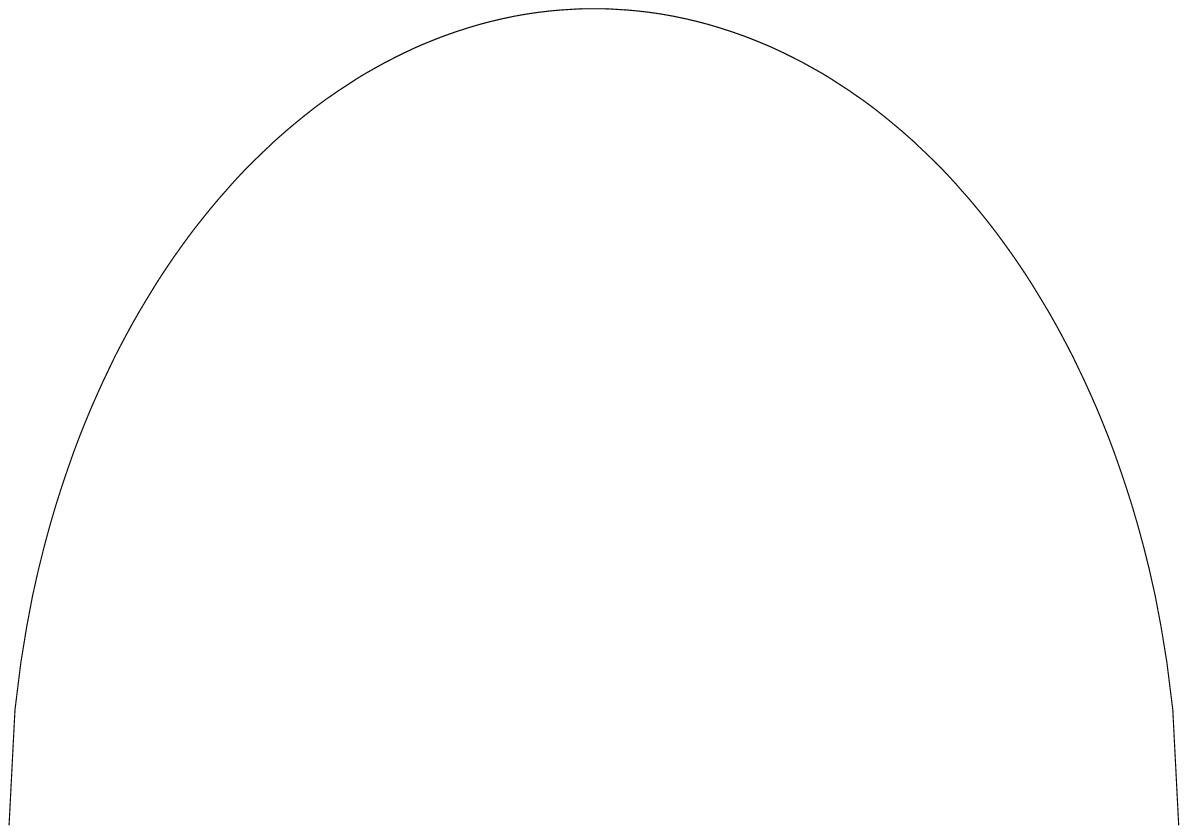}
\end{center}
\caption{Typical singular solution of the hypersingular integral equation on
         an interval}
\label{fig:hy:sing:2d}
\end{figure}

%%%%%%%%%%%%%%%%%%%%%%%%%%%%%%%%%%%%%%%%%%%%%%%%%%%%%%%%%%%%%%%%%%%%%%%%%%%%%%%%%%%%
\paragraph{Three space dimensions.}
For simplicity we restrict our presentation of singularities in three dimensions
to the case of $\Gamma$ being a plane open surface with polygonal boundary.
We use the results from \cite{vonPetersdorffS_90_DEC,vonPetersdorffS_90_RMB} and follow
the notation from \cite{bespalov:heuer:05,bespalov:heuer:10}, see also \cite{SchwabS_96_OpA}.

Again, we consider the weakly singular integral equation from Proposition~\ref{prop:weaksing}
(with solution $\phi$ and right-hand side function $f$) and the hypersingular integral equation
from Proposition~\ref{prop:hypsing} (with solution $u$ and right-hand side function $g$).
As before, we assume that $f$ and $g$ are sufficiently smooth. In the following
we present the singularities of $\phi$ and $u$ together.

Let $V$ and $E$ denote the sets of vertices and edges of $\Gamma$, respectively.
For $v\in V$, let $E(v)$ denote the set of edges with $v$ as an endpoint.
Then, $\phi$ and $u$ are of the form
\begin{align*}
   \phi &= \phi_{\rm reg} + \sum_{e\in E}\phi^e + \sum_{v\in V}\phi^v
         + \sum_{v\in V}\sum_{e\in E(v)} \phi^{ev},
   \\
   u    &= u_{\rm reg} + \sum_{e\in E}u^e + \sum_{v\in V}u^v
         + \sum_{v\in V}\sum_{e\in E(v)} u^{ev},
\end{align*}
where, using local polar and Cartesian coordinate systems $(r_v,\theta_v)$ and $(x_{e1},x_{e2})$
with origin $v$, there hold the following representations:

(i) The regular parts satisfy $\phi_{\rm reg}\in H^k(\Gamma)$,
    $u_{\rm reg}\in H^{k+1}(\Gamma)$, with $k>0$.

(ii) The edge singularities $\phi^e$, $u^e$ have the form
\begin{align*}
   \phi^e &= \sum_{j=1}^{m_e}
                  \left( \sum_{s=0}^{s_j^e} b_{js}^e(x_{e1})|\log x_{e2}|^s \right)
                  x_{e2}^{\gamma_j^e-1}\,\chi_1^e(x_{e1})\chi_2^e(x_{e2}),
   \\
   u^e &= \sum_{j=1}^{m_e}
                  \left( \sum_{s=0}^{s_j^e} b_{js}^e(x_{e1})|\log x_{e2}|^s \right)
                  x_{e2}^{\gamma_j^e}\,\chi_1^e(x_{e1})\chi_2^e(x_{e2}),
\end{align*}
where $\gamma_{j+1}^e\ge\gamma_j^e\ge \frac 12$, and $m_e$, $s_j^e$ are
integers. Here, $\chi_1^e$, $\chi_2^e$ are $C^\infty$ cut-off functions
with $\chi_1^e=1$ in a certain distance to the endpoints of $e$,
and $\chi_1^e=0$ in a neighbourhood of these vertices.
Moreover, $\chi_2^e=1$ for $0\le x_{e2}\le \delta_e$
and $\chi_2^e=0$ for $x_{e2}\ge 2\delta_e$ with some $\delta_e \in (0,\frac 12)$.
The functions $b_{js}^e\chi_1^e$ are in $H^m(e)$ for $m$ as large as required.

(iii) The vertex singularities $\phi^v$, $u^v$ have the form
\begin{align*}
   \phi^v &= \chi^v(r_v)
             \sum_{i=1}^{n_v}\sum_{t=0}^{q_i^v} B_{it}^v |\log r_v|^t
                                                r_v^{\lambda_i^v-1} w_{it}^v(\theta_v),
   \\
   u^v &= \chi^v(r_v)
             \sum_{i=1}^{n_v}\sum_{t=0}^{q_i^v} B_{it}^v |\log r_v|^t
                                                r_v^{\lambda_i^v} w_{it}^v(\theta_v),
\end{align*}
where $\lambda_{i+1}^v\ge\lambda_i^v>0$, $n_v$, $q_i^v\ge 0$ are integers,
and $B_{it}^v$ are real numbers.
Here, $\chi^v$ is a $C^\infty$ cut-off function with
$\chi^v=1$ for $0\le r_v\le\tau_v$ and $\chi^v=0$ for $r_v\ge 2\tau_v$
with some $\tau_v \in (0,\frac 12)$. The functions $w_{it}^v$ are in $H^q(0,\omega_v)$
for $q$ as large as required. Here, $\omega_v$ denotes the interior angle
(on $\Gamma$) between the edges meeting at $v$.

(iv) The edge-vertex singularities $\phi^{ev}$, $u^{ev}$ have the form
\[
   \phi^{ev} = \phi_1^{ev} + \phi_2^{ev},\quad
   u^{ev} = u_1^{ev} + u_2^{ev},
\]
where
\begin{multline*}
   \phi_1^{ev} = \sum_{j=1}^{m_e}\sum_{i=1}^{n_v}\left(
              \sum_{s=0}^{s_j^e}\sum_{t=0}^{q_i^v}\sum_{l=0}^s
              B_{ijlts}^{ev}|\log x_{e1}|^{s+t-l}|\log x_{e2}|^l
              \right)
   \\
              x_{e1}^{\lambda_i^v-\gamma_j^e}
              x_{e2}^{\gamma_j^e-1}
              \,\chi^v(r_v) \chi^{ev}(\theta_v),
\end{multline*}
\begin{multline*}
   u_1^{ev} = \sum_{j=1}^{m_e}\sum_{i=1}^{n_v}\left(
              \sum_{s=0}^{s_j^e}\sum_{t=0}^{q_i^v}\sum_{l=0}^s
              B_{ijlts}^{ev}|\log x_{e1}|^{s+t-l}|\log x_{e2}|^l
              \right)
   \\
              x_{e1}^{\lambda_i^v-\gamma_j^e}
              x_{e2}^{\gamma_j^e}
              \,\chi^v(r_v) \chi^{ev}(\theta_v)
\end{multline*}
and
\begin{align*}
   \phi_2^{ev} &= \sum_{j=1}^{m_e}\sum_{s=0}^{s_j^e}
              B_{js}^{ev}(r_v) |\log x_{e2}|^s x_{e2}^{\gamma_j^e-1}
              \,\chi^v(r_v)\chi^{ev}(\theta_v),
   \\
   u_2^{ev}    &= \sum_{j=1}^{m_e}\sum_{s=0}^{s_j^e}
              B_{js}^{ev}(r_v) |\log x_{e2}|^s x_{e2}^{\gamma_j^e}
              \,\chi^v(r_v)\chi^{ev}(\theta_v),
\end{align*}
with
\[
   B_{js}^{ev}(r_v) = \sum_{l=0}^s B_{jsl}^{ev}(r_v)|\log r_v|^l.
\]
Here, $q_i^v$, $s_j^e$, $\lambda_i^v$, $\gamma_j^e$, $\chi^v$ are as above,
$B_{ijlts}^{ev}$ are real numbers,
and $\chi^{ev}$ is a $C^\infty$ cut-off function with
$\chi^{ev}=1$ for $0\le\theta_v\le\beta_v$ and $\chi^{ev}=0$ for
$\frac 32\beta_v\le\theta_v\le\omega_v$ for some
$\beta_v \in (0,\min\{\omega_v/2,\linebreak \pi/8\}]$.
The functions $B_{jsl}^{ev}$ may be chosen such that
\[
    B_{js}^{ev}(r_v)\,\chi^v(r_v)\chi^{ev}(\theta_v)
    =
    \chi_{js}(x_{e1},x_{e2})\,\chi_2^e(x_{e2}),
\]
where the extension of $\chi_{js}$ by zero onto
\[
   \R^{2+}:=\{(x_{e1},x_{e2});\; x_{e2}>0\}
\]
lies in $H^m(\R^{2+})$ for $m$ as large as required.
Here, $\chi_2^e$ is a $C^\infty$ cut-off function as in {\rm (ii)}.

Concluding, on open surfaces, both integral equations have solutions with singularities.
The strongest ones are of the edge-type $\dist(\cdot,\partial\Gamma)^{-1/2}$
for the solution $\phi$ of the weakly singular equation. In this case, $\phi$ is
not an element of $L_2(\Gamma)$, as in the case of two dimensions on open curves.
Correspondingly, the strongest singularities of the solution $u$ of the hypersingular
equation are of the type $\dist(\cdot,\partial\Gamma)^{1/2}$ so that $u\not\in H^1(\Gamma)$,
again analogously to the case in two dimensions.
In Figures~\ref{fig:sl:sing:3d}, \ref{fig:hy:sing:3d} we present typical solutions to
both integral equations on the open surface $\Gamma=(0,1)\times(0,1)\times\{0\}$.
It remains to mention that on polyhedral surfaces, singularities have the same structure
but with larger exponents defining the edge and edge-vertex singularities,
cf.~\cite{vonPetersdorffS_90_DEC,vonPetersdorffS_90_RMB} for more details.

\begin{figure}[t]
\begin{center}
\includegraphics[width=0.5\textwidth]{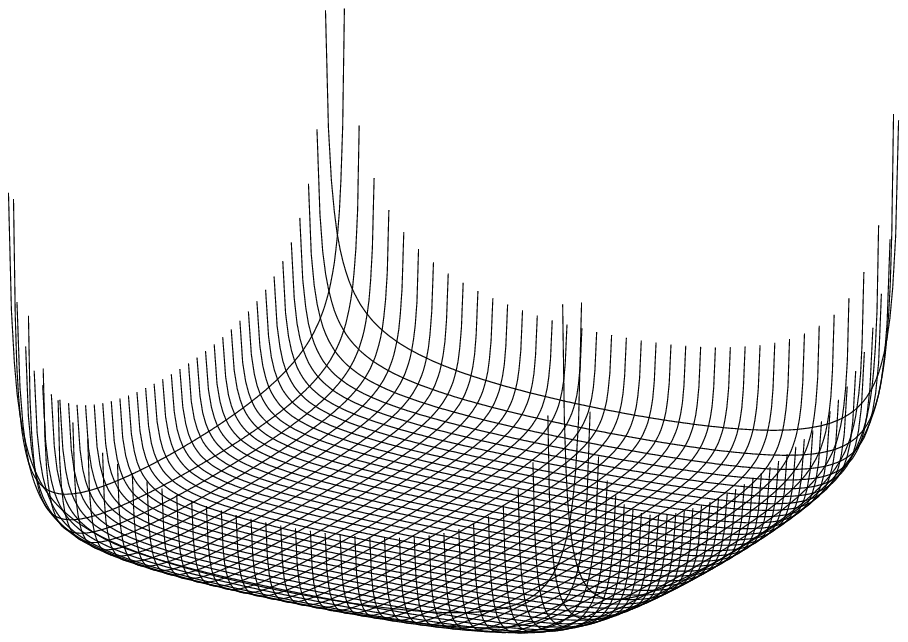}
\end{center}
\caption{Typical singular solution of the weakly singular integral equation on
         the open surface $(0,1)\times(0,1)\times\{0\}$}
\label{fig:sl:sing:3d}
\end{figure}

\begin{figure}[t]
\begin{center}
\includegraphics[width=0.5\textwidth]{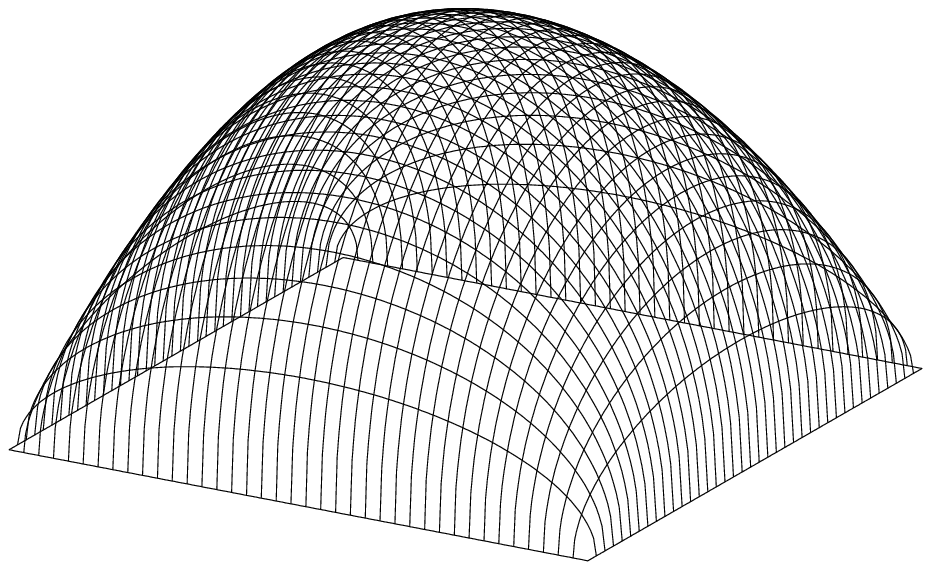}
\end{center}
\caption{Typical singular solution of the hypersingular integral equation on
         the open surface $(0,1)\times(0,1)\times\{0\}$}
\label{fig:hy:sing:3d}
\end{figure}
%%%%%%%%%%%%%%%%%%%%%%%%%%%%%%%%%%%%%%%%%%%%%%%%%%%%%%%%%%%%%%%%%%%%%%%%%%%%%%%%%%%%%%%%%%%%%%%%%%%%%%%
\subsection{Discrete spaces}\label{section:bem:discrete}
%%%%%%%%%%%%%%%%%%%%%%%%%%%%%%%%%%%%%%%%%%%%%%%%%%%%%%%%%%%%%%%%%%%%%%%%%%%%%%%%%%%%%%%%%%%%%%%%%%%%%%%
The discrete spaces that will be used to approximate the solutions of the problems given by
Propositions~\ref{prop:weaksing}--\ref{prop:neumann} are spaces of piecewise polynomials over
a \text{mesh} of $\Gamma$. The related terms will be introduced in this section.
%------------------------------------------------------------------------------------------------------
\subsubsection{Meshes}
\begin{mydefinition}
  A mesh $\mesh$ on $\Gamma\subset\partial\Omega$ is a finite, mutually disjoint partition
  $\mesh=\left\{ \el_1, \dots, \el_M \right\}$ with the following properties:
  \begin{itemize}
    \item every element $\el\in\mesh$ is a $d$-simplex, i.e., the interior of the convex hull
      of $d$ points $x_1,\dots,x_{d}$,
    \item $\overline\Gamma = \bigcup_{i=1}^M\overline\el_i$,
    \item the intersection $\overline\el\cap\overline\el^\prime$ is either empty,
      a common point, or a common edge of both $\el$ and $\el^\prime$.
  \end{itemize}
\end{mydefinition}
The collection of all points $\NN := \left\{ x_1, \dots, x_N \right\}$ that constitute the
elements is called the set of \textit{nodes} of $\mesh$. Associated to a mesh $\mesh$ is the
local mesh-width function $h_\mesh\in L_\infty(\Gamma)$, given $\mesh$-element-wise as
$h_\mesh|_\el := h_\mesh(\el) := \abs{\el}^{1/(d-1)}$.
On certain occasions the index $\mesh$ will be omitted if no confusion can arise, i.e.,
$h_\el$ will be used instead of $h_\mesh|_\el$.
The quantity
\begin{align*}
  \sigma_\mesh :=
  \begin{cases}
  \displaystyle
  \sup_{\el,\el^\prime\in\mesh, \overline\el\cap\overline\el^\prime\neq\emptyset}
  \frac{\diam(\el)}{\diam(\el')}&\quad\text{ for } d=2,\\
  \displaystyle
  \max_{\el\in\mesh} \frac{\diam(\el)^{d-1}}{\abs{\el}}&\quad\text{ for } d\geq3,
  \end{cases}
\end{align*}
is usually called the \textit{shape-regularity constant} of $\mesh$, and it is a measure
for the degeneracy of the elements $\el$.
\begin{remark}
  To say that \textit{a constant $C$ in a statement depends on shape-regularity}
  means that, given some constant $\sigma>0$, there is a constant $C(\sigma)$,
  depending only on $\sigma$, such that the statement holds true for all
  meshes $\mesh$ as long as $\sigma_\mesh \leq \sigma$.
\end{remark}
For a node $z\in\NN$, the node-patch $\omega_z$ is the collection of all elements $\el\in\mesh$
which share $z$, same idea for the element-patch $\omega_\el$, i.e.,
\begin{align*}
  \omega_z &:= \left\{ \el\in\mesh \mid z\in\overline\el \right\}\\
  \omega_\el &:= \left\{ \el'\in\mesh \mid \overline{\el'}\cap\overline\el\neq\emptyset \right\}.
\end{align*}
An important concept is the so-called \textit{reference element} $\el_\refel$, which
is chosen to be fixed throughout, e.g., as the interior of the convex hull of $(0,0)$, $(1,0)$, and
$(0,1)$. Every element $\el\in\mesh$ with nodes $\left\{ x_0, x_1, x_2 \right\}$
is then the image $\el=F_\el(\el_\refel)$ of $\el_\refel$ under the affine mapping
\begin{align*}
  F_\el:
  \begin{cases}
    \R^2 \rightarrow \R^3 \\
    x \mapsto B_\el x + x_0,
  \end{cases}
\end{align*}
with matrix $B_\el = \left( x_1 - x_0 \mid x_2 - x_0 \right)\in\R^{3\times 2}$.
%------------------------------------------------------------------------------------------------------
\subsubsection{Polynomial spaces}
Denoting for $p\geq0$ the polynomial space on the reference element by
\begin{align*}
  \Pp^p(\el_\refel) := \textrm{span}\left\{ (x,y)\in\R^2\mapsto x^iy^k \mid 0 \leq i+k\leq p \right\},
\end{align*}
polynomial spaces on a mesh $\mesh$ are defined by
\begin{align*}
  \Pp^p(\mesh) &:= \left\{ u\in L_\infty(\Gamma) \mid u\circ F_\el\in \Pp^p(\el_\refel)
    \text{ for all } \el\in\mesh \right\}\\
    \Sp^p(\mesh) &:= \Pp^p(\mesh)\cap C^0(\Gamma).
\end{align*}
For $\Gamma\subsetneq\partial\Omega$, define the space with vanishing boundary conditions
\begin{align*}
  \wilde\Sp^p(\mesh) := \wilde H^{1/2}(\Gamma) \cap \Sp^p(\mesh).
\end{align*}
%------------------------------------------------------------------------------------------------------
\begin{remark}
  If a mesh carries an index, e.g., $\mesh_{\ell}$, it's associated quantities are also
  equipped with this index, e.g., $\sigma_\ell$ denotes the shape-regularity constant,
  $h_\ell$ the mesh-width, $\NN_\ell$ the set of nodes, and so on.
\end{remark}
%%%%%%%%%%%%%%%%%%%%%%%%%%%%%%%%%%%%%%%%%%%%%%%%%%%%%%%%%%%%%%%%%%%%%%%%%%%%%%%%%%%%%%%%%%%%%%%%%%%%%%%
\subsection{Galerkin formulation}\label{section:galerkin}
%%%%%%%%%%%%%%%%%%%%%%%%%%%%%%%%%%%%%%%%%%%%%%%%%%%%%%%%%%%%%%%%%%%%%%%%%%%%%%%%%%%%%%%%%%%%%%%%%%%%%%%
Discrete approximations to the exact solutions $\phi$ and $u$ of the Problems in
Propositions~\ref{prop:weaksing}--\ref{prop:neumann} can be computed by changing the infinite
dimensional spaces $\wilde H^{\pm 1/2}(\Gamma)$ to the discrete spaces
$\Pp^p(\mesh)$ and $\Sp^p(\mesh)$.
The fact that we can also use functions of the discrete spaces in the
variational formulations provides best-approximation estimates (C\'ea's Lemma).
\begin{myproposition}[Galerkin for weakly singular]\label{prop:galerkin:weaksing}
  There is a\linebreak
  unique solution $\Phi\in\Pp^p(\mesh)$ of
    \begin{align*}
    \dual{V\Phi}{\Psi}_\Gamma = \dual{f}{\Psi}_\Gamma
    \quad\text{ for all }\Psi\in \Pp^p(\mesh).
  \end{align*}
\end{myproposition}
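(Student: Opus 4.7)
The plan is to reduce the statement to an application of the Lax--Milgram lemma (or, since the bilinear form is symmetric and positive definite, the Riesz representation theorem) on the finite-dimensional subspace $\Pp^p(\mesh)$, using the abstract framework from Section~\ref{section:cea} together with the mapping properties collected in Theorems~\ref{thm:stability} and~\ref{thm:ellipticity}.

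First, I would verify that $\Pp^p(\mesh)$ is a (finite-dimensional, hence closed) subspace of $\XX := \wilde H^{-1/2}(\Gamma)$. Since piecewise polynomials on a mesh of $\Gamma$ lie in $L_\infty(\Gamma) \subset L_2(\Gamma)$, the Gelfand-triple construction from Section~\ref{section:sobolev} with pivot space $L_2(\Gamma)$ gives $L_2(\Gamma) \hookrightarrow \wilde H^{-1/2}(\Gamma) = H^{1/2}(\Gamma)'$ continuously, so the inclusion is clear.

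Next, I would identify the bilinear form and right-hand side in the abstract setting of Section~\ref{section:cea}. Define $b(\phi,\psi) := \dual{\slo\phi}{\psi}_\Gamma$ and $F(\psi) := \dual{f}{\psi}_\Gamma$ on $\XX$. Continuity~\eqref{intro:continuous} follows from Theorem~\ref{thm:stability} applied with $s=0$: for $\phi,\psi\in\wilde H^{-1/2}(\Gamma)$,
\begin{align*}
 |b(\phi,\psi)| \le \norm{\slo\phi}{H^{1/2}(\Gamma)}\norm{\psi}{\wilde H^{-1/2}(\Gamma)}
 \le \c{cnt}\norm{\phi}{\wilde H^{-1/2}(\Gamma)}\norm{\psi}{\wilde H^{-1/2}(\Gamma)},
\end{align*}
where the first estimate uses the definition of the duality pairing and the second uses boundedness of $\slo:\wilde H^{-1/2}(\Gamma)\to H^{1/2}(\Gamma)$. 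Ellipticity~\eqref{intro:elliptic} is exactly the statement of Theorem~\ref{thm:ellipticity}. Continuity of $F$ on $\XX$ is immediate from $f\in H^{1/2}(\Gamma)$ and the definition of the duality pairing: $|F(\psi)| \le \norm{f}{H^{1/2}(\Gamma)}\norm{\psi}{\wilde H^{-1/2}(\Gamma)}$.

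With these ingredients in place, the Lax--Milgram lemma applied to the finite-dimensional Hilbert space $\Pp^p(\mesh)$ (equipped with the inherited $\wilde H^{-1/2}(\Gamma)$-norm) yields the unique Galerkin solution $\Phi\in\Pp^p(\mesh)$, precisely as discussed for~\eqref{intro:galerkin} in Section~\ref{section:cea}. There is no genuine obstacle here; the only point requiring a moment's care is checking the embedding $\Pp^p(\mesh)\subset\wilde H^{-1/2}(\Gamma)$, which is handled by the pivot-space identification above.
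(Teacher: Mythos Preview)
Your proof is correct and follows exactly the approach the paper intends: the proposition is stated without explicit proof because it is an immediate instance of the abstract Galerkin framework~\eqref{intro:galerkin} from Section~\ref{section:cea}, where Lax--Milgram applies on the closed subspace $\XX_\mesh=\Pp^p(\mesh)\subset\wilde H^{-1/2}(\Gamma)$ thanks to the continuity and ellipticity of $\slo$ from Theorems~\ref{thm:stability}--\ref{thm:ellipticity}. Your verification of the embedding $\Pp^p(\mesh)\subset\wilde H^{-1/2}(\Gamma)$ via the Gelfand triple is the only detail worth spelling out, and you handle it correctly.
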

%------------------------------------------------------------------------------------------------------
\begin{myproposition}[Galerkin for Dirichlet]\label{prop:galerkin:dirichlet}
  There is a unique solution $\Phi\in\Pp^p(\mesh)$ of
    \begin{align*}
    \dual{V\Phi}{\Psi}_\Gamma = \dual{(1/2+\dlo)f}{\Psi}_\Gamma
    \quad\text{ for all }\Psi\in \Pp^p(\mesh).
  \end{align*}
\end{myproposition}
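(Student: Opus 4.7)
The plan is to mimic the argument used for the continuous Dirichlet problem in Proposition~\ref{prop:dirichlet} at the discrete level, and to reduce everything to the abstract Lax--Milgram framework introduced in Section~\ref{section:cea} for the Galerkin discretization~\eqref{intro:galerkin}. The key observation is that the finite-dimensional subspace $\Pp^p(\mesh)$ sits inside $H^{-1/2}(\Gamma)$, so that the continuity and ellipticity properties established on the continuous level transfer automatically.

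First, I would verify the embedding $\Pp^p(\mesh)\subset H^{-1/2}(\Gamma)$: piecewise polynomials over a bounded surface are bounded, hence in $L_2(\Gamma)$, and the Gelfand triple construction recalled at the end of Section~\ref{section:sobolev} gives $L_2(\Gamma)\subset H^{-1/2}(\Gamma)$. Next, I would set $b(\Phi,\Psi):=\dual{\slo\Phi}{\Psi}_\Gamma$ and apply Theorem~\ref{thm:stability} (which provides boundedness of $\slo:H^{-1/2}(\Gamma)\to H^{1/2}(\Gamma)$) together with the duality pairing to deduce continuity of $b$ on $H^{-1/2}(\Gamma)\times H^{-1/2}(\Gamma)$, and hence on $\Pp^p(\mesh)\times\Pp^p(\mesh)$. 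Since the Dirichlet problem concerns $\Gamma=\partial\Omega$, the first ellipticity estimate of Theorem~\ref{thm:ellipticity} gives $b(\Psi,\Psi)\ge\c{ell}\norm{\Psi}{H^{-1/2}(\Gamma)}^2$ for all $\Psi\in H^{-1/2}(\Gamma)$, which restricts trivially to the discrete subspace.

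Third, I would check that the right-hand side defines a continuous linear functional on $\Pp^p(\mesh)$. By Theorem~\ref{thm:stability}, $\dlo:H^{1/2}(\Gamma)\to H^{1/2}(\Gamma)$ is bounded, so $(1/2+\dlo)f\in H^{1/2}(\Gamma)$ whenever $f\in H^{1/2}(\Gamma)$, and hence $\Psi\mapsto\dual{(1/2+\dlo)f}{\Psi}_\Gamma$ is continuous on $H^{-1/2}(\Gamma)$ via the duality pairing. Finally, applying the Lax--Milgram lemma to the continuous and elliptic bilinear form $b$ and the continuous linear functional on the Hilbert space $\Pp^p(\mesh)$ (equipped with the $H^{-1/2}(\Gamma)$-norm) yields the claimed unique $\Phi\in\Pp^p(\mesh)$; equivalently, the associated Galerkin matrix is symmetric positive definite.

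There is essentially no hard obstacle here: this is the standard conforming Galerkin construction for a symmetric elliptic problem, and Propositions~\ref{prop:weaksing}--\ref{prop:dirichlet} already did the heavy lifting at the continuous level. The only point requiring a small amount of care is to observe that ellipticity on $H^{-1/2}(\Gamma)$ automatically restricts to any closed subspace, so no extra discrete inf-sup condition or discrete ellipticity argument is needed.
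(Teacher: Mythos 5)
Your proof is correct and follows exactly the argument the paper has in mind: the paper does not spell out a proof for Proposition~\ref{prop:galerkin:dirichlet}, treating it as an immediate application of the Lax--Milgram framework from Section~\ref{section:cea} to the conforming subspace $\Pp^p(\mesh)\subset L_2(\Gamma)\subset H^{-1/2}(\Gamma)$, with continuity and ellipticity inherited from Theorems~\ref{thm:stability} and~\ref{thm:ellipticity} and boundedness of the right-hand side from the mapping property of $\dlo$. Your remark that ellipticity restricts to any closed subspace without a separate discrete inf-sup argument is precisely the point that makes the discretization routine.
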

\begin{mylemma}
  If $\phi\in \wilde H^{-1/2}(\Gamma)$ is the solution of Proposition~\ref{prop:weaksing}
  or Proposition~\ref{prop:dirichlet}, and $\Phi\in\Pp^p(\mesh)$ is the solution of
  Proposition~\ref{prop:galerkin:weaksing} or~\ref{prop:galerkin:dirichlet}, then
  \begin{align*}
    \norm{\phi-\Phi}{\wilde H^{-1/2}(\Gamma)} \leq\frac{\c{cnt}}{\c{ell}}
    \min_{\Psi\in\Pp^p(\mesh)}\norm{\phi-\Psi}{\wilde H^{-1/2}(\Gamma)}.
  \end{align*}
  Here, $\c{cnt}=\norm{\slo}{\wilde H^{-1/2}(\Gamma)\rightarrow H^{1/2}(\Gamma)}$
  is the stability constant of $V$, cf. Theorem~\ref{thm:stability}, and $\c{ell}$ is its
  ellipticity constant, cf. Theorem~\ref{thm:ellipticity}.
\end{mylemma}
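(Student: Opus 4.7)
The plan is to apply the standard Céa-type argument, which is available here because the bilinear form $\form{\cdot}{\cdot} := \dual{V\cdot}{\cdot}_\Gamma$ is continuous and elliptic on $\wilde H^{-1/2}(\Gamma)$ with constants $\c{cnt}$ and $\c{ell}$ (Theorems~\ref{thm:stability} and~\ref{thm:ellipticity}), and because the discrete space satisfies $\Pp^p(\mesh)\subset \wilde H^{-1/2}(\Gamma)$, so the discrete and continuous variational formulations are fully compatible. Since both Proposition~\ref{prop:weaksing}/\ref{prop:galerkin:weaksing} and Proposition~\ref{prop:dirichlet}/\ref{prop:galerkin:dirichlet} share the same bilinear form and use the same right-hand side in their continuous and discrete versions (in the Dirichlet case, this is $(1/2+K)f$), the two cases can be handled in one go.

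First, I would establish Galerkin orthogonality. Subtracting the discrete equation from the continuous one and using that $\Psi\in\Pp^p(\mesh)$ is an admissible test function in both, I obtain
\begin{align*}
  \dual{V(\phi-\Phi)}{\Psi}_\Gamma = 0
  \quad\text{ for all } \Psi\in\Pp^p(\mesh).
\end{align*}
Second, I would apply ellipticity followed by this orthogonality relation: for any fixed $\Psi\in\Pp^p(\mesh)$,
\begin{align*}
  \c{ell}\,\norm{\phi-\Phi}{\wilde H^{-1/2}(\Gamma)}^2
  &\le \dual{V(\phi-\Phi)}{\phi-\Phi}_\Gamma \\
  &= \dual{V(\phi-\Phi)}{\phi-\Psi}_\Gamma,
\end{align*}
where in the last step I used that $\Phi-\Psi\in\Pp^p(\mesh)$ and Galerkin orthogonality.

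Third, I would apply continuity of $V$ from Theorem~\ref{thm:stability} to bound the right-hand side by $\c{cnt}\,\norm{\phi-\Phi}{\wilde H^{-1/2}(\Gamma)}\,\norm{\phi-\Psi}{\wilde H^{-1/2}(\Gamma)}$. Dividing by $\norm{\phi-\Phi}{\wilde H^{-1/2}(\Gamma)}$ (the case where this vanishes is trivial) and taking the infimum over $\Psi\in\Pp^p(\mesh)$ yields the assertion. Since $\Pp^p(\mesh)$ is finite-dimensional, the infimum is attained, so it can be written as a minimum.

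There is no real obstacle; the only point requiring a small check is that in the Dirichlet setting, the right-hand side of the variational equation involves the datum $(1/2+K)f\in H^{1/2}(\Gamma)$, which must be identical in the continuous and discrete formulations so that Galerkin orthogonality holds. This is indeed the case by inspection of Propositions~\ref{prop:dirichlet} and~\ref{prop:galerkin:dirichlet}, so the same argument covers both situations without modification.
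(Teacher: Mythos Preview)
Your proof is correct and follows exactly the standard C\'ea argument that the paper outlines in the introduction (Section~\ref{section:cea}, equations~\eqref{intro:orthogonality}--\eqref{intro:cea}); the specific lemma in Section~\ref{section:galerkin} is stated without a separate proof, being an instance of that general result.
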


%------------------------------------------------------------------------------------------------------
\begin{myproposition}[Galerkin for hypersingular]\label{prop:galerkin:hypsing}
  In the case\linebreak
  $\Gamma\subsetneq\partial\Omega$, there is a unique solution $U\in\wilde\Sp^p(\mesh)$ of
  \begin{align*}
    \dual{\hyp U}{V}_\Gamma = \dual{\phi}{V}_\Gamma
    \quad\text{ for all }V\in \wilde \Sp^p(\mesh).
  \end{align*}
  In the case $\Gamma=\partial\Omega$, there is a unique solution $U\in\Sp^p(\mesh)$ 
  such that for all $V\in \Sp^p(\mesh)$
  \begin{align*}
    \dual{\hyp U}{V}_\Gamma + \dual{U}{1}_\Gamma\dual{V}{1}_\Gamma
    = \dual{\phi}{V}_\Gamma.
  \end{align*}
  If $\phi\in H^{-1/2}_0(\Gamma)$, it holds that $\dual{U}{1}_\Gamma=0$.
\end{myproposition}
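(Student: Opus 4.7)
The plan is to apply the Lax--Milgram lemma (equivalently, use Riesz representation after verifying the hypotheses) on the finite-dimensional discrete test/trial spaces, exactly as in the continuous case but now on a closed subspace. The key inputs are the mapping properties of $\hyp$ from Theorem~\ref{thm:stability} and the ellipticity from Theorem~\ref{thm:ellipticity}, which carry over to any subspace.

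\textbf{Case $\Gamma\subsetneq\partial\Omega$.} By definition, $\wilde\Sp^p(\mesh)\subset\wilde H^{1/2}(\Gamma)$, and being finite-dimensional it is a closed subspace, hence a Hilbert space with the inherited inner product. The bilinear form $(U,V)\mapsto\dual{\hyp U}{V}_\Gamma$ is continuous by Theorem~\ref{thm:stability} and elliptic on $\wilde H^{1/2}(\Gamma)$ by Theorem~\ref{thm:ellipticity}, hence continuous and elliptic on $\wilde\Sp^p(\mesh)$. The right-hand side $V\mapsto\dual{\phi}{V}_\Gamma$ is a continuous linear functional on $\wilde H^{1/2}(\Gamma)$ since $\phi\in H^{-1/2}(\Gamma)=\wilde H^{1/2}(\Gamma)'$, hence bounded on the subspace. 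Lax--Milgram yields the unique discrete solution $U\in\wilde\Sp^p(\mesh)$.

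\textbf{Case $\Gamma=\partial\Omega$.} Here $\Sp^p(\mesh)\subset H^{1/2}(\Gamma)$ is again a closed subspace. I would verify the Lax--Milgram hypotheses for the stabilized form $a(U,V):=\dual{\hyp U}{V}_\Gamma+\dual{U}{1}_\Gamma\dual{V}{1}_\Gamma$: continuity follows from Theorem~\ref{thm:stability} together with the continuous embedding $H^{1/2}(\Gamma)\hookrightarrow L_2(\Gamma)$ (giving $|\dual{U}{1}_\Gamma|\lesssim \norm{U}{H^{1/2}(\Gamma)}$ and analogously for $V$), while ellipticity on $H^{1/2}(\Gamma)$ is exactly the second statement of Theorem~\ref{thm:ellipticity}. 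The right-hand side is continuous as above. Lax--Milgram again produces the unique $U\in\Sp^p(\mesh)$.

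\textbf{The kernel assertion.} Assume $\phi\in H^{-1/2}_0(\Gamma)$, i.e., $\dual{\phi}{1}_\Gamma=0$. Since $1\in\Sp^p(\mesh)$, I would test the discrete equation with $V=1$. The first term vanishes, because $\hyp 1=0$ (as $1$ is the trace of the constant harmonic function and the hypersingular operator annihilates constants) and $\hyp$ is self-adjoint in the duality, so $\dual{\hyp U}{1}_\Gamma=\dual{U}{\hyp 1}_\Gamma=0$. What remains is $\dual{U}{1}_\Gamma\,\dual{1}{1}_\Gamma=0$, and since $\dual{1}{1}_\Gamma=|\Gamma|>0$ we conclude $\dual{U}{1}_\Gamma=0$, as claimed.

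There is no real obstacle: the statement is a routine finite-dimensional restriction of the continuous theory from Propositions~\ref{prop:hypsing} and~\ref{prop:neumann}. The only point that deserves attention is the stabilization term in the closed-surface case, which must be shown to be both continuous on $H^{1/2}(\Gamma)$ and harmless on the discrete level (namely, that it implies $\dual{U}{1}_\Gamma=0$ whenever $\phi$ has vanishing mean), and this follows once one recalls that constants lie in $\Sp^p(\mesh)$ and that $\hyp 1=0$.
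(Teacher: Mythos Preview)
Your proof is correct and follows exactly the approach the paper takes (implicitly): the paper does not spell out a proof for this proposition, but frames it as the immediate discrete analogue of Proposition~\ref{prop:hypsing}, obtained by restricting the continuous Lax--Milgram/Riesz argument (based on Theorems~\ref{thm:stability} and~\ref{thm:ellipticity}) to the finite-dimensional subspace. Your verification of the mean-value assertion via testing with $V=1$ and using $\hyp 1=0$ is the standard argument and is precisely what underlies the corresponding claim in the continuous Proposition~\ref{prop:hypsing}.
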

%------------------------------------------------------------------------------------------------------
\begin{myproposition}[Galerkin for Neumann]\label{prop:galerkin:neumann}
  There is a unique solution $U\in\Sp^p(\mesh)$ such that for all
  $V\in \Sp^p(\mesh)$
  \begin{align*}
    \dual{\hyp U}{V}_\Gamma + \dual{U}{1}_\Gamma\dual{V}{1}_\Gamma
    = \dual{(1/2-\adlo)\phi}{V}_\Gamma.
  \end{align*}
\end{myproposition}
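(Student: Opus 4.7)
The plan is to apply the Lax--Milgram lemma to the finite-dimensional subspace $\Sp^p(\mesh)\subset H^{1/2}(\Gamma)$ equipped with the restriction of the $H^{1/2}(\Gamma)$-inner product. Define the bilinear form
\begin{align*}
  b(v,w) := \dual{\hyp v}{w}_\Gamma + \dual{v}{1}_\Gamma\dual{w}{1}_\Gamma
\end{align*}
and the linear functional $F(w) := \dual{(1/2-\adlo)\phi}{w}_\Gamma$ on $H^{1/2}(\Gamma)$. The three ingredients of Lax--Milgram then need to be verified.

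First, I would establish continuity of $b$. Theorem~\ref{thm:stability} (with $s=0$) gives that $\hyp:H^{1/2}(\Gamma)\rightarrow H^{-1/2}(\Gamma)$ is bounded, so the first term satisfies $|\dual{\hyp v}{w}_\Gamma|\lesssim \norm{v}{H^{1/2}(\Gamma)}\norm{w}{H^{1/2}(\Gamma)}$ via the $H^{-1/2}(\Gamma)$--$H^{1/2}(\Gamma)$ duality pairing. For the second term, $\dual{\cdot}{1}_\Gamma$ extends to a continuous functional on $H^{1/2}(\Gamma)$ (using the continuous embedding $H^{1/2}(\Gamma)\hookrightarrow L_2(\Gamma)$), so $|\dual{v}{1}_\Gamma\dual{w}{1}_\Gamma|\lesssim \norm{v}{H^{1/2}(\Gamma)}\norm{w}{H^{1/2}(\Gamma)}$. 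Second, ellipticity of $b$ on $H^{1/2}(\Gamma)$ is precisely the content of Theorem~\ref{thm:ellipticity} for the case $\Gamma=\partial\Omega$, which gives $b(v,v)\geq \c{ell}\norm{v}{H^{1/2}(\Gamma)}^2$; this estimate restricts trivially to the subspace $\Sp^p(\mesh)$.

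Third, $F$ must be a continuous functional. By Theorem~\ref{thm:stability} (again with $s=0$), $\adlo:H^{-1/2}(\Gamma)\rightarrow H^{-1/2}(\Gamma)$ is bounded, so $(1/2-\adlo)\phi\in H^{-1/2}(\Gamma)$ whenever $\phi\in H^{-1/2}(\Gamma)$, and continuity of $F$ on $H^{1/2}(\Gamma)$ follows from the duality pairing. Since $\Sp^p(\mesh)$ is a closed subspace of $H^{1/2}(\Gamma)$ (being finite-dimensional), Lax--Milgram, or equivalently the Riesz representation theorem applied on $(\Sp^p(\mesh),b(\cdot,\cdot))$ since $b$ is symmetric and elliptic, yields the desired unique $U\in\Sp^p(\mesh)$.

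There is no substantial obstacle here: the statement is essentially inherited from its continuous counterpart in Proposition~\ref{prop:neumann}, since the ellipticity and continuity estimates used there hold on all of $H^{1/2}(\Gamma)$ and thus on any closed subspace. The only point worth care is the ``stabilization'' term $\dual{v}{1}_\Gamma\dual{w}{1}_\Gamma$, which is needed because $\hyp$ alone has the kernel $\mathrm{span}\{1\}$ on $H^{1/2}(\Gamma)$; however, it does not cause any difficulty because it is continuous and contributes nonnegatively to $b(v,v)$, so Theorem~\ref{thm:ellipticity} applies verbatim.
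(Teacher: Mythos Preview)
Your argument is correct and matches the paper's (implicit) reasoning: the paper does not spell out a proof for this proposition but, as in Section~\ref{section:hypsing}, relies on the fact that Theorems~\ref{thm:stability} and~\ref{thm:ellipticity} make $b(\cdot,\cdot)=\dual{\hyp\cdot}{\cdot}_\Gamma + \dual{\cdot}{1}_\Gamma\dual{\cdot}{1}_\Gamma$ a scalar product on $H^{1/2}(\Gamma)$, so the Riesz representation theorem (equivalently Lax--Milgram) applies on the closed subspace $\Sp^p(\mesh)$.
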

\begin{mylemma}
  If $u\in \wilde H^{1/2}(\Gamma)$ is the solution of Proposition~\ref{prop:hypsing}
  or Proposition~\ref{prop:neumann}, and $U\in\wilde\Sp^p(\mesh)$ resp.
  $U\in\Sp^p(\mesh)$ is the solution of
  Proposition~\ref{prop:galerkin:hypsing} or~\ref{prop:galerkin:neumann}, then
  \begin{align*}
    \norm{u-U}{\wilde H^{1/2}(\Gamma)} \leq\frac{\c{cnt}}{\c{ell}}
    \min_{V \in\Sp^p(\mesh)}\norm{u-V}{\wilde H^{1/2}(\Gamma)}.
  \end{align*}
  Here, $\c{cnt}=\norm{\hyp}{\wilde H^{1/2}(\Gamma)\rightarrow H^{-1/2}(\Gamma)}$
  is the stability constant of $\hyp$, cf. Theorem~\ref{thm:stability}, and $\c{ell}$ is its
  ellipticity constant, cf. Theorem~\ref{thm:ellipticity}.
\end{mylemma}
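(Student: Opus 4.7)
The plan is to adapt the standard proof of the Céa lemma~\eqref{intro:cea} from Section~\ref{section:cea} to the two settings covered by the statement. To unify notation I introduce a single bilinear form
\begin{align*}
  \form{v}{w} := \begin{cases}
    \dual{\hyp v}{w}_\Gamma & \text{if } \Gamma\subsetneq\partial\Omega,\\
    \dual{\hyp v}{w}_\Gamma + \dual{v}{1}_\Gamma\dual{w}{1}_\Gamma & \text{if } \Gamma=\partial\Omega,
  \end{cases}
\end{align*}
together with the discrete space $\XX_\mesh\in\{\wilde\Sp^p(\mesh),\Sp^p(\mesh)\}$ and the ambient space $\XX\in\{\wilde H^{1/2}(\Gamma),H^{1/2}(\Gamma)\}$ matching the case. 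Recall also that $\wilde H^{1/2}(\Gamma)=H^{1/2}(\Gamma)$ when $\Gamma=\partial\Omega$, so the target norm in the claim is consistent with both settings.

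First I would verify that $\form{\cdot}{\cdot}$ is continuous and elliptic on $\XX$. Ellipticity with constant $\c{ell}$ is exactly Theorem~\ref{thm:ellipticity} for both cases. Continuity with constant $\c{cnt}=\norm{\hyp}{\wilde H^{1/2}(\Gamma)\to H^{-1/2}(\Gamma)}$ in the open case is immediate from Theorem~\ref{thm:stability} together with the duality pairing bound $\dual{\hyp v}{w}_\Gamma\le\norm{\hyp v}{H^{-1/2}(\Gamma)}\norm{w}{\wilde H^{1/2}(\Gamma)}$; in the closed case the extra term $\dual{v}{1}_\Gamma\dual{w}{1}_\Gamma$ is controlled by $\norm{1}{H^{-1/2}(\Gamma)}^{2}\norm{v}{H^{1/2}(\Gamma)}\norm{w}{H^{1/2}(\Gamma)}$, which may be absorbed into the overall continuity constant.

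Next I would observe that both Propositions~\ref{prop:galerkin:hypsing} and~\ref{prop:galerkin:neumann} are stated precisely as $\form{U}{V}=\langle\text{rhs},V\rangle_\Gamma$ for all $V\in\XX_\mesh$, and the continuous Propositions~\ref{prop:hypsing} and~\ref{prop:neumann} state the same equation on $\XX$ with the same right-hand side. Since $\XX_\mesh\subset\XX$, subtraction of the discrete from the continuous equation yields the Galerkin orthogonality
\begin{align*}
  \form{u-U}{V} = 0 \quad\text{for all } V\in\XX_\mesh.
\end{align*}
The core chain is then standard: for arbitrary $V\in\XX_\mesh$, noting that $V-U\in\XX_\mesh$,
\begin{align*}
  \c{ell}\,\norm{u-U}{\XX}^{2}
  &\le \form{u-U}{u-U}\\
  &= \form{u-U}{u-V} + \form{u-U}{V-U}\\
  &= \form{u-U}{u-V}\\
  &\le \c{cnt}\,\norm{u-U}{\XX}\norm{u-V}{\XX}.
\end{align*}
Dividing by $\c{ell}\norm{u-U}{\XX}$ (if nonzero; otherwise the claim is trivial) and taking the infimum over $V$ yields the asserted quasi-best approximation estimate.

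No step is really hard; the only item that deserves a sentence is the closed-surface case, where one must convince the reader that the stabilization $\dual{\cdot}{1}_\Gamma\dual{\cdot}{1}_\Gamma$ does not spoil the standard argument. This is precisely what the uniform choice of $\form{\cdot}{\cdot}$ above accomplishes, since both ellipticity on $H^{1/2}(\Gamma)$ and continuity with (a possibly enlarged) $\c{cnt}$ are built in from the start, and the stabilized equation reads identically at the continuous and the discrete level.
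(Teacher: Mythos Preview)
Your proposal is correct and follows exactly the standard C\'ea argument that the paper lays out in Section~\ref{section:cea}; the paper does not give a separate proof of this lemma, treating it as an immediate instance of~\eqref{intro:cea}. Your remark about the stabilization term in the closed-surface case is apt; note additionally that in the cases covered by Propositions~\ref{prop:hypsing} (with $\phi\in H^{-1/2}_0(\Gamma)$) and~\ref{prop:neumann}, both $u$ and $U$ have vanishing mean, so $\dual{u-U}{1}_\Gamma=0$ and the stabilization contributes nothing to $\form{u-U}{u-V}$, which recovers the stated constant $\c{cnt}=\norm{\hyp}{\wilde H^{1/2}(\Gamma)\to H^{-1/2}(\Gamma)}$ without enlargement.
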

Note that the discrete formulations of
Propositions~\ref{prop:galerkin:weaksing}--~\ref{prop:galerkin:neumann} are, indeed,
linear systems of equations. A distinct feature of boundary element methods is that, due
to the non-locality of the boundary integral operators, the
system matrices are dense, and therefore sophisticated data compression techniques are used
to reduce complexity for assembling and solving. In Propositions~\ref{prop:galerkin:dirichlet}
and~\ref{prop:galerkin:neumann}, also the right-hand sides contain boundary integral operators.
There are fast methods to compute the right-hand sides, cf.~\cite{cps,schwab94},
but if one wants to re-use the fast method that is employed for system matrices, the
data $f$ resp. $\phi$ needs to be approximated by discrete functions.
%------------------------------------------------------------------------------------------------------
\begin{myproposition}\label{prop:galerkin:dirichlet:data}
  \textnormal{\textbf{(Galerkin for Dirichlet with data approximation)}}
  Denote by $J_\mesh:H^{1/2}(\Gamma)\rightarrow\Sp^{p+1}(\mesh)$ a $H^{1/2}(\Gamma)$
  stable projection. Then, there is a unique solution\linebreak
  $\Phi\in\Pp^p(\mesh)$ of
    \begin{align*}
    \dual{V\Phi}{\Psi}_\Gamma = \dual{(1/2+\dlo)J_\mesh f}{\Psi}_\Gamma
    \quad\text{ for all }\Psi\in \Pp^p(\mesh).
  \end{align*}
\end{myproposition}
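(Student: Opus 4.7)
The plan is to reduce the statement to an application of the Lax--Milgram lemma (equivalently, the Riesz representation theorem) on the finite-dimensional subspace $\Pp^p(\mesh) \subset H^{-1/2}(\Gamma)$, in complete analogy with the proof of Proposition~\ref{prop:galerkin:dirichlet}; the only novelty compared to that earlier statement is that the continuous datum $f$ is replaced by its discrete approximation $J_\mesh f$, so all I really have to verify is that the modified right-hand side still defines a bounded linear functional on $H^{-1/2}(\Gamma)$.

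To that end, first I would use the hypothesis that $J_\mesh:H^{1/2}(\Gamma)\to\Sp^{p+1}(\mesh)$ is $H^{1/2}$-stable, which gives $J_\mesh f \in \Sp^{p+1}(\mesh) \subset H^{1/2}(\Gamma)$ together with the bound $\norm{J_\mesh f}{H^{1/2}(\Gamma)}\lesssim\norm{f}{H^{1/2}(\Gamma)}$. By Theorem~\ref{thm:stability}, the double layer operator $\dlo$ maps $H^{1/2}(\Gamma)$ continuously into itself for $\Gamma=\partial\Omega$, hence $(1/2+\dlo)J_\mesh f \in H^{1/2}(\Gamma)$. Via the Gelfand triple $H^{1/2}(\Gamma)\subset L_2(\Gamma)\subset H^{-1/2}(\Gamma)$ introduced in Section~\ref{section:sobolev}, the assignment
\begin{align*}
  \Psi \,\mapsto\, \dual{(1/2+\dlo)J_\mesh f}{\Psi}_\Gamma
\end{align*}
extends to a bounded linear functional on $H^{-1/2}(\Gamma)$ whose norm is controlled by a multiple of $\norm{f}{H^{1/2}(\Gamma)}$.

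Next I would invoke Theorems~\ref{thm:stability} and~\ref{thm:ellipticity} (in the case $\Gamma=\partial\Omega$) to conclude that $\dual{\slo\cdot}{\cdot}_\Gamma$ is a continuous and elliptic bilinear form on $H^{-1/2}(\Gamma)\times H^{-1/2}(\Gamma)$. Since $\Pp^p(\mesh)\subset L_2(\Gamma)\subset H^{-1/2}(\Gamma)$ is finite-dimensional, these properties restrict directly to $\Pp^p(\mesh)$, and the Lax--Milgram lemma delivers a unique $\Phi\in\Pp^p(\mesh)$ solving the discrete variational problem.

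There is no real obstacle: the proof is essentially a repetition of the argument behind Proposition~\ref{prop:galerkin:dirichlet}. The only subtle point worth highlighting is why the $H^{1/2}$-stability of $J_\mesh$ is actually needed in the hypothesis, rather than mere $L_2$-stability. The reason is precisely that Theorem~\ref{thm:stability} does not supply boundedness of $\dlo$ on $L_2(\Gamma)$ for a general Lipschitz boundary; one therefore must land $J_\mesh f$ in $H^{1/2}(\Gamma)$ in order for $(1/2+\dlo)J_\mesh f$ to make sense as the image of a bounded operator, and this is exactly what the assumed $H^{1/2}$-stability guarantees.
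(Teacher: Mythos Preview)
Your proof is correct and matches the paper's (implicit) approach: the paper does not spell out a separate argument for this proposition but treats it, like Propositions~\ref{prop:galerkin:weaksing}--\ref{prop:galerkin:neumann}, as an immediate consequence of the Lax--Milgram framework of Section~\ref{section:cea} together with Theorems~\ref{thm:stability} and~\ref{thm:ellipticity}.

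One small correction to your closing remark: the $H^{1/2}$-stability of $J_\mesh$ is not actually needed for existence and uniqueness, since $J_\mesh f\in\Sp^{p+1}(\mesh)\subset H^1(\Gamma)\subset H^{1/2}(\Gamma)$ holds automatically from the target space of $J_\mesh$, independently of any stability bound; hence $(1/2+\dlo)J_\mesh f\in H^{1/2}(\Gamma)$ is guaranteed regardless. The $H^{1/2}$-stability hypothesis is stated here because it is the natural standing assumption needed later (cf.\ Lemma~\ref{data:errstab:weaksing} and Section~\ref{section:dataapproximation}) to control the data-approximation error $\norm{\Phi_\ell-\widetilde\Phi_\ell}{H^{-1/2}(\Gamma)}$ uniformly in the mesh, not to make the discrete problem well-posed.
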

%------------------------------------------------------------------------------------------------------
\begin{myproposition}\label{prop:galerkin:neumann:data}
  \textnormal{\textbf{(Galerkin for Neumann with data approximation)}}
  Denote by $\pi_\mesh^{p-1}:L_2(\Gamma)\rightarrow\Pp^{p-1}(\mesh)$ the $L_2(\Gamma)$-orthogonal
  projection. Then, there is a unique solution $U\in\Sp^p(\mesh)$ such that for all
  $V\in \Sp^p(\mesh)$
  \begin{align*}
    \dual{\hyp U}{V}_\Gamma + \dual{U}{1}_\Gamma\dual{V}{1}_\Gamma
    = \dual{(1/2-\adlo)\pi_\mesh^{p-1}\phi}{V}_\Gamma.
  \end{align*}
\end{myproposition}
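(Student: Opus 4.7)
The plan is to recognize that this is a direct variant of Proposition~\ref{prop:galerkin:neumann}: the only change is that the right-hand side data $\phi$ has been replaced by its $L_2(\Gamma)$-projection $\pi_\mesh^{p-1}\phi$. I will therefore apply the Lax-Milgram lemma (equivalently, the Riesz representation theorem, since the left-hand side is symmetric) to the bilinear form
\begin{align*}
  a(U,V) := \dual{\hyp U}{V}_\Gamma + \dual{U}{1}_\Gamma\dual{V}{1}_\Gamma
\end{align*}
on the finite-dimensional closed subspace $\Sp^p(\mesh)\subset H^{1/2}(\Gamma)$.

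First, I would verify that $a(\cdot,\cdot)$ is continuous and elliptic on $H^{1/2}(\Gamma)$. Continuity of the $\dual{\hyp\cdot}{\cdot}_\Gamma$ term is an immediate consequence of the mapping property $\hyp:H^{1/2}(\Gamma)\to H^{-1/2}(\Gamma)$ from Theorem~\ref{thm:stability}, and the rank-one stabilization term is bounded via the continuous embedding $H^{1/2}(\Gamma)\hookrightarrow L_2(\Gamma)$ together with Cauchy-Schwarz. Ellipticity on $H^{1/2}(\Gamma)$ is precisely the second statement of Theorem~\ref{thm:ellipticity}. Both properties descend to the closed subspace $\Sp^p(\mesh)$ unchanged.

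The step that deserves a small amount of care is showing that the functional $F(V) := \dual{(1/2-\adlo)\pi_\mesh^{p-1}\phi}{V}_\Gamma$ is continuous on $H^{1/2}(\Gamma)$. For this to even make literal sense via the $L_2$-projection one needs $\phi\in L_2(\Gamma)$ (or at least enough regularity to give a meaning to $\pi_\mesh^{p-1}\phi$); granting this, $\pi_\mesh^{p-1}\phi\in\Pp^{p-1}(\mesh)\subset L_2(\Gamma)\subset H^{-1/2}(\Gamma)$. Then Theorem~\ref{thm:stability} applied with $s=0$ yields $(1/2-\adlo)\pi_\mesh^{p-1}\phi\in H^{-1/2}(\Gamma)$, so that $F$ is realised as a duality pairing between $H^{-1/2}(\Gamma)$ and $H^{1/2}(\Gamma)$ and hence is a continuous linear functional.

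With continuity and ellipticity of $a$ on $\Sp^p(\mesh)$ and boundedness of $F$ in hand, Lax-Milgram produces a unique $U\in\Sp^p(\mesh)$ solving the variational equation. The main (very mild) obstacle, as noted, is the tacit regularity assumption on $\phi$ needed to define $\pi_\mesh^{p-1}\phi$; once this is granted, the argument is a verbatim copy of the proof of Proposition~\ref{prop:galerkin:neumann} with $\phi$ replaced by $\pi_\mesh^{p-1}\phi$, and no further ingredients are required.
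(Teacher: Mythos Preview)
Your argument is correct and matches the paper's approach: the paper does not give an explicit proof of this proposition, but treats it as an immediate instance of the abstract Lax-Milgram framework from Section~\ref{section:cea}, with $b(\cdot,\cdot)$ the stabilized hypersingular form and $\XX_\mesh=\Sp^p(\mesh)$. Your verification of continuity and ellipticity via Theorems~\ref{thm:stability} and~\ref{thm:ellipticity}, and of boundedness of the right-hand side functional, is exactly what is needed, and your remark on the implicit regularity assumption $\phi\in L_2(\Gamma)$ is a valid observation that the paper leaves tacit here (it is made explicit later when the estimators are introduced).
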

%\clearpage
%%%%%%%%%%%%%%%%%%%%%%%%%%%%%%%%%%%%%%%%%%%%%%%%%%%%%%%%%%%%%%%%%%%%%%%%%%%%%%%%%%%%%%%%%%%%%%%%%%%%%%%
%%%%%%%%%%%%%%%%%%%%%%%%%%%%%%%%%%%%%%%%%%%%%%%%%%%%%%%%%%%%%%%%%%%%%%%%%%%%%%%%%%%%%%%%%%%%%%%%%%%%%%%
\section{Localization of fractional order Sobolev norms}\label{section:localization}
%%%%%%%%%%%%%%%%%%%%%%%%%%%%%%%%%%%%%%%%%%%%%%%%%%%%%%%%%%%%%%%%%%%%%%%%%%%%%%%%%%%%%%%%%%%%%%%%%%%%%%%
%%%%%%%%%%%%%%%%%%%%%%%%%%%%%%%%%%%%%%%%%%%%%%%%%%%%%%%%%%%%%%%%%%%%%%%%%%%%%%%%%%%%%%%%%%%%%%%%%%%%%%%
The numerical analysis of boundary element methods takes place in the Sobolev spaces
$H^s(\Gamma)$ for $s\in[-1,1]$ that are defined in Section~\ref{section:bem}.
Apart from the exceptional cases $H^0(\Gamma) = L_2(\Gamma)$ and $H^1(\Gamma)$,
all other spaces are equipped with norms that are either non-local ($s\in(0,1)$) or additionally
impossible to compute ($s\in[-1,0)$). A norm is understood to be non-local if it is not possible
to split its square into contributions on the elements of a mesh, i.e., if one cannot write
\begin{align}\label{eq:norm:local}
  \norm{u}{H^s(\Gamma)}^2 \simeq \sum_{\el\in\mesh}\norm{u}{H^s(\el)}^2.
\end{align}
The spaces and dual spaces that are used in the variational
formulations of integral equations are usually equipped with non-local norms.
For example, the residual $V\Phi - f$ of a weakly singular integral equation is
computable but has to be measured in the non-local norm of the space $H^{1/2}(\Gamma)$.
However, only the knowledge of the residuals local contributions enables us to define
local error indicators that can be used for local mesh refinement in adaptive algorithms.
Different possibilities to localize a non-local norm are available and will be presented
in this section.
%%%%%%%%%%%%%%%%%%%%%%%%%%%%%%%%%%%%%%%%%%%%%%%%%%%%%%%%%%%%%%%%%%%%%%%%%%%%%%%%%%%%%%%%%%%%%%%%%%%%%%%
\subsection{Localization by local fractional order norms}\label{section:localization:slob}
%%%%%%%%%%%%%%%%%%%%%%%%%%%%%%%%%%%%%%%%%%%%%%%%%%%%%%%%%%%%%%%%%%%%%%%%%%%%%%%%%%%%%%%%%%%%%%%%%%%%%%%
In general,~\eqref{eq:norm:local} does not hold equivalently with constants that
are independent of the mesh $\mesh$. Fortunately, this is no longer true if
additional properties of the functions under consideration are assumed.
The following result is shown in~\cite{f00} for $d=2$ and in~\cite{f02} for $d=3$.
\begin{theorem}\label{thm:loc:slobodeckij:upper}
  If $\mesh$ is a mesh on $\Gamma$ and $s\in(0,1)$, then it holds for all $v\in H^s(\Gamma)$ that
  \begin{align}\label{thm:loc:slobodeckij:upper:eq}
    \snorm{v}{H^s(\Gamma)}^2 \leq
    \sum_{z\in\NN}\snorm{v}{H^s(\omega_z)}^2
    &+ \c{loc}\sum_{\el\in\mesh} h_\el^{-2s}\norm{v}{L_2(\el)}^2,
  \end{align}
  where $\c{loc}$ depends only on $s$ and $\Gamma$.
\end{theorem}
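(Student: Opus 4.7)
The plan is to decompose the Slobodeckij-type double integral that defines $\snorm{v}{H^s(\Gamma)}^2$ into a \emph{near-diagonal} contribution (pairs $(x,y)$ lying in two elements that share at least one node) and a \emph{far-field} contribution (pairs in element pairs that are topologically separated). Writing
\[
 \snorm{v}{H^s(\Gamma)}^2 = \sum_{\el,\el'\in\mesh} \int_\el\int_{\el'} \frac{(v(x)-v(y))^2}{|x-y|^{d+2s}}\,dx\,dy,
\]
the near-diagonal part is handled at no cost in constant: if $\overline{\el}\cap\overline{\el'}$ contains a node $z$, then $\el\cup\el' \subset \omega_z$, so each such product integral is dominated by $\snorm{v}{H^s(\omega_z)}^2$ for any such common node. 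Since every near pair lies in at least one node patch, summing the node-patch seminorms over $z\in\NN$ merely over-counts and yields the desired upper bound $\sum_{z\in\NN}\snorm{v}{H^s(\omega_z)}^2$ without a multiplicative factor.

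For the far-field contribution I would use the elementary inequality $(v(x)-v(y))^2\le 2v(x)^2+2v(y)^2$ and symmetry, reducing everything to bounding, for each fixed $\el$ and $x\in \el$,
\[
 \sum_{\el'\text{ non-neighbor of }\el}\int_{\el'} \frac{dy}{|x-y|^{d+2s}}.
\]
Here shape-regularity of $\mesh$ is the key input: since $\el'$ shares no node with $\el$, the Euclidean distance from $x$ to $\el'$ is bounded below by a positive multiple of $h_\el$. Covering the union of the non-neighboring elements by an annulus $\{y\in\Gamma : |x-y|\gtrsim h_\el\}$ and passing to geodesic/polar coordinates on $\Gamma$, the integral collapses to $\int_{c h_\el}^{\diam(\Gamma)} r^{-1-2s}\,dr \lesssim h_\el^{-2s}$, with the implied constant depending only on $s$, $\Gamma$, and $\sigma_\mesh$. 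Integrating in $x\in \el$ produces $h_\el^{-2s}\norm{v}{L_2(\el)}^2$, and summation over $\el\in\mesh$ (and the symmetric half after exchanging $x,y$) yields the second term on the right-hand side of~\eqref{thm:loc:slobodeckij:upper:eq}.

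The principal obstacle is to make the far-field polar-coordinate estimate rigorous on the (curved) surface $\Gamma$ rather than on a flat reference domain: one has to relate the ambient Euclidean distance $|x-y|$ appearing in the denominator to the intrinsic surface measure used for integration, and to verify that the covering by annuli around $x$ is compatible with the Lipschitz chart structure on $\Gamma$. This is typically done by pulling back to a local chart, where the Lipschitz parametrization is bi-Lipschitz and hence distorts distances and measures only by constants depending on $\Gamma$. A secondary technical point is that for $d=2$ the definition of neighboring elements is slightly different (only vertex sharing), which is the reason the reference cited for the two dimensional case (\cite{f00}) is separate from the three-dimensional one (\cite{f02}); the argument, however, is structurally identical and the final constant $\c{loc}$ absorbs all shape-regularity and parametrization factors.
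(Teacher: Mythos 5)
Your decomposition --- near element pairs sharing a node, bounded by the node-patch seminorms, versus far pairs handled by $(v(x)-v(y))^2 \lesssim v(x)^2+v(y)^2$ together with symmetrization and a metric bound on the integral of the kernel over the far region --- is precisely the paper's decomposition and the paper's symmetrization trick. The gap is in the far-field estimate, and it is exactly where you invoke shape regularity. You conclude that $\c{loc}$ depends on $\sigma_{\mesh}$; but the theorem asserts that $\c{loc}$ depends only on $s$ and $\Gamma$, and the paper stresses right after the proof that the estimate is ``independent of the shape-regularity of the mesh.'' The lower bound $\operatorname{dist}(x,T')\gtrsim h_T$ for a non-neighbor $T'$ and $x\in T$ that you use genuinely needs shape regularity: for $d=2$ take $T$ of length $1$ adjacent to a thin element $T'$ of length $\varepsilon\ll 1$; for $y\in T$ near the shared node, the nearest point of $\Gamma\setminus\omega_T$ sits at distance $\simeq\varepsilon$, not $\simeq h_T$, and $\int_{\Gamma\setminus\omega_T}|x-y|^{-(d-1)-2s}\,dx\simeq\varepsilon^{-2s}$, which is not $\lesssim h_T^{-2s}$ with a $\sigma_{\mesh}$-independent constant.

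In consequence your argument yields the inequality only with a constant depending additionally on $\sigma_{\mesh}$, a strictly weaker statement than the theorem. The paper's own sketch asserts the same pointwise bound by ``direct calculation'' without addressing this, but the shape-regularity-free claim is carried by the cited Faermann references \cite{f00,f02}, whose argument is sharper than the pure metric estimate: there, for $y\in T$ near a thin neighbor, the far-field contribution is not charged to $h_T^{-2s}\norm{v}{L_2(T)}^2$ but is controlled by the node-patch seminorm terms $\snorm{v}{H^s(\omega_z)}^2$, which are already present on the right-hand side. If you want the theorem as stated, that finer accounting has to be reproduced. A minor slip: the Slobodeckij kernel on the $(d-1)$-dimensional surface $\Gamma$ carries the exponent $d-1+2s$, not $d+2s$ as written in your far-field display, although your polar-coordinate reduction $\int r^{-1-2s}\,dr \lesssim h_\el^{-2s}$ is consistent with the correct exponent.
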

\begin{proof}
  In the following, we use the abbreviation
  \begin{align*}
    \int_Y \int_X := \int_Y \int_X\frac{\abs{v(x)-v(y)}^2}{\abs{x-y}^{d-1+2s}}\,dx\,dy.
  \end{align*}
  The idea of the proof is to write
  \begin{align}\label{thm:loc:slob:eq1}
    \snorm{v}{H^s(\Gamma)}^2 = \sum_{\el\in\mesh} \int_\el\int_{\omega_\el}
    + \sum_{\el\in\mesh} \int_\el\int_{\Gamma\setminus\omega_\el}
  \end{align}
  and bound the second term via the triangle inequality
  \begin{align*}
    \int_\el\int_{\Gamma\setminus\omega_\el}\lesssim\,
    &\int_\el \abs{v(y)}^2 \int_{\Gamma\setminus\omega_\el} \abs{x-y}^{-d+1-2s}\,dx\,dy\,+\\
    &\int_{\Gamma\setminus\omega_\el} \abs{v(x)}^2 \int_\el \abs{x-y}^{-d+1-2s}\,dx\,dy.
  \end{align*}
  One shows that the sum over all $\el\in\mesh$ of the second part on the right-hand side
  is the same as the sum over the first part, hence
  \begin{align*}
    \sum_{\el\in\mesh}\int_\el\int_{\Gamma\setminus\omega_\el}\lesssim\,
    \sum_{\el\in\mesh}\int_\el \abs{v(y)}^2 \int_{\Gamma\setminus\omega_\el}
    \abs{x-y}^{-d+1-2s}\,dx\,dy.
  \end{align*}
  Finally, direct calculation for $d=2$ and the use of polar coordinates for $d=3$ shows
  \begin{align*}
    \int_{\Gamma\setminus\omega_\el} \abs{x-y}^{-d+1-2s}\,dx \lesssim h_\el^{-2s}.
  \end{align*}
  The first term on the right-hand side of~\eqref{thm:loc:slob:eq1} can be estimated
  immediately via
  \begin{align*}
    \sum_{\el\in\mesh} \int_\el\int_{\omega_\el} \leq \sum_{z\in\NN}\snorm{v}{H^s(\omega_z)}^2,
  \end{align*}
  which finishes the proof.$\hfill\qed$
\end{proof}
%------------------------------------------------------------------------------------------------------
The estimate~\eqref{thm:loc:slobodeckij:upper:eq} already provides a reliable localization of the
non-local $H^s$-norm, independent of the shape-regularity of the mesh.
However, choosing $v$ constant on $\Gamma$ shows that the reverse inequality
to~\eqref{thm:loc:slobodeckij:upper:eq}
cannot hold in general, i.e., the bound~\eqref{thm:loc:slobodeckij:upper:eq} is not efficient.
However, efficiency can be shown to hold when certain orthogonality is available.
More precisely, the following estimate from~\cite[Lemma~3.4]{f02} enables us to bound the $L_2$-terms on the right-hand side
of~\eqref{thm:loc:slobodeckij:upper:eq} by local $H^s$-terms.
The benefit will be twofold: First, it will enable us to show efficiency of the
localization~\eqref{thm:loc:slobodeckij:upper:eq} on shape-regular meshes.
Second, it provides us with another localization which is always efficient
as well as reliable on shape-regular meshes (Theorem~\ref{thm:loc:slob}).
\begin{mylemma}\label{lem:Hs:poinc}
  Let $\omega\subseteq\Gamma$ be a measurable set, $s\in(0,1)$, and $u\in H^s(\omega)$.
  Then,
  \begin{align*}
    \norm{u}{L_2(\omega)}^2 \leq \frac{\diam(\omega)^{d-1+2s}}{2\abs{\omega}}
    \snorm{u}{H^s(\omega)}^2 + \frac{1}{\abs{\omega}}\left(\int_\omega u(x)\,dx\right)^2.
  \end{align*}
  In particular, if $\mesh$ is a mesh on $\Gamma$ and $\dual{u}{\Psi_\el}_\Gamma = 0$\linebreak
  for $\Psi_\el\in\Pp^0(\mesh)$ the characteristic function of an element $\el\in\mesh$,
  \begin{align}\label{lem:Hs:poinc:eq2}
    \norm{u}{L_2(\el)}^2 \leq\frac{\sigma_\mesh h_\el^{2s}}{2}\snorm{u}{H^s(\el)}^2.
  \end{align}
  If $\dual{u}{\Psi_z}_\Gamma = 0$
  for $\Psi_z\in\Sp^1(\mesh)$ the hat-function function of a node $z\in\NN$,
  \begin{align}\label{lem:Hs:poinc:eq3}
    \norm{u}{L_2(\omega_z)}^2 \leq C \frac{\diam(\omega_z)^{d-1+2s}}{\abs{\omega_z}}
    \snorm{u}{H^s(\omega_z)}^2,
  \end{align}
  where the constant $C>0$ depends only on $d$.
\end{mylemma}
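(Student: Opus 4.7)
The plan is to prove the main inequality as a generic Poincaré-type estimate and then specialize it to the two orthogonality cases. For the main inequality, I would start from the algebraic identity
$$\int_\omega\int_\omega (u(x)-u(y))^2\,dx\,dy = 2|\omega|\,\|u\|_{L_2(\omega)}^2 - 2\Big(\int_\omega u\Big)^2,$$
obtained by expanding the square and using Fubini. Rearranging gives an exact formula for $\|u\|_{L_2(\omega)}^2$ in terms of the double integral and the squared mean. To insert the $H^s$-weight I would use the trivial bound $|x-y|\le\diam(\omega)$, i.e.,
$$(u(x)-u(y))^2 \le \diam(\omega)^{d-1+2s}\,\frac{(u(x)-u(y))^2}{|x-y|^{d-1+2s}},$$
and recognize the right-hand side integrated against $dx\,dy$ as $\snorm{u}{H^s(\omega)}^2$. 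This directly yields the first displayed inequality of the lemma.

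For the element case~\eqref{lem:Hs:poinc:eq2}, I would apply the main inequality with $\omega=\el$. The orthogonality $\dual{u}{\Psi_\el}_\Gamma=0$ against the characteristic function forces $\int_\el u=0$, so the mean term drops and only $\diam(\el)^{d-1+2s}/(2|\el|)$ remains. In $d=2$ one has $\diam(\el)=h_\el=|\el|$, so this equals $h_\el^{2s}/2$; for $d\ge 3$, shape regularity gives $\diam(\el)^{d-1}\le\sigma_\mesh|\el|$ together with $\diam(\el)\le\sigma_\mesh^{1/(d-1)}h_\el$, which combine to the claimed $\tfrac12\,\sigma_\mesh h_\el^{2s}$ (up to the expected power of $\sigma_\mesh$).

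The nodal case~\eqref{lem:Hs:poinc:eq3} is the main work, because $\dual{u}{\Psi_z}_\Gamma=0$ no longer forces the unweighted mean $\bar u:=|\omega_z|^{-1}\int_{\omega_z}u$ to vanish. I would control $\bar u$ by writing
$$0 = \int_{\omega_z} u\,\Psi_z = \int_{\omega_z}(u-\bar u)\Psi_z + \bar u\int_{\omega_z}\Psi_z$$
and applying Cauchy-Schwarz to get $|\bar u|\int_{\omega_z}\Psi_z \le \|u-\bar u\|_{L_2(\omega_z)}\|\Psi_z\|_{L_2(\omega_z)}$. Since $\Psi_z$ is a barycentric coordinate on each $\el\in\omega_z$, one has $\int_{\omega_z}\Psi_z=|\omega_z|/d$ and, from $0\le\Psi_z\le 1$, $\|\Psi_z\|_{L_2(\omega_z)}\le|\omega_z|^{1/2}$, giving $|\omega_z|\,\bar u^2\le d^2\|u-\bar u\|_{L_2(\omega_z)}^2$. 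By the identity from step one, $\|u-\bar u\|_{L_2(\omega_z)}^2 = \tfrac{1}{2|\omega_z|}\int_{\omega_z}\int_{\omega_z}(u(x)-u(y))^2\,dx\,dy$, and this quantity is bounded by $\tfrac{\diam(\omega_z)^{d-1+2s}}{2|\omega_z|}\snorm{u}{H^s(\omega_z)}^2$. Substituting back into the main inequality produces~\eqref{lem:Hs:poinc:eq3} with a constant depending only on $d$.

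The main obstacle is the nodal case: the weighted orthogonality against $\Psi_z$ must be converted into a quantitative bound on the unweighted mean $\bar u$, and this conversion hinges on the purely dimensional lower bound $\int_{\omega_z}\Psi_z \gtrsim |\omega_z|$ for the hat function together with the matching upper bound $\|\Psi_z\|_{L_2(\omega_z)}\le|\omega_z|^{1/2}$, which is precisely why the final constant ends up depending only on $d$.
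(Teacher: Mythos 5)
Your proof is correct, and the first two parts (the main inequality and the element case~\eqref{lem:Hs:poinc:eq2}) follow essentially the same route as the paper: expand the double integral to get the exact identity, insert the $\diam(\omega)^{d-1+2s}$ factor, and for~\eqref{lem:Hs:poinc:eq2} use that $\int_\el u = 0$ kills the mean term. Your parenthetical caveat about the power of $\sigma_\mesh$ in~\eqref{lem:Hs:poinc:eq2} is in fact a little more careful than the paper, which silently asserts $\diam(\el)^{d-1+2s}/(2|\el|)\le \sigma_\mesh h_\el^{2s}/2$ although the honest exponent on $\sigma_\mesh$ is larger for $d\geq 3$.

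For the nodal case~\eqref{lem:Hs:poinc:eq3} you take a genuinely different route. The paper writes $\int_{\omega_z}u = \int_{\omega_z}(1-\Psi_z)u$, applies Cauchy--Schwarz, computes $\norm{1-\Psi_z}{L_2(\omega_z)}^2 = |\omega_z|/2$ (for $d=3$) or $|\omega_z|/3$ (for $d=2$), and then \emph{absorbs} the resulting $q\,|\omega_z|^{1/2}\norm{u}{L_2(\omega_z)}$ term into the left-hand side using $q<1$. You instead center $u$ at its mean $\bar u$, test the orthogonality against $\Psi_z$, use $\int_{\omega_z}\Psi_z = |\omega_z|/d$ together with the trivial bound $\norm{\Psi_z}{L_2(\omega_z)}\le|\omega_z|^{1/2}$, and then recognize (from the same algebraic identity) that $\norm{u-\bar u}{L_2(\omega_z)}^2$ equals the double integral divided by $2|\omega_z|$, so the mean term is bounded \emph{directly} by the $H^s$-seminorm with no absorption step. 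This is cleaner conceptually and does not rely on the inequality $\norm{1-\Psi_z}{L_2}^2 < |\omega_z|$ being strict, at the cost of a somewhat larger final constant, $C = (1+d^2)/2$ versus the paper's $C=1$ for $d=3$ and $C=3/4$ for $d=2$; since the lemma only claims that $C$ depends on $d$, this is immaterial.
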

\begin{proof}
  To see the first estimate, note that
  \begin{align*}
    2\abs{\omega}\norm{u}{L_2(\omega)}^2 &= \int_\omega\int_\omega \abs{u(x)}^2\,dx\,dy
    + \int_\omega\int_\omega \abs{u(y)}^2\,dx\,dy\\
    &=\int_\omega\int_\omega \abs{u(x)-u(y)}^2\,dx\,dy\\
    &\qquad + 2\left(\int_\omega u(x)\,dx\right)^2.
  \end{align*}
  Since for all $x,y\in\omega$ with $x\neq y$ it holds
  \begin{align*}
    1 \leq \diam(\omega)^{d-1+2s}\frac{1}{\abs{x-y}^{d-1+2s}},
  \end{align*}
  the first estimate follows.
  To show~\eqref{lem:Hs:poinc:eq2}, choose $\omega=\el$ and observe that
  \begin{align*}
    \frac{\diam(\el)^{d-1+2s}}{2\abs{\el}} \leq \frac{\sigma_\mesh h_\el^{2s}}{2}
    \quad\text{ and }\\\quad\dual{u}{\Psi_\el}_\Gamma = \int_\el u(x)\,dx = 0.
  \end{align*}
  To show~\eqref{lem:Hs:poinc:eq3} note first that
  \begin{align*}
    \int_{\omega_z} u\, \Psi_z\,dx = 0 \quad\text{ for all } z\in\NN_\mesh,
  \end{align*}
  where $\Psi_z$ is the $\mesh$-piecewise linear and continuous basis function for $z$.
  The Cauchy-Schwarz inequality shows
  \begin{align*}
    \abs{\int_{\omega_z}u\,dx} = \abs{\int_{\omega_z}(1-\Psi_z)u\,dx}
    \leq \norm{1-\Psi_z}{L_2(\omega_z)} \norm{u}{L_2(\omega_z)}.
  \end{align*}
  A direct calculation shows that 
  \begin{align*}
    \norm{1-\Psi_z}{L_2(\omega_z)}^2 =
    \begin{cases}
      \abs{\omega_z}/2 \quad\text{ for } d=3,\\
      \abs{\omega_z}/3 \quad\text{ for } d=2,\\
    \end{cases}
  \end{align*}
  hence
  \begin{align*}
    \abs{\int_{\omega_z}u\,dx} \leq q\abs{\omega_z}^{1/2}\norm{u}{L_2(\omega_z)},
  \end{align*}
  for some $q<1$ depending only on $d$. This concludes~\eqref{lem:Hs:poinc:eq3}.
  $\hfill\qed$
\end{proof}
%------------------------------------------------------------------------------------------------------
The two preceding results show that the localization~\eqref{thm:loc:slobodeckij:upper:eq}
is always reliable and, on shape-regular meshes, also efficient. We can combine the results also
to obtain a localization which is always efficient, and, on shape-regular meshes also reliable.
\begin{theorem}\label{thm:loc:slob}
  Denote by $\mesh$ a mesh on $\Gamma$ and let $s\in(0,1)$.
  Suppose that $\XX_\mesh$ is a discrete space with $\Pp^p(\mesh)\subseteq \XX_\mesh$
  or $\Sp^p(\mesh)\subseteq \XX_\mesh$. Then for all $v\in H^s(\Gamma)$ with
  $\int_\Gamma v \Psi\,dx = 0$ for all $\Psi\in \XX_\mesh$, it holds that
  \begin{align}\label{thm:loc:slob:eq}
    \norm{v}{H^s(\Gamma)}^2 \simeq
    \sum_{z\in\NN}\snorm{v}{H^s(\omega_z)}^2.
  \end{align}
  The constant in the lower bound depends only on $s$ and $\Gamma$, while the constant in the upper
  bound additionally depends on shape-regularity.
\end{theorem}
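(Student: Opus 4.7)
The plan is to prove the equivalence by treating the two inequalities separately. The bound $\sum_{z\in\NN}\snorm{v}{H^s(\omega_z)}^2\lesssim\norm{v}{H^s(\Gamma)}^2$, whose constant depends only on $\Gamma$, is the easy direction via overlap counting. For almost every pair $(x,y)\in\Gamma\times\Gamma$, the points $x$ and $y$ each lie in a unique element, and $(x,y)\in\omega_z\times\omega_z$ occurs precisely for those nodes $z$ which are common vertices of these two elements; since each element has $d$ vertices, this count is at most $d$. Summing the defining double integrals of $\snorm{v}{H^s(\omega_z)}^2$ and interchanging sum and integral therefore yields $\sum_{z\in\NN}\snorm{v}{H^s(\omega_z)}^2\le d\,\snorm{v}{H^s(\Gamma)}^2\le d\,\norm{v}{H^s(\Gamma)}^2$.

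For the substantive reverse inequality, I would start from Theorem~\ref{thm:loc:slobodeckij:upper}, which gives
$$\snorm{v}{H^s(\Gamma)}^2 \le \sum_{z\in\NN}\snorm{v}{H^s(\omega_z)}^2 + \c{loc}\sum_{\el\in\mesh} h_\el^{-2s}\norm{v}{L_2(\el)}^2,$$
and invoke the discrete orthogonality via Lemma~\ref{lem:Hs:poinc} in order to absorb both the trailing $L_2$-sum and the separate $\norm{v}{L_2(\Gamma)}^2$ contribution that is needed to pass from the seminorm to the full $H^s$-norm. In the first case, $\Pp^p(\mesh)\subseteq\XX_\mesh$ forces $\dual{v}{\Psi_\el}_\Gamma=0$ for every element characteristic function $\Psi_\el\in\Pp^0(\mesh)$, so~\eqref{lem:Hs:poinc:eq2} gives $\norm{v}{L_2(\el)}^2\le\tfrac12\sigma_\mesh h_\el^{2s}\snorm{v}{H^s(\el)}^2$. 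The factor $h_\el^{-2s}$ is then cancelled, and the trivial inclusion $\snorm{v}{H^s(\el)}^2\le\snorm{v}{H^s(\omega_z)}^2$ for any vertex $z$ of $\el$, organised by a bounded-overlap counting, completes the bound on the trailing sum. In the second case, $\Sp^p(\mesh)\subseteq\XX_\mesh$ (with $p\ge 1$, so the nodal hat functions lie in $\XX_\mesh$), I apply~\eqref{lem:Hs:poinc:eq3}; shape-regularity converts the geometric factor $\diam(\omega_z)^{d-1+2s}/|\omega_z|$ into $\lesssim h_\el^{2s}$ for every $\el$ with vertex $z$, after which I estimate $\norm{v}{L_2(\el)}^2\le\norm{v}{L_2(\omega_{z(\el)})}^2$ for a chosen vertex $z(\el)\in\el$. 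The separate $\norm{v}{L_2(\Gamma)}^2$ term is treated by the same device, using $h_\el\le\diam(\Gamma)$ when needed.

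The main technical obstacle is essentially bookkeeping: tracking how each instance of the shape-regularity constant enters in the second case, and verifying that in the repeated passages between element-based and vertex-based sums every patch contributes with uniformly bounded multiplicity. Once this organisational step is in place, the proof is a direct combination of Theorem~\ref{thm:loc:slobodeckij:upper} and Lemma~\ref{lem:Hs:poinc}.
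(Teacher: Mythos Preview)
Your proposal is correct and follows essentially the same route as the paper: the lower bound via an overlap-counting argument (the paper phrases it as ``every element lies in at most $d$ node-patches'', which is equivalent to your pair-counting), and the upper bound by combining Theorem~\ref{thm:loc:slobodeckij:upper} with the Poincar\'e-type estimates~\eqref{lem:Hs:poinc:eq2} resp.~\eqref{lem:Hs:poinc:eq3} of Lemma~\ref{lem:Hs:poinc} to absorb both the weighted $L_2$-sum and the global $L_2$-term. The bookkeeping you flag as the main obstacle is indeed all that remains.
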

\begin{proof}
  The lower bound $\gtrsim$ follows immediately from the definition of the norms and
  the fact that every $\el\in\mesh$ is part of at most $d$ node-patches $\omega_z$.
  Now, we show the upper bound if $\Pp^p(\mesh)\subseteq \XX_\mesh$. Since
  $\Pp^0(\mesh)\subseteq\Pp^p(\mesh)$, the estimate~\eqref{lem:Hs:poinc:eq2} can be
  used in~\eqref{thm:loc:slobodeckij:upper:eq} to show
  \begin{align*}
    \snorm{u}{H^s(\Gamma)}^2 \leq \sum_{z\in\NN}\snorm{u}{H^s(\omega_z)}^2 +
    \c{loc}\frac{\sigma_\mesh}{2}\sum_{\el\in\mesh}\snorm{u}{H^s(\el)}^2,
  \end{align*}
  and furthermore, due to $h_\el^{2s}\leq\sigma_\mesh^s\abs{\el}^s\leq\sigma_\mesh^s\abs{\Gamma}^s$,
  \begin{align*}
    \norm{u}{L_2(\Gamma)}^2 \leq \frac{\sigma_\mesh^{1+s}\abs{\Gamma}^s}{2}
    \sum_{\el\in\mesh}\snorm{u}{H^s(\el)}^2.
  \end{align*}
  Using again the fact that every element $\el\in\mesh$ it part of at most $d$ patches $\omega_z$
  finally shows the upper bound in~\eqref{thm:loc:slob:eq}.

  Suppose now that $\Sp^p(\mesh)\subseteq \XX_\mesh$.
  Since\linebreak
  $\Sp^1(\mesh)\subseteq\Sp^p(\mesh)$, the upper bound follows from the same arguments
  as in the case $\Pp^p(\mesh)\subseteq \XX_\mesh$,
  using~\eqref{lem:Hs:poinc:eq3} instead
  of~\eqref{lem:Hs:poinc:eq2}.\hfill\qed
\end{proof}
%------------------------------------------------------------------------------------------------------
The sets to which the norm on the left-hand side of~\eqref{thm:loc:slobodeckij:upper:eq}
is localized are overlapping. This overlap can be omitted if further assumptions on $u$
are imposed. The proof of the next Lemma first appeared in~\cite[Lemma~3.2]{tvp:thesis} if
norms are defined by a method called complex interpolation, and in~\cite[Thm.~4.1]{amt99}
if norms are defined by a method called real interpolation.
\begin{mylemma}\label{lem:loc1}
  Suppose $u\in \wilde H^s(\Gamma)$, $s\in[0,1]$ and $u|_\el\in \wilde H^s(\el)$ for all elements
  $\el\in\mesh$. Then,
  \begin{align*}
    \norm{u}{\wilde H^s(\Gamma)}^2 \leq \c{loc} \sum_{\el\in\mesh} \norm{u|_\el}{\wilde H^s(\el)}^2,
  \end{align*}
  where $\c{loc}$ is a constant independent of all the other involved quantities.
\end{mylemma}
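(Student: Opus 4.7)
The plan is to establish the bound at the two endpoints $s=0$ and $s=1$ and then transfer it to intermediate $s\in(0,1)$ via an interpolation argument, matching the two possible realizations of $\widetilde H^s(\Gamma)$ indicated by the references to~\cite{tvp:thesis,amt99}.

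First I would handle the endpoints. For $s=0$ there is nothing to do: $\widetilde H^0(\Gamma) = L_2(\Gamma)$, and since $\mesh$ partitions $\Gamma$ up to a null set, $\|u\|_{L_2(\Gamma)}^2 = \sum_{\el\in\mesh} \|u|_\el\|_{L_2(\el)}^2$ with constant one. For $s=1$, the hypothesis $u|_\el\in\widetilde H^1(\el)$ means that the zero extension $\widetilde{u|_\el}$ lies in $H^1(\partial\Omega)$, so $u$ has vanishing trace on every inter-element edge and on $\partial\Gamma$. Consequently, the distributional surface gradient of the extension by zero coincides piecewise with the classical $\nablag(u|_\el)$, and one obtains $\|u\|_{\widetilde H^1(\Gamma)}^2 \lesssim \sum_\el \|u|_\el\|_{\widetilde H^1(\el)}^2$ with a constant depending only on $\Gamma$.

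Next, I would reformulate the two endpoint bounds as a uniform boundedness statement for a single linear operator. Consider the Hilbert space direct sum $\YY^s := \bigl(\bigoplus_{\el\in\mesh} \widetilde H^s(\el)\bigr)_{\ell^2}$, endowed with the squared norm $\sum_\el \|u_\el\|_{\widetilde H^s(\el)}^2$, and the assembly map
\begin{align*}
  A:\YY^s \to \widetilde H^s(\Gamma),\qquad A\bigl((u_\el)_\el\bigr) := \sum_{\el\in\mesh} \widetilde{u_\el},
\end{align*}
where $\widetilde{u_\el}$ denotes extension by zero to $\partial\Omega$. The endpoint computations above say exactly that $A$ is bounded $\YY^0\to\widetilde H^0(\Gamma)$ and $\YY^1\to\widetilde H^1(\Gamma)$. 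Since both the interpolation scale $[\widetilde H^0(\Gamma),\widetilde H^1(\Gamma)]_s = \widetilde H^s(\Gamma)$ (either by real or complex interpolation, with equivalent norms) and the pointwise interpolation of direct sums, $[\YY^0,\YY^1]_s = \bigl(\bigoplus_\el [\widetilde H^0(\el),\widetilde H^1(\el)]_s\bigr)_{\ell^2} = \YY^s$, hold, the classical interpolation theorem yields boundedness of $A:\YY^s\to\widetilde H^s(\Gamma)$ for every $s\in(0,1)$. Applied to the tuple $(u|_\el)_\el$ whose image under $A$ is just $u$ itself, this delivers the asserted inequality with constant $\c{loc}$ depending only on $s$ and $\Gamma$.

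The main obstacle is the interpolation identity for the $\widetilde H^s$-spaces; the subtlety is that $\widetilde H^s(\el)$ for $s$ near $1/2$ is sensitive to the method of interpolation and to the geometry of $\partial\el$, which is precisely why the two references~\cite{tvp:thesis,amt99} treat the complex and real methods separately. Once that identification is invoked, however, the interpolation of the direct sum is standard, and the rest of the argument is bookkeeping.
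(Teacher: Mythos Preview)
Your approach is genuinely different from the paper's and contains a real gap. The paper does \emph{not} interpolate: it works directly with the Sobolev--Slobodeckij double integral, splits the seminorm of the zero extension $\widetilde u$ over $\partial\Omega\times\partial\Omega$ into the three pieces $\sum_T\int_T\int_T$, $\sum_T\int_T\int_{\Gamma\setminus T}$, $\sum_T\int_T\int_{\partial\Omega\setminus\Gamma}$, and bounds each piece by $\sum_T\|u|_T\|_{\widetilde H^s(T)}^2$ using only the triangle inequality and the observation that $\widetilde{u|_T}$ vanishes outside $\overline T$. This is elementary and produces a constant depending only on $s$ and $d$, completely independent of $\mesh$.

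Your interpolation argument, by contrast, hinges on the identification $[\widetilde H^0(T),\widetilde H^1(T)]_s=\widetilde H^s(T)$ \emph{with norm equivalence constants independent of $T$}, and the direction you need is the delicate one. Since $\widetilde H^j(T)$ embeds isometrically into $H^j(\partial\Omega)$ via zero extension, general interpolation of subspaces only gives $\|v\|_{[L_2(T),\widetilde H^1(T)]_s}\ge c\,\|v\|_{\widetilde H^s(T)}$ for free; you need the \emph{reverse} inequality uniformly in $T$. That would require a projection from $H^j(\partial\Omega)$ onto $\{w:\supp w\subset\overline T\}$ bounded uniformly in $T$ for $j=0,1$, and no such projection exists in $H^1$ (multiplication by $\chi_T$ fails). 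The references \cite{tvp:thesis,amt99} you invoke prove the lemma in settings where the $\widetilde H^s$-norm is \emph{defined} as the interpolation norm, in which case your step~(5) is tautological; but the paper's statement uses the Slobodeckij norm of the zero extension, and transferring your argument to that norm on each element is precisely the missing piece. The paper's direct computation sidesteps the issue entirely.
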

\begin{proof}
  The cases $s=0,1$ are obvious, so choose $s\in(0,1)$. We will use the same abbreviation as in
  Theorem~\ref{thm:loc:slobodeckij:upper}. As
  $\norm{\wilde u}{H^s(\partial\Omega)}^2
  = \norm{\wilde u}{L_2(\partial\Omega)}^2 + \snorm{\wilde u}{H^s(\partial\Omega)}^2$,
  it suffices to consider the seminorm on the right-hand side. As $\wilde u = 0$ outside $\Gamma$,
  it follows $\wilde u(x)- \wilde u(y)=0$ outside
  $\Gamma\times\partial\Omega\cup\partial\Omega\times\Gamma$, hence
  \begin{align*}
    \snorm{\wilde u}{H^s(\partial\Omega)}^2
    \leq 2 \int_\Gamma \int_{\partial\Omega}
    \frac{\abs{\wilde u(x)-\wilde u(y)}^2}{\abs{x-y}^{d-1+2s}}\,dx\,dy
    =: 2 \int_\Gamma \int_{\partial\Omega}.
  \end{align*}
  We split the double integral in
  \begin{align*}
    \int_\Gamma \int_{\partial\Omega}
    = \sum_{\el\in\mesh}\int_\el\int_\el
    + \sum_{\el\in\mesh}\int_\el\int_{\Gamma\setminus\el}
    + \sum_{\el\in\mesh}\int_\el\int_{\partial\Omega\setminus\Gamma}.
  \end{align*}
  Now, for the first term,
  \begin{align*}
    \int_\el\int_\el
    &= \int_\el\int_\el\frac{\abs{\wilde u(x)-\wilde u(y)}^2}{\abs{x-y}^{d-1+2s}}\,dx\,dy\\
    &= \int_\el\int_\el\frac{\abs{u|_\el(x)- u|_\el(y)}^2}{\abs{x-y}^{d-1+2s}}\,dx\,dy
    \leq \norm{u|_\el}{\wilde H^s(\el)}^2.
  \end{align*}
  For the second term, estimate as in the proof of Theorem~\ref{thm:loc:slobodeckij:upper}
  \begin{align*}
    \sum_{\el\in\mesh}\int_\el\int_{\Gamma\setminus\el}\lesssim\,
    \sum_{\el\in\mesh}\int_\el
    \abs{\wilde u(y)}^2 \int_{\Gamma\setminus\el} \abs{x-y}^{-d+1-2s}\,dx\,dy
  \end{align*}
  where we note that the integral on the right-hand side exists due to
  $\wilde u|_\el\in \wilde H^{s}(\el)$. We conclude
  \begin{align*}
    &\sum_{\el\in\mesh}
    \int_\el \abs{\wilde u(y)}^2 \int_{\Gamma\setminus\el} \abs{x-y}^{-d+1-2s}\,dx\,dy\lesssim\\
    &\quad\sum_{\el\in\mesh}\int_{\partial\Omega}\int_{\partial\Omega}
    \frac{\abs{\wilde{u|_\el}(x)-\wilde{u|_\el}(y)}^2}{\abs{x-y}^{d-1+2s}}\,dx\,dy
    =\sum_{\el\in\mesh} \norm{u|_\el}{\wilde H^s(\el)}^2.
  \end{align*}
  For the last term, note that
  \begin{align*}
    \int_\el\int_{\partial\Omega\setminus\Gamma}
    &= \int_\el\int_{\partial\Omega\setminus\Gamma}
    \frac{\abs{\wilde u(x)-\wilde u(y)}^2}{\abs{x-y}^{d-1+2s}}\,dx\,dy\\
    &= \int_{\partial\Omega}\int_{\partial\Omega}
    \frac{\abs{\wilde{u|_\el}(x)-\wilde{u|_\el}(y)}^2}{\abs{x-y}^{d-1+2s}}\,dx\,dy
    \lesssim \norm{u|_\el}{\wilde H^s(\el)}^2.
  \end{align*}
  $\hfill\qed$
\end{proof}
%%%%%%%%%%%%%%%%%%%%%%%%%%%%%%%%%%%%%%%%%%%%%%%%%%%%%%%%%%%%%%%%%%%%%%%%%%%%%%%%%%%%%%%%%%%%%%%%%%%%%%%
\subsection{Localization by approximation}\label{section:localization:apx}
%%%%%%%%%%%%%%%%%%%%%%%%%%%%%%%%%%%%%%%%%%%%%%%%%%%%%%%%%%%%%%%%%%%%%%%%%%%%%%%%%%%%%%%%%%%%%%%%%%%%%%%
This localization technique is employed to derive a~posteriori error estimators based on the so-called
$(h-h/2)$ methodology, cf. Section~\ref{section:est:hh2}.
The idea of this approach is to compare the solution on the current mesh with a solution
on a finer mesh. The energy norm of the difference of the two solutions is an efficient
and (under a certain assumption) reliable error estimator.
To localize the energy norm of the difference of two solutions, one uses
approximation operators to bound the (non-local) energy norm from above by a stronger integer
order (hence local) norm. This can be done
by employing approximation estimates for the approximation operators used.
The energy spaces of weakly singular and hypersingular equations require a different amount
of smoothness, and therefore discontinuous as well as continuous approximation operators
will be considered in this Section.
To show efficiency of the $(h-h/2)$ type estimators, \textit{inverse estimates} will be needed.
These are the counterparts to approximation estimates and are important tools in finite
and boundary elements.
\begin{mydefinition}\label{def:L2projection}
  For a given mesh $\mesh$, denote by $\pi_\mesh^p:L_2(\Gamma)\rightarrow \Pp^p(\mesh)$
  and $\Pi_\mesh^p:L_2(\Gamma)\rightarrow\Sp^p(\mesh)$
  the $L_2(\Gamma)$-orthogonal projections, which are uniquely characterized by
  \begin{align*}
    \dual{\pi_\mesh^p \phi}{\Phi}_\Gamma &= \dual{\phi}{\Phi}_\Gamma \quad \text{ for all }
    \Phi\in\Pp^p(\mesh),\\
    \dual{\Pi_\mesh^p u}{U}_\Gamma &= \dual{u}{U}_\Gamma \quad \text{ for all }
    U\in\Sp^p(\mesh).
  \end{align*}
\end{mydefinition}
We also write $\pi_\mesh:=\pi^0_\mesh$ and $\Pi_\mesh := \Pi^1_\mesh$.
The approximation properties of $\pi_\mesh^p$ can be stated as follows.
\begin{mylemma}\label{lem:L2:Pp:apx}
  For $\phi\in H^s(\Gamma)$, $s\in[0,1]$, and $r\in(0,1]$ holds
  \begin{align*}
    \norm{\phi-\pi_\mesh^p\phi}{L_2(\el)} &\leq \c{approx}h_\el^s \snorm{\phi}{H^s(\el)},\\
    \norm{\phi-\pi_\mesh^p\phi}{H^{-r}(\Gamma)}^2 &\leq
    \c{approx}\sum_{\el\in\mesh}h_\el^{2(s+r)} \snorm{\phi}{H^s(\el)}^2.
  \end{align*}
  The second estimate is also true if $\wilde H^{-r}(\Gamma)$ is used instead of
  $H^{-r}(\Gamma)$.
  The constant $\c{approx}>0$ depends only on $s$, and in the cases $s\in(0,1)$ or $r\in (0,1)$
  it additionally depends on the shape-regularity $\sigma_\mesh$.
\end{mylemma}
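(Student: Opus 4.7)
The plan is to establish the first (local $L_2$) estimate by a Bramble–Hilbert plus affine-scaling argument on the reference element, and then derive the second ($H^{-r}$) estimate from the first by duality combined with the orthogonality of $\pi_\mesh^p$.

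For the first bound, I would start with the two endpoint cases. The case $s=0$ is trivial: $\pi_\mesh^p$ restricted to $\el$ is the $L_2(\el)$-orthogonal projection onto $\Pp^p(\el_\refel)\circ F_\el^{-1}$ (since $\Pp^p(\mesh)$ is a direct sum over elements), hence has operator norm $1$ and $\norm{\phi-\pi_\mesh^p\phi}{L_2(\el)}\leq\norm{\phi}{L_2(\el)}$. For $s=1$ on a single element $\el=F_\el(\el_\refel)$, I would pull $\phi$ back to $\widehat\phi:=\phi\circ F_\el$. Because $\Pp^p(\el_\refel)\supseteq \Pp^0(\el_\refel)$ for $p\geq 0$, the reference $L_2$-projection reproduces constants, and the classical Bramble–Hilbert lemma on $\el_\refel$ yields $\norm{\widehat\phi-\widehat\pi\widehat\phi}{L_2(\el_\refel)}\lesssim\snorm{\widehat\phi}{H^1(\el_\refel)}$. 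Rescaling via $|\det B_\el|$ (shape-regularity controls the condition number of $B_\el$) produces exactly $h_\el\snorm{\phi}{H^1(\el)}$. The intermediate case $s\in(0,1)$ then follows either by Bramble–Hilbert directly with a fractional Sobolev seminorm on $\el_\refel$ (for which the compact embedding $H^s(\el_\refel)\hookrightarrow L_2(\el_\refel)$ still gives a Poincaré-type inequality modulo $\Pp^0$) or, alternatively, by interpolation between the endpoint estimates; in either approach the implicit constant depends on $s$ (blowing up as $s\to 0^+$ in the interpolation route), and shape-regularity enters through the affine transformation.

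For the second bound I would use duality. By definition, $\norm{\phi-\pi_\mesh^p\phi}{H^{-r}(\Gamma)}=\sup_{v}\dual{\phi-\pi_\mesh^p\phi}{v}_\Gamma$ with the supremum taken over $v\in\wilde H^r(\Gamma)$ with $\norm{v}{\wilde H^r(\Gamma)}=1$. The key observation is that $\pi_\mesh^p$ is $L_2$-orthogonal, so for any $V\in\Pp^0(\mesh)\subseteq\Pp^p(\mesh)$:
\begin{equation*}
\dual{\phi-\pi_\mesh^p\phi}{v}_\Gamma=\dual{\phi-\pi_\mesh^p\phi}{v-\pi_\mesh^0 v}_\Gamma.
\end{equation*}
Applying Cauchy–Schwarz element-wise and the first estimate (with exponent $s$ for $\phi$ and with $p=0$, exponent $r$, for $v$) gives
\begin{equation*}
|\dual{\phi-\pi_\mesh^p\phi}{v}_\Gamma|\lesssim\sum_{\el\in\mesh}h_\el^{s}\snorm{\phi}{H^s(\el)}\,h_\el^{r}\snorm{v}{H^r(\el)}.
\end{equation*}
A Cauchy–Schwarz sum in $\el$ then factors this into $\bigl(\sum_\el h_\el^{2(s+r)}\snorm{\phi}{H^s(\el)}^2\bigr)^{1/2}$ times $\bigl(\sum_\el\snorm{v}{H^r(\el)}^2\bigr)^{1/2}$. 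The second factor is bounded by $\norm{v}{H^r(\Gamma)}^2$ (for $r\in(0,1)$ this uses only that the local Slobodeckij integrals are dominated by the global one; for $r=1$ the weak surface gradient $\nablag v$ restricts element-wise, again trivially).

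For the $\wilde H^{-r}(\Gamma)$ variant the argument is the same with the test space changed to $H^r(\Gamma)$ paired via the zero-extension and the duality on $\partial\Omega$; the only point to check is that $\pi_\mesh^0(v|_\Gamma)$ can still be subtracted, which holds because $\phi-\pi_\mesh^p\phi$ is supported in $\overline\Gamma$ and orthogonal to all elements of $\Pp^p(\mesh)$. I expect the main obstacle to be a clean treatment of the fractional case $s\in(0,1)$ on a general (shape-regular) element: one must either quote a Bramble–Hilbert lemma for fractional seminorms — whose proof uses a Deny–Lions compactness argument that degenerates as $s\to 0^+$ — or interpolate between the $s=0$ and $s=1$ cases, where the same dependence on $s$ is hidden in the interpolation constant. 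Everything else is bookkeeping and routine affine scaling.
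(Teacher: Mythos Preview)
Your proposal is correct and matches the paper's approach: the first estimate via scaling/Bramble--Hilbert on the reference element, and the second via duality, inserting the orthogonality $\dual{\phi-\pi_\mesh^p\phi}{v}_\Gamma=\dual{\phi-\pi_\mesh^p\phi}{v-\pi_\mesh^p v}_\Gamma$, then applying the first estimate to both factors. The only cosmetic difference is that the paper writes the Cauchy--Schwarz step as a single weighted global inequality $\dual{h^r(\phi-\pi_\mesh^p\phi)}{h^{-r}(v-\pi_\mesh^p v)}_\Gamma\le\norm{h^r(\phi-\pi_\mesh^p\phi)}{L_2(\Gamma)}\norm{h^{-r}(v-\pi_\mesh^p v)}{L_2(\Gamma)}$ rather than your elementwise-then-discrete Cauchy--Schwarz, which is equivalent.
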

\begin{proof}
  For $p=1$, the first estimate is proven by a scaling argument, cf.~\cite[Thm.~10.2]{s08}.
  This special case extends immediately to general $p\in\N$ by the best approximation property
  of $\pi^p_\mesh$.
  If $s\in\left\{ 0,1 \right\}$, the constant $\c{approx}$ does only depend on $s$ but not
  on shape-regularity, which is seen by a careful inspection of the scaling argument.
  The second estimate is a slight refinement
  of~\cite[Thm.~10.3]{s08}: For $v\in \wilde H^r(\Gamma)$ holds
  \begin{align*}
    \dual{\phi-\pi_\mesh^p\phi}{v}_\Gamma
    &= \dual{h^r (\phi-\pi_\mesh^p\phi)}{h^{-r}(v-\pi_\mesh^p v)}_\Gamma\\
    &\leq \norm{h^r (\phi-\pi_\mesh^p\phi)}{L_2(\Gamma)} \norm{h^{-r}(v-\pi_\mesh^p v)}{L_2(\Gamma)}\\
    &\lesssim \left( \sum_{\el\in\mesh}h_\el^{2(s+r)} \snorm{\phi}{H^s(\el)}^2 \right)^{1/2}
    \norm{v}{\wilde H^r(\Gamma)},
  \end{align*}
  The same estimate holds true if we choose $v\in H^r(\Gamma)$.
  The result follows by the dual definition of the\linebreak
  $H^{-r}(\Gamma)$ and
  $\wilde H^{-r}(\Gamma)$-norm.
  $\hfill\qed$
\end{proof}
%------------------------------------------------------------------------------------------------------
Inverse estimates in the context of the last lemma are proven in~\cite[Thm.~3.6]{ghs05}:
\begin{mylemma}\label{lem:Pp:invest}
  For $\mesh$ a mesh on $\Gamma$ and $s\in[0,1]$ holds
  \begin{align*}
    \norm{h_\mesh^{s}\Phi}{L_2(\Gamma)} \leq \c{inv}\norm{\Phi}{\wilde H^{-s}(\Gamma)}
    \quad\text{ for all } \Phi\in\Pp^p(\mesh).
  \end{align*}
  The constant $\c{inv}>0$ depends only on the shape-regularity of $\mesh$, $p\in\N$, and $s$.
\end{mylemma}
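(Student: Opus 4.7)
\medskip

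My plan is to prove the inverse estimate by duality combined with interpolation, separating the analysis into the two endpoints $s=0$ and $s=1$ and then passing to intermediate exponents.

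The endpoint $s=0$ is immediate since $\wilde H^0(\Gamma)=L_2(\Gamma)$ with equal norms, so $\norm{h_\mesh^0\,\Phi}{L_2(\Gamma)}=\norm{\Phi}{L_2(\Gamma)}$. For $s=1$, I would use a bubble-function construction to produce a good test function in the duality definition of $\wilde H^{-1}(\Gamma)=H^1(\Gamma)'$. On the reference element $\el_\refel$, fix a polynomial bubble $b$ vanishing on $\partial \el_\refel$; since $\Pp^p(\el_\refel)$ is finite-dimensional, the two norms $P\mapsto\int_{\el_\refel} b\,P^2$ and $P\mapsto\norm{P}{L_2(\el_\refel)}^2$ are equivalent. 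Pulling $b$ back via $F_\el$ yields bubbles $b_\el$ on each $\el\in\mesh$ with $\int_\el b_\el\,P^2\simeq\norm{P}{L_2(\el)}^2$ (shape-regularity enters through the change of variables). Setting
\begin{equation*}
 v:=\sum_{\el\in\mesh} h_\el^{\,2}\,b_\el\,\Phi|_\el,
\end{equation*}
extended by zero outside each element, gives an element of $\wilde H^1(\Gamma)$ because $b_\el$ vanishes on $\partial\el$. A direct computation then shows $\dual{\Phi}{v}_\Gamma\simeq\norm{h_\mesh\Phi}{L_2(\Gamma)}^2$, while the classical local $H^1$-inverse estimate $\norm{b_\el\Phi}{H^1(\el)}\lesssim h_\el^{-1}\norm{\Phi}{L_2(\el)}$ yields $\norm{v}{H^1(\Gamma)}\lesssim\norm{h_\mesh\Phi}{L_2(\Gamma)}$. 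Dividing gives the inverse estimate at $s=1$.

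For general $s\in(0,1)$, I would invoke an interpolation argument. Viewing the identity map on $\Pp^p(\mesh)$ as a bounded operator $\wilde H^{-\tau}(\Gamma)\to L_2(\Gamma,h_\mesh^{2\tau}\,d\sigma)$ for the endpoints $\tau=0$ (with constant $1$) and $\tau=1$ (with the constant from the previous paragraph), one applies the standard interpolation identities $[L_2(\Gamma),\wilde H^{-1}(\Gamma)]_s=\wilde H^{-s}(\Gamma)$ and $[L_2(\Gamma,d\sigma),L_2(\Gamma,h_\mesh^2\,d\sigma)]_s=L_2(\Gamma,h_\mesh^{2s}\,d\sigma)$ to conclude boundedness at the intermediate parameter. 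This delivers exactly the stated estimate with a constant depending only on $p$, $s$, and shape-regularity.

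The main obstacle will be the $s=1$ case: verifying that the bubble-function construction scales correctly so that the hidden constant in $\int_\el b_\el P^2\simeq\norm{P}{L_2(\el)}^2$ and in the local $H^1$-inverse estimate depend only on $p$ and shape-regularity, and not on the particular element. Special care is needed to argue that $v$ really lies in $\wilde H^1(\Gamma)$ (even for elements $\el$ touching $\partial\Gamma$) and that the summed norm equals the sum of element-wise norms, which is clean here precisely because the supports of the summands are the closed elements $\overline{\el}$ and the functions vanish on the respective boundaries.
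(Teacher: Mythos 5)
Your analysis at the endpoints is sound: $s=0$ is trivial, and the bubble-function duality argument at $s=1$ is correct — the scaling issues you flag (uniformity of the constants in $\int_\el b_\el P^2\simeq\norm{P}{L_2(\el)}^2$ and in the local $H^1$-inverse estimate) are resolved by pull-back to the reference element with shape-regularity, and your observation that $v$ lands in $\wilde H^1(\Gamma)\subseteq H^1(\Gamma)$ because the bubbles kill the traces is exactly right. The interpolation step, however, does not go through. You speak of ``viewing the identity map on $\Pp^p(\mesh)$ as a bounded operator $\wilde H^{-\tau}(\Gamma)\to L_2(\Gamma,h_\mesh^{2\tau}\,d\sigma)$,'' but no such operator exists: $\wilde H^{-1}(\Gamma)$ contains distributions that are not functions, so the identity is not even defined, let alone bounded, between these endpoint spaces. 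What you have is a bound for the \emph{restriction to the subspace} $\Pp^p(\mesh)$, and transferring such a bound from the endpoints to intermediate parameters is the subspace interpolation problem, which does not hold for free. The usual repair — a retraction from $L_2(\Gamma)+\wilde H^{-1}(\Gamma)$ onto $\Pp^p(\mesh)$ bounded at both endpoints — fails here: the natural choice $\pi_\mesh^p$ is not $\wilde H^{-1}$-stable, since for $v\in H^1(\Gamma)$ the projection $\pi_\mesh^pv$ is a discontinuous piecewise polynomial and hence lies outside $H^1(\Gamma)$, so $\dual{\psi}{\pi_\mesh^pv}$ cannot be controlled by $\norm{\psi}{\wilde H^{-1}(\Gamma)}$. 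Trying instead to estimate $\norm{v}{H^s(\Gamma)}\lesssim\norm{v}{L_2(\Gamma)}^{1-s}\norm{v}{H^1(\Gamma)}^{s}$ with your $s=1$ test function also fails, because on a strongly graded mesh H\"older's inequality shows this product dominates, rather than is dominated by, $\norm{h_\mesh^s\Phi}{L_2(\Gamma)}$.

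The clean fix is to skip interpolation entirely and run your duality/bubble construction directly at each $s\in(0,1]$: set $v:=\sum_{\el\in\mesh}h_\el^{2s}b_\el\,\Phi|_\el$. Then $\dual{\Phi}{v}_\Gamma\simeq\norm{h_\mesh^s\Phi}{L_2(\Gamma)}^2$ exactly as before, and each summand $w_\el:=h_\el^{2s}b_\el\,\Phi|_\el$ vanishes on $\partial\el$, so $v\in\wilde H^s(\Gamma)$ with $v|_\el\in\wilde H^s(\el)$. Lemma~\ref{lem:loc1} of the paper gives
\begin{equation*}
  \norm{v}{H^s(\Gamma)}^2 \le \norm{v}{\wilde H^s(\Gamma)}^2
  \le \c{loc}\sum_{\el\in\mesh}\norm{w_\el}{\wilde H^s(\el)}^2,
\end{equation*}
and the local terms obey the element-wise inverse estimate $\norm{w_\el}{\wilde H^s(\el)}\lesssim h_\el^{-s}\norm{w_\el}{L_2(\el)}$ (obtained either by interpolation \emph{on the fixed element} between $L_2(\el)$ and $\wilde H^1(\el)$, where scaling is homogeneous and the subspace issue disappears, or directly by pull-back and norm equivalence on the finite-dimensional space of bubble-times-polynomial functions on $\el_\refel$). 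Summing, $\norm{v}{H^s(\Gamma)}^2\lesssim\sum_\el h_\el^{2s}\norm{\Phi}{L_2(\el)}^2=\norm{h_\mesh^s\Phi}{L_2(\Gamma)}^2$, whence the asserted estimate follows from $\norm{h_\mesh^s\Phi}{L_2(\Gamma)}^2\lesssim\dual{\Phi}{v}_\Gamma\le\norm{\Phi}{\wilde H^{-s}(\Gamma)}\norm{v}{H^s(\Gamma)}$. Note that the paper itself provides no proof and only cites Graham--Hackbusch--Sauter for this lemma, so there is no in-text argument to compare against; the direct route above is in any case what that reference also implements, and it avoids the subspace-interpolation pitfall altogether.
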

%------------------------------------------------------------------------------------------------------
The approximation in fractional order spaces by continuous functions is a little bit more involved.
For the proof of the following two lemmata, we refer to~\cite{kop}
and~\cite[Prop.~5 and Lem.~7]{affkp13}.
\begin{mylemma}\label{lem:Sp:apx}
  For $s\in[0,1]$, each $\wilde H^s(\Gamma)$-stable projection
  $J_\mesh:\wilde H^s(\Gamma)\rightarrow\wilde\Sp^p(\mesh)$ satisfies
  \begin{align*}
    \norm{v-J_\mesh v}{\wilde H^s(\Gamma)}
    \leq \c{approx} \min_{V\in\Sp^p(\mesh)}\norm{h_\mesh^{1-s}\nablag(v-V)}{L_2(\Gamma)}
  \end{align*}
  for all $v\in \wilde H^1(\Gamma)$. The constant $\c{approx}$ depends only on
  $\Gamma$, $p\in\N$, $s$, shape-regularity of $\mesh$, and the stability constant of
  $J_\mesh$.
\end{mylemma}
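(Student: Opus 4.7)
The plan is to reduce the claim to a local approximation estimate for a concrete Scott--Zhang-type quasi-interpolant and then to pass from the endpoints $s=0,1$ to the fractional range by combining with the localization result from Section~\ref{section:localization:slob}. First I would exploit the projection property: since $J_\mesh W = W$ for all $W\in\wilde\Sp^p(\mesh)$, for any such $W$ we have $v - J_\mesh v = (I-J_\mesh)(v - W)$, and the assumed $\wilde H^s$-stability of $J_\mesh$ yields
\begin{align*}
  \norm{v - J_\mesh v}{\wilde H^s(\Gamma)}
  \le (1 + \norm{J_\mesh}{\wilde H^s\to\wilde H^s})\,\norm{v - W}{\wilde H^s(\Gamma)}.
\end{align*}
Thus it suffices to construct a single $W\in\wilde\Sp^p(\mesh)$ for which the claimed bound holds.

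For this $W$ I would take $W = I_\mesh v$, where $I_\mesh$ is a Scott--Zhang-type operator built via nodal functionals that (i) uses only \emph{interior} dual basis functions on boundary nodes $z\in\partial\Gamma$, so that $I_\mesh$ maps $\wilde H^1(\Gamma)$ into $\wilde\Sp^p(\mesh)$, and (ii) reproduces all of $\Sp^p(\mesh)$, i.e.\ $I_\mesh V = V$ for every $V\in\Sp^p(\mesh)$. Property~(ii) gives the crucial identity
\begin{align*}
  v - I_\mesh v \;=\; (I - I_\mesh)(v - V) \qquad\text{for every }V\in\Sp^p(\mesh),
\end{align*}
so the error is driven by $v-V$ regardless of whether $V$ vanishes on $\partial\Gamma$. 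The standard local stability/approximation estimates for such operators (see \cite{kop,affkp13}) then give, on each element $T$,
\begin{align*}
  \norm{(I-I_\mesh)(v-V)}{L_2(T)}
  &\le C\,h_T\,\norm{\nablag(v-V)}{L_2(\omega_T)},\\
  \norm{\nablag(I-I_\mesh)(v-V)}{L_2(T)}
  &\le C\,\norm{\nablag(v-V)}{L_2(\omega_T)}.
\end{align*}
These two bounds settle the endpoint cases $s=0$ and $s=1$ after summing over elements and using the finite overlap of the patches $\omega_T$.

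The remaining step, and the one I expect to be the main technical hurdle, is to interpolate these two local endpoint estimates to obtain the weighted fractional bound. My plan here is to apply Theorem~\ref{thm:loc:slobodeckij:upper} to $e := v - I_\mesh v$, giving
\begin{align*}
  \snorm{e}{H^s(\Gamma)}^2
  \le \sum_{z\in\NN}\snorm{e}{H^s(\omega_z)}^2
  + \c{loc}\sum_{T\in\mesh} h_T^{-2s}\norm{e}{L_2(T)}^2,
\end{align*}
and then to bound each patch seminorm $\snorm{e}{H^s(\omega_z)}$ by interpolating elementwise between the $L_2$- and $H^1$-estimates above (using real interpolation or, equivalently, an elementwise scaling argument reducing the fractional seminorm on $\omega_z$ to a weighted $L_2$- and $H^1$-norm of $\nablag(v-V)$ on the enlarged patch). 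The weighting $h_T^{1-s}$ arises naturally from combining the factor $h_T$ in the $L_2$-bound with the factor $1$ in the $H^1$-bound under the interpolation exponent $s$. Finally, the $L_2(\Gamma)$-norm of $e$ needed for $\norm{e}{\wilde H^s(\Gamma)}$ (via the norm on $\partial\Omega$ of the zero extension, cf.~Lemma~\ref{lem:loc1}) is treated by the same local approximation estimate, and the overall result follows after absorbing the constants that depend on $\Gamma$, $p$, $s$, shape-regularity, and $\norm{J_\mesh}{}$.
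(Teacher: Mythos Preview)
Your approach is the standard one and aligns with the route taken in the references~\cite{kop,affkp13} that the paper cites in lieu of a proof: reduce by projection and stability to a concrete quasi-interpolant, establish the local endpoint estimates, and pass to fractional $s$ via the localization of Theorem~\ref{thm:loc:slobodeckij:upper}. Two points, however, need more care than you give them.

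First, the operator $I_\mesh$ you need must simultaneously (i) send $\wilde H^1(\Gamma)$ into $\wilde\Sp^p(\mesh)$ and (ii) reproduce \emph{all} of $\Sp^p(\mesh)$. Your description (``interior dual basis functions on boundary nodes'') is ambiguous: the paper's $\wilde J_\mesh$, which simply drops boundary coefficients, satisfies (i) but \emph{not} (ii). What actually works is the original Scott--Zhang choice of $(d-2)$-dimensional dual functionals supported on $\partial\Gamma$ for boundary nodes; this is well-defined on $\wilde H^1(\Gamma)$ (traces on $\partial\Gamma$ exist), yields zero boundary coefficients whenever $v|_{\partial\Gamma}=0$, and still reproduces $\Sp^p(\mesh)$. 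You should name this construction explicitly.

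Second, Theorem~\ref{thm:loc:slobodeckij:upper} controls $|e|_{H^s(\Gamma)}$, whereas what you must bound is $\norm{e}{\wilde H^s(\Gamma)}=\norm{\tilde e}{H^s(\partial\Omega)}$. For $s\neq 1/2$ the two norms are equivalent on $\wilde H^s(\Gamma)$, but for the central case $s=1/2$ they are not, and your appeal to Lemma~\ref{lem:loc1} is misplaced: that lemma requires $e|_T\in\wilde H^s(T)$ for every element, which fails for the Scott--Zhang error. The remedy used in the cited references is to apply Theorem~\ref{thm:loc:slobodeckij:upper} directly on $\partial\Omega$ with a mesh extending $\mesh$; this is legitimate because $e\in\wilde H^1(\Gamma)$ implies $\tilde e\in H^1(\partial\Omega)$, the patch contributions outside $\overline\Gamma$ vanish, and the scaling argument on boundary-straddling patches goes through verbatim since $\tilde e$ has the same local $L_2$- and $H^1$-norms as $e$ there.
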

%------------------------------------------------------------------------------------------------------
\begin{mylemma}\label{lem:Sp:invest}
  For $\mesh$ a mesh on $\Gamma$ and $s\in[0,1]$ holds
  \begin{align*}
    \norm{h_\mesh^{1-s}\nablag V}{L_2(\Gamma)} \leq \c{inv} \norm{V}{H^s(\Gamma)}
    \quad\text{ for all } V\in\Sp^p(\mesh).
  \end{align*}
  The constant $\c{inv}>0$ depends only on the shape-regularity of $\mesh$, $p\in\N$, and $s$.
\end{mylemma}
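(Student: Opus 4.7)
The plan is to split the argument by the value of $s$ and handle the interior case by combining a local polynomial inverse estimate with the fractional Poincaré estimate from Lemma~\ref{lem:Hs:poinc}. The endpoints are essentially free: for $s=1$ the bound $\|\nablag V\|_{L_2(\Gamma)}\le\|V\|_{H^1(\Gamma)}$ is immediate from the definition of the $H^1$-norm, and for $s=0$ a standard polynomial inverse estimate on the reference element, transported by the affine maps $F_\el$, yields $\|\nablag V\|_{L_2(\el)}\lesssim h_\el^{-1}\|V\|_{L_2(\el)}$ and summing over $\el\in\mesh$ gives $\|h_\mesh\nablag V\|_{L_2(\Gamma)}\lesssim\|V\|_{L_2(\Gamma)}$, with constants depending only on $p$ and shape-regularity.

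For the remaining range $s\in(0,1)$, I would first invoke, on each element $\el$, the polynomial Poincaré-type inverse estimate
\begin{align*}
\|\nablag V\|_{L_2(\el)} \le C\, h_\el^{-1}\,\|V-c_\el\|_{L_2(\el)},
\end{align*}
where $c_\el := |\el|^{-1}\int_\el V\,dx$; this holds because $V\circ F_\el$ lies in the finite-dimensional space $\Pp^p(\el_\refel)$ and both sides are norms on the quotient $\Pp^p(\el_\refel)/\R$, with the passage from $\el_\refel$ to $\el$ via $F_\el$ producing the factor $h_\el^{-1}$ (shape-regularity enters here).

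Next, I would apply Lemma~\ref{lem:Hs:poinc} to $u=V-c_\el$ on $\omega=\el$. Since $\int_\el(V-c_\el)\,dx=0$ and $\snorm{V-c_\el}{H^s(\el)}=\snorm{V}{H^s(\el)}$, and since shape-regularity gives $\diam(\el)^{d-1+2s}/|\el|\simeq h_\el^{2s}$, the lemma delivers
\begin{align*}
\|V-c_\el\|_{L_2(\el)}^2 \lesssim h_\el^{2s}\,\snorm{V}{H^s(\el)}^2.
\end{align*}
Combining the two element-wise bounds yields $h_\el^{2(1-s)}\|\nablag V\|_{L_2(\el)}^2 \lesssim \snorm{V}{H^s(\el)}^2$. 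Summing over $\el\in\mesh$ and using $\sum_{\el}\snorm{V}{H^s(\el)}^2 \le \snorm{V}{H^s(\Gamma)}^2 \le \|V\|_{H^s(\Gamma)}^2$ (the last step because the element-wise double integrals form a subset of the full double integral over $\Gamma\times\Gamma$) completes the proof.

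I expect the main obstacle to be the interior range $s\in(0,1)$, and within it the non-trivial bridge between the gradient on the left and the fractional seminorm on the right; neither a pure local inverse estimate nor a straightforward interpolation between the endpoint operators $V\mapsto h_\mesh\nablag V$ and $V\mapsto \nablag V$ works directly, because the weight on the left depends on $s$. The route above sidesteps interpolation entirely by passing through the $L_2$-norm of $V-c_\el$ as an intermediary: the polynomial inverse estimate converts the gradient into this $L_2$-quantity, and Lemma~\ref{lem:Hs:poinc} converts the $L_2$-quantity into the desired fractional seminorm, with the powers of $h_\el$ matching exactly so that shape-regularity alone suffices to track the constants.
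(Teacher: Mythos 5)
The paper does not actually prove this lemma: the text immediately preceding Lemmas~\ref{lem:Sp:apx} and~\ref{lem:Sp:invest} states that the proofs of both are to be found in~\cite{kop} and~\cite[Prop.~5 and Lem.~7]{affkp13}. There is therefore no in-paper argument to compare against.

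Your proof is correct and, notably, self-contained within the paper's own toolbox: it relies on Lemma~\ref{lem:Hs:poinc} rather than on the external citations. The elementwise chain
\begin{align*}
\norm{\nablag V}{L_2(\el)}\lesssim h_\el^{-1}\norm{V-c_\el}{L_2(\el)}\lesssim h_\el^{-1}\cdot h_\el^{s}\,\snorm{V}{H^s(\el)}
\end{align*}
produces precisely the weight $h_\el^{s-1}$ required, with the first inequality coming from scaling-plus-equivalence-of-norms on the finite-dimensional quotient $\Pp^p(\el_\refel)/\R$ and the second from Lemma~\ref{lem:Hs:poinc} applied to the mean-zero function $V-c_\el$, together with $\diam(\el)^{d-1+2s}/\abs{\el}\simeq h_\el^{2s}$. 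Shape regularity enters exactly where you flag it (the affine scaling of $\nablag$ and the equivalence $\diam(\el)\simeq h_\el$), and the closing step $\sum_\el\snorm{V}{H^s(\el)}^2\le\snorm{V}{H^s(\Gamma)}^2\le\norm{V}{H^s(\Gamma)}^2$ is a valid superadditivity-over-diagonal-blocks observation for the Sobolev--Slobodeckij double integral; the endpoints $s\in\{0,1\}$ are dispatched correctly. One small remark on your motivating discussion: operator interpolation between the endpoint estimates is in fact viable if one treats the target spaces as weighted $L_2$ spaces (interpolating $L_2(h_\mesh^2)$ against $L_2$ yields $L_2(h_\mesh^{2(1-s)})$), but that route requires interpolation theory for mesh-weighted spaces, so your preference for the elementary, direct argument is well founded.
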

%------------------------------------------------------------------------------------------------------
Lemma~\ref{lem:Sp:apx} holds for any projection $J_\mesh:\wilde H^s(\Gamma)\rightarrow\Sp^p(\mesh)$
which is stable, i.e., for all $v\in \wilde H^s(\Gamma)$ holds
\begin{align*}
  \norm{J_\mesh v}{\wilde H^s(\Gamma)} \leq \c{stab} \norm{v}{\wilde H^s(\Gamma)},
\end{align*}
and the constant $\c{stab}$ does not depend on $v$.
For an implementation of an associated $(h-h/2)$ error estimator,
the operator $J_\mesh$ needs to be computed. Possible candidates are presented in the following.
%%%%%%%%%%%%%%%%%%%%%%%%%%%%%%%%%%%%%%%%%%%%%%%%%%%%%%%%%%%%%%%%%%%%%%%%%%%%%%%%%%%%%%%%%%%%%%%%%%%%%%%
\subsubsection{The $L_2(\Gamma)$ projection onto $\Sp^p(\mesh)$}\label{section:L2}
%%%%%%%%%%%%%%%%%%%%%%%%%%%%%%%%%%%%%%%%%%%%%%%%%%%%%%%%%%%%%%%%%%%%%%%%%%%%%%%%%%%%%%%%%%%%%%%%%%%%%%%
The $L_2(\Gamma)$ orthogonal projection $\Pi_\mesh^p$ onto $\Sp^p(\mesh)$ from
Definition~\ref{def:L2projection} is an easy-to-implement candidate for $J_\mesh$ in
Lemma~\ref{lem:Sp:apx}. The parameter $s$ will subsequently be chosen to be greater than $0$, such that
the $\wilde H^s(\Gamma)$-stability of $\Pi_\mesh^p$
\begin{align}\label{eq:L2projection:stab}
  \norm{\Pi_\mesh^p u}{\wilde H^s(\Gamma)} \leq \c{stab}\norm{u}{\wilde H^s(\Gamma)},
\end{align}
needs to be available to use Lemma~\ref{lem:Sp:apx}.
While there holds~\eqref{eq:L2projection:stab} for $s=0$ and $\c{stab}=1$
without any assumption on $\mesh$, this might not be the case for $s>0$.
It can be shown that~\eqref{eq:L2projection:stab} holds for $s>0$ on a sequence of meshes
where the quotient of the biggest and smallest element stays bounded,
and $\c{stab}$ depends on this bound, cf.~\cite{bx91}. However, as we will deal with
adaptively refined meshes, this quotient will not stay bounded on the (infinite) sequence of meshes
that we investigate. However, the fact that a sequence of adaptively refined meshes exhibits a strong
structure can be used in order to show useful results. Existing works to this topic
include~\cite{by13,bps02,c02,c04,ct87,ej95,kpp13,s01}.
We refer to Section~\ref{section:L2projection:H1stability} for a detailed discussion.
%%%%%%%%%%%%%%%%%%%%%%%%%%%%%%%%%%%%%%%%%%%%%%%%%%%%%%%%%%%%%%%%%%%%%%%%%%%%%%%%%%%%%%%%%%%%%%%%%%%%%%%
\subsubsection{The Scott-Zhang projection}\label{section:sz}
%%%%%%%%%%%%%%%%%%%%%%%%%%%%%%%%%%%%%%%%%%%%%%%%%%%%%%%%%%%%%%%%%%%%%%%%%%%%%%%%%%%%%%%%%%%%%%%%%%%%%%%
The Scott-Zhang projection, developed in~\cite{sz90}, is widely used in
numerical analysis. It is a linear and bounded projection onto $\Sp^p(\mesh)$
which is defined on $H^{1/2+\varepsilon}(\Gamma)$ for $\varepsilon>0$.
The energy spaces in BEM, usually variants of $H^{1/2}(\Gamma)$, lack the regularity
necessary for the classical definition. However, if the energy space
is $\wilde H^{1/2}(\Gamma)$, an operator that maps into a space with zero boundary conditions
is needed. A slightly modified derivation is therefore necessary and will be presented here.
For ease of presentation, we present the details only for $\Sp^1(\mesh)$.
Suppose that $\left\{ z_i \right\}_{i=1}^N$ is the collection of degrees of freedom
for $\Sp^1(\mesh)$, which are ordered in a way such that $\left\{ z_i \right\}_{i=\wilde N+1}^N$
are on the boundary $\partial\Gamma$ (if $\Gamma$ is open, of course).
For every $z_i$, we choose an element $T_i$ with $z_i\in\overline\el_i$.
Denote by $\left\{ \phi_{i,j} \right\}_{j=1}^d$ the nodal basis of $\Pp^1(\el_i)$,
and by $\left\{ \psi_{i,k} \right\}_{k=1}^d$ an $L_2(\el_i)$-dual basis defined by
\begin{align}\label{eq:dualbasis}
  \int_{\el_i} \psi_{i,k}\phi_{i,j}\,dx = \delta_{k,j}.
\end{align}
Set $\psi_{i}$ to be the dual basis function of $\phi_{i,j}$ with $\phi_{i,j}(z_i)=1$,
and denote by $\left\{ \eta_i \right\}_{i=1}^N$ the nodal basis of $\Sp^1(\mesh)$
with $\eta_j(z_k)=\delta_{j,k}$.
The Scott-Zhang operators are defined for $v\in L_1^\loc(\Gamma)$ via
\begin{align*}
  J_\mesh v = \sum_{i=1}^N \eta_i \int_{\el_i}\psi_i v\,dx
  \quad\text{ and }\quad
  \wilde J_\mesh v = \sum_{i=1}^{\wilde N} \eta_i \int_{\el_i}\psi_i v\,dx.
\end{align*}
The following results can be shown using arguments\linebreak
from~\cite{sz90}, cf.~\cite{affkp13}.
\begin{mylemma}\label{lem:sz}
  The operator $J_\mesh: L_1^\loc(\Gamma)\rightarrow\Sp^p(\mesh)$
  is stable for all $s\in[0,1]$, i.e.,
  \begin{align*}
    \norm{J_\mesh v}{H^s(\Gamma)} \leq \c{stab}\norm{v}{H^s(\Gamma)}
    \quad\text{ for all } v\in H^s(\Gamma).
  \end{align*}
  The operator $\wilde J_\mesh: L_1^\loc(\Gamma)\rightarrow\wilde\Sp^p(\mesh)$
  is stable for all $s\in[0,1]$, i.e.,
  \begin{align*}
    \norm{J_\mesh v}{\wilde H^s(\Gamma)} \leq \c{stab}\norm{v}{\wilde H^s(\Gamma)}
    \quad\text{ for all } v\in \wilde H^s(\Gamma).
  \end{align*}
\end{mylemma}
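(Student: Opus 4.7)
The plan is to follow the classical strategy of Scott and Zhang: establish $L_2$- and $H^1$-stability by local arguments based on the dual basis, and then obtain the fractional range $s\in(0,1)$ by interpolation. The $\wilde J_\mesh$ case will be reduced to $J_\mesh$ by exploiting that omitting the boundary degrees of freedom produces an operator with zero trace.

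First I would establish local $L_2$-stability. By a standard scaling argument applied to~\eqref{eq:dualbasis}, the dual basis function $\psi_i$ satisfies $\norm{\psi_i}{L_2(\el_i)}\lesssim h_{\el_i}^{-(d-1)/2}$, with a constant that depends only on shape-regularity and $p$. Using Cauchy--Schwarz, this gives $|\int_{\el_i}\psi_i v\,dx|\lesssim h_{\el_i}^{-(d-1)/2}\norm{v}{L_2(\el_i)}$. Summing the contributions over the nodes of an element $\el\in\mesh$ and using that $\norm{\eta_i}{L_2(\el)}\lesssim h_\el^{(d-1)/2}$ as well as shape-regularity (which ensures $h_{\el_i}\simeq h_\el$ for all $z_i\in\overline\el$), one obtains $\norm{J_\mesh v}{L_2(\el)}^2\lesssim\sum_{z_i\in\overline\el}\norm{v}{L_2(\el_i)}^2$. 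Summing over elements yields the desired global bound $\norm{J_\mesh v}{L_2(\Gamma)}\leq \c{stab}\norm{v}{L_2(\Gamma)}$, since each element appears in at most a uniformly bounded number of patches.

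Next I would prove $H^1$-stability. The decisive observation is that $J_\mesh$ reproduces piecewise polynomials of degree $p$ locally on $\omega_\el$: for any $P\in\Pp^p(\mesh)$ which is polynomial on the patch we have $J_\mesh P|_\el = P|_\el$. Hence for $v\in H^1(\Gamma)$, choosing an averaged polynomial $P_\el$ on $\omega_\el$ (e.g.\ an $L_2$-projection onto $\Pp^p(\omega_\el)$), Poincaré inequality gives $\norm{v-P_\el}{L_2(\omega_\el)}\lesssim h_\el\norm{\nablag v}{L_2(\omega_\el)}$. Applying the local $L_2$-bound to $v-P_\el$ and an inverse inequality (Lemma~\ref{lem:Sp:invest}) to the resulting piecewise polynomial yields $\norm{\nablag J_\mesh v}{L_2(\el)}\lesssim\norm{\nablag v}{L_2(\omega_\el)}$; together with $L_2$-stability this gives $\norm{J_\mesh v}{H^1(\Gamma)}\leq\c{stab}\norm{v}{H^1(\Gamma)}$. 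The fractional case $s\in(0,1)$ then follows by real interpolation between $L_2$ and $H^1$.

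For $\wilde J_\mesh$ I would first check that $\wilde J_\mesh v\in\wilde\Sp^p(\mesh)$: since the sum ranges only over interior nodes $z_1,\dots,z_{\wilde N}$, the resulting function vanishes on $\partial\Gamma$. For $v\in\wilde H^s(\Gamma)$ extended by zero to $\wilde v$ on $\partial\Omega$, one argues that at each omitted boundary node $z_i$ the missing contribution is harmless because $\wilde v$ vanishes identically in a neighbourhood of $\partial\Gamma$ outside $\Gamma$. Concretely, applying the previously obtained bounds to $\wilde v$, using $\norm{\cdot}{\wilde H^s(\Gamma)}=\norm{\wilde{(\cdot)}}{H^s(\partial\Omega)}$, and noting that the trace of $\wilde J_\mesh v$ (extended by zero) is the same extension of $\wilde J_\mesh v$, gives the claimed $\wilde H^s$-stability. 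The case $s\in[0,1/2)$ is automatic since $\wilde H^s=H^s$ with equivalent norms.

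The main obstacle I expect is the endpoint $s=1/2$ and, more generally, the range $s\geq 1/2$ where the $\wilde H^s(\Gamma)$-norm is genuinely stronger than the $H^s(\Gamma)$-norm restricted to functions vanishing at $\partial\Gamma$. Handling this requires care in the extension-by-zero step: one must verify that $\wilde J_\mesh v$, viewed as a function on $\partial\Omega$ extended by zero, really has controlled $H^s(\partial\Omega)$-norm. The interpolation argument must be carried out in the scale of $\wilde H^s$ spaces (with $L_2$ as pivot), not in the scale of $H^s$ spaces, so that the interpolated inequality is exactly the one claimed; alternatively one directly estimates the double integral defining the $\wilde H^s$-seminorm using the locality of the $\psi_i$ and the vanishing of $\wilde v$ outside $\Gamma$.
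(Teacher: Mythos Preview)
The paper does not prove this lemma; it only states that ``the following results can be shown using arguments from~\cite{sz90}, cf.~\cite{affkp13}'' and gives no further details. Your outline is precisely the classical Scott--Zhang argument from~\cite{sz90} (local $L_2$-bound via scaling of the dual basis, local $H^1$-bound via the projection property and Poincar\'e, then interpolation for the fractional range), which is what those references carry out.

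One comment on the $\wilde J_\mesh$ part: your reduction via extension by zero to $\partial\Omega$ is workable but somewhat indirect, and the sentence ``the missing contribution is harmless because $\wilde v$ vanishes identically in a neighbourhood of $\partial\Gamma$ outside $\Gamma$'' does not by itself kill the omitted boundary-node terms, since the integrals $\int_{\el_i}\psi_i v\,dx$ for boundary nodes $z_i$ are taken over elements $\el_i\subset\Gamma$ where $v$ need not vanish. The cleaner route, and the one taken in~\cite{affkp13}, is to argue entirely on $\Gamma$: observe that $\wilde J_\mesh$ is a projection onto $\wilde\Sp^p(\mesh)$ (for $P\in\wilde\Sp^p(\mesh)$ the omitted boundary coefficients are zero anyway), establish the local $L_2$- and $H^1$-bounds for $\wilde J_\mesh$ exactly as for $J_\mesh$ using this projection property together with a Poincar\'e/Friedrichs inequality that respects the zero trace on $\partial\Gamma$, and then interpolate in the $\wilde H^s$-scale. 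You have already flagged that the interpolation must be done in the $\wilde H^s$-scale to capture the endpoint $s=1/2$; that is exactly right.
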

%%%%%%%%%%%%%%%%%%%%%%%%%%%%%%%%%%%%%%%%%%%%%%%%%%%%%%%%%%%%%%%%%%%%%%%%%%%%%%%%%%%%%%%%%%%%%%%%%%%%%%%
\subsubsection{Nodal interpolation}\label{section:nodal}
%%%%%%%%%%%%%%%%%%%%%%%%%%%%%%%%%%%%%%%%%%%%%%%%%%%%%%%%%%%%%%%%%%%%%%%%%%%%%%%%%%%%%%%%%%%%%%%%%%%%%%%
The nodal interpolator $J_\mesh:C(\overline\Gamma)\rightarrow\Sp^p(\mesh)$
is without doubt the easiest approximation operator when
it comes to implementation, as
\begin{align*}
  J_\mesh v = \sum_{i=1}^N v(z_i)\eta_i,
\end{align*}
where $\left\{ z_i \right\}_{i=1}^N$ and $\left\{ \eta_i \right\}_{i=1}^N$ are again the degrees
of freedom and it's associated nodal basis, i.e., $\eta_j(z_k)=\delta_{j,k}$.
However, $J_\mesh$ needs point evaluation, which is
only a stable operation on curves (i.e., $d=2$, cf. Section~\ref{section:bio}) due
to the Sobolev embedding theorem. The following result is essentially proved in~\cite[Thm.~1]{c97}
and~\cite[Cor.~3.4]{cp07} for $p=1$, but transfers verbatim to $p\geq 1$.
\begin{mylemma}\label{lem:nodalinterpolation:1d}
  For $d=2$, i.e., $\Gamma\subseteq\Omega$ a one-dimensional curve, it holds for all
  $v\in H^1(\Gamma)$ that
  \begin{align*}
    \norm{v - J_\mesh v}{H^s(\Gamma)} \leq \c{approx}\norm{h_\mesh^{1-s}v^\prime}{L_2(\Gamma)},
  \end{align*}
  where $\c{approx}>0$ depends only $\Gamma$, $\sigma_\mesh$, and $p$.
\end{mylemma}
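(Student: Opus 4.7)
The plan is to split the squared norm as $\norm{w}{H^s(\Gamma)}^2 = \norm{w}{L_2(\Gamma)}^2 + \snorm{w}{H^s(\Gamma)}^2$ where $w := v - J_\mesh v$, and to bound the two pieces independently by $\norm{h_\mesh^{1-s}v'}{L_2(\Gamma)}^2$. The crucial structural fact is that in one dimension $w$ vanishes at every node of $\mesh$, hence at both endpoints of every element $T\in\mesh$.

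First I would handle the $L_2$-part. Elementwise, the standard one-dimensional Poincar\'e/Friedrichs inequality on $T=[a,b]$ applied to $w$ (with $w(a)=w(b)=0$, or just $w(a)=0$) gives $\norm{w}{L_2(T)}\lesssim h_T \norm{v'}{L_2(T)}$, the constant being absolute. Since $h_T$ is uniformly bounded on $\Gamma$, we have $h_T \lesssim h_T^{1-s}$ (up to a constant depending on $\mathrm{diam}(\Gamma)$), so summing over $T\in\mesh$ yields $\norm{w}{L_2(\Gamma)}^2 \lesssim \norm{h_\mesh^{1-s}v'}{L_2(\Gamma)}^2$.

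Next I would estimate the seminorm by invoking the localization from Theorem~\ref{thm:loc:slobodeckij:upper}, which gives
\begin{align*}
\snorm{w}{H^s(\Gamma)}^2 \le \sum_{z\in\NN}\snorm{w}{H^s(\omega_z)}^2 + \c{loc}\sum_{T\in\mesh}h_T^{-2s}\norm{w}{L_2(T)}^2.
\end{align*}
The last sum is controlled directly by the elementwise $L_2$-estimate of the previous step, since $h_T^{-2s}\cdot h_T^2 = h_T^{2(1-s)}$. For the patch contributions I would use a scaling-and-interpolation argument: on each patch $\omega_z$ (consisting of two elements of size $\simeq h_z$ by shape-regularity), transform to a reference patch $\hat\omega$ via an affine map with scale $h_z$. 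The pulled-back function $\hat w$ satisfies $\hat w(\hat z)=0$ at the image of the node $z$, so a Poincar\'e inequality on $\hat\omega$ yields $\norm{\hat w}{L_2(\hat\omega)}\lesssim \norm{\hat w'}{L_2(\hat\omega)}$, hence $\norm{\hat w}{H^1(\hat\omega)}\lesssim \norm{\hat w'}{L_2(\hat\omega)}$. Real interpolation between $L_2$ and $H^1$ then gives $\snorm{\hat w}{H^s(\hat\omega)}\lesssim \norm{\hat w'}{L_2(\hat\omega)}$.

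Finally I would translate the reference estimate back to $\omega_z$. Using the one-dimensional scaling identities $\snorm{w}{H^s(\omega_z)}^2 \simeq h_z^{1-2s}\snorm{\hat w}{H^s(\hat\omega)}^2$ and $\norm{w'}{L_2(\omega_z)}^2 \simeq h_z^{-1}\norm{\hat w'}{L_2(\hat\omega)}^2$, one deduces $\snorm{w}{H^s(\omega_z)}^2 \lesssim h_z^{2(1-s)}\norm{v'}{L_2(\omega_z)}^2$. Summing over $z\in\NN$ and using the finite overlap of the patches (bounded by shape-regularity) completes the proof. The main obstacle I expect is the patchwise estimate: namely, justifying the interpolation-theoretic step cleanly and tracking how the $H^s$-seminorm and $L_2$-norm of $v'$ transform under scaling so that the powers of $h_z$ match. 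Everything else is elementwise Poincar\'e and the already-available localization.
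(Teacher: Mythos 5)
The paper offers no proof of this lemma — it only points to~\cite[Thm.~1]{c97} and~\cite[Cor.~3.4]{cp07} and remarks that the argument transfers from $p=1$ to $p\ge1$. So there is no in-paper proof to compare yours against; I can only assess whether your argument closes.

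Your structure — split $\norm{w}{H^s}^2$ into an $L_2$ part and a Slobodeckij seminorm, handle the seminorm via the localization of Theorem~\ref{thm:loc:slobodeckij:upper}, and treat the node patches by scaling to a reference patch (finitely many up to similarity, by shape regularity) followed by a real-interpolation argument between $L_2$ and $H^1$ — is sound and is in the same spirit as the cited references. The scaling exponents you record for the $H^s$-seminorm and the $L_2$-norm of the derivative are correct in one dimension and lead to the right weight $h_\mesh^{1-s}$.

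There is, however, one recurring step you assert without justification, and it appears twice: you pass from $\norm{w'}{L_2}$ to $\norm{v'}{L_2}$, where $w=v-J_\mesh v$. Friedrichs on an element gives $\norm{w}{L_2(T)}\lesssim h_T\norm{w'}{L_2(T)}$, and on the reference patch the Poincar\'e--plus--interpolation argument yields $\snorm{\hat w}{H^s(\hat\omega)}\lesssim\norm{\hat w'}{L_2(\hat\omega)}$. In both places you then quietly replace $w'$ by $v'$; this requires the $H^1$-stability of the nodal interpolant, $\norm{(J_\mesh v)'}{L_2(T)}\lesssim\norm{v'}{L_2(T)}$. For $p=1$ this is immediate from the identity $(J_\mesh v)'|_T=\pi^0_\mesh(v')|_T$ (the derivative of the linear interpolant is the elementwise mean of $v'$), so $\norm{w'}{L_2(T)}=\norm{(1-\pi^0_\mesh)v'}{L_2(T)}\le\norm{v'}{L_2(T)}$ — this is exactly the identity the paper itself later uses for $d=2$. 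For $p\ge2$ the identity fails, but the stability still holds via a Bramble--Hilbert argument on the reference patch: since $J_\mesh$ reproduces constants, one may subtract the mean of $v$, use the $L^\infty$-embedding of $H^1$ in 1D together with an inverse estimate, and conclude $\norm{\hat w'}{L_2(\hat\omega)}\lesssim\norm{\hat v'}{L_2(\hat\omega)}$. Once that intermediate stability step is inserted, your proof closes and gives the stated dependence of $\c{approx}$ on $\Gamma$, $\sigma_\mesh$, and $p$.
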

Contrary, for surfaces, i.e., $d=3$, point evaluation is not a stable operation and a result
analogous to the last one cannot hold.
A remedy that can be used at least in $(h-h/2)$ error
estimation is that nodal interpolation can be shown to be a stable operation when the function
to be approximated is discrete on a finer scale, and the scales do not differ too much.
The following result captures this idea in a mathematical sense, for a proof see~\cite{affkp13}.
\begin{mylemma}\label{lem:nodalinterpolation}
  Consider a mesh $\mesh$ together with its uniform refinement $\widehat\mesh$, cf.
  Section~\ref{section:meshrefinement}. For $q\geq p$, the nodal interpolation operator
  $J_\mesh:\wilde\Sp^q(\widehat\mesh)\rightarrow\wilde\Sp^p(\mesh)$ satisfies for $s\in[0,1]$
  \begin{align*}
    \norm{(1-J_\mesh)\widehat V}{\wilde H^s(\Gamma)}
    \leq \c{approx} \min_{V\in\Sp^p(\mesh)}\norm{h_\mesh^{1-s}\nablag(\widehat V - V )}{L_2(\Gamma)}
  \end{align*}
  as well as
  \begin{align*}
    \norm{h_\mesh^{1-s}\nablag(1-J_\mesh)\widehat V}{L_2(\Gamma)}
    \leq \c{stab}\norm{h_\mesh^{1-s}(1-\Pi_\mesh^p)\nablag\widehat V}{L_2(\Gamma)}
  \end{align*}
  for all $\widehat V\in\wilde\Sp^q(\widehat\mesh)$. The constant $\c{approx}>0$
  depends only on $\Gamma$, $p$, $q$, $s$, and the shape-regularity $\sigma_\mesh$,
  whereas the constant $\c{stab}$ depends only on the shape-regularity $\sigma_\mesh$.
\end{mylemma}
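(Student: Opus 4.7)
The plan is to exploit two structural facts: first, that $J_\mesh$ is a projection onto $\Sp^p(\mesh)$, so that $J_\mesh V = V$ for every $V \in \Sp^p(\mesh)$; and second, that $\widehat V$ lives in the discrete space $\wilde\Sp^q(\widehat\mesh)$, so on each $T \in \mesh$ the restriction $\widehat V|_T$ lies in the finite-dimensional space of $\widehat\mesh|_T$-piecewise polynomials of degree $q$. Because $\widehat\mesh$ is the uniform refinement of $\mesh$, shape-regularity of $\mesh$ forces the number of combinatorial configurations of $\widehat\mesh|_T$ (up to affine equivalence) to be finite, so that any bounded linear functional on these local discrete spaces has a norm bounded purely in terms of $\sigma_\mesh$, $p$, and $q$ after scaling to a reference element.

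For the first estimate, I would first localize via Lemma~\ref{lem:loc1} to reduce the global $\wilde H^s$-norm to a sum of local $\wilde H^s(T)$-norms. Using the projection property, for any $V \in \Sp^p(\mesh)$ I rewrite $(1-J_\mesh)\widehat V = (1-J_\mesh)(\widehat V - V)$. On each element $T$, I pull back via $F_T$ to a reference configuration; the map $\widehat W \mapsto (1 - J_{\refel})\widehat W$ is a bounded linear operator on the relevant finite-dimensional space which vanishes on $\Pp^p(\el_\refel)$, so on the reference element its $\wilde H^s$-operator norm is bounded by its $H^1$-operator norm, and the estimate
\[
 \norm{(1-J_\mesh)(\widehat V - V)}{\wilde H^s(T)} \lesssim h_T^{1-s}\norm{\nablag(\widehat V - V)}{L_2(\omega_T)}
\]
follows by scaling (the factor $h_T^{1-s}$ accounting for the different scaling exponents of $\wilde H^s$ and $\nabla$ in $L_2$). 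Summing over $T$ and taking the infimum over $V \in \Sp^p(\mesh)$ yields the claim.

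For the second estimate, I again use the projection property to replace $\widehat V$ by $\widehat V - V$ for arbitrary $V \in \Sp^p(\mesh)$, and then I split
\[
 \nablag(1-J_\mesh)(\widehat V - V) = \nablag(\widehat V - V) - \nablag J_\mesh(\widehat V - V).
\]
The first summand is harmless. For the second, $J_\mesh(\widehat V - V) \in \Sp^p(\mesh)$, so elementwise inverse/trace estimates on polynomials (combined with $L_2$-stability of $J_\mesh$ on the finite-dimensional local spaces) produce a bound of the form $\norm{\nablag J_\mesh(\widehat V - V)}{L_2(T)} \lesssim \norm{\nablag(\widehat V - V)}{L_2(\omega_T)}$ by the same reference-configuration argument. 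This gives the estimate with $\nablag(\widehat V - V)$ on the right-hand side for arbitrary $V$; choosing $V$ so that $\nablag V$ realizes (the continuous $L_2$-projection of) $\nablag \widehat V$, i.e., matching $\nablag V = \Pi_\mesh^p \nablag \widehat V$ up to an irrotational correction that can be absorbed, converts this into the stated bound in terms of $(1-\Pi_\mesh^p)\nablag\widehat V$. The factor $h_\mesh^{1-s}$ is constant on each element and can be pulled into the $L_2$-norm.

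The hardest step will be the second one, specifically passing from a local best-approximation bound of the form $\min_{V \in \Sp^p(\mesh)}\norm{h_\mesh^{1-s}\nablag(\widehat V - V)}{L_2(\omega_T)}$ (which is what the scaling argument naturally delivers) to the stated global bound in terms of the continuous $L_2$-projection $\Pi_\mesh^p \nablag \widehat V$. This requires either a careful patchwise construction of $V$ whose gradient is close to $\Pi_\mesh^p \nablag \widehat V$, or, equivalently, an argument showing that locally the elementwise and continuous $L_2$-projections of $\nablag \widehat V$ agree up to a term controlled by the continuous projection residual. Once that link is established, summing the elementwise bounds and using the finite overlap of patches $\omega_T$ closes the estimate.
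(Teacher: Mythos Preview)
The paper itself does not supply a proof of this lemma; it defers to \cite{affkp13}. So I assess your proposal on its own merits.

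Your localization step for the first estimate is broken. Lemma~\ref{lem:loc1} requires $u|_T\in\wilde H^s(T)$ for every $T\in\mesh$, i.e., the restriction must have vanishing trace on $\partial T$. But $(1-J_\mesh)\widehat V$ vanishes only at the \emph{nodes} of $\mesh$ (where the nodal degrees of freedom are matched); on the interior of an edge $e\subset\partial T$, the function $\widehat V$ is piecewise $\Pp^q$ on the sub-edges of $\widehat\mesh$ while $J_\mesh\widehat V$ is $\Pp^p$ on all of $e$, so their difference is generically nonzero there. For $d=3$ and $s\ge 1/2$ this means $(1-J_\mesh)\widehat V|_T\notin\wilde H^s(T)$, and Lemma~\ref{lem:loc1} does not apply. (For $d=2$ the edges of $T$ are points and your argument would go through, but that case is already covered by Lemma~\ref{lem:nodalinterpolation:1d}; the present lemma is precisely about $d=3$.) A workable repair is to localize via Theorem~\ref{thm:loc:slobodeckij:upper} instead, which needs no trace condition, and then control both the patch seminorms $|\cdot|_{H^s(\omega_z)}$ and the weighted $L_2$-terms by your reference-configuration scaling argument. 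Alternatively, compare $J_\mesh$ with the Scott-Zhang projection $\wilde J_\mesh$: write $(1-J_\mesh)\widehat V=(1-\wilde J_\mesh)\widehat V+(\wilde J_\mesh-J_\mesh)\widehat V$, apply Lemma~\ref{lem:Sp:apx} to the first piece, and handle the second (which lies in $\Sp^p(\mesh)$) by local $L_2$-stability of both operators on the finite-dimensional space $\Sp^q(\widehat\mesh|_{\omega_T})$ together with an inverse estimate.

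For the second estimate your worry is misplaced. Nodal interpolation is \emph{element-local} (unlike Scott-Zhang), so the scaling argument on $T$ gives directly
\[
  \norm{\nablag(1-J_\mesh)\widehat V}{L_2(T)}\lesssim \min_{Q\in\Pp^p(T)}\norm{\nablag\widehat V-\nablag Q}{L_2(T)}
  \le \norm{(1-\pi_{\mesh,T}^{p-1})\nablag\widehat V}{L_2(T)},
\]
because $\nablag\Pp^p(T)\subset\Pp^{p-1}(T)^{d-1}$ and the elementwise $L_2$-projection realizes the minimum. No ``irrotational correction'' or global construction of $V$ is needed; the passage to the \emph{continuous} projection $\Pi_\mesh^p$ on the right-hand side (if that is really what is meant rather than $\pi_\mesh^{p-1}$, cf.\ Theorem~\ref{thm:def:hh2:hypsing:grad}) would only weaken the bound, since the elementwise discontinuous best approximation is at least as good. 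Also note that your right-hand sides carry $\omega_T$, but nodal interpolation on $T$ depends only on $\widehat V|_T$, so $T$ suffices.
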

The implementation of the nodal interpolation operator is straight forward and will not be discussed
further.
%%%%%%%%%%%%%%%%%%%%%%%%%%%%%%%%%%%%%%%%%%%%%%%%%%%%%%%%%%%%%%%%%%%%%%%%%%%%%%%%%%%%%%%%%%%%%%%%%%%%%%%
\subsection{Localization by multilevel norms}\label{section:localization:multilevel}
%%%%%%%%%%%%%%%%%%%%%%%%%%%%%%%%%%%%%%%%%%%%%%%%%%%%%%%%%%%%%%%%%%%%%%%%%%%%%%%%%%%%%%%%%%%%%%%%%%%%%%%
The localization techniques discussed in
Sections~\ref{section:localization:slob}--\ref{section:localization:multilevel}
depend on either orthogonality and/or approximation.
If neither of those properties is available, one can still use so-called
\textit{multilevel norms} to localize fractional-order Sobolev norms of discrete functions.
The following theorem and its proof are found in~\cite{oswald:98}. We will not give
the proof here, as it involves deeper mathematical results such as Besov spaces.
\begin{theorem}\label{thm:multilevel}
  Let $\left( \mesh_\ell \right)_{\ell\in\N_0}$ be a uniform sequence of meshes on $\Gamma$ with
  corresponding mesh-width $h_\ell$. Denote by\linebreak
  $\pi_\ell:L_2(\Gamma)\rightarrow \Pp^0(\mesh_\ell)$ the
  $L_2$ orthogonal projections and $\pi_{-1}=0$.
  Then, there are constants $\setc{ml1}, \setc{ml2}>0$ such that for all $L\geq0$ and all
  $\Phi_L\in\Pp^0(\mesh_L)$ it holds that
  \begin{align*}
    \c{ml1}\norm{\Phi_L}{\wilde H^{-1/2}(\Gamma)}^2
    &\leq \sum_{\ell=0}^L h_\ell \norm{(\pi_\ell-\pi_{\ell-1})\Phi_L}{L_2(\Gamma)}^2\\
    &\leq \c{ml2}(L+1)^2\norm{\Phi_L}{\wilde H^{-1/2}(\Gamma)}^2.
  \end{align*}
\end{theorem}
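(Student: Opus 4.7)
The plan is to exploit the $L_2(\Gamma)$-orthogonal telescoping decomposition $\Phi_L = \sum_{\ell=0}^L (\pi_\ell - \pi_{\ell-1})\Phi_L$: nestedness $\Pp^0(\mesh_{\ell-1})\subset\Pp^0(\mesh_\ell)$ on the uniform hierarchy plus self-adjointness of each $\pi_\ell$ shows that $(\pi_\ell - \pi_{\ell-1})\Phi_L$ lies in the $L_2(\Gamma)$-orthogonal complement of $\Pp^0(\mesh_{\ell-1})$ inside $\Pp^0(\mesh_\ell)$, so the pieces at different levels are mutually orthogonal. Beyond this decomposition, the key tools are the Jackson estimates of Lemma~\ref{lem:L2:Pp:apx} applied to $\pi_\ell$, the duality definition of $\widetilde H^{-1/2}(\Gamma)$, and a real-interpolation (K-functional) characterization of $H^{1/2}(\Gamma)$ between $L_2(\Gamma)$ and $H^1(\Gamma)$.

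For the lower bound $\c{ml1}\|\Phi_L\|_{\widetilde H^{-1/2}(\Gamma)}^2 \leq \sum_\ell h_\ell\|(\pi_\ell-\pi_{\ell-1})\Phi_L\|_{L_2(\Gamma)}^2$, I would test against $v\in H^{1/2}(\Gamma)$. Since $\Phi_L\in\Pp^0(\mesh_L)$, we have $\dual{\Phi_L}{v}_\Gamma = \dual{\Phi_L}{\pi_L v}_\Gamma$, and expanding $\pi_L v$ telescopically and using orthogonality across levels yields
\begin{align*}
\dual{\Phi_L}{v}_\Gamma = \sum_{\ell=0}^L \dual{(\pi_\ell-\pi_{\ell-1})\Phi_L}{(\pi_\ell-\pi_{\ell-1})v}_\Gamma.
\end{align*}
A weighted Cauchy--Schwarz with weights $h_\ell^{\pm 1/2}$ reduces the task to the Besov-type bound $\sum_\ell h_\ell^{-1}\|(\pi_\ell-\pi_{\ell-1})v\|_{L_2(\Gamma)}^2 \lesssim \|v\|_{H^{1/2}(\Gamma)}^2$. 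This bound follows from the Jackson estimate $\|v-\pi_\ell v\|_{L_2(\Gamma)}\lesssim h_\ell^{1/2}\|v\|_{H^{1/2}(\Gamma)}$ applied dyadically on the geometrically decaying scales $h_\ell\simeq 2^{-\ell}h_0$, combined with the K-functional identity $\|v\|_{H^{1/2}(\Gamma)}^2\simeq \int_0^1 t^{-2}K(t,v;L_2,H^1)^2\,dt/t$; crucially, the Jackson estimate on the primal side $v$ (not on the discrete $\Phi_L$) holds without loss at $s=1/2$, yielding the \emph{clean} lower bound.

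For the upper bound, I would argue by duality on each level. Self-adjointness of $\pi_\ell-\pi_{\ell-1}$ in $L_2(\Gamma)$ gives
\begin{align*}
\|(\pi_\ell-\pi_{\ell-1})\Phi_L\|_{L_2(\Gamma)} \leq \|\Phi_L\|_{\widetilde H^{-1/2}(\Gamma)}\sup_{g\in L_2(\Gamma)}\frac{\|(\pi_\ell-\pi_{\ell-1})g\|_{H^{1/2}(\Gamma)}}{\|g\|_{L_2(\Gamma)}}.
\end{align*}
For $s<1/2$ one has the inverse estimate $\|V\|_{H^s(\Gamma)}\lesssim h_\ell^{-s}\|V\|_{L_2(\Gamma)}$ on $\Pp^0(\mesh_\ell)$ (Lemma~\ref{lem:Pp:invest} is stated for $\wilde H^{-s}$ but the analogous positive-order version is classical), but at $s=1/2$ this must be replaced by a logarithmically degraded substitute. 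Inserting the weaker bound, summing the finitely many levels $\ell=0,\ldots,L$, and applying a second $\ell_2$--Cauchy--Schwarz across levels to absorb the logarithmic factor produces the stated $(L+1)^2$ loss.

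The main obstacle is precisely the borderline index $s=1/2$: characteristic functions of mesh elements, which span $\Pp^0(\mesh_\ell)$, belong to $H^s(\Gamma)$ only for $s<1/2$, with constants that blow up as $s\uparrow 1/2$, so the Bernstein/inverse inequality needed for a clean upper bound is unavailable. Tracking this blow-up quantitatively---via weighted Besov spaces or a careful real-interpolation argument between $L_2(\Gamma)$ and $\widetilde H^{-1}(\Gamma)$ whose constants degrade at the midpoint---is the technical heart of the proof and accounts for the unavoidable $(L+1)^2$ factor; it is also the reason the equivalence is two-sided with no logarithm only for indices $|s|<1/2$.
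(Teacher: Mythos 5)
The paper does not prove Theorem~\ref{thm:multilevel}: it explicitly defers to Oswald~\cite{oswald:98}, remarking that the argument ``involves deeper mathematical results such as Besov spaces''. There is therefore no paper-internal proof against which to compare your sketch, so I assess it on its own terms.

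Your argument for the left inequality is essentially sound and follows the standard Jackson-type strategy. With $D_\ell:=\pi_\ell-\pi_{\ell-1}$, the telescoping $L_2$-orthogonality gives $\dual{\Phi_L}{v}_\Gamma=\sum_{\ell=0}^L\dual{D_\ell\Phi_L}{D_\ell v}_\Gamma$; the weighted Cauchy--Schwarz then reduces the claim to the square-sum estimate $\sum_\ell h_\ell^{-1}\|D_\ell v\|_{L_2(\Gamma)}^2\lesssim\|v\|_{H^{1/2}(\Gamma)}^2$, and this indeed follows from the first-order Jackson estimate for $\pi_\ell$ together with the real-interpolation characterization $H^{1/2}(\Gamma)=(L_2(\Gamma),H^1(\Gamma))_{1/2,2}$. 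That is precisely the ``deeper machinery'' the paper chose not to expose. (A small typo: at $\theta=1/2$ the K-functional characterization reads $\int_0^\infty t^{-1}K(t,v)^2\,dt/t$, not $\int_0^1 t^{-2}K(t,v)^2\,dt/t$.)

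The right inequality, however, has a genuine gap. Your key duality step,
\begin{align*}
\|D_\ell\Phi_L\|_{L_2(\Gamma)}\leq\|\Phi_L\|_{\widetilde H^{-1/2}(\Gamma)}\sup_{g\in L_2(\Gamma)}\frac{\|D_\ell g\|_{H^{1/2}(\Gamma)}}{\|g\|_{L_2(\Gamma)}},
\end{align*}
is not a ``weaker bound to be tracked'' --- the supremum is identically $+\infty$, because the range of $D_\ell$ consists of piecewise constants, which do not belong to $H^{1/2}(\Gamma)$. You correctly identify the borderline exponent $s=1/2$ as the obstruction, but the repair you describe is not an argument: the ``weaker bound'' is never specified, and ``a second $\ell_2$--Cauchy--Schwarz across levels'' is not a mechanism that produces the factor $(L+1)^2$ (Cauchy--Schwarz cannot improve a square-sum, and a crude level-by-level loss does not obviously aggregate to the quadratic power). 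A workable version of your idea would need to pair $\Phi_L$ against $D_\ell g$ in $\widetilde H^{-s}(\Gamma)\times H^s(\Gamma)$ for $s<1/2$, carry the explicit Bernstein blow-up $\|D_\ell g\|_{H^s(\Gamma)}\lesssim(1/2-s)^{-1/2}h_\ell^{-s}\|g\|_{L_2(\Gamma)}$, pass back from $\|\Phi_L\|_{\widetilde H^{-s}(\Gamma)}$ to $\|\Phi_L\|_{\widetilde H^{-1/2}(\Gamma)}$ via interpolation and the inverse estimate of Lemma~\ref{lem:Pp:invest}, and then optimize $s$ depth-by-depth; Oswald's treatment organizes this through Besov-space characterizations of $\widetilde H^{-1/2}(\Gamma)$. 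Your sketch identifies the right framework but leaves exactly this decisive quantitative step missing.
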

The last theorem gives a reliable, computable bound for the $\wilde H^{-1/2}$-norm of a discrete
function. The upper bound in contrast depends on the number of levels $L$ that are involved.
This upper bound cannot be improved in general, cf.~\cite{oswald:98}.
%\clearpage
%%%%%%%%%%%%%%%%%%%%%%%%%%%%%%%%%%%%%%%%%%%%%%%%%%%%%%%%%%%%%%%%%%%%%%%%%%%%%%%%%%%%%%%%%%%%%%%%%%%%%%%
%%%%%%%%%%%%%%%%%%%%%%%%%%%%%%%%%%%%%%%%%%%%%%%%%%%%%%%%%%%%%%%%%%%%%%%%%%%%%%%%%%%%%%%%%%%%%%%%%%%%%%%
\section{A posteriori error estimators for the $h$-version}\label{section:aposteriori}
%%%%%%%%%%%%%%%%%%%%%%%%%%%%%%%%%%%%%%%%%%%%%%%%%%%%%%%%%%%%%%%%%%%%%%%%%%%%%%%%%%%%%%%%%%%%%%%%%%%%%%%
%%%%%%%%%%%%%%%%%%%%%%%%%%%%%%%%%%%%%%%%%%%%%%%%%%%%%%%%%%%%%%%%%%%%%%%%%%%%%%%%%%%%%%%%%%%%%%%%%%%%%%%
The continuous solution $u$ of any of our model problems is in general unknown, and so
is the error $u-U$, where $U$ is a discrete approximation to $u$
from a discrete space $\XX_\mesh$ based on a mesh $\mesh$.
The idea of a~posteriori error estimation is to estimate the error $u-U$ in order to
\begin{itemize}
  \item use an estimate for the \textit{global} error as a stopping criterion, or
  \item to use local contributions of the error to decide where to refine the mesh locally.
\end{itemize}
Adaptive algorithms clearly need estimators that provide local contributions, which will
be written as
\begin{align*}
  \eta_\mesh = \left( \sum_{\el\in\mesh}\eta_\el^2 \right)^{1/2}
  \text{ or } \eta_\mesh = \left( \sum_{j\in J} \eta_j^2 \right)^{1/2},
\end{align*}
where $J$ is a certain index set (e.g., the set of nodes).
If the error estimator does not underestimate the error, i.e.,
\begin{align*}
  \norm{u-U}{}\leq \c{rel}\eta_\mesh
\end{align*}
holds true, then $\eta_\mesh$ is said to be \textit{reliable}.
Likewise, if the error estimator does not overestimate the error, i.e.,
\begin{align*}
  \eta_\mesh\leq \c{eff}\norm{u-U}{}
\end{align*}
holds true, it is called \textit{efficient}.
Here, $\norm{\cdot}{}$ is a norm of interest, typically the energy
norm of the problem. Usually, $\c{rel},\c{eff}>0$ are unknown (except for $(h-h/2)$ estimators,
where $\c{eff}=1$), but do not depend on the current mesh $\mesh$.

In this Section, we present different approaches for a~posteriori error estimation in boundary
element methods that are available in the mathematical literature.
The focus is to show reliability and efficiency and give an overview on the available approaches.
Frequently, we will identify a bilinear form $b:\XX\times\XX\rightarrow\R$ with an operator
$B:\XX\rightarrow\XX'$ via
\begin{align*}
  b(v,w) = \dual{Bv}{w}_{\XX'\times\XX},
\end{align*}
where $\dual{\cdot}{\cdot}_{\XX'\times\XX}$ is the chosen duality pairing.
%%%%%%%%%%%%%%%%%%%%%%%%%%%%%%%%%%%%%%%%%%%%%%%%%%%%%%%%%%%%%%%%%%%%%%%%%%%%%%%%%%%%%%%%%%%%%%%%%%%%%%%
\subsection{Residual type estimators}\label{section:residual}
%%%%%%%%%%%%%%%%%%%%%%%%%%%%%%%%%%%%%%%%%%%%%%%%%%%%%%%%%%%%%%%%%%%%%%%%%%%%%%%%%%%%%%%%%%%%%%%%%%%%%%%
While the exact solution $u$ of an equation is unknown, the residual
$R := F - BU$ is a computable quantity. Here, $B$ is the involved operator (i.e., the simple
layer $\slo$ or the hypersingular operator $\hyp$), and $F$ is the corresponding right-hand side.
In boundary element methods, the residual is usually measured in a non-local fractional
Sobolev norm, and the different approaches for residual error estimation differ in their approach
for localization of this norm.
%------------------------------------------------------------------------------------------------------
\subsubsection{Babu\v{s}ka-Rheinboldt-estimators}\label{section:est:br}
%------------------------------------------------------------------------------------------------------
In~\cite{f98}, Faermann extended the estimators that were developed for finite element methods
by Babu\v{s}ka and\linebreak
Rheinboldt~\cite{br78} to fractional order Sobolev norms.
We will sketch the ideas for the case of the hypersingular
integral equation and lowest-order discretization, and comment on the other cases afterwards.
To that end, denote by $u\in H^{1/2}(\Gamma)$ the exact solution to the Neumann
problem, see Proposition~\ref{prop:neumann}, and by $U\in\Sp^1(\mesh)$
the Galerkin solution, see Proposition~\ref{prop:galerkin:neumann}
%------------------------------------------------------------------------------------------------------
\begin{mydefinition}\label{def:hyp:br}
  Denote by $\left\{ \Psi_j \right\}_{j=1}^N$ the nodal basis of $\Sp^1(\mesh)$
  and by $R:=(1/2-K')\phi - \hyp U$ the residual.
  The BR-type error estimator is defined by
  \begin{align*}
    \eta_\mesh^2 = \sum_{j=1}^N \eta_j^2,\quad \text{ where }\quad
    \eta_j :=
    \sup_{\substack{
      v\in H^{1/2}(\Gamma)\\
      \Psi_jv \neq 0
    }}
    \frac{\dual{R}{\Psi_jv}_\Gamma}{\norm{\Psi_jv}{H^{1/2}(\Gamma)}}
  \end{align*}
\end{mydefinition}
%------------------------------------------------------------------------------------------------------
For the hypersingular integral operator, the BR-type estimators are reliable and efficient.
\begin{theorem}\label{thm:br:hyp}
  There are constants $\c{rel},\c{eff}>0$ such that
  \begin{align}\label{thm:br:hyp:eq}
    \c{eff}^{-2}\eta_\mesh^2 \leq \norm{u-U}{\wilde H^{1/2}(\Gamma)}^2 \leq \c{rel}^2 \eta_\mesh^2.
  \end{align}
  The constant $\c{rel}$ depends on the shape-regularity of $\mesh$,
  whereas the constant $\c{eff}$ depends also on $\Gamma$.
\end{theorem}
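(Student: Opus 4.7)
The proof splits into the reliability bound (upper estimate for the error) and the efficiency bound (its reverse).

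\textbf{Reliability.} The plan is to combine ellipticity of $\hyp$ with the partition-of-unity property of the hat basis, Galerkin orthogonality, the Poincar\'e estimate~\eqref{lem:Hs:poinc:eq3}, and a multiplier bound. Since $\phi\in H_0^{-1/2}(\Gamma)$ forces $\dual{u}{1}_\Gamma=0$ and hence $R=\hyp(u-U)$, Theorem~\ref{thm:ellipticity} gives
\[
\c{ell}\,\norm{u-U}{\wilde H^{1/2}(\Gamma)}^2 \leq \dual{R}{u-U}_\Gamma.
\]
Using $\sum_{j=1}^N\Psi_j\equiv 1$, this splits as $\sum_j\dual{R}{\Psi_j(u-U)}_\Gamma$. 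Because $\Psi_j\in\Sp^1(\mesh)$, Galerkin orthogonality $\dual{R}{\Psi_j}_\Gamma=0$ allows me to subtract any constant; setting
\[
c_j:=\dual{u-U}{\Psi_j}_\Gamma/\dual{1}{\Psi_j}_\Gamma, \qquad w_j:=(u-U)-c_j,
\]
one has $\dual{w_j}{\Psi_j}_\Gamma=0$, so~\eqref{lem:Hs:poinc:eq3} yields $\norm{w_j}{L_2(\omega_j)}^2\lesssim h_j\,\snorm{u-U}{H^{1/2}(\omega_j)}^2$. The definition of $\eta_j$ then produces
\[
\dual{R}{\Psi_j(u-U)}_\Gamma = \dual{R}{\Psi_j w_j}_\Gamma \leq \eta_j\,\norm{\Psi_j w_j}{H^{1/2}(\Gamma)}.
\]
A standard multiplier bound, obtained by splitting the double integral defining the $H^{1/2}$-seminorm over $\omega_j\times\omega_j$ and $\omega_j\times(\Gamma\setminus\omega_j)$ and exploiting the Lipschitz constant $\sim h_j^{-1}$ of $\Psi_j$, gives
\[
\norm{\Psi_j w_j}{H^{1/2}(\Gamma)}^2 \lesssim \snorm{w_j}{H^{1/2}(\omega_j)}^2 + h_j^{-1}\norm{w_j}{L_2(\omega_j)}^2 \lesssim \snorm{u-U}{H^{1/2}(\omega_j)}^2.
\]
Cauchy–Schwarz in $j$ together with the easy direction $\sum_j\snorm{u-U}{H^{1/2}(\omega_j)}^2\lesssim\norm{u-U}{\wilde H^{1/2}(\Gamma)}^2$ of Theorem~\ref{thm:loc:slob} (a consequence of finite overlap of the patches $\omega_j$) then finishes the reliability bound.

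\textbf{Efficiency.} The goal is $\sum_j\eta_j^2\lesssim\norm{u-U}{\wilde H^{1/2}(\Gamma)}^2$. Because $R=\hyp(u-U)$ and every admissible test function $\Psi_j v$ has support in $\omega_j$, the strategy is to prove a local continuity estimate of the form $\eta_j\lesssim\norm{u-U}{\wilde H^{1/2}(\omega_j^+)}$ on a slightly enlarged patch $\omega_j^+$, from which finite overlap yields the claim. For this, I would decompose the integral expression $\dual{\hyp(u-U)}{\Psi_j v}_\Gamma$ into a near-field part (couplings within $\omega_j^+$), handled by the mapping property of $\hyp$ after extension by zero, and a far-field part, where the kernel is smooth and Galerkin orthogonality combined with kernel decay away from the diagonal provides the required smallness.

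\textbf{Main obstacle.} Reliability is essentially mechanical once the Poincar\'e and multiplier bounds are in place. The genuine difficulty is efficiency: in contrast to residual estimators for local elliptic PDEs, here $R=\hyp(u-U)$ is nonlocal, so localising its dual norm $\eta_j$ to a single patch requires exploiting the pseudodifferential structure of $\hyp$ (equivalently, a Maue-type representation through $\nablag$ and the single-layer kernel). This is precisely what forces $\c{eff}$ to depend on $\Gamma$ in addition to shape-regularity, as stated in the theorem.
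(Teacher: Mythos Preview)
Your reliability argument is correct and essentially matches the paper's (the paper tests against a generic $v$ in the dual norm of $R$ rather than $u-U$ directly, but the ingredients---partition of unity, subtraction of constants, Poincar\'e, multiplier bound, finite overlap---are the same).

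The efficiency argument has a genuine gap. The local bound $\eta_j\lesssim\norm{u-U}{\wilde H^{1/2}(\omega_j^+)}$ you aim for is false for a nonlocal operator: if $u-U$ happened to be concentrated away from $\omega_j^+$, the right side would be negligible while $R|_{\omega_j}=\hyp(u-U)|_{\omega_j}$---and hence $\eta_j$---would not. Kernel decay yields boundedness of the far-field contribution, not smallness, and Galerkin orthogonality is of no help here since $\Psi_j v$ is not a discrete test function. The paper's proof avoids any local analysis of $\hyp$: it colours the node set into $M$ classes $J_k$ (with $M$ bounded by shape-regularity) so that the patches $\{\omega_j:j\in J_k\}$ are pairwise disjoint. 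For fixed $k$ and arbitrary $v_j$, rescale $w_j:=\alpha_j v_j$ on $\omega_j$ with $\alpha_j:=\dual{R}{\Psi_jv_j}_\Gamma/\norm{\Psi_jv_j}{H^{1/2}(\Gamma)}^2$; a short calculation then gives
\[
\sum_{j\in J_k}\frac{\dual{R}{\Psi_jv_j}_\Gamma^2}{\norm{\Psi_jv_j}{H^{1/2}(\Gamma)}^2}
=\frac{\dual{R}{\textstyle\sum_{j\in J_k}\Psi_jw_j}_\Gamma^2}{\sum_{j\in J_k}\norm{\Psi_jw_j}{H^{1/2}(\Gamma)}^2}
\le \c{loc}\,\norm{R}{H^{-1/2}(\Gamma)}^2,
\]
where the last step applies Lemma~\ref{lem:loc1} to $\sum_{j\in J_k}\Psi_jw_j$ (each summand lies in $\wilde H^{1/2}(\omega_j)$). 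Taking the supremum over the $v_j$, summing over the $M$ colours, and using only the \emph{global} continuity of $\hyp$ yields $\eta_\mesh^2\le M\c{loc}\norm{\hyp}{}^2\norm{u-U}{\wilde H^{1/2}(\Gamma)}^2$. The $\Gamma$-dependence of $\c{eff}$ thus enters solely through $\norm{\hyp}{}$, not through any pseudodifferential or Maue-type decomposition.
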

\begin{proof}
  We sketch the ideas of~\cite[Sect. 6]{cf}.
  The exact solution satisfies $u\in H^{1/2}_0(\Gamma)$, hence
  $\hyp u = (1/2-K')\phi$ in $H^{1/2}(\Gamma)$.
  As $\hyp^{-1}$ is linear and bounded, it follows that
  $\norm{u-U}{\wilde H^{1/2}(\Gamma)} \lesssim \norm{R}{H^{-1/2}(\Gamma)}$, and
  \begin{align*}
    \norm{R}{H^{-1/2}(\Gamma)} &= \sup_{\norm{v}{H^{1/2}(\Gamma)}=1}\dual{R}{v}_\Gamma\\
    &\leq \eta_\mesh \sup_{\norm{v}{H^{1/2}(\Gamma)}=1}
    \left( \sum_{j=1}^N \norm{(v-z_j)\Psi_j}{H^{1/2}(\Gamma)}^2 \right)^{1/2},
  \end{align*}
  with arbitrary $z_j\in\R$ for $j=1,\dots,N$. Now, with\linebreak
  $\omega_j = \supp(\Psi_j)$, it holds
  \begin{align*}
    \snorm{(v-z_j)\Psi_j}{H^{1/2}(\Gamma)} &\lesssim
    \snorm{v}{H^{1/2}(\omega_j)}\\
    &\quad+ \diam(\omega_j)^{-1/2}\norm{v-z_j}{L_2(\omega_j)}.
  \end{align*}
  Choosing $z_j$ according to the variant of the Poincar\'e inequality
  of~\cite[Thm.~7.1]{ds80} shows the upper bound in~\eqref{thm:br:hyp:eq}.
  To show the lower bound,
  note that the index set $J=\left\{ 1, \dots, N \right\}$ can be
  decomposed into at most $M$ pairwise disjoint subsets $J_k$, $k=1,\dots, M$, and $M$ depends
  only on the shape-regularity of $\mesh$, such that the supports of the basis functions
  $\left\{ \Psi_j \right\}_{j\in J_k}$ are pairwise disjoint.
  Due to the latter property, it is possible to choose for an arbitrary collection
  of functions
  $v_j\in H^{1/2}(\Gamma)$, $j\in J_k$, a function $w_j\in H^{1/2}(\Gamma)$ such that
  on the support of $\Psi_j$ it holds
  \begin{align*}
    w_j = \frac{\dual{R}{\Psi_j v_j}_\Gamma}{\norm{\Psi_j v_j}{H^{1/2}(\Gamma)}^2} v_j.
  \end{align*}
  It follows that
  \begin{align*}
    \sum_{j\in J_k}\norm{\Psi_j w_j}{H^{1/2}(\Gamma)}^2
    = \sum_{j\in J_k} \frac{\dual{R}{\Psi_j v_j}_\Gamma^2}{\norm{\Psi_j v_j}{H^{1/2}(\Gamma)}^2}
    = \dual{R}{\sum_{j\in J_k}\Psi_j w_j}_\Gamma,
  \end{align*}
  and hence
  \begin{align*}
    \sum_{j\in J_k} \frac{\dual{R}{\Psi_j v_j}_\Gamma^2}{\norm{\Psi_j v_j}{H^{1/2}(\Gamma)}^2}
    &= \frac{
      \left(\dual{R}{\sum_{j\in J_k} \Psi_j w_j}_\Gamma\right)^2
    }{\sum_{j\in J_k}\norm{\Psi_j w_j}{H^{1/2}(\Gamma)}^2}\\
    &\leq \c{loc} \norm{R}{H^{-1/2}(\Gamma)}^2,
  \end{align*}
  where $\c{loc}$ is the constant from Lemma~\ref{lem:loc1}.
  Since the $v_j$ can be chosen arbitrarily, we conclude that
  \begin{align*}
    \sum_{j\in J_K}\eta_j^2 \leq \c{loc}\norm{R_N}{H^{-1/2}(\Gamma)}^2
    \leq \c{loc}\norm{\hyp}{}^2 \norm{u-U_N}{\wilde H^{1/2}(\Gamma)}^2,
  \end{align*}
  where $\norm{\hyp}{} = \norm{\hyp}{\wilde H^{1/2}(\Gamma)\rightarrow H^{-1/2}(\Gamma)}$.
  Hence, the lower bound in~\eqref{thm:br:hyp:eq} follows with
  $\c{eff}^2 = M\c{loc}\norm{\hyp}{}^2$.\hfill\qed
\end{proof}
%------------------------------------------------------------------------------------------------------
In~\cite{f98}, a result like Theorem~\ref{thm:br:hyp} is proven for bijective, continuous
operators $B:H^\alpha(\Gamma)\rightarrow H^{-\alpha}(\Gamma)$, $\alpha\in\R$,
that satisfy the G\r{a}rding inequality
\begin{align}\label{eq:garding}
  \abs{b(v,v)} \geq \c{ell}\norm{v}{H^\alpha(\Gamma)}^2 -
  \c{garding}\norm{v}{H^{\alpha-\delta}(\Gamma)}
\end{align}
for all $v\in H^\alpha(\Gamma)$ and some $\delta > 0 $, where $\c{ell}>0$ and $\c{garding}\geq0$.
As in the case of the hypersingular operator, estimators of BR-type are associated to a basis
$\left\{ \Psi_j \right\}_{j=1}^N$ of the discrete trial space $\XX_\mesh$,
which is supposed to fulfill the following assumptions.
\begin{assumption}\label{ass:br}
  There are constants $M\in\N$ and $\c{faermann:assumption:stab}>0$ such that
  \begin{itemize}
    \item[(i)] The basis $\left\{ \Psi_j \right\}_{j=1}^N$ can be partitioned into $M$
      sets of basis functions with mutually disjoint
      support, i.e., there are at most $M$ disjoint subsets $I_k$, $k=1, \dots, M$
      with $I_k \subseteq \left\{ 1,\dots, N \right\}$
      such that
      \begin{align*}
	\supp\left( \Psi_m \right) \cap \supp\left( \Psi_n \right) = \emptyset \quad\text{ for } 
	m,n\in I_k, m\neq n.
      \end{align*}
    \item[(ii)] The basis $\left\{ \Psi_j \right\}_{j=1}^N$ is a partition of unity, i.e.,
      \begin{align*}
	\sum_{j=1}^N \Psi_j(x) = 1 \quad\text{ for almost all } x\in\Gamma.
      \end{align*}
    \item[(iii)] For every function $v\in H^{\max\left\{ \alpha,0 \right\}}(\Gamma)$
      there is a function $V\in \XX_\mesh$ such that
      \begin{align*}
	\sum_{j=1}^N \norm{\Psi_j(v-V)}{H^\alpha(\Gamma)}^2 \leq \c{faermann:assumption:stab}
	\norm{v}{H^\alpha(\Gamma)}^2.
      \end{align*}
    \item[(iv)] For all $v\in H^{\alpha+\delta}(\Gamma)$ exists a function $V_N\in \XX_\mesh$ such that
      \begin{align*}
	\norm{v-V_N}{H^{\alpha}(\Gamma)} \leq \c{approx}\max_{\el\in\mesh}h^\delta
	\norm{v}{H^{\alpha+\delta}(\Gamma)},
      \end{align*}
      and $\c{approx}$ depends only on the shape-regularity $\sigma_\mesh$ and $\Gamma$.
  \end{itemize}
\end{assumption}
%------------------------------------------------------------------------------------------------------
For $d=2$, the standard bases of $\Pp^p(\mesh)$ or $\Sp^p(\mesh)$ always fulfill \textit{(i)}.
By standard bases, we mean functions having support only on one element for $\Pp^p(\mesh)$,
and the classical finite-element \textit{hat-functions} for $\Sp^p(\mesh)$.
For $d=3$, the standard basis of $\Pp^p(\mesh)$ always fulfills \textit{(i)},
while for the standard basis of $\Sp^p(\mesh)$ the constant $M$ depends on shape-regularity.
Independent of $d$, the assumption \textit{(ii)} can always be fulfilled as long as there
are no boundary conditions imposed, i.e., in the case of $\Gamma$ being not a closed boundary,
the space $\wilde\Sp^p(\mesh)$ cannot fulfill \textit{(ii)}.
Assumption \textit{(iii)} holds for $d=2$ for $\Pp^0(\mesh)$ and $\alpha=-1/2$. In the case
of $\Sp^1(\mesh)$ and $\alpha=1/2$, the constant $\c{faermann:assumption:stab}$
depends on the shape-regularity of $\mesh$.
In $d=3$, it holds for $\Pp^0(\mesh)$ and $\alpha=-1/2$ or $\Sp^1(\mesh)$ and $\alpha=1/2$, and
the constant $\c{faermann:assumption:stab}$ depends on the shape-regularity of $\mesh$ in both cases.
\begin{remark}
  Note that the basis $\Psi_j$ is only needed for the computation of the BR-indicators.
\end{remark}
%------------------------------------------------------------------------------------------------------
In the general case, the following result together with a proof can be found
in~\cite[Thm.~5.2]{f98}.
\begin{theorem}
  Denote by $B:H^\alpha(\Gamma)\rightarrow H^{-\alpha}(\Gamma)$, $\alpha\in\R$,
  a linear and bounded operator which satisfies the G\r{a}rding inequality~\eqref{eq:garding}.
  If $\c{garding}>0$, assume that $B:H^{\alpha-\delta}(\Gamma)\rightarrow\linebreak
  H^{-\alpha-\delta}(\Gamma)$
  is bijective and continuous, and denote by $\XX_\mesh\subset H^{\alpha}(\Gamma)$ a discrete
  space which fulfills Assumption~\ref{ass:br}.
  If $u\in H^{\max\left\{ \alpha,0 \right\}}(\Gamma)$ and $U\in \XX_\mesh$
  are the exact and the Galerkin solution of
  \begin{align*}
    \dual{Bu}{v}_\Gamma &= \dual{F}{v}_\Gamma \quad \text{ for all } v\in \wilde H^{-\alpha}(\Gamma),\\
    \dual{BU}{V}_\Gamma &= \dual{F}{V}_\Gamma \quad \text{ for all } V\in \XX_\mesh,
  \end{align*}
  denote the residual by $R:= F - BU$ and define a BR-type estimator by
  \begin{align*}
    \eta_\mesh^2 = \sum_{j=1}^N \eta_j^2,\quad \text{ where }\quad
    \eta_j :=
    \sup_{\substack{
      v\in H^{\alpha}(\Gamma)\\
      \Psi_jv \neq 0
    }}
    \frac{\dual{R}{\Psi_jv}_\Gamma}{\norm{\Psi_jv}{H^{\alpha}(\Gamma)}}.
  \end{align*}
  Then, the following holds:
  \begin{itemize}
    \item If $B$ is elliptic, i.e., $\c{garding}=0$, $\eta_\mesh$ is reliable, i.e.,
      \begin{align}\label{br:rel}
	\norm{u-U}{H^{\alpha}(\Gamma)} \leq \c{rel} \eta_\mesh.
      \end{align}
    \item If $B$ is not elliptic but satisfies a G\r{a}rding inequality,\linebreak
      then~\eqref{br:rel} holds
      if $h_\mesh$ is sufficiently small.
    \item For $\alpha\geq0$ holds efficiency
      \begin{align}\label{br:eff}
	\eta_\mesh \leq \c{eff}\norm{u-U}{H^\alpha(\Gamma)},
      \end{align}
      where $\c{eff}$ depends on $B$, $\Gamma$, and $M$.
    \item For $\alpha\in\R$ holds efficiency~\eqref{br:eff}, where $\c{eff}$ depends on
      $\dim (X)$.
    \item For $d=2$ and $\alpha\in (-1/2,0)$ holds~\eqref{br:eff} if the mesh is sufficiently small,
      where $\c{eff}$ depends on $A$, $\Gamma$, $\alpha$, and the shape-regularity of $\mesh$.
  \end{itemize}
\end{theorem}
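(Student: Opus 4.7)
The overall plan is to lift the two-sided argument already used for Theorem~\ref{thm:br:hyp} to the abstract setting of the present theorem, with Galerkin orthogonality and the structural Assumptions~\ref{ass:br}(i)--(iv) replacing the concrete hat-function computations. All four efficiency regimes will be shown to spring from the same basic template, distinguished only by how one controls the local test functions.

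For reliability in the elliptic case ($\c{garding}=0$), I start from $R=B(u-U)$ and use ellipticity of $b$ on $H^\alpha(\Gamma)$ to obtain
\[
\|u-U\|_{H^\alpha(\Gamma)}\lesssim \|R\|_{H^{-\alpha}(\Gamma)}=\sup_{\|v\|_{H^\alpha}=1}\dual{R}{v}_\Gamma.
\]
Fix such a $v$ and let $V\in\XX_\mesh$ be the approximant from Assumption~\ref{ass:br}(iii). Galerkin orthogonality kills the contribution of $V$, and the partition-of-unity Assumption~\ref{ass:br}(ii) gives
\[
\dual{R}{v}_\Gamma=\sum_{j=1}^N\dual{R}{\Psi_j(v-V)}_\Gamma\le\sum_{j=1}^N\eta_j\,\|\Psi_j(v-V)\|_{H^\alpha(\Gamma)}.
\]
Cauchy--Schwarz and Assumption~\ref{ass:br}(iii) then bound the right-hand side by $\sqrt{\c{faermann:assumption:stab}}\,\eta_\mesh\,\|v\|_{H^\alpha(\Gamma)}$, which yields~\eqref{br:rel}.

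For the Gårding case I would insert a Schatz-type duality step before the same argument. Since $B:H^{\alpha-\delta}(\Gamma)\to H^{-\alpha-\delta}(\Gamma)$ is bijective, an Aubin--Nitsche trick combined with Assumption~\ref{ass:br}(iv) gives
\[
\|u-U\|_{H^{\alpha-\delta}(\Gamma)}\le C\,\bigl(\max_{\el\in\mesh}h_\el\bigr)^{\delta}\,\|u-U\|_{H^\alpha(\Gamma)}.
\]
Combined with $\c{ell}\|u-U\|_{H^\alpha}^2\le|b(u-U,u-U)|+\c{garding}\|u-U\|_{H^{\alpha-\delta}}^2$ and Galerkin orthogonality $b(u-U,u-U)=\dual{R}{u-U-V}_\Gamma$ for any $V\in\XX_\mesh$, the compact term is absorbed as soon as $\max h_\el$ is small enough, and the elliptic argument handles the remainder. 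I expect this absorption step to be the main obstacle, because it is where the smallness assumption on $h_\mesh$ enters and one must carefully match the regularity shift $\delta$ with the approximation order to close the inequality.

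For efficiency when $\alpha\ge 0$, I would use Assumption~\ref{ass:br}(i) to split the index set into $M$ blocks $J_k$ whose associated $\Psi_j$ have pairwise disjoint supports. For each $k$ and each near-maximizing family $(v_j)_{j\in J_k}\subset H^\alpha(\Gamma)$, disjointness lets me define a single $w_k\in H^\alpha(\Gamma)$ whose restriction to $\supp(\Psi_j)$ equals a scaled copy of $v_j$, and Lemma~\ref{lem:loc1} bounds $\|w_k\|_{H^\alpha}^2$ by $\sum_{j\in J_k}\|\Psi_j w_j\|_{H^\alpha}^2$. Testing $R$ against $w_k$ and optimizing the scalars exactly as in the proof of Theorem~\ref{thm:br:hyp} yields $\sum_{j\in J_k}\eta_j^2\lesssim\|R\|_{H^{-\alpha}}^2\lesssim\|u-U\|_{H^\alpha}^2$ by continuity of $B$, and summing over the $M$ blocks gives~\eqref{br:eff}. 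The remaining two regimes are then by-products of finite-dimensional norm equivalence: for general $\alpha\in\R$ (no Assumption~\ref{ass:br}(i)), reducing the supremum in $\eta_j$ to an effectively finite-dimensional test space yields~\eqref{br:eff} with a $\dim(\XX_\mesh)$-dependent constant, and for $d=2$ with $\alpha\in(-1/2,0)$ the inverse estimate of Lemma~\ref{lem:Pp:invest} combined with a localized variant of the block argument closes the proof once $h_\mesh$ is small enough to absorb an $h^{-\alpha}$ factor.
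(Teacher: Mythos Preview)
Your proposal is correct and follows the same template the paper uses. The paper does not actually give an independent proof of this general theorem; it simply refers to \cite[Thm.~5.2]{f98}. What the paper \emph{does} prove in detail is the special case Theorem~\ref{thm:br:hyp}, and your argument is precisely the abstract version of that proof: for reliability you replace the concrete constants $z_j$ and Poincar\'e step by the approximant $V$ from Assumption~\ref{ass:br}(iii) together with the partition of unity (ii), and for efficiency with $\alpha\ge 0$ you reproduce the block argument based on Assumption~\ref{ass:br}(i) and Lemma~\ref{lem:loc1}. Your handling of the G\r{a}rding case via a Schatz/Aubin--Nitsche duality using Assumption~\ref{ass:br}(iv), and of the remaining efficiency regimes via finite-dimensional norm equivalence and inverse estimates, is also the standard route taken in~\cite{f98}. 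One small notational slip: in the efficiency step it is $\bigl\|\sum_{j\in J_k}\Psi_j w_j\bigr\|_{H^\alpha}$ that Lemma~\ref{lem:loc1} controls by $\sum_{j\in J_k}\|\Psi_j w_j\|_{H^\alpha}^2$, not $\|w_k\|_{H^\alpha}$ itself; but the intended use is clear.
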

%------------------------------------------------------------------------------------------------------
As the BR-type estimators are defined as a supremum, they are not computable. By definition,
any $v\in H^\alpha(\Gamma)$ with\linebreak
$\Psi_jv \neq 0$ fulfills
\begin{align*}
  \frac{\dual{R}{\Psi_jv}_\Gamma}{\norm{\Psi_jv}{H^{\alpha}(\Gamma)}} \leq \eta_j,
\end{align*}
providing a lower, computable bound by choosing, e.g., $v = \Psi_j$. Computable upper bounds
are more involved. In~\cite{f98}, it is shown that for $d=2$, it holds
\begin{align*}
  \eta_j \lesssim
  \begin{cases}
    \diam(\omega_j)^{\alpha} \norm{R_N}{L_2(\omega_j)} \quad\text{ for } \alpha\geq0,\\
    \snorm{R_N}{H^{-\alpha}(\omega_j)}^2 + \\
    \quad\sum_{j=0}^{[-\alpha]} \diam(\omega_j)^{2(j+\alpha)}\snorm{R_N}{H^j(\omega_j)}^2
    \quad\text{ for } \alpha<0.
  \end{cases}
\end{align*}
For the case of the hypersingular integral operator ($\alpha=1/2$),
this upper bound corresponds to the
weighted residual error estimator which will be considered in Section~\ref{section:est:wres}.
%------------------------------------------------------------------------------------------------------
\subsubsection{RYW-estimators}
%------------------------------------------------------------------------------------------------------
These types of estimators were the first ones available for boundary element methods.
Developed and analyzed by Rank \cite{rank:arfec:86} and Wendland-Yu~\cite{wy88}, they
were labeled\\ \textit{RYW}-estimators in \cite{f98}. These estimators are connected
to the Babu\v{s}ka-Rheinboldt estimators from Section~\ref{section:est:br}.
Again we sketch the ideas for the case of the hypersingular
integral equation and lowest-order discretization first.
To that end, denote by $u\in H^{1/2}(\Gamma)$ the exact solution to the Neumann
problem of Proposition~\ref{prop:neumann}, and by $U\in\Sp^1(\mesh)$
the Galerkin solution of the discrete version of Proposition~\ref{prop:galerkin:neumann}.
%------------------------------------------------------------------------------------------------------
\begin{mydefinition}
  Denote by $\left\{ \Psi_j \right\}_{j=1}^N$ the nodal basis of $\Sp^1(\mesh)$.
  For every $j=1,\dots, N$, consider the space
  \begin{align*}
    H_j := \wilde H^{1/2}(\supp(\Psi_j)^\circ),
  \end{align*}
  which is a closed subspace of $H^{1/2}(\Gamma)$.
  Define $\zeta_j\in H_j$ as the unique solution of
  \begin{align*}
    \dual{\hyp\zeta_j}{v_j}_\Gamma = \dual{\hyp(u-U_N)}{v_j}_\Gamma\quad \text{ for all } v_j\in H_j,
  \end{align*}
  and set
  \begin{align*}
    \eta_\mesh^2 = \sum_{j=1}^N \eta_j^2 \quad \text{ where }
    \eta_j := \norm{\zeta_j}{H^{1/2}(\Gamma)}.
  \end{align*}
\end{mydefinition}
\begin{theorem}\label{thm:ryw:hyp}
  There are constants $\c{rel},\c{eff}>0$ such that
  \begin{align}\label{thm:ryw:hyp:eq}
    \c{eff}^{-2}\eta_\mesh^2 \leq \norm{u-U}{H^{1/2}(\Gamma)}^2 \leq \c{rel}^2 \eta_\mesh^2.
  \end{align}
  The constant $\c{rel}>0$ depends only on the shape-regularity of $\mesh$.
\end{theorem}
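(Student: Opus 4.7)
The plan is to reduce reliability to the already-established reliability of the BR-estimator (Theorem~\ref{thm:br:hyp}), and to derive efficiency by summing over a disjoint-support decomposition of the nodal basis and invoking the localization Lemma~\ref{lem:loc1}.

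\emph{Well-posedness and a dual characterization of $\eta_j$.} First I would observe that $\dual{\hyp\cdot}{\cdot}_\Gamma$ is continuous and elliptic on the closed subspace $H_j\subset H^{1/2}(\Gamma)$, so Lax--Milgram yields existence, uniqueness, and well-posedness of $\zeta_j$. Testing ellipticity against $v_j=\zeta_j$ and continuity against a generic $v_j\in H_j$ then gives the two-sided bound
\begin{equation*}
 \c{ell}\,\eta_j \;\le\; \sup_{0\neq v_j\in H_j}\frac{\dual{\hyp(u-U)}{v_j}_\Gamma}{\norm{v_j}{H^{1/2}(\Gamma)}} \;\le\; \c{cnt}\,\eta_j.
\end{equation*}
This dual characterization of the indicator is the bridge to both halves of the theorem.

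\emph{Reliability.} For every $v\in H^{1/2}(\Gamma)$, the product $\Psi_jv$ is supported in $\omega_j$ and, because the nodal hat function $\Psi_j$ vanishes continuously on $\partial\omega_j$, the zero extension of $\Psi_jv$ to $\Gamma$ lies in $H_j$. Hence the supremum defining the BR-indicator $\eta_j^{\mathrm{BR}}$ of Definition~\ref{def:hyp:br} is taken over a subset of the supremum from the previous step, and using $R=\hyp(u-U)$ I obtain $\eta_j^{\mathrm{BR}}\le\c{cnt}\,\eta_j$. Squaring, summing in $j$, and invoking the reliability half of Theorem~\ref{thm:br:hyp} yields $\norm{u-U}{H^{1/2}(\Gamma)}\le\c{rel}\,\eta_\mesh$, with $\c{rel}$ inheriting its shape-regularity dependence directly from the BR-bound.

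\emph{Efficiency.} Assumption~\ref{ass:br}(i) affords a partition of the index set into $M$ classes $I_k$ with pairwise disjoint supports $\supp(\Psi_j)$, and hence pairwise disjoint supports of the $\zeta_j$. Set $Z_k:=\sum_{j\in I_k}\zeta_j$. Ellipticity, the defining equation for each $\zeta_j$ tested against $v_j=\zeta_j$, and the continuity of $\hyp$ give
\begin{equation*}
 \c{ell}\sum_{j\in I_k}\eta_j^2 \;\le\; \sum_{j\in I_k}\dual{\hyp\zeta_j}{\zeta_j}_\Gamma \;=\; \dual{\hyp(u-U)}{Z_k}_\Gamma \;\le\; \c{cnt}\,\norm{u-U}{H^{1/2}(\Gamma)}\,\norm{Z_k}{H^{1/2}(\Gamma)}.
\end{equation*}
Lemma~\ref{lem:loc1} applied to the disjoint collection $(\zeta_j)_{j\in I_k}$ bounds $\norm{Z_k}{H^{1/2}(\Gamma)}^2\le\c{loc}\sum_{j\in I_k}\eta_j^2$; cancelling the common factor $(\sum_{j\in I_k}\eta_j^2)^{1/2}$ and summing the resulting estimate over the $M$ classes yields $\eta_\mesh\le\c{eff}\,\norm{u-U}{H^{1/2}(\Gamma)}$ with $\c{eff}$ depending on $M$, $\c{loc}$, $\c{cnt}$, and $\c{ell}$.

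\emph{Main obstacle.} The most delicate point is the precise matching of the three norms in play---on $H_j=\wilde H^{1/2}(\supp(\Psi_j)^\circ)$, on $H^{1/2}(\Gamma)$, and the one implicitly used in Lemma~\ref{lem:loc1}---because $s=1/2$ is the borderline value excluded from the generic $\wilde H^s$/$H^s$ norm equivalence recalled in Section~\ref{section:sobolev}. One has to verify that zero-extension of $\Psi_jv$ from $\omega_j$ to $\Gamma$ indeed produces a bona fide element of $H^{1/2}(\Gamma)$ with controlled norm, and that Lemma~\ref{lem:loc1} applies in the form needed to the disjoint family $(\zeta_j)_{j\in I_k}$; these are technical but standard verifications, relying on the continuity of the hat functions.
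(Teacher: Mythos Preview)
Your proof is correct. The reliability half matches the paper's argument: both observe that $\Psi_j v\in H_j$, deduce $\eta_j^{\mathrm{BR}}\lesssim\eta_j$, and then invoke Theorem~\ref{thm:br:hyp}.

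For efficiency you take a genuinely different route. The paper establishes the \emph{pointwise} equivalence $\eta_j\lesssim\eta_j^{\mathrm{BR}}$ by a density/limiting argument---choosing $v_j^{(\delta)}$ with $\norm{\Psi_j v_j^{(\delta)}-\zeta_j}{H^{1/2}(\Gamma)}\le\delta$ and letting $\delta\to0$---and then cites the efficiency half of Theorem~\ref{thm:br:hyp}. You instead prove efficiency directly: you sum $\c{ell}\eta_j^2\le\dual{\hyp\zeta_j}{\zeta_j}_\Gamma=\dual{\hyp(u-U)}{\zeta_j}_\Gamma$ over a disjoint-support class $I_k$, use continuity of $\hyp$, and bound $\norm{Z_k}{H^{1/2}(\Gamma)}$ via Lemma~\ref{lem:loc1}. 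This is essentially the mechanism behind the BR efficiency proof, run with $\zeta_j$ in place of $\Psi_j w_j$. Your approach avoids the density step (that functions of the form $\Psi_j v$ are dense in $H_j=\wilde H^{1/2}(\omega_j^\circ)$, which the paper uses implicitly), at the cost of not obtaining the local equivalence $\eta_j\simeq\eta_j^{\mathrm{BR}}$ that the paper's argument delivers as a byproduct and that feeds into the general Theorem~\ref{thm:ryw}.

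Two minor remarks on your ``main obstacle'' paragraph. First, the $s=1/2$ exclusion you worry about concerns the identification $\wilde H^s=H^s$ on sets with boundary; here all norms you need are the $H^{1/2}(\Gamma)$-norm of zero-extended functions, which is by definition the $\wilde H^{1/2}(\omega_j)$-norm, so no equivalence of distinct norms is at stake. Second, your application of Lemma~\ref{lem:loc1} to the node-patch collection $\{\omega_j\}_{j\in I_k}$ rather than a simplicial mesh is fine (and the paper does the same thing in its proof of Theorem~\ref{thm:br:hyp}); the proof of the lemma does not use simpliciality, and the complement $\Gamma\setminus\bigcup_{j\in I_k}\omega_j$ contributes zero since $Z_k$ vanishes there.
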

\begin{proof}
  Denote by $\eta_{\textrm{BR}}$ the Babu\v{s}ka-Rheinboldt estimator\linebreak
  from
  Definition~\ref{def:hyp:br} with its local contributions $\eta_{\textrm{BR},j}$
  and by $R:=(1/2-K')\phi - \hyp U_N$ the residual.
  Now, if $v\in H^{1/2}(\Gamma)$ with $\Psi_j v\neq 0$, it follows
  $\Psi_j v\in H_j$ and hence
  \begin{align*}
    \dual{R}{\Psi_j v}_\Gamma = \dual{\hyp\zeta_j}{\Psi_j v}_\Gamma
    \lesssim \norm{\zeta_j}{H^{1/2}(\Gamma)} \norm{\Psi_j v}{H^{1/2}(\Gamma)}.
  \end{align*}
  Taking the supremum over all those $v$ yields
  \begin{align*}
    \eta_{\textrm{BR},j} \leq \eta_j,
  \end{align*}
  such that reliability, i.e., the upper bound in~\eqref{thm:ryw:hyp:eq}, follows
  from Theorem~\ref{thm:br:hyp}. To show efficiency, choose for $\delta>0$ a function
  $v_j^{(\delta)}$ such that
  \begin{align*}
    \norm{\Psi_j v_j^{(\delta)} - \zeta_j}{H^{1/2}(\Gamma)} \leq \delta.
  \end{align*}
  Then,
  \begin{align*}
    \eta_j^2 \simeq \dual{W \zeta_j}{\zeta_j}_\Gamma &=
    \dual{\hyp \zeta_j}{\Psi_j v_j^{(\delta)}}_\Gamma
    + \dual{\hyp \zeta_j}{\zeta_j - \Psi_j v_j^{(\delta)}}_\Gamma,
  \end{align*}
  and
  \begin{align*}
    \dual{\hyp \zeta_j}{\Psi_j v_j^{(\delta)}}_\Gamma &\leq \eta_{\textrm{BR},j}
    \norm{\Psi_j v_j^{(\delta)}}{H^{1/2}(\Gamma)}, \\
    \dual{\hyp \zeta_j}{\zeta_j - \Psi_j v_j^{(\delta)}}_\Gamma &\lesssim
    \delta \norm{\zeta_j}{H^{1/2}(\Gamma)}.
  \end{align*}
  Due to $\norm{\Psi_j v_j^{(\delta)}}{H^{1/2}(\Gamma)} \leq \eta_j + \delta$ it follows that
  \begin{align*}
    \eta_j^2 \lesssim \eta_{\textrm{BR},j} (\eta_j+\delta) + \delta \eta_j,
  \end{align*}
  such that the limit $\delta \rightarrow 0$ finishes the proof of efficiency.
  $\hfill\qed$
\end{proof}
%------------------------------------------------------------------------------------------------------
In~\cite{wy88}, estimators of this type are analyzed for bijective, continuous operators
$B:H^\alpha(\Gamma)\rightarrow H^{-\alpha}(\Gamma)$, $\alpha\in\R$, that
satisfy the G\r{a}rding inequality~\eqref{eq:garding} with discretizations that
satisfy $(ii)$ and $(iii)$ of Assumption~\ref{ass:br}. The following Theorem
summarizes the available results on the RYW-estimators.
\begin{theorem}[{\cite{f98,wy88}}]\label{thm:ryw}
  Denote by $B:H^\alpha(\Gamma)\rightarrow H^{-\alpha}(\Gamma)$, $\alpha\in\R$,
  a linear and bounded operator which satisfies the G\r{a}rding inequality~\eqref{eq:garding}.
  If $\c{garding}>0$, assume that\linebreak $B:H^{\alpha-\delta}(\Gamma)\rightarrow H^{-\alpha-\delta}(\Gamma)$
  is bijective and continuous,
  and denote by $\XX_\mesh\subset H^{\alpha}(\Gamma)$ a discrete space which fulfills $(ii)$ and
  $(iii)$ of Assumption~\ref{ass:br}.
  Suppose that there is a basis $\left\{ \Psi_j \right\}_{j=1}^N$ of $\XX_\mesh$
  such that there are at most $M$ disjoint subsets $I_k$, $k=1, \dots, M$,
  $I_k \subseteq \left\{ 1,\dots, N \right\}$, which satisfy the\linebreak strengthened Cauchy-Schwarz
  inequality
  \begin{align*}
    \dual{A v_m}{v_n}_\Gamma \leq (\# I_k)^{-1} \dual{A v_m}{v_m}_\Gamma^{1/2}
    \dual{A v_n}{v_n}_\Gamma^{1/2}
  \end{align*}
  for all $m\neq n \in I_k$ and $v_j \in H_j$, with
  \begin{align*}
    H_j := \wilde H^{\alpha}(\supp(\Psi_j)^\circ).
  \end{align*}
  If $u\in H^{\alpha}(\Gamma)$ and $U\in \XX_\mesh$
  are the exact and the Galerkin solution of
  \begin{align*}
    \dual{Au}{v}_\Gamma &= \dual{F}{v}_\Gamma \quad \text{ for all } v\in \wilde H^{-\alpha}(\Gamma),\\
    \dual{AU}{V}_\Gamma &= \dual{F}{V}_\Gamma \quad \text{ for all } V\in \XX_\mesh,
  \end{align*}
  denote the residual by $R:= F - BU$ and define a RYW-type estimator by
  \begin{align*}
    \eta_\mesh^2 = \sum_{j=1}^N \eta_j^2,\quad \text{ where }\quad
    \eta_j := \norm{\zeta_j}{H^{\alpha}(\Gamma)},
  \end{align*}
  where $\zeta_j\in H_j$ is defined by
  \begin{align*}
    \dual{A\zeta_j}{v_j}_\Gamma = \dual{R}{v_j}_\Gamma \quad\text{ for all } v_j\in H_j.
  \end{align*}
  Then, the following holds:
    \begin{itemize}
    \item If $B$ is elliptic, i.e., $\c{garding}=0$, $\eta_\mesh$ is reliable, i.e.,
      \begin{align}\label{ryw:rel}
	\norm{u-U}{\wilde H^{\alpha}(\Gamma)} \leq \c{rel} \eta_\mesh.
      \end{align}
    \item If $B$ is not elliptic but satisfies a G\r{a}rding inequality,
      i.e., $\c{garding}>0$, then~\eqref{ryw:rel} holds
      if $h_\mesh$ is sufficiently small.
    \item For $\alpha\geq0$ holds efficiency
      \begin{align}\label{ryw:eff}
	\eta_\mesh \leq \c{eff}\norm{u-U}{\wilde H^\alpha(\Gamma)},
      \end{align}
      where $\c{eff}$ depends on $B$, $\Gamma$, and $M$.
    \item For $\alpha\in\R$ holds efficiency~\eqref{br:eff}, where $\c{eff}$ depends on
      $M$.
    \item For $d=2$ and $\alpha\in (-1/2,0)$ efficiency~\eqref{ryw:eff} holds
      if the mesh is sufficiently small. The efficiency constant $\c{eff}>0$ depends
    on $B$, $\Gamma$, $\alpha$ and on the shape-regularity of $\mesh$.
  \end{itemize}
\end{theorem}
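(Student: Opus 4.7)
The plan is to reduce the analysis to the Babu\v{s}ka--Rheinboldt estimator from Section~\ref{section:est:br}, mimicking the hypersingular case treated in Theorem~\ref{thm:ryw:hyp}. First, for reliability in the elliptic case ($\c{garding}=0$), I would observe that the partition-of-unity property (ii) ensures $\Psi_j v\in H_j$ whenever $v\in H^\alpha(\Gamma)$ satisfies $\Psi_j v\neq 0$. Testing the defining equation of $\zeta_j$ with $\Psi_j v$ and using continuity of $B$ yields
\[
\dual{R}{\Psi_j v}_\Gamma = \dual{B\zeta_j}{\Psi_j v}_\Gamma \leq \c{cnt}\,\eta_j\,\norm{\Psi_j v}{H^\alpha(\Gamma)}.
\]
Taking the supremum over $v$ gives $\eta_{\textrm{BR},j}\leq \c{cnt}\,\eta_j$, and the reliability bound~\eqref{ryw:rel} then follows from the corresponding BR-reliability, which by Theorem~\ref{thm:br:hyp} (and its generalisation in~\cite{f98}) requires only property~(iii). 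For the G\r{a}rding case I would apply the usual Aubin--Nitsche duality: the invertibility of $B$ on the shifted scale $H^{\alpha-\delta}\to H^{-\alpha-\delta}$, combined with the approximation property~(iv), converts the compact lower-order term in~\eqref{eq:garding} into $h_\mesh^\delta\norm{u-U}{H^\alpha(\Gamma)}$, which is absorbed provided $h_\mesh$ is sufficiently small.

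For efficiency, I would work locally on each subspace $H_j$. Using the defining relation of $\zeta_j$ with test function $\zeta_j$ itself, the G\r{a}rding inequality on $H_j$ (together with property~(iv) to absorb the lower-order part when $\c{garding}>0$), and the identity $R=B(u-U)$, gives
\[
\c{ell}\,\eta_j^2 \lesssim \dual{B\zeta_j}{\zeta_j}_\Gamma = \dual{R}{\zeta_j}_\Gamma = \dual{B(u-U)}{\zeta_j}_\Gamma.
\]
Summing over $j$ inside a single colour class $I_k$ and invoking the strengthened Cauchy--Schwarz inequality produces
\[
\sum_{j\in I_k}\eta_j^2 \lesssim \norm{u-U}{\wilde H^\alpha(\Gamma)}\Bigl(\sum_{j\in I_k}\eta_j^2\Bigr)^{1/2},
\]
so that dividing by the right-hand factor and adding the $M$ colour classes yields~\eqref{ryw:eff} with $\c{eff}$ depending on $M$ and $B$. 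For $\alpha\geq 0$ the strengthened Cauchy--Schwarz step can be bypassed by the direct bound $\dual{B(u-U)}{\zeta_j}_\Gamma \leq \norm{B}{}\norm{u-U}{H^\alpha(\Gamma)}\norm{\zeta_j}{H^\alpha(\Gamma)}$, producing the same conclusion with a constant that no longer depends on the colour structure beyond $M$.

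The delicate case is $d=2$ with $\alpha\in(-1/2,0)$: here $\zeta_j$ has only negative Sobolev regularity, so the pairing $\dual{R}{\zeta_j}_\Gamma$ cannot be estimated pointwise, and the lower-order norm in~\eqref{eq:garding} is weaker than the energy norm itself. The remedy I would pursue is to replace $\zeta_j$ locally by a smoothed substitute (e.g.\ obtained by a mollification or local $L_2$-projection supported in $\overline{\supp(\Psi_j)}$), recovering ellipticity on $H_j$ modulo a commutator whose norm scales like a positive power of $h_\mesh$, and then absorb this remainder using the smallness assumption on the mesh. The technical crux is to verify that the regularisation preserves membership in $H_j$ and remains compatible with the comparison with the BR-indicators; this is precisely the mesh-width restriction that appears in the statement of the theorem.
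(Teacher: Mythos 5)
Your reliability argument coincides with what the paper (via the hypersingular prototype, Theorem~\ref{thm:ryw:hyp}, and the citation to \cite{f98}) actually does: you test the defining equation of $\zeta_j$ against $\Psi_j v$ to deduce $\eta_{\textrm{BR},j}\lesssim\eta_j$ and then invoke BR-reliability. This is the paper's route.

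For efficiency, however, you take a \emph{different} route from the one the paper prefers. The paper's proof points out two historical strategies: the direct ``test-and-colour'' argument of \cite{wy88}, and the later BR--RYW equivalence argument of \cite{f98} (which is the one actually sketched in the hypersingular Theorem~\ref{thm:ryw:hyp}: choose $\Psi_j v_j^{(\delta)}\approx\zeta_j$ to show $\eta_j\lesssim\eta_{\textrm{BR},j}+\delta$, pass to the limit, and then apply BR-efficiency with its own localization). Your efficiency proof is the first route: test the local equation with $\zeta_j$ itself, obtain $\eta_j^2\lesssim\dual{B(u-U)}{\zeta_j}_\Gamma$, sum inside a colour class, bound $\dual{B(u-U)}{w}_\Gamma$ by $\norm{u-U}\norm{w}$, and control $\norm{w}$ by the strengthened Cauchy--Schwarz inequality. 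That outline is sound (modulo spelling out that the strengthened CS gives $\dual{Bw}{w}_\Gamma\lesssim\sum_{j\in I_k}\dual{B\zeta_j}{\zeta_j}_\Gamma$ before ellipticity converts this into $\norm{w}^2\lesssim\sum\eta_j^2$). What the paper's BR-equivalence route buys is that it concentrates all the localization subtleties into a single place (the BR efficiency proof, which for $\alpha\ge0$ uses the disjoint-support Lemma~\ref{lem:loc1}), whereas your route must redo localization per colour class directly.

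Two concrete issues. First, your claim that ``for $\alpha\geq 0$ the strengthened Cauchy--Schwarz step can be bypassed by the direct bound $\dual{B(u-U)}{\zeta_j}_\Gamma\leq\norm{B}\,\norm{u-U}\,\norm{\zeta_j}$'' cannot stand as written: applied per index $j$ this gives only $\eta_j\lesssim\norm{u-U}$, hence $\eta_\mesh^2\lesssim N\norm{u-U}^2$, with the constant growing like $N=\dim\XX_\mesh$ rather than $M$. Some localization step is still needed; for $\alpha\ge0$ it is precisely Lemma~\ref{lem:loc1} (disjoint supports, $\wilde H^\alpha$-superadditivity), which is what the BR efficiency proof uses. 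Second, your treatment of $d=2$, $\alpha\in(-1/2,0)$ via a mollified substitute for $\zeta_j$ is speculative and does not correspond to what the references establish; both the paper and \cite{f98} handle this case through the BR--RYW equivalence combined with Faermann's localization techniques (Section~\ref{section:localization}), not through mollification. You have not verified that your proposed regularization preserves the strengthened CS structure or yields the claimed $h_\mesh$-scaling, so this part of the proposal remains a gap rather than a proof.
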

\begin{proof}
  The proof was first shown in~\cite{wy88}, with efficiency always dependent on $M$.
  Later, Faermann~\cite{f98} showed equivalence of the RYW and the BR estimators, thereby
  obtaining efficiency without dependence on $M$ for $\alpha\geq 0$ and $d=2$ and
  $\alpha\in(-1/2,0)$.
  $\hfill\qed$
\end{proof}
%------------------------------------------------------------------------------------------------------
The constant $M$ in the last theorem can always be chosen as $M=N$ by
decomposing the set of degrees of freedom into it's single elements. In~\cite{yu87}
it is postulated that $M$ can be chosen even much smaller.
%------------------------------------------------------------------------------------------------------
\subsubsection{Weighted residual estimators}\label{section:est:wres}
%------------------------------------------------------------------------------------------------------
Estimators of this kind usually employ orthogonality properties to 
localize the residuals' fractional norm by a weighted norm of integer order.
The very first paper in this sense is~\cite{cs95}, where the following idea
was carried out in a more general Banach space setting:
Suppose that $\XX_\mesh\subset\XX$ is a discrete space and denote by $U$ the Galerkin
approximation to $u\in\XX$, cf.~\eqref{intro:galerkin}, and by $R := Bu-BU$ the residual.
Due to the open mapping theorem, $B^{-1}$ is bounded and it holds
\begin{align*}
  \norm{u-U}{\XX} \leq \norm{B^{-1}}{\XX'\rightarrow\XX} \norm{R}{\XX'}.
\end{align*}
We assume that there are spaces $\XX_0,\XX_1$ with $\XX_0'\supseteq\XX'\supseteq\XX_1'$,
such that $\XX_\mesh\subset\XX_0$, $R\in\XX_1'$, and
\begin{align}
  \begin{split}\label{wres:eq1}
    \norm{u}{\XX'} \leq C \norm{u}{\XX_0'}^{1-s}\norm{u}{\XX_1'}^{s}
    &\quad \text{ for all } u\in\XX_1',\\
    0 = \dual{V}{R}_{\XX_0\times\XX_0'} &\quad \text{ for all } V\in\XX_\mesh.
  \end{split}
\end{align}
Then, according to the theorem of Hahn-Banach, there is $\rho\in\XX_0$ with
\begin{align*}
  \norm{\rho}{\XX_0}^2 = \norm{R}{\XX_0'}^2 = \dual{\rho}{R}_{\XX_0\times\XX_0'}
  = \dual{\rho-V}{R}_{\XX_0\times\XX_0'}
\end{align*}
for all $V\in\XX_\mesh$, and from~\eqref{wres:eq1} we infer, using the Cauchy-Schwarz inequality,
that
\begin{align*}
  \norm{u-U}{\XX} \lesssim \norm{R}{\XX_1'}^{s}
  \inf_{V \in\XX_\mesh}\norm{\rho - V}{\XX_0}^{1-s}.
\end{align*}
For example, in the context of weakly singular integral equations in $d\geq 2$ one chooses
$B=\slo$, $\XX=\wilde H^{-1/2}(\Gamma)$, $\XX_0'=L_2(\Gamma)$, $\XX_1'=H^1(\Gamma)$, $s=1/2$,
and $\XX_\mesh := \Pp^p(\mesh)$.
Then, $\rho=R$ and due to Lemma~\ref{lem:L2:Pp:apx},
\begin{align*}
  \inf_{V \in\Pp^p(\mesh)}\norm{\rho - V}{L_2(\Gamma)} \lesssim \norm{h_\mesh \nablag R}{L_2(\Gamma)}.
\end{align*}
This shows the following, cf.~\cite[Thm.~2]{cs95}.
%------------------------------------------------------------------------------------------------------
\begin{theorem}\label{thm:wres:cs95}
  If $\phi\in\wilde H^{-1/2}(\Gamma)$ is the exact solution of Proposition~\ref{prop:weaksing}
  or~\ref{prop:dirichlet} with $f\in H^1(\Gamma)$ and $\Phi\in\Pp^p(\mesh)$ is the respective Galerkin
  approximation from Proposition~\ref{prop:galerkin:weaksing} or~\ref{prop:galerkin:dirichlet},
  then
  \begin{align*}
    \norm{\phi-\Phi}{\wilde H^{-1/2}(\Gamma)} \lesssim
    \norm{R}{H^1(\Gamma)}^{1/2} \norm{h_\mesh \nablag R}{L_2(\Gamma)}^{1/2}
  \end{align*}
\end{theorem}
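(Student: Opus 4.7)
The plan is to combine three ingredients that are already at hand: the isomorphism property of the single layer operator on fractional Sobolev spaces, a standard interpolation inequality between $L_2(\Gamma)$ and $H^1(\Gamma)$, and the approximation estimate for the $L_2$-orthogonal projection onto $\Pp^p(\mesh)$ of Lemma~\ref{lem:L2:Pp:apx}. This corresponds exactly to the abstract argument outlined around~\eqref{wres:eq1} with the concrete choices $\XX=\wilde H^{-1/2}(\Gamma)$, $\XX_0'=L_2(\Gamma)$, $\XX_1'=H^{1}(\Gamma)$, and $s=1/2$.

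First I would use Theorems~\ref{thm:stability} and~\ref{thm:ellipticity} (each with $s=1/2$) to conclude that $\slo:\wilde H^{-1/2}(\Gamma)\to H^{1/2}(\Gamma)$ is a continuous isomorphism, which gives
\[
  \norm{\phi-\Phi}{\wilde H^{-1/2}(\Gamma)} \lesssim \norm{\slo(\phi-\Phi)}{H^{1/2}(\Gamma)} = \norm{R}{H^{1/2}(\Gamma)}.
\]
Next, the standard interpolation estimate
\[
  \norm{R}{H^{1/2}(\Gamma)} \lesssim \norm{R}{L_2(\Gamma)}^{1/2}\,\norm{R}{H^{1}(\Gamma)}^{1/2}
\]
reduces the task to bounding $\norm{R}{L_2(\Gamma)}$ by $\norm{h_\mesh\nablag R}{L_2(\Gamma)}$. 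Here I would exploit the Galerkin orthogonality $\dual{R}{\Psi}_\Gamma = 0$ for all $\Psi\in\Pp^p(\mesh)$: choosing $\Psi=\pi_\mesh^p R$ and applying Cauchy--Schwarz gives
\[
  \norm{R}{L_2(\Gamma)}^2 = \dual{R}{R-\pi_\mesh^p R}_\Gamma \le \norm{R}{L_2(\Gamma)}\,\norm{R-\pi_\mesh^p R}{L_2(\Gamma)},
\]
and Lemma~\ref{lem:L2:Pp:apx} with $s=1$ yields $\norm{R-\pi_\mesh^p R}{L_2(\Gamma)} \lesssim \norm{h_\mesh\nablag R}{L_2(\Gamma)}$. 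Concatenating the three bounds delivers the claim.

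The main technical point to verify is that $R$ indeed has the regularity $R\in H^{1}(\Gamma)$ required to make $h_\mesh\nablag R\in L_2(\Gamma)$ meaningful. For this I would invoke once more Theorem~\ref{thm:stability} with $s=1/2$, which provides $\slo:L_2(\Gamma)\to H^{1}(\Gamma)$, so that $\slo\Phi\in H^{1}(\Gamma)$ since $\Phi\in\Pp^p(\mesh)\subset L_2(\Gamma)$; together with the hypothesis $f\in H^{1}(\Gamma)$ this shows $R=f-\slo\Phi\in H^{1}(\Gamma)$ in the weakly singular case. In the Dirichlet case, the same theorem with $s=1/2$ gives $\dlo:H^{1}(\Gamma)\to H^{1}(\Gamma)$, so that $R=(1/2+\dlo)f-\slo\Phi\in H^{1}(\Gamma)$ as well. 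I do not expect any other serious obstacle; everything else is a direct assembly of the estimates above.
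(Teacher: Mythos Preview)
Your proposal is correct and follows essentially the same route as the paper: the discussion preceding the theorem already lays out the abstract argument with the choices $\XX=\wilde H^{-1/2}(\Gamma)$, $\XX_0'=L_2(\Gamma)$, $\XX_1'=H^1(\Gamma)$, $s=1/2$, and then specializes to $\rho=R$ together with Lemma~\ref{lem:L2:Pp:apx}, which is exactly your three-step chain (bounded inverse of $\slo$, interpolation, Galerkin orthogonality plus approximation of $\pi_\mesh^p$). One small slip: for the isomorphism $\slo:\wilde H^{-1/2}(\Gamma)\to H^{1/2}(\Gamma)$ you need Theorem~\ref{thm:stability} with $s=0$ (and Theorem~\ref{thm:ellipticity} has no parameter $s$); the choice $s=1/2$ in Theorem~\ref{thm:stability} is what you correctly use later for the regularity $R\in H^1(\Gamma)$.
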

%------------------------------------------------------------------------------------------------------
This method can be applied to problems involving hypersingular integrals~\cite[Thms.~3,~4]{cs95}
as well as transmission problems~\cite[Sec.~5]{cs95}. However, the a~posteriori error estimates
based on this method are of the form
\begin{align*}
  \eta_\mesh^2 := \left( \sum_{\el\in\mesh} \eta_\el^2 \right)^{1/2} \cdot
  \left( \sum_{\el\in\mesh} h_\el^2\eta_\el^2 \right)^{1/2},
\end{align*}
which reflects the fact that this method does not fully localize a fractional norm, see the discussions
in~\cite[Sec.~6]{cs95} and~\cite[Sec.~1]{cs96}. This issue can be overcome when considering
a uniform sequence of meshes, where the result of Theorem~\ref{thm:wres:cs95} clearly reduces to
\begin{align}
  \label{eq:wres:weaksing:rel}
  \begin{split}
  \norm{\phi-\Phi}{\wilde H^{-1/2}(\Gamma)} \lesssim
  \eta_\mesh := \left( \sum_{\el\in\mesh}\eta_\el^2 \right)^{1/2}\\
  \text{ with } \eta_\el := \norm{h_\mesh^{1/2} \nablag R}{L_2(\el)}.
  \end{split}
\end{align}
Further works on weighted residual error estimation in BEM focus on establishing
the reliability estimate~\eqref{eq:wres:weaksing:rel} also for locally refined meshes.
The first one to mention is~\cite{cs96}. For weakly singular equations, it is shown that
for $d=2$ it holds
\begin{align*}
  \norm{\phi-\Phi}{\wilde H^{-1/2}(\Gamma)} \lesssim \sigma_\mesh^{1/2}
  \sum_{\el\in\mesh}h_\el^{1/2}(h_\el^2+1)^{1/4}\norm{R'}{L_2(\el)},
\end{align*}
where $\sigma_\mesh$ is the shape-regularity constant of $\mesh$
and $(\cdot)'$ abbreviates the arclength derivative $\nablag(\cdot)$ for $d=2$.
An analogous result holds
for equations involving the hypersingular operator. The most advanced results regarding
reliable a~posteriori estimation by weighted residuals are due to~\cite{c97} for $d=2$
and~\cite{cmps,cms} for $d=3$. The first theorem that will be presented is concerned with the
a~posteriori error estimation for weakly singular integral equations,
cf.~\cite[Ex.~1]{c97} for $d=2$ and~\cite[Cor.~4.2]{cms} for $d=3$. The idea of the proof
will be presented briefly.
%------------------------------------------------------------------------------------------------------
\begin{theorem}\label{thm:est:wres:weaksing:rel}
  If $\phi\in\wilde H^{-1/2}(\Gamma)$ is the exact solution of Proposition~\ref{prop:weaksing}
  or~\ref{prop:dirichlet} with $f\in H^1(\Gamma)$ and $\Phi\in\Pp^p(\mesh)$
  is the respective Galerkin
  approximation from Proposition~\ref{prop:galerkin:weaksing} or~\ref{prop:galerkin:dirichlet},
  then
  \begin{align*}
    \norm{\phi-\Phi}{\wilde H^{-1/2}(\Gamma)} \leq \c{rel}
    \norm{h_\mesh^{1/2} \nablag R}{L_2(\Gamma)} =: \eta_\mesh,
  \end{align*}
  where $R$ denotes the residual, i.e., $R=f-V\Phi$ in the case of Proposition~\ref{prop:weaksing}
  and $R=(1/2+\dlo)f - V\Phi$ in the case of Proposition~\ref{prop:dirichlet}.
  The constant $\c{rel}>0$ depends only on $\Gamma$, the shape-regularity $\sigma_\mesh$,
  and on the polynomial degree $p$.
\end{theorem}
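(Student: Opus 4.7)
The plan is to reduce the reliability bound to controlling the residual $R$ in the non-local $H^{1/2}(\Gamma)$-norm, to localize that norm to node patches by exploiting Galerkin orthogonality, and then to bound each patch contribution by a scaling argument against a weighted local $L_2$-norm of the surface gradient.

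First, I would apply the ellipticity of $V$ (Theorem~\ref{thm:ellipticity}) together with the identity $V(\phi-\Phi) = R$ to write
\begin{align*}
\c{ell}\,\norm{\phi-\Phi}{\wilde H^{-1/2}(\Gamma)}^2
 \leq \dual{V(\phi-\Phi)}{\phi-\Phi}_\Gamma
 = \dual{R}{\phi-\Phi}_\Gamma
 \leq \norm{R}{H^{1/2}(\Gamma)}\,\norm{\phi-\Phi}{\wilde H^{-1/2}(\Gamma)},
\end{align*}
so the theorem reduces to $\norm{R}{H^{1/2}(\Gamma)} \lesssim \norm{h_\mesh^{1/2}\nablag R}{L_2(\Gamma)}$. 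The right-hand side is even well defined classically: since $f \in H^1(\Gamma)$ by assumption and $V\Phi \in H^1(\Gamma)$ by Theorem~\ref{thm:stability} applied to $\Phi \in L_2(\Gamma)$, the residual $R$ lies in $H^1(\Gamma)$.

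Second, Galerkin orthogonality (Proposition~\ref{prop:galerkin:weaksing} or Proposition~\ref{prop:galerkin:dirichlet}) yields $\dual{R}{\Psi}_\Gamma = 0$ for all $\Psi \in \Pp^p(\mesh)$. I would then invoke Theorem~\ref{thm:loc:slob} with $s=1/2$ and $\XX_\mesh = \Pp^p(\mesh)$ to obtain
\begin{align*}
\norm{R}{H^{1/2}(\Gamma)}^2 \lesssim \sum_{z\in\NN}\snorm{R}{H^{1/2}(\omega_z)}^2,
\end{align*}
with a constant depending only on $\Gamma$ and the shape-regularity of $\mesh$.

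Third, I would control each node-patch seminorm by a scaling argument. On a reference patch of unit diameter, the continuous embedding $H^1 \hookrightarrow H^{1/2}$ together with the shift-invariance of the seminorm and a Poincar\'e inequality yields $\snorm{v}{H^{1/2}}^2 \lesssim \norm{\nabla v}{L_2}^2$. A standard scaling computation, whose powers of $h_z$ happen to produce the same factor in $d=2$ and $d=3$, then gives
\begin{align*}
\snorm{R}{H^{1/2}(\omega_z)}^2 \lesssim h_z\,\norm{\nablag R}{L_2(\omega_z)}^2
\end{align*}
for every node $z \in \NN$. Summing over $z$ and exploiting the finite overlap of node patches together with the shape-regular comparison $h_z \simeq h_\el$ for $\el \subset \omega_z$, I end up with $\sum_{\el\in\mesh} h_\el\,\norm{\nablag R}{L_2(\el)}^2 = \norm{h_\mesh^{1/2}\nablag R}{L_2(\Gamma)}^2$, which closes the argument.

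The main obstacle is the patch-wise seminorm inequality. The bare embedding $H^1 \hookrightarrow H^{1/2}$ is scale-insensitive and carries no $h$-smallness, so to extract the crucial factor $h_z^{1/2}$ one must first subtract the local mean of $R$ on $\omega_z$ (legitimate since the $H^{1/2}$-seminorm ignores constants) and then invoke Poincar\'e. Shape-regularity of $\mesh$ enters the final constant $\c{rel}$ exactly at this step, both through the Poincar\'e constant on each patch and through the transition from patch diameters to element mesh-widths in the final summation.
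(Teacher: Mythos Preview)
Your argument is correct but proceeds differently from the paper's proof. The paper follows the original approach of~\cite{cms}: it multiplies $R$ by the hat functions $\varphi_z$ to obtain functions with zero boundary trace on each node patch, groups the nodes by a coloring argument so that the supports within each color class are disjoint, and then applies Lemma~\ref{lem:loc1} to localize $\norm{R}{H^{1/2}(\Gamma)}$ into a sum of $\norm{\varphi_z R}{\wilde H^{1/2}(\omega_z)}^2$. Each patch term is then bounded via Friedrichs' inequality (legitimate because $\varphi_z R$ vanishes on $\partial\omega_z$), which produces $h_z\,\norm{\nablag(\varphi_z R)}{L_2(\omega_z)}^2$; the product rule leaves an extra term $h_z^{-1}\norm{R}{L_2(\omega_z)}$ that is absorbed by an element-wise Poincar\'e inequality using only the orthogonality of $R$ to $\Pp^0(\mesh)$.

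Your route avoids the partition of unity altogether: you feed the full Galerkin orthogonality $R\perp\Pp^p(\mesh)$ directly into Theorem~\ref{thm:loc:slob} to obtain the patch-wise Slobodeckij seminorms $\snorm{R}{H^{1/2}(\omega_z)}$, and then bound each by $h_z^{1/2}\norm{\nablag R}{L_2(\omega_z)}$ via scaling and Poincar\'e. This is cleaner because it reuses an already-established localization result and sidesteps the product-rule term entirely. The paper's argument, in turn, needs only Lemma~\ref{lem:loc1} (which requires no orthogonality, only zero traces) rather than the sharper Theorem~\ref{thm:loc:slob}, and makes the role of Galerkin orthogonality explicit and minimal: only $R\perp\Pp^0(\mesh)$ is actually used.
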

\begin{proof}
  We will show the result for $\Gamma = \partial\Omega$ a closed boundary. The case
  of an open boundary then follows easily.
  Stability of $V^{-1}$ shows
  \begin{align*}
    \norm{\phi-\Phi}{\wilde H^{-1/2}(\Gamma)} \lesssim \norm{R}{H^{1/2}(\Gamma)}.
  \end{align*}
  The set of nodes $\NN$ of the mesh $\mesh$ can be split into $m>0$ subsets,
  $m$ depending only on $\sigma_\mesh$, into sets $\NN^i$, $i=1, \dots, m$, such that
  \begin{align*}
    \NN &= \bigcup_{i=1}^m \NN^i,
  \end{align*}
  and $\supp(\varphi_{z_1}) \cap \supp(\varphi_{z_2}) = \emptyset \text{ for } z_1,z_2\in \NN^i,$
  where $\phi_z$ denotes the hat function associated to a vertex $z\in\NN$.
  As $\left( \sum_{j=1}^m a_j \right)^2 \leq m \left( \sum_{j=1}^m a_j^2 \right)$, it follows
  from the triangle inequality and Lemma~\ref{lem:loc1} that
  \begin{align*}
    \norm{R}{H^{1/2}(\Gamma)}^2 
    &\leq m \sum_{i=1}^m \norm{\sum_{z\in\NN^i}\varphi_{z}R}{\wilde H^{1/2}(\Gamma)}^2\\
    &\leq \c{loc}m\sum_{z\in\NN}\norm{\varphi_z R}{\wilde H^{1/2}(\omega_z)}^2,
  \end{align*}
  where $\omega_z :=\supp(\varphi_z)$. Friedrich's inequality shows
  \begin{align*}
    \norm{\varphi_z R}{\wilde H^{1/2}(\omega_z)}^2
    \lesssim h_z(1+h_z^2)^{1/2}\norm{\nablag (\varphi_z R)}{L_2(\omega_z)}^2,
  \end{align*}
  where $h_z:=\diam(\omega_z)$.
  Now, as $R$ is orthogonal to piecewise constants, a Poincar\'e inequality shows
  \begin{align*}
    \norm{R}{L_2(\omega_z)} \lesssim \diam(\omega_z) \norm{\nablag R}{L_2(\omega_z)},
  \end{align*}
  and taking into account $\norm{\varphi_z}{L_\infty(\Gamma)} \simeq 1$ and
  $\norm{\nablag \varphi_z}{L_\infty(\Gamma)} \simeq h_z^{-1}$
  shows the result.
  $\hfill\qed$
\end{proof}
%------------------------------------------------------------------------------------------------------
An analogous estimate holds for hypersingular integral equations,
cf.~\cite[Ex.~5]{c97} for $d=2$ and~\cite[Thm.~4.2]{cmps} for $d=3$.
%------------------------------------------------------------------------------------------------------
\begin{theorem}\label{thm:est:wres:hypsing:rel}
  If $u\in\wilde H^{1/2}(\Gamma)$ is the exact solution of Proposition~\ref{prop:hypsing}
  or~\ref{prop:neumann} with $\phi\in L_2(\Gamma)$ and $U\in\Sp^p(\mesh)$ is the respective Galerkin
  approximation from Proposition~\ref{prop:galerkin:hypsing} or~\ref{prop:galerkin:neumann},
  then
  \begin{align*}
    \norm{u-U}{\wilde H^{1/2}(\Gamma)} \leq \c{rel}
    \norm{h_\mesh^{1/2} R}{L_2(\Gamma)},
  \end{align*}
  where $R$ denotes the residual, i.e., $R= \phi-\hyp U$ in the case of Prop.~\ref{prop:hypsing}
  and $R=(1/2-\adlo)\phi - \hyp U$ in the case of Prop.~\ref{prop:neumann}.
  The constant $\c{rel}>0$ depends only on $\Gamma$, the shape-regularity $\sigma_\mesh$,
  and on the polynomial degree $p$.
\end{theorem}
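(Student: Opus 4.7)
The plan is to dualize the argument of Theorem~\ref{thm:est:wres:weaksing:rel}. Starting from ellipticity of $\hyp$ (Theorem~\ref{thm:ellipticity}) and Galerkin orthogonality, I first reduce the energy-norm error to a pairing with the residual,
\begin{align*}
\c{ell}\,\|u-U\|_{\wilde H^{1/2}(\Gamma)}^2 \leq \langle\hyp(u-U),u-U\rangle_\Gamma = \langle R,u-U\rangle_\Gamma.
\end{align*}
In the open case $\Gamma\subsetneq\partial\Omega$ this is immediate. In the closed case of Proposition~\ref{prop:neumann}, the mean-zero hypothesis $\phi\in H^{-1/2}_0(\Gamma)$ combined with Propositions~\ref{prop:hypsing} and~\ref{prop:galerkin:hypsing} yields $\langle u,1\rangle_\Gamma=\langle U,1\rangle_\Gamma=0$, so the rank-one contribution to the bilinear form vanishes and the same reduction applies.

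Next I pick $J_\mesh$ to be the Scott--Zhang projection onto $\wilde\Sp^1(\mesh)\subseteq\wilde\Sp^p(\mesh)$ from Lemma~\ref{lem:sz}. Since $J_\mesh(u-U)\in\wilde\Sp^p(\mesh)$ is an admissible test function, Galerkin orthogonality gives $\langle R,u-U\rangle_\Gamma = \langle R,(I-J_\mesh)(u-U)\rangle_\Gamma$, and an elementwise weighted Cauchy--Schwarz inequality then produces
\begin{align*}
\langle R,(I-J_\mesh)(u-U)\rangle_\Gamma \leq \|h_\mesh^{1/2}R\|_{L_2(\Gamma)}\,\|h_\mesh^{-1/2}(I-J_\mesh)(u-U)\|_{L_2(\Gamma)}.
\end{align*}
The reliability estimate then follows at once from the weighted approximation inequality
\begin{align*}
\|h_\mesh^{-1/2}(v-J_\mesh v)\|_{L_2(\Gamma)} \lesssim \|v\|_{\wilde H^{1/2}(\Gamma)} \qquad (v\in\wilde H^{1/2}(\Gamma))
\end{align*}
applied to $v=u-U$, after cancelling one factor of $\|u-U\|_{\wilde H^{1/2}(\Gamma)}$.

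The weighted approximation estimate, which plays the role of the Friedrichs and Poincar\'e steps in the proof of Theorem~\ref{thm:est:wres:weaksing:rel}, is the main technical obstacle. Its proof proceeds elementwise: polynomial invariance of $J_\mesh$ combined with its local $L_2$-stability and a Poincar\'e--Wirtinger inequality in fractional Sobolev space yield the local bound $\|v-J_\mesh v\|_{L_2(\el)}\lesssim h_\el^{1/2}\,|v|_{H^{1/2}(\omega_\el)}$. Dividing by $h_\el$, squaring, and summing over $\el\in\mesh$ produces $\sum_\el|v|_{H^{1/2}(\omega_\el)}^2$, which is bounded by a constant times $\|v\|_{H^{1/2}(\Gamma)}^2$ thanks to the finite overlap of the element patches on shape-regular meshes. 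Through these two ingredients, $\c{rel}$ comes to depend on $\Gamma$, $\sigma_\mesh$, and $p$, as claimed.
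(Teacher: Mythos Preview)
The paper does not give its own proof of this theorem; it merely cites \cite[Ex.~5]{c97} for $d=2$ and \cite[Thm.~4.2]{cmps} for $d=3$. Your argument is correct and is in fact the standard route taken in those references: reduce via ellipticity to the pairing $\langle R,u-U\rangle_\Gamma$, insert a quasi-interpolant by Galerkin orthogonality, split by a weighted Cauchy--Schwarz, and close with the fractional approximation bound $\|h_\mesh^{-1/2}(I-J_\mesh)v\|_{L_2(\Gamma)}\lesssim\|v\|_{\wilde H^{1/2}(\Gamma)}$. Your local derivation of that bound---constant preservation and local $L_2$-stability of $J_\mesh$ together with the fractional Poincar\'e inequality of Lemma~\ref{lem:Hs:poinc}, followed by finite-overlap summation---is precisely how it is done.

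It is worth noting that this is \emph{not} a direct dualization of the paper's proof of Theorem~\ref{thm:est:wres:weaksing:rel}. There the paper localizes $\|R\|_{H^{1/2}(\Gamma)}$ explicitly via a partition of unity by hat functions and then applies Friedrichs and Poincar\'e on each node patch. Your argument instead exploits that $H^{-1/2}$ is only accessible through duality, so the test function $u-U$ itself carries the localization via the interpolant. Both approaches yield the same estimate, but the duality route you take is more transparent for the hypersingular operator and avoids the coloring argument.
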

%------------------------------------------------------------------------------------------------------
The preceding two theorems provide reliable and fully localized
error estimators for Galerkin methods for
weakly singular and hypersingular integral equations. Up to now these estimators
are the only ones which can be mathematically shown to drive adaptive BEM algorithms with
optimal rates, cf. Section~\ref{opt:sec:example2}.
The efficiency of this type of estimator is more involved and requires a careful analysis
of the (possible) singular behavior of the solutions of the problem at hand. In
the current optimality theory for adaptive algorithms, efficiency can be used
to characterize approximation classes and therefore provides a means to work only with
the error estimator, cf.~\cite{affkp13-A}.

Efficiency results for the estimators of Theorem~\ref{thm:est:wres:weaksing:rel}
and~\ref{thm:est:wres:hypsing:rel} have first been proved for $d=2$ and globally quasi-uniform
meshes in~\cite{c96}. We state the idea in the context and with notation of
Theorem~\ref{thm:est:wres:weaksing:rel}.
As we consider globally quasi-uniform meshes, we treat the mesh-with $h_\mesh$ of mesh $\mesh$
as a constant rather than a function.
For a given mesh $\mesh$ with mesh size $h_\mesh$, suppose that $\mesh_\star\geq\mesh$ is a finer mesh
with mesh size $h_{\mesh_\star} \leq h_\mesh$. Recall that $\pi_\mesh$ and $\pi_{\mesh_\star}$
denote the $L_2$ projections onto $\Pp^0(\mesh)$ and $\Pp^0(\mesh_\star)$, respectively,
and that $R$ denotes the residual on $\mesh$, using the Galerkin solution $\Phi\in\Pp^p(\mesh)$.
As $(1-\pi_{\mesh_\star}) = (1-\pi_{\mesh_\star})(1-\pi_{\mesh_\star})$, it follows
from the approximation properties of $\pi_{\mesh_\star}$ that
\begin{align*}
  \norm{(1-\pi_{\mesh_\star})\phi}{\wilde H^{-1/2}(\Gamma)}
  \lesssim h_{\mesh_\star}^{1/2}\norm{(1-\pi_{\mesh_\star})\phi}{L_2(\Gamma)},
\end{align*}
and an inverse estimate then shows
\begin{align*}
  &h_\mesh^{1/2}\norm{\pi_{\mesh_\star}\phi-\Phi}{L_2(\Gamma)}
  \lesssim\\
  &\quad h_\mesh^{1/2}
    \norm{\pi_{\mesh_\star}\phi-\phi}{\wilde H^{-1/2}(\Gamma)}
    + \left(\frac{h_\mesh}{h_{\mesh_\star}}\right)^{1/2}\norm{\phi-\Phi}{\wilde H^{-1/2}(\Gamma)}.
\end{align*}
Finally, this gives
\begin{align}\label{eq:wres:eff:eq1}
  \begin{split}
    h_\mesh^{1/2}&\norm{R}{H^1(\Gamma)} \lesssim h_\mesh^{1/2}\norm{\phi-\Phi}{L_2(\Gamma)}\\
    &\lesssim h_\mesh^{1/2}\norm{\phi-\pi_{\mesh_\star}\phi}{L_2(\Gamma)}
    + \left(\frac{h_\mesh}{h_{\mesh_\star}}\right)^{1/2}\norm{\phi-\Phi}{\wilde H^{-1/2}(\Gamma)}.
  \end{split}
\end{align}
Given $\mesh$ and an arbitrary $q<1$, the fine mesh $\mesh_\star$ can always be chosen such that
\begin{align}\label{eq:wres:eff:eq2}
  \norm{\phi-\pi_{\mesh_\star}\phi}{L_2(\Gamma)} \leq q \norm{\phi-\pi_\mesh\phi}{L_2(\Gamma)}
  \leq q \norm{\phi - \Phi}{L_2(\Gamma)}
\end{align}
holds. It follows that~\eqref{eq:wres:eff:eq1} and~\eqref{eq:wres:eff:eq2} yield
\begin{align}\label{est:wres:eff:eq3}
  h_\mesh^{1/2}&\norm{R}{H^1(\Gamma)} \lesssim
  \left(\frac{h_\mesh}{h_{\mesh_\star}}\right)^{1/2}\norm{\phi-\Phi}{\wilde H^{-1/2}(\Gamma)}.
\end{align}
The mesh $\mesh_\star$ depends on $\mesh$ and on $q$ (it is a refinement of $\mesh$
that fulfills~\eqref{eq:wres:eff:eq2}).
However, from~\eqref{est:wres:eff:eq3} we see that efficiency can only hold if it is guaranteed that
$h_\mesh \leq C h_{\mesh_\star}$, where $0<C<1$ only depends on $q$.
This can be done by exploiting explicit knowledge of the qualitative behavior of $\phi$,
cf.~\cite[Prop.~1]{c96} for globally quasi-uniform meshes and weakly singular and hypersingular
equations.
The presented approach is analyzed in a local fashion in~\cite{affkp13-A} to
obtain efficiency results on locally refined meshes for the weakly singular case.
The corresponding result is the following.
%------------------------------------------------------------------------------------------------------
\begin{theorem}\label{thm:est:wres:weaksing:eff}
  Suppose $d=2$ and that the data fulfill $f\in H^1(\Gamma)$ and $f\in H^s$
  for some $s>2$ on the different sides of the polygonal boundary $\Gamma$. Then,
  \begin{align*}
    \c{eff}^{-1} \norm{h_\mesh^{1/2}\nablag R}{L_2(\Gamma)}
    \leq &\norm{\phi-\Phi}{\wilde H^{-1/2}} + \\
    &C(s,\varepsilon)\left( \sum_{\el\in\mesh} h_{\mesh}(\el)^{\min\{2s,5\} -1 -\varepsilon } \right)
  \end{align*}
  for all $\varepsilon>0$. The constant $\c{eff}>0$ depends only $\Gamma$ and $\sigma_\mesh$,
  whereas $C(s,\varepsilon)$ additionally depends on $s$ and $\varepsilon$.
\end{theorem}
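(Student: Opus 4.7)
The plan is to transplant the globally quasi-uniform argument of~\cite{c96} outlined in~\eqref{eq:wres:eff:eq1}--\eqref{est:wres:eff:eq3} into a local, element-wise framework, following the approach of~\cite{affkp13-A}. The starting point is the identity $R=\slo(\phi-\Phi)$ and the continuity of $\slo:L_2(\Gamma)\to H^1(\Gamma)$, which follows from Theorem~\ref{thm:stability} by interpolating the mappings $\slo:H^{-1/2}\to H^{1/2}$ and $\slo:H^{1/2}\to H^{3/2}$. Combined with an inverse estimate on a suitably refined reference mesh, this should reduce efficiency to controlling a best-approximation quantity in $L_2$ and the energy error $\norm{\phi-\Phi}{\wilde H^{-1/2}(\Gamma)}$.

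Concretely, I would construct a refinement $\mesh_\star$ of $\mesh$ via newest-vertex bisection (cf.\ Section~\ref{section:meshrefinement}) of local depth large enough that a saturation
\[
\norm{\phi-\pi^p_{\mesh_\star}\phi}{L_2(\Gamma)}\le q\,\norm{\phi-\pi^p_\mesh\phi}{L_2(\Gamma)}
\]
holds with some fixed $q<1$. Splitting locally
\[
\norm{h_\mesh^{1/2}(\phi-\Phi)}{L_2(\el)}\le \norm{h_\mesh^{1/2}(\phi-\pi^p_{\mesh_\star}\phi)}{L_2(\el)}+\norm{h_\mesh^{1/2}(\pi^p_{\mesh_\star}\phi-\Phi)}{L_2(\el)},
\]
applying Lemma~\ref{lem:Pp:invest} to the second term (using that $\pi^p_{\mesh_\star}\phi-\Phi\in\Pp^p(\mesh_\star)$ because $\Pp^p(\mesh)\subset\Pp^p(\mesh_\star)$), and invoking Lemma~\ref{lem:L2:Pp:apx} to absorb the resulting $\wilde H^{-1/2}$-contribution of $\phi-\pi^p_{\mesh_\star}\phi$, mimics~\eqref{eq:wres:eff:eq1} element-by-element. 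Summing over $\el\in\mesh$ then produces the estimator on the left and $\norm{\phi-\Phi}{\wilde H^{-1/2}(\Gamma)}$ plus an oscillation $\sum_{\el}h_\el\norm{\phi-\pi^p_{\mesh_\star}\phi}{L_2(\el)}^2$ on the right.

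The remaining task is to dominate this oscillation by $\sum_{\el}h_\el^{\min\{2s,5\}-1-\varepsilon}$. Here the regularity theory of Section~\ref{sec_reg} enters decisively: on each straight side of $\Gamma$ the solution $\phi$ inherits the piecewise $H^s$-smoothness of $f$, while at each corner $t_j$ it possesses a finite decomposition into terms $c|x-t_j|^{\alpha_{jk}-1}$ (possibly with logarithmic factors) and exponents $\alpha_{jk}=k\pi/\omega_j\ge 1/2$. Applying the $L_2$-approximation estimate of Lemma~\ref{lem:L2:Pp:apx} to the smooth part and a direct computation to each singular term yields the claimed exponent, the value $5$ arising from the strongest corner contribution and $\varepsilon$ absorbing the logarithmic factors.

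The main obstacle is the nonlocality of $\slo$: although the left-hand-side estimator on a patch is local, the identity $R=\slo(\phi-\Phi)$ couples all elements, so the inverse-estimate step on $\mesh_\star$ is constrained by the need to keep the quotient $h_\mesh/h_{\mesh_\star}$ uniformly bounded in order not to spoil~\eqref{eq:wres:eff:eq1}. The construction of $\mesh_\star$ must simultaneously achieve $L_2$-saturation of $\phi$ (which would demand very strong local refinement near the singular corners, where $\phi$ is unbounded) and a uniform bound on the refinement depth; reconciling these two requirements independently of $\mesh$ is possible only by exploiting the explicit singular expansion of $\phi$ recalled above, which is the technical heart of the proof in~\cite{affkp13-A}.
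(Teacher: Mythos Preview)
The paper does not give a self-contained proof of this theorem; it sketches the globally quasi-uniform argument of~\cite{c96} in~\eqref{eq:wres:eff:eq1}--\eqref{est:wres:eff:eq3} and then states the result, deferring the local analysis to~\cite{affkp13-A}. Your outline correctly mirrors that strategy: pass to an auxiliary refinement $\mesh_\star$, split locally with $\pi^p_{\mesh_\star}$, and control the remaining oscillation via the explicit corner expansion of $\phi$ from Section~\ref{sec_reg}. In that sense, the approach is the intended one.

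There is, however, a genuine gap at your very first step. On locally refined meshes $h_\mesh$ is not a constant, and the global continuity $\slo:L_2(\Gamma)\to H^1(\Gamma)$ that you invoke does \emph{not} yield
\[
\norm{h_\mesh^{1/2}\nablag R}{L_2(\Gamma)}\ \lesssim\ \norm{h_\mesh^{1/2}(\phi-\Phi)}{L_2(\Gamma)},
\]
which is precisely what you need before entering the element-wise splitting. The nonlocality of $\slo$ that you flag in the last paragraph already bites here, not only in the saturation step: $(\nablag R)|_\el$ depends on $(\phi-\Phi)$ over all of $\Gamma$, so you cannot simply move the weight $h_\mesh^{1/2}$ through $\slo$. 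Standard polynomial inverse estimates (Lemma~\ref{lem:Pp:invest}) do not help either, since $R$ is not piecewise polynomial. The missing ingredient is the weighted inverse-type estimate for the integral operator itself, recorded in Lemma~\ref{estred:lem:invest}, equation~\eqref{opt:eq:invest}:
\[
\norm{h_\mesh^{1/2}\nablag \slo\psi}{L_2(\Gamma)}\ \le\ \c{opt:weaksing:invest}\bigl(\norm{\psi}{\wilde H^{-1/2}(\Gamma)}+\norm{h_\mesh^{1/2}\psi}{L_2(\Gamma)}\bigr),
\]
applied to $\psi=\phi-\Phi$. This is a nontrivial result (cf.~\cite{invest,fkmp13,gantumur}) and is exactly what replaces the first line of~\eqref{eq:wres:eff:eq1} when $h_\mesh$ varies. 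Once this step is in place, the remainder of your plan---the local splitting, Lemma~\ref{lem:Pp:invest} on $\pi^p_{\mesh_\star}\phi-\Phi$, and the corner-by-corner bound on the oscillation producing the exponent $\min\{2s,5\}-1-\varepsilon$---is on target.
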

%------------------------------------------------------------------------------------------------------
\subsubsection{Faermann's local double norm estimators}\label{section:est:faermann}
%------------------------------------------------------------------------------------------------------
It was suggested in~\cite{fhk96} to split the outer integral of the $H^s$-norm of the residual
in contributions on different faces of the mesh. The authors used this approach to present an
adaptive boundary element algorithm
for the solution of the Helmholtz equation, but nevertheless this procedure does not give
fully localized indicators.
Their approach was refined in~\cite{f00,f02}, where localization techniques for Sobolev-Slobodeckij norms
(cf. Section~\ref{section:localization}) were deduced and put into action to derive fully
localized error indicators. Up to now, this is the only way to obtain localized estimators that
are both reliable and efficient (on shape-regular meshes)
without further conditions or additional analysis. Their operational area is restricted
to continuous and bijective operators
\begin{align*}
  B: \wilde H^{s+2\alpha}(\Gamma) \rightarrow H^{s}(\Gamma),
  \quad s\in(0,1), \alpha\in\R.
\end{align*}
The upper bound on $s$ stems from the fact that we deal with Lipschitz domains,
but this bound can be enlarged on\linebreak
smoother domains.
For arbitrary $d$, denote by $\phi\in\wilde H^{s+2\alpha}(\Gamma)$ the exact solution of the
equation
\begin{align*}
  \dual{B \phi}{\psi}_\Gamma = \dual{F}{\psi}_\Gamma \quad \text{ for all }
  \psi\in \wilde H^{-s}(\Gamma).
\end{align*}
For a discrete space $\XX_\mesh\subseteq \wilde H^{s+2\alpha}(\Gamma)$, denote by $\Phi\in \XX_\mesh$
the Galerkin solution
\begin{align*}
  \dual{B \Phi}{V}_\Gamma = \dual{F}{V} \quad \text{ for all } V\in \XX_\mesh.
\end{align*}
Denote by $R:=F-B \Phi \in H^s(\Gamma)$ the residual.
%------------------------------------------------------------------------------------------------------
The localization result of Theorem~\ref{thm:loc:slobodeckij:upper} immediately provides
a localized a~posteriori estimator.
\begin{theorem}
  Suppose that the assumptions and notations from the beginning of this section hold.
  For a mesh $\mesh$, define the a~posteriori error estimator
  \begin{align*}
    \eta_\mesh^2 &:= \sum_{z\in\NN}\eta_z^2 + \sum_{\el\in\mesh}\eta_\el^2\quad \text{ with }\\
    \eta_z^2 &:= \snorm{R}{H^s(\omega_z)}^2, \quad
    \eta_\el^2 := h_\el^{-2s}\norm{R}{L_2(\el)}^2.
  \end{align*}
  Then, $\eta_\mesh$ is always reliable, i.e.,
  \begin{align*}
    \norm{\phi-\Phi}{\wilde H^{s+2\alpha}(\Gamma)} \leq \c{rel} \eta_\mesh,
  \end{align*}
  and $\c{rel}$ depends only on $s$ and $\Gamma$. If $\Pp^p(\mesh)\subseteq \XX_\mesh$ or
  $\Sp^p(\mesh)\subseteq \XX_\mesh$, then we have efficiency
  \begin{align*}
    \eta_\mesh \leq \c{eff} \norm{\phi-\Phi_N}{\wilde H^{s+2\alpha}(\Gamma)},
  \end{align*}
  and $\c{eff}$ depends only on the shape-regularity $\sigma_{\mesh}$.
\end{theorem}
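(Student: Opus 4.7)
The plan is to split the proof into reliability and efficiency, with the crucial observation that both bounds follow by transporting the error into a norm estimate on the residual $R = F - B\Phi$ via continuity of $B$ and $B^{-1}$, and then localizing/de-localizing the $H^s(\Gamma)$-norm of $R$ using the machinery of Section~\ref{section:localization}. Since $B:\wilde H^{s+2\alpha}(\Gamma)\to H^s(\Gamma)$ is a continuous bijection, so is $B^{-1}$, hence
\begin{align*}
  \norm{\phi-\Phi}{\wilde H^{s+2\alpha}(\Gamma)}
  \simeq \norm{R}{H^s(\Gamma)}
  = \bigl(\norm{R}{L_2(\Gamma)}^2 + \snorm{R}{H^s(\Gamma)}^2\bigr)^{1/2},
\end{align*}
with equivalence constants depending only on $B$.

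For reliability I would apply Theorem~\ref{thm:loc:slobodeckij:upper} directly to the seminorm, giving
$\snorm{R}{H^s(\Gamma)}^2 \le \sum_{z\in\NN}\eta_z^2 + \c{loc}\sum_{\el\in\mesh}\eta_\el^2$. The $L_2$-part is harmless: since $h_\el\le\diam(\Gamma)$ we have $\norm{R}{L_2(\el)}^2 \le \diam(\Gamma)^{2s}\,\eta_\el^2$, so summing over $\el$ yields $\norm{R}{L_2(\Gamma)}^2 \le \diam(\Gamma)^{2s}\sum_\el \eta_\el^2$. Altogether $\norm{R}{H^s(\Gamma)}^2 \lesssim \eta_\mesh^2$ with a constant depending only on $s$ and $\Gamma$, and reliability follows from the continuity of $B^{-1}$.

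For efficiency, the node contributions are immediate: by finite overlap of the patches $\omega_z$ (at most $d$ elements per node under the given shape‑regular setting),
\begin{align*}
  \sum_{z\in\NN} \eta_z^2 = \sum_{z\in\NN}\snorm{R}{H^s(\omega_z)}^2 \le d\,\snorm{R}{H^s(\Gamma)}^2 \le d\,\norm{R}{H^s(\Gamma)}^2,
\end{align*}
which is $\lesssim \norm{\phi-\Phi}{\wilde H^{s+2\alpha}(\Gamma)}^2$ by continuity of $B$. The main step, and the place where the hypothesis $\Pp^p(\mesh)\subseteq\XX_\mesh$ or $\Sp^p(\mesh)\subseteq\XX_\mesh$ enters, is the efficiency of $\eta_\el$. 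Galerkin orthogonality gives $\dual{R}{V}_\Gamma = 0$ for all $V\in\XX_\mesh$, so in the first case $\dual{R}{\Psi_\el}_\Gamma=0$ for all element characteristic functions and Lemma~\ref{lem:Hs:poinc}, estimate~\eqref{lem:Hs:poinc:eq2}, yields $h_\el^{-2s}\norm{R}{L_2(\el)}^2 \le \tfrac{\sigma_\mesh}{2}\snorm{R}{H^s(\el)}^2$; summing and using $\sum_\el \snorm{R}{H^s(\el)}^2 \le \snorm{R}{H^s(\Gamma)}^2$ closes the argument.

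In the second case, $\Sp^1(\mesh)\subseteq\Sp^p(\mesh)\subseteq\XX_\mesh$ gives $\dual{R}{\Psi_z}_\Gamma=0$ for every hat function, and~\eqref{lem:Hs:poinc:eq3} combined with shape‑regularity ($\diam(\omega_z)^{d-1}/\abs{\omega_z}\lesssim 1$ and $h_\el \simeq \diam(\omega_z)$ for $z\in\overline\el$) delivers $h_\el^{-2s}\norm{R}{L_2(\el)}^2 \lesssim \snorm{R}{H^s(\omega_{z_\el})}^2$ for some node $z_\el\in\overline\el$. Summing over $\el$ and using finite overlap of the patches again produces $\sum_\el \eta_\el^2 \lesssim \snorm{R}{H^s(\Gamma)}^2 \lesssim \norm{\phi-\Phi}{\wilde H^{s+2\alpha}(\Gamma)}^2$. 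I expect the only real subtlety to be the bookkeeping between $h_\el$ and $\diam(\omega_z)$ in the $\Sp^p$‑case, which is precisely where the dependence of $\c{eff}$ on the shape‑regularity $\sigma_\mesh$ arises; everything else is a direct consequence of the localization results already established.
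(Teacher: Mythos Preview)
Your proof is correct and mirrors the paper's argument: transport the error to $\norm{R}{H^s(\Gamma)}$ via boundedness of $B$ and $B^{-1}$, then invoke Theorem~\ref{thm:loc:slobodeckij:upper} for reliability and Lemma~\ref{lem:Hs:poinc} (together with Galerkin orthogonality) for efficiency. One small correction to your parenthetical: the overlap bound $\sum_{z\in\NN}\snorm{R}{H^s(\omega_z)}^2 \le d\,\snorm{R}{H^s(\Gamma)}^2$ holds because each \emph{element} lies in at most $d$ node-patches (one per vertex), not because each patch contains at most $d$ elements.
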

\begin{proof}
  To show reliability, note first that $B^{-1}$ is bounded due to the bounded inverse theorem.
  This gives
  \begin{align*}
    \norm{\phi-\Phi_N}{\wilde H^{s+2\alpha}(\Gamma)}^2
    \lesssim \norm{R_N}{H^s(\Gamma)}^2  = \norm{R_N}{L_2(\Gamma)}^2
    + \snorm{R_N}{H^s(\Gamma)}^2.
  \end{align*}
  The $H^s$-part can be bounded immediately with Theorem~\ref{thm:loc:slobodeckij:upper}.
  There is a constant $C(\Gamma)>0$ such that for all $\el\in\mesh$ it holds that
  $h_\el\leq C(\Gamma)$, and the $L_2$-part can be hence bounded by
  \begin{align*}
    \norm{R}{L_2(\Gamma)}^2
    \leq C(\Gamma)^{2s} \sum_{\el\in\mesh}h_\el^{-2s}\norm{R}{L_2(\el)}^2.
  \end{align*}
  This yields reliability.
  To show efficiency, we note that due to the assumptions $\Pp^p(\mesh)\subseteq \XX_\mesh$
  or $\Sp^p(\mesh)\subseteq \XX_\mesh$ it follows that $\dual{R}{\Psi}_\Gamma=0$
  for all discrete functions $\Psi\in \XX_\mesh$. Lemma~\ref{lem:Hs:poinc} shows that
  \begin{align*}
    \sum_{\el\in\mesh}h_\el^{-2s}\norm{R_N}{L_2(\el)}^2
    &\leq C(\sigma_\mesh) \sum_{z\in\NN}\snorm{R_N}{H^s(\omega_z)}^2.
  \end{align*}
  Hence,
  \begin{align*}
    \eta_\mesh^2 \lesssim \sum_{z\in\NN}\snorm{R_N}{H^s(\omega_z)}^2 \lesssim 
    \norm{R_N}{H^s(\Gamma)}^2,
  \end{align*}
  and continuity of $B$ shows the efficiency.
  \hfill\qed
\end{proof}
%------------------------------------------------------------------------------------------------------
The estimator of the last theorem is always reliable, and on shape-regular meshes it is also
efficient. With Theorem~\ref{thm:loc:slob}, the reverse situation can be generated.
\begin{theorem}
  Suppose that the assumptions and notations from the beginning of this section hold.
  For a mesh $\mesh$, define the a~posteriori error estimator
  \begin{align*}
    \eta_\mesh^2 := \sum_{z\in\NN}\eta_z^2\quad \text{ with }\quad
    \eta_z^2 := \snorm{R}{H^s(\omega_z)}^2.
  \end{align*}
  Then, $\eta_\mesh$ is always efficient, i.e.,
  \begin{align*}
    \eta_\mesh \leq \c{eff} \norm{\phi-\Phi}{\wilde H^{s+2\alpha}(\Gamma)},
  \end{align*}
  and $\c{eff}$ depends only on $s$ and $\Gamma$. If $\Pp^p(\mesh)\subseteq \XX_\mesh$ or
  $\Sp^p(\mesh)\subseteq \XX_\mesh$, then it is also reliable,
  \begin{align*}
    \norm{\phi-\Phi}{\wilde H^{s+2\alpha}(\Gamma)} \leq \c{rel} \eta_\mesh,
  \end{align*}
  and $\c{rel}$ depends only on the shape-regularity $\sigma_\mesh$.
\end{theorem}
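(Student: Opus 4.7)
The plan is to derive both bounds by applying Theorem~\ref{thm:loc:slob} to the residual $R=F-B\Phi\in H^s(\Gamma)$ and then transferring the estimate to the error $\phi-\Phi$ via the continuity of $B\colon \wilde H^{s+2\alpha}(\Gamma)\to H^s(\Gamma)$ and of its inverse $B^{-1}$. The latter exists and is bounded since $B$ is, by assumption, bijective and continuous, so the open mapping theorem applies. The theorem is structurally a re-packaging of the localization Theorem~\ref{thm:loc:slob}, and the only subtlety is in tracking which half of that equivalence is being used in each direction.

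For efficiency, I would first observe that the direction $\norm{v}{H^s(\Gamma)}^2 \gtrsim \sum_{z\in\NN}\snorm{v}{H^s(\omega_z)}^2$ in Theorem~\ref{thm:loc:slob} does not require the orthogonality hypothesis at all: inspection of its proof shows that it follows solely from the definition of the seminorm and the fact that every $\el\in\mesh$ lies in at most $d$ node-patches $\omega_z$, with a constant depending only on $s$ and $\Gamma$. Applying this to $v=R$ and using $R=B(\phi-\Phi)$ together with the continuity of $B$ yields
\begin{align*}
\eta_\mesh^2 = \sum_{z\in\NN}\snorm{R}{H^s(\omega_z)}^2
\lesssim \norm{R}{H^s(\Gamma)}^2
\lesssim \norm{\phi-\Phi}{\wilde H^{s+2\alpha}(\Gamma)}^2,
\end{align*}
where the constants depend only on $s$, $\Gamma$, and the continuity constant of $B$, but \emph{not} on shape-regularity, as required.

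For reliability, I would invoke the Galerkin orthogonality $\dual{R}{V}_\Gamma = 0$ for all $V\in\XX_\mesh$. The hypothesis $\Pp^p(\mesh)\subseteq\XX_\mesh$ or $\Sp^p(\mesh)\subseteq\XX_\mesh$ then furnishes exactly the orthogonality condition required by Theorem~\ref{thm:loc:slob}, whose upper bound gives $\norm{R}{H^s(\Gamma)}^2 \lesssim \sum_{z\in\NN}\snorm{R}{H^s(\omega_z)}^2 = \eta_\mesh^2$ with constant depending on shape-regularity. Combined with the boundedness of $B^{-1}$,
\begin{align*}
\norm{\phi-\Phi}{\wilde H^{s+2\alpha}(\Gamma)}
\le \norm{B^{-1}}{}\,\norm{R}{H^s(\Gamma)}
\lesssim \eta_\mesh.
\end{align*}

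There is no serious obstacle: the argument is mostly a careful bookkeeping of constants. The main point to watch is to keep the efficiency constant independent of shape-regularity, which forces one to use the lower-bound half of Theorem~\ref{thm:loc:slob} in its raw form (without the orthogonality assumption), rather than quoting the equivalence symbol blindly; this is also what was done in the previous theorem of this subsection.
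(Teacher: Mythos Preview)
Your proposal is correct and is precisely the argument the paper intends: it states explicitly that ``With Theorem~\ref{thm:loc:slob}, the reverse situation can be generated'' and gives no further proof, so your write-up simply fills in the details of that hint. Your care in noting that the lower bound in Theorem~\ref{thm:loc:slob} holds without the orthogonality hypothesis (hence efficiency is shape-regularity-free) is exactly the point, and matches how the previous theorem's proof handles the analogous step.
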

%%%%%%%%%%%%%%%%%%%%%%%%%%%%%%%%%%%%%%%%%%%%%%%%%%%%%%%%%%%%%%%%%%%%%%%%%%%%%%%%%%%%%%%%%%%%%%%%%%%%%%%
\subsection{Estimators based on space enrichment}\label{sec:enrich}
%%%%%%%%%%%%%%%%%%%%%%%%%%%%%%%%%%%%%%%%%%%%%%%%%%%%%%%%%%%%%%%%%%%%%%%%%%%%%%%%%%%%%%%%%%%%%%%%%%%%%%%
The principal idea for the construction of error estimators based on space enrichment
is that, for a given approximation, the Galerkin error can be approximated by replacing
the exact solution with an improved approximation from an enriched discrete space.
In the following, we introduce the basic setting for this methodology. Afterwards,
in Subsections~\ref{section:est:2level} and~\ref{section:est:hh2}, we discuss specific variants within this framework.

To fix notation, let us consider the variational problem specified in
Section~\ref{section:cea}, i.e.,
find $u$ in a Hilbert space $\XX$ such that $u$ is a solution of equation~\eqref{intro:weakform},
where $b$ is a continuous and elliptic bilinear form, i.e.~\eqref{intro:continuous}
and~\eqref{intro:elliptic} hold true with constants $\c{cnt}, \c{ell}>0$.
Recall that for a discrete approximation $U\in\XX_\mesh\subset\XX$, there holds C\'ea's
Theorem~\eqref{intro:cea}. Now, for the error estimation, one considers an enriched
approximation space $\XX_\mesh \subset \widehat\XX_\mesh \subset \XX$ with corresponding Galerkin
approximation $\widehat U\in\widehat\XX_\mesh$. Under appropriate conditions,
\begin{align} \label{estimH}
  \overline\eta_\mesh := \norm{\widehat U-U}{\XX}
\end{align}
is a good approximation of the error $\norm{u-U}{\XX}$.

The estimator $\overline\eta_\mesh$ is usually not practical for two reasons.
First, it requires the calculation of the improved approximation $\widehat U$,
which is expensive. Second, considering\linebreak
boundary element methods for integral
equations of the first kind, the $\XX$-norm is non-local so that $\overline\eta_\mesh$ does
not immediately provide local informations that could be used for adaptivity.
Therefore, further techniques are needed to avoid these problems. The resulting
methods are called {\em error estimators based on space enrichment}. First and common
step for their analysis is to study reliability and efficiency of $\overline\eta_\mesh$ for
the estimation of $\norm{u-U}{\XX}$. Reliability of the estimator
is based on the ``richness'' of $\widehat\XX_\mesh$ which is usually formulated as the
following \emph{saturation assumption}.

\begin{assumption}[saturation] \label{ass:satH}
Let $(\XX_\ell)_{\ell=1}^\infty$ be a sequence\linebreak
of approximation spaces $\XX_\ell\subset \XX$ with
corresponding sequence of enriched spaces $(\widehat\XX_\ell)_\ell^\infty$ and Galerkin projections
$U_\ell\in \XX_\ell$, $\widehat U_\ell\in\widehat\XX_\ell$. There exists a constant 
$\c{sat}\in[0,1)$ such that
\begin{align*}
  \norm{u-\widehat U_\ell}{\XX} \leq \c{sat} \norm{u-U_\ell}{\XX}
  \quad \text{ for all } \ell\in\N.
\end{align*}
In the following, when referring to this assumption and to simplify notation,
we will simply write
\begin{align*}
  \norm{u-\widehat U}{\XX} \leq \c{sat}\norm{u-U}{\XX}
\end{align*}
in the sense that $U\in \XX_\mesh$ is an element of a family of Galerkin approximations
and that $\widehat U$ is an improved approximation from an enriched space $\widehat\XX_\mesh$.
\end{assumption}

Reliability and efficiency of $\overline\eta_\mesh$ (or variants) in this or similar situations
are based on the saturation assumption and have been studied many times in the literature,
to our knowledge first in~\cite{bank:smith:93}. An immediate consequence of the
saturation assumption, combined with the triangle inequality, is the following
two-sided estimate, showing reliability and efficiency of the global estimator \eqref{estimH}.

\begin{myproposition} \label{prop:estimH}
  The estimator $\overline\eta_\mesh$ is efficient, i.e.,
  \begin{align*}
    \overline\eta_\mesh \leq \frac{\c{cnt}}{\c{ell}} \norm{u-U}{\XX}.
  \end{align*}
  In the situation of Assumption~\ref{ass:satH}, $\overline\eta_\mesh$ is also reliable, i.e.,
  \begin{align*}
    \norm{u-U}{\XX} \leq (1-\c{sat})^{-1}\overline\eta_\mesh
  \end{align*}
\end{myproposition}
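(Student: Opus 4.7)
The proposition splits into two independent claims: efficiency (unconditional) and reliability (under the saturation assumption). Both rely only on the Hilbert-space Galerkin framework from Section~\ref{section:cea} together with the inclusion $\XX_\mesh \subset \widehat\XX_\mesh \subset \XX$ and standard orthogonality arguments.

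For reliability, the plan is the classical triangle-inequality trick: write
\begin{align*}
  \norm{u-U}{\XX} \le \norm{u-\widehat U}{\XX} + \norm{\widehat U - U}{\XX},
\end{align*}
apply the saturation assumption to the first term to absorb $\norm{u-U}{\XX}$, and divide by $(1-\c{sat})>0$. This is straightforward and requires no further ingredients.

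For efficiency, the key observation is that $\widehat U - U \in \widehat\XX_\mesh$ (by nestedness of the discrete spaces), so it is a legitimate test function in the Galerkin equation on $\widehat\XX_\mesh$. Using ellipticity \eqref{intro:elliptic}, then splitting
\begin{align*}
  \c{ell}\,\norm{\widehat U - U}{\XX}^2 \le b(\widehat U - U,\,\widehat U - U)
  = b(\widehat U - u,\,\widehat U - U) + b(u-U,\,\widehat U - U),
\end{align*}
the first term on the right vanishes by Galerkin orthogonality of $\widehat U$ in $\widehat\XX_\mesh$, and the second is bounded by continuity \eqref{intro:continuous}. Dividing once by $\norm{\widehat U - U}{\XX}$ yields the desired factor $\c{cnt}/\c{ell}$.

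There is no real obstacle here; the only subtle point is recognizing in the efficiency argument that the enriched test space $\widehat\XX_\mesh$ (not $\XX_\mesh$) is the one to exploit, which is exactly what allows the term $b(\widehat U - u,\,\widehat U - U)$ to be killed by orthogonality. Had one tried to use Galerkin orthogonality on $\XX_\mesh$ alone, the relevant test function $\widehat U - U$ would not lie in $\XX_\mesh$, and the argument would fail. With this recognition, the proof amounts to a couple of lines in each direction.
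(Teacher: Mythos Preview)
Your proof is correct and matches the paper's approach. The paper does not spell out a detailed proof but only remarks that the two-sided estimate is ``an immediate consequence of the saturation assumption, combined with the triangle inequality''; your reliability argument is exactly that, and your efficiency argument is precisely the C\'ea-type computation (Galerkin orthogonality on $\widehat\XX_\mesh$ plus ellipticity and continuity) needed to obtain the stated constant $\c{cnt}/\c{ell}$ rather than the weaker $1+\c{sat}$ that a pure triangle-inequality argument would give.
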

Hence, $\overline\eta_\mesh$ is an efficient measure for the error, whereas it's reliability hinges
on the saturation assumption.
In the case that the bilinear form $b(\cdot,\cdot)$ is symmetric, both estimates can
be improved when they are formulated in terms of the so-called \emph{energy norm}
$\norm{\cdot}{b}:=\sqrt{b(\cdot,\cdot)}$.
In many publications, the saturation assumption is formulated with respect
to this norm anyhow, whereas Assumption~\ref{ass:satH} uses the $\XX$-norm.
Of course, both norms are equivalent, i.e.,
\begin{align}\label{eq:normeq}
  \c{ell}\norm{v}{\XX}^2 \leq \norm{v}{b}^2 \leq \c{cnt}\norm{v}{\XX}^2\text{ for all } v\in \XX.
\end{align}
Using this norm equivalence, Assumption~\ref{ass:satH} yields
\begin{align*}
  \norm{u-\widehat U}{b} \leq \sqrt{\c{cnt}/\c{ell}}\;\c{sat} \norm{u-U}{b}.
\end{align*}
However, from~\eqref{intro:continuous} and~\eqref{intro:elliptic} it follows that
\begin{align*}
  1 \leq \c{cnt}/\c{ell},
\end{align*}
which does not provide a saturation assumption in the energy norm.
Proposition~\ref{prop:estim:sat} below shows how to overcome this problem
For convenience, we separately formulate the saturation assumption for the energy norm first.

\begin{assumption}[saturation in energy norm] \label{ass:sata}
Let us consider the situation of Assumption~\ref{ass:satH} and let the bilinear form
$b(\cdot,\cdot)$ be symmetric. We assume that there exists a constant 
$\setc{sata}\in[0,1)$ such that
\begin{align*}
  \norm{u-\widehat U_\ell}{b} \leq \c{sata} \norm{u-U_\ell}{b}
  \quad \text{ for all } \ell\in\N.
\end{align*}
As previously in Assumption~\ref{ass:satH}, we will use this estimate for
a single discrete space $\XX_\mesh$ understanding that it is an element of a family of spaces
(with corresponding Galerkin approximations and enrichments).
\end{assumption}

We are ready to present the results corresponding to\linebreak Proposition~\ref{prop:estimH}
in the case of a symmetric bilinear form and in terms of the energy norm.

\begin{myproposition} \label{prop:estim:sat}
Let the bilinear form $b(\cdot,\cdot)$ be symmetric and $\widehat\XX_\mesh$ be an enriched
space of $\XX_\mesh\subset \XX$. Define an estimator by
\begin{align*}
  \eta_\mesh := \norm{\widehat U - U}{b}.
\end{align*}
Then, the estimator $\eta_\mesh$ is efficient, i.e.,
\begin{align*}
  \eta_\mesh \leq \norm{u-U}{b}.
\end{align*}
If additionally Assumption~\ref{ass:sata} holds, then $\eta_\mesh$ is also reliable, i.e.,
\begin{align*}
  \norm{u-U}{b} \leq (1-\c{sata}^2)^{-1/2} \eta_\mesh.
\end{align*}
Furthermore, Assumption~\ref{ass:satH} implies Assumption~\ref{ass:sata}.
\end{myproposition}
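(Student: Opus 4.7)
The plan rests on one central observation: since $b(\cdot,\cdot)$ is symmetric and elliptic, the energy norm $\norm{\cdot}{b}$ is induced by the inner product $b(\cdot,\cdot)$, so Galerkin orthogonality yields a Pythagoras identity. Concretely, because $\XX_\mesh\subset\widehat\XX_\mesh$, Galerkin orthogonality applied to $\widehat U$ gives $b(u-\widehat U,V)=0$ for every $V\in\widehat\XX_\mesh$, and in particular for $V=\widehat U-U\in\widehat\XX_\mesh$. Writing $u-U=(u-\widehat U)+(\widehat U-U)$ and expanding $\norm{u-U}{b}^2$ therefore produces
\begin{equation*}
  \norm{u-U}{b}^2 = \norm{u-\widehat U}{b}^2 + \norm{\widehat U-U}{b}^2.
\end{equation*}
Everything else will be read off this identity.

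For efficiency, I simply drop the nonnegative term $\norm{u-\widehat U}{b}^2$ on the right, which yields $\eta_\mesh^2=\norm{\widehat U-U}{b}^2\le\norm{u-U}{b}^2$. For reliability under Assumption~\ref{ass:sata}, I substitute $\norm{u-\widehat U}{b}^2\le\c{sata}^2\norm{u-U}{b}^2$ into the Pythagoras identity and rearrange to obtain $(1-\c{sata}^2)\norm{u-U}{b}^2\le\eta_\mesh^2$, which gives the claimed constant $(1-\c{sata}^2)^{-1/2}$.

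The only step requiring more care is the implication Assumption~\ref{ass:satH} $\Rightarrow$ Assumption~\ref{ass:sata}. A naive attempt that just applies the norm equivalence~\eqref{eq:normeq} on both sides produces the factor $\sqrt{\c{cnt}/\c{ell}}\,\c{sat}$, which need not be below $1$; this is precisely the obstacle flagged in the text. The cure is again to use the Pythagoras identity, which can be rewritten as
\begin{equation*}
  \norm{u-\widehat U}{b}^2 = \norm{u-U}{b}^2 - \norm{\widehat U-U}{b}^2,
\end{equation*}
so it suffices to bound $\norm{\widehat U-U}{b}$ from below in terms of $\norm{u-U}{b}$. The reverse triangle inequality together with Assumption~\ref{ass:satH} gives $\norm{\widehat U-U}{\XX}\ge\norm{u-U}{\XX}-\norm{u-\widehat U}{\XX}\ge(1-\c{sat})\norm{u-U}{\XX}$, and then~\eqref{eq:normeq} applied on both sides yields
\begin{equation*}
  \norm{\widehat U-U}{b}^2 \ge \c{ell}(1-\c{sat})^2\norm{u-U}{\XX}^2 \ge \frac{\c{ell}(1-\c{sat})^2}{\c{cnt}}\,\norm{u-U}{b}^2.
\end{equation*}
Plugging this back into the rewritten Pythagoras identity delivers Assumption~\ref{ass:sata} with $\c{sata}^2:=1-\c{ell}(1-\c{sat})^2/\c{cnt}$; since $\c{sat}<1$ and $0<\c{ell}\le\c{cnt}$, this value lies in $[0,1)$, as required.
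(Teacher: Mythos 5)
Your proof is correct and follows essentially the same route as the paper's: Galerkin orthogonality with the symmetric energy inner product gives the Pythagoras identity, from which efficiency, reliability, and the implication Assumption~\ref{ass:satH} $\Rightarrow$ Assumption~\ref{ass:sata} all follow, with your $\c{sata}^2 = 1-\c{ell}(1-\c{sat})^2/\c{cnt}$ matching the paper's constant exactly. The only cosmetic difference is that you re-derive the bound $\norm{u-U}{\XX}\le(1-\c{sat})^{-1}\norm{\widehat U-U}{\XX}$ via the reverse triangle inequality rather than citing the reliability statement of Proposition~\ref{prop:estimH}, which is proved by the same step.
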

\begin{proof}
Symmetry of $b(\cdot,\cdot)$, Galerkin orthogonality and the saturation assumption~\ref{ass:sata}
immediately yield
\begin{align*}
  \norm{\widehat U - U}{b}^2 \leq \norm{u-U}{b}^2 \leq
  \c{sata}^2 \norm{u-U}{b}^2 + \norm{\widehat U - U}{b}^2.
\end{align*}
This proves both reliability and efficiency. To show that saturation in the norm $\norm{\cdot}{\XX}$
implies saturation in the norm $\norm{\cdot}{b}$, use the reliability of
Proposition~\ref{prop:estimH} and the norm equivalence~\eqref{eq:normeq} to see
\begin{align*}
  \norm{u-U}{b} \leq \frac{\sqrt{\c{cnt}/\c{ell}}}{1-\c{sat}}\norm{\widehat U-U}{b}.
\end{align*}
Galerkin orthogonality then yields
\begin{align*}
  \norm{u-\widehat U}{b}^2 &= \norm{u-U}{b}^2 - \norm{\widehat U-U}{b}^2\\
  &\leq \norm{u-U}{b}^2 - \left(\frac{\sqrt{\c{cnt}/\c{ell}}}{1-\c{sat}}\right)^{-2}
  \norm{u-U}{b}^2
\end{align*}
and saturation in the energy norm $\norm{\cdot}{b}$ follows with
\begin{align*}
  \c{sata} = \left( 1 - \left(\frac{\sqrt{\c{cnt}/\c{ell}}}{1-\c{sat}}\right)^{-2} \right)^{1/2}.
\end{align*}
As $1\leq\c{cnt}/\c{ell}$ and $\c{sat}\in[0,1)$ it follows that $\c{sata}\in[0,1)$.
$\hfill\qed$
\end{proof}

%---------------------------------------------------------------------------------------------------
\subsubsection{Two-level estimators} \label{section:est:2level}
%---------------------------------------------------------------------------------------------------
The term two-level estimator refers to the fact that, using the notation $\widehat\XX_\mesh$ and
$\XX_\mesh\subset \XX$ from Section~\ref{sec:enrich}, the space $\widehat\XX_\mesh$ is generated like
\begin{align} \label{2level}
   \widehat\XX_\mesh = \XX_\mesh \oplus \ZZ_\mesh.
\end{align}
That means $\widehat\XX_\mesh$ is generated by adding to the
approximation space $\XX_\mesh$ a second level as enrichment. In other words, $\widehat\XX_\mesh$ has
a hierarchical two-level decomposition like \eqref{2level}. In order to produce local
contributions to the final error estimator, the second level is usually further decomposed
so that
\begin{align} \label{2level:dec}
  \widehat\XX_\mesh = \ZZ_{\mesh,0} \oplus \ZZ_{\mesh,1} \oplus \ZZ_{\mesh,2} \oplus \cdots \oplus \ZZ_{\mesh,L}
\end{align}
with $\ZZ_{\mesh,0}:=\XX_\mesh$ if we want to be consistent with \eqref{2level}.
Here, the number $L$ of subspaces $\ZZ_{\mesh,j}\subset \ZZ_\mesh$ can be fixed or can vary
with the dimension of $\XX_\mesh$.

In the following, let us consider the simplest case of a symmetric (and elliptic, continuous)
bilinear form $b(\cdot,\cdot)$. Based on the decomposition \eqref{2level:dec} one defines
error indicators
\begin{align} \label{estim:2level:loc}
  \eta_j := \|P_j(\widehat U-U)\|_b,\quad j=0,\ldots,L,
\end{align}
with
\[
  P_j:\;\widehat\XX_\mesh\to \ZZ_{\mesh,j}:\quad b(P_jv, w) = b(v,w)\quad\text{ for all } w\in \ZZ_{\mesh,j}.
\]
The projectors $P_j$ are called \emph{additive Schwarz projectors} and
$P:=\sum_{j=0}^LP_j$ is the \emph{additive Schwarz operator} corresponding to the
decomposition \eqref{2level:dec}, cf.~\cite{qv:99,sbg:96,tw:05}.
The operator $P$ corresponds to a preconditioned stiffness matrix and is related
to techniques from domain decomposition when \eqref{2level:dec} is constructed via
such a decomposition. However, in principle, \eqref{2level:dec} can be generated
by any means, in particular to allow for indicators aimed at anisotropic mesh
refinement, cf., e.g., \cite{eh06}. Finally, having at hand the indicators $\eta_j$,
an error estimator is defined by
\begin{align} \label{estim:2level}
  \eta_\mesh := \Bigl(\sum_{j=0}^L \eta_j^2)^{1/2}.
\end{align}
The following simple result shows that the calculation of $\eta_\mesh$ is not expensive
if the dimensions of $\ZZ_{\mesh,j}$ ($j>0$) are small and $\ZZ_{\mesh,0}=\XX_\mesh$. In particular, there is
no need to calculate the improved Galerkin approximation $\widehat U\in\widehat\XX_\mesh$.

\begin{mylemma} \label{lem:ASMprop}
The additive Schwarz projections $P_j(\widehat U-U)$ can be calculated
by solving problems in the subspaces $\ZZ_{\mesh,j}$ without knowing $\widehat U$,
\[
  b(P_j(\widehat U-U),V) = L(V)-b(U,V)\quad\text{ for all } V\in \ZZ_{\mesh,j}.
\]
Moreover, if $\ZZ_{\mesh,0}\subset \XX_\mesh$ then $\eta_0=0$.
\end{mylemma}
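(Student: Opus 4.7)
The plan is to unfold the definitions of the additive Schwarz projectors $P_j$ and the Galerkin solutions $U$ and $\widehat U$ on the spaces $\XX_\mesh$ and $\widehat\XX_\mesh$ respectively, and then exploit that $\widehat\XX_\mesh \supset \ZZ_{\mesh,j}$ for all $j$. There is no genuine obstacle here: both statements follow directly from the defining relations.

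For the first claim, fix $V\in \ZZ_{\mesh,j}$ and expand, by definition of the $b(\cdot,\cdot)$-orthogonal projector $P_j$ onto $\ZZ_{\mesh,j}$,
\begin{align*}
  b(P_j(\widehat U-U),V) = b(\widehat U-U,V) = b(\widehat U,V) - b(U,V).
\end{align*}
Since $V\in \ZZ_{\mesh,j}\subset \widehat\XX_\mesh$ and $\widehat U\in\widehat\XX_\mesh$ is the Galerkin solution on the enriched space, the first term equals $F(V)$ (writing $F$ for the right-hand side functional, which appears as $L$ in the lemma). This yields the asserted identity, whose right-hand side no longer involves $\widehat U$. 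In particular, $P_j(\widehat U-U)\in \ZZ_{\mesh,j}$ is characterized as the unique element satisfying this variational equation in $\ZZ_{\mesh,j}$, and its computation therefore only requires solving a (typically small) problem on $\ZZ_{\mesh,j}$ once $U$ is known.

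For the second claim, assume $\ZZ_{\mesh,0}\subset \XX_\mesh$. Then every $V\in \ZZ_{\mesh,0}$ is an admissible test function in the original Galerkin problem \eqref{intro:galerkin}, so Galerkin orthogonality gives $F(V) - b(U,V) = 0$. Combining this with the identity from the first claim,
\begin{align*}
  b(P_0(\widehat U-U),V) = F(V) - b(U,V) = 0 \quad\text{for all } V\in \ZZ_{\mesh,0}.
\end{align*}
Choosing $V := P_0(\widehat U-U) \in \ZZ_{\mesh,0}$ and invoking the ellipticity~\eqref{intro:elliptic} of $b(\cdot,\cdot)$ yields $P_0(\widehat U-U)=0$, hence $\eta_0 = \|P_0(\widehat U-U)\|_b = 0$, as claimed.
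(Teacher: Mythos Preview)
Your proof is correct and follows exactly the approach indicated in the paper, which simply states that the claims follow immediately from the definition of the projectors and Galerkin orthogonality. You have spelled out these two ingredients in full detail; the only cosmetic difference is that once you obtain $b(P_0(\widehat U-U),P_0(\widehat U-U))=0$ you could conclude $\eta_0=0$ directly from the definition $\eta_0=\|P_0(\widehat U-U)\|_b$ without separately invoking ellipticity.
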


\begin{proof}
These properties follow immediately by the definition of the projectors
and the Galerkin orthogonality.
$\hfill\qed$
\end{proof}

To show reliability and efficiency of $\eta$ one usually shows stability of
the decomposition \eqref{2level:dec}. This can be formulated in different equivalent
ways as follows (see, e.g.,~\cite{qv:99,sbg:96,tw:05}).

\begin{myproposition} \label{prop:ASM}
Let $P$ be the additive Schwarz operator related to the symmetric bilinear form $b(\cdot,\cdot)$
and discrete space $\widehat\XX_\mesh$ with decomposition \eqref{2level:dec}.
Then, for two positive numbers $\lambda_0$, $\lambda_1$ the following statements are
equivalent.

\noindent (i)
There hold the bounds
$\lambda_\mathrm{min}(P)\ge\lambda_0$ and $\lambda_\mathrm{max}(P)\le\lambda_1$
for the minimum and maximum eigenvalues of $P$, respectively.
\begin{align*}
   (ii)\quad
   \lambda_0 \sum_{j=0}^L b(v_j,v_j) \le b(v,v) \le \lambda_1 \sum_{j=0}^L b(v_j,v_j)
\end{align*}
for all $v=\sum_{j=0}^L v_j\in \widehat\XX_\mesh$ with $v_j\in \ZZ_{\mesh,j}$ ($j=0,\ldots,L$).
\[
   (iii)\quad
   \lambda_0\|v\|_b^2 \le b(Pv,v) \le \lambda_1\|v\|_b^2\quad\text{ for all } v\in\widehat\XX_\mesh.
\]
\end{myproposition}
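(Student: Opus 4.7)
The plan is to exploit that $P$ is self-adjoint and positive definite with respect to the energy inner product $b(\cdot,\cdot)$. Self-adjointness of each $P_j$ follows from its defining Galerkin property $b(v - P_j v, z) = 0$ for $z \in \ZZ_{\mesh,j}$ together with symmetry of $b$: applying this to $z = P_j w$ yields $b(P_j v, w) = b(P_j v, P_j w) = b(v, P_j w)$, and summation over $j$ gives $b$-self-adjointness of $P$. Positive definiteness follows from $b(Pv,v) = \sum_j \norm{P_j v}{b}^2 \ge 0$ together with the spanning property $\sum_j \ZZ_{\mesh,j} = \widehat\XX_\mesh$. Consequently, $P$ admits a $b$-orthonormal eigenbasis with positive real eigenvalues, and the Rayleigh quotient characterization
\[
  \lambda_{\min}(P) = \inf_{v\neq 0}\frac{b(Pv,v)}{b(v,v)},\qquad
  \lambda_{\max}(P) = \sup_{v\neq 0}\frac{b(Pv,v)}{b(v,v)}
\]
immediately yields the equivalence (i)\,$\Leftrightarrow$\,(iii).

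For (i)\,$\Leftrightarrow$\,(ii), the key intermediate step I would establish is the classical identity of P.L.~Lions,
\[
  b(P^{-1}v,v) \;=\; \min_{v=\sum_{j=0}^L v_j,\ v_j\in\ZZ_{\mesh,j}}\ \sum_{j=0}^L b(v_j,v_j)
  \qquad\text{for all } v\in\widehat\XX_\mesh.
\]
The candidate decomposition $v_j^\star := P_j P^{-1} v$ reconstructs $v$ since $P = \sum_j P_j$, and via $P_j^2 = P_j$ together with $b$-self-adjointness one computes $\sum_j b(v_j^\star, v_j^\star) = \sum_j b(P_j P^{-1} v, P^{-1} v) = b(v, P^{-1} v)$. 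Its optimality follows from the Galerkin-type orthogonality $\sum_j b(v_j - v_j^\star, v_j^\star) = 0$ valid for every admissible decomposition $v = \sum v_j$, which in turn stems from $b(v_j, v_j^\star) = b(P_j v_j, P^{-1}v) = b(v_j, P^{-1}v)$ (using $P_j v_j = v_j$) summed against $v = \sum v_j$; the binomial expansion $\sum_j b(v_j,v_j) = \sum_j \norm{v_j - v_j^\star}{b}^2 + \sum_j b(v_j^\star, v_j^\star)$ then yields the lower bound.

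With Lions' identity in hand, statement (ii) rewrites as $\lambda_0\, b(P^{-1}v,v) \le b(v,v) \le \lambda_1\, b(P^{-1}v,v)$ for all $v \in \widehat\XX_\mesh$: the upper bound in (ii) for every decomposition is tightened by taking the minimum on the right, while the lower bound, read on the minimizing decomposition, is equivalent to $\lambda_0\, b(P^{-1}v,v) \le b(v,v)$. Testing this two-sided inequality against a $b$-orthonormal eigenvector $u$ of $P$ with $Pu = \mu u$, and using $b(P^{-1}u,u) = \mu^{-1} b(u,u)$, converts it directly into the eigenvalue bounds $\lambda_0 \le \mu \le \lambda_1$ of (i); the converse direction uses the spectral expansion of $P$. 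The main obstacle throughout is the proof of Lions' identity; the remainder is routine spectral-theoretic bookkeeping for a $b$-self-adjoint positive operator.
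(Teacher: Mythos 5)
The paper does not prove Proposition~\ref{prop:ASM}; it merely cites the standard domain-decomposition textbooks (\cite{qv:99,sbg:96,tw:05}). Your argument is the canonical proof from that literature and is correct. Briefly verifying: the $b$-self-adjointness of each $P_j$ follows as you say from the Galerkin property read twice; $P_j^2 = P_j$ holds because $P_j$ restricts to the identity on $\ZZ_{\mesh,j}$; and since $b(Pv,v) = \sum_j \norm{P_jv}{b}^2 = 0$ forces $P_jv = 0$ for all $j$, which in turn forces $b(v,v)=\sum_j b(v,v_j)=\sum_j b(P_jv,v_j)=0$ for any admissible decomposition $v=\sum v_j$, the operator $P$ is indeed $b$-positive definite, so the Rayleigh-quotient identification of (i) and (iii) is immediate. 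Lions' lemma, the optimality of the candidate $v_j^\star = P_jP^{-1}v$, and the final spectral bookkeeping are all correct as you present them.

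One remark worth making: the paper explicitly restricts attention to the \emph{direct} decomposition \eqref{2level:dec}, so every $v\in\widehat\XX_\mesh$ has a unique representation $v=\sum_j v_j$, and the minimization in Lions' identity is over a singleton. In this setting the lemma degenerates to the identity $b(P^{-1}v,v)=\sum_j b(v_j,v_j)$ (established by exactly your computation, since $v_j^\star=P_jP^{-1}v$ must coincide with $v_j$ by uniqueness), and your discussion of tightening the upper bound over all decompositions and reading the lower bound on the minimizer is logically vacuous. Your argument is thus slightly more machinery than the direct case requires — it would instead be the right level of generality if the $\ZZ_{\mesh,j}$ were allowed to overlap — but there is nothing wrong with it.
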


For a decomposition of $\widehat\XX_\mesh$ that, unlike \eqref{2level:dec},
is not direct, the spectral properties of $P$ are characterized slightly differently.
In the following we will consider only direct decompositions \eqref{2level:dec} of
$\widehat\XX_\mesh$.

In most applications, the stability of \eqref{2level:dec} is ensured by two independent
steps, first the enrichment of $\XX_\mesh$ by a second level $\ZZ_\mesh$ so that the decomposition
\eqref{2level} is stable and, second, a stable decomposition of the second level,
\begin{align} \label{Z:dec}
  \ZZ_\mesh=\ZZ_{\mesh,1}\oplus\cdots\oplus \ZZ_{\mesh,L}.
\end{align}
Of course, the stability of \eqref{2level} is optimal
when $\XX_\mesh$ and $\ZZ_\mesh$ are orthogonal,
\[
   b(v,v) = b(x,x) + b(z,z)
\]
for all $v=x+z\in\widehat\XX_\mesh$ with $x\in \XX_\mesh$ and $z\in \ZZ_\mesh$,
cf. Proposition~\ref{prop:ASM}, (iii).
A generalization of this case is the so-called \emph{strengthened Cauchy-Schwarz inequality}.

\begin{mydefinition} \label{def:CSI}
The decomposition \eqref{2level} satisfies a \emph{strengthened Cauchy-Schwarz inequality}
if there exists a number $\gamma\in[0,1)$ such that
\[
   b(x,z) \le \gamma \|x\|_b \|z\|_b\quad\forall x\in \XX_\mesh,\; z\in \ZZ_\mesh.
\]
\end{mydefinition}

Immediate implication of the strengthened Cauchy-\linebreak Schwarz inequality is the stability
of the two-level decomposition.

\begin{mylemma} \label{la:CSI}
Let the decomposition \eqref{2level} satisfy a strengthened Cauchy-Schwarz inequality
(with constant $\gamma$) and let \eqref{Z:dec} be a stable decomposition
with constants $\lambda_0^\ZZ$ and $\lambda_1^\ZZ$,
\begin{align} \label{Z:dec:stab}
   \lambda_0^\ZZ \sum_{j=1}^L b(v_j,v_j) \le b(v,v) \le \lambda_1^\ZZ \sum_{j=1}^L b(v_j,v_j)
\end{align}
for all $v=\sum_{j=1}^L v_j\in \ZZ_\mesh$ with $v_j\in \ZZ_{\mesh,j}$ ($j=1,\ldots,L$).
Then, \eqref{2level:dec} is stable in the sense of Proposition~\ref{prop:ASM}
with
\[
   \lambda_0\ge (1-\gamma)\, \mathrm{min}\{1,\lambda_0^\ZZ\}   \quad \text{and}\quad
   \lambda_1\le (1+\gamma)\, \mathrm{max}\{1,\lambda_1^\ZZ\}.
\]
\end{mylemma}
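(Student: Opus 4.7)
The plan is to expand $b(v,v)$ along the two-level splitting $v = x + z$ with $x \in \XX_\mesh$ and $z \in \ZZ_\mesh$, then apply the strengthened Cauchy--Schwarz inequality to bound the cross term, and finally insert the stability~\eqref{Z:dec:stab} of the inner decomposition of $\ZZ_\mesh$. Since the total splitting $\widehat\XX_\mesh = \XX_\mesh \oplus \ZZ_{\mesh,1} \oplus \cdots \oplus \ZZ_{\mesh,L}$ is direct (we identify $\ZZ_{\mesh,0} = \XX_\mesh$ so that $v_0 = x$), the bounds we seek are two-sided estimates of $b(v,v)$ by $\sum_{j=0}^L b(v_j,v_j)$.

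First I would write
\begin{align*}
  b(v,v) = b(x,x) + 2 b(x,z) + b(z,z).
\end{align*}
Using Definition~\ref{def:CSI} together with Young's inequality $2\|x\|_b\|z\|_b \le \|x\|_b^2 + \|z\|_b^2$, the cross term is controlled by
\begin{align*}
  |2 b(x,z)| \le 2\gamma \|x\|_b\|z\|_b \le \gamma\bigl(\|x\|_b^2 + \|z\|_b^2\bigr),
\end{align*}
which immediately yields the two-sided sandwich
\begin{align*}
  (1-\gamma)\bigl(\|x\|_b^2+\|z\|_b^2\bigr) \le b(v,v) \le (1+\gamma)\bigl(\|x\|_b^2+\|z\|_b^2\bigr).
\end{align*}

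Next I would apply the assumed stability~\eqref{Z:dec:stab} of the inner decomposition to $\|z\|_b^2$, which gives
\begin{align*}
  \lambda_0^\ZZ \sum_{j=1}^L \|v_j\|_b^2 \le \|z\|_b^2 \le \lambda_1^\ZZ \sum_{j=1}^L \|v_j\|_b^2.
\end{align*}
Combining this with the sandwich above and using $\|x\|_b^2 = \|v_0\|_b^2$ together with the elementary estimates $a + \mu b \ge \min\{1,\mu\}(a+b)$ and $a + \mu b \le \max\{1,\mu\}(a+b)$ for $a,b\ge 0$ and $\mu>0$ yields
\begin{align*}
  (1-\gamma)\min\{1,\lambda_0^\ZZ\} \sum_{j=0}^L \|v_j\|_b^2 \le b(v,v) \le (1+\gamma)\max\{1,\lambda_1^\ZZ\} \sum_{j=0}^L \|v_j\|_b^2,
\end{align*}
which is precisely the claimed stability of~\eqref{2level:dec} in the sense of Proposition~\ref{prop:ASM}(ii), with the stated bounds for $\lambda_0$ and $\lambda_1$.

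There is no serious obstacle here: the proof is essentially an elementary computation once one recognises that the strengthened Cauchy--Schwarz inequality provides exactly the two-sided bound needed to decouple the $\XX_\mesh$- and $\ZZ_\mesh$-contributions. The only minor point to keep track of is that the decomposition is direct, so that for each $v \in \widehat\XX_\mesh$ the components $x \in \XX_\mesh$ and $v_j \in \ZZ_{\mesh,j}$ are uniquely determined, making both sides of the target inequality well-defined functions of $v$.
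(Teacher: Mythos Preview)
Your proof is correct and follows essentially the same approach as the paper: derive the two-sided sandwich $(1-\gamma)(\|v_0\|_b^2+\|v_\ZZ\|_b^2)\le\|v_0+v_\ZZ\|_b^2\le(1+\gamma)(\|v_0\|_b^2+\|v_\ZZ\|_b^2)$ from the strengthened Cauchy--Schwarz inequality, then insert the stability~\eqref{Z:dec:stab} of the inner decomposition. The paper's proof is more terse (it simply states the sandwich and says the rest ``follows immediately''), but your expanded version with the explicit Young inequality and the $\min/\max$ bookkeeping is the same argument.
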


\begin{proof}
The strengthened Cauchy-Schwarz inequality implies that
\begin{align*}
   (1-\gamma) \bigl(\|v_0\|_b^2 + \|v_\ZZ\|_b^2\bigr)
   &\le
   \|v_0+v_\ZZ\|_b^2\\
   &\le
   (1+\gamma) \bigl(\|v_0\|_b^2 + \|v_\ZZ\|_b^2\bigr)
\end{align*}
for all $v_0\in \XX_\mesh$ and $v_\ZZ\in \ZZ_\mesh$. The assertion then follows immediately
by application of \eqref{Z:dec:stab}.
$\hfill\qed$
\end{proof}

A combination of Propositions~\ref{prop:estim:sat} and \ref{prop:ASM}
leads to the following general result on the efficiency and reliability of a two-level
error estimator.

\begin{theorem} \label{thm:2level:estim}
Let the bilinear form $b(\cdot,\cdot)$ be symmetric and $\widehat\XX_\mesh$ be an enriched
space of $\XX_\mesh\subset \XX$. Assume that the decomposition \eqref{2level:dec} is stable
in the sense that there exist positive numbers $\lambda_0$, $\lambda_1$ that satisfy
the relations of Proposition~\ref{prop:ASM}.
Then the estimator $\eta_\mesh$ from \eqref{estim:2level} defined by the local
projections \eqref{estim:2level:loc} is efficient,
\[
   \lambda_1^{-1/2} \eta_\mesh \le \|u-U\|_b.
\]
If, additionally, Assumption~\ref{ass:sata} holds then $\eta_\mesh$ is also reliable,
\[
   \|u-U\|_b \le (1-\c{sata}^2)^{-1/2} \lambda_0^{-1/2}\; \eta_\mesh.
\]
Here, $U\in \XX_\mesh$ is the Galerkin projection of the exact solution $u\in \XX$ of the
abstract problem \eqref{intro:weakform}.
\end{theorem}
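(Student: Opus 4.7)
The plan is to combine Proposition~\ref{prop:estim:sat} (which controls $\norm{u-U}\XX$ by $\norm{\widehat U-U}{b}$ under the saturation assumption) with Proposition~\ref{prop:ASM} (which relates the additive Schwarz operator to the energy norm) by identifying $\eta_\mesh^2$ with $b(Pw,w)$ for $w:=\widehat U-U$.

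First, I would exploit the defining property of the $b$-orthogonal projectors $P_j:\widehat\XX_\mesh\to\ZZ_{\mesh,j}$. Since $P_jw\in\ZZ_{\mesh,j}$, its definition yields $b(P_jw,P_jw)=b(w,P_jw)$, hence
\[
\norm{P_jw}{b}^2=b(P_jw,w)\quad\text{for all }w\in\widehat\XX_\mesh.
\]
Summing over $j=0,\ldots,L$ and using linearity, this identifies the square of the estimator with the additive Schwarz quadratic form,
\[
\eta_\mesh^2=\sum_{j=0}^L\norm{P_j(\widehat U-U)}{b}^2=b\bigl(P(\widehat U-U),\widehat U-U\bigr).
\]

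Next, I would apply Proposition~\ref{prop:ASM}(iii) with $v=\widehat U-U\in\widehat\XX_\mesh$ to sandwich the right-hand side by the energy norm,
\[
\lambda_0\,\norm{\widehat U-U}{b}^2\le\eta_\mesh^2\le\lambda_1\,\norm{\widehat U-U}{b}^2.
\]
Efficiency then follows immediately by combining the upper bound with the efficiency part of Proposition~\ref{prop:estim:sat}, namely $\norm{\widehat U-U}{b}\le\norm{u-U}{b}$, to obtain $\lambda_1^{-1/2}\eta_\mesh\le\norm{u-U}{b}$. For reliability, I would combine the lower bound with the reliability part of Proposition~\ref{prop:estim:sat}, $\norm{u-U}{b}\le(1-\c{sata}^2)^{-1/2}\norm{\widehat U-U}{b}$, which gives $\norm{u-U}{b}\le(1-\c{sata}^2)^{-1/2}\lambda_0^{-1/2}\eta_\mesh$.

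There is no real obstacle here: everything is a direct bookkeeping exercise once the identity $\eta_\mesh^2=b(Pw,w)$ is in place. The only step that requires a moment's attention is confirming that symmetry of $b(\cdot,\cdot)$, together with the definition of $P_j$, really yields $\norm{P_jw}b^2=b(P_jw,w)$, since this is what turns the sum of local energies into the global additive Schwarz form to which Proposition~\ref{prop:ASM} applies. Note that the statement does not invoke Lemma~\ref{lem:ASMprop}; that lemma is a separate practical remark explaining how $\eta_\mesh$ is computed without first assembling $\widehat U$, and is not needed for the proof of efficiency and reliability.
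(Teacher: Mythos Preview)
Your proposal is correct and follows essentially the same route as the paper: identify $\eta_\mesh^2=\sum_j\norm{P_j(\widehat U-U)}{b}^2=b\bigl(P(\widehat U-U),\widehat U-U\bigr)$ via the projector identity, sandwich by Proposition~\ref{prop:ASM}(iii), and conclude with Proposition~\ref{prop:estim:sat}. The paper's argument is exactly this, written in one chain of equalities and inequalities.
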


\begin{proof}
By definition of $\eta_\ell$ and the projectors $P_j$, and using the characterization
by Proposition~\ref{prop:ASM}, (iii), there holds
\begin{align*}
  \eta_\mesh^2 = &\sum_{j=0}^L \eta_j^2 = 
  \sum_{j=0}^L b\bigl(P_j(\widehat U-U), P_j(\widehat U-U)\bigr)\\
            = &\sum_{j=0}^L b\bigl(\widehat U-U, P_j(\widehat U-U)\bigr)
            =               b\bigl(\widehat U-U, P(\widehat U-U)\bigr)\\
   &\left\{
   \begin{array}{l}
      \le \lambda_1 \|\widehat U-U\|_b^2
        = \lambda_1 \eta_\mesh^2,\\
      \ge \lambda_0 \|\widehat U-U\|_b^2
        = \lambda_0 \eta_\mesh^2,
   \end{array}
   \right.
\end{align*}
where $\eta_\ell$ is the estimator defined in Proposition~\ref{prop:estim:sat}.
The assertions follow from the properties of $\eta_\ell$.
$\hfill\qed$
\end{proof}

Having set the abstract (additive Schwarz) framework for two-level error estimators
we proceed considering the specific cases of low order approximations to solutions
of weakly singular and hypersingular integral equations.

\paragraph{Weakly singular operator:}
Let us consider the weakly singular integral equation (see Proposition~\ref{prop:weaksing})
on an open or closed polyhedral surface $\Gamma$, with solution $\phi\in\wilde H^{-1/2}(\Gamma)$.
For simplicity we write $\wilde H^{-1/2}(\Gamma)=H^{-1/2}(\Gamma)$ also on a closed surface.
For a mesh $\mesh$ of shape-regular triangles and quadrilaterals, and discrete space
$\Pp^0(\mesh)$ of piecewise constant functions, $\Phi\in\Pp^0(\mesh)$ denotes the
Galerkin approximation of $\phi$, cf. Proposition~\ref{prop:galerkin:weaksing}.
We stress the fact that the mesh needs not be quasi-uniform and the {\bf quadrilaterals
can be anisotropic} but must be convex and satisfy a minimum angle condition
In the notation introduced previously,
\[
   b(u,v) = \dual{\slo u}{v}_\Gamma,\quad
   \XX=\wilde H^{-1/2}(\Gamma),\quad
   \XX_\mesh=\Pp^0(\mesh).
\]
Now, in order to define a two-level estimator for the error
$\|\phi-\Phi\|_{\wilde H^{-1/2}(\Gamma)}$, we define the second level space $\ZZ_\mesh$ as
piecewise constant functions on a refined mesh $\widehat\mesh$ with the restriction that the
functions have integral-mean zero on any element of $\mesh$:
\[
  \ZZ_\mesh = \{V\in \Pp^0(\widehat\mesh)\;|\; \dual{V}{1}_T=0\ \text{ for all } T\in\mesh\}.
\]
The enriched space is
\[
   \widehat\XX_\mesh = \Pp^0(\mesh) \oplus \ZZ_\mesh = \Pp^0(\widehat\mesh).
\]
Here, we generate $\widehat\mesh$ by refining every element of $\mesh$ in such a way that
elements of $\widehat\mesh$ are shape-regular, see Figure~\ref{fig:weaksing:refine}.
In this enrichment step the objective is two-fold. Essential is to make the
saturation assumption hold. Second, if one wants to perform anisotropic mesh refinement
then one needs sufficiently many unknowns on every old element that allow for
direction indicators. Some more details will be given below.

\begin{figure}
\begin{center}
\includegraphics[width=0.25\textwidth]{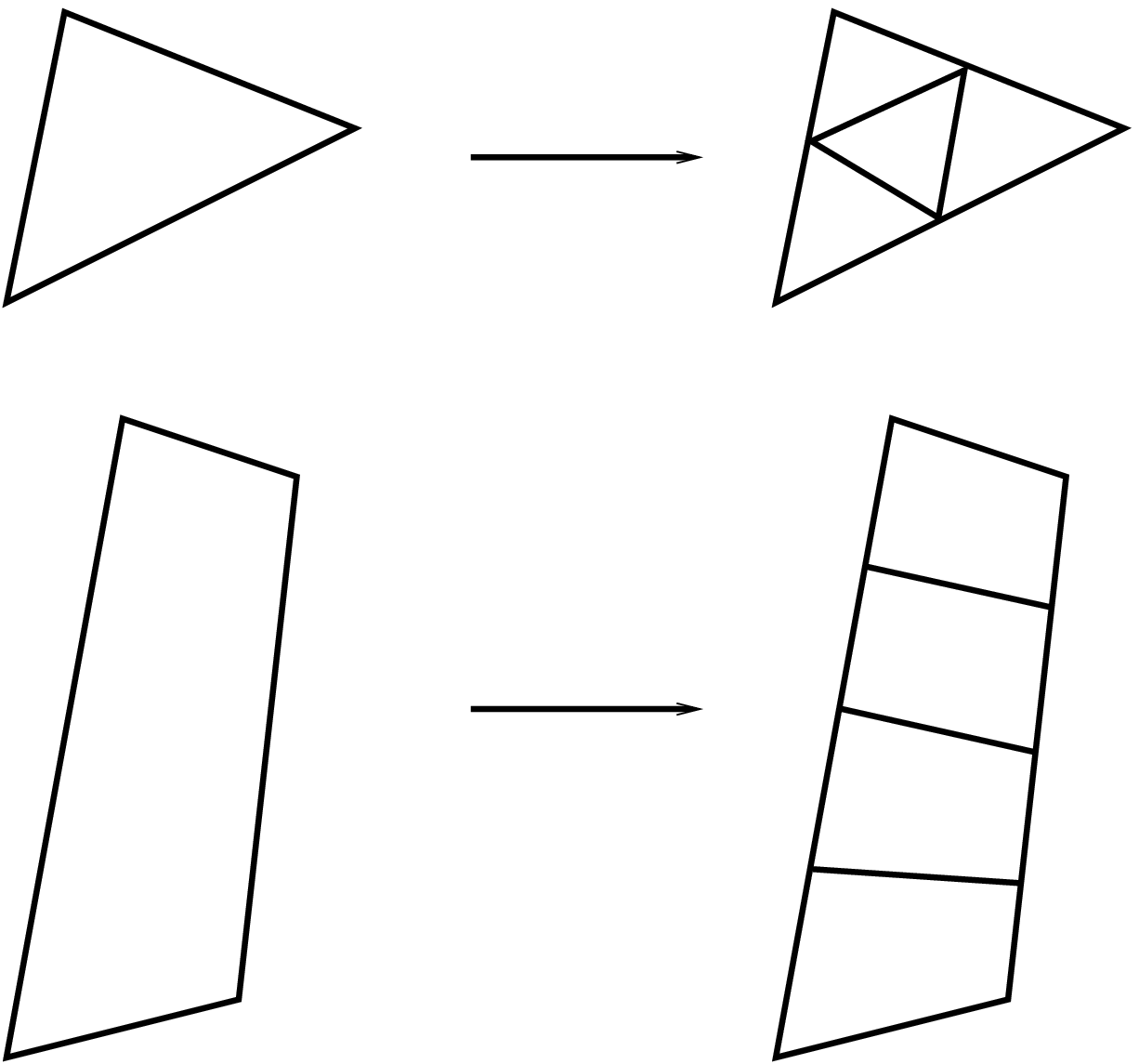}
\end{center}
\caption{Some elements of $\mesh$ (on the left) and their refinements
         to shape-regular elements of $\wilde\mesh$ (on the right)}
\label{fig:weaksing:refine}
\end{figure}

In this relatively general setting one can show reliability (based on saturation) and
efficiency of the element-based error estimator
\begin{align} \label{weaksing:estim:2level}
   \eta_\mesh := \Bigl(\sum_{T\in\mesh} \eta_T^2\Bigr)^{1/2},\quad
   \eta_T:=\|P_T(\widehat\Phi-\Phi)\|_b.
\end{align}
Here, $\widehat\Phi\in\widehat\XX_\mesh$ is the improved Galerkin approximation and,
for any $T\in\mesh$ and $\ZZ_T:=\{v\in \ZZ_\mesh\;|\; \supp(v)\subset\bar T\}$,
\[
   P_T:\;\widehat\XX_\mesh\to \ZZ_T:\quad
   \dual{\slo P_T v}{w}_T=\dual{\slo v}{w}_T\quad \forall w\in \ZZ_T
\]
and
\[
  \|v\|_b^2 = \dual{\slo v}{v}_T\quad\text{ for all } v\in \ZZ_T.
\]
\begin{theorem} \label{thm:weaksing:estim:2level}
The error estimator $\eta_\mesh$ defined by \eqref{weaksing:estim:2level} is efficient:
there exists a constant $\c{eff}>0$ such that, for any mesh $\mesh$ with shape-regular
refinement $\widehat\mesh$, there holds
\[
   \eta_\mesh \le \c{eff} \|\phi-\Phi\|_b.
\]
Furthermore, if Assumption~\ref{ass:sata} holds, then $\eta_\mesh$ is also reliable:
there exists a constant $c>0$ such that, with $\c{rel}=(1-\c{sata}^2)^{-1/2} c$, there
holds for any mesh $\mesh$ with shape-regular refinement $\wilde\mesh$ the estimate
\[
   \|\phi-\Phi\|_b \le \c{rel}\; \eta_\mesh.
\]
\end{theorem}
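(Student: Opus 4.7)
The plan is to recognise the setting as an instance of the abstract additive-Schwarz framework of Theorem~\ref{thm:2level:estim} and then verify its hypotheses. Concretely, I would consider the direct decomposition
\[
  \widehat\XX_\mesh \;=\; \ZZ_{\mesh,0} \oplus \bigoplus_{T\in\mesh} \ZZ_T
  \quad\text{with}\quad \ZZ_{\mesh,0}:=\Pp^0(\mesh),
\]
which is indeed direct because functions in $\ZZ_\mesh$ have vanishing integral mean on every $T\in\mesh$ and functions in different $\ZZ_T$ have essentially disjoint supports. The projectors $P_T$ from the theorem are exactly the additive Schwarz projectors from \eqref{estim:2level:loc}, so Lemma~\ref{lem:ASMprop} gives $\eta_0=0$ (since $\ZZ_{\mesh,0}=\XX_\mesh$). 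Consequently the estimator $\eta_\mesh$ from \eqref{weaksing:estim:2level} coincides with the abstract two-level estimator of \eqref{estim:2level}, and Theorem~\ref{thm:2level:estim} yields both efficiency and (under Assumption~\ref{ass:sata}) reliability, provided the decomposition is stable in the sense of Proposition~\ref{prop:ASM} with bounds $\lambda_0,\lambda_1$ independent of $\mesh$.

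To obtain this stability I would invoke Lemma~\ref{la:CSI}, which splits the task into (a) a strengthened Cauchy-Schwarz inequality between $\Pp^0(\mesh)$ and $\ZZ_\mesh$, and (b) stability of the inner decomposition $\ZZ_\mesh=\bigoplus_{T\in\mesh}\ZZ_T$. For (a), the key observation is that every $z\in\ZZ_\mesh$ satisfies $\dual{z}{1}_T=0$ for all $T\in\mesh$, i.e.\ $z$ is $L_2$-orthogonal to $\Pp^0(\mesh)$. Combined with the smoothing effect of the order $-1$ operator $\slo$ and Lemma~\ref{lem:L2:Pp:apx} (or direct estimates on the single-layer kernel), this cancellation yields $|b(x,z)|\le\gamma\,\|x\|_b\|z\|_b$ for all $x\in\Pp^0(\mesh)$, $z\in\ZZ_\mesh$ with $\gamma\in[0,1)$ depending only on the shape-regularity of $\widehat\mesh$; this is the classical mechanism behind hierarchical preconditioners for weakly singular BEM.

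For (b), each $v\in\ZZ_\mesh$ decomposes uniquely as $v=\sum_{T\in\mesh}v_T$ with $v_T\in\ZZ_T$ (supports in distinct coarse elements). The lower bound in \eqref{Z:dec:stab} is delicate because $b(\cdot,\cdot)=\dual{\slo\cdot}{\cdot}_\Gamma$ is \emph{non-local}, so one must control the off-diagonal contributions $b(v_T,v_{T'})$ for $T\neq T'$. The two integral-mean-zero conditions give a double cancellation in the kernel of $\slo$, which — together with a Cauchy-Schwarz / geometric-series argument over the mesh — produces a uniform strict Cauchy-Schwarz constant between the subspaces $\ZZ_T$; the upper bound in \eqref{Z:dec:stab} is then a routine consequence and the lower bound follows by the usual spectral argument. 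I expect this step to be the main obstacle, especially because the theorem permits anisotropic (but convex, minimum-angle) quadrilaterals in $\mesh$, so the kernel estimates and the refinement picture in Figure~\ref{fig:weaksing:refine} must be exploited carefully to make all constants depend only on the shape-regularity of $\widehat\mesh$ and not on the aspect ratio of the coarse elements.

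Once (a) and (b) are in hand, Lemma~\ref{la:CSI} delivers uniform $\lambda_0,\lambda_1$, and Theorem~\ref{thm:2level:estim} yields
\[
  \eta_\mesh\le\lambda_1^{1/2}\|\phi-\Phi\|_b
  \quad\text{and}\quad
  \|\phi-\Phi\|_b\le (1-\c{sata}^2)^{-1/2}\lambda_0^{-1/2}\eta_\mesh
\]
under the saturation assumption, proving the theorem with $\c{eff}=\lambda_1^{1/2}$ and $c=\lambda_0^{-1/2}$. Equivalence of $\|\cdot\|_b$ with $\|\cdot\|_{\wilde H^{-1/2}(\Gamma)}$ by Theorems~\ref{thm:stability}--\ref{thm:ellipticity} gives the natural-norm version.
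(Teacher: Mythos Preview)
Your high-level plan matches the paper: cast the problem in the additive-Schwarz framework, establish uniform stability of the decomposition $\widehat\XX_\mesh=\Pp^0(\mesh)\oplus\bigoplus_{T\in\mesh}\ZZ_T$, and invoke Theorem~\ref{thm:2level:estim}. The technical route you propose for the stability analysis, however, is genuinely different from the paper's. You split via Lemma~\ref{la:CSI} into a strengthened Cauchy--Schwarz inequality between $\Pp^0(\mesh)$ and $\ZZ_\mesh$, plus inner stability of $\ZZ_\mesh=\bigoplus_T\ZZ_T$ obtained by kernel-cancellation estimates and a geometric-series summation over the mesh. The paper (following \cite{eh06}) does not touch the kernel of $\slo$ at all. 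Instead it introduces a \emph{scalable} local dual norm
\[
\|v\|_{\wilde H^{-1/2}_h(T)}:=\sup_{0\ne\varphi\in H^{1/2}(T)}\frac{\dual{v}{\varphi}_T}{\|\varphi\|_{H^{1/2}_h(T)}},\qquad
\|\varphi\|_{H^{1/2}_h(T)}^2:=h_T^{-1}\|\varphi\|_{L_2(T)}^2+|\varphi|_{H^{1/2}(T)}^2,
\]
which is uniformly equivalent to $\|\cdot\|_b$ on $\ZZ_T$ (the integral-mean-zero condition is used precisely here), and then reduces stability to two Sobolev-space localization inequalities: $\|v\|_{\wilde H^{-1/2}(\Gamma)}^2\lesssim\sum_T\|v|_T\|_{\wilde H^{-1/2}_h(T)}^2$ for mean-zero $v$, and the converse for $V\in\ZZ_\mesh$ by scaling and finite-dimensionality on the reference element.

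The paper's route buys robustness on anisotropic quadrilaterals at essentially no extra cost, since the weighted $H^{1/2}_h(T)$-norm is designed to scale correctly under the affine maps $F_T$; your route would have to extract the same uniformity from kernel estimates, which is delicate. More seriously, your mechanism for the inner decomposition is tuned to \emph{far-field} interactions: the double mean-zero cancellation makes $b(v_T,v_{T'})$ decay faster when $T,T'$ are well separated, but for \emph{neighbouring} elements the coupling is not small relative to the diagonal, so a geometric-series argument alone does not control the near field. A coloring argument then gives the upper bound $\lambda_1^\ZZ$, but the lower bound $\lambda_0^\ZZ\sum_T\|v_T\|_b^2\le\|v\|_b^2$---which you dismiss as ``the usual spectral argument''---is exactly the hard direction and is where the paper's scalable-norm technique does the real work. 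If you pursue your route, this lower bound on general (possibly anisotropic) meshes is the step that needs a genuine argument.
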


For a detailed proof of Theorem~\ref{thm:weaksing:estim:2level} we refer to
\cite{eh06}, where the vector case of the weakly singular operator for the Stokes
problem is analyzed. As indicated by Theorem~\ref{thm:2level:estim}, a proof
boils down to a stability analysis of the underlying decomposition
\begin{align} \label{2level:dec:weaksing}
   \widehat\XX_\mesh = \Pp^0(\mesh)\oplus \bigoplus_{T\in\mesh} \ZZ_T.
\end{align}
This analysis uses estimates for norms from fractional order Sobolev spaces.
Therefore, a major ingredient is to find a Sobolev norm that is equivalent
to the energy norm $\|\cdot\|_b$. For a fixed surface $\Gamma$, this is
the $\wilde H^{-1/2}(\Gamma)$-norm according to Theorems~\ref{thm:stability}
and \ref{thm:ellipticity}. However, for an element $v\in \ZZ_T$, there holds the
equivalence
\[
   \|v\|_b^2 = \dual{\slo v}{v}_T \simeq \|v\|_{\wilde H^{-1/2}(T)}^2,
\]
and it is not immediately clear how the corresponding equivalence numbers depend
on $T$. One has to find a Sobolev norm that is uniformly equivalent to the
energy norm for shape-regular elements $T\in\widehat\mesh$. By an affine mapping
of $T$ to a reference element $T_\refel$ one finds that
\[
  \|v\|_b^2 \simeq h_\el^{2d-3} \|\hat v\|_b^2.
\]
Here, $\hat v$ is the affinely transformed function defined on $\el_\refel$.
This equivalence is immediate by the two Jacobians of the double integral in
$\dual{\slo \cdot}{\cdot}_T$ and by the scaling property of the weakly singular kernel,
\[
   \frac 1{\abs{x-y}} = \frac 1{\abs{F_\el(\hat x)-F_\el(\hat y)}}
   \simeq h_\el^{-1} \frac 1{\abs{\hat x-\hat y}},
\]
where $x = F_\el(\hat x), y = F_\el(\hat y)$.
On the other hand,
\[
   \|v\|_{\wilde H^{-1/2}(T)}
   =
   \sup_{\varphi\in H^{1/2}(T)\setminus\{0\}}
   \frac {\dual{v}{\varphi}_T}{\|\varphi\|_{H^{1/2}(T)}}
\]
is certainly not uniformly equivalent to the energy norm since the duality
in the numerator scales under affine transformations but the denominator does not
(the seminorm $|\cdot|_{H^{1/2}(T)}$ behaves differently from the $L_2(T)$-norm
under affine mappings). To fix this mismatch, one uses an $H^{1/2}(T)$-norm with
weighted $L_2(T)$-term,
\[
   \|v\|_{H^{1/2}_h(T)}^2 := h_\el^{-1}\|v\|_{L_2(T)}^2 + |v|_{H^{1/2}(T)}^2,
\]
and defines a scalable $\wilde H^{-1/2}(T)$-norm by duality:
\[
   \|v\|_{\wilde H^{-1/2}_h(T)}
   :=
   \sup_{\varphi\in H^{1/2}(T)\setminus\{0\}}
   \frac {\dual{v}{\varphi}_T}{\|\varphi\|_{H^{1/2}_h(T)}}.
\]
This norm is uniformly equivalent to the energy norm
under affine mappings that maintain shape regularity, as long as the functions
under consideration have integral-mean zero.
This integral-mean zero condition is essential and the reason for the
particular construction of our second level space $\ZZ$.

A proof of stability of the decomposition \eqref{2level:dec:weaksing} then
reduces to the following three steps.
\begin{enumerate}
\item Replace the energy norm by the uniformly equivalent scalable Sobolev norm
      $\|\cdot\|_{\wilde H^{-1/2}_h(T)}$ in the spaces $\ZZ_T$.
\item One shows (see \cite[Lemma~3.2]{eh06}) that
\[
   \|v\|_{\wilde H^{-1/2}(\Gamma)}^2
   \lesssim
   \sum_{T\in\mesh} \|v|_T\|_{\wilde H^{-1/2}_h(T)}^2
\]
for all $v\in\wilde H^{-1/2}(\Gamma)$ with $v|_T\in\wilde H^{-1/2}(T)$ and
$\dual{v}{1}_T=0 \text{ for all } T\in\mesh$.
\item By scalability and equivalence of norms in finite-dimen\-sional spaces one
proves (see \cite[(3.16)]{eh06}) that
\[
   \sum_{T\in\mesh} \|V|_T\|_{\wilde H^{-1/2}_h(T)}^2
   \lesssim
   \|V\|_{\wilde H^{-1/2}(\Gamma)}^2
   \quad\forall V\in \ZZ_\mesh.
\]
\end{enumerate}
Finally, having shown the stability of \eqref{2level:dec:weaksing} and making
use of the saturation assumption, Theorem~\ref{thm:weaksing:estim:2level} is
proved by application of Theorem~\ref{thm:2level:estim}.

\begin{remark}
The indicators $\eta_T$ defined so far give information only with respect to the
location of elements. By simple changes, it is easy to define indicators with
respect to directions, so that anisotropic refinements can be considered. One only
has to use slightly different local spaces $\ZZ_T$, further split so that corresponding
projections give the direction indicators. In Figure~\ref{fig:tri:dec} we have
illustrated this for a single triangle (on the left) which is decomposed into
two triangles in three different ways (on the right). The plus and minus signs
indicate that one has to use piecewise constant functions (positive on the triangle
with the plus sign and negative on the other) so that the function has
integral-mean value zero. In this way, on each triangle $T\in\mesh$, one has
three spaces and together they generate the second level $\ZZ$ on $T$. A refinement
algorithm with direction control would consider, e.g., the triangle refinement that
corresponds to the space among the three whose error indicator is largest.
Similar constructions work on quadrilaterals.
Throughout, in the refinement procedure with direction steering,
one has to consider a minimum angle condition.

The inclusion of this direction control in the stability analysis of the
decomposition of $\ZZ_\mesh$ is straightforward by selecting the previously scalable
Sobolev norm. One only has to use an argument from equivalence of norms in
finite-dimensional spaces. For more details we refer to \cite{eh06}.
\end{remark}

\begin{figure}
\begin{center}
\includegraphics[width=0.25\textwidth]{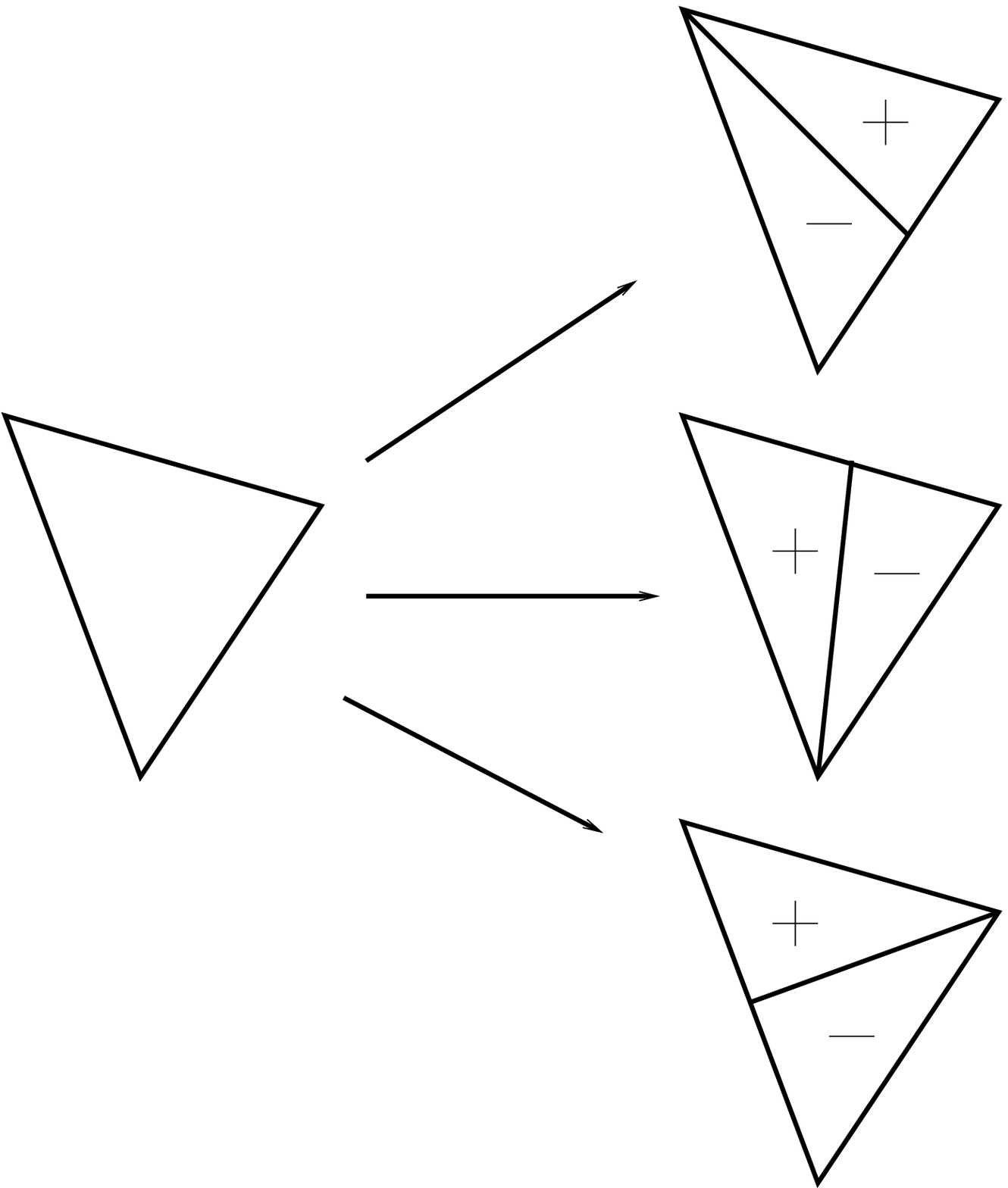}
\end{center}
\caption{A triangle of $\mesh$ (on the left) and its three partitions
         (on the right) for the construction of error indicators with
         direction control.}
\label{fig:tri:dec}
\end{figure}

Let us comment on other publications on two-level error estimators for
weakly singular integral equations. In \cite{msw98}, Mund, Stephan and Wei{\ss}e
analyze the situation we have considered above for the particular case of uniform
meshes of squares. In this case, several of the required norm estimates
can be calculated exactly so that general arguments from fractional order
Sobolev spaces (that we have discussed above) can be avoided. Also, \cite{msw98}
reports on numerical experiments on curved surfaces.

In \cite{ms99}, Mund and Stephan study two-level error estimators for the
coupling of finite elements and boundary elements. The model problem is a
transmission problem in two dimensions with nonlinear behavior in a
bounded domain, coupled with the Laplacian in the exterior. The variational
formulation and its discretization involves the weakly singular operator (on
a curve). The proposed error estimator is of the two-level kind with additive
Schwarz theory. Here, the authors prove stability of the boundary element
contribution up to a perturbation of the type $h^{-\epsilon}$ with $\epsilon>0$
and $h$ being the mesh size.

So far we have only discussed the case of symmetric and elliptic bilinear forms.
This theory can be extended to indefinite problems. In particular,
in~\cite{bank:smith:93} Bank and Smith analyze in an abstract setting the
general case of a variational form with bounded bilinear form that only satisfies
the continuous and discrete inf-sup conditions
(to guarantee existence and uniqueness of a continuous and discrete solution).
Apart from the saturation assumption in the form of Assumption~\ref{ass:satH}
(with respect to a Sobolev norm rather than energy norm), the analysis is based
on a strengthened Cauchy-Schwarz inequality in the corresponding Sobolev norm
(cf. Definition~\ref{def:CSI} in the energy norm).
More specifically, for the boundary element method, Maischak, Mund and Stephan
analyze in~\cite{mms97} two-level error estimators for weakly singular integral equations governing
the Helmholtz problem with small wave number. Their theory follows the setting
from~\cite{bank:smith:93}, by showing that it is enough to have a stable decomposition
corresponding to the elliptic part of the operator, and that the compact perturbation
due to non-zero wave number does not change the behavior of the two-level error
estimator. However, proofs are given for the two-dimensional case. In three dimensions,
numerical results verify the expected behavior of the error estimator.

Finally, we note that in \cite{jl99} the authors have studied two-level error estimators
for boundary element discretizations of weakly singular and hypersingular operators in
two dimensions. However, there are several unresolved theoretical hickups
involving subtle issues with fractional order Sobolev spaces that the authors replaced
with several assumptions. We prefer not to discuss the outcomes in detail.

\paragraph{Hypersingular operator:}

In two dimensions, that means for boundary integral equations on curves, additive
Schwarz theory for weakly singular operators is equivalent to the one for hypersingular
operators. This is due to the fact that Sobolev spaces of orders plus and minus one half
are being mapped among them by differentiation and integration with respect to the
arc-length. Correspondingly, basis functions are being transformed. The only,
purely technical, difficulty is an integral-mean zero condition for functions in
$H^{-1/2}$ along the elements or the curve. For an early observation and application
of this fact, see~\cite{eps:s:89}.

In three dimensions, however, the situation is different. In this case, rather than
simple differentiation and integration, pseudo-differential operators act as appropriate
mappings. Possible operators are the square root of the negative Laplacian
(more precisely, of the negative Laplace-Beltrami operator) and its inverse
operator. These operators do not map piecewise polynomials onto piecewise polynomials.
Therefore, in three dimensions on surfaces, the stability analyses of two-level
decompositions of discrete spaces  in $H^{1/2}$ and $H^{-1/2}$ are substantially different.

We do not know of any mathematical publication on two-level error estimators for
hypersingular integral equations on surfaces that do not also consider the $p$-version
(where approximations are improved by increasing polynomial degrees).
Therefore, we postpone the discussion of this case to Section~\ref{section:aposteriori:hp}
which deals with the $hp$-version.
%------------------------------------------------------------------------------------------------------
\subsubsection{$(h-h/2)$ estimators}\label{section:est:hh2}
%------------------------------------------------------------------------------------------------------
The starting point for this type of error estimators is Proposition~\ref{prop:estim:sat}, which states
\begin{align*}
  \norm{\widehat U-U}{b} \leq \norm{u-U}{b} \leq (1-\c{sata}^2)^{-1/2} \norm{\widehat U-U}{b}
\end{align*}
for a symmetric bilinear form $b(\cdot,\cdot)$, where the upper bound holds under
the saturation assumption~\ref{ass:sata}. Here, $U\in \XX_\mesh$
denotes the Galerkin solution with respect to a mesh $\mesh$ and
$\widehat U\in \widehat\XX_\mesh := \XX_{\widehat\mesh}$
denotes the Galerkin solution with respect to a uniformly refined mesh $\widehat\mesh$. The term
\begin{align*}
  \eta := \norm{\widehat U -U}{b}
\end{align*}
is computable in the sense that it does not contain any unknowns and that it can be evaluated
easily as a matrix-vector product
\begin{align*}
  \norm{\widehat U -U}{b}^2
  = (\mathbf{\widehat U} - \mathbf{U})\cdot\mathbf{\widehat B}\cdot(\mathbf{\widehat U} - \mathbf{U}),
\end{align*}
where $\mathbf{\widehat B}$ is the Galerkin matrix on the space $\widehat\XX_\mesh$ and
$\mathbf{\widehat U}$ and $\mathbf{U}$ are the coefficient vectors of the Galerkin solutions
with respect to the chosen basis for $\widehat\XX_\mesh$. Now, the idea of $(h-h/2)$ estimators is to
overcome the following two problems:
\begin{itemize}
  \item The computation of both, $U$ and $\widehat U$ is necessary.
  \item The inherent non-locality of the norm prevents us to use
    $\norm{\widehat U -U}{b}$ as refinement indicator in an adaptive algorithm.
\end{itemize}
To motivate a remedy for the first problem, we note that, due to best approximation properties
of Galerkin solutions, $\widehat U$ will always be a better solution than $U$ which therefore
becomes only a temporary result.
Furthermore, as soon as $\widehat U$ is computed, the computational
cost of computing $U$ is quite high in contrast to a simple postprocessing of $\widehat U$.
Hence, to avoid the (expensive) computation of $U$,
we use $\Pi \widehat U$ instead, where $\Pi$ is a (preferably cheap) projection onto the space $\XX_\mesh$,
which is supposed to fulfill the following properties for all $\widehat U\in\widehat\XX_\mesh$:
\begin{align}
    \norm{(1-\Pi)\widehat U}{b} &\leq 
    \c{approx}\min_{V \in \XX_\mesh}\norm{\widehat U - V}{h^s}\label{sec:hh2:ass1}\\
    \norm{\widehat U}{h^s} &\leq \c{inv} \norm{\widehat U}{b}\label{sec:hh2:ass2},
\end{align}
where $\norm{\cdot}{h^s}$ denotes an adequate $h^s$-weighted, integer order seminorm.
In BEM, the norm $\norm{\cdot}{b}$ is equivalent to a fractional order Sobolev norm.
Hence, the first estimate in~\eqref{sec:hh2:ass1} is an approximation property for the operator $\Pi$,
whereas the estimate~\eqref{sec:hh2:ass2} corresponds to an inverse estimate.
Note, however, that $\widehat U\in\widehat\XX_\mesh$ is based on a fine mesh $\widehat\mesh$, whereas
$h$ in~\eqref{sec:hh2:ass2} corresponds to the mesh $\mesh$. In other words,~\eqref{sec:hh2:ass2} requires that
the mesh-size $\widehat h$ of $\widehat\mesh$ must not be too small in comparison with the mesh-size $h$ of $\mesh$.
Now, the best approximation properties of Galerkin methods show immediately that
\begin{align*}
  \eta \leq \wilde\eta := \norm{(1 - \Pi) \widehat U}{b}.
\end{align*}
From the estimate~\eqref{sec:hh2:ass1} follows immediately that
\begin{align*}
  \wilde\eta = \norm{(1 - \Pi) \widehat U}{b} \lesssim \norm{(1-\Pi) \widehat U}{h^s} =: \wilde \mu.
\end{align*}
The estimator $\wilde \mu$ has all the desired properties as it is local and avoids
the computation of $U$.
Next, the estimates~\eqref{sec:hh2:ass2} and~\eqref{sec:hh2:ass1} show
\begin{align*}
  \wilde\mu = \norm{(1-\Pi)\widehat U}{h^s} \lesssim \norm{(1-\Pi)\widehat U}{b} \lesssim \norm{\widehat U-U}{h^s} =: \mu.
\end{align*}
Finally it follows from estimate~\eqref{sec:hh2:ass2} that
\begin{align*}
  \mu = \norm{\widehat U-U}{h^s} \lesssim \norm{\widehat U - U}{b} = \eta
\end{align*}
The strength of the resulting estimators is that they are conceptually simple and require nearly
no overhead in implementation. In the following, we will specify the involved quantities to
obtain estimates in the weakly singular and hypersingular case.
\paragraph{Weakly singular operator:}
For weakly singular integral equations, i.e., integral equations involving the single layer
potential $\slo$, the presented approach was analyzed in detail for
$d=2$ in~\cite{effp09} and for $d=3$ in~\cite{fp08}. The cited works discuss only the
lowest-order case $p=0$, therefore we sketch the proof for general $p\geq 0$.
The energy norm is given in this case by
\begin{align*}
  \norm{u}{b}^2 := \norm{u}{\slo}^2 := \dual{\slo u}{u}_\Gamma.
\end{align*}
The first result regarding reliability and efficiency of the estimator $\eta$ follows
directly from Proposition~\ref{prop:estim:sat}, cf.~\cite[Prop.~1.1]{fp08}
and~\cite[Prop.~3.1]{effp09}.
\begin{theorem}\label{thm:hh2:weaksing}
  Suppose that $\phi\in\wilde H^{-1/2}(\Gamma)$ is the exact solution of
  Proposition~\ref{prop:weaksing} or~\ref{prop:dirichlet}. Given a mesh $\mesh$ and its uniform
  refinement $\widehat\mesh$, denote by $\Phi\in\Pp^p(\mesh)$ and
  $\widehat\Phi\in\Pp^p(\widehat\mesh)$ the respective Galerkin
  approximations from Proposition~\ref{prop:galerkin:weaksing} or~\ref{prop:galerkin:dirichlet}.
  Then,
  \begin{align*}
    \eta_\mesh := \norm{\Phi - \widehat\Phi}{\slo} \leq\norm{\phi - \Phi}{\slo},
  \end{align*}
  i.e., the estimator $\eta_\mesh$ is efficient with $\c{eff}=1$.
  Under the saturation assumption~\ref{ass:sata}, $\eta_\mesh$ is reliable, i.e.,
  \begin{align*}
    \norm{\phi - \Phi}{\slo} \leq (1-\c{sata}^2)^{-1/2} \eta_\mesh.
  \end{align*}
\end{theorem}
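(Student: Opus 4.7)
The plan is to apply Proposition~\ref{prop:estim:sat} directly to the present concrete setting, since all the abstract ingredients are available. First I would identify the pieces: take $\XX := \wilde H^{-1/2}(\Gamma)$ and $b(u,v) := \dual{\slo u}{v}_\Gamma$, which is symmetric, continuous, and $\XX$-elliptic by Theorems~\ref{thm:stability} and~\ref{thm:ellipticity}; the induced energy norm $\|\cdot\|_b$ then coincides with $\|\cdot\|_\slo$. The coarse discrete subspace is $\XX_\mesh := \Pp^p(\mesh)$ with Galerkin solution $\Phi$ (Proposition~\ref{prop:galerkin:weaksing} or~\ref{prop:galerkin:dirichlet}), and the enriched subspace is $\widehat\XX_\mesh := \Pp^p(\widehat\mesh)$ with Galerkin solution $\widehat\Phi$.

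Next I would verify the nestedness $\XX_\mesh \subseteq \widehat\XX_\mesh$ that the abstract framework of Section~\ref{sec:enrich} requires. Since $\widehat\mesh$ is a uniform refinement of $\mesh$, every $\widehat\el \in \widehat\mesh$ is contained in a unique element of $\mesh$, so a piecewise polynomial of degree $p$ on $\mesh$ is trivially a piecewise polynomial of degree $p$ on $\widehat\mesh$. The Galerkin orthogonality $b(\phi - \widehat\Phi, V) = 0$ for all $V \in \XX_\mesh$ then follows from the orthogonality on $\widehat\XX_\mesh$, which is exactly the property exploited in the proof of Proposition~\ref{prop:estim:sat}.

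Given these identifications, efficiency $\eta_\mesh \leq \|\phi - \Phi\|_\slo$ with constant $1$ follows by writing
\begin{align*}
  \|\widehat\Phi - \Phi\|_\slo^2 = b(\widehat\Phi - \Phi, \widehat\Phi - \Phi)
  = b(\phi - \Phi, \widehat\Phi - \Phi) \leq \|\phi - \Phi\|_\slo \|\widehat\Phi - \Phi\|_\slo,
\end{align*}
where I used Galerkin orthogonality of $\phi - \widehat\Phi$ against $\widehat\Phi - \Phi \in \widehat\XX_\mesh$. For reliability, under Assumption~\ref{ass:sata} the Pythagoras identity $\|\phi - \Phi\|_\slo^2 = \|\phi - \widehat\Phi\|_\slo^2 + \|\widehat\Phi - \Phi\|_\slo^2$ (again from Galerkin orthogonality, using symmetry of $b$) combined with $\|\phi - \widehat\Phi\|_\slo \leq \c{sata} \|\phi - \Phi\|_\slo$ yields $(1-\c{sata}^2)\|\phi - \Phi\|_\slo^2 \leq \eta_\mesh^2$.

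There is essentially no obstacle; the whole theorem is a direct instantiation of the abstract Proposition~\ref{prop:estim:sat}. The only minor point worth noting is that the argument is independent of whether the right-hand side of the weakly singular equation is $f$ (Proposition~\ref{prop:weaksing}) or $(1/2+\dlo)f$ (Proposition~\ref{prop:dirichlet}), since only Galerkin orthogonality and the symmetric elliptic structure enter the estimates.
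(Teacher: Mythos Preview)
Your proposal is correct and matches the paper's approach exactly: the paper simply states that the result follows directly from Proposition~\ref{prop:estim:sat} (with references to \cite{fp08,effp09}) and gives no separate proof. You have in fact spelled out more detail than the paper does, correctly identifying the bilinear form, the nested spaces, and the Galerkin orthogonality/Pythagoras argument underlying that proposition.
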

The localization of $\eta_\mesh$ and the avoidance of the computation of $\Phi$ is
done by using the $L_2$-orthogonal projection $\pi^p_\mesh$ from Definition~\ref{def:L2projection},
and the seminorm $\norm{\cdot}{h^s}$ will be the $h_\mesh^{1/2}$-weighted $L_2$-norm in this case.
Note that by Theorem~\ref{thm:stability} and~\ref{thm:ellipticity}, $\norm{\cdot}{b}$ is an
equivalent norm on $\wilde H^{-1/2}(\Gamma)$.
Lemma~\ref{lem:L2:Pp:apx} with $r=1/2$ and $s=0$ provides the approximation
properties for the derivation of~\eqref{sec:hh2:ass1}, and Lemma~\ref{lem:Pp:invest} provides the
inverse estimate that is needed in~\eqref{sec:hh2:ass2}.
Note that the projection property of $\pi_\mesh^p$ is used to arrive at the minima.
The resulting estimators and equivalences are stated in the following theorem,
cf.~\cite[Thms.~3.2,~3.4]{fp08}.
\begin{theorem}\label{thm:def:hh2:weaksing}
  Define the following a~posteriori error estimators:
  \begin{align*}
    \begin{split}
    \begin{array}{rclcrcl}
      \eta_\mesh &:=& \norm{\widehat\Phi - \Phi}{\slo},
      &\quad&
      \mu_\mesh &:=& \norm{h_\mesh^{1/2}(\widehat\Phi - \Phi)}{L_2(\Gamma)},\\
      \widetilde\eta_\mesh &:=& \norm{(1-\pi^p_\mesh)\widehat\Phi}{\slo},
      &\quad&
      \widetilde\mu_\mesh &:=& \norm{h_\mesh^{1/2}(1-\pi^p_\mesh)\widehat\Phi}{L_2(\Gamma)}.
    \end{array}
    \end{split}
  \end{align*}
  Then, it holds
  \begin{align*}
    \eta_\mesh &\leq \wilde\eta_\mesh\leq C_\slo^{1/2}\c{approx}\wilde\mu_\mesh,\\
    \wilde\mu_\mesh &\leq \mu_\mesh \leq \c{inv}\c{ell}^{-1/2} \eta_\mesh,
  \end{align*}
  where $C_V = \norm{\slo}{\wilde H^{-1/2}(\Gamma)\rightarrow H^{1/2}(\Gamma)}$ and
  $\c{ell}$ are the stability and ellipticity constants of the single layer operator $\slo$,
  $\c{approx}$ is the constant of Lemma~\ref{lem:L2:Pp:apx}, and $\c{inv}$ is the
  constant of the inverse estimate of Lemma~\ref{lem:Pp:invest}.
\end{theorem}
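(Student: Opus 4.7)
The proof divides into the four inequalities stated in the theorem. My plan is to traverse them in the natural order $\eta_\mesh \le \widetilde\eta_\mesh \le \widetilde\mu_\mesh \ge $ (no, sorry) $\widetilde\mu_\mesh \le \mu_\mesh \le \eta_\mesh$, using Galerkin orthogonality, the approximation/inverse estimates of Lemmas~\ref{lem:L2:Pp:apx} and~\ref{lem:Pp:invest}, and Theorems~\ref{thm:stability}, \ref{thm:ellipticity} in the respective places.

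For $\eta_\mesh \le \widetilde\eta_\mesh$, I would observe that $\Pp^p(\mesh) \subseteq \Pp^p(\widehat\mesh)$, so subtracting Galerkin orthogonality on $\mesh$ and on $\widehat\mesh$ yields $\dual{\slo(\widehat\Phi-\Phi)}{V}_\Gamma = 0$ for all $V\in \Pp^p(\mesh)$; thus $\Phi$ is the $\slo$-Galerkin projection of $\widehat\Phi$ onto $\Pp^p(\mesh)$. The resulting best-approximation property in the energy norm gives $\norm{\widehat\Phi-\Phi}{\slo} \le \norm{\widehat\Phi - \pi_\mesh^p \widehat\Phi}{\slo}$ by picking $V = \pi_\mesh^p\widehat\Phi\in\Pp^p(\mesh)$, which is exactly $\eta_\mesh \le \widetilde\eta_\mesh$.

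For $\widetilde\eta_\mesh \le C_\slo^{1/2}\c{approx}\widetilde\mu_\mesh$, I would first bound the energy norm by the $\wilde H^{-1/2}$-norm: continuity of $\slo:\wilde H^{-1/2}(\Gamma)\to H^{1/2}(\Gamma)$ gives $\norm{v}{\slo}^2 = \dual{\slo v}{v}_\Gamma \le C_\slo\norm{v}{\wilde H^{-1/2}(\Gamma)}^2$. The key trick is then to apply Lemma~\ref{lem:L2:Pp:apx} with $s=0$, $r=1/2$ not to $\widehat\Phi$ but to $w:=(1-\pi_\mesh^p)\widehat\Phi$, exploiting the projection property $\pi_\mesh^p w = 0$; this yields $\norm{w}{\wilde H^{-1/2}(\Gamma)}^2 = \norm{w-\pi_\mesh^p w}{\wilde H^{-1/2}(\Gamma)}^2 \lesssim \sum_{\el\in\mesh} h_\el \norm{w}{L_2(\el)}^2 = \widetilde\mu_\mesh^2$. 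Absorbing the square root into $\c{approx}$ gives the claimed constant.

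The inequality $\widetilde\mu_\mesh \le \mu_\mesh$ is the cheapest step: since $h_\mesh$ is piecewise constant and $\Phi|_\el\in\Pp^p(\el)$, the elementwise $L_2$-best approximation property of $\pi_\mesh^p$ gives $\norm{(1-\pi_\mesh^p)\widehat\Phi}{L_2(\el)} \le \norm{\widehat\Phi-\Phi}{L_2(\el)}$; multiplying by $h_\el^{1/2}$ and summing yields the estimate. Finally, for $\mu_\mesh \le \c{inv}\c{ell}^{-1/2}\eta_\mesh$, I would use that $\widehat\Phi-\Phi \in \Pp^p(\widehat\mesh)$ and apply the inverse estimate of Lemma~\ref{lem:Pp:invest} with $s=1/2$ (on the mesh $\widehat\mesh$, with $h_{\widehat\mesh} \simeq h_\mesh$ for a single uniform refinement; the resulting shape-regularity-dependent factor is absorbed into $\c{inv}$), obtaining $\mu_\mesh \le \c{inv}\norm{\widehat\Phi-\Phi}{\wilde H^{-1/2}(\Gamma)}$. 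Ellipticity of $\slo$ (Theorem~\ref{thm:ellipticity}) then bounds the right-hand side by $\c{ell}^{-1/2}\norm{\widehat\Phi-\Phi}{\slo} = \c{ell}^{-1/2}\eta_\mesh$. The main obstacle, if any, is the technical point in step~2 about how to get $\widetilde\mu_\mesh$ (with the projection) rather than $\norm{h_\mesh^{1/2}\widehat\Phi}{L_2(\Gamma)}$ on the right-hand side; all other steps are direct applications of the cited lemmas.
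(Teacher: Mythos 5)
Your proof is correct and follows the same chain $\eta_\mesh \le \widetilde\eta_\mesh \lesssim \widetilde\mu_\mesh \le \mu_\mesh \lesssim \eta_\mesh$ that the paper lays out in its abstract framework preceding the theorem, relying on Galerkin best-approximation, Lemma~\ref{lem:L2:Pp:apx} combined with the projection idempotency $\pi^p_\mesh(1-\pi^p_\mesh)=0$, the elementwise $L_2$-best-approximation property of $\pi^p_\mesh$, and the inverse estimate of Lemma~\ref{lem:Pp:invest} together with ellipticity. The only caveats are the cosmetic ones you already flagged: the stated constants must be read so as to absorb the square root coming from Lemma~\ref{lem:L2:Pp:apx} and the $h_\mesh/h_{\widehat\mesh}$-factor from applying the inverse estimate on $\widehat\mesh$.
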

The last theorem shows that all estimators are equivalent up to constants that depend
only on $\Gamma$, $p$, and the shape-regularity constant $\sigma_\mesh$. In particular,
Theorem~\ref{thm:hh2:weaksing} shows that all estimators are efficient and (under the
saturation assumption~\ref{ass:sata}) reliable.
\begin{remark}
  The work~\cite{fp08} uses the quantity $\rho_\mesh$ instead of $h_\mesh$
  to define the estimators $\mu_\mesh$ and $\wilde\mu_\mesh$, where $\rho_\mesh$ is
  defined $\mesh$-elementwise as the diameter of the largest sphere centered at a point in
  $\el\in\mesh$ whose intersection with $\Gamma$ lies entirely in $\el$.
  The reason for this is that~\cite{fp08} also uses the estimator $\wilde\mu_\mesh$
  to steer an adaptive anisotropic mesh refinement on quadrilaterals, for which
  $\rho_\mesh$ is more appropriate than $h_\mesh$.
  After an element has been selected for refinement, the choice on the refinement directions
  is based on the expansion of $\widehat\Phi$ in a series of functions on $\widehat\mesh$
  indicating the possible refinement directions. The resulting adaptive algorithms
  behave reasonable and the authors observe the optimal convergence rate
  $\OO(N^{-3/2})$, where $N$ is the number of degrees of freedom. We refer
  to~\cite{afp12,fp08} as well as Section~\ref{section:estred:anisotropic} for further details.
\end{remark}
\paragraph{Hypersingular operator:}
For hypersingular integral equations, the analysis of $(h-h/2)$-type estimators is given
in~\cite{cp07,efgp12} for $d=2$ and the lowest-order case $p=1$, and in~\cite{affkp13} for $d=3$
and general $p\geq 1$. The energy norm is given by
\begin{align*}
  \norm{u}{b}^2 := \norm{u}{\hyp}^2 :=
  \begin{cases}
    \dual{\hyp u}{u}_\Gamma &\text{ for } \Gamma\subsetneq\partial\Omega\\
    \dual{\hyp u}{u}_\Gamma + \dual{u}{1}_\Gamma^2 &\text{ for } \Gamma = \partial\Omega,
  \end{cases}
\end{align*}
cf. Section~\ref{section:bem}. Again, Proposition~\ref{prop:estim:sat} shows reliability
and efficiency of the estimator.
\begin{theorem}\label{thm:hh2:hypsing}
  Suppose that $u\in\wilde H^{1/2}(\Gamma)$ is the exact solution of Proposition~\ref{prop:hypsing}
  or~\ref{prop:neumann}. Given a mesh $\mesh$ and its uniform refinement $\widehat\mesh$,
  denote by $U\in\wilde\Sp^p(\mesh)$ and $\widehat U\in\wilde\Sp^p(\widehat\mesh)$ the respective
  Galerkin approximations from Proposition~\ref{prop:galerkin:hypsing} or~\ref{prop:galerkin:neumann}.
  Then,
  \begin{align*}
    \eta_\mesh := \norm{U - \widehat U}{\hyp} \leq \norm{u-U}{\hyp},
  \end{align*}
  i.e., the estimator $\eta_\mesh$ is efficient with $\c{eff}=1$. Under the saturation
  assumption~\ref{ass:sata}, $\eta_\mesh$ is reliable, i.e.,
  \begin{align*}
    \norm{u-U}{\hyp} \leq (1-\c{sata}^2)^{-1/2} \eta_\mesh.
  \end{align*}
\end{theorem}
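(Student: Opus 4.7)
The statement is the direct analogue of Theorem~\ref{thm:hh2:weaksing} for the hypersingular setting, so my plan is to reduce it to a direct application of the abstract Proposition~\ref{prop:estim:sat}. To this end, I first identify the correct symmetric, continuous, and elliptic bilinear form on the energy space $\XX$: in the open-surface case $\Gamma\subsetneq\partial\Omega$, I take $\XX=\wilde H^{1/2}(\Gamma)$ with $b(v,w):=\dual{\hyp v}{w}_\Gamma$, while for $\Gamma=\partial\Omega$ I take $\XX=H^{1/2}(\Gamma)$ with the stabilized form $b(v,w):=\dual{\hyp v}{w}_\Gamma+\dual{v}{1}_\Gamma\dual{w}{1}_\Gamma$. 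Symmetry of both forms is immediate from the self-adjointness of $\hyp$ with respect to the $L_2$ duality, and continuity and ellipticity are exactly Theorems~\ref{thm:stability} and~\ref{thm:ellipticity}. With this choice, $\norm{\cdot}{b}=\norm{\cdot}{\hyp}$ by definition.

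Next, I match the discrete setup to the abstract framework of Section~\ref{sec:enrich}. I set $\XX_\mesh:=\wilde\Sp^p(\mesh)$ (resp.\ $\Sp^p(\mesh)$) and the enriched space $\widehat\XX_\mesh:=\wilde\Sp^p(\widehat\mesh)$ (resp.\ $\Sp^p(\widehat\mesh)$). Since $\widehat\mesh$ is the uniform refinement of $\mesh$, one has the nestedness $\XX_\mesh\subset\widehat\XX_\mesh\subset\XX$, so $U$ and $\widehat U$ from Proposition~\ref{prop:galerkin:hypsing} (resp.~\ref{prop:galerkin:neumann}) are precisely the Galerkin approximations of $u$ in these two spaces with respect to $b(\cdot,\cdot)$. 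In the closed-boundary case, one should also observe that adding the $\dual{\cdot}{1}_\Gamma\dual{\cdot}{1}_\Gamma$ term does not change the Galerkin problem because $1\in\Sp^p(\mesh)\subset\Sp^p(\widehat\mesh)$, so the extra term is built into both discrete equations consistently.

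With the abstract setting in place, both bounds are immediate consequences of Proposition~\ref{prop:estim:sat}. The efficiency $\eta_\mesh=\norm{U-\widehat U}{\hyp}\le\norm{u-U}{\hyp}$ follows from Galerkin orthogonality together with the Pythagoras identity $\norm{u-U}{\hyp}^2=\norm{u-\widehat U}{\hyp}^2+\norm{\widehat U-U}{\hyp}^2$, without any further assumption. Under the saturation Assumption~\ref{ass:sata}, the same Pythagoras identity gives $(1-\c{sata}^2)\norm{u-U}{\hyp}^2\le\norm{\widehat U-U}{\hyp}^2$, which is the asserted reliability estimate.

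In contrast to the weakly singular case, there is no genuine technical obstacle here; the only point that requires a moment's care is the closed-boundary situation, where one has to verify that the stabilized bilinear form remains the correct energy product on both $\XX$ and its discrete subspaces, and that $\norm{\cdot}{\hyp}$ as defined in the theorem statement coincides with the induced energy norm. Once this bookkeeping is done, Proposition~\ref{prop:estim:sat} applies verbatim and delivers both bounds with the sharp constants $\c{eff}=1$ and $\c{rel}=(1-\c{sata}^2)^{-1/2}$.
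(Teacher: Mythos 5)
Your proposal is correct and follows the same route as the paper: identify the (stabilized) hypersingular bilinear form and the nested spaces $\wilde\Sp^p(\mesh)\subset\wilde\Sp^p(\widehat\mesh)$ so that Proposition~\ref{prop:estim:sat} applies verbatim with the energy norm $\norm\cdot{\hyp}$. One small wording caveat: in the closed-surface case the stabilization term $\dual{\cdot}{1}_\Gamma\dual{\cdot}{1}_\Gamma$ certainly does change the Galerkin equations, but since it appears consistently at every level (continuous, coarse, fine) the Galerkin orthogonality and Pythagoras identity hold for the stabilized form $b(\cdot,\cdot)$, which is what the abstract proposition needs — that is the point you are after, and once it is phrased that way the argument is complete.
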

The estimator $\eta_\mesh$ will be localized by the seminorm
\begin{align*}
  \norm{\cdot}{h^s} = \norm{h_\mesh^{1/2}\nablag(\cdot)}{L_2(\Gamma)}.
\end{align*}
Note first that $\norm{\cdot}{b}$ is an equivalent
norm on $\wilde H^{1/2}(\Gamma)$ by Theorem~\ref{thm:stability} and~\ref{thm:ellipticity}.
Estimate~\eqref{sec:hh2:ass2} is valid due to the inverse estimate of Lemma~\ref{lem:Sp:invest}
with $s=1/2$, as long as $\Pi$ is a projection.
To show~\eqref{sec:hh2:ass1}, Lemma~\ref{lem:Sp:apx} with $s=1/2$ can be employed
as long as $\Pi$ is an $\wilde H^{1/2}$ stable projection.
Sections~\ref{section:L2}--\ref{section:nodal} present different projection operators that can be used
in this context.
\begin{itemize}
  \item The Scott-Zhang operator $\wilde J_\mesh$ (resp. $J_\mesh$), which is an $\wilde H^{1/2}(\Gamma)$
    stable projection due to Lemma~\ref{lem:sz}.
  \item For $d=2$, the nodal interpolation operator $J_\mesh$, which is $\wilde H^{1/2}(\Gamma)$
    stable due to Lemma~\ref{lem:nodalinterpolation:1d}.
  \item On a sequence of meshes that is generated by certain mesh refinement rules, the
    the $L_2$ projection $\Pi_\mesh^p$ onto $\wilde\Sp^p(\mesh)$
    can be shown to be stable in $\wilde H^{1/2}(\Gamma)$.
    Present proofs for this property require certain restrictions on the mesh refinement
    and the polynomial degree, cf. Section~\ref{section:meshrefinement} for details.
  \item For $d=3$, the nodal interpolation operator $J_\mesh$ is not $\wilde H^{1/2}(\Gamma)$
    stable. However, Lemma~\ref{lem:nodalinterpolation} with $s=1/2$ and $q=p$ can be used.
    Indeed, the estimate~\eqref{sec:hh2:ass1} is given explicitly in
    Lemma~\ref{lem:nodalinterpolation}.
\end{itemize}
The resulting estimators and equivalences are summarized in the following Theorem,
cf.~\cite{cp07,efgp12} for $d=2, p=1$, and\linebreak\cite{affkp13} for $d=3, p\geq1$.
\begin{theorem}\label{thm:def:hh2:hypsing}
  Denote by $P_\mesh$ either
  \begin{itemize}
    \item[(a)] the Scott-Zhang operator or
    \item[(b)] the $L_2$ orthogonal projection onto $\Sp^p(\mesh)$
      (given that it is $\wilde H^{1/2}(\Gamma)$ stable, cf. Section~\ref{section:meshrefinement})
    \item[(c)] the nodal interpolation operator.
  \end{itemize}
  Define the following a~posteriori error estimators:
  \begin{align*}
    \begin{split}
    \begin{array}{rclcrcl}
      \eta_\mesh &:=& \norm{\widehat U - U}{\hyp},
      &\,&
      \mu_\mesh &:=& \norm{h_\mesh^{1/2}\nablag(\widehat U - U)}{L_2(\Gamma)},\\
      \widetilde\eta_\mesh &:=& \norm{(1-P_\mesh)\widehat U}{\hyp},
      &\,&
      \widetilde\mu_\mesh &:=& \norm{h_\mesh^{1/2}\nablag(1-P_\mesh)\widehat U}{L_2(\Gamma)}.
    \end{array}
    \end{split}
  \end{align*}
  Then, it holds that
  \begin{align*}
    \eta_\mesh &\leq \wilde\eta_\mesh\leq C_\hyp^{1/2}\c{approx}\wilde\mu_\mesh,\\
    \wilde\mu_\mesh &\leq \c{stab}\mu_\mesh \leq \c{inv}\c{ell}^{-1/2} \eta_\mesh,
  \end{align*}
  where $C_\hyp = \norm{\hyp}{\wilde H^{1/2}(\Gamma)\rightarrow H^{-1/2}(\Gamma)}$ and
  $\c{ell}$ are the stability and ellipticity constants of the hypersingular operator $\hyp$,
  $\c{inv}$ is the constant of the inverse estimate of Lemma~\ref{lem:Sp:invest}, and, depending
  on the choice of $P_\mesh$,
  \begin{itemize}
    \item[(a)] $\c{approx}$ is the constant of Lemma~\ref{lem:Sp:apx} and
      $\c{stab}$ depends solely on the operator norm $\norm{J_\mesh}{H^1(\Gamma)}$, or
    \item[(b)] $\c{approx}$ is the constant of Lemma~\ref{lem:Sp:apx} and
      $\c{stab} = \c{inv}\c{approx}$.
    \item[(c)] For $d=2$, $\c{approx}$ is the constant from Lemma~\ref{lem:nodalinterpolation:1d}
      and $\c{stab}=1$, and for $d=3$, $\c{approx}$ and $\c{stab}$ are the constants from
      Lemma~\ref{lem:nodalinterpolation}.
  \end{itemize}
\end{theorem}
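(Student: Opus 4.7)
The plan is to establish the chain of four inequalities in Theorem~\ref{thm:def:hh2:hypsing} separately, following the template already used for the weakly singular counterpart (Theorem~\ref{thm:def:hh2:weaksing}). The key structural observation throughout is that each of the admissible projections $P_\mesh$ in (a)--(c) takes values in $\Sp^p(\mesh)$ (respectively $\wilde\Sp^p(\mesh)$) and satisfies $P_\mesh U = U$; moreover, by nestedness of the uniform refinement $\widehat\mesh$, we have $\wilde\Sp^p(\mesh)\subseteq\wilde\Sp^p(\widehat\mesh)$ and in particular $\widehat U - U\in\wilde\Sp^p(\widehat\mesh)$.

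The two outer inequalities do not distinguish between the choices of $P_\mesh$. For $\eta_\mesh\leq\wilde\eta_\mesh$, I would invoke the Galerkin best approximation property in the energy norm $\norm{\cdot}{\hyp}$: since $P_\mesh\widehat U$ belongs to $\Sp^p(\mesh)$, there holds $\norm{\widehat U - U}{\hyp}\leq\norm{\widehat U - P_\mesh\widehat U}{\hyp}$. For $\mu_\mesh\leq\c{inv}\c{ell}^{-1/2}\eta_\mesh$, I would apply the inverse estimate of Lemma~\ref{lem:Sp:invest} with $s=1/2$ to $\widehat U - U\in\wilde\Sp^p(\widehat\mesh)$ on the refined mesh, use $h_{\widehat\mesh}\simeq h_\mesh$ under uniform refinement, and convert the resulting $\wilde H^{1/2}$-norm into the energy norm by ellipticity (Theorem~\ref{thm:ellipticity}).

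For $\wilde\eta_\mesh\leq C_\hyp^{1/2}\c{approx}\wilde\mu_\mesh$, continuity of $\hyp$ (Theorem~\ref{thm:stability}) gives $\wilde\eta_\mesh\leq C_\hyp^{1/2}\norm{(1-P_\mesh)\widehat U}{\wilde H^{1/2}(\Gamma)}$, and the task reduces to an approximation bound for $(1-P_\mesh)\widehat U$. In cases (a) and (b), Lemma~\ref{lem:Sp:apx} at $s=1/2$ applies, and selecting $V=P_\mesh\widehat U$ in its infimum gives the required right-hand side $\c{approx}\wilde\mu_\mesh$. In case (c) for $d=3$ the same conclusion follows directly from the first estimate of Lemma~\ref{lem:nodalinterpolation}, while for $d=2$ I would exploit $J_\mesh^2=J_\mesh$ to rewrite $v - J_\mesh v=(v-V)-J_\mesh(v-V)$ for arbitrary $V\in\Sp^p(\mesh)$ and then apply Lemma~\ref{lem:nodalinterpolation:1d} to $v-V$ before minimising over $V$.

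The main obstacle is the remaining inequality $\wilde\mu_\mesh\leq\c{stab}\mu_\mesh$, because the three choices of $P_\mesh$ force genuinely different arguments. In all cases I would use $P_\mesh U=U$ to rewrite $(1-P_\mesh)\widehat U=(1-P_\mesh)(\widehat U - U)$. For (a) Scott-Zhang, I would invoke the local $H^1$-stability $\norm{\nablag J_\mesh w}{L_2(T)}\lesssim\norm{\nablag w}{L_2(\omega_T)}$, combine it with shape-regularity ($h_T\simeq h_{T'}$ on patches) and a triangle inequality, and sum over elements, yielding $\c{stab}$ depending on $\norm{J_\mesh}{H^1(\Gamma)}$. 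For (b) the $L_2$-projection, I would exploit that $(1-\Pi_\mesh^p)\widehat U\in\wilde\Sp^p(\widehat\mesh)$, apply the inverse estimate Lemma~\ref{lem:Sp:invest} on $\widehat\mesh$ to absorb the gradient (with $h_{\widehat\mesh}\simeq h_\mesh$), and then invoke the $\wilde H^{1/2}$-approximation of Lemma~\ref{lem:Sp:apx} with $V=U$ to recover $\mu_\mesh$; this produces the product structure $\c{stab}=\c{inv}\c{approx}$. For (c), the second estimate of Lemma~\ref{lem:nodalinterpolation} applied to $\widehat V=\widehat U - U$, together with the $L_2$-contractivity $\norm{(1-\Pi_\mesh^p)w}{L_2}\leq\norm{w}{L_2}$, yields the conclusion for $d=3$; for $d=2$ an elementwise mean-value computation shows that linear (more generally, polynomial) interpolation does not increase the $L_2$-norm of the derivative, giving $\c{stab}=1$. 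The case (b) is the most delicate, since it implicitly relies on the (non-trivial) $\wilde H^{1/2}$-stability of the $L_2$-projection onto $\wilde\Sp^p(\mesh)$ on adaptively refined meshes, which is a condition that cannot be taken for granted and is discussed separately in Section~\ref{section:L2}.
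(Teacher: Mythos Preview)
Your proposal is correct and follows essentially the same route as the paper. The paper does not give a standalone proof for this theorem but derives it from the abstract $(h-h/2)$ framework at the start of Section~\ref{section:est:hh2} (the chain built from~\eqref{sec:hh2:ass1}--\eqref{sec:hh2:ass2}), specialised to the hypersingular case via Lemmas~\ref{lem:Sp:apx}, \ref{lem:Sp:invest}, \ref{lem:nodalinterpolation:1d}, and~\ref{lem:nodalinterpolation}; your four steps reproduce exactly this structure, including the case-by-case treatment of $\wilde\mu_\mesh\le\c{stab}\mu_\mesh$ required to match the different constants in (a)--(c).
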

The estimators of the last theorem always apply an operator $P_\mesh$ to the solution $\widehat U$.
However, as Lemma~\ref{lem:nodalinterpolation} shows, also the gradient $\nablag\widehat U$ could
be projected locally on the coarse mesh, which is much cheaper.
\begin{theorem}\label{thm:def:hh2:hypsing:grad}
  Define the a~posteriori error estimator
  \begin{align*}
    \overline\mu_\mesh := \norm{h_\mesh^{1/2}(1-\pi_\mesh^{p-1})\nablag \widehat U}{L_2(\Gamma)}.
  \end{align*}
  Then, it holds that
  \begin{align*}
    \eta_\mesh \leq C_\hyp \c{approx}\c{stab} \overline\mu_\mesh \leq \c{inv}\c{ell}^{-1/2}\eta_\mesh,
  \end{align*}
  where $\c{approx},\c{stab}>0$ are the constants from Lemma~\ref{lem:nodalinterpolation}, $\c{inv}>0$
  is the constant of the inverse estimate of Lemma~\ref{lem:Sp:invest}, and
  $C_\hyp = \norm{\hyp}{\wilde H^{1/2}(\Gamma)\rightarrow H^{-1/2}(\Gamma)}$ and
  $\c{ell}$ are the stability and ellipticity constants of the hypersingular operator $\hyp$,
\end{theorem}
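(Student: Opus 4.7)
The plan is to use the estimator $\overline\mu_\mesh$ as an intermediary between $\eta_\mesh$ and the already-analyzed quantities of Theorem~\ref{thm:def:hh2:hypsing}. The pivotal observation is that for any $V\in\wilde\Sp^p(\mesh)$ the surface gradient $\nablag V$ lies elementwise in $\Pp^{p-1}(\mesh)$, so that $\pi_\mesh^{p-1}\nablag V=\nablag V$; this is what turns gradient-of-nodal-interpolant estimates into $\pi_\mesh^{p-1}$-based estimates and vice versa.

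For the upper bound, I would apply C\'ea's lemma together with continuity of $\hyp$ to obtain
\begin{align*}
\eta_\mesh = \|\widehat U-U\|_\hyp \;\leq\; C_\hyp^{1/2}\,\|\widehat U - J_\mesh\widehat U\|_{\wilde H^{1/2}(\Gamma)},
\end{align*}
where $J_\mesh:\wilde\Sp^p(\widehat\mesh)\to\wilde\Sp^p(\mesh)$ is the nodal interpolation operator covered by Lemma~\ref{lem:nodalinterpolation}. Lemma~\ref{lem:Sp:apx} with $s=1/2$ then bounds the right-hand side (up to $\c{approx}$) by $\|h_\mesh^{1/2}\nablag(1-J_\mesh)\widehat U\|_{L_2(\Gamma)}$, and the second inequality of Lemma~\ref{lem:nodalinterpolation} (with $s=1/2$, $q=p$) converts this into $\c{stab}\,\|h_\mesh^{1/2}(1-\pi_\mesh^{p-1})\nablag\widehat U\|_{L_2(\Gamma)} = \c{stab}\,\overline\mu_\mesh$, which yields the first inequality of the theorem.

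For the lower bound, I would exploit the observation above: since $U\in\wilde\Sp^p(\mesh)$ implies $\pi_\mesh^{p-1}\nablag U = \nablag U$, one has
\begin{align*}
\overline\mu_\mesh = \|h_\mesh^{1/2}(1-\pi_\mesh^{p-1})\nablag(\widehat U-U)\|_{L_2(\Gamma)} \leq \|h_\mesh^{1/2}\nablag(\widehat U-U)\|_{L_2(\Gamma)}
\end{align*}
by $L_2$-contractivity of $\pi_\mesh^{p-1}$. The uniform refinement $\widehat\mesh$ of $\mesh$ satisfies $h_\mesh\lesssim h_{\widehat\mesh}$ elementwise, so the inverse estimate of Lemma~\ref{lem:Sp:invest} with $s=1/2$, applied to $\widehat U-U\in\wilde\Sp^p(\widehat\mesh)$ on the fine mesh, gives $\|h_\mesh^{1/2}\nablag(\widehat U-U)\|_{L_2(\Gamma)}\leq \c{inv}\,\|\widehat U-U\|_{\wilde H^{1/2}(\Gamma)}$. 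Ellipticity of $\hyp$ (Theorem~\ref{thm:ellipticity}) finishes via $\|\widehat U-U\|_{\wilde H^{1/2}(\Gamma)}\leq \c{ell}^{-1/2}\,\|\widehat U-U\|_\hyp = \c{ell}^{-1/2}\,\eta_\mesh$.

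The main obstacle I expect is the bookkeeping around Lemma~\ref{lem:nodalinterpolation}'s second estimate, namely reconciling the $L_2$-projection appearing there (onto discontinuous $\Pp^{p-1}(\mesh)$, i.e., $\pi_\mesh^{p-1}$, into which $\nablag J_\mesh\widehat U$ naturally falls) with the discrete subspace used in the approximation estimate of Lemma~\ref{lem:Sp:apx}. Once this identification is made, the proof is essentially a chase of constants through inverse, approximation, and stability estimates; the nontrivial input is entirely encapsulated in Lemmas~\ref{lem:Sp:apx}, \ref{lem:Sp:invest}, and~\ref{lem:nodalinterpolation}, which in particular handle the subtle $d=3$ case where nodal interpolation is only stable because it acts on discrete functions from a nested finer mesh.
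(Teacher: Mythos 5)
Your proof is essentially the argument the paper intends. The paper's preamble to this theorem explicitly points to Lemma~\ref{lem:nodalinterpolation} as the mechanism for trading the nodal-interpolation-based estimator $\wilde\mu_\mesh$ of Theorem~\ref{thm:def:hh2:hypsing} for the gradient-projection estimator $\overline\mu_\mesh$, which is exactly what your chain accomplishes. Your lower bound — the Galerkin orthogonality identity $(1-\pi_\mesh^{p-1})\nablag\widehat U = (1-\pi_\mesh^{p-1})\nablag(\widehat U-U)$ since $\nablag U\in\Pp^{p-1}(\mesh)$ is $\pi_\mesh^{p-1}$-invariant, then $L_2$-contractivity, the inverse estimate of Lemma~\ref{lem:Sp:invest}, and ellipticity — is the natural and correct argument, and it matches the $\c{inv}\c{ell}^{-1/2}$ constant exactly.

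One attribution slip in the upper bound: you invoke Lemma~\ref{lem:Sp:apx} to pass from $\|\widehat U - J_\mesh\widehat U\|_{\wilde H^{1/2}(\Gamma)}$ to $\c{approx}\|h_\mesh^{1/2}\nablag(1-J_\mesh)\widehat U\|_{L_2(\Gamma)}$. That lemma requires $J_\mesh$ to be an $\wilde H^{1/2}$-stable projection, and for $d=3$ nodal interpolation is not stable in that sense; the correct reference is the \emph{first} estimate of Lemma~\ref{lem:nodalinterpolation}, which is tailored to nodal interpolation acting on discrete functions from the finer mesh and delivers the identical bound (taking $V=J_\mesh\widehat U$ in the minimum). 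Since you flag precisely this subtlety yourself and the two estimates have the same form, this is a citation issue rather than a mathematical gap. You are also right that the second estimate of Lemma~\ref{lem:nodalinterpolation} should be read with $\pi_\mesh^{p-1}$ rather than $\Pi_\mesh^p$ (the printed $\Pi_\mesh^p$ does not type-check against $\nablag J_\mesh\widehat U\in\Pp^{p-1}(\mesh)$). Finally, your chain yields $C_\hyp^{1/2}$ rather than the stated $C_\hyp$; comparison with the parallel constant $C_\hyp^{1/2}$ in Theorem~\ref{thm:def:hh2:hypsing} indicates this is a typo in the theorem statement, so your constant is the correct (and sharper) one.
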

\begin{remark}
  The concept of $(h-h/2)$ type error estimators has recently been extended to nonconforming
  boundary element methods for hypersingular integral equations,
  see~\cite{DominguezH_PEA,HeuerK_ACR}.
\end{remark}
%%%%%%%%%%%%%%%%%%%%%%%%%%%%%%%%%%%%%%%%%%%%%%%%%%%%%%%%%%%%%%%%%%%%%%%%%%%%%%%%%%%%%%%%%%%%%%%%%%%%%%%
\subsection{Averaging estimators}\label{section:averaging}
%%%%%%%%%%%%%%%%%%%%%%%%%%%%%%%%%%%%%%%%%%%%%%%%%%%%%%%%%%%%%%%%%%%%%%%%%%%%%%%%%%%%%%%%%%%%%%%%%%%%%%%
The advantage of space-enrichment based error estimators 
(Section~\ref{sec:enrich}) is that their implementation essentially only
requires a simple postprocessing of the Galerkin data and the computed Galerkin
solution. For the $(h-h/2)$-type error estimators from 
Section~\ref{section:est:hh2}, one theoretical drawback is that the Galerkin 
solution has to computed on the fine-mesh $\widehat\TT$, while the error 
estimators only estimates the coarse-mesh error, cf.\ 
Theorem~\ref{thm:hh2:weaksing} for the weakly singular integral equation and 
Theorem~\ref{thm:hh2:hypsing} for the hypersingular integral equation. 
Although the two-level error estimators from Section~\ref{section:est:2level}
avoid the computation of the fine-mesh solution, their computation requires
the assembly of the fine-mesh Galerkin data. Since the latter is the most
time consuming part of BEM computations, neither of these error estimators
seems to be attractive at the first glance. 

This section discusses error estimation by averaging on large patches. 
On an abstract level, the approach can be outlined as follows: 
Let $ u\in\XX$ denote the unknown exact solution of~\eqref{intro:weakform}.
Suppose that $\TT$ is a given mesh with uniform refinement $\widehat\TT$
and that we are given a space $\XX(\widehat\TT)$ with low-order polynomials 
on the fine mesh and a space $\widehat\XX(\TT)$ with higher-order 
polynomials on the coarse mesh. The goal is to derive a computable error estimator
$\eta_\TT$ which estimates the fine-mesh error $\norm{u-\widehat U}{\XX}$ of 
the Galerkin solution $\widehat U\in\XX(\widehat\TT)$ of~\eqref{intro:galerkin}
with $\XX = \XX(\widehat\TT)$. To that end, let $G:\XX\to\widehat\XX(\TT)$ denote the
Galerkin projection, i.e., for all $w\in\XX$, $Gw \in \widehat\XX(\TT)$ is 
the unique solution of the linear system
\begin{align}\label{eq:averaging:galerkin}
 b(Gw,v) = b(w,v)
 \quad\text{for all }v\in\widehat\XX(\TT).
\end{align}
With this notation, we define the computable error estimator
\begin{align}
 \eta_\TT := \norm{(1-G)\widehat U}{\XX}.
\end{align}
The following abstract theorem is found, e.g., 
in\linebreak
\cite[Thm.~2.1]{cp07:lecture:notes}.

\begin{theorem}\label{theorem:averaging}
Define the quantities
\begin{align}\label{eq:averaging:q}
 q &:= \frac{\norm{(1-G)u}{\XX}}{\norm{u-\widehat U}{\XX}},\\
 \label{eq:averaging:lambda}
 \lambda &:= \max_{V\in\widehat\XX(\TT)}\min_{\widehat V\in\XX(\widehat\TT)}
 \frac{\norm{V-\widehat V}{\XX}}{\norm{V}{\XX}}.
\end{align}
Then, the error estimator $\eta_\TT$ is efficient
\begin{align}\label{eq:averaging:efficient}
 \eta_\TT \le (\c{cnt}/\c{ell}+q)\,\norm{u-\widehat U}{\XX}.
\end{align}
Provided that the ellipticity and continuity constant of $b(\cdot,\cdot)$
satisfy $q+\lambda < \c{ell}/\c{cnt}$, there also holds reliability
\begin{align}\label{eq:averaging:reliable}
 \norm{u-\widehat U}{\XX}
 \le \frac{\c{cnt}}{\c{ell}-\c{cnt}(q+\lambda)}\,\eta_\TT.
\end{align}
\end{theorem}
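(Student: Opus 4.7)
The plan is to establish the two bounds separately, each based on the triangle inequality combined with two key tools: the quasi-optimality of the Galerkin projection $G$ onto $\widehat\XX(\TT)$ (which follows from C\'ea's lemma applied to \eqref{eq:averaging:galerkin} and yields $\|(1-G)w\|_\XX \le (\c{cnt}/\c{ell})\|w\|_\XX$ for every $w\in\XX$ by choosing $V=0$ as trial function), and the Galerkin orthogonality $b(u-\widehat U,\widehat V)=0$ for all $\widehat V\in\XX(\widehat\TT)$ coming from the defining equation of $\widehat U$.

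For efficiency \eqref{eq:averaging:efficient}, I would decompose $(1-G)\widehat U = (1-G)u - (1-G)(u-\widehat U)$ and apply the triangle inequality to obtain $\eta_\TT \le \|(1-G)u\|_\XX + \|(1-G)(u-\widehat U)\|_\XX$. The first term equals $q\,\|u-\widehat U\|_\XX$ by the definition \eqref{eq:averaging:q} of $q$, and the second term is controlled by $(\c{cnt}/\c{ell})\|u-\widehat U\|_\XX$ via the quasi-optimality of $G$ applied to $w = u-\widehat U$. Adding these two contributions yields the claimed bound.

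For reliability, setting $e := u-\widehat U$, the natural opening step is $\|e\|_\XX \le \|u-G\widehat U\|_\XX + \|G\widehat U - \widehat U\|_\XX = \|u-G\widehat U\|_\XX + \eta_\TT$, after which the first summand is split as $\|u-Gu\|_\XX + \|G(u-\widehat U)\|_\XX = q\|e\|_\XX + \|Ge\|_\XX$. The decisive step is then to bound $\|Ge\|_\XX$ by a small multiple of $\|e\|_\XX$. Using $b(Ge,Ge) = b(e,Ge)$ (defining property of $G$) together with the Galerkin orthogonality of $\widehat U$, one may subtract from the second argument any $\widehat V\in\XX(\widehat\TT)$, giving $b(Ge,Ge) = b(e,Ge-\widehat V)$. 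Choosing $\widehat V$ optimally via \eqref{eq:averaging:lambda} so that $\|Ge-\widehat V\|_\XX \le \lambda\|Ge\|_\XX$, then combining ellipticity and continuity of $b$, produces $\|Ge\|_\XX \le (\c{cnt}/\c{ell})\lambda\|e\|_\XX$. Substituting back gives $\|e\|_\XX \le \bigl(q + (\c{cnt}/\c{ell})\lambda\bigr)\|e\|_\XX + \eta_\TT$, and under $q+\lambda<\c{ell}/\c{cnt}$ the prefactor is strictly less than one, so an absorption argument delivers \eqref{eq:averaging:reliable} (with slack, since $q + (\c{cnt}/\c{ell})\lambda \le (\c{cnt}/\c{ell})(q+\lambda)$).

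The main obstacle is precisely the bound on $\|Ge\|_\XX$: the crude quasi-optimality of $G$ only gives $\|Ge\|_\XX \le (\c{cnt}/\c{ell})\|e\|_\XX$, which is far too weak to close the absorption loop. Gaining the critical factor $\lambda$ requires exploiting \emph{both} Galerkin orthogonalities simultaneously --- the one satisfied by $G$ on $\widehat\XX(\TT)$ (to reduce $b(Ge,Ge)$ to $b(e,Ge)$) and the one satisfied by $\widehat U$ on $\XX(\widehat\TT)$ (to freely subtract any $\widehat V\in\XX(\widehat\TT)$ from the second argument) --- and then invoking the approximation quantity $\lambda$ to make the difference $Ge-\widehat V$ small. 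Once this interplay is set up, everything else is triangle inequalities and absorption.
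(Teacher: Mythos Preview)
Your efficiency argument matches the paper's exactly. For reliability, your proof is correct but follows a genuinely different route from the paper, and in fact yields a slightly sharper constant.

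The paper does not open with a triangle inequality on $\|e\|_\XX$. Instead it introduces the $\XX$-\emph{orthogonal} best approximation $E\in\widehat\XX(\TT)$ of $e:=u-\widehat U$ (so that $\|e-E\|_\XX^2+\|E\|_\XX^2=\|e\|_\XX^2$) and splits the bilinear form as $b(e,e)=b(e,e-E)+b(e,E)$. The first piece is bounded using $\|e-E\|_\XX\le\|(1-G)u\|_\XX+\eta_\TT$ (choosing $V=Gu-G\widehat U$ in the best-approximation property of $E$); the second piece uses Galerkin orthogonality of $\widehat U$ to subtract any $\widehat V\in\XX(\widehat\TT)$ from $E$, then the definition of $\lambda$ together with the Pythagoras bound $\|E\|_\XX\le\|e\|_\XX$. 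Ellipticity then closes the argument, producing the stated constant $\c{cnt}/(\c{ell}-\c{cnt}(q+\lambda))$ directly.

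Your approach avoids the orthogonal projector $E$ altogether: you work only with the Galerkin projector $G$, using triangle inequalities on $\|e\|_\XX$ and then bounding $\|Ge\|_\XX$ by invoking \emph{both} Galerkin orthogonalities simultaneously in the estimate $\c{ell}\|Ge\|_\XX^2\le b(e,Ge-\widehat V)$. This is more elementary (no inner-product projection, no Pythagoras) and gives the sharper constant $1/\bigl(1-q-(\c{cnt}/\c{ell})\lambda\bigr)$, from which the paper's constant follows by the slack you indicated. The paper's splitting of $b(e,e)$, on the other hand, lands exactly on the stated form of the constant without any slack.
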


\begin{proof}
Let $\dual\cdot\cdot_\XX$ denote the scalar product on the Hilbert space
$\XX$ which gives rise to the norm $\norm\cdot\XX$.
Recall that the C\'ea lemma~\eqref{intro:cea} also applies for
$\widehat\XX(\TT)$ and hence $G$. The efficiency estimate~\eqref{eq:averaging:efficient}
therefore follows from the triangle inequality
\begin{align*}
 \eta_\TT 
 &\le \norm{(1-G)(u-\widehat U)}{\XX}
 + \norm{(1-G)u}{\XX}\\
 &\le (\c{cnt}/\c{ell}+q)\,\norm{u-\widehat U}\XX.
\end{align*}
For the proof of the reliability estimate~\eqref{eq:averaging:reliable}, we define
$e:=u-\widehat U$. Let $E\in\widehat\XX(\TT)$ denote the best approximation
in $\widehat\XX(\TT)$, i.e.,
\begin{align}\label{eq:averaging:best approximation}
 \norm{e-E}{\XX} = \min_{V\in\widehat\XX(\TT)}\norm{e-V}{\XX}.
\end{align}
Recall that $E$ is then characterized by the orthogonality
\begin{align*}
 \dual{e-E}{V}_\XX = 0
 \quad\text{for all }V\in\widehat\XX(\TT)
\end{align*}
which implies the Pythagoras theorem
\begin{align*}
 \norm{e-E}\XX^2 + \norm{E}\XX^2 = \norm{e}\XX^2.
\end{align*}
First, note that that the best approximation property~\eqref{eq:averaging:best approximation} and the triangle inequality for $V=Gu+G\widehat U$ prove
\begin{align*}
 \c{cnt}^{-1}\,b(e,e-E) 
 &\le \norm{e}\XX\,\norm{e-E}\XX\\
 &\le \norm{e}\XX\,(\norm{(1-G)u}\XX + \eta_\TT)\\
 &\le \norm{e}\XX\,(q\,\norm{e}\XX + \eta_\TT).
\end{align*}
Second, observe that by definition of $\lambda$ the Galerkin orthogonality
for $e=u-\widehat U$ as well as the estimate $\norm{E}\XX\le\norm{e}\XX$ prove
\begin{align*}
 \c{cnt}^{-1}\,b(e,E)
 = \c{cnt}^{-1}\,\min_{\widehat V\in\XX(\widehat \mesh)}b(e,E-\widehat V)
 &\le \lambda\,\norm{e}\XX\norm{E}\XX\\
 & \le \lambda\,\norm{e}\XX^2.
\end{align*}
Altogether, we see
\begin{align*}
 \c{ell}\norm{e}\XX^2 
 \le b(e,e) &= b(e,e-E) + b(e,E)\\
 &\le \c{cnt}(q+\lambda)\norm{e}\XX^2 + \c{cnt}\,\eta_\TT\,\norm{e}\XX.
\end{align*}
Rearranging this estimate, we conclude the proof.
$\hfill\qed$
\end{proof}

In practice, higher-order polynomials lead to higher-order convergence
rates if the unknown solution $u$ is smooth or if the mesh $\TT$ is appropriately graded. Therefore, one may
expect that the constant $q$ from~\eqref{eq:averaging:q} satisfies
$q\to0$ if the mesh is adaptively refined. The constant
$\lambda$ from~\eqref{eq:averaging:lambda} satisfies 
$0\le\lambda\le1$ by definition. Geometrically, $\lambda<1$ corresponds
to a strengthened Cauchy inequality, cf.\ \cite[Sect 4]{cp07:lecture:notes}.
In practice, $\lambda\ll1$ follows if the mesh $\widehat\TT$ is 
sufficiently fine with respect to $\TT$. We refer to the discussion below.
In conclusion, the assumption $q+\lambda < \c{ell}/\c{cnt}$ required
for the reliability estimate~\eqref{eq:averaging:reliable} can be satisfied
in practice.

As for the $(h-h/2)$-error estimator from Section~\ref{section:est:hh2},
a practical BEM application has, first, to replace the non-local norm
$\norm\cdot\XX$ by some easily computable local norm, e.g., some locally
weighted $L_2$-norm resp.\ $H^1$-seminorm. Moreover, the computationally
expensive Galerkin projection $G$ has to be replaced by some numerically
cheaper operator $\Pi:\XX(\widehat\TT)\to\widehat\XX(\TT)$. Both aspects
are discussed for the weakly singular and hypersingular model problem
in the following subsections.

We finally note that for our applications, i.e., weakly singular and
hypersingular integral equation, averaging on large patches turns out
to be equivalent to $(h-h/2)$-type error estimation.

\paragraph{Weakly singular operator:}
Averaging on large patches for weak\-ly singular integral equations in 2D 
and 3D BEM has first been proposed and analyzed in~\cite{cp06}. We also refer
to~\cite{afp12} for the discussion on anisotropic mesh refinement.
In~\cite{cp06}, it holds $\XX(\widehat\TT) = \Pp^p(\widehat\TT)$ and 
$\widehat\XX(\TT)=\Pp^{p+1}(\TT)$. We suppose that $\widehat\TT$ is obtained 
from $k$ uniform refinements of $\TT$, i.e., the corresponding mesh-sizes 
satisfy
\begin{align}
 \widehat h = 2^{-k}\,h.
\end{align}
Let $\pi_{\widehat\TT}^{p}$ be the $L_2$-projection onto 
$\Pp^{p}(\widehat\TT)$. Fix $V\in\linebreak\Pp^{p+1}(\TT)$. The approximation estimate 
from Lemma~\ref{lem:L2:Pp:apx} yields
\begin{align*}
 \min_{\widehat V\in\Pp^p(\widehat\TT)}&
 \norm{V-\widehat V}{\wilde H^{-1/2}(\Gamma)}
 \le \norm{(1-\pi_{\widehat\TT}^{p})V}{\wilde H^{-1/2}(\Gamma)}
 \\
 &\lesssim \norm{\widehat h^{1/2}V}{L_2(\Gamma)}
 \le \norm{(\widehat h/h)^{1/2}}{L^\infty(\Gamma)}\,
 \norm{h^{1/2}V}{L_2(\Gamma)}.
\end{align*}
By choice of $\widehat\TT$, it holds $\norm{(\widehat h/h)^{1/2}}{L^\infty(\Gamma)} \le 2^{-k/2}$. The inverse estimate of Lemma~\ref{lem:Pp:invest}
proves
\begin{align*}
 \norm{h^{1/2}V}{L_2(\Gamma)}
 \lesssim \norm{V}{\wilde H^{-1/2}(\Gamma)}.
\end{align*}
Combining these observations, we see that the constant $\lambda$
from~\eqref{eq:averaging:lambda} satisfies, for $k$ sufficiently large,
\begin{align*}
\lambda &:= \max_{V\in\Pp^{p+1}(\TT)}\min_{\widehat V\in\Pp^p(\widehat\TT)}
 \frac{\norm{V-\widehat V}{\wilde H^{-1/2}(\Gamma)}}{\norm{V}{\wilde H^{-1/2}(\Gamma)}}
\lesssim 2^{-k/2} \ll1,
\end{align*}
where the hidden constant depends only on 
$\Gamma$, shape regularity of $\TT$, and the polynomial degree $p$.
Moreover, standard approximation results prove (see e.g.~\cite{ss11}) 
that, at least for smooth solutions $u$, the constant $q$ 
from~\eqref{eq:averaging:q} satisfies $q=\OO(h^{p+5/2}/\widehat h^{p+3/2})
=\OO(h)$. 

The following theorem is first found in~\cite[Sect~5]{cp06} and 
formulated in the energy norm $\norm\cdot{\slo}\simeq\norm\cdot{\wilde H^{-1/2}(\Gamma)}$. Note that $\eta_\mesh$ corresponds to the abstract error
estimator $\eta_\TT$ from the abstract Theorem~\ref{theorem:averaging}.
Since the proof is similar to that of Theorem~\ref{thm:def:hh2:weaksing},
we omit the details.

\begin{theorem}\label{theorem:averaging:weaksing}
Let $\pi_\TT^{p+1}$ denote the $L_2$-orthogonal projection onto $\Pp^{p+1}(\TT)$.
Let $G_\TT^{p+1}$ denote the Galerkin projection~\eqref{eq:averaging:galerkin} 
onto $\Pp^{p+1}(\TT)$. Then, the estimators
  \begin{align*}
    \begin{split}
    \begin{array}{rclcrcl}
      \eta_\mesh &:=& \norm{(1-G_\TT^{p+1})\widehat\Phi}{\slo},
      &\,&
      \mu_\mesh &:=& \norm{h_\mesh^{1/2}(1-G_\TT^{p+1})\widehat\Phi}{L_2(\Gamma)},\\
      \widetilde\eta_\mesh &:=& \norm{(1-\pi^{p+1}_\mesh)\widehat\Phi}{\slo},
      &\,&
      \widetilde\mu_\mesh &:=& \norm{h_\mesh^{1/2}(1-\pi^{p+1}_\mesh)\widehat\Phi}{L_2(\Gamma)},
    \end{array}
    \end{split}
  \end{align*}
satisfy the equivalence estimates 
  \begin{align*}
    \eta_\mesh &\leq \wilde\eta_\mesh\leq C_\slo^{1/2}\c{approx}\wilde\mu_\mesh,\\
    \wilde\mu_\mesh &\leq \mu_\mesh \leq 2^{k/2}\c{inv}\c{ell}^{-1/2} \eta_\mesh,
  \end{align*}
  where $C_V = \norm{\slo}{\wilde H^{-1/2}(\Gamma)\rightarrow H^{1/2}(\Gamma)}$ and
  $\c{ell}$ are the stability and ellipticity constants of the single layer operator $\slo$,
  $\c{approx}$ is the constant of Lemma~\ref{lem:L2:Pp:apx}, and $\c{inv}$ is the
  constant of the inverse estimate of Lemma~\ref{lem:Pp:invest}.
\end{theorem}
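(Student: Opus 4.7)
The plan is to establish the four inequalities
(i) $\eta_\mesh \le \wilde\eta_\mesh$,
(ii) $\wilde\eta_\mesh \le C_\slo^{1/2}\c{approx}\wilde\mu_\mesh$,
(iii) $\wilde\mu_\mesh \le \mu_\mesh$, and
(iv) $\mu_\mesh \le 2^{k/2}\c{inv}\c{ell}^{-1/2}\eta_\mesh$
one at a time. The structure is parallel to the $(h-h/2)$-analysis of Theorem~\ref{thm:def:hh2:weaksing}, with the ``fine'' object $\widehat\Phi$ now replaced by the higher-order Galerkin projection $G_\mesh^{p+1}\widehat\Phi$ on the coarse mesh; the four bounds correspond respectively to Galerkin best approximation in the $\slo$-energy norm, continuity of $\slo$ combined with an $L_2$-based approximation estimate, elementwise $L_2$-best approximation, and an inverse estimate on the refined mesh composed with ellipticity of $\slo$.

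For (i), the Galerkin projection $G_\mesh^{p+1}$ is the orthogonal projection onto $\Pp^{p+1}(\mesh)$ with respect to the symmetric, elliptic form $\dual{\slo\cdot}{\cdot}_\Gamma$, hence the best approximation of $\widehat\Phi$ in the $\slo$-norm; since $\pi_\mesh^{p+1}\widehat\Phi\in\Pp^{p+1}(\mesh)$ is a competitor, (i) follows. For (ii), continuity of $\slo$ yields $\norm{v}{\slo}^2=\dual{\slo v}{v}_\Gamma\le C_\slo\norm{v}{\wilde H^{-1/2}(\Gamma)}^2$ with $v=(1-\pi_\mesh^{p+1})\widehat\Phi$. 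Then I apply Lemma~\ref{lem:L2:Pp:apx} with $s=0$, $r=1/2$ not to $\widehat\Phi$ directly but to $\phi:=(1-\pi_\mesh^{p+1})\widehat\Phi$; by idempotency, $\pi_\mesh^{p+1}\phi=0$, so the lemma gives
\[
 \norm{(1-\pi_\mesh^{p+1})\widehat\Phi}{\wilde H^{-1/2}(\Gamma)}^2 \le \c{approx}^2\sum_{T\in\mesh}h_T\,\norm{(1-\pi_\mesh^{p+1})\widehat\Phi}{L_2(T)}^2 = \c{approx}^2\wilde\mu_\mesh^2,
\]
and combining with continuity gives (ii). For (iii), the $L_2$-orthogonal projection $\pi_\mesh^{p+1}$ is the elementwise best $L_2$-approximation in $\Pp^{p+1}(\mesh)$, so for $V:=G_\mesh^{p+1}\widehat\Phi\in\Pp^{p+1}(\mesh)$ and each $T\in\mesh$,
\[
 \norm{(1-\pi_\mesh^{p+1})\widehat\Phi}{L_2(T)} \le \norm{\widehat\Phi-V}{L_2(T)} = \norm{(1-G_\mesh^{p+1})\widehat\Phi}{L_2(T)};
\]
multiplying by $h_T$ and summing yields (iii). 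Finally for (iv), the function $v:=(1-G_\mesh^{p+1})\widehat\Phi$ lies in $\Pp^{p+1}(\widehat\mesh)$. By construction of $\widehat\mesh$, every $\widehat T\in\widehat\mesh$ satisfies $h_\mesh|_{\widehat T}=2^k\,\widehat h_\mesh|_{\widehat T}$, hence
\[
 \mu_\mesh = \norm{h_\mesh^{1/2}v}{L_2(\Gamma)} = 2^{k/2}\norm{\widehat h_\mesh^{1/2}v}{L_2(\Gamma)} \le 2^{k/2}\c{inv}\norm{v}{\wilde H^{-1/2}(\Gamma)}
\]
by Lemma~\ref{lem:Pp:invest} applied on $\widehat\mesh$ with $s=1/2$, and ellipticity of $\slo$ provides $\norm{v}{\wilde H^{-1/2}(\Gamma)}\le \c{ell}^{-1/2}\norm{v}{\slo}=\c{ell}^{-1/2}\eta_\mesh$.

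The only bookkeeping subtlety is in step (ii): a naive application of Lemma~\ref{lem:L2:Pp:apx} directly to $\widehat\Phi$ produces the weighted term $\norm{h_\mesh^{1/2}\widehat\Phi}{L_2(\Gamma)}$, which is \emph{not} bounded by $\wilde\mu_\mesh$. Exploiting idempotency of the $L_2$-projection to run the approximation estimate on $(1-\pi_\mesh^{p+1})\widehat\Phi$ is what delivers exactly the right-hand side $\wilde\mu_\mesh$. The remaining three steps are straightforward instances of best approximation and the continuity/ellipticity/inverse machinery already established; the hidden constants depend only on $\Gamma$, the shape-regularity $\sigma_\mesh$, and the polynomial degree $p$.
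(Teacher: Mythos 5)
Your proof is correct and follows exactly the route the paper implies when it says the argument is \emph{``similar to that of Theorem~\ref{thm:def:hh2:weaksing}''}: $G_\TT^{p+1}$ plays the role of the coarse-mesh Galerkin solution, and the four inequalities come from $\slo$-best approximation, continuity plus the $L_2$-approximation estimate, elementwise $L_2$-best approximation, and the inverse estimate on $\widehat\TT$ combined with ellipticity, with the factor $2^{k/2}$ arising precisely from $h_\TT = 2^k\widehat h_\TT$. Your idempotency trick in step (ii) is a clean way to land on $\wilde\mu_\TT$ given the literal statement of Lemma~\ref{lem:L2:Pp:apx}; in fact the displayed dualization in that lemma's proof already yields $\norm{(1-\pi^{p+1}_\TT)\widehat\Phi}{\wilde H^{-1/2}(\Gamma)}\lesssim\norm{h_\TT^{1/2}(1-\pi^{p+1}_\TT)\widehat\Phi}{L_2(\Gamma)}$ directly, so either reading gives the same bound.
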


The numerical experiments in~\cite{cp06,cp07:lecture:notes,effp09,afp12}
give empirical evidence that $k=2$ seems to be sufficient in practice.
As first observed in~\cite[Thm.~5.3]{effp09} for lowest-order 2D BEM $p=0$, one 
can prove that averaging on large patches is equivalent to 
$(h-h/2)$-error estimation. The argument also transfers to 3D and arbitrary
polynomial degree $p\ge0$.

\begin{mycorollary}\label{corollary:averaging:weaksing}
For all $T\in\TT$, it holds 
\begin{align}\label{eq:averaging:equiv}
\begin{split}
 \norm{(1-\pi^{p+1}_\mesh)\widehat\Phi}{L_2(T)}
 &\le 
 \norm{(1-\pi^{p}_\mesh)\widehat\Phi}{L_2(T)}\\
 &\le C_{\rm equiv}\, \norm{(1-\pi^{p+1}_\mesh)\widehat\Phi}{L_2(T)},
\end{split}
\end{align}
where the constant $C_{\rm equiv}$ depends only on the polynomial degree $p$.
Comparing the error estimators $\widetilde\mu_\TT$ of Theorem~\ref{thm:def:hh2:weaksing} and Theorem~\ref{theorem:averaging:weaksing},
this proves that all eight error estimators are equivalent. In particular,
the estimate~\eqref{eq:averaging:equiv} shows that the equivalence of the
respective $\widetilde\mu_\TT$ estimators holds even elementwise.
\end{mycorollary}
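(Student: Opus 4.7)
The plan is to prove the two inequalities in \eqref{eq:averaging:equiv} separately, then use each of Theorems~\ref{thm:def:hh2:weaksing} and~\ref{theorem:averaging:weaksing} to chain all eight estimators together.

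The lower bound is immediate. Since $\Pp^{p}(\mesh) \subseteq \Pp^{p+1}(\mesh)$, the best-approximation property of the $L_2(T)$-orthogonal projections yields
\[
 \norm{(1-\pi^{p+1}_\mesh)\widehat\Phi}{L_2(T)}
 = \min_{q\in\Pp^{p+1}(T)}\norm{\widehat\Phi-q}{L_2(T)}
 \le \min_{q\in\Pp^{p}(T)}\norm{\widehat\Phi-q}{L_2(T)}
 = \norm{(1-\pi^{p}_\mesh)\widehat\Phi}{L_2(T)}.
\]

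For the upper bound, the plan is a standard finite-dimensional equivalence-of-seminorms argument combined with scaling to a reference configuration. Fix $T\in\TT$ and let $X_T := \Pp^p(\widehat\TT|_T)$ denote the (finite-dimensional) space of $\widehat\TT$-piecewise polynomials of degree at most $p$ on $T$; note that $\widehat\Phi|_T\in X_T$. On $X_T$ consider the two seminorms
\[
 \| v\|_1 := \norm{(1-\pi^p_\mesh)v}{L_2(T)},\qquad \| v\|_2 := \norm{(1-\pi^{p+1}_\mesh)v}{L_2(T)}.
\]
The key observation is that both seminorms share the \emph{same} kernel $\Pp^p(T)$: clearly $\ker\|\cdot\|_1 = X_T\cap \Pp^p(T)=\Pp^p(T)$, while $v\in X_T\cap \Pp^{p+1}(T)$ forces $v$ to be simultaneously a global polynomial of degree $p{+}1$ on $T$ and piecewise of degree at most $p$, whence $v\in \Pp^p(T)$. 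Consequently $\|\cdot\|_1$ and $\|\cdot\|_2$ induce two norms on the finite-dimensional quotient $X_T/\Pp^p(T)$ and are therefore equivalent, which yields the desired bound on $T$. Pulling everything back to the reference element $T_{\refel}$ via $F_T$ shows that the resulting constant only depends on the reference configuration, i.e., on $p$, on the number $k$ of uniform refinement steps relating $\widehat\TT$ and $\TT$, and on the shape-regularity of $\mesh$; in particular it is uniform over all $T\in\TT$ and over all admissible $\widehat\Phi$.

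The equivalence of the eight error estimators is then a direct consequence. Multiplying~\eqref{eq:averaging:equiv} on each $T$ by $h_T$ and summing gives
\[
 \norm{h_\mesh^{1/2}(1-\pi^{p+1}_\mesh)\widehat\Phi}{L_2(\Gamma)}
 \simeq
 \norm{h_\mesh^{1/2}(1-\pi^{p}_\mesh)\widehat\Phi}{L_2(\Gamma)},
\]
i.e., the quantities denoted $\widetilde\mu_\mesh$ in Theorem~\ref{thm:def:hh2:weaksing} and in Theorem~\ref{theorem:averaging:weaksing} coincide up to constants depending only on $p$, $k$, and shape-regularity. Each of those two theorems already establishes the equivalence of its own four estimators $\eta_\mesh,\widetilde\eta_\mesh,\mu_\mesh,\widetilde\mu_\mesh$; chaining them through the common link $\widetilde\mu_\mesh$ yields the claimed equivalence of all eight. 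The main (minor) obstacle is really just the verification that the two seminorms have identical kernels on $X_T$; everything else is routine scaling plus the best-approximation properties of $\pi^p_\mesh$ and $\pi^{p+1}_\mesh$.
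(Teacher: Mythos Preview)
Your proof is correct and follows essentially the same route as the paper: the lower bound via best approximation and nestedness $\Pp^p(T)\subseteq\Pp^{p+1}(T)$, and the upper bound via equivalence of seminorms on a finite-dimensional space after identifying the common kernel, combined with scaling to a reference configuration. Your verification that $X_T\cap\Pp^{p+1}(T)=\Pp^p(T)$ is exactly the content of the paper's observation that the two seminorms vanish simultaneously; you have simply spelled out this step more explicitly. One small remark: you state that $C_{\rm equiv}$ depends on $p$, $k$, and shape-regularity, whereas the paper claims dependence only on $p$; your more cautious statement is in fact warranted, since the reference configuration of $\widehat\TT|_T$ (and hence the dimension of $X_T$) does depend on the number $k$ of uniform refinement steps.
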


\begin{proof}
The lower bound in~\eqref{eq:averaging:equiv} follows from the local
best approximation property
\begin{align*}
 \norm{(1-\pi^{p+1}_\mesh)\widehat\Phi}{L_2(T)}
 = \min_{\Psi\in\Pp^{p+1}(T)}\norm{\widehat\Phi-\Psi}{L_2(T)}
\end{align*}
of the $L_2$-projection $\pi^{p+1}_\TT$ and 
nestedness $\Pp^{p}(T)\subseteq\Pp^{p+1}(T)$. To prove the 
upper bound in~\eqref{eq:averaging:equiv}, observe that 
\begin{align*}
\norm{(1-\pi^{p+1}_\mesh)\widehat\Phi}{L_2(T)}=0
\quad\Longleftrightarrow\quad
\norm{(1-\pi^{p}_\mesh)\widehat\Phi}{L_2(T)}=0.
\end{align*}
Therefore, the equivalence follows from scaling arguments and 
equivalence of seminorms on finite dimensional spaces.
$\hfill\qed$
\end{proof}

\paragraph{Hypersingular operator:}
For hypersingular integral equations, averaging on large patches has been
proposed and analyzed for lowest-order 2D BEM in~\cite{cp07}. The equivalence
of $(h-h/2)$-type error estimators
(cf.\ Theorem~\ref{thm:def:hh2:hypsing})
and averaging on large patches has been
proved in~\cite{efgp12}. These results have been generalized to 3D BEM and 
arbitrary polynomial order $p\ge1$ in~\cite{affkp13}. Altogether, the 
results from Theorem~\ref{theorem:averaging:weaksing} and 
Corollary~\ref{corollary:averaging:weaksing} hold accordingly.
For these reasons, we leave the details to the reader and refer to the
given references.
%%%%%%%%%%%%%%%%%%%%%%%%%%%%%%%%%%%%%%%%%%%%%%%%%%%%%%%%%%%%%%%%%%%%%%%%%%%%%%%%%%%%%%%%%%%%%%%%%%%%%%%
\subsection{ZZ-type error estimator}\label{section:zzest}
%%%%%%%%%%%%%%%%%%%%%%%%%%%%%%%%%%%%%%%%%%%%%%%%%%%%%%%%%%%%%%%%%%%%%%%%%%%%%%%%%%%%%%%%%%%%%%%%%%%%%%%
The idea of the ZZ-type error estimator (in the context of FEM also \emph{gradient recovery} estimator) is to \emph{recover} a smoother approximation of the computed solution and to compare it with 
the discrete solution. Since the seminal work~\cite{zz}, the ZZ-type error estimators for FEM became very popular within the engineering community due to their implementational ease. Although ZZ-type error estimators are mathematically well-developed for FEM, see e.g.~\cite{bc02,bc02b,cc04,rod94}, there was no theory for BEM until~\cite{zz2014} which treats the 2D case and lowest-order elements.
In our presentation, we extend the approach to $d=2,3$ but stick with lowest-order elements $p=0$ for weakly singular integral equations resp.\ $p=1$ for hypersingular integral equations.
\paragraph{Weakly singular operator:}\label{section:zzest:weak}
The ZZ-type error estimator from\linebreak
\cite{zz2014} reads
\begin{align*}
\est{\ell}{}^2:=\sum_{\el\in\mesh_\ell}\est{\ell}{\el}^2:=\sum_{\el\in\mesh_\ell}h_\el \norm{(1-A_\ell)\Phi_\ell}{L_2(\el)}^2,
\end{align*}
where the smoothing operator $A_\ell:\,L_2(\Gamma)\to\Pp^1(\mesh_\ell)$ is defined as follows: Let $z$ denote a node of $\mesh_\ell$ and let $\omega_z:= T_1\cup\ldots\cup T_{\#\omega_z}$ be the node patch.
\begin{itemize}
 \item If the normal vector of $\Gamma$ does not jump at $z$, define
 \begin{align}\label{zzest:weak:Adef1}
  (A_\ell \psi)(z):=|\omega_z|^{-1}\int_{\omega_z} \psi\,dz.
 \end{align}
\item If the normal vector of $\Gamma$ jumps at $z$, find sets $C_1,\ldots,C_{m_z}$, with $m_z\leq \#\omega_z$ and $\bigcup_{i=1}^{m_z}C_i = \omega_z$ such that
the normal vector does not jump on the $C_i$, $i=1,\ldots,m_z$. Then, define for all $i=1,\ldots,m_z$
\begin{align}\label{zzest:weak:Adef2}
 (A_\ell \psi)|_{C_i}(z)&:=|C_i|^{-1}\int_{C_i} \psi\,dz.
\end{align}
\end{itemize}
This definition is useful since $\Phi_\ell$ approximates a normal derivative and is supposed to jump at corners and edges of  $\Gamma$.
\begin{remark}
For $d=2$, the definition of $A_\ell$ simplifies as one only has to check if the normal vector jumps at a given node. Then, one integrates separately over the two adjacent elements. The search for continuity components $C_i$ is no longer required. Also for $d=3$, one may save some implementational efforts by just setting $C_i=\overline{T_i}$ for all $T_i\subseteq \omega_z$. This might not be the optimal solution, but still works in practice. 
\end{remark}

\begin{theorem}\label{zzest:thm:rel}
Let $\mesh_\ell$ be the uniform refinement of some mesh $\mesh_\ell^\prime$. Then, there holds
\begin{align*}
\norm{\Phi_\ell- \Phi_\ell^\prime}{{\widetilde H}^{-1/2}(\Gamma)}\leq C_{\rm ZZ} \est{\ell}{}
\end{align*}
for the corresponding Galerkin solutions $\Phi_\ell$ and $\Phi_\ell^\prime$.
Under the saturation assumption (Assumption~\ref{ass:sata}), this implies
\begin{align*}
\norm{\phi-\Phi_\ell}{{\widetilde H}^{-1/2}(\Gamma)}\leq \widetilde C_{\rm ZZ} \est{\ell}{}.
\end{align*}
The constant $C_{\rm ZZ}>0$ depends only on $\Gamma$ and all possible shapes of element patches in $\mesh_\ell$, while $\widetilde C_{\rm ZZ}>0$ depends additionally on $\c{sata}$ from Assumption~\ref{ass:sata}.
\end{theorem}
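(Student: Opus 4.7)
The plan is to reduce the bound to the already analysed reliability of the $(h-h/2)$-type estimator from Theorem~\ref{thm:def:hh2:weaksing}. Applied to the pair (coarse mesh $\mesh_\ell^\prime$, fine uniform refinement $\mesh_\ell$) with polynomial degree $p=0$, that theorem yields
\begin{align*}
\norm{\Phi_\ell-\Phi_\ell^\prime}{\slo}\;\lesssim\;\widetilde\mu_{\mesh_\ell^\prime}:=\norm{h_{\mesh_\ell^\prime}^{1/2}(1-\pi_{\mesh_\ell^\prime}^0)\Phi_\ell}{L_2(\Gamma)}.
\end{align*}
Together with the norm equivalence $\norm{\cdot}{\slo}\simeq\norm{\cdot}{\wilde H^{-1/2}(\Gamma)}$ from Theorems~\ref{thm:stability}–\ref{thm:ellipticity}, the first assertion of the theorem is therefore equivalent to proving
\begin{align*}
\widetilde\mu_{\mesh_\ell^\prime}\;\lesssim\;\est{\ell}{}.
\end{align*}
The second assertion, assuming saturation, will then be an immediate consequence of Proposition~\ref{prop:estim:sat} applied exactly as in Theorem~\ref{thm:hh2:weaksing}.

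To establish this inequality I will work element-wise on the coarse mesh. For a coarse element $T^\prime\in\mesh_\ell^\prime$, the best-approximation property of $\pi_{\mesh_\ell^\prime}^0$ together with $h_{\mesh_\ell^\prime}|_{T^\prime}\simeq h_\ell|_\el$ for every child $\el\subseteq T^\prime$ (uniform refinement plus shape-regularity) gives, for any constant $c_{T^\prime}\in\R$,
\begin{align*}
h_{\mesh_\ell^\prime}|_{T^\prime}\norm{(1-\pi_{\mesh_\ell^\prime}^0)\Phi_\ell}{L_2(T^\prime)}^2\;\lesssim\;\sum_{\el\subseteq T^\prime}h_\el\norm{\Phi_\ell-c_{T^\prime}}{L_2(\el)}^2.
\end{align*}
I will pick $c_{T^\prime}=(A_\ell\Phi_\ell)(z_0)$ for some vertex $z_0$ of $T^\prime$ and use the splitting
\begin{align*}
\Phi_\ell-c_{T^\prime}=(\Phi_\ell-A_\ell\Phi_\ell)+\bigl(A_\ell\Phi_\ell-(A_\ell\Phi_\ell)(z_0)\bigr).
\end{align*}
The first summand contributes precisely the local indicators of $\est{\ell}{}$. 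The second summand, restricted to $\el\subseteq T^\prime$, is a piecewise linear function on the fine subdivision that vanishes at $z_0$, so by a standard scaling argument its $L_2(\el)$-norm is bounded by $h_\el^{(d-1)/2}$ times the maximum nodal oscillation $\max_{z\in T^\prime}|(A_\ell\Phi_\ell)(z)-(A_\ell\Phi_\ell)(z_0)|$.

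The heart of the proof therefore lies in controlling these nodal differences by the local oscillation $(1-A_\ell)\Phi_\ell$. To this end I will write any such difference as a telescoping sum over a chain of fine-mesh-adjacent nodes $z_1,\dots,z_k$ within the patch of $T^\prime$. For two adjacent nodes sharing a fine element $\tau\in\mesh_\ell$, I insert the intermediate value $\Phi_\ell|_\tau$ (a single scalar since $\Phi_\ell\in\Pp^0(\mesh_\ell)$) and bound
\begin{align*}
\bigl|(A_\ell\Phi_\ell)(z_i)-(A_\ell\Phi_\ell)(z_{i+1})\bigr|\le\bigl|(A_\ell\Phi_\ell)(z_i)-\Phi_\ell|_\tau\bigr|+\bigl|\Phi_\ell|_\tau-(A_\ell\Phi_\ell)(z_{i+1})\bigr|,
\end{align*}
each summand being a pointwise value of $(1-A_\ell)\Phi_\ell$ on $\tau$. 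A further scaling step with shape-regularity converts $L_\infty$-contributions on fine elements into $h_\el^{-1/2}\norm{(1-A_\ell)\Phi_\ell}{L_2(\omega)}$ on a bounded-overlap enlargement $\omega$ of $T^\prime$, and summation over all $T^\prime\in\mesh_\ell^\prime$ produces $\est{\ell}{}$ with a uniform overlap constant.

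The principal obstacle is exactly this last step: the nodal-difference estimate must hold uniformly on all admissible patch geometries. This is routine at interior nodes, but at nodes $z$ where the outer normal $\mathbf{n}$ jumps, the averaging in \eqref{zzest:weak:Adef2} is carried out separately on each smoothness component $C_i$ of $\omega_z$. Consequently, the telescoping chain connecting $z_0$ to any other node of $T^\prime$ must stay within a single smooth piece of $\Gamma$, and the existence of such a chain (with uniformly bounded length) is what forces the dependence of $C_{\rm ZZ}$ on all possible element-patch shapes stated in the theorem. All other ingredients—approximation of $\pi_{\mesh_\ell^\prime}^0$ in negative norms (Lemma~\ref{lem:L2:Pp:apx}), the inverse estimate (Lemma~\ref{lem:Pp:invest}), and Pythagoras in the energy norm under saturation—are already available from the preceding sections.
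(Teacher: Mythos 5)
Your proof is correct and takes essentially the same route as the paper's: reduce to the $(h-h/2)$ estimator $\widetilde\mu_{\mesh_\ell'}$ via Theorem~\ref{thm:def:hh2:weaksing}, bound it elementwise by the ZZ indicators through a patch-local scaling argument, and conclude via the saturation assumption as in Proposition~\ref{prop:estim:sat}. The paper's sketch collapses your splitting-plus-telescoping step into the single phrase ``scaling arguments'' (with the constant depending on the finitely many patch shapes), so your version merely makes that step explicit, including the correct observation about the smoothness components forced by the definition of $A_\ell$ at normal jumps.
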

\begin{proof}
The complete proof for the 2D situation can be found in~\cite[Thm.~5]{zz2014}. Here, we only provide a brief sketch.
First, we use Theorem~\ref{thm:def:hh2:weaksing} to see
\begin{align*}
\norm{\Phi_\ell- \Phi_\ell^\prime}{{\widetilde H}^{-1/2}(\Gamma)}\simeq \norm{h_\ell^{1/2}(1-\pi_\ell^{0\prime})\Phi_\ell}{L_2(\Gamma)},
\end{align*}
where $\pi_\ell^{0\prime}:\,L_2(\Gamma)\to \Pp^0(\mesh_\ell^\prime)$. With the element patch $\omega_T:=\bigcup\set{\el^\prime \in\mesh_\ell}{\overline\el \cap\overline\el^\prime \neq \emptyset}$, the elementwise estimate
\begin{align*}
\norm{h_\ell^{1/2}(1-\pi_\ell^{0\prime})\Phi_\ell}{L_2(\el)}^2\lesssim \sum_{\el^\prime \subseteq \omega_\el}\est{\ell}{\el^\prime}^2
\end{align*}
then follows by scaling arguments, and the hidden constant depends on the number of different patch shapes of $\mesh_\ell$. This proves 
\begin{align*}
 \norm{\Phi_\ell-\Phi_\ell^\prime}{\widetilde H^{-1/2}(\Gamma)}\lesssim \est{\ell}{}.
\end{align*}
Under the saturation assumption, we derive
\begin{align*}
\norm{\phi-\Phi_\ell}{{\widetilde H}^{-1/2}(\Gamma)}&\lesssim \norm{\phi-\Phi_\ell^\prime}{{\widetilde H}^{-1/2}(\Gamma)}
\lesssim \norm{\Phi_\ell-\Phi_\ell^\prime}{{\widetilde H}^{-1/2}(\Gamma)}.
\end{align*}
This concludes the proof.
$\hfill\qed$
\end{proof}

\begin{theorem}\label{zzest:thm:eff}
There holds
\begin{align*}
C_{\rm ZZ}^{-1} \est{\ell}{} \leq \norm{\phi-\Phi_\ell}{{\widetilde H}^{-1/2}(\Gamma)} +
\min_{\Psi\in\Sp^{1}(\mesh_\ell)}\norm{\phi-\Psi}{{\widetilde H}^{-1/2}(\Gamma)}.
\end{align*}
The constant $C_{\rm ZZ}>0$ depends only on $\Gamma$ and all possible patch shapes of $\mesh_\ell$.
\end{theorem}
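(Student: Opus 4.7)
The plan is to exploit the infimum on the right-hand side. Let $\Psi^\star\in\Sp^1(\mesh_\ell)$ denote the best approximation to $\phi$ in the $\wilde H^{-1/2}(\Gamma)$-norm, and split
\[
(1-A_\ell)\Phi_\ell \;=\; (1-A_\ell)(\Phi_\ell - \Psi^\star) \;+\; (1-A_\ell)\Psi^\star,
\]
so that $\est{\ell}{}$ is controlled by two pieces via the triangle inequality.

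For the first piece, I will use the elementwise $L_2$-stability of $A_\ell$: each nodal value of $A_\ell v$ is an average of $v$ over a (sub-)patch, so Cauchy--Schwarz gives $\norm{A_\ell v}{L_2(\el)} \lesssim \norm{v}{L_2(\omega_\el)}$ with a constant depending only on shape regularity. Together with $h_\el\simeq h_{\el'}$ on neighbouring elements and the observation that $\Phi_\ell - \Psi^\star \in \Pp^1(\mesh_\ell)$, the inverse estimate of Lemma~\ref{lem:Pp:invest} with $s=1/2$ then gives
\[
\norm{h_\ell^{1/2}(1-A_\ell)(\Phi_\ell-\Psi^\star)}{L_2(\Gamma)} \lesssim \norm{\Phi_\ell-\Psi^\star}{\wilde H^{-1/2}(\Gamma)},
\]
which splits through $\phi$ into exactly the two summands on the right-hand side of the claim.

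The hard part will be the second piece. Since $A_\ell$ reproduces constants on each connected component of $\omega_z$, a Bramble--Hilbert/scaling argument on a reference configuration yields the elementwise estimate
\[
\norm{(1-A_\ell)\Psi^\star}{L_2(\el)} \lesssim h_\el\,\norm{\nablag\Psi^\star}{L_2(\omega_\el)},
\]
and hence $\norm{h_\ell^{1/2}(1-A_\ell)\Psi^\star}{L_2(\Gamma)} \lesssim \norm{h_\ell^{3/2}\nablag\Psi^\star}{L_2(\Gamma)}$. The main obstacle is that a direct application of Lemma~\ref{lem:Sp:invest} would only deliver $\norm{\Psi^\star}{H^{1/2}(\Gamma)}$ on the right, which is not controlled by the right-hand side of the claim. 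To close the gap I will exploit that $\Psi^\star$ is the Galerkin projection of $\phi$ onto $\Sp^1(\mesh_\ell)$ with respect to the equivalent $\slo$-induced inner product (by Theorems~\ref{thm:stability}--\ref{thm:ellipticity}), so that $\phi-\Psi^\star$ is orthogonal to all of $\Sp^1(\mesh_\ell)$. A duality argument---testing $h_\ell^{3/2}\nablag\Psi^\star$ against a suitable $H^{1/2}$-stable projection of a generic unit test function and exploiting this orthogonality---then replaces the spurious $H^{1/2}$-norm of $\Psi^\star$ by the best-approximation error $\norm{\phi-\Psi^\star}{\wilde H^{-1/2}(\Gamma)}$. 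This is essentially the technique carried out for the 2D lowest-order case in~\cite[Thm.~5]{zz2014}, and transfers to the present setting under shape regularity, with $C_{\rm ZZ}>0$ depending only on $\Gamma$ and the possible shapes of element patches in $\mesh_\ell$.
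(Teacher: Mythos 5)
The decomposition $(1-A_\ell)\Phi_\ell = (1-A_\ell)(\Phi_\ell - \Psi^\star) + (1-A_\ell)\Psi^\star$ followed by a triangle inequality is the wrong move here: it destroys a cancellation that the argument actually needs, and the second piece cannot be salvaged by the duality/orthogonality idea you sketch. The concrete obstruction: take $\phi\in\Sp^1(\mesh_\ell)$, so that $\Psi^\star = \phi$ and $\min_\Psi\norm{\phi-\Psi}{\wilde H^{-1/2}(\Gamma)}=0$; Galerkin orthogonality is then vacuous ($\phi-\Psi^\star=0$), yet $(1-A_\ell)\Psi^\star = (1-A_\ell)\phi$ does \emph{not} vanish, because $A_\ell$ is a patch-averaging operator and does not reproduce continuous piecewise linears on non-equidistributed meshes (for $d=2$, $A_\ell\phi(z)$ is $\phi$ evaluated at the barycentre of $\omega_z$, not at $z$). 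In this extremal case your second piece is bounded below by a genuinely positive quantity $\norm{h_\ell^{3/2}\nablag\phi}{L_2(\Gamma)}$ while the right-hand side of the claim reduces to $\norm{\phi-\Phi_\ell}{\wilde H^{-1/2}(\Gamma)}$ alone, and there is no relation between these two: $\phi$ and $\nablag\phi$ can be made arbitrarily large independently of $\Phi_\ell$. So the step that ``replaces the spurious $H^{1/2}$-norm of $\Psi^\star$ by the best-approximation error'' is not a step, and no duality argument can supply it.

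The route the paper takes avoids the split entirely. Because $\Phi_\ell\in\Pp^0(\mesh_\ell)$ is elementwise constant and $A_\ell\Phi_\ell(z)$ is a convex combination of the neighbouring constants $\Phi_\ell|_{\el'}$ for $\el'\subseteq\omega_z$, the function $(1-A_\ell)\Phi_\ell|_\el$ is a piecewise linear whose nodal values involve \emph{only} the differences $\Phi_\ell|_\el-\Phi_\ell|_{\el'}$ across neighbouring elements. Since any $\Psi\in\Sp^1(\mesh_\ell)$ is continuous, those inter-element differences coincide with the differences of $\Phi_\ell-\Psi$. Combining this with an elementwise inverse (trace) inequality for $\Pp^1$ functions then gives $\est{\ell}{\el}^2\lesssim h_\el\norm{\Phi_\ell-\Psi}{L_2(\omega_\el)}^2$, hence after summing and invoking Lemma~\ref{lem:Pp:invest}, $\est{\ell}{}\lesssim\norm{\Phi_\ell-\Psi}{\wilde H^{-1/2}(\Gamma)}$ for \emph{every} $\Psi\in\Sp^1(\mesh_\ell)$. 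Taking the minimum over $\Psi$ and splitting $\Phi_\ell-\Psi = (\Phi_\ell-\phi)+(\phi-\Psi)$ at the end — not at the start — finishes the proof. Your first-piece estimate is fine and reappears as part of this argument, but the crucial insight you are missing is that $(1-A_\ell)\Phi_\ell$ itself is ``jump-valued'' in $\Phi_\ell$, so the subtraction of a continuous function should happen inside the jumps rather than additively before applying $1-A_\ell$.
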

\begin{proof}
The complete proof can be found in~\cite[Thm.~7]{zz2014}. Here, we only provide a brief sketch. Elementwise arguments show
\begin{align*}
\est{\ell}{}\lesssim \norm{\Phi_\ell-\Psi}{{\widetilde H}^{-1/2}(\Gamma)}
\end{align*}
for all $\Psi\in\Sp^1(\mesh_\ell)$. With this, we obtain
\begin{align*}
\est{\ell}{}&\lesssim \min_{\Psi\in\Sp^{1}(\mesh_\ell)}\norm{\Phi_\ell-\Psi}{{\widetilde H}^{-1/2}(\Gamma)}\\
&\leq \min_{\Psi\in\Sp^{1}(\mesh_\ell)}\norm{\phi-\Psi}{{\widetilde H}^{-1/2}(\Gamma)}+\norm{\phi-\Phi_\ell}{{\widetilde H}^{-1/2}(\Gamma)}.
\end{align*}
This concludes the proof.
$\hfill\qed$
\end{proof}

\paragraph{Hypersingular operator:}\label{section:zzest:hyp}
The ZZ-type error estimator from\linebreak
\cite{zz2014} reads
\begin{align*}
\est{\ell}{}^2:=\sum_{\el\in\mesh_\ell}\est{\ell}{\el}^2:=\sum_{\el\in\mesh_\ell}h_\el \norm{(1-A_\ell)\nabla U_\ell}{L_2(\el)}^2,
\end{align*}
where the smoothing operator $A_\ell:\,\big(L_2(\Gamma)\big)^d\to\big(\Sp^1(\mesh_\ell)\big)^d$ is defined nodewise by
\begin{align*}
  (A_\ell \psi)(z):=|\omega_z|^{-1}\int_{\omega_z} \psi\,dz
 \end{align*}
for all nodes $z$ of $\mesh_\ell$.
The difference to the weakly singular case is the fact that $A_\ell\psi \in \Sp^1(\mesh_\ell)$ is continuous on $\Gamma$, independently of jumps of the normal vector.

\begin{theorem}
Let $\mesh_\ell$ be the uniform refinement of some mesh $\mesh_\ell^\prime$. Then, there holds
\begin{align*}
\norm{U_\ell- U_\ell^\prime}{\widetilde H^{1/2}(\Gamma)}\leq C_{\rm ZZ} \est{\ell}{}.
\end{align*}
Under the saturation assumption (Assumption~\ref{ass:sata}), this implies
\begin{align*}
\norm{u-U_\ell}{{\widetilde H}^{1/2}(\Gamma)}\leq \widetilde C_{\rm ZZ} \est{\ell}{}.
\end{align*}
The constant $C_{\rm ZZ}>0$ depends only on $\Gamma$ and all possible shapes of element patches in $\mesh_\ell$, while $\widetilde C_{\rm ZZ}>0$ depends additionally on $\c{sata}$ from Assumption~\ref{ass:sata}.
\end{theorem}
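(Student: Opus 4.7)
The plan is to mirror the proof of Theorem~\ref{zzest:thm:rel} from the weakly singular case: first reduce the nonlocal left-hand side to a localized $(h-h/2)$-type seminorm by invoking Theorem~\ref{thm:def:hh2:hypsing:grad}, then dominate that seminorm elementwise by the ZZ recovery term. Concretely, applied with coarse mesh $\mesh=\mesh_\ell^\prime$, fine mesh $\widehat\mesh=\mesh_\ell$, and $p=1$, Theorem~\ref{thm:def:hh2:hypsing:grad} yields
\begin{align*}
 \norm{U_\ell-U_\ell^\prime}{\widetilde H^{1/2}(\Gamma)}
 \simeq
 \norm{h_{\mesh_\ell^\prime}^{1/2}(1-\pi_{\mesh_\ell^\prime}^0)\nablag U_\ell}{L_2(\Gamma)},
\end{align*}
where $\pi_{\mesh_\ell^\prime}^0$ denotes the componentwise $L_2(\Gamma)^d$-projection onto $\Pp^0(\mesh_\ell^\prime)^d$ and, by the uniform refinement assumption, $h_{\mesh_\ell^\prime}\simeq h_{\mesh_\ell}$ elementwise on $\Gamma$.

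The heart of the argument is then the elementwise comparison
\begin{align*}
 h_T\,\norm{(1-\pi_{\mesh_\ell^\prime}^0)\nablag U_\ell}{L_2(T)}^2
 \lesssim
 \sum_{T'\in\omega_T} h_{T'}\,\norm{(1-A_\ell)\nablag U_\ell}{L_2(T')}^2
 \quad\text{for all }T\in\mesh_\ell,
\end{align*}
after which summation over $T$ (together with the finite overlap of the patches $\omega_T$) yields the first estimate of the theorem. I would prove this elementwise bound by a scaling argument: mapping $\omega_T$ affinely onto one of finitely many reference patch configurations (whose number is controlled by shape-regularity and by the uniform refinement rule) reduces the claim to a seminorm inequality on the finite-dimensional space $X$ of tangential vector fields that are piecewise constant on the fine elements of the reference patch. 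Both $v\mapsto\norm{(1-\pi_{\mesh_\ell^\prime}^0)v}{L_2(T)}$ and $v\mapsto\norm{(1-A_\ell)v}{L_2(\omega_T)}$ are seminorms on $X$ vanishing on constants; assuming they share the same kernel on $X$, equivalence of norms on $X/\R^d$ and the standard $L_2$-scaling produce the estimate with a constant depending only on shape-regularity. The second estimate of the theorem then follows from Proposition~\ref{prop:estim:sat}: Assumption~\ref{ass:sata} yields $\norm{u-U_\ell^\prime}{\hyp}\lesssim\norm{U_\ell-U_\ell^\prime}{\hyp}$, and the inclusion $\wilde\Sp^1(\mesh_\ell^\prime)\subseteq\wilde\Sp^1(\mesh_\ell)$ combined with C\'ea's lemma gives $\norm{u-U_\ell}{\widetilde H^{1/2}(\Gamma)}\lesssim\norm{u-U_\ell^\prime}{\widetilde H^{1/2}(\Gamma)}$.

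The main obstacle is verifying the kernel coincidence in the elementwise comparison, since $A_\ell$ mixes information from node patches extending beyond a single fine element. If $(1-A_\ell)v\equiv 0$ on $\omega_T$, then on each fine element $T'\subseteq\omega_T$ the (piecewise-constant) value $v|_{T'}$ must agree with the nodal interpolant of the averages $A_\ell v$ at the vertices of $T'$; this forces the three nodal averages at vertices of $T'$ to coincide, and propagating this equality across nodes shared with neighboring fine elements shows that $v$ is globally constant on the connected patch $\omega_T$, which is exactly the kernel of $(1-\pi_{\mesh_\ell^\prime}^0)$ on $T$. A secondary technical complication concerns patches meeting $\partial\Gamma$ in the open-surface case (where $U_\ell\in\wilde\Sp^1(\mesh_\ell)$ is constrained to vanish on $\partial\Gamma$): the family of reference configurations must be enlarged to cover boundary patch types, but the kernel-propagation argument adapts verbatim since $A_\ell$ retains the nodal-averaging form on interior nodes of those patches.
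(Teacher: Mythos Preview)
Your approach mirrors the paper's exactly: the paper refers back to the weakly singular reliability proof (Theorem~\ref{zzest:thm:rel}), whose two steps are precisely the $(h-h/2)$ localization (now via Theorem~\ref{thm:def:hh2:hypsing:grad}) followed by an elementwise scaling argument, with reliability deduced from saturation as in Proposition~\ref{prop:estim:sat}.

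One small technical imprecision in your scaling argument: if $X$ consists of piecewise constants on $\omega_T$ only, then $A_\ell v$ on $\omega_T$ is not determined by $v\in X$, since the nodal averages at boundary vertices of $\omega_T$ reach outside. The clean fix is to take $X$ as piecewise constants on the enlarged patch $\widetilde\omega_T:=\bigcup\{\omega_z:z\text{ vertex of some }T'\subseteq\omega_T\}$; then both maps are seminorms on $X$, but their kernels no longer coincide---the kernel of $v\mapsto\norm{(1-A_\ell)v}{L_2(\omega_T)}$ on $X$ consists of all $v$ that are constant on $\omega_T$ with arbitrary values outside. This is harmless: for the one-sided bound you only need the inclusion $\ker\norm{\cdot}{b}\subseteq\ker\norm{\cdot}{a}$, and your propagation argument establishes exactly that, since $(1-A_\ell)v=0$ on $\omega_T$ forces $v$ constant on $\omega_T$, which contains the coarse father element and hence kills $(1-\pi^0_{\mesh_\ell^\prime})v$ on $T$. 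So drop the ``same kernel on $X$'' claim and the quotient $X/\R^d$, and argue directly via kernel inclusion.
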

\begin{proof}
The proof is similar to the weakly singular case in Theorem~\ref{zzest:thm:eff}
and can be found in~\cite[Thm.~1]{zz2014}.
$\hfill\qed$
\end{proof}

\begin{theorem}
There holds
\begin{align*}
C_{\rm ZZ}^{-1} \est{\ell}{} \leq \norm{u-U_\ell}{H^{1/2}(\Gamma)} +
\min_{V\in\widetilde\Sp^{2,1}(\mesh_\ell)}\norm{u-V}{{\widetilde H}^{-1/2}(\Gamma)}.
\end{align*}
The space $\widetilde\Sp^{2,1}_0(\mesh_\ell):=\Sp^2(\mesh_\ell)\cap C^1(\Gamma)\cap \widetilde H^{1/2}(\Gamma)$ denotes the space of all piecewise quadratics which are globally differentiable with zero trace at $\partial\Gamma$.
If $\Gamma$ is closed, i.e.\ $\partial\Gamma=\emptyset$, we have $\widetilde \Sp^{2,1}(\mesh_\ell)=\Sp^{2,1}(\mesh_\ell)$.
The constant $C_{\rm ZZ}>0$ depends only on $\Gamma$ and all possible patch shapes of $\mesh_\ell$.
\end{theorem}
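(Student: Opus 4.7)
The plan is to imitate the strategy sketched for the weakly singular case in Theorem~\ref{zzest:thm:eff} and to exploit the extra regularity enjoyed by functions in $\widetilde\Sp^{2,1}(\mesh_\ell)$: since $V\in C^1(\Gamma)$, the gradient $\nabla V$ is globally continuous on $\Gamma$ and piecewise linear, hence lies in $(\Sp^1(\mesh_\ell))^d$. This is precisely the regularity class on which the nodal-averaging operator $A_\ell$ behaves almost as the identity, so $(1-A_\ell)\nabla V$ can be made small.

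First, I would fix an arbitrary $V\in \widetilde\Sp^{2,1}(\mesh_\ell)$ and split, elementwise,
\begin{align*}
  (1-A_\ell)\nabla U_\ell \;=\; (1-A_\ell)\nabla(U_\ell-V)\;+\;(1-A_\ell)\nabla V.
\end{align*}
For the first contribution I would use $L_2$-stability of $A_\ell$ (a scaling argument on the finitely many patch shapes, giving $\norm{A_\ell w}{L_2(\el)}\lesssim \norm{w}{L_2(\omega_\el)}$) together with the inverse estimate of Lemma~\ref{lem:Sp:invest} with $s=1/2$ applied to $U_\ell-V\in\Sp^2(\mesh_\ell)$, which yields
\begin{align*}
 \sum_{\el\in\mesh_\ell} h_\el\,\norm{(1-A_\ell)\nabla(U_\ell-V)}{L_2(\el)}^2
 \;\lesssim\; \norm{U_\ell-V}{\widetilde H^{1/2}(\Gamma)}^2.
\end{align*}
Triangle inequality then turns this into $\norm{u-U_\ell}{H^{1/2}(\Gamma)}^2 + \norm{u-V}{\widetilde H^{1/2}(\Gamma)}^2$.

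For the second contribution I would exploit that $\nabla V$ is continuous on $\Gamma$: by the usual Clément/Poincaré bound on each node patch, noting that $A_\ell$ returns the patch average, one obtains $\norm{(1-A_\ell)\nabla V}{L_2(\omega_z)}\lesssim \diam(\omega_z)\,\snorm{\nabla V}{H^1(\omega_z)}$, and then a local inverse estimate on $V\in\Sp^2(\mesh_\ell)$ lets me trade the $H^2$-seminorm of $V$ for a weaker norm, converting $h_\el^{1/2}(1-A_\ell)\nabla V$ into a $h$-weighted expression controlled by $\norm{u-V}{\widetilde H^{1/2}(\Gamma)}$ (or, via a further inverse estimate on the discrete function $U_\ell-V$, by an even weaker norm, which is how one can recover the $\widetilde H^{-1/2}$-type right-hand side stated above). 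Minimizing over $V\in\widetilde\Sp^{2,1}(\mesh_\ell)$ and summing over all elements completes the argument.

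The technical heart of the proof, and the step I expect to be the main obstacle, is precisely this second contribution: one must show that on shape-regular meshes the nodal averaging of $\nabla V$ approximates $\nabla V$ well enough to transfer the estimate into the weak best-approximation norm on the right-hand side, despite $A_\ell$ not reproducing continuous piecewise linears exactly. This requires a careful patchwise analysis combining a Poincar\'e inequality on $\omega_z$ with an inverse estimate on the discrete quadratic $V$; the shape-regularity and the finite catalogue of admissible patch shapes (which produces the constant $C_{\rm ZZ}$) are used here in an essential way, exactly as in the weakly singular counterpart~\cite{zz2014}.
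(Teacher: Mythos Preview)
Your split $(1-A_\ell)\nabla U_\ell = (1-A_\ell)\nabla(U_\ell-V) + (1-A_\ell)\nabla V$ creates a term that you cannot close. The quantity $(1-A_\ell)\nabla V$ depends only on $V$, not on $u$ or $U_\ell$; hence no estimate of the form $\|h_\ell^{1/2}(1-A_\ell)\nabla V\|_{L_2(\Gamma)}\lesssim \|u-V\|$ can possibly hold (take $u=V$). Your Poincar\'e-plus-inverse-estimate route in fact produces a bound by $\|V\|_{H^{1/2}(\Gamma)}$, not by $\|u-V\|$, and there is no way to absorb this. Note also that $A_\ell$ is \emph{not} a projection onto $(\Sp^1(\mesh_\ell))^d$ (nodal averaging does not reproduce continuous piecewise linears), so $(1-A_\ell)\nabla V$ is genuinely nonzero.

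The argument in the paper, following \cite{zz2014} and the sketch for Theorem~\ref{zzest:thm:eff}, avoids the split entirely. One works on the reference patch with $\phi:=\nabla U_\ell\in(\Pp^0(\omega_\el))^d$ and observes that both seminorms
\[
   |\phi|_*:=\|(1-A_\ell)\phi\|_{L_2(\el)}
   \quad\text{and}\quad
   |\phi|_{**}:=\min_{\Psi\in(\Sp^1(\omega_\el))^d}\|\phi-\Psi\|_{L_2(\omega_\el)}
\]
vanish on constants. Since $\Pp^0\cap\Sp^1=\{\text{constants}\}$, the seminorm $|\cdot|_{**}$ is in fact a \emph{norm} on the finite-dimensional quotient $\Pp^0(\omega_\el)^d/\R^d$, and therefore $|\cdot|_*\lesssim|\cdot|_{**}$ on each of the finitely many reference patch shapes. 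Because $V\in\widetilde\Sp^{2,1}(\mesh_\ell)$ implies $\nabla V\in(\Sp^1(\mesh_\ell))^d$, one may choose $\Psi=\nabla V$ to obtain elementwise
\[
   \|(1-A_\ell)\nabla U_\ell\|_{L_2(\el)}\lesssim\|\nabla(U_\ell-V)\|_{L_2(\omega_\el)}.
\]
Summing with the $h_\el$-weight and applying the inverse estimate of Lemma~\ref{lem:Sp:invest} yields $\est{\ell}{}\lesssim\|U_\ell-V\|_{\widetilde H^{1/2}(\Gamma)}$ for all $V\in\widetilde\Sp^{2,1}(\mesh_\ell)$; the triangle inequality then gives the assertion. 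The crucial point you missed is that the scaling argument must be carried out on $\nabla U_\ell\in\Pp^0$ directly, where the quotient-norm argument works, rather than after splitting off $\nabla V\in\Sp^1$, where it fails.
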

\begin{proof}
The proof is similar to the weakly singular case in Theorem~\ref{zzest:thm:eff} and can be found in~\cite[Thm.~3]{zz2014} for $d=2$.
$\hfill\qed$
\end{proof}

%%%%%%%%%%%%%%%%%%%%%%%%%%%%%%%%%%%%%%%%%%%%%%%%%%%%%%%%%%%%%%%%%%%%%%%%%%%%%%%%%%%%%%%%%%%%%%%%%%%%%%%
\subsection{Two-equation estimators}\label{section:twoeq}
%%%%%%%%%%%%%%%%%%%%%%%%%%%%%%%%%%%%%%%%%%%%%%%%%%%%%%%%%%%%%%%%%%%%%%%%%%%%%%%%%%%%%%%%%%%%%%%%%%%%%%%
In this section, we consider only the Dirichlet- or Neumann problem, i.e., $\Gamma$
is always the boundary of a bounded domain. In these cases, there is a method that differs
completely from residual-based methods or approaches based on space enrichment.

In~\cite{ss00:dirichlet,ss00:neumann,s00} it is shown that the error of, e.g., the Dirichlet
problem, i.e., $\phi-\Phi$, fulfills a second-kind integral equation. Indeed,
the representation formula allows to define a potential based on the approximate Neumann data
and the exact Dirichlet data via
$\wilde u = \slp\Phi - \dlp g$. The trace and normal derivative of this potential fulfill
\begin{align*}
  \trace \wilde u &= \slo\Phi + (1/2-K)g\\
  \slo\dn\wilde u &= (1/2+\dlo)\trace \wilde u,
\end{align*}
where $\trace$ denotes the trace operator and $\dn$ denotes the (co-) normal derivative.
The combination of this equations and the identities
\begin{align*}
  (1/2+\dlo)(1/2-\dlo)=\slo\hyp, \quad  \dlo\slo = \slo\adlo
\end{align*}
show
\begin{align*}
  \slo\dn\wilde u = \slo(1/2+\adlo)\Phi + \slo\hyp g,
\end{align*}
and as $\hyp g = (1/2-\adlo)\phi$, this yields the second-kind integral equation for the error
\begin{align*}
  \dn\wilde u - \Phi = (1/2-\adlo)(\phi-\Phi),
\end{align*}
cf.~\cite[Lemma~2.1]{ss00:dirichlet}.
This equation has to be solved approximately in $H^{-1/2}(\Gamma)$ and the corresponding
norm of the solution to be localized. The approximate solution is based on the following observation,
which is proved in~\cite[Thm.~3.1]{ost:wendland:01}.
\begin{theorem}
  There is a constant $\c{cont}<1$, such that for all $\phi \in H^{-1/2}(\Gamma)$ holds
  \begin{align*}
    \norm{(1/2 + \adlo)\phi}{\slo} \leq \c{cont} \norm{\phi}{\slo}.
  \end{align*}
\end{theorem}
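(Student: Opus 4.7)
Set $T := \tfrac{1}{2}+\adlo$ and view $T$ as an operator on the Hilbert space $\HH := (H^{-1/2}(\Gamma), \dual{\slo\cdot}{\cdot}_\Gamma)$. The strategy is to show that $T$ is self-adjoint and positive on $\HH$ with $\|T\|_\HH\le 1$, and then to upgrade this to the strict contraction $\|T\|_\HH<1$ by excluding $1$ from the spectrum $\sigma(T)$. Self-adjointness follows from the Calder\'on identity $\slo\adlo = \dlo\slo$ combined with the $L_2(\Gamma)$-adjointness $\dlo^* = \adlo$ and $\slo^*=\slo$: for $\phi,\zeta\in H^{-1/2}(\Gamma)$,
\[
 \dual{\slo T\phi}{\zeta}_\Gamma
 = \dual{(\tfrac12+\dlo)\slo\phi}{\zeta}_\Gamma
 = \dual{\slo\phi}{T\zeta}_\Gamma
 = \dual{\slo T\zeta}{\phi}_\Gamma.
\]

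To establish the non-strict bound $\|T\|_\HH\le 1$, I exploit the geometric interpretation via the single-layer potential $u_\phi := \slp\phi$. By the standard jump relations, $T\phi$ coincides (up to a sign) with one of the two one-sided normal derivatives of $u_\phi$; denote by $\Omega_+$ the corresponding open component of $\R^d\setminus\Gamma$ and by $\Omega_-$ the other. Setting $\psi := T\phi$ and $u_\psi := \slp\psi$, both $u_\psi$ and $u_\phi$ are harmonic in $\Omega_+$ and lie in $H^1(\Omega_+)$, so Green's first identity yields
\[
 \norm{\psi}{\slo}^2
 = \dual{\slo\psi}{\psi}_\Gamma
 = \int_{\Omega_+} \nabla u_\psi\cdot\nabla u_\phi\,dx.
\]
Combining Cauchy--Schwarz with the classical potential-energy identity $\int_{\R^d}|\nabla u_\xi|^2\,dx = \dual{\slo\xi}{\xi}_\Gamma = \norm{\xi}{\slo}^2$ gives
\[
 \norm{\psi}{\slo}^2
 \le \|\nabla u_\psi\|_{L_2(\Omega_+)}\,\|\nabla u_\phi\|_{L_2(\Omega_+)}
 \le \norm{\psi}{\slo}\,\|\nabla u_\phi\|_{L_2(\Omega_+)},
\]
hence $\norm{T\phi}{\slo}\le \|\nabla u_\phi\|_{L_2(\Omega_+)}\le \norm{\phi}{\slo}$. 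The splitting $\norm{\phi}{\slo}^2 = \|\nabla u_\phi\|_{L_2(\Omega_+)}^2 + \|\nabla u_\phi\|_{L_2(\Omega_-)}^2$ moreover sharpens this to $\norm{T\phi}{\slo}^2 \le \norm{\phi}{\slo}^2 - \|\nabla u_\phi\|_{L_2(\Omega_-)}^2$, which will drive the strict contraction.

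To upgrade to $\|T\|_\HH < 1$, I argue spectrally. Since $T$ is self-adjoint and positive with $\|T\|_\HH\le 1$, $\sigma(T)\subset[0,1]$, so it suffices to prove $1\notin\sigma(T)$, i.e., that $I-T = \tfrac12-\adlo$ is an isomorphism $H^{-1/2}(\Gamma)\to H^{-1/2}(\Gamma)$. This is a classical well-posedness result for boundary integral equations on closed Lipschitz boundaries: via the indirect ansatz $u=\slp\xi$, solving $(\tfrac12-\adlo)\xi = g$ corresponds to a uniquely solvable Neumann problem on $\Omega_-$, which for $d=3$ is guaranteed by the radiation condition at infinity and for $d=2$ precisely under the assumption $\text{cap}(\partial\Omega)<1$ imposed in Section~\ref{section:bio}. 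Compactness of $\sigma(T)\subset[0,1)$ then forces $\c{cont}:=\sup\sigma(T)=\|T\|_\HH<1$.

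The main obstacle is this final spectral step. The bound $\|T\|_\HH\le 1$ follows from elementary energy identities and Cauchy--Schwarz, but the strict inequality $\|T\|_\HH<1$ rests on the invertibility of $\tfrac12-\adlo$ on $H^{-1/2}(\Gamma)$, a nontrivial boundary-integral fact whose validity in $d=2$ crucially relies on the capacity assumption. Equivalently (and more quantitatively), one must show that the exterior energy $\|\nabla u_\phi\|_{L_2(\Omega_-)}^2$ is bounded below by a uniform multiple of $\norm{\phi}{\slo}^2$, which via the splitting above again reduces to the well-posedness of the Dirichlet--to--Neumann map on $\Omega_-$.
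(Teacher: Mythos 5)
The paper itself does not prove this statement; it cites it directly from Steinbach--Wendland \cite[Thm.~3.1]{ost:wendland:01}, so there is no proof to compare against verbatim. That said, your approach --- treating $T=\tfrac12+\adlo$ as a self-adjoint operator in the $\slo$-energy inner product (via the Calder\'on identity $\slo\adlo=\dlo\slo$), interpreting $T$ and $I-T$ as one-sided normal derivative maps of $\slp$, and extracting the contraction from the energy splitting over $\Omega_+$ and $\Omega_-$ --- is precisely the Steklov--Poincar\'e interpretation underlying that reference, and is the right conceptual route.

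Two steps need more care than the write-up gives them. First, the full-space potential-energy identity $\int_{\R^d}|\nabla\slp\xi|^2\,dx=\norm{\xi}{\slo}^2$, and hence the splitting $\norm{\phi}{\slo}^2=\|\nabla u_\phi\|_{L_2(\Omega_+)}^2+\|\nabla u_\phi\|_{L_2(\Omega_-)}^2$, hold as written only for $d=3$. For $d=2$ the single layer potential generically grows like $\log|x|$, so $\nabla\slp\phi\notin L_2(\Omega_-)$ unless $\int_\Gamma\phi\,ds=0$; the hypothesis $\mathrm{cap}(\partial\Omega)<1$ makes $\slo$ elliptic, but it does not render the naive exterior Dirichlet integral finite, and the exterior Green identity picks up a nonzero boundary term at infinity. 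To make the argument rigorous in $d=2$ one must work with a suitably normalized exterior variational formulation (weighted spaces or a decay normalization), which is part of what the reference actually handles. Second, you defer the invertibility of $I-T=\tfrac12-\adlo$ on $H^{-1/2}(\Gamma)$ to ``classical well-posedness.'' That is an honest reduction and the correct identification of the crux, but note that for merely Lipschitz $\Gamma$ this is not a Fredholm-alternative fact (the operator $\adlo$ is in general not compact on $L_2(\Gamma)$); it rests on Verchota's $L_2$-invertibility theory via Rellich identities together with duality and interpolation. Finally, your spectral step needs $\sigma(T)\subset[0,1]$ rather than merely $[-1,1]$: without $T\geq0$ you would also have to rule out $-1\in\sigma(T)$. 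The positivity $T\geq0$ is asserted in your plan but never derived --- it follows in one line from $\dual{\slo T\phi}{\phi}_\Gamma=\|\nabla u_\phi\|_{L_2(\Omega_+)}^2\geq0$, and should be stated explicitly.
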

\begin{remark}
  The notation used in this section is bounded to Galerkin methods. Error estimators of the
  type presented here do not use orthogonality and can therefore be defined also for
  collocation or qualocation methods,
  where, instead of the factor $1/2$, a function has to be used which represents the
  curvature of the boundary.
\end{remark}
According to the last theorem, the Neumann series
\begin{align*}
  (1/2-\adlo)^{-1} = \sum_{j=0}^\infty (1/2 + \adlo)^j
\end{align*}
converges in the norm $\norm{\cdot}{\slo}$,
so that one may define for $J\in\N_0$ the global error estimator
\begin{align*}
  \eta^{(J)} := \norm{\sum_{j=0}^J (1/2 + \adlo)^j (\dn\wilde u - \Phi)}{\slo}.
\end{align*}
Due to representation via a Neumann series, the estimator is efficient and reliable,
as is shown in~\cite{ss00:dirichlet}.
\begin{theorem}\label{thm:est:ost:eff:rel}
  The estimator $\eta^{(J)}$ is efficient and reliable,
  \begin{align*}
    \frac{1}{1+\c{cont}^{J+1}}\eta^{(J)} \leq \norm{\phi-\Phi}{\slo}
    \leq \frac{1}{1-\c{cont}^{J+1}}\eta^{(J)}
  \end{align*}
\end{theorem}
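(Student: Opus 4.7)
The plan is to set $e := \phi - \Phi$, $r := \dn\wilde u - \Phi$ and observe that the second-kind integral equation $r = (1/2-\adlo)e$ together with the operator identity $I - (1/2 + \adlo) = 1/2 - \adlo$ lets us write the inverse as a Neumann series
\begin{align*}
  (1/2-\adlo)^{-1} = \bigl(I - (1/2+\adlo)\bigr)^{-1} = \sum_{j=0}^{\infty}(1/2+\adlo)^j,
\end{align*}
with convergence in the operator norm induced by $\norm{\cdot}{\slo}$ by the contraction bound with constant $\c{cont}<1$ from the cited theorem. Consequently, $e = \sum_{j=0}^{\infty}(1/2+\adlo)^j r$.

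Next, I would denote by $S_J := \sum_{j=0}^{J}(1/2+\adlo)^j r$ the partial sum, so that $\eta^{(J)} = \norm{S_J}{\slo}$. The key identity is the factorization of the tail
\begin{align*}
  e - S_J = \sum_{j=J+1}^{\infty}(1/2+\adlo)^j r = (1/2+\adlo)^{J+1}\sum_{j=0}^{\infty}(1/2+\adlo)^j r = (1/2+\adlo)^{J+1} e,
\end{align*}
from which the contraction property (iterated $J+1$ times) gives
\begin{align*}
  \norm{e - S_J}{\slo} \le \c{cont}^{J+1}\,\norm{e}{\slo}.
\end{align*}

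Finally, two triangle inequalities close the argument. For efficiency I would bound
\begin{align*}
  \eta^{(J)} = \norm{S_J}{\slo} \le \norm{e}{\slo} + \norm{e - S_J}{\slo} \le \bigl(1 + \c{cont}^{J+1}\bigr)\,\norm{e}{\slo},
\end{align*}
and for reliability
\begin{align*}
  \norm{e}{\slo} \le \norm{S_J}{\slo} + \norm{e - S_J}{\slo} \le \eta^{(J)} + \c{cont}^{J+1}\,\norm{e}{\slo},
\end{align*}
which upon rearranging yields $\norm{e}{\slo} \le \eta^{(J)}/(1-\c{cont}^{J+1})$. The two bounds together are exactly the claim.

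There is no real obstacle here once the contraction bound is in hand; the only subtle point is verifying that the Neumann series is actually valid in the $\slo$-induced norm rather than the standard $H^{-1/2}(\Gamma)$-norm, which however follows directly from the equivalence of $\norm{\cdot}{\slo}$ with $\norm{\cdot}{H^{-1/2}(\Gamma)}$ (Theorems~\ref{thm:stability}, \ref{thm:ellipticity}) combined with the stated contraction estimate $\norm{(1/2+\adlo)\phi}{\slo}\le \c{cont}\norm{\phi}{\slo}$.
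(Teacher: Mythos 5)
Your proof is correct and follows exactly the Neumann-series argument the paper hints at (``Due to representation via a Neumann series, the estimator is efficient and reliable'') before deferring the details to the reference~\cite{ss00:dirichlet}. The factorization $e - S_J = (1/2+\adlo)^{J+1}e$, the contraction estimate iterated $J+1$ times, and the two triangle inequalities are precisely the intended argument; your closing remark about the $\slo$-norm versus the $H^{-1/2}$-norm correctly identifies (and dispatches) the only subtlety.
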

The same arguments also apply for the Neumann problem, where the error estimator for an approximation
$U$ is defined by
\begin{align*}
  \eta^{(J)} := \norm{\sum_{j=0}^J \left( P(1/2-\dlo)^j P (\trace \wilde u - U) \right)}{\hyp},
\end{align*}
cf.~\cite{ss00:neumann,s00}, where $P$ is an operator that ensures vanishing integral mean.
The analogue to Theorem~\ref{thm:est:ost:eff:rel} is of course valid.\\

On the implementational side, one has to introduce an approximation of the application of
the Neumann series. If, e.g., $\Phi\in\Pp^p(\mesh)$ is an approximation of the solution of a Dirichlet
problem, the $L_2(\Gamma)$-projection $\pi$ on a space finer than $\Pp^p(\mesh)$ may be used to compute
\begin{align*}
  \wilde\eta^{(J)} := \norm{\sum_{j=0}^J \left( \pi(1/2 + \adlo) \right)^j
  \pi(\dn\wilde u - \Phi)}{\slo},
\end{align*}
in which case the following result is valid, cf.~\cite[Thm.~3.3]{ss00:dirichlet}.
\begin{theorem}\label{thm:est:ost:apx:eff:rel}
  Let $(\mesh_\ell)_{\ell\in\N_0}$ be a uniform sequence of meshes with mesh-width $h_\ell$ and
  $(\widehat\mesh_\ell)_{\ell\in\N_0}$ be a uniform sequence of meshes with mesh-width
  $\widehat h_\ell$ such that $\mesh_\ell\subseteq \widehat\mesh_\ell$ for $\ell\in\N_0$.
  If $\Phi_\ell\in\Pp^p(\mesh_\ell)$ is an approximation to the exact solution $\phi$ of
  a Dirichlet problem with data $g$, $\wilde u_\ell := \slp \Phi_\ell - \dlp g$,
  and $\widehat\pi_\ell:L_2\rightarrow\Pp^p(\widehat\mesh_\ell)$ is the $L_2$ orthogonal projection,
  the estimator
  \begin{align*}
    \wilde\eta^{(J)}_\ell := \norm{\sum_{j=0}^J \left( \widehat\pi_\ell(1/2 + \adlo) \right)^j
    \widehat\pi_\ell(\dn\wilde u_\ell - \Phi_\ell)}{\slo},
  \end{align*}
  is efficient and reliable in the sense that there exists a constant $\setc{est:2eq}>0$ such that
  \begin{align*}
    \frac{1}{1+\c{cont}^{J+1}} \left\{ \wilde\eta^{(J)}_\ell 
    - \c{est:2eq} J\widehat h_\ell \eta^{(0)}_\ell \right\}
    \leq
    \norm{\phi-\Phi_\ell}{\slo}\\
    \leq
    \frac{1}{1-\c{cont}^{J+1}} \left\{ \wilde\eta^{(J)}_\ell 
    + \c{est:2eq} J\widehat h_\ell \eta^{(0)}_\ell \right\}
  \end{align*}
\end{theorem}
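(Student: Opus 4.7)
The plan is to deduce the result from Theorem~\ref{thm:est:ost:eff:rel} by controlling the consistency error introduced through the discrete projection $\widehat\pi_\ell$. Writing $A:=1/2+\adlo$ and $r_\ell:=\dn\wilde u_\ell-\Phi_\ell$, so that
\begin{align*}
 \est{\ell}{}^{(J)} := \norm{\textstyle\sum_{j=0}^J A^j r_\ell}{\slo},
 \qquad
 \estd{\ell}{}^{(J)} := \norm{\textstyle\sum_{j=0}^J (\widehat\pi_\ell A)^j \widehat\pi_\ell r_\ell}{\slo},
\end{align*}
Theorem~\ref{thm:est:ost:eff:rel} already provides
\begin{align*}
 (1+\c{cont}^{J+1})^{-1}\est{\ell}{}^{(J)} \le \norm{\phi-\Phi_\ell}{\slo} \le (1-\c{cont}^{J+1})^{-1}\est{\ell}{}^{(J)}.
\end{align*}
It therefore suffices to prove the perturbation bound
\begin{align}\label{plan:perturb}
 \bigl|\estd{\ell}{}^{(J)}-\est{\ell}{}^{(J)}\bigr| \le \c{est:2eq}\,J\widehat h_\ell\,\est{\ell}{}^{(0)},
\end{align}
after which the two inequalities of the theorem follow from the triangle inequality applied to the outer $\slo$-norm.

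To establish~\eqref{plan:perturb}, I will use a telescoping identity for the iterates. A direct induction on $j$ gives
\begin{align*}
 (\widehat\pi_\ell A)^j \widehat\pi_\ell r_\ell - A^j r_\ell
 = \sum_{k=0}^{j} (\widehat\pi_\ell A)^{k}\,(\widehat\pi_\ell-I)\,A^{j-k} r_\ell,
\end{align*}
so that by the triangle inequality
\begin{align*}
 \bigl|\estd{\ell}{}^{(J)}-\est{\ell}{}^{(J)}\bigr|
 \le \sum_{j=0}^{J}\sum_{k=0}^{j}
 \norm{(\widehat\pi_\ell A)^{k}(I-\widehat\pi_\ell)A^{j-k}r_\ell}{\slo}.
\end{align*}
The two ingredients I will feed into this estimate are (a) the contraction property from~\cite{ost:wendland:01} quoted above, together with the $\slo$-stability of $\widehat\pi_\ell$ (which on a uniform mesh follows from the $L_2$-orthogonality and an inverse estimate), yielding $\norm{(\widehat\pi_\ell A)^{k}w}{\slo}\lesssim \c{cont}^{k}\norm{w}{\slo}$, and (b) the duality-enhanced approximation estimate
\begin{align*}
 \norm{(I-\widehat\pi_\ell) w}{H^{-1/2}(\Gamma)}
 \lesssim \widehat h_\ell\,\norm{w}{H^{1/2}(\Gamma)},
\end{align*}
which follows from Lemma~\ref{lem:L2:Pp:apx} applied twice through a Nitsche duality trick; combined with the norm equivalence $\norm{\cdot}{\slo}\simeq\norm{\cdot}{H^{-1/2}(\Gamma)}$ and the mapping property $A:H^{1/2}(\Gamma)\to H^{1/2}(\Gamma)$ from Theorem~\ref{thm:stability}, this produces
\begin{align*}
 \norm{(I-\widehat\pi_\ell)A^{j-k} r_\ell}{\slo}
 \lesssim \widehat h_\ell\,\c{cont}^{j-k}\norm{r_\ell}{H^{1/2}(\Gamma)}.
\end{align*}
Summing the resulting geometric series over $k$ and $j$ gives the factor $J$ and replaces $\c{cont}^{j}$ by a bounded geometric tail; a final inverse estimate (valid because $r_\ell\in L_2(\Gamma)$ lives on the uniform coarse scale $h_\ell$) converts $\norm{r_\ell}{H^{1/2}(\Gamma)}$ into $\norm{r_\ell}{\slo}=\est{\ell}{}^{(0)}$ at the cost of an $h_\ell^{-1}$-factor, which is absorbed into $\c{est:2eq}$ since $h_\ell\simeq 1$ on a fixed coarsest mesh (or, alternatively, by tracking the uniform ratio $h_\ell/\widehat h_\ell$).

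The main obstacle is the last step: obtaining the full power $\widehat h_\ell$ (rather than $\widehat h_\ell^{1/2}$) in the perturbation term requires using the approximation property of $\widehat\pi_\ell$ on \emph{both} sides of the duality and hence assuming (or verifying) that each iterate $A^{j}r_\ell$ lies in $H^{1/2}(\Gamma)$ with a norm that can be controlled uniformly in $j$ by $\est{\ell}{}^{(0)}$. This is where the uniformity of the mesh sequences $(\mesh_\ell)$ and $(\widehat\mesh_\ell)$ enters in an essential way, because it allows the necessary inverse estimates that trade Sobolev regularity for mesh-width factors; the same step would fail, or at least require a more delicate analysis, on locally refined meshes.
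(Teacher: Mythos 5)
The paper does not prove Theorem~\ref{thm:est:ost:apx:eff:rel} internally---it states the result with a pointer to \cite[Thm.~3.3]{ss00:dirichlet}---so there is no argument here to compare against. Your plan, namely to reduce to Theorem~\ref{thm:est:ost:eff:rel} via the triangle inequality and a telescoping perturbation bound $|\wilde\eta^{(J)}_\ell-\eta^{(J)}_\ell|\le\c{est:2eq}\,J\,\widehat h_\ell\,\eta^{(0)}_\ell$, is the natural organization, and the telescoping identity for $(\widehat\pi_\ell A)^j\widehat\pi_\ell r_\ell-A^j r_\ell$ with $A:=1/2+\adlo$ is algebraically correct.

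The proof is nevertheless incomplete; the difficulty you flag at the end is a genuine gap, not a loose end. On a uniform mesh, Lemma~\ref{lem:L2:Pp:apx} gives $\norm{(I-\widehat\pi_\ell)v}{H^{-1/2}(\Gamma)}\lesssim\widehat h_\ell^{\,s+1/2}\snorm{v}{H^s(\Gamma)}$ for $s\in[0,1]$, so extracting the full power $\widehat h_\ell$ requires $s=1/2$ and hence control of $\snorm{A^{j-k}r_\ell}{H^{1/2}(\Gamma)}$. But $A=1/2+\adlo$ is a pseudodifferential operator of order zero and gains no regularity, and $r_\ell=\dn\wilde u_\ell-\Phi_\ell=(1/2-\adlo)(\phi-\Phi_\ell)$ is not in $H^{1/2}(\Gamma)$ in general: $\Phi_\ell$ is a discontinuous piecewise polynomial, and $\phi$ is only a density in $H^{-1/2}(\Gamma)$ with possible edge and corner singularities. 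Your proposed remedy---an inverse estimate converting $\norm{r_\ell}{H^{1/2}(\Gamma)}$ into $\norm{r_\ell}{\slo}$ at the cost of $h_\ell^{-1}$, ``absorbed into $\c{est:2eq}$''---does not hold up. First, $r_\ell$ is not a discrete function on $\mesh_\ell$ (the component $\dn\wilde u_\ell$ is not piecewise polynomial), so no inverse estimate for it is available. Second, the meshes $\mesh_\ell$ are refined, so $h_\ell\to 0$ and $h_\ell^{-1}$ cannot be absorbed into a fixed constant. Third, the alternative of tracking the bounded ratio $\widehat h_\ell/h_\ell$ yields an $\ell$-uniform constant rather than a factor proportional to $\widehat h_\ell$, which is strictly weaker than the stated estimate. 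What does fall out of the telescoping, using $s=0$ in Lemma~\ref{lem:L2:Pp:apx}, is the weaker perturbation $|\wilde\eta^{(J)}_\ell-\eta^{(J)}_\ell|\lesssim J\,\widehat h_\ell^{\,1/2}\norm{r_\ell}{L_2(\Gamma)}$; upgrading this to $\widehat h_\ell\,\eta^{(0)}_\ell$ needs the structure $r_\ell=(1/2-\adlo)(\phi-\Phi_\ell)$ and the smoothing properties of the iterated operator chain, as in \cite{ss00:dirichlet}, not generic interpolation and inverse estimates. A secondary issue: the claimed bound $\norm{(\widehat\pi_\ell A)^k w}{\slo}\lesssim\c{cont}^k\norm{w}{\slo}$ ``from $L_2$-orthogonality and an inverse estimate'' also requires more care, since the $L_2$-projection onto discontinuous piecewise polynomials is not a priori bounded on $H^{-1/2}(\Gamma)$ and the duality argument fails because its range is not contained in $H^{1/2}(\Gamma)$; one has to treat $(\widehat\pi_\ell-I)A$ as a small perturbation for $\widehat h_\ell$ small, which reintroduces the very consistency analysis the telescoping was meant to bypass.
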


The localization of $\wilde\eta^{(J)}_\ell$ in this case is a more subtle matter, as no orthogonality or
approximation property can be used. Indeed, the derivation of this type of estimator assumed
no whatsoever special approximation property. In~\cite{ss00:dirichlet} the authors
use the localization
\begin{align*}
  \left( \wilde \eta^{(J)}_\ell \right)^2 = \sum_{\el\in\mesh_\ell} \wilde \eta^{(J)}_\el
\end{align*}
where
\begin{align*}
  \wilde \eta^{(J)}_\el &= \dual{\slo e^{(J)}_\ell}{e^{(J)}_\ell}_\el,\\
  e^{(J)}_\ell &= \sum_{j=0}^J \left( \pi(1/2 + \adlo) \right)^j \pi(\dn\wilde u - \Phi_\ell)
\end{align*}
This is indeed not a fully localized estimator since it involves the single layer operator $\slo$.
In~\cite{s00}, it is suggested to use the multilevel localization from
Section~\ref{section:localization:multilevel}. With the notation from
Theorem~\ref{thm:est:ost:apx:eff:rel}, define the operator
\begin{align*}
  A_\ell^{s} \phi := \sum_{k=0}^\ell h_k^{-2s} (\pi_k - \pi_{k-1})\phi
  + \widehat h_\ell^{-2s} (\widehat\pi_\ell - \pi_\ell)\phi
\end{align*}
and note that for $\widehat\Phi_\ell\in\Pp^0(\widehat\mesh_\ell)$ holds
\begin{align*}
  \sum_{k=0}^\ell &h_k^{-2s} \norm{(\pi_k - \pi_{k-1})\Phi_\ell}{\ell_2(\Gamma)}^2
+ \widehat h_\ell^{-2s} \norm{(\widehat\pi_\ell - \pi_\ell)\Phi_\ell}{\ell_2(\Gamma)}^2\\
&= \dual{A_\ell^{s}\widehat\Phi_\ell}{\widehat\Phi_\ell}_{\ell_2(\Gamma)}
= \dual{A_\ell^{s/2}\widehat\Phi_\ell}{A_\ell^{s/2}\widehat\Phi_\ell}_{\ell_2(\Gamma)},
\end{align*}
where the last identity follows from the properties of the $\pi_k$, cf.~\cite[Prop.~2.1]{s00}.
Finally, Theorem~\ref{thm:multilevel} states that the $H^{-1/2}$-norm of $e_\ell^{(J)}$
can be bounded by
\begin{align*}
  &\dual{A_\ell^{-1/4}e^{(J)}_\ell}{A_\ell^{-1/4}e^{(J)}_\ell}_{\ell_2(\Gamma)}\\
  &\qquad =
  \sum_{\el\in\mesh_\ell} \dual{A_\ell^{-1/4}e^{(J)}_\ell}{A_\ell^{-1/4}e^{(J)}_\ell}_{\ell_2(\el)},
\end{align*}
via
\begin{align*}
  \sum_{\el\in\mesh_\ell} \eta_\el^2
  \lesssim
  \left( \wilde\eta^{(J)}_\ell \right)^2
  \lesssim (\ell+2)^2 \sum_{\el\in\mesh_\ell} \eta_\el^2.
\end{align*}
%%%%%%%%%%%%%%%%%%%%%%%%%%%%%%%%%%%%%%%%%%%%%%%%%%%%%%%%%%%%%%%%%%%%%%%%%%%%%%%%%%%%%%%%%%%%%%%%%%%%%%%
\subsection{A~posteriori error control of data approximation}\label{section:dataapproximation}
%%%%%%%%%%%%%%%%%%%%%%%%%%%%%%%%%%%%%%%%%%%%%%%%%%%%%%%%%%%%%%%%%%%%%%%%%%%%%%%%%%%%%%%%%%%%%%%%%%%%%%%
In practice, the right-hand side $F$ of~\eqref{intro:galerkin} cannot be computed analytically. For the weakly singular
integral equation from Proposition~\ref{prop:dirichlet}, it holds for instance $ F = (K+1/2) f$ for some given
$f \in H^{1/2}(\Gamma)$. For the hypersingular integral equation from Proposition~\ref{prop:neumann}, it holds
for instance $ F = (K^\prime-1/2) f$ for some given $f \in H^{-1/2}_0(\Gamma)$. In either case, the
action of the integral operator to the continuous data $f$ is well-defined, but hardly computable.
In practice, the given data $f$ is therefore replaced by some piecewise polynomial data
$f_\ell$. This leads to a computable right-hand side for the Galerkin discretization~\eqref{intro:galerkin}, where
$F$ is replaced by some approximation $F_\ell$. The following short sections give insight
in how to control this additional approximation error.

\subsubsection{Inverse estimates for integral operators}
The following inverse-type estimates have independently first been shown in~\cite{fkmp13,gantumur} for piecewise polynomials. While~\cite{fkmp13} considered lowest-order polynomials on piecewise polygonal geometries,~\cite{gantumur} covers arbitrary-order piecewise polynomials but is restricted to smooth boundaries $\Gamma$. In~\cite{invest}, the results of~\cite{fkmp13,gantumur} are generalized to general densities instead of piecewise polynomials.
\begin{mylemma}\label{estred:lem:invest}
There exists a constant $\setc{opt:weaksing:invest}>0$ such that for all $v\in \widetilde  H^1(\Gamma)$ and all $\psi\in L_2(\Gamma)$
\begin{align}\label{opt:eq:invest}
\begin{split}
 \norm{h_\ell^{1/2}\nabla_\Gamma V\psi&}{L_2(\Gamma)}+ \norm{h_\ell^{1/2}(1/2-K^\prime)\psi}{L_2(\Gamma)}\\
 &\leq  \c{opt:weaksing:invest} \big(\norm{\psi}{\widetilde H^{-1/2}(\Gamma)}+\norm{h_\ell^{1/2}\psi}{L_2(\Gamma)}\big),\\
\norm{h_\ell^{1/2} Wv&}{L_2(\Gamma)}+\norm{h_\ell^{1/2}\nabla_\Gamma (1/2+K)v}{L_2(\Gamma)}\\
&\leq  \c{opt:weaksing:invest}\big(\norm{v}{\widetilde H^{1/2}(\Gamma)}+\norm{h_\ell^{1/2}\nabla_\Gamma v}{L_2(\Gamma)}\big).
\end{split}
\end{align}
The constant $\c{opt:weaksing:invest}$ depends only on the shape regularity of $\mesh_\ell$ and on $\Gamma$. In the special case
$v=V_\ell\in\widetilde\Sp^p(\mesh_\ell)$ and $\psi=\Psi_\ell\in\Pp^p(\mesh_\ell)$, there even holds
\begin{align}\label{opt:eq:invest:discrete}
 \begin{split}
\norm{h_\ell^{1/2}\nabla_\Gamma V\Psi_\ell}{L_2(\Gamma)} &+  \norm{h_\ell^{1/2}(1/2-K^\prime)\Psi_\ell}{L_2(\Gamma)}\\
&\leq \c{opt:weaksing:invest:discrete} \norm{\Psi_\ell}{\widetilde H^{-1/2}(\Gamma)},\\
 \norm{h_\ell^{1/2} WV_\ell}{L_2(\Gamma)}&+ \norm{h_\ell^{1/2}\nabla_\Gamma (1/2+K)V_\ell}{L_2(\Gamma)}\\
 &\leq \c{opt:weaksing:invest:discrete}\norm{V_\ell}{\widetilde H^{1/2}(\Gamma)}.
 \end{split}
\end{align}
The constant $\setc{opt:weaksing:invest:discrete}>$ depends only on $\Gamma$, the shape regularity of $\mesh_\ell$, and on the polynomial degree $p$.
\end{mylemma}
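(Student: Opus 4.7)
The plan is to prove the continuous estimate~\eqref{opt:eq:invest} for the single-layer part $\|h_\ell^{1/2}\nabla_\Gamma V\psi\|_{L_2(\Gamma)}$ first; the companion term involving $(1/2-K')\psi$ can then be handled by the Plemelj jump relation $(1/2-K')\psi = \dn V\psi$ together with the observation that the interior and exterior traces of $\nabla \widetilde V\psi$ are both controlled by $\|\nabla_\Gamma V\psi\|_{L_2} + \|(1/2-K')\psi\|_{L_2}$, while the two estimates in the second line of~\eqref{opt:eq:invest} (hypersingular and double-layer) follow by completely analogous arguments using $W = -\dn\dlp$. Finally, the discrete bounds~\eqref{opt:eq:invest:discrete} fall out of the continuous ones: for $\Psi_\ell\in\Pp^p(\mesh_\ell)$, Lemma~\ref{lem:Pp:invest} with $s=1/2$ gives $\|h_\ell^{1/2}\Psi_\ell\|_{L_2(\Gamma)}\lesssim \|\Psi_\ell\|_{\widetilde H^{-1/2}(\Gamma)}$, absorbing the second term on the right-hand side of~\eqref{opt:eq:invest}; likewise Lemma~\ref{lem:Sp:invest} handles the discrete hypersingular case.

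For the single-layer estimate, I would begin by localizing: since $h_\ell$ is $\mesh_\ell$-piecewise constant,
\[
 \|h_\ell^{1/2}\nabla_\Gamma V\psi\|_{L_2(\Gamma)}^2 = \sum_{T\in\mesh_\ell} h_T\,\|\nabla_\Gamma V\psi\|_{L_2(T)}^2,
\]
so it suffices to prove, for each $T\in\mesh_\ell$, an elementwise bound of the form
\[
 h_T\|\nabla_\Gamma V\psi\|_{L_2(T)}^2 \lesssim \|\psi\|_{\widetilde H^{-1/2}(\omega_T^\star)}^2 + h_T\|\psi\|_{L_2(\omega_T^\star)}^2
\]
on a finitely overlapping patch $\omega_T^\star\supset T$. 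To this end, I would pick a smooth cutoff $\chi_T$ with $\chi_T\equiv 1$ on a neighbourhood of $T$, $\supp(\chi_T)\subset\omega_T^\star$, and $\|\nabla\chi_T\|_{L_\infty}\lesssim h_T^{-1}$, and split
\[
 \nabla_\Gamma V\psi|_T = \nabla_\Gamma V(\chi_T\psi)|_T + \nabla_\Gamma V((1-\chi_T)\psi)|_T.
\]
For the \emph{near-field} term, the mapping property $V:\widetilde H^{-1/2}(\Gamma)\to H^{1/2}(\Gamma)$ from Theorem~\ref{thm:stability}, combined with a scaling argument on the reference element and the multiplier estimate $\|\chi_T\psi\|_{\widetilde H^{-1/2}}\lesssim \|\psi\|_{\widetilde H^{-1/2}(\omega_T^\star)} + h_T^{1/2}\|\psi\|_{L_2(\omega_T^\star)}$ (which follows from $\|\chi_T\|_{L_\infty}\lesssim 1$ and $\|\nabla\chi_T\|_{L_\infty}\lesssim h_T^{-1}$ by interpolation between $L_2$ and $H^1$ multiplier bounds), yields the required estimate. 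For the \emph{far-field} term, I would exploit that the kernel $(x,y)\mapsto (1-\chi_T(y))\nabla_x G(x-y)$ is smooth on $T\times\Gamma$; writing this operator as a commutator-type correction $[\chi_T,V]$ applied to $\psi$ (after exchanging cutoffs), the kernel $(\chi_T(x)-\chi_T(y))G(x-y)$ is less singular by one order thanks to $|\chi_T(x)-\chi_T(y)|\lesssim h_T^{-1}|x-y|$, so that the corresponding operator maps $\widetilde H^{-1/2}(\Gamma)\to H^1_{\rm loc}$ with operator norm $\lesssim h_T^{-1/2}$ on $T$, giving exactly the desired scaling.

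The main technical obstacle will be the cutoff manipulation in Step 2: verifying that the multiplier and commutator estimates hold uniformly with constants depending only on shape-regularity and $\Gamma$ (not on the element $T$ itself) requires a careful scaling argument to the reference patch, coupled with the (non-trivial) fact that fractional Sobolev norms on patches can be localized via Lemma~\ref{lem:Hs:poinc} and Theorem~\ref{thm:loc:slobodeckij:upper}. In particular, for the commutator bound one needs a Calder\'on--Zygmund-type analysis of the smoothed kernel, which is where~\cite{fkmp13,gantumur,invest} invest most of the technical work; the polygonal/curved-surface distinction enters precisely here, explaining why~\cite{fkmp13} is restricted to piecewise polygonal $\Gamma$ while~\cite{gantumur} treats smooth boundaries and~\cite{invest} handles both in a unified way. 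Once this is done, summation over $T\in\mesh_\ell$ yields~\eqref{opt:eq:invest}, and, as explained, the remaining cases follow by jump relations, duality, and the inverse inequalities of Lemmas~\ref{lem:Pp:invest} and~\ref{lem:Sp:invest}.
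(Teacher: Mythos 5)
The paper itself gives no proof of this lemma: it is stated and then attributed to~\cite{fkmp13,gantumur,invest} (see the sentence immediately preceding it), so there is no in-paper argument to compare against. Your overall architecture (cut-off decomposition into a near-field and a commutator-type far-field; jump relations to move between the four operators; reduction of the discrete bounds to the continuous ones via Lemmas~\ref{lem:Pp:invest} and~\ref{lem:Sp:invest}) does match the strategy of those references.

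However, your near-field step contains a genuine gap. You propose to control $\nabla_\Gamma V(\chi_T\psi)$ in $L_2(T)$ by combining the mapping property $V:\widetilde H^{-1/2}(\Gamma)\to H^{1/2}(\Gamma)$ from Theorem~\ref{thm:stability} with a multiplier bound on $\chi_T\psi$ and a scaling argument. But $V:\widetilde H^{-1/2}\to H^{1/2}$ only yields $V(\chi_T\psi)\in H^{1/2}(\Gamma)$, which is half a derivative short of making $\nabla_\Gamma V(\chi_T\psi)$ an $L_2$-function on $T$; and since $V(\chi_T\psi)$ is not a polynomial, no inverse inequality or affine-scaling argument can upgrade $H^{1/2}$-control to $L_2$-control of its surface gradient. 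What actually closes the near-field in~\cite{fkmp13,gantumur,invest} is the Calder\'on--Zygmund-type boundedness $\nabla_\Gamma V:L_2(\Gamma)\to L_2(\Gamma)$ (equivalently, local $H^1$-regularity of $V$ on $L_2$ densities), applied to the localized data $\chi_T\psi$: this gives $h_T\|\nabla_\Gamma V(\chi_T\psi)\|^2_{L_2(T)}\lesssim h_T\|\psi\|^2_{L_2(\omega_T^\star)}$, which after summing over $T$ contributes exactly the $\|h_\ell^{1/2}\psi\|^2_{L_2(\Gamma)}$ term on the right-hand side. It is precisely this CZ estimate — not the $\widetilde H^{-1/2}\to H^{1/2}$ mapping — that encodes the difficulty on Lipschitz (polygonal/polyhedral) boundaries, and it is where the hard work of those references lives. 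Your far-field reasoning (kernel gain of one order from $|\chi_T(x)-\chi_T(y)|\lesssim h_T^{-1}|x-y|$, giving a $\widetilde H^{-1/2}\to H^1_{\rm loc}$ bound with the right $h_T$-scaling) and your reduction of~\eqref{opt:eq:invest:discrete} to~\eqref{opt:eq:invest} are both sound.
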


\subsubsection{Weakly singular integral equation}
\label{section:aposteriori:data:weaksing}
We show two eligible ways of data approximation for the weakly singular integral equation from
Proposition~\ref{prop:galerkin:dirichlet} with $\Gamma=\partial\Omega$.
First, the approximation for the right-hand side $F$ can be done via the Scott-Zhang projection $J_\ell:\, L_2(\Gamma)\to \Sp^{p+1}(\mesh_\ell)$ from Section~\ref{section:sz}, i.e.
\begin{align}\label{data:apx:sz}
 F_\ell:=(1/2+K)J_\ell f,
\end{align}
or via the $L_2$-orthogonal projection $\Pi_\ell^{p+1}:\,L_2(\Gamma)\to\linebreak\Sp^{p+1}(\mesh_\ell)$ from
Section~\ref{section:L2}, i.e.,
\begin{align}\label{data:apx:l2}
 F_\ell:=(1/2+K)\Pi_\ell^{p+1} f,
\end{align}
where we additionally assume that $\Pi_\ell^{p+1}$ is $H^1$-stable (cf., Section~\ref{section:L2}).
In the following, we denote with $P_\ell$ either the Scott-Zhang projection $P_\ell=J_\ell$ or the $L_2$-orthogonal projection $P_\ell=\Pi_\ell^{p+1}$.
Let $\widetilde\Phi_\ell\in\Pp^p(\mesh_\ell)$ denote the solution of~\eqref{intro:galerkin} with right-hand side~\eqref{data:apx:sz} or~\eqref{data:apx:l2}. The introduced approximation error
can be controlled with the following result
\begin{mylemma}\label{data:errstab:weaksing}
There exists a constant $\setc{data:errstab:weaksing}>0$ such that
\begin{subequations}
\begin{align}
 \c{data:errstab:weaksing}^{-1}\norm{\Phi_\ell-\widetilde\Phi_\ell}{H^{-1/2}(\Gamma)}&\leq  \norm{h_\ell^{1/2}\nabla_\Gamma (1-P_\ell)f}{L_2(\Gamma)}.\label{data:osc:weak:a}
 \end{align}
 Moreover, there exists a
 constant $\setc{data:errstab:weaksing:nvb}>0$ such that
 \begin{align}
  \c{data:errstab:weaksing:nvb}^{-1}\norm{\Phi_\ell-\widetilde\Phi_\ell}{H^{-1/2}(\Gamma)}&\leq \norm{h_\ell^{1/2}(1-\pi_\ell^p)\nabla_\Gamma f}{L_2(\Gamma)}.\label{data:osc:weak:b}
\end{align}
\end{subequations}
The constant $\c{data:errstab:weaksing}$ depends only on $\Gamma$, the shape regularity of $\mesh_\ell$, and on the polynomial degree $p$. The constant $\c{data:errstab:weaksing:nvb}$ depends additionally on all possible shapes of element patches in $\mesh_\ell$.
\end{mylemma}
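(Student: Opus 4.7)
My overall plan is to reduce both estimates to the ellipticity of the single layer operator $\slo$ on the discrete difference $\Psi_\ell := \Phi_\ell - \widetilde\Phi_\ell \in \Pp^p(\mesh_\ell)$. Subtracting the Galerkin equations for $\Phi_\ell$ and $\widetilde\Phi_\ell$ and testing with $\Psi_\ell$ itself, Theorem~\ref{thm:ellipticity} (note that $\Gamma=\partial\Omega$ is closed, so $H^{-1/2}(\Gamma)=\widetilde H^{-1/2}(\Gamma)$) immediately yields
\begin{align*}
\c{ell}\,\norm{\Psi_\ell}{H^{-1/2}(\Gamma)}^2
\le \dual{(1/2+K)(1-P_\ell)f}{\Psi_\ell}_\Gamma,
\end{align*}
and both~\eqref{data:osc:weak:a} and~\eqref{data:osc:weak:b} will follow by estimating this duality product by the claimed right-hand sides multiplied by $\norm{\Psi_\ell}{H^{-1/2}(\Gamma)}$ and dividing through.

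For~\eqref{data:osc:weak:a}, Cauchy--Schwarz in the $H^{1/2}(\Gamma)/H^{-1/2}(\Gamma)$ duality together with the continuity $(1/2+K):H^{1/2}(\Gamma)\to H^{1/2}(\Gamma)$ from Theorem~\ref{thm:stability} reduces the task to bounding $\norm{(1-P_\ell)f}{H^{1/2}(\Gamma)}$. Since $P_\ell$ is $H^{1/2}$-stable for either admissible choice (Lemma~\ref{lem:sz} for Scott--Zhang, and by hypothesis for the $L_2$-projection, see Section~\ref{section:L2}), Lemma~\ref{lem:Sp:apx} at $s=1/2$ applies; taking the admissible $V:=P_\ell f \in \Sp^{p+1}(\mesh_\ell)$ under the minimum eliminates the infimum and produces exactly
\begin{align*}
\norm{(1-P_\ell)f}{H^{1/2}(\Gamma)}\lesssim\norm{h_\ell^{1/2}\nabla_\Gamma(1-P_\ell)f}{L_2(\Gamma)},
\end{align*}
completing~\eqref{data:osc:weak:a}.

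For~\eqref{data:osc:weak:b}, the target is strictly sharper, because $P_\ell f\in\Sp^{p+1}(\mesh_\ell)$ forces $\nabla_\Gamma P_\ell f\in\Pp^p(\mesh_\ell)^d$, so that $\pi_\ell^p$ annihilates it and one has the key identity $(1-\pi_\ell^p)\nabla_\Gamma(1-P_\ell)f=(1-\pi_\ell^p)\nabla_\Gamma f$. I would switch to the adjoint identity $\dual{(1/2+K)\cdot}{\cdot}_\Gamma=\dual{\cdot}{(1/2+K')\cdot}_\Gamma$ so that the discrete test quantity is $(1/2+K')\Psi_\ell\in L_2(\Gamma)$. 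Decomposing $(1/2+K')=1-(1/2-K')$ and combining the $\Pp^p$-inverse estimate of Lemma~\ref{lem:Pp:invest} with the operator inverse estimate~\eqref{opt:eq:invest:discrete} then yields
\begin{align*}
\norm{h_\ell^{1/2}(1/2+K')\Psi_\ell}{L_2(\Gamma)}\lesssim\norm{\Psi_\ell}{H^{-1/2}(\Gamma)},
\end{align*}
and after $L_2$-Cauchy--Schwarz with matched weights $h_\ell^{\pm 1/2}$ it remains to control the paired factor $\norm{h_\ell^{-1/2}(1-P_\ell)f}{L_2(\Gamma)}$ by $\norm{h_\ell^{1/2}(1-\pi_\ell^p)\nabla_\Gamma f}{L_2(\Gamma)}$.

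The hard part will be precisely this last reduction. A naive elementwise Poincar\'e inequality on a single triangle $T$ would only deliver $\norm{(1-P_\ell)f}{L_2(T)}\lesssim h_T\norm{\nabla_\Gamma(1-P_\ell)f}{L_2(T)}$, which reintroduces the $\Pp^p$-contribution $\pi_\ell^p\nabla_\Gamma(1-P_\ell)f$ that~\eqref{data:osc:weak:b} is designed to avoid. The resolution is a patchwise first-order approximation estimate of the form
\begin{align*}
\norm{(1-P_\ell)f}{L_2(T)}\lesssim h_T\,\norm{(1-\pi_\ell^p)\nabla_\Gamma f}{L_2(\omega_T)},
\end{align*}
obtained from a Bramble--Hilbert argument on a reference patch modulo the kernel of $\nabla_\Gamma f \mapsto (1-\pi_\ell^p)\nabla_\Gamma f$, exploiting that $P_\ell$ reproduces $\Sp^{p+1}(\mesh_\ell)$ (and hence in particular piecewise affine functions, whose gradients lie in $\Pp^p(\mesh_\ell)^d$). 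Summing locally and using finite patch overlap explains the additional dependence of $\c{data:errstab:weaksing:nvb}$ on the admissible patch shapes of $\mesh_\ell$.
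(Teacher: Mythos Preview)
Your argument for~\eqref{data:osc:weak:a} is essentially the paper's: ellipticity of $\slo$, continuity of $(1/2+K):H^{1/2}(\Gamma)\to H^{1/2}(\Gamma)$, and then Lemma~\ref{lem:Sp:apx} applied to the $H^{1/2}$-stable projection $P_\ell$.

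For~\eqref{data:osc:weak:b} you take an unnecessarily circuitous route. The paper simply observes that Lemma~\ref{lem:Sp:apx} already delivers, in the course of proving~\eqref{data:osc:weak:a}, the stronger intermediate bound
\[
\norm{\Phi_\ell-\widetilde\Phi_\ell}{H^{-1/2}(\Gamma)}\lesssim\min_{V\in\Sp^{p+1}(\mesh_\ell)}\norm{h_\ell^{1/2}\nabla_\Gamma(f-V)}{L_2(\Gamma)},
\]
and then cites the approximation result \cite[Proposition~8]{dirichlet3d} bounding this minimum by $\norm{h_\ell^{1/2}(1-\pi_\ell^p)\nabla_\Gamma f}{L_2(\Gamma)}$ (with constant depending on patch shapes). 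Your detour through the adjoint $K'$, the inverse estimate~\eqref{opt:eq:invest:discrete}, and weighted $L_2$ Cauchy--Schwarz is correct up to the ``hard part'', but that hard part---your patchwise bound $\norm{(1-P_\ell)f}{L_2(T)}\lesssim h_T\norm{(1-\pi_\ell^p)\nabla_\Gamma f}{L_2(\omega_T)}$---is precisely a local version of the same approximation result, so the detour gains nothing. More seriously, your patchwise Bramble--Hilbert argument presupposes that $(1-P_\ell)f|_T$ depends only on $f|_{\omega_T}$; this locality holds for the Scott--Zhang projection but \emph{fails} for the global $L_2$-projection $\Pi_\ell^{p+1}$ onto continuous splines, which is one of the two admissible choices for $P_\ell$. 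The paper's route through the $P_\ell$-independent minimum handles both choices uniformly and avoids this issue entirely.
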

\begin{proof}
 The Galerkin formulation~\eqref{intro:galerkin} shows
 \begin{align*}
  \form{\Phi_\ell-\widetilde\Phi_\ell}{\Phi_\ell-\widetilde\Phi_\ell}&=\dual{(1/2+K)(f-P_\ell f)}{\Phi_\ell-\widetilde\Phi_\ell}_\Gamma\\
  &\lesssim \norm{f-P_\ell f}{ H^{1/2}(\Gamma)}\norm{\Phi_\ell-\widetilde\Phi_\ell}{H^{-1/2}(\Gamma)},
 \end{align*}
where we used the stability of $K$ from Theorem~\ref{thm:stability}. With ellipticity from Theorem~\ref{thm:ellipticity}, this yields
\begin{align*}
\norm{\Phi_\ell-\widetilde\Phi_\ell}{H^{-1/2}(\Gamma)}\lesssim \norm{f-P_\ell f}{ H^{1/2}(\Gamma)}.
\end{align*}
We use the assumption on $H^1$-stability on $\Pi_\ell^{p+1}$ or, in case of $P_\ell=J_\ell$, the $H^1$-stability of $J_\ell$ from Lemma~\ref{lem:sz}. With Lemma~\ref{lem:Sp:apx}, it holds
\begin{align*}
 \norm{\Phi_\ell-\widetilde\Phi_\ell}{H^{-1/2}(\Gamma)}\lesssim \min_{V_\ell\in \Sp^{p+1}(\mesh_\ell)}\norm{h_\ell^{1/2}\nablag(f-V_\ell)}{L_2(\Gamma)}.
\end{align*}
This shows~\eqref{data:osc:weak:a}. For~\eqref{data:osc:weak:b}, the result~\cite[Proposition~8]{dirichlet3d} finally shows
\begin{align*}
 &\min_{V_\ell\in \Sp^{p+1}(\mesh_\ell)}\norm{h_\ell^{1/2}\nablag(f-V_\ell)}{L_2(\Gamma)}\\
 &\qquad\qquad\lesssim \norm{h_\ell^{1/2}(1-\pi_\ell^{p})\nablag f}{L_2(\Gamma)},
\end{align*}
where the hidden constant depends on the shapes of the element patches in $\mesh_\ell$.
This concludes the proof.
$\hfill\qed$
\end{proof}
Also the error estimators satisfy certain stability properties.
\begin{mylemma}\label{data:eststab:weaksing}
Let $\est{\ell}{}$ denote one of the $(h-h/2)$-type error estimators defined in Theorem~\ref{thm:hh2:weaksing} or the weighted residual error estimator from Theorem~\ref{thm:est:wres:weaksing:rel}.
Moreover, let $\estd{\ell}{}$ denote the perturbed version of the respective error estimator computed with the perturbed Galerkin approximation $\widetilde \Phi_\ell$ and the perturbed data $F_\ell$.
Then, there exists a constant $\setc{data:eststab:weaksing}>0$ such that
\begin{subequations}
\begin{align}\label{data:osc:weak:est:a}
|\estd{\ell}{}- \est{\ell}{}|\leq\c{data:eststab:weaksing}\norm{h_\ell^{1/2}\nabla_\Gamma (1-P_\ell)f}{L_2(\Gamma)}.
 \end{align}
 Moreover, there exists a constant $\setc{data:eststab:weaksing:nvb}>0$ such that
 \begin{align}
 |\estd{\ell}{}- \est{\ell}{}|\leq\c{data:eststab:weaksing:nvb}\norm{h_\ell^{1/2}(1-\pi_\ell^p)\nabla_\Gamma f}{L_2(\Gamma)}.\label{data:osc:weak:est:b}
\end{align}
\end{subequations}
The constant $\c{data:eststab:weaksing}$ depends only on $\Gamma$, the shape regularity of $\mesh_\ell$, and on $p$. The constant $\c{data:eststab:weaksing:nvb}$ depends additionally on all possible shapes of element patches in $\mesh_\ell$.
\end{mylemma}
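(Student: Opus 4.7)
The plan is to handle the $(h-h/2)$-type estimator and the weighted residual estimator in parallel by a triangle-inequality reduction to the propagation of the data perturbation $f\mapsto P_\ell f$, both through the Galerkin discretization and through the residual itself. The main ingredients will be the approximation estimate of Lemma~\ref{lem:Sp:apx}, the inverse estimates of Lemma~\ref{estred:lem:invest} and Lemma~\ref{lem:Pp:invest}, together with the already established Galerkin perturbation bound from Lemma~\ref{data:errstab:weaksing}; the nvb-variant~\eqref{data:osc:weak:est:b} will then follow from~\eqref{data:osc:weak:est:a} by invoking, exactly as in the proof of Lemma~\ref{data:errstab:weaksing}, the inequality
\begin{align*}
\min_{V_\ell\in\Sp^{p+1}(\mesh_\ell)}\|h_\ell^{1/2}\nabla_\Gamma(f-V_\ell)\|_{L_2(\Gamma)}
\lesssim \|h_\ell^{1/2}(1-\pi_\ell^p)\nabla_\Gamma f\|_{L_2(\Gamma)}.
\end{align*}

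For the $(h-h/2)$-type estimator $\eta_\ell=\|\Phi_\ell-\widehat\Phi_\ell\|_{V}$ from Theorem~\ref{thm:hh2:weaksing}, the reverse triangle inequality yields
\begin{align*}
|\widetilde\eta_\ell-\eta_\ell|\le\|\Phi_\ell-\widetilde\Phi_\ell\|_{V}+\|\widehat\Phi_\ell-\widehat{\widetilde\Phi}_\ell\|_{V}.
\end{align*}
The first summand is bounded directly by Lemma~\ref{data:errstab:weaksing} via the equivalence of $\|\cdot\|_V$ and $\|\cdot\|_{\widetilde H^{-1/2}(\Gamma)}$. For the second summand, I would rerun the proof of Lemma~\ref{data:errstab:weaksing} on the uniform refinement $\widehat\mesh_\ell$: since the perturbed data $F_\ell=(1/2+K)P_\ell f$ is the same on both levels, the right-hand side is still $\|(1-P_\ell)f\|_{H^{1/2}(\Gamma)}$, and the $\widetilde H^{1/2}$-stability together with the approximation property of $P_\ell$ (Lemma~\ref{lem:Sp:apx}) reduces this to the desired weighted $L_2$-term.

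For the weighted residual estimator of Theorem~\ref{thm:est:wres:weaksing:rel}, the residual perturbation factors as
\begin{align*}
\widetilde R_\ell-R_\ell=(1/2+K)(P_\ell-1)f+V(\Phi_\ell-\widetilde\Phi_\ell),
\end{align*}
so that $|\widetilde\eta_\ell-\eta_\ell|$ splits into two contributions. The first is controlled by the inverse-type estimate~\eqref{opt:eq:invest}, which yields
\begin{align*}
&\|h_\ell^{1/2}\nabla_\Gamma(1/2+K)(1-P_\ell)f\|_{L_2(\Gamma)}\\
&\qquad\lesssim\|(1-P_\ell)f\|_{\widetilde H^{1/2}(\Gamma)}+\|h_\ell^{1/2}\nabla_\Gamma(1-P_\ell)f\|_{L_2(\Gamma)},
\end{align*}
and the fractional-Sobolev term is absorbed into the weighted $L_2$-term by Lemma~\ref{lem:Sp:apx} together with the $\widetilde H^{1/2}$-stability of $P_\ell$. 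The second summand is bounded by the discrete inverse estimate~\eqref{opt:eq:invest:discrete} by $\|\Phi_\ell-\widetilde\Phi_\ell\|_{\widetilde H^{-1/2}(\Gamma)}$, which is again controlled by Lemma~\ref{data:errstab:weaksing}.

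The main technical obstacle I anticipate is precisely the fine-mesh step in the $(h-h/2)$ argument: one needs to bound the fine-mesh Galerkin perturbation $\widehat\Phi_\ell-\widehat{\widetilde\Phi}_\ell$ in terms of the \emph{coarse}-mesh oscillation $\|h_\ell^{1/2}\nabla_\Gamma(1-P_\ell)f\|_{L_2(\Gamma)}$ rather than a finer analogue, since the estimator itself is formulated on $\mesh_\ell$. This relies on the fact that $P_\ell$ is a projection onto the coarse space, so that Lemma~\ref{lem:Sp:apx} applied on $\mesh_\ell$ remains available even when the Galerkin perturbation is measured on $\widehat\mesh_\ell$; once this bookkeeping is in place, the rest of the argument is routine.
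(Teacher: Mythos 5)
Your proposal is correct and follows essentially the same approach as the paper: both rest on a reverse-triangle-inequality split, the inverse estimates of Lemmas~\ref{estred:lem:invest} and~\ref{lem:Pp:invest}, and the Galerkin perturbation bound of Lemma~\ref{data:errstab:weaksing} combined with the approximation property of $P_\ell$. The only cosmetic difference is that for the $(h-h/2)$ part you argue with the energy-norm variant $\|\Phi_\ell-\widehat\Phi_\ell\|_\slo$ (so you control both coarse- and fine-mesh Galerkin perturbations) while the paper takes the single-solution variant $\|h_\ell^{1/2}(1-\pi_\ell^p)\widehat\Phi_\ell\|_{L_2(\Gamma)}$, and the bookkeeping point you correctly flag --- the fine-mesh perturbation is still controlled by the \emph{coarse}-mesh oscillation because $P_\ell$ is fixed at the coarse level --- is exactly what both variants rely on.
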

\begin{proof}
As example for $(h-h/2)$-type estimators, we choose e.g., $\est{\ell}{}=\norm{h_\ell^{1/2}(1-\pi_\ell^p)\widehat\Phi_\ell}{L_2(\Gamma)}$. Let $\widetilde{\widehat\Phi_\ell}$ denote the solution of~\eqref{intro:galerkin} on the 
uniformly refined space $\widehat\XX_\ell:=\Pp^p(\widehat\mesh_\ell)$ with right-hand side $F_\ell$.
The inverse triangle inequality combined with the inverse estimate from Lemma~\ref{lem:Pp:invest} shows
\begin{align*}
\big| \norm{h_\ell^{1/2}(1-\pi_\ell^{p})\widetilde{\widehat \Phi_\ell}}{L_2(\Gamma)}&-\norm{h_\ell^{1/2}(1-\pi_\ell^{p})\widetilde{\Phi_\ell}}{L_2(\Gamma)}\big|\\
&\leq \norm{h_\ell^{1/2}(\widetilde{\widehat \Phi_\ell}-\widehat{\Phi_\ell})}{L_2(\Gamma)}\\
&\lesssim \norm{\widetilde{\widehat \Phi_\ell}-\widehat{\Phi_\ell}}{\widetilde H^{-1/2}(\Gamma)}.
\end{align*}
The remaining statement follows from~\eqref{data:osc:weak:a}--\eqref{data:osc:weak:b}.

The estimate for the weighted residual error estimator is similar. There holds
\begin{align*}
\big| \norm{h_\ell^{1/2}\nablag &(V\widetilde \Phi_\ell - F_\ell)}{L_2(\Gamma)}- \norm{h_\ell^{1/2}\nablag (V \Phi_\ell - F)}{L_2(\Gamma)}\big|\\
 & \lesssim  \norm{h_\ell^{1/2}\nablag V(\widetilde \Phi_\ell -\Phi_\ell)}{L_2(\Gamma)}\\
 &\qquad+ \norm{h_\ell^{1/2}\nablag (1/2+K)(f-P_\ell f)}{L_2(\Gamma)}.
\end{align*}
We apply the inverse estimate for $V$ from~\eqref{opt:eq:invest:discrete} and for $(1/2+K)$ from~\eqref{opt:eq:invest} to obtain
\begin{align*}
 \big| \norm{h_\ell^{1/2}\nablag &(V\widetilde \Phi_\ell - F_\ell)}{L_2(\Gamma)}- \norm{h_\ell^{1/2}\nablag (V \Phi_\ell - F)}{L_2(\Gamma)}\big|\\
 &\lesssim \norm{\widetilde\Phi_\ell -\Phi_\ell}{\widetilde H^{-1/2}(\Gamma)}\\
&\quad + \norm{f-P_\ell f}{H^{1/2}(\Gamma)} + \norm{h_\ell^{1/2}\nablag (f-P_\ell f)}{L_2(\Gamma)}.
\end{align*}
The remainder follows as in the proof of Lemma~\ref{data:errstab:weaksing}.
$\hfill\qed$
\end{proof}

\subsubsection{Hypersingular integral equation}
\label{section:aposteriori:data:hypsing}
For the hypersingular integral equation from Proposition~\ref{prop:galerkin:neumann} with $\Gamma = \partial\Omega$, the most useful method for data approximation employs the $L_2$-orthogonal projection $\pi_\ell^{p-1}:\,L_2(\Gamma)\to\Pp^{p-1}(\mesh_\ell)$, i.e.,
\begin{align}\label{data:apx:hyp}
 F_\ell:=(1/2-K^\prime)\pi_\ell^{p-1}f.
\end{align}
Note that if $f\in H^{-1/2}_0(\Gamma)$, then also $\pi_\ell^{p-1}f\in H^{-1/2}_0(\Gamma)$.
Let $U_\ell\in \Sp^p(\mesh_\ell)$ denote the solution of~\eqref{intro:galerkin} with right-hand side~\eqref{data:apx:hyp}.
The introduced approximation error
can be controlled with the following result.

\begin{mylemma}\label{data:errstab:hypsing}
There exists a constant $\setc{data:errstab:hypsing}>0$ such that
\begin{align}
 \c{data:errstab:hypsing}^{-1}\norm{U_\ell-\widetilde U_\ell}{H^{1/2}(\Gamma)}&\leq  \norm{h_\ell^{1/2}(1-\pi^{p-1}_\ell)f}{L_2(\Gamma)}.\label{data:osc:hyp:a}
 \end{align}
The constant $\c{data:errstab:hypsing}$ depends only on $\Gamma$, the shape regularity of $\mesh_\ell$, and on the polynomial degree $p$.
\end{mylemma}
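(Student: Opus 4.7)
The plan is to mimic the proof of Lemma~\ref{data:errstab:weaksing} in the dual setting, exploiting that $\nablag V$ is automatically piecewise polynomial of degree $p-1$ and, more importantly, that $L_2$-orthogonality against $\Pp^{p-1}(\mesh_\ell)$ can be exercised against test functions on the \emph{other} side of the duality pairing.

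First, I would subtract the perturbed and unperturbed Galerkin equations from Proposition~\ref{prop:galerkin:neumann} resp.\ its variant with $F_\ell=(1/2-\adlo)\pi_\ell^{p-1}f$: for every $V\in\Sp^p(\mesh_\ell)$,
\begin{align*}
 \dual{\hyp(U_\ell-\widetilde U_\ell)}{V}_\Gamma
 &+ \dual{U_\ell-\widetilde U_\ell}{1}_\Gamma\dual{V}{1}_\Gamma\\
 &= \dual{(1/2-\adlo)(1-\pi_\ell^{p-1})f}{V}_\Gamma.
\end{align*}
Testing with $V=U_\ell-\widetilde U_\ell$ and invoking the ellipticity from Theorem~\ref{thm:ellipticity} (recall $\Gamma=\partial\Omega$), this gives
\begin{align*}
 \c{ell}\,\norm{U_\ell-\widetilde U_\ell}{H^{1/2}(\Gamma)}^2
 \leq \dual{(1-\pi_\ell^{p-1})f}{(1/2-\dlo)(U_\ell-\widetilde U_\ell)}_\Gamma,
\end{align*}
where I have moved $(1/2-\adlo)$ to the other side as its adjoint $(1/2-\dlo)$.

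Next, since $(1-\pi_\ell^{p-1})f$ is $L_2$-orthogonal to every element of $\Pp^{p-1}(\mesh_\ell)$, I may subtract $\pi_\ell^{p-1}(1/2-\dlo)(U_\ell-\widetilde U_\ell)$ from the second factor without changing the duality and then apply Cauchy--Schwarz with weights $h_\ell^{1/2}$ and $h_\ell^{-1/2}$:
\begin{align*}
 \dual{(1-\pi_\ell^{p-1})f}{(1/2-&\dlo)(U_\ell-\widetilde U_\ell)}_\Gamma\\
 \leq\; & \norm{h_\ell^{1/2}(1-\pi_\ell^{p-1})f}{L_2(\Gamma)}\\
 &\cdot \norm{h_\ell^{-1/2}(1-\pi_\ell^{p-1})(1/2-\dlo)(U_\ell-\widetilde U_\ell)}{L_2(\Gamma)}.
\end{align*}

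The only nontrivial step---and the one I expect to be the main obstacle---is to absorb the second factor into the left-hand side. For this I would apply Lemma~\ref{lem:L2:Pp:apx} elementwise with $s=1/2$, which yields
\begin{align*}
 \norm{h_\ell^{-1/2}(1-\pi_\ell^{p-1})g}{L_2(\Gamma)}^2
 \lesssim \sum_{\el\in\mesh_\ell}\snorm{g}{H^{1/2}(\el)}^2
 \leq \snorm{g}{H^{1/2}(\Gamma)}^2,
\end{align*}
where the last inequality holds trivially because the double integral defining $\snorm{g}{H^{1/2}(\el)}^2$ is over a subset of $\Gamma\times\Gamma$. Setting $g=(1/2-\dlo)(U_\ell-\widetilde U_\ell)$ and using the continuity $\dlo:H^{1/2}(\Gamma)\to H^{1/2}(\Gamma)$ from Theorem~\ref{thm:stability}, I obtain
\begin{align*}
 \norm{h_\ell^{-1/2}(1-\pi_\ell^{p-1})(1/2-\dlo)(U_\ell-\widetilde U_\ell)}{L_2(\Gamma)}
 \lesssim \norm{U_\ell-\widetilde U_\ell}{H^{1/2}(\Gamma)}.
\end{align*}
Plugging this into the estimate above and dividing by $\norm{U_\ell-\widetilde U_\ell}{H^{1/2}(\Gamma)}$ finishes the proof, with $\c{data:errstab:hypsing}$ depending on $\Gamma$, shape-regularity, and $p$ (the latter entering only through the approximation constant of $\pi_\ell^{p-1}$ and the $p$-independent continuity of $\dlo$).
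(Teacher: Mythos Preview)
Your proof is correct. The paper's own proof is a one-liner (``follows as for the weakly singular case in Lemma~\ref{data:errstab:weaksing}''), which in this setting amounts to: test with $V=U_\ell-\widetilde U_\ell$, use ellipticity, bound
\[
\dual{(1/2-\adlo)(1-\pi_\ell^{p-1})f}{U_\ell-\widetilde U_\ell}_\Gamma
\lesssim \norm{(1-\pi_\ell^{p-1})f}{H^{-1/2}(\Gamma)}\,\norm{U_\ell-\widetilde U_\ell}{H^{1/2}(\Gamma)}
\]
via the continuity $\adlo:H^{-1/2}(\Gamma)\to H^{-1/2}(\Gamma)$ of Theorem~\ref{thm:stability}, and then invoke the second estimate of Lemma~\ref{lem:L2:Pp:apx} with $r=1/2$, $s=0$ to get $\norm{(1-\pi_\ell^{p-1})f}{H^{-1/2}(\Gamma)}\lesssim\norm{h_\ell^{1/2}(1-\pi_\ell^{p-1})f}{L_2(\Gamma)}$.

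Your route is a minor but perfectly valid variant: instead of applying continuity of $\adlo$ on $H^{-1/2}$ and then citing the black-box approximation estimate, you pass to the adjoint $(1/2-\dlo)$ on the test side and unpack the duality argument by hand (orthogonality, weighted Cauchy--Schwarz, elementwise approximation in $H^{1/2}$, continuity of $\dlo$ on $H^{1/2}$). This is exactly the mechanism behind the proof of Lemma~\ref{lem:L2:Pp:apx}, so the two arguments are equivalent; the paper's version is shorter because it reuses that lemma, while yours is self-contained.
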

\begin{proof}
 The proof follows as for the weakly singular case in Lemma~\ref{data:errstab:weaksing}.
$\hfill\qed$
\end{proof}

Again, also the error estimators satisfy certain stability properties.
\begin{mylemma}\label{data:eststab:hypsing}
Let $\est{\ell}{}$ denote one of the $(h-h/2)$-type error estimators defined in Theorem~\ref{thm:hh2:hypsing} or the weighted residual error estimator from Theorem~\ref{thm:est:wres:hypsing:rel}.
Moreover, let $\estd{\ell}{}$ denote the perturbed version of the respective error estimator computed with the perturbed Galerkin approximation $\widetilde U_\ell$ and the perturbed data $F_\ell$.
Then, there exists a constant $\setc{data:eststab:hypsing}>0$ such that
\begin{align}\label{data:osc:hyp:est:a}
|\estd{\ell}{} - \est{\ell}{}|\leq \c{data:eststab:hypsing}\norm{h_\ell^{1/2}(1-\pi^{p-1}_\ell)f}{L_2(\Gamma)}.
 \end{align}
The constant $\c{data:eststab:hypsing}$ depends only on $\Gamma$, the shape regularity of $\mesh_\ell$, and on $p$.
\end{mylemma}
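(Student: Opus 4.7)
The plan is to treat the two families (weighted residual and $(h-h/2)$-type) separately, in each case applying the inverse triangle inequality to reduce the claim to two contributions: a data perturbation term stemming from $f - \pi_\ell^{p-1}f$ and a Galerkin perturbation term stemming from $U_\ell - \widetilde U_\ell$. The latter is already controlled by Lemma~\ref{data:errstab:hypsing}, while the former will be handled by combining an inverse estimate from Lemma~\ref{estred:lem:invest} with the $L_2$-approximation properties of $\pi_\ell^{p-1}$ from Lemma~\ref{lem:L2:Pp:apx}.

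Consider first the weighted residual estimator $\est{\ell}{} = \norm{h_\ell^{1/2}R}{L_2(\Gamma)}$ with $R = (1/2-K')f - \hyp U_\ell$ and, correspondingly, $\estd{\ell}{} = \norm{h_\ell^{1/2}\widetilde R}{L_2(\Gamma)}$ with $\widetilde R = (1/2-K')\pi_\ell^{p-1}f - \hyp \widetilde U_\ell$. By the inverse triangle inequality,
\begin{align*}
  |\estd{\ell}{} - \est{\ell}{}|
  &\le \norm{h_\ell^{1/2}(1/2-K')(1-\pi_\ell^{p-1})f}{L_2(\Gamma)}\\
  &\quad+ \norm{h_\ell^{1/2}\hyp(U_\ell - \widetilde U_\ell)}{L_2(\Gamma)}.
\end{align*}
For the first term I would apply the inverse estimate for $1/2-K'$ from~\eqref{opt:eq:invest} with $\psi = (1-\pi_\ell^{p-1})f$, and bound the resulting $\widetilde H^{-1/2}$-norm by $\norm{h_\ell^{1/2}(1-\pi_\ell^{p-1})f}{L_2(\Gamma)}$ using Lemma~\ref{lem:L2:Pp:apx} (with $s=0$, $r=1/2$). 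For the second term I would use the discrete inverse estimate~\eqref{opt:eq:invest:discrete} for $\hyp$ applied to $V_\ell := U_\ell - \widetilde U_\ell \in \Sp^p(\mesh_\ell)$, which yields $\norm{h_\ell^{1/2}\hyp V_\ell}{L_2(\Gamma)} \lesssim \norm{V_\ell}{\widetilde H^{1/2}(\Gamma)}$, and then invoke~\eqref{data:osc:hyp:a} from Lemma~\ref{data:errstab:hypsing}. Both contributions are then bounded by $\norm{h_\ell^{1/2}(1-\pi_\ell^{p-1})f}{L_2(\Gamma)}$.

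For the $(h-h/2)$-type estimators, I would treat a representative case, say $\est{\ell}{} = \norm{h_\ell^{1/2}\nablag(\widehat U_\ell - U_\ell)}{L_2(\Gamma)}$, with $\widetilde{\widehat U_\ell} \in \wilde\Sp^p(\widehat\mesh_\ell)$ denoting the Galerkin solution on the fine space with perturbed data. By the inverse triangle inequality and the inverse estimate of Lemma~\ref{lem:Sp:invest} applied on the fine mesh $\widehat\mesh_\ell$ (noting that $h_\ell \lesssim h_{\widehat\mesh_\ell}$ trivially or, if needed, using the elementwise inclusion of $\widehat\mesh_\ell$ in $\mesh_\ell$),
\begin{align*}
  |\estd{\ell}{} - \est{\ell}{}|
  &\lesssim \norm{h_\ell^{1/2}\nablag(\widehat U_\ell - \widetilde{\widehat U_\ell})}{L_2(\Gamma)} + \norm{h_\ell^{1/2}\nablag(U_\ell - \widetilde U_\ell)}{L_2(\Gamma)}\\
  &\lesssim \norm{\widehat U_\ell - \widetilde{\widehat U_\ell}}{\wilde H^{1/2}(\Gamma)} + \norm{U_\ell - \widetilde U_\ell}{\wilde H^{1/2}(\Gamma)}.
\end{align*}
Each of the two $\wilde H^{1/2}$-terms is now controlled by Lemma~\ref{data:errstab:hypsing} (on $\mesh_\ell$ resp.\ $\widehat\mesh_\ell$), and monotonicity of the $L_2$-projection ($\norm{(1-\widehat\pi_\ell^{p-1})f}{L_2(\el)}\le\norm{(1-\pi_\ell^{p-1})f}{L_2(\el)}$ for all $\el \in \widehat\mesh_\ell$, $\el\subset T\in \mesh_\ell$) together with $h_{\widehat\mesh_\ell}\le h_\ell$ lets me absorb the fine-mesh oscillation term into the coarse-mesh one. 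The variants involving $P_\ell$ or $\pi_\ell^{p-1}\nablag$ follow analogously after inserting a triangle inequality in $P_\ell$ or $\pi_\ell^{p-1}$ and invoking their $L_2$-stability.

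The only mildly delicate point is handling the non-discrete function $(1-\pi_\ell^{p-1})f$ inside the inverse-estimate~\eqref{opt:eq:invest} (as opposed to the discrete version~\eqref{opt:eq:invest:discrete}); this is precisely why we must combine the inverse estimate with Lemma~\ref{lem:L2:Pp:apx} rather than merely apply the discrete inverse bound. Everything else is a bookkeeping exercise in triangle inequalities and already established tools.
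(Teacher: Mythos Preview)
Your proposal is correct and follows essentially the same route as the paper, which simply states that the proof follows as in the weakly singular case (Lemma~\ref{data:eststab:weaksing}): inverse triangle inequality, inverse-type estimates~\eqref{opt:eq:invest}--\eqref{opt:eq:invest:discrete} for $W$ and $1/2-K'$, and Lemma~\ref{data:errstab:hypsing} for the Galerkin perturbation. Your explicit remark that the non-discrete argument $(1-\pi_\ell^{p-1})f$ forces the use of~\eqref{opt:eq:invest} together with Lemma~\ref{lem:L2:Pp:apx} (rather than just~\eqref{opt:eq:invest:discrete}) is exactly the point the paper handles by deferring to the proof of Lemma~\ref{data:errstab:weaksing}; the one cosmetic slip is that $h_\ell \lesssim h_{\widehat\mesh_\ell}$ is not literally trivial but follows from the comparability~\eqref{ass2:hh2} of the uniform refinement.
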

\begin{proof}
 The proof follows as for  the weakly singular case in Lemma~\ref{data:eststab:weaksing}.
$\hfill\qed$
\end{proof}
%\clearpage
%%%%%%%%%%%%%%%%%%%%%%%%%%%%%%%%%%%%%%%%%%%%%%%%%%%%%%%%%%%%%%%%%%%%%%%%%%%%%%%%%%%%%%%%%%%%%%%%%%%%%%%
%%%%%%%%%%%%%%%%%%%%%%%%%%%%%%%%%%%%%%%%%%%%%%%%%%%%%%%%%%%%%%%%%%%%%%%%%%%%%%%%%%%%%%%%%%%%%%%%%%%%%%%
\section{A posteriori error estimators for the $p$ and $hp$-versions}\label{section:aposteriori:hp}
%%%%%%%%%%%%%%%%%%%%%%%%%%%%%%%%%%%%%%%%%%%%%%%%%%%%%%%%%%%%%%%%%%%%%%%%%%%%%%%%%%%%%%%%%%%%%%%%%%%%%%%
%%%%%%%%%%%%%%%%%%%%%%%%%%%%%%%%%%%%%%%%%%%%%%%%%%%%%%%%%%%%%%%%%%%%%%%%%%%%%%%%%%%%%%%%%%%%%%%%%%%%%%%

The $p$-version of the boundary element method is the extreme case of improving
approximation properties only by increasing polynomial degrees, of piecewise polynomials
on a fixed mesh. A combination of mesh refinement with increasing polynomial degrees
is called $hp$-version. Higher order polynomial degrees are particularly suited for
the approximation of singular functions, the ones that appear due to corners and edges
of domains, recall Section~\ref{sec_reg} for details.

As is well known from finite element error analysis, the $p$-version converges
twice as fast as the $h$-version for problems with singularities when the meshes
match the singularity locations. This is also true of boundary elements. For a
first analysis in two dimensions (on curves) considering hypersingular
and weakly singular operators see~\cite{eps:s:89}. An optimal
analysis of this case has been provided in~\cite{guo:heuer:04}.
In three dimensions (on surfaces) the first $p$-version analysis for the hypersingular
operator appeared in~\cite{schwab:suri:96}. Here, only closed surfaces
are considered, implying $H^1(\Gamma)$ regularity of the solution.
Later, this gap has been closed in~\cite{bespalov:heuer:05} for hypersingular
operators and~\cite{bespalov:heuer:07} presents an analysis for weakly singular operators.
Of course, the pure $p$-version is mainly of theoretical interest since in practice,
mesh refinement is easier to implement than polynomials of high degree in a stable way.
Combining the $h$- and the $p$-version one can choose quasi-uniform or non-uniformly
refined meshes. The $hp$-version with quasi-uniform meshes combines the convergence orders of
both variants (twice the rate with respect to degrees in comparison to mesh refinement).
The corresponding analyses have been given in~\cite{eps:suri:91} and~\cite{guo:heuer:06} (preliminary and optimal estimates, respectively) in two
dimensions for both operators. In three dimensions, and for open surfaces with the
strongest singularities, the publications are~\cite{bespalov:heuer:08}
(hypersingular operator) and~\cite{bespalov:heuer:10}.

The $hp$-version gives full flexibility in choosing any mesh and degree combination,
with analysis for quasi-uniform\linebreak
meshes provided by the publications mentioned before.
In the so-called $hp$-version with geometric meshes one selects a specific combination
of geometrically graded meshes with polynomial degrees that are larger on larger elements.
In three dimensions (on surfaces) this implies the use of an\-isotropic elements and
polynomial degrees that are different in different directions on the same element.
In this way, an exponential rate of convergence (faster than any algebraic order in terms
of numbers of unknowns) can be achieved, cf.~\cite{heuer:maischak:eps:99}.

Finite and boundary element analysis for meshes including anisotropic elements is
challenging. This is due to the fact that no simple scaling arguments related to
affine mappings onto reference elements apply. In many cases, different scaling
properties in different directions get mixed up and make the analysis on distorted
elements cumbersome.

In the case of the $p$-version there is another difficulty. The analysis of low order
methods employs scaling arguments in order to use arguments from the equivalence of
norms in finite-dimensional spaces, defined on reference elements. When considering
the $p$-version, by definition dimensions of approximation spaces on elements are not\linebreak
bounded. This means that simple arguments from the equivalence of different norms
do not apply. Analytical tools for the analysis of $p$- and $hp$-versions are usually
different, and scaling arguments form only a part of the story.

Considering both remarks, on anisotropic elements and on the difficulties with the
$p$-version, is becomes clear that error estimation for the $hp$-version with
geometric meshes is a non-trivial issue. In fact, we are not aware of any publication
analyzing this situation in a satisfying way, neither using residual-based estimators
nor enrichment-based methods. In particular, nothing is known for the a~posteriori
$p$-error estimation of weakly singular operators in three dimensions.
Additive Schwarz theory is the most advanced area dealing with $p$-approximations
of boundary integral operators in three dimensions. This, in particular,
is the case with hypersingular operators. However, let us recall that there is
a satisfying analysis of two-level error estimation on an\-isotropic meshes for
weakly singular integral equations \cite{eh06}, as discussed in Section~\ref{section:est:2level},
cf.~Figure~\ref{fig:weaksing:refine} and Theorem~\ref{thm:weaksing:estim:2level}.

In the following we discuss the existing theory for two-level error estimation
of the $p$- and $hp$-version with quasi-uniform meshes for the solution
of hypersingular integral equations on surfaces \cite{h02,hms02}.
For the $p$- and $hp$-version of the BEM, solving integral equations on curves,
we refer to \cite{hms01}, see also \cite{heuer:thesis}.

Our model problem is the hypersingular integral equation considered in
Proposition~\ref{prop:hypsing}, and for simplicity we consider an open flat surface
$\Gamma$ with polygonal boundary. The meshes $\mesh$ are assumed to be quasi-uniform
with shape-regular elements. Triangles and convex quadrilaterals are allowed.
We will use the notation and framework introduced in Section~\ref{section:cea}.
That means we are considering
\[
   b(u,v) = \dual{\hyp u}{v}_\Gamma,\quad
   \XX=\wilde H^{1/2}(\Gamma),\quad
   \XX_\mesh^p=\wilde\Sp^p(\mesh).
\]
Here, the index $p$ in the discrete space refers to polynomial degrees $p\ge 1$
that can be different on different elements, and even different in different directions.
The exact solution of the problem is $u\in\wilde H^{1/2}(\Gamma)$ and the discrete
solution is denoted by $U\in \wilde\Sp^p(\mesh)$.

Now, in order to define a two-level estimator for the error
$\|u-U\|_{\wilde H^{1/2}(\Gamma)}$, we consider as in Section~\ref{section:est:2level} an
enriched discrete space $\widehat\XX_\mesh^p$ with $\XX_\mesh^p\subset \widehat\XX_\mesh^p\subset\wilde H^{1/2}(\Gamma)$
and decomposition
\begin{align} \label{2level:dec:hypsing}
  \widehat\XX_\mesh^p = \ZZ_{\mesh,0} \oplus \ZZ_{\mesh,1} \oplus \ZZ_{\mesh,2}
  \oplus \cdots \oplus \ZZ_{\mesh,L}.
\end{align}
The resulting error estimator is
\begin{align} \label{hypsing:estim:2level}
   \eta_\mesh := \Bigl(\sum_{j=0}^L \eta_j^2\Bigr)^{1/2},\quad
   \eta_j:=\|P_j(\widehat U-U)\|_b.
\end{align}
Here,
\[
   P_j:\;\widehat\XX_\mesh^p\to \ZZ_{\mesh,j}:\quad
   \dual{\hyp P_j v}{w}_\Gamma=\dual{\hyp v}{w}_\Gamma\quad \forall w\in \ZZ_{\mesh,j}
\]
and
\[
   \|v\|_b^2 = \dual{\hyp v}{v}_\Gamma\quad\forall v\in \ZZ_{\mesh,j}
   \quad (j=0,\ldots,L).
\]
Moreover, $\eta_0=0$ corresponds to $\ZZ_0\subset \XX_\mesh^p$, cf.~Lemma~\ref{lem:ASMprop}.

There are different issues to be considered when selecting the enriched space and
a decomposition.
\begin{itemize}
\item Basis functions for $\widehat\XX_\mesh^p\subset\widetilde H^{1/2}(\Gamma)$ must be continuous.
      This fact restricts the possibility of having stable decompositions of $\widehat\XX_\mesh^p$
      with subspaces that are localized on elements.
\item Decompositions \eqref{2level:dec:hypsing} for the $p$-version based on the separation
      of basis functions are inherently unstable for a standard basis
      (cf.~\cite{babuska:griebel:pitkaranta:89} for the finite element method), or
      require specific basis functions that are partially orthogonal
      (cf.~\cite{heuer:98,heuer:98:erratum}).
\item Partially orthogonal basis functions (as mentioned before) are not hierarchical.
      They can be constructed a priorily for rectangles (through tensor product representations)
      or a~posteriorily through a Schur complement step. This Schur complement is not
      a local construction for boundary integral operators and, thus, expensive.
\item Increasing polynomial degrees by a finite number, e.g., from $p$ to $p+1$, for the
      generation of the enriched space $\widehat\XX_\mesh^p$ does in general not satisfy the saturation
      assumption. On the other hand, increasing polynomial degrees by a fixed factor
      is not practical since polynomials of higher degrees are inherently difficult to
      implement in a stable and efficient way.
\end{itemize}

For the reasons above, we suggest to consider two different enrichments with corresponding
decompositions. One for error estimation with focus on satisfying the saturation assumption
(let's call this estimator $\eta_\mathrm{est}$)
and another one to generate indicators steering the mesh refinement (and/or increase of
polynomial degrees) with focus on providing local information
(let's call this estimator $\eta_\mathrm{ref}$).
The estimator $\eta_\mathrm{est}$ can be relatively expensive since it will be used
only for a stopping criterion, it is not necessary to calculate it after each refinement step.
On the other hand, $\eta_\mathrm{ref}$ is needed for every refinement step and should
be cheap. In the following we discuss both cases.

%----------------------------------------------------------------------------------------------
\paragraph{Error estimator $\eta_\mathrm{est}$.}
We consider the enriched space $\widehat\XX_\mesh^p:=\wilde\Sp^{\widehat p}(\widehat\mesh)$
that one obtains
by refining the mesh $\mesh$ uniformly, i.e., subdividing every triangle and quadrilateral
in an iso\-tropic way. Here, $\widehat\mesh$ denotes the refined mesh. Polynomial degrees
$\widehat p$ can be inherited from father elements or one can select the maximum polynomial
degree from the actual space $\XX_\mesh^p$ for the enriched space, $\widehat p=\max\{p\}$.
In this way, numerical experiments indicate
good compliance with the saturation property, cf.~\cite{h02}. Uniformly stable decompositions
can be obtained through overlapping domain decomposition. To this end, let $\omega_j$
($z_j\in\widehat\NN$) denote the patches of elements that are adjacent to interior nodes of
the refined mesh $\widehat\mesh$. Here, for simplicity, we assume that $\Gamma$ is
convex to avoid the appearance of special situations at incoming corners. This is only
for technical reasons and not essential. Then, we consider the decomposition
$\widehat\XX_\mesh^p=\ZZ_{\mesh,0}\cup \ZZ_{\mesh,1}\cup\ldots\cup \ZZ_{\mesh,L}$ with
\begin{align} \label{2level:dec:hypsing:overl}
  \ZZ_{\mesh,0} = \wilde\Sp^1(\widehat\mesh)\quad\text{and}\quad
   \ZZ_{\mesh,j} = \wilde\Sp^{\widehat p}(\widehat\mesh|_{\omega_j}),\quad z_j\in\widehat\NN.
\end{align}
Two comments are in order. First, this decomposition is not direct. This results in
a slightly more complicated additive Schwarz theory than presented in
Section~\ref{section:est:2level}.
Second, we do not have the inclusion $\ZZ_{\mesh,0}\subset \XX_\mesh^p$ so that the error indicator
\[
   \eta_{\mathrm{est},0} = \|P_0(\widehat U-U)\|_b
\]
corresponding to this subspace does not vanish in general. $\ZZ_{\mesh,0}$ is the so-called
coarse grid space of the decomposition and, since it is defined with lowest order
polynomial degree, its calculation is not too expensive. Additionally, this step
can be accelerated by using efficient low order implementations (though this has not
been studied in this particular situation).

\begin{theorem} \label{thm:hypsing:estim:2level:err}
Let $\widehat\XX_\mesh^p=\wilde\Sp^{\widehat p}(\widehat\mesh)$
be defined with uniform degree $\widehat p=\max\{p\}$.
The error estimator $\eta_\mathrm{est}$ defined through the decomposition of
$\widehat\XX_\mesh^p$ with subspaces \eqref{2level:dec:hypsing:overl} is efficient:
there exists a constant $\c{eff}>0$ such that, for any quasi-uniform mesh $\mesh$ of
shape-regular elements with shape-\linebreak regular refinement $\widehat\mesh$, there holds
for any polynomial degree $\widehat p$
\[
   \eta_\mathrm{est} \le \c{eff} \|u-U\|_b.
\]
Furthermore, let $\widehat\mesh$ be sufficiently refined so that Assumption~\ref{ass:sata}
holds. Then $\eta_\mathrm{est}$ is also reliable:
there exists a constant $c>0$ such that, with $\c{rel}=(1-\c{sata}^2)^{-1/2} c$, there
holds for any mesh $\mesh$ with shape-regular refinement $\widehat\mesh$ and for any
polynomial degree $\widehat p$ the estimate
\[
   \|u-U\|_b \le \c{rel}\; \eta_\mathrm{est}.
\]
\end{theorem}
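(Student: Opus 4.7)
The plan is to invoke the abstract additive Schwarz framework of Proposition~\ref{prop:estim:sat} and Theorem~\ref{thm:2level:estim}, adapted to the fact that the decomposition~\eqref{2level:dec:hypsing:overl} is overlapping rather than direct. For overlapping decompositions, the spectral characterization in Proposition~\ref{prop:ASM}(iii) remains the appropriate tool, and the proof reduces to establishing uniform bounds
\begin{align*}
  \lambda_0 \|v\|_b^2 \le \dual{\hyp Pv}{v}_\Gamma \le \lambda_1 \|v\|_b^2
  \quad\text{for all } v\in\widehat\XX_\mesh^p,
\end{align*}
with $\lambda_0,\lambda_1>0$ depending only on shape-regularity of $\widehat\mesh$, but independent of $\widehat h$ and $\widehat p$. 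Once these are in hand, efficiency follows exactly as in the calculation of the proof of Theorem~\ref{thm:2level:estim}, and reliability under Assumption~\ref{ass:sata} follows from Proposition~\ref{prop:estim:sat} combined with the lower spectral bound.

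For the upper bound $\lambda_1$ I would use a finite coloring of the interior-node index set $\{z_j\in\widehat\NN\}$. By shape-regularity, every element $T\in\widehat\mesh$ lies in at most a fixed number of patches $\omega_j$, so the node set can be colored with $M=\OO(1)$ colors such that patches of the same color are pairwise disjoint. Writing the sum $\sum_j P_j v$ as a sum over colors and applying the trivial orthogonality $\|\sum_{j\in I_k} w_j\|_b^2 = \sum_{j\in I_k}\|w_j\|_b^2$ for disjoint supports (together with the coarse contribution $P_0 v$), a standard strengthened Cauchy–Schwarz plus Cauchy–Schwarz argument in $\wilde H^{1/2}(\Gamma)$ yields $\lambda_1 \le M+1$.

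For the lower bound $\lambda_0$ I would construct, for each $v\in\widehat\XX_\mesh^p$, an admissible decomposition $v=v_0+\sum_{j=1}^L v_j$ with $v_j\in\ZZ_{\mesh,j}$ satisfying $\sum_{j=0}^L\|v_j\|_b^2 \lesssim \|v\|_b^2$. The natural choice is to take $v_0\in\wilde\Sp^1(\widehat\mesh)$ as a Scott–Zhang-type quasi-interpolant of $v$ (Lemma~\ref{lem:sz}), which is $\wilde H^{1/2}$-stable and has an approximation estimate of the type in Lemma~\ref{lem:Sp:apx}. The remainder $v-v_0$ is then localized to the node-patches $\omega_j$ by multiplication with the hat-function partition of unity on $\widehat\mesh$, yielding $v_j := \eta_j(v-v_0)\in\ZZ_{\mesh,j}$. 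Lemma~\ref{lem:loc1} controls the assembly of local $\wilde H^{1/2}(\omega_j)$-norms from below by $\|v-v_0\|_{\wilde H^{1/2}(\Gamma)}$, while a localized Poincaré-type inequality on the patch (exploiting the approximation estimate for $v_0$) bounds the $L_2$-contribution of the multiplication operator by $\|v-v_0\|_b$.

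The main obstacle is the $\widehat p$-independence of this lower bound. Multiplication by a piecewise linear hat function is not a stable operation on high-degree polynomial spaces in the fractional norm $\wilde H^{1/2}$ without extra care, because the resulting function $\eta_j(v-v_0)$ need not itself be a polynomial of degree $\widehat p$ unless one redefines $v_j$ via a suitable $H^{1/2}$-stable polynomial extension/projection into $\wilde\Sp^{\widehat p}(\widehat\mesh|_{\omega_j})$. The available tool for this is a polynomial-preserving, $H^{1/2}$-stable lifting on the patch that is uniform in $\widehat p$, as developed for the $hp$-BEM in the references cited in the surrounding paragraphs of Section~\ref{section:aposteriori:hp} (in particular~\cite{h02,hms02,heuer:98,heuer:98:erratum}). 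Invoking these lifting/projection results on each $\omega_j$ transfers the $p$-independent stability from the continuous to the discrete setting, closes the estimate, and—combined with the upper bound and Proposition~\ref{prop:estim:sat}—yields both efficiency and reliability with $\c{rel}=(1-\c{sata}^2)^{-1/2}\lambda_0^{-1/2}$ as claimed.
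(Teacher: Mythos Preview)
The paper does not actually prove this theorem; it simply refers to~\cite{h02}. Your high-level plan---reduce to uniform spectral bounds $\lambda_0,\lambda_1$ for the additive Schwarz operator and then apply Proposition~\ref{prop:estim:sat}---is exactly the framework used there, and you correctly identify the $\widehat p$-independence of the patch-localization as the crux, pointing to the right literature for $p$-robust polynomial liftings.

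There is, however, a genuine gap in your upper-bound argument. You invoke ``trivial orthogonality $\|\sum_{j\in I_k}w_j\|_b^2=\sum_{j\in I_k}\|w_j\|_b^2$ for disjoint supports'', but the energy norm is induced by the \emph{non-local} hypersingular operator: $b(w_i,w_j)=\dual{\hyp w_i}{w_j}_\Gamma$ does not vanish when the supports of $w_i,w_j$ are disjoint. This is precisely what distinguishes the BEM situation from FEM domain decomposition, and a finite coloring alone cannot deliver $\lambda_1=\OO(1)$. The argument in~\cite{h02} instead establishes strengthened Cauchy--Schwarz constants between subspaces that decay with the distance of their supports (relative to $\widehat h$) and sums these over all pairs; only this decay yields a mesh- and $\widehat p$-independent bound on $\lambda_{\max}(P)$.

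Your lower-bound sketch also misapplies Lemma~\ref{lem:loc1}: that lemma bounds the global $\wilde H^{1/2}$-norm \emph{from above} by the sum of local norms, whereas a stable decomposition requires the reverse inequality $\sum_j\|v_j\|_b^2\lesssim\|v\|_b^2$. In addition, $\eta_j(v-v_0)$ has degree $\widehat p+1$ and does not lie in $\ZZ_{\mesh,j}=\wilde\Sp^{\widehat p}(\widehat\mesh|_{\omega_j})$; you note this, but the fix is not a mere projection step---one needs the $p$-uniform $H^{1/2}$-stability of the partition-of-unity localization combined with the approximation estimate for $v_0$, which is the technical core of~\cite{h02}. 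The constant $c$ in the statement is $\lambda_0^{-1/2}$ coming out of that analysis.
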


For a proof of Theorem~\ref{thm:hypsing:estim:2level:err} we refer to \cite{h02}.

%----------------------------------------------------------------------------------------------
\paragraph{Error indicator $\eta_\mathrm{ind}$.}
In order to generate error indicators that are local and useful for adaptive steering we
increase locally polynomial degrees. In particular, we aim at error indicators that
indicate also in which direction to refine (literally an element or in the sense of
increasing polynomial degrees in a certain direction on elements).
Here, we do not focus on satisfying the saturation assumption.
In the following, to keep things simpler, we consider only rectangular elements.
For meshes consisting of triangles, or rectangles and triangles, we refer to \cite{h02}.

Our enriched space $\widehat\XX_\mesh^p$ and decomposition will be like
\begin{align} \label{2level:dec:hypsing:p}
  \widehat\XX_\mesh^p = \ZZ_{\mesh,0} \oplus \bigoplus_{T\in\mesh;\; i=1,2} \ZZ_{Ti}.
\end{align}
Here, $\ZZ_{T1}$ and $\ZZ_{T2}$ consist of functions with support on (the closure of) an element
$T\in\mesh$ and the two spaces will generate direction indicators. The space $\ZZ_{\mesh,0}$ consists
of functions with support on (the closure of) $\Gamma$. In order to have conformity
$\ZZ_{Ti}\subset\widetilde H^{1/2}(\Gamma)$ and locality of functions, the elements of
$\ZZ_{Ti}$ must vanish on the boundary of $T$ (so that they can be continuously extended
by zero onto $\Gamma\setminus T$). Of course we are considering polynomials on $T$,
and in that case these functions (with vanishing trace on the boundary of $T$) are
called bubble functions. The generation of bubble functions requires a minimum
polynomial degree. On a triangle the lowest order bubble function has degree three,
and on rectangles one uses tensor products of polynomials of at least degree two in
both directions. In both cases, the minimum degree allows for only one linearly
independent bubble function. Therefore, in order to have enough unknowns for
indicators in different directions, we need slightly higher polynomial degrees.

For ease of presentation let us now assume that $T$ is a rectangle that is oriented
in the $x_1$-$x_2$ plane. Furthermore, $\Pp^{p_1,p_2}(T)$ indicates the space
of polynomials on $T$ with degrees up to $p_i$ in $x_i$-direction, $i=1,2$.
We then define for any $T\in\mesh$ the spaces $\ZZ_{T1}$, $\ZZ_{T2}$ as follows.
\[
   \ZZ_{T1} :=
   \left\{
   \begin{array}{l}
      {\rm span}
      \left\{\Pp^{p_1+1,p_2}(T)\setminus\Pp^{p_1,p_2}(T)\right\}
      \cap H_0^1(T)
      \\\qquad\qquad
      \mbox{if}\ p_1>1, p_2>1 \quad {\rm (a)}\\[1em]
      {\rm span}
      \left\{\Pp^{p_1+1,2}(T)\setminus\Pp^{p_1,2}(T)\right\}
      \cap H_0^1(T)
      \\\qquad\qquad
      \mbox{if}\ p_1>1, p_2=1 \quad {\rm (b)}\\[1em]
      \Pp^{2,p_2}(T)
      \cap H_0^1(T)
      \\\qquad\qquad
      \mbox{if}\ p_1=1, p_2>1 \quad {\rm (c)}\\[1em]
      \Pp^{3,2}(T)
      \cap H_0^1(T)
      \\\qquad\qquad
      \mbox{if}\ p_1=1, p_2=1 \quad {\rm (d)}
   \end{array}
   \right.
\]
is the space to generate an indicator on $T$ in $x_1$-direction and
\[
   \ZZ_{T2}
   :=
   \left\{
   \begin{array}{l}
      {\rm span}
      \left\{\Pp^{p_1,p_2+1}(T)\setminus\Pp^{p_1,p_2}(T)\right\}
      \cap H_0^1(T)
      \\\qquad\qquad
      \mbox{if}\ p_1>1, p_2>1 \quad {\rm (a)}\\[1em]
      \Pp^{p_1,2}(T)
      \cap H_0^1(T)
      \\\qquad\qquad
      \mbox{if}\ p_1>1, p_2=1 \quad {\rm (b)}\\[1em]
      {\rm span}
      \left\{\Pp^{2,p_2+1}(T)\setminus\Pp^{2,p_2}(T)\right\}
      \cap H_0^1(T)
      \\\qquad\qquad
      \mbox{if}\ p_1=1, p_2>1 \quad {\rm (c)}\\[1em]
      \Pp^{2,3}(T)
      \cap H_0^1(T)
      \\\qquad\qquad
      \mbox{if}\ p_1=1, p_2=1 \quad {\rm (d)}
   \end{array}
   \right.
\]
will generate an indicator in $x_2$-direction.
Here, $(p_1,p_2)$ are the polynomial degrees in $\wilde\Sp^p(\mesh)$ on $T$.
They can be different on every element.
As basis functions for the subspaces $\ZZ_{T1}$, $\ZZ_{T2}$ we take affine
images of the tensor products
\[
   \psi_{p_1}(x_1) \psi_{p_2}(x_2)
   \quad\mbox{with}\quad
   \psi_q(s) := \int_{-1}^s L_{q-1}(t)\,dt
\]
for $p_1, p_2 \ge 2$ defined on $(-1,1)^2$ with $L_{q-1}$ being the
Legendre polynomial of degree $q-1$.

\begin{figure}[hbt]
  \psfrag{Z}{$\ZZ$}
  \psfrag{T1}{$T1$}
  \psfrag{T2}{$T2$}
  \psfrag{p2}{$p_2$}
  \psfrag{p2+1}{$p_2+1$}
  \psfrag{p2+2}{$p_2+2$}
  \psfrag{2}{$2$}
  \psfrag{p1}{$p_1$}
  \psfrag{p1+1}{$p1+1$}
  \psfrag{p1+2}{$p1+2$}
\centerline{\scalebox{0.8}{\includegraphics{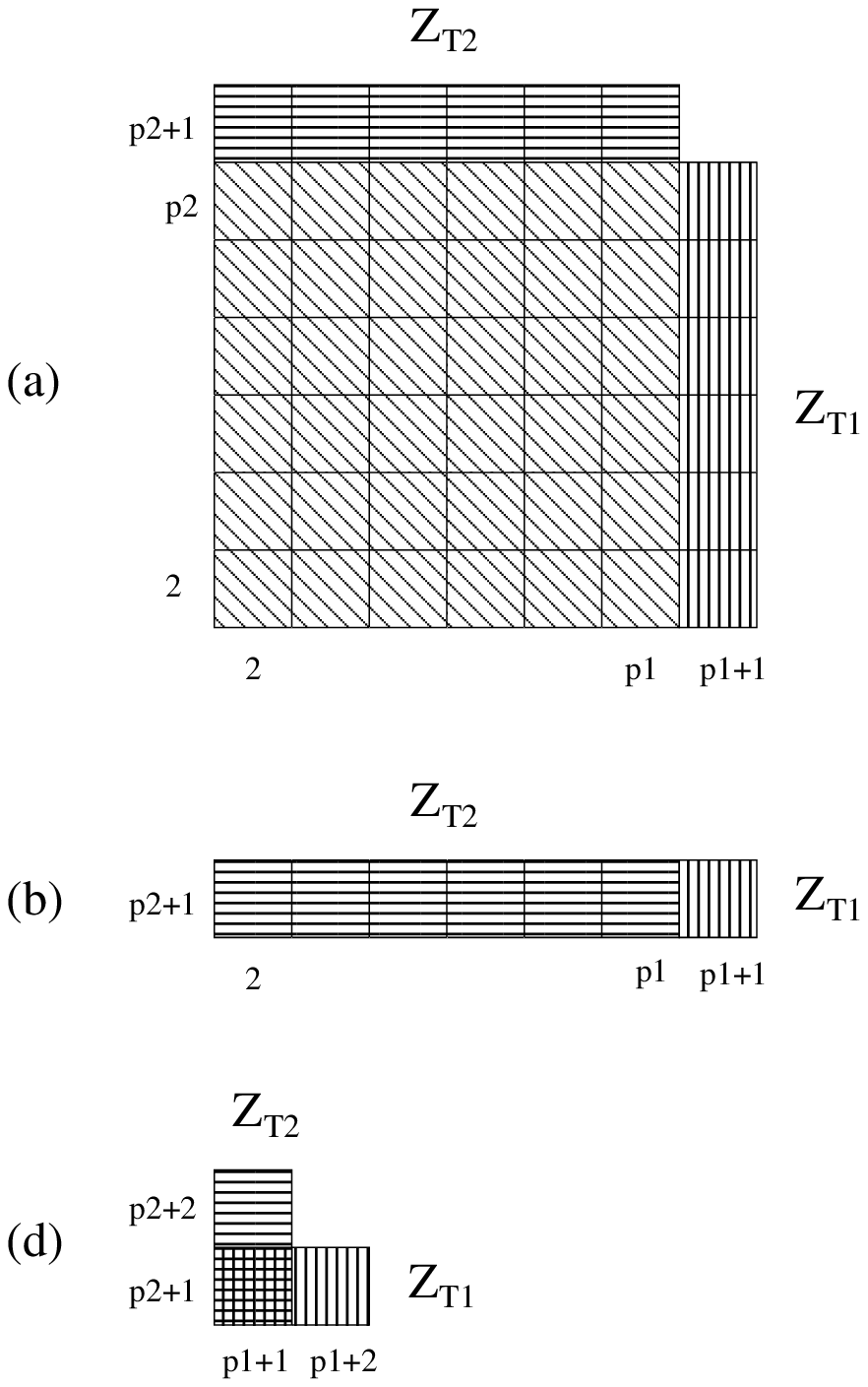}}}
\caption{Illustration of $p$-enrichment and decomposition on an element $T$
         for direction control.}
\label{fig_hyp_pol_dec}
\end{figure}

In Figure~\ref{fig_hyp_pol_dec} we illustrate the increase of polynomial degrees
for the generation of $\ZZ_{T1}$ and $\ZZ_{T2}$ in different situations.
The marked regions represent pairs of polynomial degrees for the two spaces.
We represent only degrees larger than one, that means we illustrate only
bubble functions.
Case (a) is the general case when the polynomial degrees $p_1$, $p_2$ on an
element $T$ are larger than one. We increase the polynomial degrees
by one in both directions and define the local spaces $\ZZ_{T1}$ and
$\ZZ_{T2}$ by the indicated degrees.
The remaining situations, where $p_1$ or $p_2$ is one, are illustrated by
(b), (c) and (d).
These cases are not covered by (a) and we need a special $p$-enrichment
to produce subspaces that indicate different directions for refinement.
Case (c) is analogous to the case (b) when exchanging $p_1$ and $p_2$,
and is omitted.
Only in the case (a) (with $p_1\ge 2$ and $p_2\ge 2$) bubble functions of
the previous space $\XX_\mesh^p$ are present. This is indicated by the diagonal shading.
Case (d) is the only situation where the decomposition of $\ZZ_T=\ZZ_{T1}\cup \ZZ_{T2}$
is not direct.

Now, for elements not being aligned with the $x_1$-$x_2$ directions, the construction
of the two spaces $\ZZ_{T1}$, $\ZZ_{T2}$ works analogously. Concluding, we have defined
the decomposition \eqref{2level:dec:hypsing:p} with the exception of $\ZZ_{\mesh,0}$.
In \cite{h02} and \cite{hms02}, different strategies have been considered to
generate $\ZZ_{\mesh,0}$ so that $\XX_\mesh^p\subset\widehat\XX_\mesh^p$ and \eqref{2level:dec:hypsing:p} is (almost)
stable. These strategies are partial or full orthogonalizations and Schur complement
steps. Here we are not interested in reliable error estimation (which is being
provided by the error estimator based on mesh refinement) and therefore, finish
this section with recalling stability of the decomposition of the enrichment level
\begin{align} \label{2level:dec:hypsing:p:Z}
   \ZZ_\mesh = \bigoplus_{T\in\mesh} \Bigl(\ZZ_{T1}\cup \ZZ_{T2}\Bigr)
\end{align}
and assuming a stable construction of $\ZZ_{\mesh,0}$ without giving more details. This
then implies efficient and reliable estimation of $\|\widehat U-U\|_b$ by $\eta_\mathrm{ind}$.
Note that in some cases, as discussed above, the decomposition $\ZZ_{T1}\cup \ZZ_{T2}$
can be non-direct.

As said before, we consider meshes consisting only of rectangles.
As in \cite{h02,hms02} this can be generalized to meshes including quadrilaterals
and triangles.

\begin{theorem} \label{thm:hypsing:estim:2level:ind}
Let $\ZZ_\mesh$ be defined through decomposition \eqref{2level:dec:hypsing:p:Z} with local
spaces $\ZZ_{Ti}$ ($i=1,2$) as defined previously, and assume that the construction
of $\ZZ_{\mesh,0}$ in \eqref{2level:dec:hypsing:p} is stable.
Then the corresponding error indicator
\[
   \eta_\mathrm{ind} :=
   \Bigl(\|P_0(\widehat U-U)\|_b^2
         + \sum_{T\in\mesh,\; i=1,2} \|P_{Ti}(\widehat U-U)\|_b^2\Bigr)^{1/2}
\]
is reliable and efficient for the estimation of $\|\widehat U-U\|_b$ in the
following sense. 
Assume that the mesh $\mesh$ is locally quasi-uniform, i.e.
the ratio of largest side length and smallest side length
on each element is bounded from above by a global positive constant.
Then there exist constants $c_1$, $c_2>0$ which are independent of
the mesh and polynomial degrees $p$ such that
\[
   c_1 \eta_\mathrm{ind}
   \le
   \|\widehat U - U\|_b
   \le
   c_2 (1+\log p_{\max}) \eta_\mathrm{ind}.
\]
Here, $p_{\max}$ is the maximum of all polynomial degrees in $\XX_\mesh^p$.
\end{theorem}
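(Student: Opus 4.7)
The plan is to apply Theorem~\ref{thm:2level:estim} to the decomposition~\eqref{2level:dec:hypsing:p}: once one has bounds $\lambda_0 \ge c/(1+\log p_{\max})^2$ and $\lambda_1 \le C$ for the extremal eigenvalues of the additive Schwarz operator $P$ associated with~\eqref{2level:dec:hypsing:p} (in the sense of Proposition~\ref{prop:ASM}), the identity $\eta_\mathrm{ind}^2 = b(\widehat U-U,P(\widehat U-U))$ from the proof of Theorem~\ref{thm:2level:estim} yields the two-sided estimate with the stated dependence on $p_{\max}$. Hence the whole task reduces to stability analysis of~\eqref{2level:dec:hypsing:p}. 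Some care is needed because $\ZZ_T = \ZZ_{T1}\cup \ZZ_{T2}$ is not always a direct sum; in the non-direct case~(d) we simply replace $\ZZ_{T2}$ by a complement of $\ZZ_{T1}$ inside $\ZZ_{T1}\cup \ZZ_{T2}$, so that the decomposition becomes direct without changing $\eta_\mathrm{ind}$ up to a fixed constant (since in that case both spaces have fixed finite dimension).

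First I would invoke Lemma~\ref{la:CSI} to split the stability analysis into two independent parts: (i) a strengthened Cauchy-Schwarz inequality for the two-level splitting $\widehat\XX_\mesh^p = \ZZ_{\mesh,0} \oplus \ZZ_\mesh$ with $\ZZ_\mesh := \bigoplus_{T\in\mesh} \ZZ_T$, which is guaranteed by the assumed stability of the construction of $\ZZ_{\mesh,0}$; and (ii) stability of $\ZZ_\mesh = \bigoplus_T \ZZ_T$. Step~(ii) is the true core. Since any $v \in \ZZ_T$ vanishes on $\partial T$, it extends by zero to $\wilde H^{1/2}(\Gamma)$, so Lemma~\ref{lem:loc1} immediately gives the upper bound
\[
  \|v\|_b^2 \simeq \|v\|_{\wilde H^{1/2}(\Gamma)}^2 \le \c{loc}\sum_{T\in\mesh}\|v|_T\|_{\wilde H^{1/2}(T)}^2 \simeq \sum_{T\in\mesh}\|v|_T\|_b^2,
\]
contributing to $\lambda_1$ a $p$-independent constant. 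The matching lower bound (contributing to $\lambda_0$) is the more delicate direction: one passes to the locally scaled $\wilde H^{1/2}_h(T)$ norm of Section~\ref{section:est:2level}, uses quasi-uniformity of $\mesh$ together with the scaling properties of the hypersingular kernel to replace the energy norm by an affine-invariant local norm, and then exploits shape regularity and the $\widetilde H^{1/2}(T)$ localization to obtain the reverse inequality with a constant independent of $p_{\max}$.

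The remaining ingredient is the stability of the intra-element splitting $\ZZ_T = \ZZ_{T1} \oplus \ZZ_{T2}$ in the $\wilde H^{1/2}(T)$-norm. Here I would use the tensor-product structure: the basis functions are affine images of $\psi_{p_1}(x_1)\psi_{p_2}(x_2)$ with $\psi_q$ the anti-derivative of the Legendre polynomial $L_{q-1}$, for which $(\psi_q)_{q\ge 2}$ is $L_2(-1,1)$-orthogonal and $(\psi_q')_{q\ge 2}$ is $L_2(-1,1)$-orthogonal as well. By the analysis of~\cite{heuer:98,heuer:98:erratum} on partially orthogonal bases for the $p$-version, these orthogonality properties yield a strengthened Cauchy-Schwarz inequality between $\ZZ_{T1}$ and $\ZZ_{T2}$ with a constant $\gamma < 1$ independent of $p$, provided the two spaces are measured in a norm equivalent to $\widetilde H^{1/2}(T)$. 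The transition from $L_2$ and $H^1$ orthogonality to $\widetilde H^{1/2}(T)$ is exactly where the logarithmic factor $(1+\log p_{\max})$ enters, through the classical $p$-version estimates for the trace operator and the $\widetilde H^{1/2}$ stability of polynomial extensions on rectangles (cf.~\cite{babuska:griebel:pitkaranta:89,schwab:suri:96}).

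The main obstacle is precisely this last point: while the pointwise decomposition and the localization~\eqref{lem:loc1} are clean, tracking the $p$-dependence of the stability constants on each rectangle requires delicate estimates for polynomial traces, liftings, and inverse estimates in $\widetilde H^{1/2}(T)$, and it is these estimates that are known to produce, but not to improve upon, a logarithmic factor in $p_{\max}$. Once the bounds $\lambda_0 \gtrsim (1+\log p_{\max})^{-2}$ and $\lambda_1 \lesssim 1$ are assembled from the three steps above and combined with Lemma~\ref{la:CSI}, Theorem~\ref{thm:2level:estim} (applied to $\|\widehat U - U\|_b$ in place of $\|u-U\|_b$, without any saturation assumption, since we are only estimating the distance between two discrete solutions) completes the proof.
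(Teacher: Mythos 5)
The paper offers no proof of Theorem~\ref{thm:hypsing:estim:2level:ind} at all---it simply says ``For a proof we refer to \cite{h02}''---so there is no paper argument to compare your proposal against literally. That said, your general route is exactly the one that the surrounding machinery (Proposition~\ref{prop:ASM}, Lemma~\ref{la:CSI}, Theorem~\ref{thm:2level:estim}, and Lemma~\ref{lem:loc1}) is set up to support: reduce everything to eigenvalue bounds for the additive Schwarz operator associated with~\eqref{2level:dec:hypsing:p}, apply Theorem~\ref{thm:2level:estim} to $\|\widehat U-U\|_b$ directly (so no saturation is needed), split off $\ZZ_{\mesh,0}$ by the assumed stability, and use the localization of $\wilde H^{1/2}$ for the inter-element sum. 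That part of the outline is sound, and your observation that the inter-element splitting $\ZZ_\mesh = \bigoplus_T \ZZ_T$ is $p$-uniformly stable because the pieces have disjoint supports (so the decomposition is $L_2$- and $H^1_0$-isometric, hence $\wilde H^{1/2}$-stable by interpolation, and reliable by Lemma~\ref{lem:loc1}) is correct.

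The gap is in the part that is the actual substance of the theorem: the source of the $\log p_{\max}$ factor. Your account is internally inconsistent here. You claim the strengthened Cauchy--Schwarz constant $\gamma$ between $\ZZ_{T1}$ and $\ZZ_{T2}$ is $p$-independent; if that were so, Lemma~\ref{la:CSI} would give a $p$-independent stability constant and there would be no log factor at all. You then say the log ``enters through the transition from $L_2$ and $H^1$ orthogonality to $\widetilde H^{1/2}(T)$''---but a strengthened Cauchy--Schwarz constant uniformly below one does not degrade under interpolation. What actually produces the log factor in the $p$-version is the interaction between the degree-$p$ bubble spaces and the $\widetilde H^{1/2}$-level norm (e.g.\ polynomial trace and extension bounds, inverse-type estimates on rectangles), and in the present setting this enters through the stability of the decomposition that remains after the elementary reductions you made; it cannot be conjured from partial orthogonality of the 1D basis alone. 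You would need to produce the explicit $(1+\log p_{\max})^{-2}$ bound on $\lambda_0$ for the decomposition~\eqref{2level:dec:hypsing:p}, not merely cite its existence. Two smaller points: your treatment of the non-direct case~(d) by replacing $\ZZ_{T2}$ with a complement changes the projections $P_{T2}$ and hence $\eta_\mathrm{ind}$ itself; the standard way is to use the non-direct version of Proposition~\ref{prop:ASM}, as the paper itself hints at after that proposition. And the family $(\psi_q)_{q\ge 2}$ is not $L_2(-1,1)$-orthogonal; $\psi_q$ is a fixed linear combination of $L_q$ and $L_{q-2}$, so the Gram matrix is banded but not diagonal, and this affects the orthogonality claims you want to make.
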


For a proof we refer to \cite{h02}.
We finish this section with presenting a three-level refinement algorithm
that decides where to add unknowns and whether to refine the mesh or increase
polynomial degrees at those places. This algorithm has proved to work appropriately
in standard situations. The definition and analysis of optimal algorithms
for direction control and decision for $h$ or $p$ refinement in boundary element
methods is an open problem.

{\bf Three-fold algorithm} \cite{h02}:
Define an initial ansatz space $\widetilde\Sp^p(\mesh)$ with
initial mesh $\mesh$ and low polynomial degrees. Choose
an error tolerance $\epsilon>0$ and steering parameters
$\delta_1$, $\delta_2$, $\delta_3$ with $0<\delta_2 <\delta_1<1$
and $0<\delta_3<1$.
Then perform the following steps.

\begin{itemize}
\item[] {\bf 1. Galerkin solution.}
Compute the Galerkin solution $U\in \widetilde\Sp^p(\mesh)$.

\item[] {\bf 2. Error estimator.}
Calculate the terms $\eta_j=\|P_j(\widehat U - U)\|_b$
and the estimator $\eta_\mathrm{est}$, based on the decomposition
(\ref{2level:dec:hypsing:overl}).

{\bf Stop} if $\eta_\mathrm{est}\le\epsilon$.

\item[] {\bf 3. Adaption steps.} 

\begin{itemize}
\item[] {\bf (i) Indicators.}
Compute the error indicators
$\eta_{Ti}=\|P_{Ti}(\widehat U - U)\|_b$,
$\eta_T:=(\eta_{T1}^2+\eta_{T2}^2)^{1/2}$
($T\in\mesh$, $i=1,2$) based on the decomposition (\ref{2level:dec:hypsing:p:Z}),
and set $\eta_\mathrm{max}:=\max_{T\in\mesh}\eta_T$.

\item[] {\bf (ii) Classification of elements.}
Classify quadrilaterals $T$ as follows
(in pseudo Fortran90 language, directions are understood with respect to local
 coordinates):

\hspace*{1em}{\tt if ($\eta_T>\delta_1\eta_\mathrm{max}$) then}
\quad ! $h$-refinement\\
\hspace*{2em}{\tt if ($\eta_{T1}<\delta_3\eta_{T2}$) then}\\
\hspace*{3em} classify $T$ for horizontal intersection\\
\hspace*{2em}{\tt elseif ($\eta_{T2}<\delta_3\eta_{T1}$) then}\\
\hspace*{3em} classify $T$ for vertical intersection\\
\hspace*{2em}{\tt else}\\
\hspace*{3em} classify $T$ for intersections in both directions\\
\hspace*{2em}{\tt endif}\\
\hspace*{1em}{\tt elseif ($\eta_T>\delta_2\eta_\mathrm{max}$) then}
\quad ! $p$-increase\\
\hspace*{2em}{\tt if ($\eta_{T1}<\delta_3\eta_{T2}$) then}\\
\hspace*{3em} classify $T$ for increase of polynomial degree
              in vertical direction\\
\hspace*{2em}{\tt elseif ($\eta_{T2}<\delta_3\eta_{T1}$) then}\\
\hspace*{3em} classify $T$ for increase of polynomial degree
              in horizontal direction\\
\hspace*{2em}{\tt else}\\
\hspace*{3em} classify $T$ for increase of polynomial degrees
              in both directions\\
\hspace*{2em}{\tt endif}\\
\hspace*{1em}{\tt endif}

Triangles are classified without direction control, i.e., they are
refined by halving all their edges if $\eta_T>\delta_1\eta_\mathrm{max}$ and
the polynomial degree is increased if
$\delta_2\eta_\mathrm{max}<\eta_T\le\delta_1\eta_\mathrm{max}$.

\item[] {\bf (iii) Adaption.}
\begin{itemize}
\item[] (a) Go through all the elements and refine as classified.
\item[] (b) Go through all the elements and refine as necessary to remove
            hanging nodes.
\item[] (c) Go through all the elements and increase polynomial degrees
            as classified if the corresponding element has not been
            refined in (b).
\item[]{\bf goto 1}.
\end{itemize}
\end{itemize}
\end{itemize}

\begin{remark}
When an element is refined then polynomial degrees for the new
elements need to be given. To avoid high polynomial degrees on small
elements one can inherit the degrees reduced by one for the refinement direction
whenever possible (i.e., when the degree is larger than one).
A more sophisticated algorithm may include the refinement of quadrilaterals into
triangles and vice versa. This has been studied on \cite{h02,hms02}.
\end{remark}

\begin{remark}
An adaptive $h$-version can be realized by choosing $\delta_2\ge\delta_1$.
Pure $p$-adaptivity occurs when choosing $\delta_1>1$.
Isotropic adaption (no direction control) can be performed by
taking $\delta_3=0$.
\end{remark}
%\clearpage
%%%%%%%%%%%%%%%%%%%%%%%%%%%%%%%%%%%%%%%%%%%%%%%%%%%%%%%%%%%%%%%%%%%%%%%%%%%%%%%%%%%%%%%%%%%%%%%%%%%%%%%
%%%%%%%%%%%%%%%%%%%%%%%%%%%%%%%%%%%%%%%%%%%%%%%%%%%%%%%%%%%%%%%%%%%%%%%%%%%%%%%%%%%%%%%%%%%%%%%%%%%%%%%
\section{Estimator reduction}\label{section:estred}
%%%%%%%%%%%%%%%%%%%%%%%%%%%%%%%%%%%%%%%%%%%%%%%%%%%%%%%%%%%%%%%%%%%%%%%%%%%%%%%%%%%%%%%%%%%%%%%%%%%%%%%
%%%%%%%%%%%%%%%%%%%%%%%%%%%%%%%%%%%%%%%%%%%%%%%%%%%%%%%%%%%%%%%%%%%%%%%%%%%%%%%%%%%%%%%%%%%%%%%%%%%%%%%
This section explains the concept of estimator reduction and its use to
prove plain convergence of ABEM, i.e., the validity of~\eqref{eq:plain:conv}.
The general idea that will be presented here applies to error estimators
whose local contributions are weighted by the local mesh-size $h_\ell$. The 
approach is illustrated for some $(h-h/2)$-type error estimators from
Section~\ref{section:est:hh2}, the ZZ-type error estimators from 
Section~\ref{section:zzest}, and the weighted residual error estimators 
from Section~\ref{section:est:wres}.
To that end, we consider a sequence of meshes $\mesh_\ell$ which, e.g., is generated by
the adaptive Algorithm~\ref{opt:algorithm}.
We only need some minor assumptions on the mesh refinement operation $\refine(\cdot)$.

%%%%%%%%%%%%%%%%%%%%%%%%%%%%%%%%%%%%%%%%%%%%%%%%%%%%%%%%%%%%%%%%%%%%%%%%%%%%%%%%%%%%%%%%%%%%%%%%%%%%%%%
\subsection{Assumptions on mesh refinement}\label{section:estred:meshrefinementprelim}
%%%%%%%%%%%%%%%%%%%%%%%%%%%%%%%%%%%%%%%%%%%%%%%%%%%%%%%%%%%%%%%%%%%%%%%%%%%%%%%%%%%%%%%%%%%%%%%%%%%%%%%
We say that a mesh $\mesh_\star$ is a refinement of
another mesh $\mesh_\ell$, written $\mesh_\star\in\refine(\mesh_\ell)$, if the following applies:
\begin{itemize}
\item For all $\el\in\mesh_\ell$, there holds
\begin{align}\label{dp1:estred}
\overline\el = \bigcup \set{\overline\el^\prime}{\el^\prime \in \mesh_\star\text{ with }\el^\prime \subseteq \el},
\end{align}
i.e., each coarse-mesh element $T\in\TT_\ell$ is basically the union of fine-mesh elements
$T'\in\TT_\star$.
\item For all $\el\in\mesh_\ell$ and $\el^\prime\in\mesh_\star$, there holds
\begin{align}\label{dp2:estred}
\el^\prime \subsetneqq \el\quad\implies\quad |\el^\prime|\leq |\el|/2,
\end{align}
i.e., the area of refined elements is at least halved.
\end{itemize}

A sequence of meshes $(\mesh_\ell)_{\ell\in\N_0}$ is called nested, if 
for all $\ell\in\N_0$ it holds
$\mesh_{\ell+1}\in\refine(\mesh_\ell)$ and if
the shape regularity constant $\sigma_\ell$ from Section~\ref{section:bem:discrete}
is uniformly bounded\linebreak $\sup_{\ell\in\N_0}\sigma_\ell<\infty$.

Recall that with each mesh $\TT_\star$, we associate the local mesh-size function
$h_\star\in\Pp^0(\TT_\star)$ defined by $h_\star|_\el:= h_\el=|T|^{1/(d-1)}$.

%%%%%%%%%%%%%%%%%%%%%%%%%%%%%%%%%%%%%%%%%%%%%%%%%%%%%%%%%%%%%%%%%%%%%%%%%%%%%%%%%
\subsection{Abstract error estimator}
%%%%%%%%%%%%%%%%%%%%%%%%%%%%%%%%%%%%%%%%%%%%%%%%%%%%%%%%%%%%%%%%%%%%%%%%%%%%%%%%%%%%%%%%%%%%%%%%%%%%%%%
Given the mesh $\mesh_\ell$, suppose that there exists a computable error estimator 
\begin{align*}
 \est{\ell}{}:=\Big(\sum_{\el\in\mesh_\ell}\est{\ell}{\el}^2\Big)^{1/2}\text{ with } \est{\ell}{\el}\in [0,\infty)\text{ for all }\el\in\mesh_\ell.
\end{align*}
The estimator usually depends on the computed solution $U_\ell$ of~\eqref{intro:galerkin} as well as on the right-hand side $F$. 

%%%%%%%%%%%%%%%%%%%%%%%%%%%%%%%%%%%%%%%%%%%%%%%%%%%%%%%%%%%%%%%%%%%%%%%%%%%%%%%%%
\subsection{Abstract adaptive algorithm}
%%%%%%%%%%%%%%%%%%%%%%%%%%%%%%%%%%%%%%%%%%%%%%%%%%%%%%%%%%%%%%%%%%%%%%%%%%%%%%%%%%%%%%%%%%%%%%%%%%%%%%%
Convergence of the adaptive algorithm~\ref{opt:algorithm} has first been addressed in
the frame of AFEM in the pioneering work~\cite{doerfler} which also introduced the bulk 
chasing~\eqref{opt:bulkchasing}. While \cite{doerfler} only proved convergence
up to the resolution of the given data on the initial mesh $\TT_0$, the
work~\cite{mns00} included the adaptive resolution of the data and contained the first plain convergence result. 
For ABEM, convergence of this algorithm has mathematically been 
addressed first in~\cite{fop} and~\cite{afp12} for $(h-h/2)$-type and
averaging error estimators, while the analysis of~\cite{cp12} relied on
an additional (and practically artificial and unnecessary) feedback control.

\begin{remark}
  In practice, step~(iv) of the Algorithm~\ref{opt:algorithm}
  provides the \emph{coarsest} refinement 
  $\TT_{\ell+1}$ of $\TT_\ell$ such that all marked elements have been refined
  by the mesh refinement strategy used, written
  $\TT_{\ell+1}:=\refine(\TT_\ell,\MM_\ell)$. We refer to 
  Section~\ref{section:meshrefinement} for possible concrete strategies for 
  local mesh refinement of 2D and 3D BEM meshes.
\end{remark}

\begin{remark}\label{rem:estred:algorithm}
 To find a set $\MM_\ell\subseteq \mesh_\ell$ which satisfies the bulk chasing~\eqref{opt:bulkchasing}, one arbitrarily adds elements $\el\in\mesh_\ell$ to
 $\MM_\ell$ until~\eqref{opt:bulkchasing} is satisfied (at least $\MM_\ell=\mesh_\ell$ will do the job). If one seeks a set of minimal cardinality $\MM_\ell$, 
 it is necessary to sort the  elementwise error indicators, i.e., $\est{\ell}{\el_1}\geq \est{\ell}{\el_2}\geq \ldots \geq \est{\ell}{\el_{\#\mesh_\ell}}$.
 Then, determine the minimal $1\leq n\leq \#\mesh_\ell$ such that $\theta\est{\ell}{}^2\leq \sum_{j=1}^n\est{\ell}{\el_j}^2$. By construction,\linebreak
 $\MM_\ell:=\{\el_1,\ldots,\el_n\}$ satisfies~\eqref{opt:bulkchasing} with minimal cardinality.
 Obviously, the set $\MM_\ell$ is not unique in general. This may lead to non-symmetric meshes
 even for completely symmetric problems.
\end{remark}

%%%%%%%%%%%%%%%%%%%%%%%%%%%%%%%%%%%%%%%%%%%%%%%%%%%%%%%%%%%%%%%%%%%%%%%%%%%%%%%%%%%%%%%%%%%%%%%%%%%%%%%
\subsection{Estimator reduction principle}\label{section:estred:principle}
%%%%%%%%%%%%%%%%%%%%%%%%%%%%%%%%%%%%%%%%%%%%%%%%%%%%%%%%%%%%%%%%%%%%%%%%%%%%%%%%%%%%%%%%%%%%%%%%%%%%%%%
The estimator reduction principle~\cite{afp12} is an elementary yet very useful starting point for the a~posteriori analysis of  any error estimator.
The following lemma states the main idea of the principle.

\begin{mylemma}\label{estred:lem:estreddef}
Given a sequence of error estimators $(\eta_\ell)_{\ell\in\N_0}$, suppose a contraction constant $0<q_{\rm est}<1$ as well as a perturbation sequence $(\alpha_\ell)_{\ell\in\N_0} \subset \R$ such that the error estimator
satisfies the perturbed contraction
\begin{align}\label{estred:eq:estred}
\est{\ell+1}{}^2\leq q_{\rm est} \est{\ell}{}^2 + \alpha_\ell^2\quad\text{for all }\ell\in\N_0.
\end{align}
Then, $\lim_{\ell\to\infty}\alpha^2_\ell =0$ implies estimator convergence
\begin{align}\label{estred:eq:estconv}
\lim_{\ell\to\infty}\est{\ell}{} = 0.
\end{align}
\end{mylemma}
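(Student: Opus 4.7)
The plan is to iterate the perturbed contraction \eqref{estred:eq:estred} to obtain an explicit closed form for $\eta_{\ell+1}^2$ in terms of $\eta_0^2$ and the perturbation sequence, and then to use the hypothesis $\alpha_\ell \to 0$ together with $0 < q_{\rm est} < 1$ to conclude that both terms vanish as $\ell \to \infty$.

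More concretely, first I would unfold the recursion by induction on $\ell$ to obtain
\begin{align*}
  \eta_{\ell+1}^2 \leq q_{\rm est}^{\ell+1}\,\eta_0^2 + \sum_{k=0}^{\ell} q_{\rm est}^{\ell-k}\,\alpha_k^2.
\end{align*}
The first summand tends to zero geometrically since $q_{\rm est} \in (0,1)$, so only the convolution-type sum needs attention. This is a standard fact: the convolution of a summable sequence $(q_{\rm est}^{j})_{j\in\N_0}$ with a null sequence $(\alpha_k^2)_{k\in\N_0}$ is again a null sequence.

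To make this precise without invoking external results, I would fix $\varepsilon>0$ and choose $N\in\N$ so large that $\alpha_k^2 < \varepsilon(1-q_{\rm est})/2$ for all $k\geq N$. Splitting the sum at $k=N$, the tail is bounded by
\begin{align*}
  \sum_{k=N}^{\ell} q_{\rm est}^{\ell-k}\,\alpha_k^2 \leq \frac{\varepsilon(1-q_{\rm est})}{2}\sum_{j=0}^{\infty} q_{\rm est}^{j} = \frac{\varepsilon}{2},
\end{align*}
while the finite head $\sum_{k=0}^{N-1} q_{\rm est}^{\ell-k}\alpha_k^2 \leq q_{\rm est}^{\ell-N+1}\sum_{k=0}^{N-1}\alpha_k^2$ is a fixed constant times $q_{\rm est}^{\ell-N+1}$, hence smaller than $\varepsilon/2$ for $\ell$ sufficiently large (and similarly the initial term $q_{\rm est}^{\ell+1}\eta_0^2$ can be absorbed). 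Altogether $\limsup_{\ell\to\infty}\eta_{\ell+1}^2 \leq \varepsilon$, and since $\varepsilon > 0$ was arbitrary, \eqref{estred:eq:estconv} follows.

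The argument is elementary and there is no serious obstacle; the only point requiring mild care is the splitting of the convolution sum into a ``small tail'' and a ``geometrically decaying head''. This is a purely sequential/analytic step and does not use any property of BEM, the mesh refinement, or the particular structure of $\eta_\ell$ beyond \eqref{estred:eq:estred}.
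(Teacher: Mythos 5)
Your proof is correct. You take a genuinely different route from the paper, although both proofs share the same unrolling step.

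The paper applies $\limsup$ to both sides of \eqref{estred:eq:estred}. Since $\limsup_\ell \alpha_\ell^2 = 0$ and $\limsup_\ell \est{\ell}{}^2 = \limsup_\ell \est{\ell+1}{}^2$, this yields $L \leq q_{\rm est} L$ for $L := \limsup_\ell \est{\ell+1}{}^2$, which forces $L \in \{0, \infty\}$. To exclude $L = \infty$ the paper then unrolls the recursion exactly as you do, but only to prove the crude \emph{boundedness} estimate $\est{\ell}{}^2 \leq q_{\rm est}^\ell \est{0}{}^2 + (1-q_{\rm est})^{-1}\sup_k\alpha_k^2 < \infty$. Your proof instead uses the unrolled form directly and shows that the convolution $\sum_{k=0}^\ell q_{\rm est}^{\ell-k}\alpha_k^2$ is itself a null sequence via the standard $\varepsilon/2$ split into a small tail and a geometrically decaying head. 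Your route is more elementary and self-contained: it never needs the $\limsup$ fixed-point argument nor the case distinction between finite and infinite limes superior, since the $\varepsilon/2$ estimate gives smallness (not just boundedness) of the unrolled bound in one stroke. The paper's route is arguably shorter on the page once the boundedness is established, and it highlights the fixed-point structure of the contraction inequality, but it buys this at the cost of separately ruling out the $L=\infty$ branch. The two proofs are thus equivalent in substance but organized around different closing steps.
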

\begin{proof}
Apply the limes superior $\overline\lim$ on both sides of the estimator reduction~\eqref{estred:eq:estred} to obtain
\begin{align*}
\overline{\lim_{\ell\to\infty}}\est{\ell+1}{}^2\leq q_{\rm est} \overline{\lim_{\ell\to\infty}}\est{\ell}{}^2 +\overline{\lim_{\ell\to\infty}}\alpha_\ell^2.
\end{align*}
Since $\alpha^2_\ell$ converges towards zero, there holds $\overline \lim_{\ell\to\infty}\alpha_\ell^2=0$. This implies
\begin{align*}
\overline{\lim_{\ell\to\infty}}\est{\ell+1}{}^2\leq q_{\rm est} \overline{\lim_{\ell\to\infty}}\est{\ell}{}^2 =q_{\rm est} \overline{\lim_{\ell\to\infty}}\est{\ell+1}{}^2.
\end{align*}
Since $0<q_{\rm est}<1$, this leaves the possibilities $\overline \lim_{\ell\to\infty}\est{\ell+1}{}^2=0$ or $\overline \lim_{\ell\to\infty}\est{\ell+1}{}^2=\infty$.
To rule out the second option, apply the estimator reduction~\eqref{estred:eq:estred} inductively to see
\begin{align*}
\est{\ell}{}^2&\leq q_{\rm est}\est{\ell-1}{}^2+\alpha_{\ell-1}^2\\
&\leq q_{\rm est}^2\est{\ell-2}{}^2+q_{\rm est}\alpha_{\ell-2}^2+\alpha_{\ell-1}^2\\
&\leq q_{\rm est}^\ell\est{0}{}^2 + \sum_{k=0}^{\ell-1} q_{\rm est}^k \alpha_{\ell-k-1}^2.
\end{align*}
Convergence of $\alpha_\ell^2$ implies the boundedness $\sup_{\ell\in\N_0}\alpha_\ell^2<\infty$ and the convergence of the geometric series concludes
\begin{align*}
\est{\ell}{}^2
&\leq q_{\rm est}^\ell\est{0}{}^2 + \frac{1}{1-q_{\rm est}}\sup_{\ell\in\N_0}\alpha_\ell^2<\infty.
\end{align*}
This proves $\overline \lim_{\ell\to\infty}\est{\ell+1}{}^2=0$ and elementary calculus yields
\begin{align*}
  0\leq \lim_{\ell\to\infty}\est{\ell}{}^2\leq \overline{\lim_{\ell\to\infty}}\est{\ell+1}{}^2=0.
\end{align*}
This concludes the proof.
$\hfill\qed$
\end{proof}

Before we apply Lemma~\ref{estred:lem:estreddef} to concrete error estimators $\est{\ell}{}$, we collect a number of auxiliary results on the convergence of projections and quasi-interpolants. Later, these will prove that the perturbation terms $\alpha_\ell$ vanish as $\ell\to\infty$.
The first lemma has already been proved in the pioneering work~\cite{bv84}
for symmetric problems and reinvented in~\cite{msv,cp12}.

\begin{mylemma}\label{estred:lem:convortho}Suppose a sequence of nested spaces $(\XX_\ell)_{\ell\in\N_0}\subset \XX$, i.e., $\XX_\ell\subseteq \XX_{\ell+1}$ for all $\ell\in\N_0$.
Then, the Galerkin approximations $U_\ell$ of~\eqref{intro:galerkin} satisfy
\begin{align}\label{estred:eq:apriorigal}
\lim_{\ell\to\infty}\norm{U_\infty-U_\ell}{\XX}=0
\end{align}
for an a~priori limit $U_\infty\in\XX$ which is unknown in general and
depends on the concrete sequence of spaces.
\end{mylemma}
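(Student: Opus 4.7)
The plan is to identify the a~priori limit as the Galerkin solution on an appropriately defined limit space and then deduce norm convergence from C\'ea's lemma together with density.

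First, I would define
\begin{align*}
  \XX_\infty := \overline{\bigcup_{\ell\in\N_0}\XX_\ell}^{\;\norm{\cdot}{\XX}},
\end{align*}
which, as a closed subspace of the Hilbert space $\XX$, is itself a Hilbert space. Nestedness $\XX_\ell\subseteq\XX_{\ell+1}$ ensures that $\bigcup_\ell\XX_\ell$ is a linear subspace, so $\XX_\infty$ is well-defined. The bilinear form $b(\cdot,\cdot)$ inherits continuity~\eqref{intro:continuous} and ellipticity~\eqref{intro:elliptic} on $\XX_\infty$, so the Lax--Milgram lemma produces a unique $U_\infty\in\XX_\infty$ satisfying
\begin{align*}
  b(U_\infty,v) = F(v) \quad\text{for all } v\in\XX_\infty.
\end{align*}

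Next, I would observe that for each $\ell\in\N_0$ the discrete solution $U_\ell\in\XX_\ell$ is precisely the $b$-Galerkin projection of $U_\infty$ onto $\XX_\ell$: for every $V\in\XX_\ell\subseteq\XX_\infty$ it holds that $b(U_\infty,V) = F(V) = b(U_\ell,V)$. Applying C\'ea's lemma~\eqref{intro:cea} to this Galerkin problem (with $u$ replaced by $U_\infty\in\XX_\infty$ and the Hilbert space $\XX$ replaced by $\XX_\infty$) gives
\begin{align*}
  \norm{U_\infty - U_\ell}{\XX} \le \frac{C_{\rm cont}}{C_{\rm ell}}\,\min_{V\in\XX_\ell}\norm{U_\infty - V}{\XX}.
\end{align*}

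Finally, I would invoke the density of $\bigcup_\ell\XX_\ell$ in $\XX_\infty$: for any $\eps>0$ there exists $L\in\N_0$ and $V_\eps\in\XX_L$ with $\norm{U_\infty - V_\eps}{\XX}<\eps$, and by nestedness $V_\eps\in\XX_\ell$ for every $\ell\ge L$. This makes the right-hand side above tend to zero, yielding~\eqref{estred:eq:apriorigal}.

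The argument is essentially routine; no step presents a substantial obstacle. The only point requiring care is to notice that, although $U_\infty$ depends on the specific sequence $(\XX_\ell)_{\ell\in\N_0}$ through the space $\XX_\infty$, it need not agree with the continuous solution $u\in\XX$ of~\eqref{intro:weakform}, which is exactly the reason why plain a~priori nestedness arguments alone do not suffice to guarantee $U_\infty = u$ in the context of adaptivity.
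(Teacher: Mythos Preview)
Your proposal is correct and follows essentially the same approach as the paper: define $\XX_\infty$ as the closure of $\bigcup_\ell\XX_\ell$, obtain $U_\infty$ via Lax--Milgram on $\XX_\infty$, apply C\'ea's lemma to bound $\norm{U_\infty-U_\ell}{\XX}$ by the best approximation from $\XX_\ell$, and conclude by density plus nestedness. The paper spells out the C\'ea step explicitly via Galerkin orthogonality rather than citing~\eqref{intro:cea}, but the argument is the same.
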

\begin{proof}
Define the closed space $\XX_\infty:=\overline{\bigcup_{\ell\in\N_0} \XX_\ell}\subseteq \XX$, where the closure is understood in the space $\XX$. The Lax-Milgram lemma
guarantees a unique solution $U_\infty\in\XX_\infty$ of~\eqref{intro:galerkin}, where $\XX_\ell$ is replaced with $\XX_\infty$. By use of the Galerkin orthogonality, we prove the C\'ea lemma~\eqref{intro:cea} also for $U_\infty$, i.e., any $V_\ell\in\XX_\ell$ satisfies
\begin{align*}
 C_{\rm ell}\norm{U_\infty-U_\ell}{\XX}^2&\leq\form{U_\infty-U_\ell}{U_\infty-U_\ell}\\
 &=\form{U_\infty-U_\ell}{U_\infty-V_\ell}\\
 &\leq C_{\rm cont} \norm{U_\infty-U_\ell}{\XX}\norm{U_\infty-V_\ell}{\XX}.
\end{align*}
Hence, we are led to
\begin{align*}
\norm{U_\infty-U_\ell}{\XX}\lesssim \min_{V_\ell\in\XX_\ell}\norm{U_\infty-V_\ell}{\XX}.
\end{align*}
Let $\eps>0$. 
The definition of $\XX_\infty$ implies the existence of $k\in\N$
and $W_k\in\XX_k$ such that $\norm{U_\infty-W_k}\XX\le\eps$.
Combining this with the nestedness $\XX_k\subseteq\XX_\ell$ for $\ell\ge k$
and the C\'ea lemma, we obtain $\norm{U_\infty-U_\ell}\XX\lesssim\eps$.
This concludes the proof.
$\hfill\qed$
\end{proof}

The following lemma provides a similar result for quasi-interpolation operators and is proved
in~\cite[Proposition~11]{zz2014}.

\begin{mylemma}\label{estred:lem:convclement}
Given a sequence of nested meshes $(\mesh_\ell)_{\ell\in\N_0}$ as well as corresponding linear
operators $(J_\ell:\, L_2(\Gamma)\to L_2(\Gamma))_{\ell\in\N}$ which satisfy
for all $\el\in\mesh_\ell$ the following properties (i)--(iii):
\begin{itemize}
\item[(i)] local $L_2$-stability
\begin{align*}
\norm{J_\ell v}{L_2(\el)}\leq C_{\rm J} \norm{ v}{L_2(\omega_\el)}\quad\text{for all }v\in L_2(\Gamma);
\end{align*}
\item[(ii)] local first-order approximation property
\begin{align*}
\norm{(1-J_\ell)v}{L_2(\el)}\leq C_{\rm J} \norm{ h_\ell\nabla_\Gamma v}{L_2(\omega_\el)}\quad\text{for all }v\in H^1(\Gamma);
\end{align*}
\item[(iii)] local definition, i.e., $(J_\ell v)|_\el $ depends only on the values of $v|_{\omega_\el}$.
\end{itemize}
Then, there exists a linear and continuous limit operator $J_\infty:\,L_2(\Gamma)\to L_2(\Gamma)$ with
\begin{align}\label{estred:eq:convclement:L2}
\lim_{\ell\to\infty}\norm{J_\infty v - J_\ell v}{L_2(\Gamma)} 
= 0 \quad\text{for all } v\in L_2(\Gamma).
\end{align}
Suppose in addition that $J_\ell(L_2(\Gamma)\big)\subseteq H^1(\Gamma)$ and that
the following property holds:
\begin{itemize}
\item[(iv)] local $H^1$-stability
\begin{align*}
\norm{\nabla_\Gamma (J_\ell v)}{L_2(\el)}\leq C_{\rm J} \norm{ v}{H^1(\omega_\el)}\quad\text{for all }v\in H^1(\Gamma).
\end{align*}
\end{itemize}
Then, the limit operator $J_\infty$ has the following additional properties:
\begin{itemize}
\item $J_\infty:\,H^s(\Gamma)\to H^s(\Gamma)$ is well-defined and continuous for all $0\leq s\leq 1$;
\item for all $0\leq s < 1$, $J_\infty$ is the pointwise limit of $J_\ell$, i.e.,
\begin{align}\label{estred:eq:convclement}
\lim_{\ell\to\infty}\norm{J_\infty v - J_\ell v}{H^s(\Gamma)} = 0 \quad\text{for all } v\in H^s(\Gamma);
\end{align}
\item for $s=1$ and all $v\in H^1(\Gamma)$, $J_\ell v$ converges weakly to $J_\infty v$ as $\ell\to\infty$.
\end{itemize}
\end{mylemma}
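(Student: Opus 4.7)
The plan is to construct $J_\infty$ first on $L_2(\Gamma)$ by showing that $(J_\ell v)_{\ell\in\N_0}$ is a Cauchy sequence, and then upgrade to $H^s$-regularity under the additional hypothesis~(iv). A central observation is that $(h_\ell)$ is a pointwise non-increasing sequence of mesh-size functions, bounded by $h_0$: nestedness together with the area-halving property~\eqref{dp2:estred} forces any refinement at a point to decrease the local mesh-size by a factor at most $2^{-1/(d-1)}$. Hence $h_\infty(x):=\lim_{\ell\to\infty}h_\ell(x)$ exists almost everywhere, and $\Gamma$ decomposes into
\begin{align*}
  \Gamma_\# := \{x\in\Gamma \mid h_\infty(x)=0\}
  \quad\text{and}\quad
  \Gamma_\ast := \Gamma\setminus\Gamma_\#,
\end{align*}
where on $\Gamma_\ast$ the mesh stabilizes in the sense that each point eventually lies in a permanent element of $\mesh_\ell$.

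To show the Cauchy property, I would first treat $v\in H^1(\Gamma)$. On $\Gamma_\#$, the approximation property~(ii) combined with dominated convergence yields $\norm{v-J_\ell v}{L_2(\Gamma_\#)}\le C\norm{h_\ell\nablag v}{L_2(\Gamma_\#)}\to 0$, so $(J_\ell v)|_{\Gamma_\#}\to v|_{\Gamma_\#}$ in $L_2(\Gamma_\#)$. On $\Gamma_\ast$, the locality~(iii) together with the fact that the element patches $\omega_\el$ of a permanent element $\el$ form a monotonically shrinking sequence of subsets of $\Gamma$ (since any refinement of a neighbour strictly reduces the patch's support) allows $(J_\ell v)|_{\Gamma_\ast}$ to be shown Cauchy in $L_2$. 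Combining both parts gives the Cauchy property on $H^1(\Gamma)$. Density of $H^1(\Gamma)$ in $L_2(\Gamma)$ together with the uniform $L_2$-stability~(i) then extends this to all $v\in L_2(\Gamma)$ via a standard $3\varepsilon$-argument. The pointwise limit defines $J_\infty v:=\lim_{\ell}J_\ell v$, and linearity and $L_2$-boundedness of $J_\infty$ pass directly to the limit, proving~\eqref{estred:eq:convclement:L2}.

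Under the additional hypothesis~(iv), the sequence $(J_\ell v)$ is bounded in $H^1(\Gamma)$ for every $v\in H^1(\Gamma)$, and the $L_2$-limit $J_\infty v$ is thus also in $H^1(\Gamma)$ with $\norm{J_\infty v}{H^1(\Gamma)}\le C_{\rm J}\norm{v}{H^1(\Gamma)}$, obtained via weak compactness and uniqueness of the weak cluster point identified through strong $L_2$-convergence. Real interpolation between $L_2(\Gamma)$ and $H^1(\Gamma)$ yields continuity of $J_\infty$ on each intermediate $H^s(\Gamma)$, $0\le s\le 1$. For $0\le s<1$, the interpolation inequality $\norm{w}{H^s(\Gamma)}\lesssim\norm{w}{L_2(\Gamma)}^{1-s}\norm{w}{H^1(\Gamma)}^s$ applied to $w=J_\infty v-J_\ell v$, combined with the $L_2$-convergence from the previous step and the uniform $H^1$-bound, proves~\eqref{estred:eq:convclement} for $v\in H^1(\Gamma)$; the statement for general $v\in H^s(\Gamma)$ follows from density of $H^1$ in $H^s$ and the uniform $H^s$-stability of $J_\ell$ (itself a consequence of interpolation applied to~(i) and~(iv)). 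Weak convergence at $s=1$ is then automatic: $(J_\ell v)$ is $H^1$-bounded, every weakly convergent subsequence has the same limit $J_\infty v$ (identified via $L_2$-convergence), hence the whole sequence converges weakly.

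The \emph{main obstacle} is the Cauchy argument on $\Gamma_\ast$: the abstract axioms~(i)--(iii) do not yield exact stabilization of $(J_\ell v)|_\el$ on a permanent element $\el$. For projection operators $J_\ell$ with nested range, such as $\Pi_\mesh^p$ or the Scott--Zhang operator, the identity $J_{\ell+1}J_\ell=J_\ell$ would make $(J_\ell v)|_\el$ eventually constant on inputs $v$ that are piecewise polynomial on a coarser mesh, giving the Cauchy property almost for free. Without this structural assumption, the argument must carefully exploit the monotone shrinking of the patches $\omega_\el$ together with the locality~(iii) to establish Cauchy-ness of the local contributions directly.
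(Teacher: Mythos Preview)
The paper does not prove this lemma; it states it with a citation to \cite[Proposition~11]{zz2014}. Your overall strategy---splitting $\Gamma$ into $\Gamma_\#=\{h_\infty=0\}$ and $\Gamma_\ast=\Gamma\setminus\Gamma_\#$, handling each piece for $v\in H^1(\Gamma)$, then extending to $L_2(\Gamma)$ by density and the uniform bound~(i)---is the natural one and very likely matches the cited argument. The subsequent $H^s$-upgrade via weak compactness and interpolation is correct and standard.

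The obstacle you flag on $\Gamma_\ast$ is, however, genuine and cannot be resolved from axioms (i)--(iii) alone. Uniform shape-regularity does ensure that the patch $\omega_T^{(\ell)}$ of any permanent element $T$ eventually stabilises (a neighbour refined infinitely often would violate the bounded local mesh-ratio), but axiom~(iii) merely says that $(J_\ell v)|_T$ depends on $v|_{\omega_T}$---it does not say that this dependence is the same for each~$\ell$. A concrete counterexample: take $\mesh_\ell=\mesh_0$ for all~$\ell$ and on each element set $(J_\ell v)|_T=\bar v_T+(-1)^\ell\, g_T\,c_T(v)$, where $g_T$ is a fixed mean-zero function on $T$ and $c_T$ a fixed linear functional annihilating constants (for instance the difference of averages of $v$ over the two halves of~$T$). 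This satisfies (i)--(iii) with a uniform constant, yet $(J_\ell v)$ oscillates and has no $L_2$-limit. What is actually needed---and what every operator in the applications (Scott--Zhang, the averaging operators~$A_\ell$ of Section~\ref{section:zzest}) satisfies---is that the local action of $J_\ell$ on $T$ is \emph{determined by the local mesh} on $\omega_T$: once the mesh restricted to $\omega_T$ stabilises, $(J_\ell v)|_T$ is eventually constant in~$\ell$ and hence trivially Cauchy. This is presumably explicit or implicit in \cite{zz2014}; without it the lemma as literally stated here is false, and your sketch cannot be completed on $\Gamma_\ast$ by ``carefully exploiting monotone shrinking'' alone.
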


The Scott-Zhang projection $J_\ell$ from Lemma~\ref{lem:sz} satisfies even stronger convergence results. We stress that $J_\ell$ satisfies the assumptions (i)--(iv) from Lemma~\ref{estred:lem:convclement}. The following lemma is proved in~\cite[Lemma~18]{dirichlet2d}.

\begin{mylemma}\label{estred:lem:szconv}
  Suppose a sequence of nested meshes $(\mesh_\ell)_{\ell\in\N_0}$ as well as the corresponding
  Scott-Zhang operators $(J_\ell:\, L_2(\Gamma)\to L_2(\Gamma))_{\ell\in\N}$. Then, the limit
  operator $J_\infty:\,L_2(\Gamma)\to L_2(\Gamma)$, which exists due to
  Lemma~\ref{estred:lem:convclement}, is a projection in the sense of $J_\infty  v =  v$
  for all
  $v\in \Sp^p_\infty:=\overline{\bigcup_{\ell\in\N_0} \Sp^p(\mesh_\ell)}\subseteq L_2(\Gamma)$
  where the closure is understood with respect to $L_2(\Gamma)$,
  and satisfies pointwise convergence
  \begin{align}\label{estred:eq:szconv}
    \lim_{\ell\to\infty}\norm{J_\infty v - J_\ell v}{H^s(\Gamma)} = 0
    \quad\text{for all } v\in H^s(\Gamma)
  \end{align}
  and all $0\leq s \leq 1$.
\end{mylemma}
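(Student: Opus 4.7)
The plan is to leverage Lemma~\ref{estred:lem:convclement}, which is applicable here since the Scott-Zhang operator satisfies its assumptions (i)--(iv) by Lemma~\ref{lem:sz}. That lemma already supplies existence of a continuous limit $J_\infty:\,H^s(\Gamma)\to H^s(\Gamma)$ for all $0\leq s\leq 1$, strong pointwise convergence in $H^s$ for $0\leq s<1$, and weak convergence in $H^1$. What remains is (a) the projection identity $J_\infty v=v$ on $\Sp^p_\infty$ and (b) the upgrade of the weak $H^1$-convergence at $s=1$ to strong convergence.

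For the projection identity, first take $v\in\Sp^p(\mesh_k)$ for some $k\in\N_0$. Nestedness gives $v\in\Sp^p(\mesh_\ell)$, and since $J_\ell$ is a projection onto $\Sp^p(\mesh_\ell)$, we have $J_\ell v=v$ for every $\ell\geq k$. Passing to the $L_2$-limit from Lemma~\ref{estred:lem:convclement} yields $J_\infty v=v$. For general $v\in\Sp^p_\infty$, pick a sequence $v_n\in\bigcup_{k\in\N_0}\Sp^p(\mesh_k)$ with $v_n\to v$ in $L_2(\Gamma)$ and combine $L_2$-continuity of $J_\infty$ with the identity $J_\infty v_n=v_n$ just established to conclude.

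For the strong $H^1$-convergence, the plan is to combine the nested-projection identity $J_\ell(J_k v)=J_k v$ for $\ell\geq k$ (which follows from $J_k v\in\Sp^p(\mesh_k)\subseteq\Sp^p(\mesh_\ell)$ and the projection property of $J_\ell$) with the identity $J_\infty(J_k v)=J_k v$ just proved, yielding the decomposition
\begin{align*}
J_\ell v - J_\infty v = J_\ell(v-J_kv) - J_\infty(v-J_kv)\qquad\text{for all }\ell\geq k.
\end{align*}
Applying $H^1$-stability of $J_\ell$ (Lemma~\ref{lem:sz}) and continuity of $J_\infty$ on $H^1(\Gamma)$ (Lemma~\ref{estred:lem:convclement}) then gives $\|J_\ell v-J_\infty v\|_{H^1(\Gamma)}\lesssim \|v-J_kv\|_{H^1(\Gamma)}$ uniformly in $\ell\geq k$, and the task reduces to verifying $\|v-J_kv\|_{H^1(\Gamma)}\to 0$ as $k\to\infty$, which in turn is controlled via Lemma~\ref{lem:Sp:apx} by the best $\Sp^p(\mesh_k)$-approximation of $v$ in $H^1(\Gamma)$.

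The hard part is exactly this last step, because adaptive mesh-refinement does not force the mesh-size to zero uniformly and hence $\bigcup_{k\in\N_0}\Sp^p(\mesh_k)$ need not be dense in $H^1(\Gamma)$. The remedy, following~\cite[Lemma~18]{dirichlet2d}, is to split $\Gamma$ into the subset on which the local mesh-size $h_\ell$ tends to zero (where density of piecewise polynomials combined with Lemma~\ref{lem:Sp:apx} yields strong $H^1$-convergence) and its complement (on which the meshes are eventually stationary, so that locality of $J_\ell$ as in assumption~(iii) of Lemma~\ref{estred:lem:convclement} renders $J_\ell v$ locally stationary in $H^1$), and then to glue the two contributions by a localization argument exploiting the element-locality of the Scott-Zhang operator.
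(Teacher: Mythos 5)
Your proof of the projection identity is correct, and the $0\le s<1$ cases are an immediate consequence of Lemma~\ref{estred:lem:convclement}, as you say. The paper itself offers no proof and simply cites~\cite[Lemma~18]{dirichlet2d}, so the comparison is only to a correct argument.

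For $s=1$, however, there is a genuine structural problem. Your decomposition
\begin{align*}
J_\ell v - J_\infty v = J_\ell(v-J_kv) - J_\infty(v-J_kv), \qquad \ell\ge k,
\end{align*}
together with the uniform $H^1$-stability of $J_\ell$ and $J_\infty$, is perfectly valid and gives
\begin{align*}
\sup_{\ell\ge k}\norm{J_\ell v - J_\infty v}{H^1(\Gamma)} \lesssim \norm{v-J_kv}{H^1(\Gamma)}.
\end{align*}
But the step where you say the task "reduces to" $\norm{v-J_kv}{H^1(\Gamma)}\to 0$ is where the proof breaks. This quantity does not tend to zero for a general adaptively generated sequence of meshes: take $\mesh_\ell=\mesh_0$ for all $\ell$, then $\norm{v-J_kv}{H^1(\Gamma)}=\norm{v-J_0v}{H^1(\Gamma)}$ is a nonzero constant, even though the lemma's conclusion holds trivially in that case. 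So you have correctly reduced the claim to a \emph{sufficient} condition that is simply false, and no "remedy" applied inside that reduced problem can make it true. The split you sketch in the final paragraph cannot repair the reduction; it has to \emph{replace} it, and be applied directly to $\norm{J_\ell v - J_\infty v}{H^1(\Gamma)}$. Your framing ("the remedy for this last step is to split $\Gamma$") suggests the split somehow forces $\norm{v-J_kv}{H^1(\Gamma)}\to 0$, which is impossible. This is a real logical gap, not just a presentational slip.

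Beyond the framing issue, the content of the split also needs to carry more weight than you give it. On the stable region the mesh eventually stops changing, and by locality $J_\ell v$ is eventually pointwise equal to $J_\infty v$ there — that part is fine. On the region where $h_\ell\to 0$, you cannot just invoke Lemma~\ref{lem:Sp:apx} because it controls $(1-J_\ell)v$, not $J_\ell v - J_\infty v$, and density of $\bigcup_\ell \Sp^p(\mesh_\ell)$ in $H^1$ fails globally precisely because of the stable region. The real work is in the interface layer between the two regions: elements whose patch $\omega_T$ straddles both regions contribute to $J_\ell v$ through $v$ on both sides, and one must show these boundary contributions are asymptotically negligible in $H^1$. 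That localization step is the technical core of~\cite[Lemma~18]{dirichlet2d}, and your sketch passes over it with "glue the two contributions." As written, the $s=1$ case is not proven.
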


Finally, also the non-local $L_2$-orthogonal projection
$\Pi^p_\ell:\,L_2(\Gamma)\to\Sp^p(\mesh_\ell)$ from Definition~\ref{def:L2projection}
satisfies the convergence properties 
of Lemma~\ref{estred:lem:convclement}--\ref{estred:lem:szconv}. The following lemma improves an
observation of~\cite{kop} to general $0\leq s\leq 1$. We note that Lemma~\ref{estred:lem:l2conv}
does not follow from Lemma~\ref{estred:lem:convclement}, since $\Pi^p_\ell$ is a \emph{non-local}
operator and fails to satisfy the \emph{local} properties (i)--(iv) from
Lemma~\ref{estred:lem:convclement}.

\begin{mylemma}\label{estred:lem:l2conv}
  Given a sequence of nested meshes $(\mesh_\ell)_{\ell\in\N_0}$, suppose uniform $H^1$-stability
  of the $L_2$-orthogonal projection $\Pi^p_\ell:\, L_2(\Gamma)\to \Sp^p(\mesh_\ell)$ for
  all $\ell\in\N_0$, i.e.,
  \begin{align}\label{estred:eq:l2stab}
    \norm{\nabla_\Gamma \Pi^p_\ell v}{L_2(\Gamma)}\leq \c{stab} \norm{v}{H^1(\Gamma)}
    \quad\text{for all }v\in H^1(\Gamma).
  \end{align}
  Then, there exists a linear and continuous limit operator
  $\Pi^p_\infty:\,L_2(\Gamma)\to L_2(\Gamma)$
  which is a projection in the sense of $\Pi^p_\infty  v = v$ for all
  $v\in \Sp^p_\infty:=\overline{\bigcup_{\ell\in\N_0} \Sp^p(\mesh_\ell)}\subseteq L_2(\Gamma)$
  where the closure is understood with respect to $L_2(\Gamma)$, and satisfies
  \begin{itemize}
    \item $\Pi^p_\infty:\,H^s(\Gamma)\to H^s(\Gamma)$ is well-defined and continuous for all
      $0\leq s\leq 1$
    \item For all $0\leq s \leq 1$, $\Pi^p_\infty$ is the pointwise limit of $\Pi^p_\ell$, i.e.
      \begin{align}\label{estred:eq:l2conv}
        \lim_{\ell\to\infty}\norm{\Pi^p_\infty v - \Pi^p_\ell v}{H^s(\Gamma)} = 0
        \quad\text{for all } v\in H^s(\Gamma)
      \end{align}
  \end{itemize}
\end{mylemma}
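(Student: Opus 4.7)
The plan is to define $\Pi^p_\infty$ as the $L_2(\Gamma)$-orthogonal projection onto the closed subspace $\Sp^p_\infty \subseteq L_2(\Gamma)$ and then upgrade convergence stepwise from $L_2$ to $H^s$ for $0\le s<1$ and finally to $s=1$. By construction $\Pi^p_\infty v = v$ for $v \in \Sp^p_\infty$. For $v\in L_2(\Gamma)$, nestedness $\Sp^p(\mesh_\ell)\subseteq\Sp^p(\mesh_{\ell+1})\subseteq\Sp^p_\infty$ combined with the best-approximation property of the $L_2$-projections yields that $\|v-\Pi^p_\ell v\|_{L_2(\Gamma)}$ decreases monotonically to $\|v-\Pi^p_\infty v\|_{L_2(\Gamma)}$ (approximating $\Pi^p_\infty v$ arbitrarily well from $\bigcup_\ell \Sp^p(\mesh_\ell)$, which is $L_2$-dense in $\Sp^p_\infty$), and strict convexity of the Hilbert space $L_2(\Gamma)$ then forces the strong $L_2$-convergence $\Pi^p_\ell v\to\Pi^p_\infty v$.

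Next, I would combine the assumed uniform $H^1$-stability~\eqref{estred:eq:l2stab} with the trivial $L_2$-stability $\|\Pi^p_\ell v\|_{L_2(\Gamma)}\le\|v\|_{L_2(\Gamma)}$ and interpolation to obtain a uniform-in-$\ell$ stability of $\Pi^p_\ell$ on $H^s(\Gamma)$ for every $0\le s\le1$. For $v\in H^1(\Gamma)$, the sequence $(\Pi^p_\ell v)_\ell$ is thus bounded in the reflexive space $H^1(\Gamma)$, so up to subsequences it converges weakly in $H^1(\Gamma)$; the strong $L_2$-convergence from the previous step identifies the weak limit uniquely as $\Pi^p_\infty v$, so the whole sequence converges weakly in $H^1(\Gamma)$, and $\Pi^p_\infty v \in H^1(\Gamma)$ with $\|\Pi^p_\infty v\|_{H^1(\Gamma)} \le \c{stab}\|v\|_{H^1(\Gamma)}$. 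Interpolation then extends $\Pi^p_\infty$ to a continuous operator on $H^s(\Gamma)$ for all $0\le s\le1$.

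For strong convergence in $H^s(\Gamma)$ with $0\le s<1$, I would invoke the compact embedding $H^1(\Gamma)\hookrightarrow H^s(\Gamma)$: for $v\in H^1(\Gamma)$, weak $H^1$-convergence together with this compactness gives strong $H^s$-convergence; for general $v\in H^s(\Gamma)$, a standard three-term splitting exploiting the density of $H^1(\Gamma)$ in $H^s(\Gamma)$ together with the uniform $H^s$-stability of $\Pi^p_\ell$ and $\Pi^p_\infty$ transfers the result.

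The main obstacle is the borderline case $s=1$, where the compact embedding is no longer available. Here I plan to leverage the Scott-Zhang limit of Lemma~\ref{estred:lem:szconv} together with an orthogonal decomposition. Since $v-\Pi^p_\infty v$ is $L_2$-orthogonal to every $\Sp^p(\mesh_\ell)\subseteq\Sp^p_\infty$, it holds $\Pi^p_\ell v = \Pi^p_\ell(\Pi^p_\infty v)$, so it suffices to prove $\Pi^p_\ell w\to w$ in $H^1(\Gamma)$ for $w:=\Pi^p_\infty v \in H^1(\Gamma)\cap\Sp^p_\infty$. For such $w$, Lemma~\ref{estred:lem:szconv} with $s=1$ yields $J_\ell w\to J_\infty w = w$ in $H^1(\Gamma)$. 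Because $J_\ell w\in\Sp^p(\mesh_\ell)$ implies $\Pi^p_\ell J_\ell w = J_\ell w$, one has the identity
\begin{align*}
 w-\Pi^p_\ell w = (w-J_\ell w) - \Pi^p_\ell(w-J_\ell w),
\end{align*}
and uniform $H^1$-stability of $\Pi^p_\ell$ yields $\|w-\Pi^p_\ell w\|_{H^1(\Gamma)} \le (1+\c{stab})\|w-J_\ell w\|_{H^1(\Gamma)} \to 0$, which closes the argument.
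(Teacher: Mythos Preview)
Your proof is correct, and at its core coincides with the paper's: the existence of the $L_2$-limit $\Pi^p_\infty$, the uniform $H^s$-stability via interpolation, the weak-limit identification, and---most importantly---the Scott--Zhang trick you deploy for $s=1$ are exactly the paper's ingredients.

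The only substantive difference is organizational. You treat $0\le s<1$ separately via the compact embedding $H^1(\Gamma)\hookrightarrow H^s(\Gamma)$ and a density argument, reserving the Scott--Zhang identity $w-\Pi^p_\ell w=(1-\Pi^p_\ell)(1-J_\ell)w$ for the ``main obstacle'' $s=1$. The paper observes that this identity works uniformly for all $0\le s\le 1$: once $\Pi^p_\infty v\in H^s(\Gamma)$ is known (from the weak-limit argument), one writes
\[
\norm{\Pi^p_\infty v-\Pi^p_\ell v}{H^s(\Gamma)}
=\norm{(1-\Pi^p_\ell)(1-J_\ell)\Pi^p_\infty v}{H^s(\Gamma)}
\lesssim \norm{(J_\infty-J_\ell)\Pi^p_\infty v}{H^s(\Gamma)}\to 0
\]
directly from Lemma~\ref{estred:lem:szconv}, since $\Pi^p_\infty v\in\Sp^p_\infty$ implies $J_\infty\Pi^p_\infty v=\Pi^p_\infty v$. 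Thus your compactness/density detour is correct but unnecessary; the argument you found for $s=1$ already handles every $s\in[0,1]$.
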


\begin{proof}
Since $\Pi^p_\ell$ is an orthogonal projection for the $L_2$-scalar product,
one proves analogously to Lemma~\ref{estred:lem:convortho} that there exists an operator
$\Pi^p_\infty:\,L_2(\Gamma)\to L_2(\Gamma)$ such that
\begin{align}\label{estred:eq:l2l2conv}
  \lim_{\ell\to\infty}\norm{\Pi^p_\ell v-\Pi^p_\infty v}{L_2(\Gamma)}=0
  \quad\text{for all }v\in L_2(\Gamma).
\end{align}
Clearly, this and the projection property of $\Pi^p_\ell$ imply in particular 
that $v=\Pi^p_\infty v$ for all $v\in\Sp^p_\infty$.
As $\Pi^p_\ell$ is stable in $L_2(\Gamma)$ by definition and stable in $H^1(\Gamma)$ by
assumption~\eqref{estred:eq:l2stab}, it follows from deeper mathematical techniques
(see, e.g.,~\cite{t07}) that it is also stable in $H^s(\Gamma)$ for $s\in(0,1)$.
For general $v\in H^s(\Gamma)$, the boundedness
$\sup_{\ell\in\N_0}\norm{\Pi^p_\ell v}{H^s(\Gamma)}<\infty$ implies for a subsequence 
$\Pi^p_{\ell_k} v\rightharpoonup w$ weakly in $H^s(\Gamma)$ and hence
$\Pi^p_\infty v=w\in H^s(\Gamma)$. To see $H^s$-convergence for all $0\leq s\leq 1$, we
apply the projection property of $\Pi^p_\ell$ to see
\begin{align*}
  \norm{\Pi^p_\infty v - \Pi^p_\ell v}{H^s(\Gamma)} &=
  \norm{(1-\Pi^p_\ell) \Pi^p_\infty v}{H^s(\Gamma)}\\
  &= \norm{(1-\Pi^p_\ell) (1-J_\ell)\Pi^p_\infty v}{H^s(\Gamma)},
\end{align*}
where $J_\ell:\, L_2(\Gamma)\to \Sp^p(\mesh_\ell)$ denotes the Scott-Zhang projection from
Lemma~\ref{lem:sz}. The $H^s$-stability of $\Pi^p_\ell$ then shows
\begin{align*}
  \norm{\Pi^p_\infty &v - \Pi^p_\ell v}{H^s(\Gamma)}\lesssim
  \norm{(1-J_\ell)\Pi^p_\infty v}{H^s(\Gamma)}\\
  &= \norm{(J_\infty - J_\ell)\Pi^p_\infty v}{H^s(\Gamma)} \to 0,
\end{align*}
as $\Pi^p_\infty v\in\Sp^p_\infty$ and hence $\Pi^p_\infty v = J_\infty \Pi^p_\infty v$
by Lemma~\ref{estred:lem:szconv}.
$\hfill\qed$
\end{proof}

\begin{remark}
For 2D BEM, the $H^1$-stability~\eqref{estred:eq:l2stab} is well-ana\-lyzed
and found in~\cite{ct87}. For 3D BEM, available results 
include~\cite{by13,bps02,bx91,c02,c04,kpp13}, and we refer 
to Section~\ref{section:L2projection:H1stability} below.
\end{remark}

\begin{remark}
Suppose that $J_\ell:L_2(\Gamma)\to L_2(\Gamma)$ satisfies\linebreak
$J_\ell(H^1_0(\Gamma))\subseteq H^1_0(\Gamma)$, i.e., $J_\ell$ incorporates 
homogeneous Diri\-chlet conditions. Suppose that $J_\ell$ satisfies the 
properties (i)--(iv) of Lemma~\ref{estred:lem:convclement} with $H^1(\Gamma)$
replaced by $H^1_0(\Gamma)=\wilde H^1(\Gamma)$. Then, the according a~priori convergence holds
in $\wilde H^s(\Gamma)$ for $0\le s\le1$ instead of $H^s(\Gamma)$. This
observation applies, in particular, for the Scott-Zhang projection from
Lemma~\ref{estred:lem:szconv}, and we refer to~\cite{affkp13} for stable 
Scott-Zhang projectors in $\wilde H^s(\Gamma)$. Finally, also 
Lemma~\ref{estred:lem:l2conv} transfers to this case, if $\Pi^p_\ell$ denotes
the $L_2$-orthogonal projection onto $\widetilde\Sp^p(\TT_\ell)$. We refer
to~\cite{kpp13A} for $H^1$-stability of this $L_2$-projection for the lowest-order
case $p=1$, see also Section~\ref{section:L2projection:H1stability} below.
\end{remark}

\begin{remark}
  For 2D BEM and lowest-order elements, nodal interpolation $J_\ell:C(\overline\Gamma)\to\Sp^1(\TT_\ell)$ from Section~\ref{section:nodal}
  satisfies the identity $(J_\ell v)'=\pi_\ell^0(v')$
 for all $v\in H^1(\Gamma)$, where $\pi_\ell^0:L_2(\Gamma)\to\Pp^0(\TT_\ell)$ 
 denotes the $L_2$-orthogonal projection onto $\Pp^0(\TT_\ell)$.
 Given a sequence of nested meshes $(\mesh_\ell)_{\ell\in\N_0}$, 
 it is part of \cite[Proof of Prop.~5.2]{afgkmp12}
 that therefore the limit of $J_\ell v$ exists in $H^1(\Gamma)$, i.e.,
 $\norm{v_\infty-J_\ell v}{H^1(\Gamma)}\to0$ as $\ell\to\infty$ for some 
 appropriate $v_\infty\in H^1(\Gamma)$.
\end{remark}

%%%%%%%%%%%%%%%%%%%%%%%%%%%%%%%%%%%%%%%%%%%%%%%%%%%%%%%%%%%%%%%%%%%%%%%%%%%%%%%%%%%%%%%%%%%%%%%%%%%%%%%
\subsection{$(h-h/2)$-type error estimators}\label{section:estred:hh2}
%%%%%%%%%%%%%%%%%%%%%%%%%%%%%%%%%%%%%%%%%%%%%%%%%%%%%%%%%%%%%%%%%%%%%%%%%%%%%%%%%%%%%%%%%%%%%%%%%%%%%%%

This section follows~\cite{afgkmp12,afp12,kop} and discusses the estimator reduction~\eqref{estred:eq:estred} for the easy-to-implement $(h-h/2)$ error estimator
from~\cite{affkp13,efgp12,fp08}. Given any $\TT_\ell$, we assume that 
$\widehat\TT_\ell\in\refine(\TT_\ell)$ is the uniform refinement of $\TT_\ell$,
i.e., it holds nestedness 
\begin{align}\label{ass1:hh2}
 \Pp^0(\TT_\ell)\subseteq\Pp^0(\TT_{\ell+1})\subseteq\Pp^0(\widehat\TT_\ell)
 \subseteq\Pp^0(\widehat\TT_{\ell+1}),
\end{align} 
and for all $T\in\TT_\ell$ and $T'\in\widehat\TT_\ell$ 
holds
\begin{align}\label{ass2:hh2}
 T'\subseteq T
 \quad\Longrightarrow\quad
 q\,|T| \le |T'| \le |T|/2,
\end{align}
for some fixed and $\ell$-independent $0<q\le1/2$,
i.e., the local mesh-sizes of $\TT_\ell$ and $\widehat\TT_\ell$ are comparable.
%------------------------------------------------------------------------------------------------------
\subsubsection{Weakly singular integral equation}\label{section:estred:hh2weaksing}
%------------------------------------------------------------------------------------------------------
As model problem serves the weakly singular integral equation from Proposition~\ref{prop:galerkin:weaksing}.
The corresponding $(h-h/2)$-type error estimator from Theorem~\ref{thm:def:hh2:weaksing} employs the $L_2(\Gamma)$-ortho\-gonal projection $\pi_\ell^p:=\pi_{\mesh_\ell}^p:\,L_2(\Gamma)\to\Pp^p(\mesh_\ell)$
from Lemma \ref{def:L2projection} as well as the solution $\widehat \Phi_\ell$ of~\eqref{intro:galerkin}, where $\XX_\ell=\Pp^p(\TT_\ell)$ is replaced with the uniform refinement $\widehat\XX_\ell=\Pp^p(\widehat\TT_\ell)$ corresponding to $\widehat\mesh_\ell$, i.e.,
\begin{align}\label{estred:def:hh2weaksing}
\est{\ell}{}^2:=\sum_{\el\in\mesh_\ell}\est{\ell}{\el}^2:=\sum_{\el\in\mesh_\ell}h_\el \norm{(1-\pi_\ell^p)\widehat\Phi_\ell}{L_2(\el)}^2,
\end{align}
where $h_\el:=|\el|^{1/(d-1)}\simeq {\rm diam}(\el)$. The following lemma
proves the estimator reduction estimate~\eqref{estred:eq:estred}. The proof
reveals that the contraction constant $0<q_{\rm set}<1$ essentially follows from
the contraction~\eqref{dp2:estred} of the local mesh-size on refined elements.

\begin{mylemma}\label{estred:lem:weaksing}
Given a sequence of nested meshes $(\mesh_\ell)_{\ell\in\N}$, which additionally satisfy the bulk chasing~\eqref{opt:bulkchasing}
for all $\ell\in\N$ and some $0<\theta\le1$, the $(h-h/2)$ error estimator 
$\est{\ell}{}$ from~\eqref{estred:def:hh2weaksing} satisfies the estimator reduction~\eqref{estred:eq:estred} with $\alpha_\ell:=\setc{opt:estred:aux}\norm{\widehat\Phi_{\ell+1}-\widehat\Phi_\ell}{\widetilde H^{-1/2}(\Gamma)}$. 
While $q_{\rm est}$ depends only on  the\linebreak marking parameter $\theta$,
the constant $\c{opt:estred:aux}$ depends additionally on $\Gamma$, the polynomial degree $p$, 
and the uniform shape regularity of $\TT_\ell$.
\end{mylemma}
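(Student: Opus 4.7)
The plan is to split $\widehat\Phi_{\ell+1} = \widehat\Phi_\ell + (\widehat\Phi_{\ell+1}-\widehat\Phi_\ell)$ and combine four ingredients: Young's inequality, the best-approximation property of the $L_2$-projection, the mesh-size contraction on refined elements from~\eqref{dp2:estred}, and the inverse estimate of Lemma~\ref{lem:Pp:invest}. Concretely, for any $\delta>0$, Young's inequality first gives
\begin{align*}
  \est{\ell+1}{}^2 &\le (1+\delta)\sum_{\el\in\mesh_{\ell+1}} h_\el \norm{(1-\pi_{\ell+1}^p)\widehat\Phi_\ell}{L_2(\el)}^2 \\
  &\quad+ (1+\delta^{-1})\sum_{\el\in\mesh_{\ell+1}} h_\el \norm{(1-\pi_{\ell+1}^p)(\widehat\Phi_{\ell+1}-\widehat\Phi_\ell)}{L_2(\el)}^2.
\end{align*}

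For the first sum, the projection $\pi_\ell^p\widehat\Phi_\ell$ lies in $\Pp^p(\mesh_\ell)\subseteq\Pp^p(\mesh_{\ell+1})$ due to nestedness, so the best-approximation property of $\pi_{\ell+1}^p$ yields the \emph{pointwise} bound $\norm{(1-\pi_{\ell+1}^p)\widehat\Phi_\ell}{L_2(\el)}\le\norm{(1-\pi_\ell^p)\widehat\Phi_\ell}{L_2(\el)}$ on every $\el\in\mesh_{\ell+1}$. I would then regroup the sum over $\mesh_{\ell+1}$ into sums over coarse elements $\el\in\mesh_\ell$ via~\eqref{dp1:estred}: unrefined elements contribute exactly $\est{\ell}{\el}^2$, while on each $\el\in\mesh_\ell\setminus\mesh_{\ell+1}$ the son elements $\el'\subsetneqq\el$ satisfy $h_{\el'}\le (1/2)^{1/(d-1)}h_\el$ by~\eqref{dp2:estred}. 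Setting $q_0:=(1/2)^{1/(d-1)}<1$, this produces
\begin{align*}
  \sum_{\el\in\mesh_{\ell+1}} h_\el \norm{(1-\pi_{\ell+1}^p)\widehat\Phi_\ell}{L_2(\el)}^2
  \le \est{\ell}{}^2 - (1-q_0)\sum_{\el\in\mesh_\ell\setminus\mesh_{\ell+1}}\est{\ell}{\el}^2.
\end{align*}
Since $\MM_\ell\subseteq\mesh_\ell\setminus\mesh_{\ell+1}$ (all marked elements are refined), the bulk chasing~\eqref{opt:bulkchasing} then implies $\sum_{\el\in\mesh_\ell\setminus\mesh_{\ell+1}}\est{\ell}{\el}^2\ge\theta\est{\ell}{}^2$, so the first sum is bounded by $(1-(1-q_0)\theta)\est{\ell}{}^2$.

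For the second (perturbation) sum, $L_2$-stability of $1-\pi_{\ell+1}^p$ gives
\begin{align*}
  \sum_{\el\in\mesh_{\ell+1}} h_\el \norm{(1-\pi_{\ell+1}^p)(\widehat\Phi_{\ell+1}-\widehat\Phi_\ell)}{L_2(\el)}^2
  \le \norm{h_{\ell+1}^{1/2}(\widehat\Phi_{\ell+1}-\widehat\Phi_\ell)}{L_2(\Gamma)}^2,
\end{align*}
and since $\widehat\Phi_{\ell+1}-\widehat\Phi_\ell\in\Pp^p(\widehat\mesh_{\ell+1})$ the inverse estimate of Lemma~\ref{lem:Pp:invest}, combined with $h_{\ell+1}\simeq\widehat h_{\ell+1}$ from~\eqref{ass2:hh2}, bounds this by $C\norm{\widehat\Phi_{\ell+1}-\widehat\Phi_\ell}{\widetilde H^{-1/2}(\Gamma)}^2$. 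Putting everything together produces $\est{\ell+1}{}^2\le(1+\delta)(1-(1-q_0)\theta)\est{\ell}{}^2+(1+\delta^{-1})C\norm{\widehat\Phi_{\ell+1}-\widehat\Phi_\ell}{\widetilde H^{-1/2}(\Gamma)}^2$, and choosing $\delta>0$ small enough so that $q_{\rm est}:=(1+\delta)(1-(1-q_0)\theta)<1$ yields the claim with $\c{opt:estred:aux}:=\sqrt{(1+\delta^{-1})C}$.

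The main subtlety I expect is step two: justifying that the (non-local) $L_2$-orthogonal projection onto the finer space is at least as good \emph{elementwise} as projection onto the coarser space. This works here only because $\pi_\ell^p\widehat\Phi_\ell$ lies in $\Pp^p(\mesh_{\ell+1})$ by nestedness~\eqref{ass1:hh2}, which allows the local minimisation property of $\pi_{\ell+1}^p$ on each $\el\in\mesh_{\ell+1}$ to be applied to this specific competitor. Everything else reduces to bookkeeping with the mesh-size, a single use of Young's inequality, and one inverse estimate.
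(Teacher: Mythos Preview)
Your proof is correct and follows essentially the same route as the paper: split off $\widehat\Phi_{\ell+1}-\widehat\Phi_\ell$, control the perturbation via the inverse estimate of Lemma~\ref{lem:Pp:invest} together with $h_{\ell+1}\simeq\widehat h_{\ell+1}$, and handle the remaining term via the elementwise best-approximation property of $\pi_{\ell+1}^p$, the mesh-size contraction~\eqref{dp2:estred}, and the bulk chasing~\eqref{opt:bulkchasing}. The only cosmetic difference is that the paper applies the triangle inequality to $\eta_{\ell+1}$ itself (rather than Young's inequality to $\eta_{\ell+1}^2$), which leads to a slightly different but equivalent form of $q_{\rm est}$.

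One small correction to your concluding remark: the ``subtlety'' you anticipate is not actually subtle. The projection $\pi_{\ell+1}^p$ onto the \emph{discontinuous} space $\Pp^p(\mesh_{\ell+1})$ is not non-local --- it coincides elementwise with the $L_2(\el)$-projection onto $\Pp^p(\el)$ for each $\el\in\mesh_{\ell+1}$. The paper states this explicitly (``the projection $\pi_{\ell+1}^p$ is even the $\TT_{\ell+1}$-piecewise best approximation''), so the elementwise comparison $\norm{(1-\pi_{\ell+1}^p)\widehat\Phi_\ell}{L_2(\el)}\le\norm{(1-\pi_\ell^p)\widehat\Phi_\ell}{L_2(\el)}$ is immediate without invoking nestedness at the global level.
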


\begin{proof}
The triangle inequality yields
\begin{align}
\begin{split}\label{dp:hh2:eq1}
 \eta_{\ell+1}
 \le &\norm{h_{\ell+1}^{1/2}(1-\pi_{\ell+1}^p)\widehat\Phi_\ell}{L_2(\Gamma)}
 \\&+ \norm{h_{\ell+1}^{1/2}(1-\pi_{\ell+1}^p)(\widehat\Phi_{\ell+1}-\widehat\Phi_\ell)}{L_2(\Gamma)}.
 \end{split}
\end{align}
Note that the projection $\pi_{\ell+1}^p$ is even the $\TT_{\ell+1}$-piecewise best approximation, i.e.,
\begin{align*}
 \norm{(1-\pi_{\ell+1}^p)\psi}{L_2(T')}
 = \min_{\Psi_{\ell+1}\in\Pp^p(T')}\norm{\psi-\Psi_{\ell+1}}{L_2(T')}.
\end{align*}
This and the inverse estimate from Lemma~\ref{lem:Pp:invest} prove
\begin{align*}
 \norm{h_{\ell+1}^{1/2}&(1-\pi_{\ell+1}^p)(\widehat\Phi_{\ell+1}-\widehat\Phi_\ell)}{L_2(\Gamma)}
 \\&\le \norm{h_{\ell+1}^{1/2}(\widehat\Phi_{\ell+1}-\widehat\Phi_\ell)}{L_2(\Gamma)}
 \lesssim \norm{\widehat\Phi_{\ell+1}-\widehat\Phi_\ell}{\widetilde H^{-1/2}(\Gamma)}.
\end{align*}
The first summand in~\eqref{dp:hh2:eq1} is split into the contributions on refined and non-refined elements
\begin{align*}
 &\norm{h_{\ell+1}^{1/2}(1-\pi_{\ell+1}^p)\widehat\Phi_\ell}{L_2(\Gamma)}^2
 =\\
 &\qquad\sum_{T\in\TT_\ell\backslash\TT_{\ell+1}}\!\!\norm{h_{\ell+1}^{1/2}(1-\pi_{\ell+1}^p)\widehat\Phi_\ell}{L_2(T)}^2
 \\&\qquad\quad+ \sum_{T\in\TT_\ell\cap\TT_{\ell+1}}\!\!\norm{h_{\ell+1}^{1/2}(1-\pi_{\ell+1}^p)\widehat\Phi_\ell}{L_2(T)}^2.
\end{align*}
For non-refined elements $T\in\TT_\ell\cap\TT_{\ell+1}$ holds
\begin{align*}
 \norm{h_{\ell+1}^{1/2}(1-\pi_{\ell+1}^p)\widehat\Phi_\ell}{L_2(T)}^2
 &= \norm{h_{\ell}^{1/2}(1-\pi_{\ell}^p)\widehat\Phi_\ell}{L_2(T)}^2\\
 &= \eta_\ell(T)^2.
\end{align*}
For refined elements $T\in\TT_\ell\backslash\TT_{\ell+1}$ holds
\begin{align}\nonumber
 \norm{h_{\ell+1}^{1/2}(1-\pi_{\ell+1}^p)\widehat\Phi_\ell}{L_2(T)}^2
 &\le 2^{-1/(d-1)}\,\norm{h_{\ell}^{1/2}(1-\pi_{\ell}^p)\widehat\Phi_\ell}{L_2(T)}^2
 \\&= 2^{-1/(d-1)}\,\eta_\ell(T)^2.\label{dp:estred:refined}
\end{align}
Combining this with the bulk chasing~\eqref{opt:bulkchasing} and 
$\MM_\ell\subseteq\TT_\ell\backslash\TT_{\ell+1}$, we obtain
\begin{align*}
 \norm{h_{\ell+1}^{1/2}&(1-\pi_{\ell+1}^p)\widehat\Phi_\ell}{L_2(\Gamma)}^2
 \\&\le \eta_\ell^2
 -(1-2^{-1/(d-1)})\sum_{T\in\TT_\ell\backslash\TT_{\ell+1}}\eta_\ell(T)^2
 \\&\le \big(1-\theta(1-2^{-1/(d-1)})\big)\,\eta_\ell^2.
\end{align*}
This concludes the proof with $q_{\rm est} = \sqrt{1-\theta(1-2^{-1/(d-1)})}$.
$\hfill\qed$
\end{proof}

\begin{myproposition}\label{estred:prop:aprioriweaksing}
Algorithm~\ref{opt:algorithm} guarantees convergence\linebreak
$\lim_{\ell\to\infty}\est{\ell}{}=0$
of the $(h-h/2)$-type estimator $\eta_\ell$ from~\eqref{estred:def:hh2weaksing}.
\end{myproposition}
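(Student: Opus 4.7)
The plan is to combine the estimator reduction estimate of Lemma~\ref{estred:lem:weaksing} with the estimator reduction principle of Lemma~\ref{estred:lem:estreddef}. Lemma~\ref{estred:lem:weaksing} already provides the perturbed contraction
\begin{align*}
  \est{\ell+1}{}^2 \le q_{\rm est}\,\est{\ell}{}^2 + \alpha_\ell^2,
  \quad \alpha_\ell = \c{opt:estred:aux}\,\norm{\widehat\Phi_{\ell+1}-\widehat\Phi_\ell}{\widetilde H^{-1/2}(\Gamma)},
\end{align*}
with contraction constant $q_{\rm est} = \sqrt{1-\theta(1-2^{-1/(d-1)})}\in(0,1)$ since $\theta\in(0,1]$. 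By Lemma~\ref{estred:lem:estreddef}, it thus suffices to prove $\alpha_\ell\to 0$ as $\ell\to\infty$, and the claimed estimator convergence $\est{\ell}{}\to 0$ follows immediately.

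The remaining step is therefore to establish a~priori convergence of the enriched Galerkin approximations $\widehat\Phi_\ell\in\Pp^p(\widehat\TT_\ell)$. First I would invoke the nestedness assumption~\eqref{ass1:hh2}, which yields $\Pp^p(\widehat\TT_\ell)\subseteq \Pp^p(\widehat\TT_{\ell+1})$ for all $\ell\in\N_0$. Then Lemma~\ref{estred:lem:convortho}, applied to the nested sequence of discrete spaces $\widehat\XX_\ell := \Pp^p(\widehat\TT_\ell)\subseteq\widetilde H^{-1/2}(\Gamma)$ together with the continuous and elliptic bilinear form $\dual{\slo\cdot}{\cdot}_\Gamma$, provides some limit $\widehat\Phi_\infty\in\widetilde H^{-1/2}(\Gamma)$ such that
\begin{align*}
  \lim_{\ell\to\infty}\norm{\widehat\Phi_\infty-\widehat\Phi_\ell}{\widetilde H^{-1/2}(\Gamma)} = 0.
\end{align*}
The triangle inequality then yields $\norm{\widehat\Phi_{\ell+1}-\widehat\Phi_\ell}{\widetilde H^{-1/2}(\Gamma)}\to 0$, whence $\alpha_\ell\to 0$ as desired.

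There is no real obstacle in this argument: both ingredients (the estimator reduction in Lemma~\ref{estred:lem:weaksing} and the abstract a~priori convergence in Lemma~\ref{estred:lem:convortho}) have already been prepared. The only subtle point to emphasize is that the a~priori convergence is applied to the \emph{fine-mesh} Galerkin approximations $\widehat\Phi_\ell$ rather than to the approximations $\Phi_\ell$ driven by Algorithm~\ref{opt:algorithm}, which is legitimate because the corresponding enriched spaces $\Pp^p(\widehat\TT_\ell)$ are nested by~\eqref{ass1:hh2}. In particular, we never have to identify the limit $\widehat\Phi_\infty$ with the exact solution $\phi$; plain estimator convergence already follows from Cauchy-type convergence of $\widehat\Phi_\ell$ in $\widetilde H^{-1/2}(\Gamma)$.
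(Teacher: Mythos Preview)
Your proof is correct and follows exactly the same approach as the paper: combine Lemma~\ref{estred:lem:weaksing} with Lemma~\ref{estred:lem:estreddef}, and verify $\alpha_\ell\to 0$ via the a~priori convergence of Lemma~\ref{estred:lem:convortho} applied to the nested fine-mesh spaces $\Pp^p(\widehat\TT_\ell)$ guaranteed by~\eqref{ass1:hh2}. The paper's proof is just a more terse version of yours.
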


\begin{proof}
According to Lemma~\ref{estred:lem:estreddef} and Lemma~\ref{estred:lem:weaksing}, it remains to prove $\alpha_\ell\to 0$ as $\ell\to\infty$. By 
nestedness~\eqref{ass1:hh2}, Lemma~\ref{estred:lem:convortho} provides some
limit $\widehat\Phi_\infty\in\widetilde H^{-1/2}(\Gamma)$ such that
$\lim_{\ell\to\infty}\norm{\widehat\Phi_\infty-\widehat\Phi_\ell}{\widetilde H^{-1/2}(\Gamma)}=0$. Hence,
 \begin{align*}
  \norm{\widehat\Phi_{\ell+1}&-\widehat\Phi_\ell}{\widetilde H^{-1/2}(\Gamma)}
  \to 0
 \end{align*}
as $\ell\to\infty$.  This concludes the proof.
$\hfill\qed$
\end{proof}

\begin{remark}
Usual implementations of uniform mesh-refine\-ment ensure
$\TT_{\ell+1}\backslash\TT_{\ell}\subseteq\widehat\TT_\ell$. 
This implies $$\norm{h_{\ell+1}^{1/2}(1-\pi_{\ell+1}^p)\widehat\Phi_\ell}{L_2(T)}^2=0$$
in~\eqref{dp:estred:refined}
for all refined elements $T\in\TT_\ell\backslash\TT_{\ell+1}$ and thus leads to
$q_{\rm est}=\sqrt{1-\theta}$ in Lemma~\ref{estred:lem:weaksing}.
\end{remark}

\begin{remark}
Analogous results hold for other $(h-h/2)$-type estimators like 
$\eta_\ell = \norm{h_\ell^{1/2}(\widehat\Phi_\ell-\Phi_\ell)}{L_2(\Gamma)}$,
where\linebreak $\alpha_\ell \simeq\norm{\widehat\Phi_{\ell+1}-\widehat\Phi_\ell}{\wilde H^{-1/2}(\Gamma)}+\norm{\Phi_{\ell+1}-\Phi_\ell}{\wilde H^{-1/2}(\Gamma)}\to0$.
We note that, in practice, the variant from~\eqref{estred:def:hh2weaksing} is
preferred as it avoids the computation of the coarse-mesh Galerkin solution
$\Phi_\ell$.
\end{remark}
%------------------------------------------------------------------------------------------------------
\subsubsection{Hypersingular integral equation}\label{section:estred:hh2hypsing}
%------------------------------------------------------------------------------------------------------
As model problem serves the hypersingular integral equation from Proposition~\ref{prop:galerkin:hypsing}.
One possible $(h-h/2)$-type error estimator from Theorem~\ref{thm:def:hh2:hypsing:grad} employs the $L_2(\Gamma)$-ortho\-go\-nal
projection $\pi_\ell^{p-1}:=\pi_{\mesh_\ell}^{p-1}:\,L_2(\Gamma)\to\Pp^{p-1}(\mesh_\ell)$  as well as the solution $\widehat U_\ell$
of~\eqref{intro:galerkin}, where $\XX_\ell=\wilde\Sp^p(\TT_\ell)$ is replaced with the uniform refinement $\widehat\XX_\ell=\wilde\Sp^p(\widehat\TT_\ell)$ and reads
\begin{align}\label{estred:def:hh2:hypsing}
\est{\ell}{}^2:=\!\sum_{\el\in\mesh_\ell}\est{\ell}{\el}^2:=\!\sum_{\el\in\mesh_\ell}h_\el
\norm{(1-\pi_\ell^{p-1})\nabla_\Gamma\widehat U_\ell}{L_2(\el)}^2,
\end{align}
where $h_\el:=|\el|^{1/(d-1)}\simeq {\rm diam}(\el)$. 

\begin{mylemma}\label{estred:lem:hypsing}
Given a sequence of nested meshes $(\mesh_\ell)_{\ell\in\N}$, which additionally satisfy the bulk chasing~\eqref{opt:bulkchasing}
for all $\ell\in\N$ and some $0<\theta\le1$, the $(h-h/2)$ error estimator $\est{\ell}{}$ from~\eqref{estred:def:hh2:hypsing} satisfies the estimator reduction~\eqref{estred:eq:estred} with $\alpha_\ell:= \c{opt:estred:aux} \norm{\widehat U_{\ell+1}-\widehat U_\ell}{\widetilde H^{1/2}(\Gamma)}$. The constant $q_{\rm est}$ depends only on the marking parameter $\theta$, while $\c{opt:estred:aux}$ additionally depends on $\Gamma$, the polynomial degree $p$, and uniform shape regularity
of $\TT_\ell$.
\end{mylemma}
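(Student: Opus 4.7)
The strategy mirrors the proof of Lemma~\ref{estred:lem:weaksing} in the weakly singular case, replacing the $L_2$-norm of densities with the weighted $L_2$-norm of their surface gradients and using the projection $\pi_{\ell+1}^{p-1}$ acting on the vector field $\nabla_\Gamma\widehat U_\ell$. My first step is the triangle inequality
\begin{align*}
 \eta_{\ell+1}
 &\le \norm{h_{\ell+1}^{1/2}(1-\pi_{\ell+1}^{p-1})\nabla_\Gamma\widehat U_\ell}{L_2(\Gamma)}\\
 &\quad+\norm{h_{\ell+1}^{1/2}(1-\pi_{\ell+1}^{p-1})\nabla_\Gamma(\widehat U_{\ell+1}-\widehat U_\ell)}{L_2(\Gamma)},
\end{align*}
which separates an ``$\ell$-data on $\TT_{\ell+1}$'' term from a ``discrete perturbation'' term.

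For the perturbation term I drop the $L_2$-elementwise best-approximation factor $(1-\pi_{\ell+1}^{p-1})$, so that only $\norm{h_{\ell+1}^{1/2}\nabla_\Gamma(\widehat U_{\ell+1}-\widehat U_\ell)}{L_2(\Gamma)}$ remains. Since the discrete difference lies in $\widetilde\Sp^p(\widehat\TT_{\ell+1})$ and since the pointwise comparability of mesh-sizes between $\TT_{\ell+1}$ and $\widehat\TT_{\ell+1}$ granted by \eqref{ass2:hh2} allows the $\TT_{\ell+1}$-weight $h_{\ell+1}$ to be replaced by $h_{\widehat\TT_{\ell+1}}$ up to a shape-regularity constant, the inverse estimate Lemma~\ref{lem:Sp:invest} applied with $s=1/2$ produces $\lesssim \norm{\widehat U_{\ell+1}-\widehat U_\ell}{\wilde H^{1/2}(\Gamma)}$. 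This is exactly the shape of $\alpha_\ell$.

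The principal work is the main term. I split the sum over $\TT_{\ell+1}$ into non-refined elements $T\in\TT_\ell\cap\TT_{\ell+1}$ and refined elements $T'\subsetneq T_0$ with $T_0\in\TT_\ell\setminus\TT_{\ell+1}$. On non-refined $T$, locality of $\pi^{p-1}$ gives $\pi_{\ell+1}^{p-1}|_T=\pi_\ell^{p-1}|_T$ and $h_{\ell+1}(T)=h_\ell(T)$, so the contribution equals $\eta_\ell(T)^2$. On a refined coarse element $T_0$, the containment $\Pp^{p-1}(T_0)|_{T'}\subset \Pp^{p-1}(T')$ yields $\norm{(1-\pi_{\ell+1}^{p-1})\nabla_\Gamma\widehat U_\ell}{L_2(T')}\le\norm{(1-\pi_\ell^{p-1})\nabla_\Gamma\widehat U_\ell}{L_2(T')}$, and \eqref{dp2:estred} provides $h_{T'}\le 2^{-1/(d-1)}h_{T_0}$; summing over children of $T_0$ therefore bounds the refined contribution by $2^{-1/(d-1)}\eta_\ell(T_0)^2$. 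Combining both cases with the D\"orfler bulk chasing \eqref{opt:bulkchasing} and $\MM_\ell\subseteq\TT_\ell\setminus\TT_{\ell+1}$ gives the reduction
\begin{align*}
 \norm{h_{\ell+1}^{1/2}(1-\pi_{\ell+1}^{p-1})\nabla_\Gamma\widehat U_\ell}{L_2(\Gamma)}^2
 \le \bigl(1-\theta(1-2^{-1/(d-1)})\bigr)\eta_\ell^2.
\end{align*}

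A final application of the Young inequality $(a+b)^2\le(1+\delta)a^2+(1+\delta^{-1})b^2$, with $\delta>0$ chosen so small that $(1+\delta)\bigl(1-\theta(1-2^{-1/(d-1)})\bigr)<1$, combines the two bounds to produce \eqref{estred:eq:estred} with $q_{\rm est}$ depending only on $\theta$ and with $\c{opt:estred:aux}$ absorbing $\delta$, the inverse estimate constant, the shape-regularity constant, and $\Gamma$. I expect the main obstacle to be a careful bookkeeping of the mesh-size comparability between $\TT_{\ell+1}$ and $\widehat\TT_{\ell+1}$ in the perturbation term, to ensure that the inverse estimate yields precisely the $\wilde H^{1/2}(\Gamma)$-norm of $\widehat U_{\ell+1}-\widehat U_\ell$ rather than some fractional Sobolev norm on the fine auxiliary mesh.
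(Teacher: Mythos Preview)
Your proof is correct and follows precisely the approach indicated by the paper, which simply states that the argument is ``very similar to that for the weakly singular integral equation from Lemma~\ref{estred:lem:weaksing}'' with the sole difference being the use of the inverse estimate from Lemma~\ref{lem:Sp:invest} in place of Lemma~\ref{lem:Pp:invest}. Your explicit use of Young's inequality at the end and your attention to the mesh-size comparability~\eqref{ass2:hh2} between $\TT_{\ell+1}$ and $\widehat\TT_{\ell+1}$ are details the paper leaves implicit but which are indeed required.
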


\begin{proof}
 The proof is very similar to that for the weakly singular integral equation from Lemma~\ref{estred:lem:weaksing} and therefore omitted.  The only difference is that at the present case, we need the inverse estimate from Lemma~\ref{lem:Sp:invest}.
$\hfill\qed$
\end{proof}

\begin{myproposition}\label{estred:prop:apriorihypsing}
Algorithm~\ref{opt:algorithm} guarantees convergence\linebreak $\lim_{\ell\to\infty}\est{\ell}{}=0$
of the $(h-h/2)$-type estimator $\eta_\ell$ from~\eqref{estred:def:hh2:hypsing}.
\end{myproposition}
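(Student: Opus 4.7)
The plan is to mimic the proof of Proposition~\ref{estred:prop:aprioriweaksing} verbatim, only replacing the weakly singular ingredients by their hypersingular counterparts. Concretely, I would combine the abstract estimator reduction principle (Lemma~\ref{estred:lem:estreddef}) with the perturbed contraction already established in Lemma~\ref{estred:lem:hypsing}. Lemma~\ref{estred:lem:estreddef} then reduces the proof to showing that the perturbation sequence
\begin{align*}
\alpha_\ell = \c{opt:estred:aux}\,\norm{\widehat U_{\ell+1}-\widehat U_\ell}{\widetilde H^{1/2}(\Gamma)}
\end{align*}
tends to zero as $\ell\to\infty$.

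For the a~priori convergence of the fine-mesh Galerkin sequence $(\widehat U_\ell)_{\ell\in\N_0}$, I would invoke Lemma~\ref{estred:lem:convortho} applied to the nested sequence of discrete spaces $\widehat\XX_\ell := \wilde\Sp^p(\widehat\TT_\ell)$. The nestedness $\widehat\XX_\ell\subseteq\widehat\XX_{\ell+1}$ is the continuous-space analogue of~\eqref{ass1:hh2}: since $\TT_{\ell+1}\in\refine(\TT_\ell)$ and uniform refinement commutes with refinement in the sense that $\widehat\TT_\ell$ is coarser than $\widehat\TT_{\ell+1}$ (every element of $\widehat\TT_\ell$ is a union of elements of $\widehat\TT_{\ell+1}$ by~\eqref{dp1:estred}), piecewise polynomials on $\widehat\TT_\ell$ remain piecewise polynomials on $\widehat\TT_{\ell+1}$, and continuity is preserved. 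Hypersingular Galerkin formulations~\eqref{intro:weakform}--\eqref{intro:galerkin} fit the Hilbert space setting of Lemma~\ref{estred:lem:convortho} by Theorems~\ref{thm:stability} and~\ref{thm:ellipticity}. Consequently, there exists $\widehat U_\infty\in\wilde H^{1/2}(\Gamma)$ with $\norm{\widehat U_\infty-\widehat U_\ell}{\wilde H^{1/2}(\Gamma)}\to 0$, and the triangle inequality yields
\begin{align*}
\norm{\widehat U_{\ell+1}-\widehat U_\ell}{\wilde H^{1/2}(\Gamma)} \le \norm{\widehat U_\infty-\widehat U_{\ell+1}}{\wilde H^{1/2}(\Gamma)} + \norm{\widehat U_\infty-\widehat U_\ell}{\wilde H^{1/2}(\Gamma)} \to 0,
\end{align*}
hence $\alpha_\ell\to 0$. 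Lemma~\ref{estred:lem:estreddef} then delivers $\lim_{\ell\to\infty}\est{\ell}{}=0$.

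Since all the hard work has already been outsourced to Lemma~\ref{estred:lem:hypsing} (which itself is stated to follow by the same pattern as Lemma~\ref{estred:lem:weaksing}, using the inverse estimate of Lemma~\ref{lem:Sp:invest} instead of Lemma~\ref{lem:Pp:invest}), there is essentially no remaining obstacle. The only subtlety worth double-checking is that the nestedness hypothesis of Lemma~\ref{estred:lem:convortho} really applies to the continuous spline spaces $\wilde\Sp^p(\widehat\TT_\ell)$ rather than just the discontinuous $\Pp^p(\widehat\TT_\ell)$; this is harmless because the geometric nestedness of the uniformly refined meshes $\widehat\TT_\ell$ guarantees inclusion of the corresponding $C^0$-conforming piecewise polynomial spaces with vanishing trace at $\partial\Gamma$.
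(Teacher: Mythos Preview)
Your proposal is correct and follows essentially the same approach as the paper: the paper's proof simply states that the result follows from Lemma~\ref{estred:lem:convortho} and Lemma~\ref{estred:lem:hypsing} in the same way as Proposition~\ref{estred:prop:aprioriweaksing}, which is precisely what you spell out. Your explicit verification that the nestedness assumption~\eqref{ass1:hh2} carries over to the continuous spline spaces $\wilde\Sp^p(\widehat\TT_\ell)$ is a welcome clarification but not an additional argument beyond what the paper intends.
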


\begin{proof}
As the proof of Proposition~\ref{estred:prop:aprioriweaksing}, the statement follows with Lemma~\ref{estred:lem:convortho} and Lemma~\ref{estred:lem:hypsing}.
$\hfill\qed$
\end{proof}

\begin{remark}
The proofs and assertions of Lemma~\ref{estred:lem:hypsing} and Pro\-position~\ref{estred:prop:apriorihypsing} also transfer to other $(h-h/2)$-type error estimators from Theorem~\ref{thm:def:hh2:hypsing}, e.g.,
\begin{align*}
 \eta_\ell = \norm{h_\ell^{1/2}\nabla_\Gamma(1-J_\ell)\widehat U_\ell}{L_2(\Gamma)},
\end{align*}
where 
\begin{align*}
 \eta_{\ell+1} 
 \le &\norm{h_{\ell+1}^{1/2}\nabla_\Gamma(1-J_\ell)\widehat U_\ell}{L_2(\Gamma)}
 \\&+ \norm{h_{\ell+1}^{1/2}\nabla_\Gamma\big((1-J_{\ell+1})\widehat U_{\ell+1}-(1-J_\ell)\widehat U_\ell\big)}{L_2(\Gamma)}.
\end{align*}
Arguing with the mesh-size reduction as in the proof of\linebreak Lemma~\ref{estred:lem:weaksing}, one sees
$\norm{h_{\ell+1}^{1/2}\nabla_\Gamma(1-J_\ell)\widehat U_\ell}{L_2(\Gamma)}
\le q_{\rm est}\,\eta_\ell$.
Suppose that $J_\ell$ satisfies the properties (i)--(iv) of 
Lemma~\ref{estred:lem:convclement}, e.g., $J_\ell$ is the Scott-Zhang projection
from Section~\ref{section:sz}. Then, the second term in the above estimate
is bounded by
\begin{align*}
 &\norm{h_{\ell+1}^{1/2}\nabla_\Gamma\big((1-J_{\ell+1})\widehat U_{\ell+1}-(1-J_\ell)\widehat U_\ell\big)}{L_2(\Gamma)}
 \\&\lesssim \norm{h_{\ell+1}^{1/2}\nabla_\Gamma(J_{\ell+1}-J_\ell)\widehat U_{\ell+1}}{L_2(\Gamma)} + \norm{h_{\ell+1}^{1/2}(\widehat U_{\ell+1}-\widehat U_\ell)}{L_2(\Gamma)} 
 \\&\lesssim \norm{(J_{\ell+1}-J_\ell)\widehat U_{\ell+1}}{\widetilde H^{1/2}(\Gamma)} + \norm{\widehat U_{\ell+1}-\widehat U_\ell}{\widetilde H^{1/2}(\Gamma)}
 =:\alpha_\ell
\end{align*}
where we have used the inverse estimate of Lemma~\ref{lem:Sp:invest}. With 
the a~priori convergence results of Lemma~\ref{estred:lem:convortho}
and Lemma~\ref{estred:lem:convclement}, one sees that $\alpha_\ell\to0$
as $\ell\to\infty$. This concludes the proof of the estimator reduction
also for other variants of the $(h-h/2)$ error estimator.
\end{remark}

%%%%%%%%%%%%%%%%%%%%%%%%%%%%%%%%%%%%%%%%%%%%%%%%%%%%%%%%%%%%%%%%%%%%%%%%%%%%%%%%%%%%%%%%%%%%%%%%%%%%%%%
\subsection{ZZ-type error estimators}\label{section:estred:zz}
%%%%%%%%%%%%%%%%%%%%%%%%%%%%%%%%%%%%%%%%%%%%%%%%%%%%%%%%%%%%%%%%%%%%%%%%%%%%%%%%%%%%%%%%%%%%%%%%%%%%%%%
For this section, we consider only the lowest-order case, which is $p=0$ for the weakly singular
integral equation and $p=1$ for the hypersingular integral equation.
%------------------------------------------------------------------------------------------------------
\subsubsection{Weakly singular integral equation}
%------------------------------------------------------------------------------------------------------
We consider the problem from Proposition~\ref{prop:galerkin:weaksing}.
The ZZ-type error estimator from Section~\ref{section:zzest:weak} reads
\begin{align}\label{estred:weaksingzz:def}
\est{\ell}{}^2:=\sum_{\el\in\mesh_\ell}\est{\ell}{\el}^2:=\sum_{\el\in\mesh_\ell}h_\el \norm{(1-A_\ell)\Phi_\ell}{L_2(\el)}^2,
\end{align}
where the smoothing operator $A_\ell:\,L_2(\Gamma)\to\Pp^1(\mesh_\ell)$ is defined in~\eqref{zzest:weak:Adef1}--\eqref{zzest:weak:Adef2}.

\begin{mylemma}\label{estred:lem:weaksingzz}
Given a sequence of nested meshes $(\mesh_\ell)_{\ell\in\N}$, which additionally satisfy the bulk chasing~\eqref{opt:bulkchasing}
for all $\ell\in\N$ and some $0<\theta\le1$, the ZZ-type error estimator $\est{\ell}{}$ from~\eqref{estred:weaksingzz:def} satisfies the estimator
reduction~\eqref{estred:eq:estred} with
$\alpha_\ell:=(\norm{h_{\ell+1}^{1/2}(1-A_\ell)(\Phi_{\ell+1}-\Phi_\ell)}{L_2(\Gamma)}+
\norm{h_{\ell+1}^{1/2}(A_{\ell+1}-A_\ell)\Phi_{\ell+1}}{L_2(\Gamma)})$. The
constant $q_{\rm est}$ depends only on $\theta$.
\end{mylemma}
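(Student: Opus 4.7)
The plan is to mimic the structure of the proof of Lemma~\ref{estred:lem:weaksing} for the $(h-h/2)$-type estimator, relying on the reduction of the local mesh-size on refined elements together with the bulk-chasing property. The main additional difficulty compared to the $L_2$-projection case is that $A_\ell$ is not a projection, so I cannot exploit a best-approximation property; instead I have to massage the difference $(1-A_{\ell+1})\Phi_{\ell+1}-(1-A_\ell)\Phi_\ell$ algebraically to expose a perturbation of the precise form stated in $\alpha_\ell$.

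First, I would apply the triangle inequality to split
\begin{align*}
\est{\ell+1}{}
&\le \norm{h_{\ell+1}^{1/2}(1-A_\ell)\Phi_\ell}{L_2(\Gamma)}\\
&\quad+\norm{h_{\ell+1}^{1/2}\big((1-A_{\ell+1})\Phi_{\ell+1}-(1-A_\ell)\Phi_\ell\big)}{L_2(\Gamma)}.
\end{align*}
For the second term I would use the elementary identity
\begin{align*}
(1-A_{\ell+1})\Phi_{\ell+1}-(1-A_\ell)\Phi_\ell
= (1-A_\ell)(\Phi_{\ell+1}-\Phi_\ell)-(A_{\ell+1}-A_\ell)\Phi_{\ell+1},
\end{align*}
so that the triangle inequality directly produces the two contributions appearing in $\alpha_\ell$. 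No further analysis is needed for this part.

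Next, for the first term I would decompose the $L_2$-integral over $\Gamma$ into contributions from non-refined elements $T\in\TT_\ell\cap\TT_{\ell+1}$ and refined elements $T\in\TT_\ell\setminus\TT_{\ell+1}$. On non-refined elements the local mesh-size is unchanged, hence the contribution equals $\eta_\ell(T)^2$. On refined elements, property~\eqref{dp2:estred} yields $h_{\ell+1}|_{T'}\le 2^{-1/(d-1)}h_\ell|_T$ for all fine-mesh sub-elements $T'\subseteq T$, and since $(1-A_\ell)\Phi_\ell$ is independent of the finer mesh this gives
\begin{align*}
\norm{h_{\ell+1}^{1/2}(1-A_\ell)\Phi_\ell}{L_2(T)}^2 \le 2^{-1/(d-1)}\eta_\ell(T)^2.
\end{align*}
Summing, using $\MM_\ell\subseteq\TT_\ell\setminus\TT_{\ell+1}$, and invoking the bulk chasing~\eqref{opt:bulkchasing}, I arrive at
\begin{align*}
\norm{h_{\ell+1}^{1/2}(1-A_\ell)\Phi_\ell}{L_2(\Gamma)}^2
\le \big(1-\theta(1-2^{-1/(d-1)})\big)\est{\ell}{}^2.
\end{align*}

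Finally, I would combine the two steps via the Young-type inequality $(a+b)^2\le(1+\delta)a^2+(1+\delta^{-1})b^2$ with $\delta>0$ chosen so small that $q_{\rm est}:=(1+\delta)\big(1-\theta(1-2^{-1/(d-1)})\big)<1$, and absorb $(1+\delta^{-1})$ into the definition of the squared perturbation term, arriving at
\begin{align*}
\est{\ell+1}{}^2\le q_{\rm est}\est{\ell}{}^2+\alpha_\ell^2
\end{align*}
(after redefining $\alpha_\ell$ by the innocuous constant $\sqrt{1+\delta^{-1}}$). I expect the main obstacle to be purely notational, namely choosing the algebraic split of $(1-A_{\ell+1})\Phi_{\ell+1}-(1-A_\ell)\Phi_\ell$ that matches the precise perturbation stated in the lemma; the mesh-size reduction and bulk-chasing arguments are identical to those in Lemma~\ref{estred:lem:weaksing}, and no stability or approximation property of $A_\ell$ beyond its mere linearity is invoked, which is why $q_{\rm est}$ depends only on~$\theta$.
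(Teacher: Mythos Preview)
Your proposal is correct and follows exactly the approach the paper takes: the paper's proof merely says that the arguments of Lemma~\ref{estred:lem:weaksing} apply, yielding $\eta_{\ell+1}\le q_{\rm est}\,\eta_\ell + \norm{h_{\ell+1}^{1/2}\big((1-A_{\ell+1})\Phi_{\ell+1}-(1-A_\ell)\Phi_\ell\big)}{L_2(\Gamma)}\le q_{\rm est}\,\eta_\ell+\alpha_\ell$. Your write-up is in fact more explicit than the paper's, spelling out the algebraic identity for the perturbation term and the Young-inequality passage to the squared estimate, both of which the paper leaves implicit.
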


\begin{proof}
The same arguments as used in the proof of Lemma~\ref{estred:lem:weaksing} apply.
The triangle inequality and reduction of the mesh-size~\eqref{dp2:estred} on
marked elements result in
\begin{align*}
\eta_{\ell+1}&\le q_{\rm est}\,\eta_\ell 
+ \norm{h_{\ell+1}^{1/2}\big((1-A_{\ell+1})\Phi_{\ell+1}-(1-A_\ell)\Phi_\ell}{L_2(\Gamma)}\\
&\le q_{\rm est}\,\eta_\ell + \alpha_\ell.
\end{align*}
This concludes the proof.
$\hfill\qed$
\end{proof}

\begin{myproposition}\label{estred:prop:aprioriweaksingzz}
Algorithm~\ref{opt:algorithm} guarantees convergence\linebreak $\lim_{\ell\to\infty} \est{\ell}{} = 0$ of the ZZ-type estimator $\eta_\ell$
from~\eqref{estred:weaksingzz:def}.
\end{myproposition}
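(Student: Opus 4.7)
The proof follows the three-step template established for Propositions~\ref{estred:prop:aprioriweaksing} and~\ref{estred:prop:apriorihypsing}: by combining Lemma~\ref{estred:lem:estreddef} with the estimator reduction provided by Lemma~\ref{estred:lem:weaksingzz}, it is enough to show that the perturbation sequence $\alpha_\ell$ tends to zero. Since $\alpha_\ell$ is the sum of two contributions, I would treat each of them separately.

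For the first contribution $\norm{h_{\ell+1}^{1/2}(1-A_\ell)(\Phi_{\ell+1}-\Phi_\ell)}{L_2(\Gamma)}$, I would first observe that the nodal-averaging operator $A_\ell$ from~\eqref{zzest:weak:Adef1}--\eqref{zzest:weak:Adef2} is locally $L_2$-stable, i.e., $\norm{A_\ell\psi}{L_2(\el)}\lesssim\norm{\psi}{L_2(\omega_\el)}$, directly by its definition and shape regularity. The triangle inequality then yields
\begin{align*}
 \norm{h_{\ell+1}^{1/2}(1-A_\ell)(\Phi_{\ell+1}-\Phi_\ell)}{L_2(\Gamma)} \lesssim \norm{h_{\ell+1}^{1/2}(\Phi_{\ell+1}-\Phi_\ell)}{L_2(\Gamma)},
\end{align*}
and the discrete inverse estimate of Lemma~\ref{lem:Pp:invest} bounds the right-hand side by $\norm{\Phi_{\ell+1}-\Phi_\ell}{\widetilde H^{-1/2}(\Gamma)}$. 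Nestedness $\Pp^0(\mesh_\ell)\subseteq\Pp^0(\mesh_{\ell+1})$ combined with Lemma~\ref{estred:lem:convortho} then forces the Galerkin solutions to be a Cauchy sequence in $\widetilde H^{-1/2}(\Gamma)$ and hence this contribution vanishes.

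The second contribution $\norm{h_{\ell+1}^{1/2}(A_{\ell+1}-A_\ell)\Phi_{\ell+1}}{L_2(\Gamma)}$ is the main obstacle, because the a~priori limit $\Phi_\infty$ from Lemma~\ref{estred:lem:convortho} exists only in $\widetilde H^{-1/2}(\Gamma)$, which is not contained in $L_2(\Gamma)$, so $A_\ell$ cannot be evaluated on $\Phi_\infty$ in a straightforward way. The plan is to invoke Lemma~\ref{estred:lem:convclement} after verifying that $A_\ell$ satisfies local $L_2$-stability, a local first-order approximation property (which follows from $A_\ell c=c$ on each continuity component for constants $c$ together with a Poincar\'e argument on patches), and the locality requirement that $(A_\ell\psi)|_\el$ depends only on $\psi|_{\omega_\el}$. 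This yields a limit operator $A_\infty$ with $A_\ell v\to A_\infty v$ in $L_2(\Gamma)$ for all $v\in L_2(\Gamma)$.

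I would then bypass the obstacle via a cut-off argument: given $\varepsilon>0$, fix $L$ so large that $\norm{\Phi_{\ell+1}-\Phi_L}{\widetilde H^{-1/2}(\Gamma)}\le\varepsilon$ for all $\ell\ge L$ (possible by the a~priori convergence of Lemma~\ref{estred:lem:convortho}), and split
\begin{align*}
 (A_{\ell+1}-A_\ell)\Phi_{\ell+1} = (A_{\ell+1}-A_\ell)(\Phi_{\ell+1}-\Phi_L) + (A_{\ell+1}-A_\ell)\Phi_L.
\end{align*}
The first part is handled exactly as in the first contribution (local $L_2$-stability followed by the inverse estimate of Lemma~\ref{lem:Pp:invest}) and is bounded by a fixed multiple of $\varepsilon$. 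The second part involves the fixed $L_2$-function $\Phi_L$ and vanishes in $L_2(\Gamma)$ as $\ell\to\infty$ by Lemma~\ref{estred:lem:convclement}, a convergence which is preserved under the uniformly bounded weight $h_{\ell+1}^{1/2}$. Sending first $\ell\to\infty$ and then $\varepsilon\to 0$ yields $\alpha_\ell\to 0$, and Lemma~\ref{estred:lem:estreddef} concludes $\lim_{\ell\to\infty}\est{\ell}{}=0$.
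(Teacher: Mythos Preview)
Your proposal is correct and follows essentially the same approach as the paper: both reduce to $\alpha_\ell\to 0$ via Lemmas~\ref{estred:lem:estreddef} and~\ref{estred:lem:weaksingzz}, treat the first contribution by local $L_2$-stability of $A_\ell$ plus the inverse estimate of Lemma~\ref{lem:Pp:invest} and a~priori convergence of the Galerkin solutions, and handle the second contribution by the same cut-off trick---freezing a reference index, splitting off a term controlled by $\norm{\Phi_{\ell+1}-\Phi_L}{\widetilde H^{-1/2}(\Gamma)}$, and applying the $L_2$-convergence of $A_\ell$ from Lemma~\ref{estred:lem:convclement} to the fixed discrete function $\Phi_L$.
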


\begin{proof}
Lemma~\ref{estred:lem:estreddef} and Lemma~\ref{estred:lem:weaksingzz} prove that it suffices to show $\alpha_\ell\to 0$ as $\ell\to\infty$.
 The operator $A_\ell$ satisfies the assumptions (i)--(iv) of Lemma~\ref{estred:lem:convclement} (see~\cite{zz2014}).
For the first contribution of $\alpha_\ell$, we use the $L_2$-stability (i) from Lemma~\ref{estred:lem:convclement}
and the inverse estimate from Lemma~\ref{lem:Pp:invest} to see
\begin{align*}
 \norm{h_\ell^{1/2}(1-A_\ell)(\Phi_{\ell+1}-\Phi_\ell)}{L_2(\Gamma)}&\lesssim \norm{h_\ell^{1/2}(\Phi_{\ell+1}-\Phi_\ell)}{L_2(\Gamma)}\\
 &\lesssim \norm{\Phi_{\ell+1}-\Phi_\ell}{\widetilde H^{-1/2}(\Gamma)}.
\end{align*} 
  Moreover, Lemma~\ref{estred:lem:convclement} provides a limit operator $A_\infty:\,L_2(\Gamma)\to L_2(\Gamma)$. For any $k\leq \ell$, there holds
 \begin{align*}
   \norm{&h_\ell^{1/2}(A_{\ell+1}-A_\ell)\Phi_\ell}{L_2(\Gamma)}\\
   &\leq  \norm{h_\ell^{1/2}(A_{\ell+1}-A_\ell)\Phi_k}{L_2(\Gamma)}\\
   &\qquad
  + \norm{h_\ell^{1/2}(A_{\ell+1}-A_\ell)(\Phi_k-\Phi_\ell)}{L_2(\Gamma)}\\
  &\lesssim  \norm{h_\ell^{1/2}(A_{\ell+1}-A_\ell)\Phi_k}{L_2(\Gamma)}
  + \norm{h_\ell^{1/2}(\Phi_k-\Phi_\ell)}{L_2(\Gamma)}.
 \end{align*}
where we used the local stability (i) from Lemma~\ref{estred:lem:convclement}. The inverse estimate from Lemma~\ref{lem:Pp:invest} shows
\begin{align*}
  \norm{h_\ell^{1/2}&(A_{\ell+1}-A_\ell)\Phi_\ell}{L_2(\Gamma)}\\
  &\lesssim  \norm{h_\ell^{1/2}(A_{\ell+1}-A_\ell)\Phi_k}{L_2(\Gamma)}
  + \norm{\Phi_k-\Phi_\ell}{\widetilde H^{-1/2}(\Gamma)}.
\end{align*}
Given any $\eps>0$, Lemma~\ref{estred:lem:convortho} allows to choose $k\in\N$ sufficiently large such that
$\norm{\Phi_\ell-\Phi_k}{\widetilde H^{-1/2}(\Gamma)}<\eps$ for all $\ell\geq k$.
For sufficiently large $\ell$, there also holds due to Lemma~\ref{estred:lem:convclement}
\begin{align*}
\norm{h_\ell^{1/2}(A_{\ell+1}-A_\ell)\Phi_k}{L_2(\Gamma)}
\lesssim \norm{(A_{\ell+1}-A_\ell)\Phi_k}{L_2(\Gamma)}\leq \eps.
\end{align*}
Altogether, we get $\alpha_\ell\lesssim \eps$ for all $\ell\ge k$ and 
therefore  conclude $\alpha\to 0 $ as $\ell\to\infty$. 
$\hfill\qed$
\end{proof}
%------------------------------------------------------------------------------------------------------
\subsubsection{Hyper singular integral equation}
%------------------------------------------------------------------------------------------------------
We consider the problem from Proposition~\ref{prop:galerkin:hypsing}.
The ZZ-type error estimator from Section~\ref{section:zzest:hyp} reads
\begin{align}\label{estred:hypsingzz:def}
\est{\ell}{}^2:=\sum_{\el\in\mesh_\ell}\est{\ell}{\el}^2:=\sum_{\el\in\mesh_\ell}h_\el \norm{(1-A_\ell)\nabla_\Gamma U_\ell}{L_2(\el)}^2,
\end{align}
where the smoothing operator $A_\ell:\,L_2(\Gamma)\to\Sp^1(\mesh_\ell)$ is defined by
 \begin{align*}
  (A_\ell \psi)(z):=|\omega_z|^{-1}\int_{\omega_z} \psi\,dz\quad\text{for all nodes }z \text{ of }\mesh_\ell
 \end{align*}
with the node patch $\omega_z := \bigcup \{\el\in\mesh_\ell \,:\, z\in\overline\el\}$.
The difference to the weakly singular case is, that $A_\ell\psi$ is continuous everywhere on $\Gamma$.

\begin{mylemma}\label{estred:lem:hypsingzz}
Given a sequence of nested meshes $(\mesh_\ell)_{\ell\in\N}$, which additionally satisfy the bulk chasing~\eqref{opt:bulkchasing}
for all $\ell\in\N$ and some $0<\theta\le1$, the ZZ-type error estimator $\est{\ell}{}$ from~\eqref{estred:hypsingzz:def} satisfies the estimator
reduction~\eqref{estred:eq:estred} with
$\alpha_\ell:=(\norm{h_{\ell+1}^{1/2}(1-A_\ell)\nabla_\Gamma(U_{\ell+1}-U_\ell)}{L_2(\Gamma)}+
\norm{h_{\ell+1}^{1/2}(A_{\ell+1}-A_\ell)\nabla_\Gamma U_{\ell+1}}{L_2(\Gamma)})$. The constant $q_{\rm est}$ depends only on $\theta$.
\end{mylemma}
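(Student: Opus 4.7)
The plan is to mimic precisely the argument of Lemma~\ref{estred:lem:weaksingzz} (which itself follows Lemma~\ref{estred:lem:weaksing}), replacing $\Phi_\ell$ by the gradient $\nabla_\Gamma U_\ell$. First I would insert the pivot element $(1-A_\ell)\nabla_\Gamma U_\ell$ and use the triangle inequality in $L_2(\Gamma)$ weighted by $h_{\ell+1}^{1/2}$ to write
\begin{align*}
 \eta_{\ell+1}
 \le &\norm{h_{\ell+1}^{1/2}(1-A_\ell)\nabla_\Gamma U_\ell}{L_2(\Gamma)}\\
 &+ \norm{h_{\ell+1}^{1/2}\bigl[(1-A_{\ell+1})\nabla_\Gamma U_{\ell+1}-(1-A_\ell)\nabla_\Gamma U_\ell\bigr]}{L_2(\Gamma)}.
\end{align*}
The second summand is controlled by $\alpha_\ell$ after the algebraic identity
\begin{align*}
  (1-A_{\ell+1})&\nabla_\Gamma U_{\ell+1}-(1-A_\ell)\nabla_\Gamma U_\ell\\
   &= (1-A_\ell)\nabla_\Gamma(U_{\ell+1}-U_\ell) + (A_\ell-A_{\ell+1})\nabla_\Gamma U_{\ell+1}
\end{align*}
and another application of the triangle inequality.

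Next, for the first summand I split the integral elementwise over $\mesh_\ell$ into non-refined elements $T\in\mesh_\ell\cap\mesh_{\ell+1}$, where $h_{\ell+1}|_T=h_\ell|_T$ so the local contribution equals exactly $\est{\ell}{T}^2$, and refined elements $T\in\mesh_\ell\setminus\mesh_{\ell+1}$, where property~\eqref{dp2:estred} yields $h_{\ell+1}|_{T'}\le 2^{-1/(d-1)}h_\ell|_T$ for every $T'\in\mesh_{\ell+1}$ with $T'\subseteq T$; summing over those $T'$ then gives
\begin{align*}
  \sum_{\substack{T'\in\mesh_{\ell+1}\\T'\subseteq T}} h_{\ell+1}(T')\norm{(1-A_\ell)\nabla_\Gamma U_\ell}{L_2(T')}^2 \le 2^{-1/(d-1)}\est{\ell}{T}^2.
\end{align*}
Since $\MM_\ell\subseteq\mesh_\ell\setminus\mesh_{\ell+1}$, combining with the D\"orfler bulk chasing~\eqref{opt:bulkchasing} produces
\begin{align*}
  \norm{h_{\ell+1}^{1/2}(1-A_\ell)\nabla_\Gamma U_\ell}{L_2(\Gamma)}^2\le \bigl(1-\theta(1-2^{-1/(d-1)})\bigr)\est{\ell}{}^2.
\end{align*}

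Finally, I would apply the Young inequality $(a+b)^2\le(1+\delta)a^2+(1+\delta^{-1})b^2$ with $\delta>0$ chosen small enough so that $(1+\delta)(1-\theta(1-2^{-1/(d-1)}))<1$, and then absorb the factor $(1+\delta^{-1})$ into a rescaling of $\alpha_\ell$. The outcome is the desired contraction~\eqref{estred:eq:estred} with a constant $q_{\rm est}\in(0,1)$ depending only on $\theta$ (and trivially on $d$). The only subtlety — and therefore the one step I would double check — is the elementary identity above that reassembles $(A_\ell-A_{\ell+1})\nabla_\Gamma U_{\ell+1}$ correctly so that the second half of $\alpha_\ell$ really appears with $\nabla_\Gamma U_{\ell+1}$ and not with $\nabla_\Gamma U_\ell$; everything else is a verbatim transcription of Lemma~\ref{estred:lem:weaksingzz}.
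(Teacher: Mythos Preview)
Your proof is correct and follows precisely the approach the paper intends: the paper's own proof consists of a single sentence pointing to Lemma~\ref{estred:lem:weaksing} (via Lemma~\ref{estred:lem:weaksingzz}), and you have faithfully unpacked that reference, including the triangle-inequality pivot, the algebraic splitting that produces the two halves of $\alpha_\ell$, the mesh-size contraction on refined elements combined with D\"orfler marking, and the final Young inequality to pass to the squared form~\eqref{estred:eq:estred}.
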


\begin{proof}
The same arguments as in the proof of Lemma~\ref{estred:lem:weaksing}, prove the estimator reduction~\eqref{estred:eq:estred} for the ZZ-type error estimator. 
$\hfill\qed$
\end{proof}

\begin{myproposition}\label{estred:prop:apriorihypsingzz}
Algorithm~\ref{opt:algorithm} guarantees convergence\linebreak$\lim_{\ell\to\infty}\est{\ell}{}=0$ of the ZZ-type estimator $\eta_\ell$
from~\eqref{estred:hypsingzz:def}.
\end{myproposition}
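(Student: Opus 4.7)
The plan is to mirror the argument used for the weakly singular case in Proposition~\ref{estred:prop:aprioriweaksingzz}. By combining the general estimator reduction principle of Lemma~\ref{estred:lem:estreddef} with the concrete reduction from Lemma~\ref{estred:lem:hypsingzz}, the task reduces to proving that the perturbation sequence $\alpha_\ell\to 0$ as $\ell\to\infty$, where $\alpha_\ell$ is the sum of the two contributions $\norm{h_{\ell+1}^{1/2}(1-A_\ell)\nabla_\Gamma(U_{\ell+1}-U_\ell)}{L_2(\Gamma)}$ and $\norm{h_{\ell+1}^{1/2}(A_{\ell+1}-A_\ell)\nabla_\Gamma U_{\ell+1}}{L_2(\Gamma)}$. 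Since $A_\ell$ acts componentwise as a nodewise patch-averaging onto $\Sp^1(\mesh_\ell)$, it satisfies the hypotheses (i)--(iv) of Lemma~\ref{estred:lem:convclement}, so there exists a limit operator $A_\infty$ with $A_\ell\psi\to A_\infty\psi$ in $L_2(\Gamma)$ pointwise in $\psi$. Moreover, nestedness $\widetilde\Sp^p(\mesh_\ell)\subseteq\widetilde\Sp^p(\mesh_{\ell+1})$ together with Lemma~\ref{estred:lem:convortho} yields an a~priori limit $U_\infty\in\widetilde H^{1/2}(\Gamma)$ with $\norm{U_\ell-U_\infty}{\widetilde H^{1/2}(\Gamma)}\to 0$.

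First I would bound the first summand of $\alpha_\ell$. Local $L_2$-stability of $A_\ell$ and the inverse estimate of Lemma~\ref{lem:Sp:invest} with $s=1/2$ yield
\begin{align*}
\norm{h_{\ell+1}^{1/2}(1-A_\ell)\nabla_\Gamma(U_{\ell+1}-U_\ell)}{L_2(\Gamma)}
&\lesssim \norm{h_{\ell+1}^{1/2}\nabla_\Gamma(U_{\ell+1}-U_\ell)}{L_2(\Gamma)}\\
&\lesssim \norm{U_{\ell+1}-U_\ell}{\widetilde H^{1/2}(\Gamma)},
\end{align*}
and the right-hand side tends to zero by the a~priori convergence from Lemma~\ref{estred:lem:convortho}. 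For the second summand, I would split via an arbitrary intermediate index $k\leq\ell$,
\begin{align*}
\norm{h_{\ell+1}^{1/2}(A_{\ell+1}-A_\ell)\nabla_\Gamma U_{\ell+1}}{L_2(\Gamma)}
&\leq \norm{h_{\ell+1}^{1/2}(A_{\ell+1}-A_\ell)\nabla_\Gamma U_k}{L_2(\Gamma)}\\
&\quad+ \norm{h_{\ell+1}^{1/2}(A_{\ell+1}-A_\ell)\nabla_\Gamma(U_{\ell+1}-U_k)}{L_2(\Gamma)}.
\end{align*}
Using once more $L_2$-stability of $A_{\ell+1}$ and $A_\ell$ combined with the inverse estimate of Lemma~\ref{lem:Sp:invest}, the second term is controlled by $\norm{U_{\ell+1}-U_k}{\widetilde H^{1/2}(\Gamma)}$, which can be made arbitrarily small by Lemma~\ref{estred:lem:convortho} provided $k$ is sufficiently large. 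For the first term, the uniform boundedness of $h_{\ell+1}$ on $\Gamma$ and the pointwise $L_2$-convergence $A_\ell\nabla_\Gamma U_k\to A_\infty\nabla_\Gamma U_k$ from Lemma~\ref{estred:lem:convclement} imply that it tends to zero as $\ell\to\infty$ with $k$ fixed. Given $\eps>0$, one first fixes $k$ so that the second term is below $\eps$ uniformly in $\ell\geq k$, and then takes $\ell$ large enough to ensure the first term is also below $\eps$. This yields $\alpha_\ell\to 0$ and completes the proof.

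The only genuine obstacle is verifying that the $A_\ell$ of Section~\ref{section:zzest:hyp} fits the abstract framework of Lemma~\ref{estred:lem:convclement}; but this is straightforward, because the hypersingular averaging is simply nodewise patch-averaging onto continuous piecewise linears and does not require the case distinction made in the weakly singular setting. All stability, approximation, and locality properties (i)--(iv) then transfer verbatim to the tangential vector-valued situation by componentwise reduction, so that Lemma~\ref{estred:lem:convclement} and, in turn, the whole argument above are available.
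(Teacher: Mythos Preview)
Your proof is correct and follows exactly the route the paper intends: it mirrors the argument of Proposition~\ref{estred:prop:aprioriweaksingzz}, replacing the inverse estimate for $\Pp^p$ (Lemma~\ref{lem:Pp:invest}) by the one for $\Sp^p$ (Lemma~\ref{lem:Sp:invest}) so that $\norm{h_{\ell+1}^{1/2}\nabla_\Gamma(U_{\ell+1}-U_\ell)}{L_2(\Gamma)}\lesssim\norm{U_{\ell+1}-U_\ell}{\widetilde H^{1/2}(\Gamma)}$. The paper itself gives no further details beyond ``analogous to the proof of Proposition~\ref{estred:prop:aprioriweaksingzz}'', so your write-up is in fact more explicit than the original.
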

\begin{proof}
The proof follows analogously to the proof of Proposition~\ref{estred:prop:aprioriweaksingzz}.
$\hfill\qed$
\end{proof}
%%%%%%%%%%%%%%%%%%%%%%%%%%%%%%%%%%%%%%%%%%%%%%%%%%%%%%%%%%%%%%%%%%%%%%%%%%%%%%%%%%%%%%%%%%%%%%%%%%%%%%%
\subsection{Weighted residual error estimators}\label{section:estred:res}
%%%%%%%%%%%%%%%%%%%%%%%%%%%%%%%%%%%%%%%%%%%%%%%%%%%%%%%%%%%%%%%%%%%%%%%%%%%%%%%%%%%%%%%%%%%%%%%%%%%%%%%
The weighted residual error estimator for BEM is more complex  than $(h-h/2)$-type- or ZZ-type error
estimators in the sense that it requires the evaluation of a
non-local integral operator. Therefore, the techniques are more involved in this section.
%------------------------------------------------------------------------------------------------------
\subsubsection{Weakly singular integral equation}\label{section:estred:resweaksing}
%------------------------------------------------------------------------------------------------------
We consider the problem from Proposition~\ref{prop:galerkin:weaksing}.
The standard weighted residual error estimator from Section~\ref{section:est:wres} for this problem reads
\begin{align}\label{estred:weaksingres:def}
\est{\ell}{}^2:=\sum_{\el\in\mesh_\ell}\est{\ell}{\el}^2:=\sum_{\el\in\mesh_\ell}h_\el \norm{\nabla_\Gamma(V\Phi_\ell-f)}{L_2(\el)}^2,
\end{align}
where $\nabla_\Gamma(\cdot)$ denotes the surface gradient on $\Gamma$.
Note that, while~\eqref{prop:galerkin:weaksing} is well-stated for $f\in H^{1/2}(\Gamma)$, the definition of $\est{\ell}{}$
requires additional regularity $f\in H^1(\Gamma)$ of the data.

\begin{mylemma}\label{estred:lem:weaksingres}
Given a sequence of nested meshes $(\mesh_\ell)_{\ell\in\N}$, which additionally satisfy the bulk chasing~\eqref{opt:bulkchasing}
for all $\ell\in\N$ and some $0<\theta\le1$, the weighted residual error estimator $\est{\ell}{}$ from~\eqref{estred:weaksingres:def} satisfies the estimator
reduction~\eqref{estred:eq:estred} with $\alpha_\ell:=\c{opt:estred:aux}\norm{\Phi_{\ell+1}-\Phi_\ell}{\widetilde
H^{-1/2}(\Gamma)}$. The constant $q_{\rm est}$ depends only on $\theta$, while $\c{opt:estred:aux}$ depends additionally on $\Gamma$, the polynomial degree $p$, and uniform shape regularity of $\mesh_\ell$.
\end{mylemma}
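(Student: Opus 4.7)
The plan is to imitate the mesh-size contraction argument from Lemma~\ref{estred:lem:weaksing}, the main new ingredient being the discrete inverse estimate~\eqref{opt:eq:invest:discrete} for the single-layer operator $V$, which is needed because here the residual $V\Phi_\ell - f$ is a truly non-local object and not merely a local polynomial.

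First, I would apply the Young inequality $(a+b)^2 \le (1+\delta) a^2 + (1+\delta^{-1}) b^2$ with $\delta>0$ to be chosen to the splitting
\begin{align*}
\nabla_\Gamma(V\Phi_{\ell+1}-f)
=\nabla_\Gamma(V\Phi_\ell-f)+\nabla_\Gamma V(\Phi_{\ell+1}-\Phi_\ell),
\end{align*}
integrated elementwise over $\mesh_{\ell+1}$ with the weight $h_{\ell+1}$. This yields
\begin{align*}
\est{\ell+1}{}^2
&\le (1+\delta)\sum_{T'\in\mesh_{\ell+1}} h_{T'}\,\|\nabla_\Gamma(V\Phi_\ell-f)\|_{L_2(T')}^2 \\
&\quad + (1+\delta^{-1})\,\|h_{\ell+1}^{1/2}\nabla_\Gamma V(\Phi_{\ell+1}-\Phi_\ell)\|_{L_2(\Gamma)}^2.
\end{align*}

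Second, I would split the first sum according to whether the fine element $T'\in\mesh_{\ell+1}$ sits inside a coarse element $T\in\mesh_\ell\cap\mesh_{\ell+1}$ (nothing changes, $h_{T'}=h_T$) or inside $T\in\mesh_\ell\setminus\mesh_{\ell+1}$ (in which case $|T'|\le|T|/2$ by~\eqref{dp2:estred}, hence $h_{T'}\le 2^{-1/(d-1)} h_T$ for every child). Summing the contributions of all children of a refined $T$ and using additivity of the local $L_2$-norm gives $\sum_{T'\subset T} h_{T'}\|\nabla_\Gamma(V\Phi_\ell-f)\|_{L_2(T')}^2\le 2^{-1/(d-1)}\,\est{\ell}{T}^2$. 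Combining this with the bulk chasing~\eqref{opt:bulkchasing} and the inclusion $\MM_\ell\subseteq\mesh_\ell\setminus\mesh_{\ell+1}$, I would obtain
\begin{align*}
\sum_{T'\in\mesh_{\ell+1}} h_{T'}\,\|\nabla_\Gamma(V\Phi_\ell-f)\|_{L_2(T')}^2
\le \bigl(1-\theta(1-2^{-1/(d-1)})\bigr)\est{\ell}{}^2.
\end{align*}

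Third, for the perturbation term I would use that $\Phi_{\ell+1}-\Phi_\ell\in\Pp^p(\mesh_{\ell+1})$ and invoke the discrete inverse estimate~\eqref{opt:eq:invest:discrete} of Lemma~\ref{estred:lem:invest} on the mesh $\mesh_{\ell+1}$ to get
\begin{align*}
\|h_{\ell+1}^{1/2}\nabla_\Gamma V(\Phi_{\ell+1}-\Phi_\ell)\|_{L_2(\Gamma)}
\le \c{opt:weaksing:invest:discrete}\,\|\Phi_{\ell+1}-\Phi_\ell\|_{\widetilde H^{-1/2}(\Gamma)}.
\end{align*}
Finally, I would fix $\delta>0$ small enough so that $q_{\rm est}:=(1+\delta)\bigl(1-\theta(1-2^{-1/(d-1)})\bigr)<1$, and set $\c{opt:estred:aux}^2:=(1+\delta^{-1})\,\c{opt:weaksing:invest:discrete}^2$. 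The main (and essentially only) obstacle compared with the purely local $(h-h/2)$-case of Lemma~\ref{estred:lem:weaksing} is exactly Step~three: the perturbation by $\Phi_{\ell+1}-\Phi_\ell$ is hidden behind the non-local operator $V$ and with a fine-mesh weight $h_{\ell+1}^{1/2}$, so one cannot simply use the standard polynomial inverse estimate from Lemma~\ref{lem:Pp:invest}; it is precisely the inverse estimate for integral operators proved in~\cite{fkmp13,gantumur,invest} that makes the argument work, and which explains the $\widetilde H^{-1/2}(\Gamma)$-norm in $\alpha_\ell$.
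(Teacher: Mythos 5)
Your proof is correct and follows essentially the same route as the paper: split off the perturbation $\nabla_\Gamma V(\Phi_{\ell+1}-\Phi_\ell)$, use the $|T'|\le|T|/2$ mesh-size contraction on refined elements together with bulk chasing to get the strict reduction, and—exactly as the paper emphasizes—invoke the discrete inverse estimate~\eqref{opt:eq:invest:discrete} for the non-local operator $V$ rather than the standard polynomial inverse estimate. You are a bit more explicit than the paper in applying Young's inequality to pass from the triangle-inequality form $\eta_{\ell+1}\le q\,\eta_\ell+\text{(pert.)}$ to the squared estimate required by~\eqref{estred:eq:estred}, which is a reasonable way to make the sketch rigorous.
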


\begin{proof}
The proof follows the lines of the proof of Lemma~\ref{estred:lem:weaksing}.
The triangle inequality and contraction~\eqref{dp2:estred} of the mesh-size
on marked elements result in
\begin{align*}
 \eta_{\ell+1} \le q_{\rm est}\,\eta_\ell 
 + \norm{h_{\ell+1}^{1/2}\nabla V(\Phi_{\ell+1}-\Phi_\ell)}{L_2(\Gamma)}.
\end{align*}
Instead of the standard inverse estimates, one needs to 
employ~\eqref{opt:eq:invest:discrete} to obtain
\begin{align*}
\norm{h_{\ell+1}^{1/2}\nabla V(\Phi_{\ell+1}-\Phi_\ell)}{L_2(\Gamma)}
\lesssim \norm{\Phi_{\ell+1}-\Phi_\ell}{\wilde H^{-1/2}(\Gamma)}.
\end{align*}
This concludes the proof.
$\hfill\qed$
\end{proof}

\begin{myproposition}\label{estred:prop:aprioriweaksingres}
Algorithm~\ref{opt:algorithm} guarantees convergence\linebreak $\lim_{\ell\to\infty}\est{\ell}{}=0$ of the weighted residual error estimator $\eta_\ell$ from~\eqref{estred:weaksingres:def}.
\end{myproposition}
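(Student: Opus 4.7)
The plan is to mimic the structure used for Proposition~\ref{estred:prop:aprioriweaksing} and Proposition~\ref{estred:prop:aprioriweaksingzz}, i.e., combine the abstract estimator reduction principle of Lemma~\ref{estred:lem:estreddef} with the concrete perturbed contraction already established in Lemma~\ref{estred:lem:weaksingres}. The only real work is to verify that the perturbation sequence
\begin{align*}
 \alpha_\ell = \c{opt:estred:aux}\,\norm{\Phi_{\ell+1}-\Phi_\ell}{\widetilde H^{-1/2}(\Gamma)}
\end{align*}
tends to zero as $\ell\to\infty$.

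First, I would exploit that nestedness of the meshes $\mesh_\ell\subseteq\mesh_{\ell+1}$ produced by $\refine(\cdot)$ yields nestedness of the ansatz spaces $\Pp^p(\mesh_\ell)\subseteq\Pp^p(\mesh_{\ell+1})\subseteq\wilde H^{-1/2}(\Gamma)$. Since the underlying bilinear form $\dual{V\cdot}{\cdot}_\Gamma$ is continuous and elliptic on $\wilde H^{-1/2}(\Gamma)$ by Theorems~\ref{thm:stability}--\ref{thm:ellipticity}, the abstract a~priori convergence Lemma~\ref{estred:lem:convortho} applies and provides a limit $\Phi_\infty\in\wilde H^{-1/2}(\Gamma)$ with
\begin{align*}
 \lim_{\ell\to\infty}\norm{\Phi_\infty-\Phi_\ell}{\widetilde H^{-1/2}(\Gamma)} = 0.
\end{align*}
The triangle inequality then gives
\begin{align*}
 \norm{\Phi_{\ell+1}-\Phi_\ell}{\widetilde H^{-1/2}(\Gamma)}
 \le \norm{\Phi_\infty-\Phi_{\ell+1}}{\widetilde H^{-1/2}(\Gamma)}
 + \norm{\Phi_\infty-\Phi_\ell}{\widetilde H^{-1/2}(\Gamma)}\to 0,
\end{align*}
so $\alpha_\ell\to0$.

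Finally, applying Lemma~\ref{estred:lem:estreddef} to the perturbed contraction of Lemma~\ref{estred:lem:weaksingres} with this vanishing $\alpha_\ell$ yields estimator convergence $\lim_{\ell\to\infty}\est{\ell}{}=0$, which is the claim. There is no genuine obstacle here; the non-trivial ingredients (the invariance-of-the-mesh-size contraction used to extract $q_{\rm est}<1$, and the discrete inverse estimate~\eqref{opt:eq:invest:discrete} for $V$ that replaces the standard inverse estimate in the derivation of $\alpha_\ell$) are already packaged inside Lemma~\ref{estred:lem:weaksingres}, so the proof reduces to the two-line a~priori convergence argument above.
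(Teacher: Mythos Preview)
Your proof is correct and follows exactly the approach the paper intends: the paper states this proposition without proof because it is verbatim the argument of Proposition~\ref{estred:prop:aprioriweaksing}, namely combining Lemma~\ref{estred:lem:estreddef} with Lemma~\ref{estred:lem:weaksingres} and invoking the a~priori convergence Lemma~\ref{estred:lem:convortho} for the nested Galerkin spaces $\Pp^p(\mesh_\ell)$ to show $\alpha_\ell\to0$.
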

%------------------------------------------------------------------------------------------------------
\subsubsection{Hypersingular integral equation}\label{section:estred:reshypsing}
%------------------------------------------------------------------------------------------------------
We consider the problem from Proposition~\ref{prop:galerkin:hypsing}.
The standard weighted residual error estimator from Section~\ref{section:est:wres} for this problem reads
\begin{align}\label{estred:hypsingres:def}
\est{\ell}{}^2:=\sum_{\el\in\mesh_\ell}\est{\ell}{\el}^2:=\sum_{\el\in\mesh_\ell}h_\el \norm{WU_\ell-f}{L_2(\el)}^2.
\end{align}
Note that, while~\eqref{prop:galerkin:hypsing} is well-stated for $f\in \widetilde H^{-1/2}(\Gamma)$, the definition of $\est{\ell}{}$
requires additional regularity $f\in L_2(\Gamma)$ of the data.

\begin{mylemma}\label{estred:lem:hypsingres}
Given a  sequence of nested meshes $(\mesh_\ell)_{\ell\in\N}$, which additionally satisfy the bulk chasing~\eqref{opt:bulkchasing}
for all $\ell\in\N$ and some $0<\theta\le1$, the weighted residual error estimator $\est{\ell}{}$ from~\eqref{estred:hypsingres:def} satisfies the estimator reduction~\eqref{estred:eq:estred} with 
$\alpha_\ell:=\c{opt:estred:aux}\norm{U_{\ell+1}-U_\ell}{\wilde H^{1/2}(\Gamma)}$. The constant $q_{\rm est}$ depends only on $\theta$, while $\c{opt:estred:aux}$ depends
additionally on $\Gamma$, the polynomial degree $p$, and uniform shape regularity
of $\TT_\ell$.
\end{mylemma}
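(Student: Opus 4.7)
The plan is to mirror the structure of the proof of Lemma~\ref{estred:lem:weaksingres}, exploiting that the hypersingular operator satisfies a completely analogous inverse estimate, namely the second bound in~\eqref{opt:eq:invest:discrete}. First I would apply the triangle inequality to the residual $WU_{\ell+1} - f = W(U_{\ell+1}-U_\ell) + (WU_\ell - f)$ under the weighted $L_2$-norm, obtaining
\begin{align*}
  \eta_{\ell+1} \leq \norm{h_{\ell+1}^{1/2}(WU_\ell - f)}{L_2(\Gamma)}
  + \norm{h_{\ell+1}^{1/2} W(U_{\ell+1}-U_\ell)}{L_2(\Gamma)}.
\end{align*}

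Next I would decompose the first term over refined and non-refined elements. For $T \in \mesh_\ell \cap \mesh_{\ell+1}$, one has $h_{\ell+1}|_T = h_\ell|_T$ so the contribution equals $\eta_\ell(T)^2$ exactly. For $T \in \mesh_\ell \setminus \mesh_{\ell+1}$, the mesh-size reduction property~\eqref{dp2:estred} gives $h_{\ell+1}|_{T'} \leq 2^{-1/(d-1)} h_\ell|_T$ on every son $T' \subsetneq T$, so the contribution is bounded by $2^{-1/(d-1)} \eta_\ell(T)^2$. Since the Dörfler marking~\eqref{opt:bulkchasing} ensures $\MM_\ell \subseteq \mesh_\ell \setminus \mesh_{\ell+1}$ and $\theta \eta_\ell^2 \leq \sum_{T \in \MM_\ell} \eta_\ell(T)^2$, this yields
\begin{align*}
  \norm{h_{\ell+1}^{1/2}(WU_\ell - f)}{L_2(\Gamma)}^2 \leq \big(1 - \theta(1 - 2^{-1/(d-1)})\big) \eta_\ell^2.
\end{align*}

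For the second term I would invoke the discrete inverse estimate~\eqref{opt:eq:invest:discrete} applied to $V_\ell := U_{\ell+1}-U_\ell \in \widetilde\Sp^p(\mesh_{\ell+1})$, which directly gives
\begin{align*}
  \norm{h_{\ell+1}^{1/2} W(U_{\ell+1}-U_\ell)}{L_2(\Gamma)} \leq \c{opt:weaksing:invest:discrete}\,\norm{U_{\ell+1}-U_\ell}{\widetilde H^{1/2}(\Gamma)}.
\end{align*}
This is really the crucial step: without the inverse-type estimate of Lemma~\ref{estred:lem:invest} for the hypersingular operator, there would be no way to absorb the non-local perturbation into a computable perturbation term depending only on Galerkin differences. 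That this inverse estimate even holds in the hypersingular case (and not only the weakly singular one) is the genuinely BEM-specific ingredient.

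Finally, I would square and apply Young's inequality $(a+b)^2 \leq (1+\delta)a^2 + (1+\delta^{-1})b^2$ with $\delta$ chosen so small that $(1+\delta)\big(1-\theta(1-2^{-1/(d-1)})\big) < 1$, obtaining the desired estimator reduction
\begin{align*}
  \eta_{\ell+1}^2 \leq q_{\rm est} \eta_\ell^2 + \c{opt:estred:aux}^2 \norm{U_{\ell+1}-U_\ell}{\widetilde H^{1/2}(\Gamma)}^2
\end{align*}
with $q_{\rm est} < 1$ depending only on $\theta$ (and $d$), while $\c{opt:estred:aux}$ collects the constant from the inverse estimate, hence depends on $\Gamma$, $p$ and shape regularity. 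The main obstacle is purely notational: keeping track that the mesh-size function used in the inverse estimate on the right-hand side is $h_{\ell+1}$, which is dominated by $h_\ell$ and hence the inverse estimate of Lemma~\ref{estred:lem:invest} applies verbatim.
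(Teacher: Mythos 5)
Your proof is correct and follows exactly the path the paper takes: the paper's own proof is a one-line reference stating that the argument works analogously to Lemma~\ref{estred:lem:weaksingres}, with the inverse estimate~\eqref{opt:eq:invest:discrete} for $\hyp$ replacing the one for $\nabla_\Gamma V$; you have simply spelled out the steps that the paper leaves implicit. One small remark: your closing comment that ``$h_{\ell+1}$ is dominated by $h_\ell$ and hence the inverse estimate applies verbatim'' slightly misstates the logic — the inverse estimate is applied directly with $\mesh_{\ell+1}$ and $h_{\ell+1}$ (since $U_{\ell+1}-U_\ell\in\widetilde\Sp^p(\mesh_{\ell+1})$), with constant controlled by the uniform shape regularity of the nested sequence; no comparison $h_{\ell+1}\le h_\ell$ enters that step.
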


\begin{proof}
  The proof works analogously to the proof of Lemma \ref{estred:lem:weaksingres}, only this time employ
  the inverse-type estimate
  \begin{align*}
    \norm{h_{\ell+1}^{1/2}\hyp(U_{\ell+1}-U_\ell)}{L_2(\Gamma)}
    \lesssim \norm{U_{\ell+1}-U_\ell}{\wilde H^{1/2}(\Gamma)}.
  \end{align*}
  from~\eqref{opt:eq:invest:discrete}.
  $\hfill\qed$
\end{proof}

Arguing as before, Lemma~\ref{estred:lem:hypsingres} allows to derive convergence
of the related ABEM with Lemma~\ref{estred:lem:estreddef}.

\begin{myproposition}\label{estred:prop:apriorihypsingres}
Algorithm~\ref{opt:algorithm} guarantees convergence\linebreak $\lim_{\ell\to\infty}\est{\ell}{}=0$ of the weighted residual error estimator $\eta_\ell$ from~\eqref{estred:hypsingres:def}.
\end{myproposition}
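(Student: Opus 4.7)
The plan is to mimic the argument of Propositions~\ref{estred:prop:aprioriweaksing}--\ref{estred:prop:aprioriweaksingres} and combine Lemma~\ref{estred:lem:estreddef} with the estimator reduction established in Lemma~\ref{estred:lem:hypsingres}. Since the latter already provides the perturbed contraction $\est{\ell+1}{}^2 \le q_{\rm est}\est{\ell}{}^2 + \alpha_\ell^2$ with $\alpha_\ell = \c{opt:estred:aux}\norm{U_{\ell+1}-U_\ell}{\wilde H^{1/2}(\Gamma)}$ and some $0<q_{\rm est}<1$, by Lemma~\ref{estred:lem:estreddef} it suffices to verify $\alpha_\ell\to0$ as $\ell\to\infty$.

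To that end, I would first observe that the sequence of refined meshes $(\mesh_\ell)_{\ell\in\N_0}$ generated by Algorithm~\ref{opt:algorithm} is nested in the sense of Section~\ref{section:estred:meshrefinementprelim}, which, by definition of the discrete space and Section~\ref{section:bem:discrete}, yields the nestedness $\wilde\Sp^p(\mesh_\ell)\subseteq\wilde\Sp^p(\mesh_{\ell+1})\subset\wilde H^{1/2}(\Gamma)$ of the corresponding discrete spaces for all $\ell\in\N_0$. This places us precisely in the setting of Lemma~\ref{estred:lem:convortho} (with $\XX=\wilde H^{1/2}(\Gamma)$ and the symmetric bilinear form $b(\cdot,\cdot)=\dual{\hyp\cdot}{\cdot}_\Gamma$, which is continuous and elliptic by Theorems~\ref{thm:stability} and~\ref{thm:ellipticity}). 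Lemma~\ref{estred:lem:convortho} therefore provides an a~priori limit $U_\infty\in\wilde H^{1/2}(\Gamma)$ with
\begin{align*}
 \lim_{\ell\to\infty}\norm{U_\infty-U_\ell}{\wilde H^{1/2}(\Gamma)} = 0.
\end{align*}

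Consequently, the triangle inequality yields
\begin{align*}
 \norm{U_{\ell+1}-U_\ell}{\wilde H^{1/2}(\Gamma)} \le \norm{U_\infty-U_{\ell+1}}{\wilde H^{1/2}(\Gamma)} + \norm{U_\infty-U_\ell}{\wilde H^{1/2}(\Gamma)} \longrightarrow 0
\end{align*}
as $\ell\to\infty$. Hence $\alpha_\ell\to0$, and Lemma~\ref{estred:lem:estreddef} concludes $\est{\ell}{}\to0$.

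There is essentially no hard step in this plan since all the work has been front-loaded into Lemma~\ref{estred:lem:hypsingres}, whose proof in turn relied on the inverse-type estimate~\eqref{opt:eq:invest:discrete} for the hypersingular operator (this is the genuinely non-trivial ingredient specific to BEM and distinguishes the argument from its finite element counterpart). Given the lemma, the only point worth double-checking is that the bilinear form on $\wilde H^{1/2}(\Gamma)$ used for the Galerkin formulation in Proposition~\ref{prop:galerkin:hypsing} matches the abstract hypotheses of Lemma~\ref{estred:lem:convortho}; in the case $\Gamma\subsetneq\partial\Omega$ this is immediate, while for $\Gamma=\partial\Omega$ one uses $b(v,w):=\dual{\hyp v}{w}_\Gamma+\dual{v}{1}_\Gamma\dual{w}{1}_\Gamma$, which is again symmetric, continuous, and elliptic on $H^{1/2}(\Gamma)$, so Lemma~\ref{estred:lem:convortho} applies verbatim.
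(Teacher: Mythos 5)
Your proposal is correct and follows precisely the route the paper intends: the text immediately preceding Proposition~\ref{estred:prop:apriorihypsingres} states that one argues exactly as in Propositions~\ref{estred:prop:aprioriweaksing}--\ref{estred:prop:apriorihypsing}, combining Lemma~\ref{estred:lem:hypsingres} with Lemma~\ref{estred:lem:estreddef}, and the remaining step $\alpha_\ell\to0$ follows from the a~priori convergence of Lemma~\ref{estred:lem:convortho}. Your extra remark that the bilinear form must be taken as the rank-one stabilized form $\dual{\hyp\cdot}{\cdot}_\Gamma+\dual{\cdot}{1}_\Gamma\dual{\cdot}{1}_\Gamma$ when $\Gamma=\partial\Omega$ (so that Lemma~\ref{estred:lem:convortho} applies) is a correct and worthwhile clarification that the paper leaves implicit.
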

%%%%%%%%%%%%%%%%%%%%%%%%%%%%%%%%%%%%%%%%%%%%%%%%%%%%%%%%%%%%%%%%%%%%%%%%%%%%%%%%%%%%%%%%%%%%%%%%%%%%%%%
\subsection{Approximation of right-hand side data with $(h-h/2)$-type estimators}
\label{section:estred:data}
%%%%%%%%%%%%%%%%%%%%%%%%%%%%%%%%%%%%%%%%%%%%%%%%%%%%%%%%%%%%%%%%%%%%%%%%%%%%%%%%%%%%%%%%%%%%%%%%%%%%%%%

In many cases, the right-hand side $F$ in~\eqref{intro:weakform} involves the application of
integral operators to the given data which can hardly be computed analytically in practice.
To circumvent this bottleneck, the aim  of data approximation is to replace the right-hand
side $F$ in~\eqref{intro:weakform} with some computable approximation $F_\ell$ on any mesh
$\mesh_\ell$ and to solve
\begin{align*}
 \form{U_\ell}{V}=F_\ell(V)\quad\text{for all }V\in\XX_\ell
\end{align*}
instead of~\eqref{intro:galerkin}.
%------------------------------------------------------------------------------------------------------
\subsubsection{Weakly singular integral equation}\label{section:estred:hh2weaksingdata}
%------------------------------------------------------------------------------------------------------
We consider the problem from Proposition~\ref{prop:galerkin:dirichlet}, where
$\Gamma=\partial\Omega$ and the right-hand side
in~\eqref{intro:weakform} reads $F(\psi):=\dual{(1/2+K)f}{\psi}_\Gamma$ for all $\psi\in \widetilde H^{-1/2}(\Gamma)$.
We approximate the right-hand side by approximating $f\in H^{1/2}(\Gamma)$ via the Scott-Zhang projection $J_\ell^{p+1}$,
i.e., $f_\ell:=J_\ell^{p+1} f\in \Sp^{p+1}(\mesh_\ell)$, and hence $F_\ell(\psi):=\dual{(1/2+K)f_\ell}{\psi}_\Gamma$.
We thus end up with the formulation given in Proposition~\ref{prop:galerkin:dirichlet:data}.
The additional error is controlled via extending the $(h-h/2)$-type error estimator
from~\eqref{estred:def:hh2weaksing} by a data oscillation term
\begin{align}\label{estred:weaksinghh2data:def}
\est{\ell}{}^2&:=\sum_{\el\in\mesh_\ell}\est{\ell}{\el}^2\\
&:=\sum_{\el\in\mesh_\ell}h_\el
\big(\norm{(1-\pi_\ell^{p})\widehat\Phi_{\ell}}{L_2(\el)}^2 
+\norm{\nabla_\Gamma(1-J_\ell^{p+1}) f}{L_2(\el)}^2\big), \nonumber
\end{align}
cf.\ Section~\ref{section:aposteriori:data:weaksing}.

\begin{mylemma}\label{estred:lem:weaksingdata}
Given a  sequence of nested meshes $(\mesh_\ell)_{\ell\in\N}$, which additionally satisfy the bulk chasing~\eqref{opt:bulkchasing}
for all $\ell\in\N$ and some $0<\theta\le1$, the $(h-h/2)$-type error estimator $\eta_\ell$ with data oscillation
term from~\eqref{estred:weaksinghh2data:def} satisfies the estimator reduction~\eqref{estred:eq:estred} with 
$\alpha_\ell:=\c{opt:estred:aux}\big(\norm{\widehat\Phi_{\ell+1}-\widehat\Phi_\ell}{H^{-1/2}(\Gamma)}+\norm{(J_{\ell+1}^{p+1}-J_\ell^{p+1})f}{H^{1/2}(\Gamma)}\big)$. The
constant $q_{\rm est}$ depends only on $\theta$, while $\c{opt:estred:aux}$ depends 
additionally on $\Gamma$, the polynomial degree
$p$, and uniform shape regularity of $\mesh_\ell$.
\end{mylemma}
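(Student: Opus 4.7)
The plan is to mimic the proof of Lemma~\ref{estred:lem:weaksing}, treating the two contributions to $\est{\ell}{}^2$ separately and then combining via Young's inequality. Write
\[
\est{\ell}{}^2 = \mu_\ell^2 + \nu_\ell^2,
\]
where $\mu_\ell^2 := \sum_{\el\in\mesh_\ell} h_\el\,\norm{(1-\pi_\ell^{p})\widehat\Phi_\ell}{L_2(\el)}^2$ is the Galerkin-type $(h-h/2)$ part already treated in Lemma~\ref{estred:lem:weaksing}, and $\nu_\ell^2 := \sum_{\el\in\mesh_\ell} h_\el\,\norm{\nabla_\Gamma(1-J_\ell^{p+1})f}{L_2(\el)}^2$ is the data oscillation part.

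For $\mu_\ell$, Lemma~\ref{estred:lem:weaksing} (invoked with the uniform refinements $\widehat\TT_\ell$) already yields
\[
\mu_{\ell+1}^2 \le q_\mu\,\mu_\ell^2 + C^2 \norm{\widehat\Phi_{\ell+1}-\widehat\Phi_\ell}{\widetilde H^{-1/2}(\Gamma)}^2
\]
with $q_\mu = 1-\theta(1-2^{-1/(d-1)}) \in (0,1)$. For $\nu_\ell$, I would apply the triangle inequality in the weighted $L_2$-norm on $\mesh_{\ell+1}$:
\[
\nu_{\ell+1} \le \norm{h_{\ell+1}^{1/2}\nabla_\Gamma(1-J_\ell^{p+1})f}{L_2(\Gamma)} + \norm{h_{\ell+1}^{1/2}\nabla_\Gamma(J_{\ell+1}^{p+1}-J_\ell^{p+1})f}{L_2(\Gamma)}.
\]
The first summand is split into refined and non-refined elements: on $\el \in \mesh_\ell \cap \mesh_{\ell+1}$, the local weight is unchanged so these contributions equal the corresponding $\nu_\ell$-contributions; on $\el \in \mesh_\ell \setminus \mesh_{\ell+1}$ the assumption~\eqref{dp2:estred} on refinement yields $h_{\ell+1}|_{T'} \le 2^{-1/(d-1)}h_\ell|_T$ for each child $T'\subseteq T$, which together with the D\"orfler marking~\eqref{opt:bulkchasing} (note $\mathcal{M}_\ell \subseteq \mesh_\ell \setminus \mesh_{\ell+1}$) gives
\[
\norm{h_{\ell+1}^{1/2}\nabla_\Gamma(1-J_\ell^{p+1})f}{L_2(\Gamma)}^2 \le \big(1-\theta(1-2^{-1/(d-1)})\big)\,\nu_\ell^2 =: q_\nu\,\nu_\ell^2.
\]
For the second summand, note that $(J_{\ell+1}^{p+1}-J_\ell^{p+1})f \in \Sp^{p+1}(\mesh_{\ell+1})$ by nestedness $\Sp^{p+1}(\mesh_\ell) \subseteq \Sp^{p+1}(\mesh_{\ell+1})$, so the inverse estimate of Lemma~\ref{lem:Sp:invest} (with $s=1/2$) applies and yields
\[
\norm{h_{\ell+1}^{1/2}\nabla_\Gamma(J_{\ell+1}^{p+1}-J_\ell^{p+1})f}{L_2(\Gamma)} \le \c{inv}\norm{(J_{\ell+1}^{p+1}-J_\ell^{p+1})f}{H^{1/2}(\Gamma)}.
\]

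Combining these two inequalities for $\nu_{\ell+1}$ via Young's inequality $(a+b)^2 \le (1+\delta)a^2+(1+\delta^{-1})b^2$ with any fixed $\delta>0$ such that $(1+\delta)q_\nu<1$, and adding the analogous result for $\mu_\ell$, we obtain the desired perturbed contraction
\[
\est{\ell+1}{}^2 \le q_{\rm est}\,\est{\ell}{}^2 + \alpha_\ell^2
\]
with $q_{\rm est} := \max\{(1+\delta)q_\mu,(1+\delta)q_\nu\} < 1$ and $\alpha_\ell$ of the claimed form. The two routine ingredients are really the mesh-size contraction bookkeeping (identical to Lemma~\ref{estred:lem:weaksing}) and the inverse estimate on the fine mesh. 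The main subtlety, and hence the only point requiring care, is ensuring that the difference $(J_{\ell+1}^{p+1}-J_\ell^{p+1})f$ is discrete on the fine mesh so that Lemma~\ref{lem:Sp:invest} is applicable; this follows from nestedness of the Scott-Zhang target spaces. Once these two contributions are combined with a suitably small Young parameter $\delta$, the claim follows. \qed
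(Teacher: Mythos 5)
Your decomposition $\est{\ell}{}^2 = \mu_\ell^2 + \nu_\ell^2$ and the triangle inequality steps are reasonable bookkeeping, but there is a genuine gap in how you obtain the contraction factors $q_\mu, q_\nu < 1$. You invoke Lemma~\ref{estred:lem:weaksing} for the $\mu$-part and D\"orfler marking separately for the $\nu$-part, but both steps are unjustified: Lemma~\ref{estred:lem:weaksing} is proved under the assumption that the bulk chasing~\eqref{opt:bulkchasing} holds for the pure $(h-h/2)$ estimator $\mu_\ell$, whereas in Lemma~\ref{estred:lem:weaksingdata} the marking criterion is applied to the \emph{combined} estimator $\est{\ell}{} = (\mu_\ell^2+\nu_\ell^2)^{1/2}$. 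Knowing $\theta(\mu_\ell^2+\nu_\ell^2)\le\sum_{\el\in\MM_\ell}(\mu_\ell(\el)^2+\nu_\ell(\el)^2)$ gives you no lower bound on $\sum_{\el\in\MM_\ell}\mu_\ell(\el)^2$ in terms of $\mu_\ell^2$ alone, nor on $\sum_{\el\in\MM_\ell}\nu_\ell(\el)^2$ in terms of $\nu_\ell^2$ alone. Treated separately, you can only conclude $\mu_{\ell+1}^2\le\mu_\ell^2+\text{pert}$ and $\nu_{\ell+1}^2\le\nu_\ell^2+\text{pert}$ with $q=1$, which is not a contraction.

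The fix --- and the route the paper actually takes --- is to not split. Apply the triangle inequality once to the full estimator $\est{\ell+1}{}$, pulling off the perturbations $\norm{h_{\ell+1}^{1/2}(1-\pi_{\ell+1}^p)(\widehat\Phi_{\ell+1}-\widehat\Phi_\ell)}{L_2(\Gamma)}$ and $\norm{h_{\ell+1}^{1/2}\nabla_\Gamma(J_{\ell+1}^{p+1}-J_\ell^{p+1})f}{L_2(\Gamma)}$. The remaining ``frozen'' term
\begin{align*}
\sum_{\el\in\TT_{\ell+1}} h_{\ell+1}|_\el\Big(\norm{(1-\pi_{\ell+1}^p)\widehat\Phi_\ell}{L_2(\el)}^2
 + \norm{\nabla_\Gamma(1-J_\ell^{p+1})f}{L_2(\el)}^2\Big)
\end{align*}
is then split into refined and non-refined elements of $\TT_\ell$. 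On non-refined elements both summands reproduce $\est{\ell}{\el}^2$; on refined elements the mesh-size reduction~\eqref{dp2:estred} gives a factor $2^{-1/(d-1)}$ for the \emph{sum} of both contributions on each element. This is precisely the quantity controlled by the D\"orfler marking on $\est{\ell}{}$, yielding $q_{\rm est}<1$. Your inverse-estimate arguments for the two perturbation terms (Lemma~\ref{lem:Pp:invest} for $\widehat\Phi_{\ell+1}-\widehat\Phi_\ell$ and Lemma~\ref{lem:Sp:invest} for $(J_{\ell+1}^{p+1}-J_\ell^{p+1})f$, noting nestedness $\Sp^{p+1}(\TT_\ell)\subseteq\Sp^{p+1}(\TT_{\ell+1})$ so that the difference is discrete on $\TT_{\ell+1}$) are correct and carry over unchanged.
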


\begin{proof}
The same arguments as used in the proof of Lemma \ref{estred:lem:weaksing} apply.
The triangle inequality and reduction of the mesh-size~\eqref{dp2:estred} on
marked elements result in
\begin{align*}
\eta_{\ell+1}&\le q_{\rm est}\,\eta_\ell 
+ \norm{h_{\ell+1}^{1/2}(1-\pi_{\ell+1}^p)(\widehat\Phi_{\ell+1}-\widehat\Phi_\ell)}{L_2(\Gamma)}\\
&\qquad+\norm{h_{\ell+1}^{1/2}\nablag(J_{\ell+1}^{p+1}-J_\ell^{p+1})f}{L_2(\Gamma)}
\end{align*}
As before,  the inverse estimate from Lemma~\ref{lem:Pp:invest} proves
\begin{align*}
 \norm{h_{\ell+1}^{1/2}(1-\pi_{\ell+1}^p)(\widehat\Phi_{\ell+1}-\widehat\Phi_\ell)}{L_2(\Gamma)}
 \lesssim \norm{\widehat\Phi_{\ell+1}-\widehat\Phi_\ell}{H^{-1/2}(\Gamma)}.
\end{align*}
Moreover, the inverse estimate from Lemma~\ref{lem:Sp:invest} gives
\begin{align*}
  \norm{h_{\ell+1}^{1/2}\nablag(J_{\ell+1}^{p+1}-J_\ell^{p+1})f}{L_2(\Gamma)}
 \lesssim \norm{(J_{\ell+1}^{p+1}-J_\ell^{p+1})f}{H^{1/2}(\Gamma)}
\end{align*}
and concludes the proof.
$\hfill\qed$
\end{proof}

\begin{myproposition}\label{estred:prop:aprioriweaksingdata}
Algorithm~\ref{opt:algorithm} guarantees convergence\linebreak $\lim_{\ell\to\infty}\est{\ell}{}=0$ of the $(h-h/2)$-type
estimator $\eta_\ell$ with data approximation from~\eqref{estred:weaksinghh2data:def}.
\end{myproposition}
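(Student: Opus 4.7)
The plan is to invoke Lemma~\ref{estred:lem:estreddef} in combination with the estimator reduction already established in Lemma~\ref{estred:lem:weaksingdata}. The only thing left to verify is that the perturbation
\[
  \alpha_\ell = \c{opt:estred:aux}\bigl(\norm{\widehat\Phi_{\ell+1}-\widehat\Phi_\ell}{H^{-1/2}(\Gamma)} + \norm{(J_{\ell+1}^{p+1}-J_\ell^{p+1})f}{H^{1/2}(\Gamma)}\bigr)
\]
tends to zero as $\ell\to\infty$. I would treat its two contributions separately.

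The Scott-Zhang part is immediate: since $(\mesh_\ell)_{\ell\in\N_0}$ is a sequence of nested meshes and $f\in H^{1/2}(\Gamma)$, Lemma~\ref{estred:lem:szconv} with $s=1/2$ yields a limit $J_\infty^{p+1}f\in H^{1/2}(\Gamma)$ with $\norm{J_\ell^{p+1}f - J_\infty^{p+1}f}{H^{1/2}(\Gamma)}\to0$, and the triangle inequality gives $\norm{(J_{\ell+1}^{p+1}-J_\ell^{p+1})f}{H^{1/2}(\Gamma)}\to 0$.

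The Galerkin part is the main obstacle: unlike in Proposition~\ref{estred:prop:aprioriweaksing}, the discrete solutions $\widehat\Phi_\ell\in\widehat\XX_\ell=\Pp^p(\widehat\mesh_\ell)$ solve~\eqref{intro:galerkin} with the \emph{$\ell$-dependent} right-hand side $F_\ell$, so Lemma~\ref{estred:lem:convortho} does not apply directly. I would therefore introduce the auxiliary Galerkin approximations $\Phi_\ell^\star\in\widehat\XX_\ell$ of the \emph{exact} problem, i.e., $b(\Phi_\ell^\star,V)=F(V)$ for all $V\in\widehat\XX_\ell$. By~\eqref{ass1:hh2} the spaces $\widehat\XX_\ell$ are nested, and hence Lemma~\ref{estred:lem:convortho} yields a limit $\Phi_\infty^\star\in\widetilde H^{-1/2}(\Gamma)$ with $\norm{\Phi_\infty^\star-\Phi_\ell^\star}{\widetilde H^{-1/2}(\Gamma)}\to0$, so in particular $\norm{\Phi_{\ell+1}^\star-\Phi_\ell^\star}{H^{-1/2}(\Gamma)}\to 0$.

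It then remains to control $\widehat\Phi_\ell-\Phi_\ell^\star$. This is exactly the Galerkin perturbation caused by data approximation on the fine mesh $\widehat\mesh_\ell$, and Lemma~\ref{data:errstab:weaksing} (applied with $\widehat\mesh_\ell$ in place of $\mesh_\ell$ and $P_\ell=J_\ell^{p+1}$) gives
\[
 \norm{\widehat\Phi_\ell-\Phi_\ell^\star}{H^{-1/2}(\Gamma)}\lesssim \norm{h_{\widehat\ell}^{1/2}\nablag(1-J_\ell^{p+1})f}{L_2(\Gamma)}\lesssim \norm{(1-J_\ell^{p+1})f}{H^{1/2}(\Gamma)},
\]
where the last step uses the $H^{1/2}$-stability of $J_\ell^{p+1}$ from Lemma~\ref{lem:sz} (and $\norm{h_{\widehat\ell}}{L_\infty(\Gamma)}\lesssim 1$). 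By Lemma~\ref{estred:lem:szconv} the right-hand side vanishes as $\ell\to\infty$. A triangle inequality
\[
 \norm{\widehat\Phi_{\ell+1}-\widehat\Phi_\ell}{H^{-1/2}(\Gamma)} \le \norm{\widehat\Phi_{\ell+1}-\Phi_{\ell+1}^\star}{H^{-1/2}(\Gamma)} + \norm{\Phi_{\ell+1}^\star-\Phi_\ell^\star}{H^{-1/2}(\Gamma)} + \norm{\Phi_\ell^\star-\widehat\Phi_\ell}{H^{-1/2}(\Gamma)}
\]
then proves that the Galerkin contribution to $\alpha_\ell$ also tends to zero. Combining both contributions, $\alpha_\ell\to 0$, and Lemma~\ref{estred:lem:estreddef} concludes $\est{\ell}{}\to 0$.
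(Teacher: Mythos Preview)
Your overall structure is right, and the Scott--Zhang part is fine. The gap is in the Galerkin part, specifically in the claim that $\norm{(1-J_\ell^{p+1})f}{H^{1/2}(\Gamma)}\to 0$ follows from Lemma~\ref{estred:lem:szconv}. That lemma only gives $J_\ell^{p+1}f\to J_\infty^{p+1}f$ in $H^{1/2}(\Gamma)$, and it asserts $J_\infty^{p+1}v=v$ \emph{only} for $v\in\Sp^{p+1}_\infty=\overline{\bigcup_\ell\Sp^{p+1}(\mesh_\ell)}$. There is no a~priori reason why the given data $f$ should lie in this closure; the adaptive algorithm may simply stop refining in regions where the estimator contributions are small. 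In fact, the paper notes in the remark immediately following this proposition that $J_\infty f=f$ is a \emph{consequence} of $\eta_\ell\to 0$ (via the oscillation term), so invoking it inside the proof is circular.

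The paper avoids this by choosing the auxiliary right-hand side differently: instead of comparing $\widehat\Phi_\ell$ to the Galerkin solution $\Phi_\ell^\star$ for the exact functional $F$, it defines $\widehat\Phi_{\ell,\infty}\in\widehat\XX_\ell$ as the Galerkin solution for the \emph{limit} functional $F_\infty(\psi)=\dual{(1/2+K)J_\infty^{p+1}f}{\psi}_\Gamma$. Then Lemma~\ref{estred:lem:convortho} still applies (the right-hand side is fixed), and the perturbation bound becomes $\norm{\widehat\Phi_{\ell,\infty}-\widehat\Phi_\ell}{H^{-1/2}(\Gamma)}\lesssim\norm{J_\infty^{p+1}f-J_\ell^{p+1}f}{H^{1/2}(\Gamma)}$, which \emph{does} tend to zero by Lemma~\ref{estred:lem:szconv}. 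Your argument is repaired by replacing $F$ with $F_\infty$ in the definition of the auxiliary problem. (As a minor aside, the intermediate inequality $\norm{h_{\widehat\ell}^{1/2}\nablag(1-J_\ell^{p+1})f}{L_2(\Gamma)}\lesssim\norm{(1-J_\ell^{p+1})f}{H^{1/2}(\Gamma)}$ you wrote is also not justified, since $(1-J_\ell^{p+1})f$ is not discrete and the inverse estimate of Lemma~\ref{lem:Sp:invest} does not apply; but the direct bound $\norm{\widehat\Phi_\ell-\Phi_\ell^\star}{H^{-1/2}(\Gamma)}\lesssim\norm{(1-J_\ell^{p+1})f}{H^{1/2}(\Gamma)}$ is available from stability of $K$ and ellipticity, so this detour is unnecessary anyway.)
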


\begin{proof}
Lemma~\ref{estred:lem:szconv} proves a~priori convergence
\begin{align*}
\norm{(J_\infty-J_\ell)f}{H^{1/2}(\Gamma)}\xrightarrow{\ell\to\infty}0.
\end{align*}
Consequently, it holds
\begin{align*}
 \norm{(J_{\ell+1}-J_\ell)f}{H^{1/2}(\Gamma)}\xrightarrow{\ell\to\infty}0.
\end{align*}
It remains to prove that
\begin{align*}
 \norm{\widehat\Phi_{\ell+1}-\widehat\Phi_\ell}{H^{-1/2}(\Gamma)}.
\end{align*}
Note that one cannot directly employ Lemma~\ref{estred:lem:convortho}, since
$\widehat\Phi_{\ell+1}$ and $\widehat\Phi_\ell$ are computed with respect to
\emph{different} right-hand sides. To tackle this issue, let 
$\widehat\Phi_{\ell,\infty}\in \Pp^p(\widehat\mesh_\ell)$ be the unique
solution of 
\begin{align*}
\form{\widehat\Phi_{\ell,\infty}}{V}=\dual{(1/2+K)J_\infty f}{V}_{L_2(\Gamma)}\quad\text{for all }V\in\widehat\XX_\ell.
\end{align*}
For this, Lemma~\ref{estred:lem:convortho} applies and proves convergence
$\norm{\widehat\Phi_\infty-\widehat\Phi_{\ell,\infty}}{H^{-1/2}(\Gamma)}\to0$
as $\ell\to\infty$. The triangle inequality proves
\begin{align*}
\norm{&\widehat\Phi_{\ell+1}-\widehat\Phi_\ell}{H^{-1/2}(\Gamma)}\\
&\leq\norm{\widehat\Phi_{\ell,\infty}-\widehat\Phi_\ell}{H^{-1/2}(\Gamma)}+
\norm{\widehat\Phi_{\ell+1}-\widehat\Phi_{\ell+1,\infty}}{H^{-1/2}(\Gamma)}\\
&\quad+ \norm{\widehat\Phi_{\ell,\infty}-\widehat\Phi_{\ell+1,\infty}}{H^{-1/2}(\Gamma)}.
\end{align*}
The third term on the right-hand side already vanishes as $\ell\to\infty$.
For the remaining to terms, the stability of the problem and Lemma~\ref{estred:lem:l2conv} show
\begin{align*}
\norm{&\widehat\Phi_{\ell,\infty}-\widehat\Phi_\ell}{H^{-1/2}(\Gamma)}+
\norm{\widehat\Phi_{\ell+1}-\widehat\Phi_{\ell+1,\infty}}{H^{-1/2}(\Gamma)}\\
&\lesssim \norm{f_\ell-J_\infty f}{H^{1/2}(\Gamma)} +
\norm{f_{\ell+1}-J_\infty f}{H^{1/2}(\Gamma)}\to 0
\end{align*}
as $\ell\to\infty$. Altogether, we obtain $\lim_{\ell\to\infty}\alpha_\ell=0$ and
conclude the proof.
$\hfill\qed$
\end{proof}

\begin{remark}
As a consequence of Proposition~\ref{estred:prop:aprioriweaksingdata}, one
obtains that $\norm{f-J_\ell f}{H^{1/2}(\Gamma)} \lesssim \eta_\ell \to0$ as 
$\ell\to\infty$, i.e., $J_\infty f = f$.
\end{remark}

\begin{remark}
If the $L_2$-orthogonal projection $\Pi_\ell^{p+1}:L_2(\Gamma)\to\Sp^{p+1}(\TT_\ell)$
is $H^1$-stable~\eqref{estred:eq:l2stab}, Lemma~\ref{estred:lem:weaksingdata}
and Proposition~\ref{estred:prop:aprioriweaksingdata} transfer to data approximation
with $f_\ell = \Pi_\ell^{p+1}f$. In practice, this approach is preferred, since
it might lead to superconvergence for pointwise errors inside of $\Omega$.
\end{remark}

\begin{remark}
The proofs of Lemma~\ref{estred:lem:weaksingdata}
and Proposition~\ref{estred:prop:aprioriweaksingdata} transfer to situations, 
where the approximation error of $f_\ell = J_\ell f \approx f$ is 
controlled by $\norm{h_\ell^{1/2}(1-\pi_\ell^p)\nabla_\Gamma f}{L_2(\Gamma)}$ 
in~\eqref{estred:weaksinghh2data:def} instead
of $\norm{h_\ell^{1/2}\nabla_\Gamma(1-J_\ell)f}{L_2(\Gamma)}$. This requires additional
care with the mesh-refine\-ment to ensure equivalence of these two norms,
cf.\ Section~\ref{section:aposteriori:data:weaksing}. Possible mesh refinement strategies
are discussed in Section~\ref{section:meshrefinement} below. In this case, one may
even use more general $H^{1/2}$-stable projections
$J_\ell:H^{1/2}(\Gamma)\to\Sp^{p+1}(\TT_\ell)$ instead of the Scott-Zhang projection
to discretize the data, see~\cite{dirichlet3d,ffkmp13}.
This approach will be presented in Section~\ref{section:estred:resweaksingdata}.
\end{remark}
%------------------------------------------------------------------------------------------------------
\subsubsection{Hyper singular integral equation}\label{section:estred:hh2hypsingdata}
%------------------------------------------------------------------------------------------------------
We consider the problem from Proposition~\ref{prop:galerkin:neumann}, where 
$\Gamma=\partial\Omega$ and the right-hand side
in~\eqref{intro:weakform} reads $F(\psi):=\dual{(1/2-K^\prime)f}{v}_\Gamma$ for all
$v\in H^{1/2}(\Gamma)$. We approximate the right-hand side by approximating
$f\in H^{-1/2}(\Gamma)$ by $f_\ell:=\pi^{p-1}_\ell f\in \Pp^{p-1}(\mesh_\ell)$
and let $F_\ell(v):=\dual{(1/2-K^\prime)f_\ell}{v}_\Gamma$. Recall that
$\pi_\ell^{p-1}$ is the $L_2(\Gamma)$-orthogonal projection onto $\Pp^{p-1}(\mesh_\ell)$.
We thus end up with the formulation given in Proposition~\ref{prop:galerkin:neumann:data}.
The additional error is controlled via extending the error estimator by a data oscillation term
\begin{align}\label{estred:hypsinghh2data:def}
  \est{\ell}{}^2&:=\sum_{\el\in\mesh_\ell}\est{\ell}{\el}^2 \\
  \begin{split}
    &:=\sum_{\el\in\mesh_\ell}h_\el \big(\norm{(1-\pi_\ell^{p-1})\nablag\widehat
    U_{\ell}}{L_2(\el)}^2 \\&\qquad\qquad+\norm{(1-\pi_\ell^{p-1}) f}{L_2(\el)}^2\big),
  \end{split}\nonumber
\end{align}
cf.\ Section~\ref{section:aposteriori:data:hypsing}

\begin{mylemma}\label{estred:lem:hypsingdata}
  Given a  sequence of nested meshes $(\mesh_\ell)_{\ell\in\N}$, which additionally satisfy
  the bulk chasing~\eqref{opt:bulkchasing} for all $\ell\in\N$ and some $0<\theta\le1$,
  the $(h-h/2)$-type error estimator  with data oscillation term
  from~\eqref{estred:hypsinghh2data:def} satisfies the estimator reduction~\eqref{estred:eq:estred}
  with $\alpha_\ell:=\c{opt:estred:aux}\norm{\widehat U_{\ell+1}-\widehat U_\ell}{H^{1/2}(\Gamma)}$.
  While $q_{\rm est}$ depends only on $\theta$, the constant $\c{opt:estred:aux}$ depends
  additionally on $\Gamma$, the polynomial degree $p$, the marking parameter $\theta$,
  and uniform shape regularity of $\mesh_\ell$.
\end{mylemma}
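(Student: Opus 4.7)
The plan is to imitate the argument of Lemma~\ref{estred:lem:weaksingdata} but with the easier data part: since $f$ is fixed and enters only through the $L_2$-orthogonal projection $\pi_\ell^{p-1}$, the oscillation term $\|h_\ell^{1/2}(1-\pi_\ell^{p-1})f\|_{L_2(\Gamma)}$ reduces on its own, without any additive perturbation. Concretely, I split
\begin{align*}
\est{\ell+1}{}^2 &= \norm{h_{\ell+1}^{1/2}(1-\pi_{\ell+1}^{p-1})\nablag \widehat U_{\ell+1}}{L_2(\Gamma)}^2 \\
&\quad + \norm{h_{\ell+1}^{1/2}(1-\pi_{\ell+1}^{p-1}) f}{L_2(\Gamma)}^2
\end{align*}
and estimate the two summands separately.

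First, I would treat each summand elementwise as in the proof of Lemma~\ref{estred:lem:weaksing}. For any $T\in\TT_\ell\cap\TT_{\ell+1}$, nestedness $\Pp^{p-1}(\TT_\ell)\subseteq\Pp^{p-1}(\TT_{\ell+1})$ together with the best-approximation property of $\pi_{\ell+1}^{p-1}$ gives exactly the $\ell$-th contribution. For $T\in\TT_\ell\setminus\TT_{\ell+1}$ (a refined element), the best-approximation property again yields $\norm{(1-\pi_{\ell+1}^{p-1})g}{L_2(T)}\le\norm{(1-\pi_\ell^{p-1})g}{L_2(T)}$ for $g\in\{\nablag \widehat U_\ell,\,f\}$, while the mesh-size contracts via $h_{\ell+1}|_T\le 2^{-1/(d-1)}h_\ell|_T$ (assumption~\eqref{dp2:estred}). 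Combined with the bulk chasing~\eqref{opt:bulkchasing} and the fact that $\MM_\ell\subseteq\TT_\ell\setminus\TT_{\ell+1}$, this produces
\begin{align*}
&\norm{h_{\ell+1}^{1/2}(1-\pi_{\ell+1}^{p-1})\nablag \widehat U_\ell}{L_2(\Gamma)}^2+\norm{h_{\ell+1}^{1/2}(1-\pi_{\ell+1}^{p-1})f}{L_2(\Gamma)}^2\\
&\qquad\le \bigl(1-\theta(1-2^{-1/(d-1)})\bigr)\,\est{\ell}{}^2.
\end{align*}

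Second, I would account for the change from $\widehat U_\ell$ to $\widehat U_{\ell+1}$. The triangle inequality on the gradient summand adds the perturbation $\norm{h_{\ell+1}^{1/2}(1-\pi_{\ell+1}^{p-1})\nablag(\widehat U_{\ell+1}-\widehat U_\ell)}{L_2(\Gamma)}$, which is bounded by $\norm{h_{\ell+1}^{1/2}\nablag(\widehat U_{\ell+1}-\widehat U_\ell)}{L_2(\Gamma)}$ using that $\pi_{\ell+1}^{p-1}$ is an $L_2$-projection; the inverse estimate of Lemma~\ref{lem:Sp:invest} with $s=1/2$ then gives the bound $\c{inv}\norm{\widehat U_{\ell+1}-\widehat U_\ell}{H^{1/2}(\Gamma)}$. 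No analogous perturbation appears for the data term because $f$ is the same on both levels. Combining the two pieces via the binomial inequality $(a+b)^2\le(1+\delta)a^2+(1+\delta^{-1})b^2$ and choosing $\delta>0$ small enough so that $(1+\delta)\bigl(1-\theta(1-2^{-1/(d-1)})\bigr)=:q_{\rm est}^2<1$, I obtain the claimed contraction with $\alpha_\ell=\c{opt:estred:aux}\norm{\widehat U_{\ell+1}-\widehat U_\ell}{H^{1/2}(\Gamma)}$.

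The essentially routine nature of the argument—there is no non-trivial new ingredient beyond what appears in Lemma~\ref{estred:lem:weaksing} and Lemma~\ref{estred:lem:hypsing}—means the only point deserving care is the application of the inverse estimate to $\nablag(\widehat U_{\ell+1}-\widehat U_\ell)$ on the fine mesh $\widehat\mesh_{\ell+1}$ while weighting with the coarser $h_{\ell+1}$; shape regularity of $\widehat\mesh_{\ell+1}$ with respect to $\mesh_{\ell+1}$ (guaranteed by~\eqref{ass2:hh2}) makes the two mesh-size functions equivalent up to a constant depending only on the shape regularity, so Lemma~\ref{lem:Sp:invest} does apply with the asserted dependencies.
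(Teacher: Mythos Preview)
Your proposal is correct and follows essentially the same approach as the paper. The paper's proof is even terser: it simply observes that the argument is analogous to the weakly singular case (Lemma~\ref{estred:lem:weaksingdata}) with the additional simplification---which you also identify---that $\norm{(1-\pi_{\ell+1}^{p-1})f}{L_2(T)}\le\norm{(1-\pi_{\ell}^{p-1})f}{L_2(T)}$ for all $T\in\TT_\ell$, so the data oscillation contributes no perturbation term and $\alpha_\ell$ involves only $\norm{\widehat U_{\ell+1}-\widehat U_\ell}{H^{1/2}(\Gamma)}$.
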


\begin{proof}
  The proof works analogously to that of the weakly singular case from
  Lemma~\ref{estred:lem:weaksingdata}, but may additionally use that
  $\norm{(1-\pi_{\ell+1}^{p-1})f}{L_2(T)}\le \norm{(1-\pi_{\ell}^{p-1})f}{L_2(T)}$ for all
  $T\in\TT_\ell$. This leads to an improved perturbation term $\alpha_\ell$.
  $\hfill\qed$
\end{proof}

As for the weakly singular case in Proposition~\ref{estred:prop:aprioriweaksingdata},
we obtain convergence of data perturbed ABEM for the hypersingular integral equation.

\begin{myproposition}\label{estred:prop:apriorihypsingdata}
  Algorithm~\ref{opt:algorithm} guarantees convergence\linebreak
  $\lim_{\ell\to\infty}\est{\ell}{}=0$ of the $(h-h/2)$-type estimator $\est{\ell}{}$
  with data approximation from~\eqref{estred:hypsinghh2data:def}.
\end{myproposition}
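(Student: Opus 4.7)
The plan is to combine the estimator reduction principle (Lemma~\ref{estred:lem:estreddef}) with the estimator reduction established in Lemma~\ref{estred:lem:hypsingdata}, so that it suffices to verify
\begin{align*}
\alpha_\ell=\c{opt:estred:aux}\,\norm{\widehat U_{\ell+1}-\widehat U_\ell}{H^{1/2}(\Gamma)}\xrightarrow{\ell\to\infty}0.
\end{align*}
The subtlety, exactly as in the weakly singular case (Proposition~\ref{estred:prop:aprioriweaksingdata}), is that $\widehat U_\ell$ and $\widehat U_{\ell+1}$ are Galerkin solutions on different discrete spaces and, moreover, with respect to different right-hand sides $F_\ell(v)=\dual{(1/2-\adlo)\pi_\ell^{p-1}f}{v}_\Gamma$ and $F_{\ell+1}(v)=\dual{(1/2-\adlo)\pi_{\ell+1}^{p-1}f}{v}_\Gamma$. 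Hence Lemma~\ref{estred:lem:convortho} is not directly applicable and one has to decouple the two effects by introducing an auxiliary discrete problem with fixed right-hand side.

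First, I would exploit that the images $\Pp^{p-1}(\mesh_\ell)$ are nested since $(\mesh_\ell)_{\ell\in\N_0}$ is a nested sequence of meshes. Consequently, the $L_2$-orthogonal projections $\pi_\ell^{p-1}$ form a non-decreasing sequence of orthogonal projections in $L_2(\Gamma)$, so that $\pi_\ell^{p-1}f$ converges in $L_2(\Gamma)$ to the $L_2$-orthogonal projection $f_\infty:=\pi_\infty^{p-1}f$ onto the closure $\Pp^{p-1}_\infty:=\overline{\bigcup_{\ell}\Pp^{p-1}(\mesh_\ell)}$. Since $L_2(\Gamma)\hookrightarrow H^{-1/2}(\Gamma)$ continuously, this also yields $\pi_\ell^{p-1}f\to f_\infty$ in $H^{-1/2}(\Gamma)$.

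Next, for each $\ell$ let $\widehat U_{\ell,\infty}\in\wilde\Sp^p(\widehat\mesh_\ell)$ denote the Galerkin solution of the hypersingular equation with fixed right-hand side $v\mapsto\dual{(1/2-\adlo)f_\infty}{v}_\Gamma$. Because the spaces $\wilde\Sp^p(\widehat\mesh_\ell)$ are nested (as $\widehat\mesh_\ell$ is obtained by one uniform refinement of a nested sequence), Lemma~\ref{estred:lem:convortho} yields a limit $\widehat U_\infty$ with $\norm{\widehat U_{\ell,\infty}-\widehat U_\infty}{H^{1/2}(\Gamma)}\to0$, and in particular
\begin{align*}
\norm{\widehat U_{\ell+1,\infty}-\widehat U_{\ell,\infty}}{H^{1/2}(\Gamma)}\xrightarrow{\ell\to\infty}0.
\end{align*}
Moreover, subtracting the two discrete equations satisfied by $\widehat U_\ell$ and $\widehat U_{\ell,\infty}$ on the common space $\wilde\Sp^p(\widehat\mesh_\ell)$ and using ellipticity of $\hyp$ together with continuity of $(1/2-\adlo)\colon H^{-1/2}(\Gamma)\to H^{-1/2}(\Gamma)$ (Theorem~\ref{thm:stability}) shows
\begin{align*}
\norm{\widehat U_\ell-\widehat U_{\ell,\infty}}{H^{1/2}(\Gamma)}\lesssim\norm{\pi_\ell^{p-1}f-f_\infty}{H^{-1/2}(\Gamma)}\to0,
\end{align*}
and analogously for the index $\ell+1$.

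Finally, a triangle inequality
\begin{align*}
\norm{\widehat U_{\ell+1}-\widehat U_\ell}{H^{1/2}(\Gamma)}
&\le \norm{\widehat U_{\ell+1}-\widehat U_{\ell+1,\infty}}{H^{1/2}(\Gamma)}\\
&\quad+\norm{\widehat U_{\ell+1,\infty}-\widehat U_{\ell,\infty}}{H^{1/2}(\Gamma)}\\
&\quad+\norm{\widehat U_{\ell,\infty}-\widehat U_\ell}{H^{1/2}(\Gamma)}
\end{align*}
combines the three preceding estimates to $\alpha_\ell\to0$, and Lemma~\ref{estred:lem:estreddef} then yields $\est{\ell}{}\to0$. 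The main technical obstacle is thus the decoupling argument via $\widehat U_{\ell,\infty}$; once the $L_2$-convergence $\pi_\ell^{p-1}f\to f_\infty$ is at hand, the remaining stability and a~priori arguments proceed verbatim as in the weakly singular setting.
$\hfill\qed$
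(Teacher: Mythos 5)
Your proof is correct and follows the same decoupling strategy the paper employs for the weakly singular analogue (Proposition~\ref{estred:prop:aprioriweaksingdata}), which is the intended argument: the paper records the hypersingular case only by reference to the weakly singular one. You also correctly exploit that Lemma~\ref{estred:lem:hypsingdata} already provides the improved perturbation term $\alpha_\ell=\c{opt:estred:aux}\norm{\widehat U_{\ell+1}-\widehat U_\ell}{H^{1/2}(\Gamma)}$ (the data-oscillation contribution has been absorbed via the monotonicity $\norm{(1-\pi_{\ell+1}^{p-1})f}{L_2(T)}\le\norm{(1-\pi_{\ell}^{p-1})f}{L_2(T)}$), so only the Galerkin increment remains to be controlled, which is precisely where the auxiliary solutions $\widehat U_{\ell,\infty}$ enter.

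One point worth stressing: the convergence $\pi_\ell^{p-1}f\to f_\infty$ in $L_2(\Gamma)$ that you use is genuinely simpler than the corresponding step in the weakly singular proof. There, the data are approximated by continuous piecewise polynomials and the a~priori convergence requires the machinery of Lemma~\ref{estred:lem:szconv} resp.\ Lemma~\ref{estred:lem:l2conv} (the latter hinging on $H^1$-stability). Here, $\pi_\ell^{p-1}$ is the $L_2$-orthogonal projection onto the nested discontinuous spaces $\Pp^{p-1}(\mesh_\ell)\subseteq\Pp^{p-1}(\mesh_{\ell+1})$, so convergence of nested orthogonal projections is an elementary Hilbert space fact, and the continuous embedding $L_2(\Gamma)\hookrightarrow H^{-1/2}(\Gamma)$ delivers the dual-norm convergence needed for your stability estimate. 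Your argument is, if anything, slightly cleaner than a direct transcription of the weakly singular proof. A cosmetic note: when you invoke ``ellipticity of $\hyp$'', strictly speaking you need ellipticity of the stabilized bilinear form $\form{u}{v}=\dual{Wu}{v}_\Gamma+\dual{u}{1}_\Gamma\dual{v}{1}_\Gamma$ since $\Gamma=\partial\Omega$ here, but this changes nothing in the estimate.
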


%%%%%%%%%%%%%%%%%%%%%%%%%%%%%%%%%%%%%%%%%%%%%%%%%%%%%%%%%%%%%%%%%%%%%%%%%%%%%%%%%%%%%%%%%%%%%%%%%%%%%%%
\subsection{Approximation of right-hand side data with weighted residual estimators}
\label{section:estred:data:res}
%%%%%%%%%%%%%%%%%%%%%%%%%%%%%%%%%%%%%%%%%%%%%%%%%%%%%%%%%%%%%%%%%%%%%%%%%%%%%%%%%%%%%%%%%%%%%%%%%%%%%%%
%------------------------------------------------------------------------------------------------------
\subsubsection{Weakly singular integral equation}\label{section:estred:resweaksingdata}
%------------------------------------------------------------------------------------------------------
We consider the problem from Proposition~\ref{prop:galerkin:dirichlet}, where
$\Gamma = \partial\Omega$ and the right-hand side is given by
$F(\psi):=\dual{(1/2+K)f}{\psi}_\Gamma$ for all $\psi\in \widetilde H^{-1/2}(\Gamma)$.
Let $J_\ell^{p+1}:H^{1/2}(\Gamma)\rightarrow\linebreak\Sp^{p+1}(\mesh_\ell)$ be an arbitrary $H^{1/2}(\Gamma)$
stable projection such that the pointwise limit
\begin{align*}
  J^{p+1}_\infty v = \lim_{\ell\rightarrow\infty}J_\ell^{p+1} v
\end{align*}
exists for any $v\in H^{1/2}(\Gamma)$.
We approximate the right-hand side by approximating $f\in H^{1/2}(\Gamma)$ by
$f_\ell:= J^{p+1}_\ell f\in \Sp^{p+1}(\mesh_\ell)$ and let
$F_\ell(\psi):=\dual{(1/2+K)f_\ell}{\psi}_\Gamma$, i.e., we arrive at the
discrete formulation of Proposition~\ref{prop:galerkin:dirichlet:data}.
This additional error is controlled via extending the error estimator by a data oscillation term
\begin{align}\label{estred:weaksingresdata:def}
\est{\ell}{}^2&:=\sum_{\el\in\mesh_\ell}\est{\ell}{\el}^2\\ \nonumber
&:=\sum_{\el\in\mesh_\ell}h_\el \big(\norm{\nabla_\Gamma(V\Phi_\ell-(1/2+K)f_\ell)}{L_2(\el)}^2\\
&\qquad+\norm{(1-\pi_\ell^{p})\nabla_\Gamma f}{L_2(\el)}^2\big). \nonumber
\end{align}
Possible example for $J^{p+1}_\ell$ include the Scott-Zhang projection onto $\Sp^{p+1}(\mesh_\ell)$,
cf. Lemma~\ref{estred:lem:szconv}, as well as the $L_2$ -orthogonal projection
onto $\Sp^{p+1}(\mesh_\ell)$, provided that the latter is $H^1(\Gamma)$ stable, cf
Lemma~\ref{estred:lem:l2conv}.

\begin{mylemma}\label{estred:lem:weaksingdatares}
  Given a  sequence of nested meshes $(\mesh_\ell)_{\ell\in\N}$, which additionally satisfy
  the bulk chasing~\eqref{opt:bulkchasing} for all $\ell\in\N$ and some $0<\theta\le1$,
  the weighted residual error estimator $\est{\ell}{}$ with data oscillation
  term from~\eqref{estred:weaksingresdata:def} satisfies the estimator
  reduction~\eqref{estred:eq:estred} with 
  $\alpha_\ell:=\c{opt:estred:aux}(\norm{\Phi_{\ell+1}-\Phi_\ell}{\widetilde H^{-1/2}(\Gamma)}+
  \norm{f_{\ell+1}-f_\ell}{H^{1/2}(\Gamma)})$. While $q_{\rm est}$ depends only on $\theta$,
  the constant $\c{opt:estred:aux}$ depends additionally on $\Gamma$, the polynomial degree $p$,
  the marking parameter $\theta$, and uniform shape regularity of $\mesh_\ell$.
\end{mylemma}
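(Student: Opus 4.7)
The plan is to mimic the pattern of Lemma~\ref{estred:lem:weaksing} and Lemma~\ref{estred:lem:weaksingres}, but now treating the residual part and the data oscillation part of $\eta_\ell$ separately and then summing. Write
\begin{align*}
 \eta_{\ell+1}^2 = A_{\ell+1}^2 + B_{\ell+1}^2
\end{align*}
where $A_{\ell+1}^2 := \sum_{T\in\TT_{\ell+1}} h_{T}\,\norm{\nablag R_{\ell+1}}{L_2(T)}^2$ with $R_{\ell+1} := V\Phi_{\ell+1}-(1/2+K)f_{\ell+1}$, and $B_{\ell+1}^2 := \sum_{T\in\TT_{\ell+1}} h_{T}\,\norm{(1-\pi^p_{\ell+1})\nablag f}{L_2(T)}^2$.

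For the residual part $A_{\ell+1}$, first I would write $\nablag R_{\ell+1} = \nablag R_\ell + \nablag V(\Phi_{\ell+1}-\Phi_\ell) - \nablag(1/2+K)(f_{\ell+1}-f_\ell)$, and apply the Young inequality $(a+b)^2\le(1+\delta)a^2+(1+\delta^{-1})b^2$ with a parameter $\delta>0$ to be fixed. Splitting the $\norm{h_{\ell+1}^{1/2}\nablag R_\ell}{L_2(\Gamma)}^2$ contribution into refined and non-refined elements and using property~\eqref{dp2:estred} (which, on refined $T\in\TT_\ell\setminus\TT_{\ell+1}$, gives $h_{\ell+1}|_{T'}\le 2^{-1/(d-1)}h_\ell|_T$ for each $T'\subseteq T$) yields
\begin{align*}
 \norm{h_{\ell+1}^{1/2}\nablag R_\ell}{L_2(\Gamma)}^2 \le A_\ell^2 - (1-2^{-1/(d-1)})\!\!\sum_{T\in\TT_\ell\setminus\TT_{\ell+1}}\!\!\eta_\ell^{\rm res}(T)^2,
\end{align*}
where $\eta_\ell^{\rm res}(T)^2 := h_T\norm{\nablag R_\ell}{L_2(T)}^2$. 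The perturbation term is controlled by the discrete inverse estimates~\eqref{opt:eq:invest:discrete} applied to $\Phi_{\ell+1}-\Phi_\ell\in\Pp^p(\widehat\mesh_\ell)$ (which is a $\Pp^p$-function on the common refinement of $\TT_\ell$ and $\TT_{\ell+1}$) and to $f_{\ell+1}-f_\ell\in \Sp^{p+1}(\mesh_{\ell+1})$, producing $\lesssim \norm{\Phi_{\ell+1}-\Phi_\ell}{\widetilde H^{-1/2}(\Gamma)} + \norm{f_{\ell+1}-f_\ell}{H^{1/2}(\Gamma)}$.

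For the data oscillation part $B_{\ell+1}$, the argument is simpler and requires no perturbation: since $\Pp^p(\TT_\ell)\subseteq\Pp^p(\TT_{\ell+1})$, the piecewise $L_2$-best approximation property of $\pi^p_{\ell+1}$ gives $\norm{(1-\pi^p_{\ell+1})\nablag f}{L_2(T')}\le \norm{(1-\pi^p_\ell)\nablag f}{L_2(T')}$ for every $T'\in\TT_{\ell+1}$. Combining this with the same splitting into refined and non-refined elements and the mesh-size contraction~\eqref{dp2:estred}, I obtain
\begin{align*}
 B_{\ell+1}^2 \le B_\ell^2 - (1-2^{-1/(d-1)})\!\!\sum_{T\in\TT_\ell\setminus\TT_{\ell+1}}\!\!\eta_\ell^{\rm osc}(T)^2
\end{align*}
with $\eta_\ell^{\rm osc}(T)^2 := h_T\norm{(1-\pi^p_\ell)\nablag f}{L_2(T)}^2$.

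Summing the two bounds and exploiting that $\MM_\ell\subseteq\TT_\ell\setminus\TT_{\ell+1}$ together with D\"orfler marking~\eqref{opt:bulkchasing}, I get
\begin{align*}
 \eta_{\ell+1}^2 \le (1+\delta)\bigl[1-\theta(1-2^{-1/(d-1)})\bigr]\eta_\ell^2 + (1+\delta^{-1})\alpha_\ell^2.
\end{align*}
Since $0<\theta\le1$ and $1-2^{-1/(d-1)}>0$, a sufficiently small $\delta>0$ makes the prefactor strictly less than one, yielding the desired reduction with $q_{\rm est}<1$ depending only on $\theta$. I expect the main technical point to be the application of the inverse estimate~\eqref{opt:eq:invest:discrete} to $(1/2+K)(f_{\ell+1}-f_\ell)$ with the correct (refined) mesh-size function $h_{\ell+1}$, which is legitimate because $f_{\ell+1}-f_\ell$ is discrete on $\mesh_{\ell+1}$, and to track that the constants ultimately depend only on $\Gamma$, $\theta$, $p$ and the uniform shape regularity of $(\mesh_\ell)_\ell$; the rest is routine bookkeeping along the lines of Lemma~\ref{estred:lem:weaksing}.
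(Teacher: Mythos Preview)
Your proposal is correct and follows essentially the same route as the paper's proof, which simply refers back to Lemmas~\ref{estred:lem:weaksingdata} and~\ref{estred:lem:weaksingres} together with the inverse estimate~\eqref{opt:eq:invest:discrete} for the $(1/2+K)$-contribution. Two cosmetic points: the difference $\Phi_{\ell+1}-\Phi_\ell$ lies in $\Pp^p(\mesh_{\ell+1})$ (not $\Pp^p(\widehat\mesh_\ell)$), which is what you need to apply~\eqref{opt:eq:invest:discrete} with weight $h_{\ell+1}$; and your final displayed bound should read $\big[(1+\delta)-\theta(1-2^{-1/(d-1)})\big]\eta_\ell^2$ rather than $(1+\delta)\big[1-\theta(1-2^{-1/(d-1)})\big]\eta_\ell^2$, since the $(1+\delta)$ factor from Young's inequality multiplies only $A_\ell^2$, not the subtracted refined contributions---but the conclusion is unaffected.
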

\begin{proof}
  The data oscillation term is treated as in the proof of Lemma~\ref{estred:lem:weaksingdata}.
  For the estimator, one needs additionally the inverse estimate~\eqref{opt:eq:invest:discrete}
  to estimate
  \begin{align*}
    \norm{h_{\ell+1}^{1/2}\nabla_\Gamma (1/2+K)(f_{\ell+1}-f_\ell)}{L_2(\Gamma)}\lesssim
    \norm{f_{\ell+1}-f_\ell}{H^{1/2}(\Gamma)}.
  \end{align*}
  The remainder however, follows exactly the lines of the proof of Lemma~\ref{estred:lem:weaksingres}.
  $\hfill\qed$
\end{proof}

\begin{myproposition}\label{estred:prop:aprioriweaksingdatares}
  Algorithm~\ref{opt:algorithm} guarantees convergence\linebreak $\lim_{\ell\to\infty}\est{\ell}{}=0$
  of the weighted residual estimator with data approximation from~\eqref{estred:weaksingresdata:def}.
\end{myproposition}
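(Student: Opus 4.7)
The plan is to combine the estimator reduction from Lemma~\ref{estred:lem:weaksingdatares} with Lemma~\ref{estred:lem:estreddef}. It therefore suffices to show that the perturbation term
\begin{align*}
  \alpha_\ell \simeq \norm{\Phi_{\ell+1}-\Phi_\ell}{\widetilde H^{-1/2}(\Gamma)} + \norm{f_{\ell+1}-f_\ell}{H^{1/2}(\Gamma)}
\end{align*}
vanishes as $\ell\to\infty$. The data-approximation part is immediate from the assumed pointwise convergence $J^{p+1}_\ell f \to J^{p+1}_\infty f$ in $H^{1/2}(\Gamma)$, since this yields that $(f_\ell)_{\ell\in\N_0}$ is a Cauchy sequence in $H^{1/2}(\Gamma)$, i.e., $\norm{f_{\ell+1}-f_\ell}{H^{1/2}(\Gamma)}\to0$.

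The main obstacle is the Galerkin part, because, unlike in the situation covered by Lemma~\ref{estred:lem:convortho}, the approximations $\Phi_\ell$ and $\Phi_{\ell+1}$ are computed with respect to \emph{different} right-hand sides $F_\ell$ and $F_{\ell+1}$. Following the strategy already used in the proof of Proposition~\ref{estred:prop:aprioriweaksingdata}, I would introduce an auxiliary sequence $\Phi_{\ell,\infty}\in\Pp^p(\mesh_\ell)$ defined as the unique Galerkin solution
\begin{align*}
  \form{\Phi_{\ell,\infty}}{\Psi} = \dual{(1/2+K)J^{p+1}_\infty f}{\Psi}_\Gamma \quad\text{for all } \Psi\in\Pp^p(\mesh_\ell).
\end{align*}
Since the right-hand side here is $\ell$-independent and the spaces $\Pp^p(\mesh_\ell)$ are nested by~\eqref{ass1:hh2}, Lemma~\ref{estred:lem:convortho} yields a limit $\Phi_\infty$ with $\norm{\Phi_\infty-\Phi_{\ell,\infty}}{\widetilde H^{-1/2}(\Gamma)}\to 0$. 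In particular, $(\Phi_{\ell,\infty})_{\ell\in\N_0}$ is Cauchy in $\widetilde H^{-1/2}(\Gamma)$.

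To conclude, I would insert $\pm\Phi_{\ell,\infty}\pm\Phi_{\ell+1,\infty}$ and use the triangle inequality
\begin{align*}
  \norm{\Phi_{\ell+1}-\Phi_\ell}{\widetilde H^{-1/2}(\Gamma)}
  &\le \norm{\Phi_{\ell+1}-\Phi_{\ell+1,\infty}}{\widetilde H^{-1/2}(\Gamma)}\\
  &\quad+ \norm{\Phi_{\ell+1,\infty}-\Phi_{\ell,\infty}}{\widetilde H^{-1/2}(\Gamma)}\\
  &\quad+ \norm{\Phi_{\ell,\infty}-\Phi_\ell}{\widetilde H^{-1/2}(\Gamma)}.
\end{align*}
The middle term tends to zero by the a~priori convergence established above. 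For the outer terms, I would exploit that $\Phi_\ell-\Phi_{\ell,\infty}$ solves a Galerkin problem on $\Pp^p(\mesh_\ell)$ with right-hand side $\dual{(1/2+K)(f_\ell-J^{p+1}_\infty f)}{\cdot}_\Gamma$; together with ellipticity of $V$ and continuity of $1/2+K:H^{1/2}(\Gamma)\to H^{1/2}(\Gamma)$ from Theorems~\ref{thm:stability} and~\ref{thm:ellipticity}, this gives
\begin{align*}
  \norm{\Phi_\ell-\Phi_{\ell,\infty}}{\widetilde H^{-1/2}(\Gamma)} \lesssim \norm{f_\ell - J^{p+1}_\infty f}{H^{1/2}(\Gamma)},
\end{align*}
and analogously for the $(\ell+1)$-term. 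Both right-hand sides tend to zero by the assumed convergence $J^{p+1}_\ell f\to J^{p+1}_\infty f$ in $H^{1/2}(\Gamma)$. Hence $\alpha_\ell\to0$, and Lemma~\ref{estred:lem:estreddef} then yields $\eta_\ell\to0$, which finishes the proof.
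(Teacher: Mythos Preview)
Your proof is correct and follows essentially the same approach as the paper, which simply states that the proof ``follows the lines of Proposition~\ref{estred:prop:aprioriweaksingdata}'' together with the convergence $\norm{f_{\ell+1}-f_\ell}{H^{1/2}(\Gamma)}\to 0$. You have spelled out precisely those details: the auxiliary sequence $\Phi_{\ell,\infty}$ with frozen right-hand side, the a~priori convergence from Lemma~\ref{estred:lem:convortho}, and the stability estimate $\norm{\Phi_\ell-\Phi_{\ell,\infty}}{\widetilde H^{-1/2}(\Gamma)}\lesssim\norm{f_\ell-J^{p+1}_\infty f}{H^{1/2}(\Gamma)}$ are exactly the ingredients used there.
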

\begin{proof}
  The proof follows the lines of Proposition~\ref{estred:prop:aprioriweaksingdata}.
  Additionally, we need to employ the convergence
  $\lim_{\ell\to\infty}\norm{f_{\ell+1}-f_\ell}{H^{1/2}(\Gamma)}^2=0$
  from Lemma~\ref{estred:lem:l2conv}.
  $\hfill\qed$
\end{proof}
%------------------------------------------------------------------------------------------------------
\subsubsection{Hypersingular integral equation}\label{section:estred:reshypsingdata}
%------------------------------------------------------------------------------------------------------
We consider the problem from Proposition~\ref{prop:galerkin:neumann}, where the right-hand side
in~\eqref{intro:weakform} reads $F(v):=\dual{(1/2-K^\prime)f}{v}_\Gamma$ for all
$v\in H^{1/2}(\Gamma)$.
We approximate the right-hand side by approximating $f\in \widetilde H^{-1/2}(\Gamma)$
by $f_\ell:=\pi^{p-1}_\ell f\in \Pp^{p-1}(\mesh_\ell)$ and let
$F_\ell(v):=\dual{(1/2-K^\prime)f_\ell}{v}_\Gamma$, i.e., we arrive at the discrete formulation
of Proposition~\ref{prop:galerkin:neumann:data}.
The additional error is controlled via extending the error estimator by a data oscillation term
\begin{align}\label{estred:hypsingresdata:def}
  \est{\ell}{}^2&:=\sum_{\el\in\mesh_\ell}\est{\ell}{\el}^2\\ \nonumber
  &:=\sum_{\el\in\mesh_\ell}h_\el
  \big(\norm{\nabla_\Gamma(W U_\ell-(1/2-K^\prime)f_\ell)}{L_2(\el)}^2\\
  &\qquad+\norm{(1-\pi_\ell^{p-1})f}{L_2(\el)}^2\big). \nonumber
\end{align}

\begin{mylemma}\label{estred:lem:hypsingdatares}
  Given a  sequence of nested meshes $(\mesh_\ell)_{\ell\in\N}$, which additionally satisfy
  the bulk chasing~\eqref{opt:bulkchasing} for all $\ell\in\N$ and some $0<\theta\le1$,
  the weighted residual error estimator with data oscillation term $\est{\ell}{}$
  from~\eqref{estred:hypsingresdata:def} satisfies the estimator reduction~\eqref{estred:eq:estred}
  with $\alpha_\ell:=\c{opt:estred:aux}(\norm{U_{\ell+1}-U_\ell}{H^{1/2}(\Gamma)}+
  \norm{f_{\ell+1}-f_\ell}{H^{-1/2}(\Gamma)})$. The constants $q_{\rm est}, \c{opt:estred:aux}$
  depend only on $\Gamma$, the marking parameter $\theta$, the polynomial degree $p$, and the
  uniform $\sigma_\ell$-shape regularity.
\end{mylemma}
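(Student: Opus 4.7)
The plan is to mimic the strategy used for Lemma \ref{estred:lem:weaksingdatares} (weakly singular with data and residual estimator) and Lemma \ref{estred:lem:hypsingres} (hypersingular, residual, no data), combining their arguments. Write $\est{\ell+1}{}^2 = \eta_{\ell+1}^{\rm res,2} + \eta_{\ell+1}^{\rm osc,2}$ according to the splitting in~\eqref{estred:hypsingresdata:def}. Both parts will be treated separately.

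First, for the data oscillation term, I would exploit that $\Pp^{p-1}(\mesh_\ell)\subseteq\Pp^{p-1}(\mesh_{\ell+1})$, so the best-approximation property of $\pi_{\ell+1}^{p-1}$ yields elementwise $\norm{(1-\pi_{\ell+1}^{p-1})f}{L_2(T')}\le\norm{(1-\pi_\ell^{p-1})f}{L_2(T')}$ for every $T'\in\mesh_{\ell+1}$. Splitting the sum over $\el\in\mesh_{\ell+1}$ into refined and non-refined elements of $\mesh_\ell$ and using the area contraction~\eqref{dp2:estred} with the bulk chasing~\eqref{opt:bulkchasing} on the marked set $\MM_\ell\subseteq\mesh_\ell\setminus\mesh_{\ell+1}$ gives a contraction of the form $\eta_{\ell+1}^{\rm osc,2}\le(1-\theta(1-2^{-1/(d-1)}))\,\eta_\ell^{\rm osc,2}$, exactly as in Lemma~\ref{estred:lem:weaksing}.

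Second, for the residual part I would use the triangle inequality to write
\begin{align*}
\eta_{\ell+1}^{\rm res}
&\le \norm{h_{\ell+1}^{1/2}\nablag(WU_\ell-(1/2-K')f_\ell)}{L_2(\Gamma)}\\
&\quad+\norm{h_{\ell+1}^{1/2}\nablag W(U_{\ell+1}-U_\ell)}{L_2(\Gamma)}\\
&\quad+\norm{h_{\ell+1}^{1/2}\nablag(1/2-K')(f_{\ell+1}-f_\ell)}{L_2(\Gamma)}.
\end{align*}
Wait — the hypersingular residual estimator in~\eqref{estred:hypsingres:def} uses $\norm{h_\ell^{1/2}(WU_\ell-f)}{L_2(\el)}$ (no $\nablag$), so the splitting should be done \emph{without} $\nablag$. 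The same area contraction argument as above combined with bulk chasing gives
\begin{align*}
\norm{h_{\ell+1}^{1/2}(WU_\ell-(1/2-K')f_\ell)}{L_2(\Gamma)}^2\le q_{\rm est}^2\,\eta_\ell^{\rm res,2},
\end{align*}
with $q_{\rm est}^2=1-\theta(1-2^{-1/(d-1)})$. The two remaining perturbation terms are estimated by the discrete inverse estimates~\eqref{opt:eq:invest:discrete} from Lemma~\ref{estred:lem:invest}, which yield
\begin{align*}
\norm{h_{\ell+1}^{1/2}W(U_{\ell+1}-U_\ell)}{L_2(\Gamma)}
&\lesssim \norm{U_{\ell+1}-U_\ell}{\widetilde H^{1/2}(\Gamma)},\\
\norm{h_{\ell+1}^{1/2}(1/2-K')(f_{\ell+1}-f_\ell)}{L_2(\Gamma)}
&\lesssim \norm{f_{\ell+1}-f_\ell}{\widetilde H^{-1/2}(\Gamma)},
\end{align*}
where the second estimate is applicable because $f_{\ell+1}-f_\ell\in\Pp^{p-1}(\mesh_{\ell+1})$ is discrete (on the fine mesh, whose mesh-size is comparable to $h_{\ell+1}$ up to shape-regularity).

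Combining these bounds with the elementary inequality $(a+b+c)^2\le(1+\varepsilon)a^2+C_\varepsilon(b^2+c^2)$ for a sufficiently small $\varepsilon>0$ and adjusting $q_{\rm est}$ slightly, the two partial contractions for $\eta_{\ell+1}^{\rm res,2}$ and $\eta_{\ell+1}^{\rm osc,2}$ add up to the claimed estimator reduction~\eqref{estred:eq:estred} with perturbation $\alpha_\ell$ of the asserted form. The main obstacle is the second inverse estimate: one must ensure that~\eqref{opt:eq:invest:discrete} is applied to $f_{\ell+1}-f_\ell$ as a discrete $\Pp^{p-1}$-function on $\mesh_{\ell+1}$ (with $h_{\ell+1}$), not on $\mesh_\ell$; the use of the weaker continuous version~\eqref{opt:eq:invest} would introduce an unwanted $L_2$-term that could not be absorbed into the $H^{-1/2}$-norm bound $\norm{f_{\ell+1}-f_\ell}{\widetilde H^{-1/2}(\Gamma)}$ in $\alpha_\ell$. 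Shape-regularity of the meshes is needed for this step, which is why $\c{opt:estred:aux}$ depends on $\sigma_\ell$.
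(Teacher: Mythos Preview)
Your proposal is correct and follows essentially the same route as the paper: treat the oscillation part via the best-approximation property of $\pi_\ell^{p-1}$ together with mesh-size contraction and bulk chasing, and treat the residual part by the triangle inequality plus the discrete inverse estimates~\eqref{opt:eq:invest:discrete} for $W$ and $(1/2-K')$, exactly as the paper does. Your self-correction is right---the $\nabla_\Gamma$ in~\eqref{estred:hypsingresdata:def} is a typo (compare~\eqref{estred:hypsingres:def} and the paper's own displayed estimate in its proof, which has no gradient), and your observation that $f_{\ell+1}-f_\ell\in\Pp^{p-1}(\mesh_{\ell+1})$ is precisely what makes the discrete inverse estimate~\eqref{opt:eq:invest:discrete} applicable here.
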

\begin{proof}
  The data oscillation term is treated as in the proof of Lemma~\ref{estred:lem:weaksingdata}.
  For the estimator, one needs additionally the inverse estimate~\eqref{opt:eq:invest:discrete}
  to estimate
  \begin{align*}
    \norm{h_{\ell+1}^{1/2} (1/2-K^\prime)(f_{\ell+1}-f_\ell)}{L_2(\Gamma)}\lesssim
    \norm{f_{\ell+1}-f_\ell}{H^{-1/2}(\Gamma)}.
  \end{align*}
  The remainder however, follows exactly the lines of the proof of Lemma~\ref{estred:lem:hypsingres}.
  $\hfill\qed$
\end{proof}

\begin{myproposition}\label{estred:prop:apriorihypsingdatares}
Algorithm~\ref{opt:algorithm} guarantees convergence\linebreak $\lim_{\ell\to\infty}\est{\ell}{}=0$
of the weighted residual estimator with data approximation from~\eqref{estred:hypsingresdata:def}.
\end{myproposition}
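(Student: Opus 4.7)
The plan is to proceed along the same lines as the proof of Proposition~\ref{estred:prop:aprioriweaksingdatares}. Combining Lemma~\ref{estred:lem:estreddef} with the estimator reduction of Lemma~\ref{estred:lem:hypsingdatares}, it will suffice to show that
\begin{align*}
\alpha_\ell=\c{opt:estred:aux}\big(\norm{U_{\ell+1}-U_\ell}{H^{1/2}(\Gamma)}+\norm{f_{\ell+1}-f_\ell}{H^{-1/2}(\Gamma)}\big)
\end{align*}
converges to zero as $\ell\to\infty$. I would treat the two contributions separately.

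First, for the data term $\norm{f_{\ell+1}-f_\ell}{H^{-1/2}(\Gamma)}$: since the meshes $(\mesh_\ell)$ are nested, so are the discontinuous spaces $\Pp^{p-1}(\mesh_\ell)$. The standard argument of Lemma~\ref{estred:lem:convortho}, applied with the $L_2(\Gamma)$-scalar product in place of $b(\cdot,\cdot)$, then yields a pointwise limit $\pi_\infty^{p-1}f\in L_2(\Gamma)$ with $\norm{\pi_\infty^{p-1}f-\pi_\ell^{p-1}f}{L_2(\Gamma)}\to 0$ (note that $f\in L_2(\Gamma)$ is in any case required for $\est{\ell}{}$ to be well-defined). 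Together with the continuous embedding $L_2(\Gamma)\hookrightarrow H^{-1/2}(\Gamma)$ and the triangle inequality, this gives $\norm{f_{\ell+1}-f_\ell}{H^{-1/2}(\Gamma)}\to 0$.

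Second, the Galerkin term cannot be controlled by Lemma~\ref{estred:lem:convortho} directly, because $U_\ell$ and $U_{\ell+1}$ solve discrete problems with respect to the different perturbed right-hand sides $F_\ell$ and $F_{\ell+1}$. To decouple this dependence, I would introduce the auxiliary Galerkin solutions $U_{\ell,\infty}\in\Sp^p(\mesh_\ell)$ associated with the \emph{limit} data $F_\infty(v):=\dual{(1/2-K')\pi_\infty^{p-1}f}{v}_\Gamma$. Since $\big(\Sp^p(\mesh_\ell)\big)_{\ell\in\N_0}$ is a nested sequence, Lemma~\ref{estred:lem:convortho} yields a limit $U_\infty$ with $\norm{U_\infty-U_{\ell,\infty}}{H^{1/2}(\Gamma)}\to 0$, and hence in particular $\norm{U_{\ell+1,\infty}-U_{\ell,\infty}}{H^{1/2}(\Gamma)}\to 0$. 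The triangle inequality
\begin{align*}
\norm{U_{\ell+1}-U_\ell}{H^{1/2}(\Gamma)}
&\le\norm{U_{\ell+1}-U_{\ell+1,\infty}}{H^{1/2}(\Gamma)}\\
&\quad+\norm{U_{\ell,\infty}-U_\ell}{H^{1/2}(\Gamma)}\\
&\quad+\norm{U_{\ell+1,\infty}-U_{\ell,\infty}}{H^{1/2}(\Gamma)},
\end{align*}
combined with the stability of $(1/2-K')$ (Theorem~\ref{thm:stability}) and the ellipticity of $\hyp$ (Theorem~\ref{thm:ellipticity}), reduces the first two terms to $\norm{\pi_\infty^{p-1}f-f_\ell}{H^{-1/2}(\Gamma)}$ and $\norm{\pi_\infty^{p-1}f-f_{\ell+1}}{H^{-1/2}(\Gamma)}$, both of which vanish by the first step.

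The main obstacle is precisely the presence of the perturbed right-hand sides $F_\ell$, which destroys the nested Galerkin structure that would otherwise permit a direct application of Lemma~\ref{estred:lem:convortho}. The auxiliary sequence $U_{\ell,\infty}$ is introduced specifically to separate the effect of mesh refinement (controlled via a~priori convergence in nested spaces) from the effect of data approximation (controlled via the $L_2$-convergence of $\pi_\ell^{p-1}f$). Once both contributions to $\alpha_\ell$ are shown to vanish, Lemma~\ref{estred:lem:estreddef} yields $\est{\ell}{}\to 0$ and concludes the proof.
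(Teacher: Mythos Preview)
Your proof is correct and follows essentially the same approach as the paper's. The paper simply says the proof follows along the lines of Proposition~\ref{estred:prop:aprioriweaksingdata} together with the $L_2$-convergence $\norm{f_{\ell+1}-f_\ell}{L_2(\Gamma)}\to 0$, which is precisely the auxiliary-solution argument you spell out in detail: introduce $U_{\ell,\infty}$ to decouple mesh refinement from data perturbation, then use nestedness of $\Pp^{p-1}(\mesh_\ell)$ and Lemma~\ref{estred:lem:convortho} to control both contributions to $\alpha_\ell$.
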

\begin{proof}
  The proof follows along the lines of Proposition~\ref{estred:prop:aprioriweaksingdata}.
  Additionally, we need to employ the convergence\linebreak
  $\lim_{\ell\to\infty}\norm{f_{\ell+1}-f_\ell}{H^{-1/2}(\Gamma)}^2\leq
  \lim_{\ell\to\infty}\norm{f_{\ell+1}-f_\ell}{L_2(\Gamma)}^2 = 0$ from Lemma~\ref{estred:lem:l2conv}.
  $\hfill\qed$
\end{proof}

\begin{figure*}[t]
\psfrag{T}[cc][cc][1][0]{$\displaystyle +1$}%
\psfrag{T1}[cc][cc][1][0]{$\displaystyle -1$}%
\psfrag{T2}[cc][cc][1][0]{$\displaystyle +1$}%
\psfrag{T3}[cc][cc][1][0]{$\displaystyle -1$}%
\psfrag{T4}[cc][cc][1][0]{$\displaystyle +1$}%
 \def\fig#1#2{\begin{minipage}[t]{40mm}%
   \centering\hspace*{-5.5mm}\includegraphics[height=37mm]{intro_#1.eps}\\[-4mm]{\scriptsize#2}%
 \end{minipage}}
 \fig{T0}{$\Psi_{\el,1}$}
 \fig{T0unif}{$\Psi_{\el,2}$}
 \fig{T0hori}{$\Psi_{\el,3}$}
 \fig{T0vert}{$\Psi_{\el,4}$}
\caption{Functions $\Psi_{\el,i}$ with their values on the element $\el\in\mesh_\ell$ used for the computation of the
error estimator from Section~\ref{section:estred:anisotropic}.}
\label{estred:fig:ani}
\end{figure*}
%%%%%%%%%%%%%%%%%%%%%%%%%%%%%%%%%%%%%%%%%%%%%%%%%%%%%%%%%%%%%%%%%%%%%%%%%%%%%%%%%%%%%%%%%%%%%%%%%%%%%%%
\subsection{Anisotropic mesh refinement}\label{section:estred:anisotropic}
%%%%%%%%%%%%%%%%%%%%%%%%%%%%%%%%%%%%%%%%%%%%%%%%%%%%%%%%%%%%%%%%%%%%%%%%%%%%%%%%%%%%%%%%%%%%%%%%%%%%%%%
The presence of edge singularities in solutions of simple problems like $V\phi=1$ on some boundary $\Gamma:=\partial\Omega$ with $\Omega\subseteq \R^3$ makes it necessary to
allow for anisotropic mesh refinement if one aims to achieve optimal convergence rates. This implies that the
shape-regularity constant $\sigma_\ell$ from Section~\ref{section:bem:discrete} cannot remain bounded for
a given sequence of meshes $(\mesh_\ell)_{\ell\in\N_0}$, but satisfies $\sup_{\ell\in\N_0}\sigma_\ell = \infty$.
The following variant of the $(h-h/2)$-type error estimator accounts for this:
\begin{align}\label{estred:anisotropic:def}
\est{\ell}{}^2:=\sum_{\el\in\mesh_\ell}\est{\ell}{\el}^2:=\sum_{\el\in\mesh_\ell}\rho_\el \norm{(1-\pi_\ell^0)\widehat\Phi_\ell}{L_2(\el)}^2,
\end{align}
where $\rho_\el>0$ denotes the radius of the largest inscribed circle of the element $\el\in\mesh_\ell$. Obviously, there holds $\rho_\el\leq h_\el$, and $\sup_{\el\in\mesh_\ell}h_\el/\rho_\el$ depends only on $\sigma_\ell$.
We briefly discuss the lowest-order case $p=0$ for rectangular elements $\el\in\mesh_\ell$, which provides an
easy-to-implement criterion to decide how to refine the elements, while the general case $p\geq 0$ is discussed
in~\cite{afp12}.
As depicted in Figure~\ref{estred:fig:ani}, we define four element functions $\Psi_{\el,i}\in \Pp^0(\widehat \mesh_\ell)$ with $\supp(\Psi_{\el,i})\subseteq \el$.
Note that $\set{\Psi_{\el,i}}{\el\in\mesh_\ell,\,i=1,\ldots,4}$ defines a basis of $\Pp^0(\widehat\mesh_\ell)$. Hence, for each $\el\in\mesh_\ell$, there exist (computable) coefficients 
\begin{align*}
 c_{\el,i}:=\frac{\int_\el \Psi_{\el,i}\, \widehat\Phi_\ell\,dx}{\norm{\Psi_{\el,i}}{L_2(\el)}}\quad\text{for }i=1,2,3,4,
\end{align*}
such that
\begin{align*}
\widehat\Phi_\ell = \sum_{i=1}^4 c_{\el,i}\Psi_{\el,i}\quad\text{on }\el. 
\end{align*}
If one intends to refine $\el$ (i.e., $\el$ is marked for refinement by the bulk chasing~\eqref{opt:bulkchasing}), the following set of rules decides the direction of refinement: Choose an additional parameter $0<\tau<1$ which steers the sensitivity to directional refinement (cf.~Figure~\ref{intro:3d:refinement} from the introduction).
\begin{itemize}
 \item[(R1)] If $c_{\el,2}^2+c_{\el,3}^2\leq \tau/(1-\tau)c_{\el,4}^2$, split $\el$ along the vertical direction to generate two sons $\el_1,\el_2\in\mesh_{\ell+1}$.
 \item[(R2)] If $c_{\el,2}^2+c_{\el,4}^2\leq \tau/(1-\tau)c_{\el,3}^2$, split $\el$ along the horizontal direction to generate two sons $\el_1,\el_2\in\mesh_{\ell+1}$.
 \item[(R3)] If none of the above applies split $\el$ along both directions to generate four sons $\el_1,\el_2,\el_3,\el_4\in\mesh_{\ell+1}$.
\end{itemize}
We note that (R1) and (R2) are exclusive, i.e., if the criterion from (R1) is satisfied, the  criterion from (R2) fails to hold (cf.~\cite{afp12}).
Moreover, we need to ensure the following two refinement rules to guarantee the validity of the inverse
estimate~\cite[Thm.~3.6]{ghs05} of Lemma~\ref{lem:Pp:invest} on anisotropic meshes.
\begin{itemize}
 \item[(R4)] Hanging nodes are at most of order one, i.e., each side $e$ of an elements $\el\in\mesh_\ell$ contains at most one node $z$ which is not an endpoint of $e$.
 \item[(R5)] $K$-mesh property: there holds for some $\kappa_\ell>0$
 \begin{align*}
  \frac{\rho_\el}{\rho_{\el^\prime}}+ \frac{h_\el}{h_{\el^\prime}} \leq \kappa_\ell<\infty,
 \end{align*}
 for all $\el,\el^\prime\in\mesh_\ell$ with $\el\cap\el^\prime\neq \emptyset$.

\end{itemize}

The following lemma is proved in~\cite{fp08,afp12}.
\begin{mylemma}\label{estred:lem:weaksingani}
Given a sequence of meshes $(\mesh_\ell)_{\ell\in\N}$ with\linebreak $\mesh_{\ell+1}\in\refine(\mesh_\ell)$ for all $\ell\in\N_0$
and $K:=\sup_{\ell\in\N_0}\kappa_\ell<\infty$, which additionally satisfy the bulk chasing~\eqref{opt:bulkchasing}
for all $\ell\in\N$ and some $0<\theta\le1$. Suppose that all marked elements $\MM_\ell\subseteq\mesh_\ell\setminus\mesh_{\ell+1}$ are refined according to the rules (R1)--(R5).
Then, the modified $(h-h/2)$ error estimator $\est{\ell}{}$ from~\eqref{estred:anisotropic:def} satisfies the
estimator reduction~\eqref{estred:eq:estred} with $\alpha_\ell:=\c{opt:estred:aux}\norm{\widehat
\Phi_{\ell+1}-\widehat\Phi_\ell}{\widetilde H^{-1/2}(\Gamma)}^2$. The constants 
$q_{\rm est}$ depends only on $\theta$ and $\tau$, while $\c{opt:estred:aux}$ depends
additionally on $\Gamma$, the polynomial degree $p$, and the K-mesh constant $K$.
\end{mylemma}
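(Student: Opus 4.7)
The plan is to mimic the isotropic proof of Lemma~\ref{estred:lem:weaksing}, replacing the local mesh-size $h_T$ by the inscribed-radius $\rho_T$ and exploiting the specific structure of the coarse basis functions $\{\Psi_{T,i}\}_{i=1}^4$ and the rules (R1)--(R3) to track the reduction of $(1-\pi_\ell^0)\widehat\Phi_\ell$ under refinement. To start, I would split by the triangle inequality
\begin{align*}
 \eta_{\ell+1}
 \le &\norm{\rho_{\ell+1}^{1/2}(1-\pi_{\ell+1}^0)\widehat\Phi_\ell}{L_2(\Gamma)}
 \\&+ \norm{\rho_{\ell+1}^{1/2}(1-\pi_{\ell+1}^0)(\widehat\Phi_{\ell+1}-\widehat\Phi_\ell)}{L_2(\Gamma)}.
\end{align*}
The second summand is treated as the perturbation: using $\rho_{\ell+1}\le h_{\ell+1}$ together with the best-approximation property of $\pi_{\ell+1}^0$ and the $K$-mesh version of the inverse estimate from Lemma~\ref{lem:Pp:invest} (cf.~\cite{ghs05}, whose constant depends only on the uniformly bounded $K$), it is bounded by $\c{opt:estred:aux}\norm{\widehat\Phi_{\ell+1}-\widehat\Phi_\ell}{\widetilde H^{-1/2}(\Gamma)}$, contributing to $\alpha_\ell$.

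Next, I would elementwise analyse the first summand. For $T\in\mesh_\ell\cap\mesh_{\ell+1}$ (non-refined), the projection and $\rho$ agree on $T$, so the contribution equals $\est{\ell}{T}^2$. For a generic refined $T\in\mesh_\ell\setminus\mesh_{\ell+1}$, monotonicity of $\rho_{T'}\le\rho_T$ for sons $T'\subseteq T$ combined with $\pi_{\ell+1}^0$ being a better $L_2$-approximator than $\pi_\ell^0$ gives the trivial bound $\sum_{T'\subseteq T}\rho_{T'}\norm{(1-\pi_{\ell+1}^0)\widehat\Phi_\ell}{L_2(T')}^2\le\est{\ell}{T}^2$; this handles elements that are refined only to satisfy (R4)--(R5). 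The crucial step is to improve this bound on marked $T\in\MM_\ell$ by using the expansion $\widehat\Phi_\ell|_T=\sum_{i=1}^4 c_{T,i}\Psi_{T,i}/\norm{\Psi_{T,i}}{L_2(T)}$ (an orthogonal basis of $\Pp^0(\widehat\mesh_\ell|_T)$), which yields $\norm{(1-\pi_\ell^0)\widehat\Phi_\ell}{L_2(T)}^2=c_{T,2}^2+c_{T,3}^2+c_{T,4}^2$.

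The hard part is verifying that, under each of the three refinement choices, enough of this residual mass is annihilated by $\pi_{\ell+1}^0$. Inspecting Figure~\ref{estred:fig:ani}: if (R3) applies, the sons of $T$ coincide with the support of a $\widehat\mesh_\ell$-element, so $(1-\pi_{\ell+1}^0)\widehat\Phi_\ell|_T=0$; if (R1) is triggered, then $\Psi_{T,4}$ is constant on each vertical son, while $\Psi_{T,2}$ and $\Psi_{T,3}$ still have vanishing mean on each vertical son, whence $\norm{(1-\pi_{\ell+1}^0)\widehat\Phi_\ell}{L_2(T)}^2=c_{T,2}^2+c_{T,3}^2$; the analogous identity with $c_{T,2}^2+c_{T,4}^2$ holds for (R2). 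The defining criterion of (R1), $c_{T,2}^2+c_{T,3}^2\le\tau/(1-\tau)\,c_{T,4}^2$, is equivalent to $c_{T,2}^2+c_{T,3}^2\le\tau(c_{T,2}^2+c_{T,3}^2+c_{T,4}^2)$, and the same manipulation works for (R2); combined with $\rho_{T'}\le\rho_T$ this yields
\[
\sum_{T'\subseteq T}\rho_{T'}\norm{(1-\pi_{\ell+1}^0)\widehat\Phi_\ell}{L_2(T')}^2
\le \tau\,\est{\ell}{T}^2
\quad\text{for all }T\in\MM_\ell.
\]

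Finally, summing the three cases and invoking the bulk-chasing criterion~\eqref{opt:bulkchasing} gives
\begin{align*}
 \norm{\rho_{\ell+1}^{1/2}(1-\pi_{\ell+1}^0)\widehat\Phi_\ell}{L_2(\Gamma)}^2
 &\le \est{\ell}{}^2-(1-\tau)\!\!\sum_{T\in\MM_\ell}\!\!\est{\ell}{T}^2
 \\&\le \bigl(1-\theta(1-\tau)\bigr)\,\est{\ell}{}^2,
\end{align*}
so that after the elementary inequality $(a+b)^2\le q_{\rm est}a^2/q + \alpha^2/(1-q)$ with a suitable $q\in(0,1)$ the claimed estimator reduction~\eqref{estred:eq:estred} follows with a contraction constant depending only on $\theta$ and $\tau$. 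The main technical burden is the anisotropic inverse estimate for $\Pp^0(\mesh_{\ell+1})$ under the $K$-mesh condition; everything else is bookkeeping in the orthogonal basis of Figure~\ref{estred:fig:ani} together with the refinement criteria.
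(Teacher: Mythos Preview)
The paper does not give its own proof but refers to~\cite{fp08,afp12}; your sketch is correct and follows precisely the argument there. The triangle-inequality split, the $K$-mesh inverse estimate from~\cite{ghs05} for the perturbation term, and the orthogonal-basis computation on marked elements (showing that each of (R1)--(R3) reduces $\norm{(1-\pi_{\ell+1}^0)\widehat\Phi_\ell}{L_2(T)}^2$ to at most $\tau\,\norm{(1-\pi_\ell^0)\widehat\Phi_\ell}{L_2(T)}^2$, combined with $\rho_{T'}\le\rho_T$) are exactly the ingredients used in~\cite{afp12}.
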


As before, Lemma~\ref{estred:lem:estreddef} implies convergence of ABEM.

\begin{myproposition}\label{estred:prop:anisotropic}
Algorithm~\ref{opt:algorithm} guarantees convergence\linebreak $\lim_{\ell\to\infty}\est{\ell}{}=0$ of the modified $(h-h/2)$-type
estimator $\est{\ell}{}$ from~\eqref{estred:anisotropic:def} for
anisotropic mesh refinement.
\end{myproposition}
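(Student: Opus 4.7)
The plan is to mimic the pattern already established for the isotropic cases (Propositions~\ref{estred:prop:aprioriweaksing} and~\ref{estred:prop:aprioriweaksingdata}): combine the estimator reduction of Lemma~\ref{estred:lem:weaksingani} with the estimator reduction principle of Lemma~\ref{estred:lem:estreddef}. The only thing one actually has to prove here is that the perturbation
\[
  \alpha_\ell = \c{opt:estred:aux}\,\norm{\widehat\Phi_{\ell+1}-\widehat\Phi_\ell}{\widetilde H^{-1/2}(\Gamma)}^2
\]
tends to zero; the contraction part of~\eqref{estred:eq:estred} is already in Lemma~\ref{estred:lem:weaksingani}, and Lemma~\ref{estred:lem:estreddef} then finishes the argument immediately.

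First, I would verify that the sequence of enriched spaces $\widehat\XX_\ell:=\Pp^0(\widehat\mesh_\ell)$ is nested, i.e., $\widehat\XX_\ell\subseteq\widehat\XX_{\ell+1}$. Since $\mesh_{\ell+1}\in\refine(\mesh_\ell)$ and $\widehat\mesh_\ell$ is the uniform refinement of $\mesh_\ell$, the element-wise properties~\eqref{dp1:estred}--\eqref{dp2:estred}, together with the anisotropic refinement rules (R1)--(R5) used to generate $\mesh_{\ell+1}$, imply $\widehat\mesh_{\ell+1}\in\refine(\widehat\mesh_\ell)$ in the sense of Section~\ref{section:estred:meshrefinementprelim}. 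In particular, $\Pp^0(\widehat\mesh_\ell)\subseteq\Pp^0(\widehat\mesh_{\ell+1})$.

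Second, with nestedness in hand, Lemma~\ref{estred:lem:convortho} applied to the sequence $(\widehat\XX_\ell)_{\ell\in\N_0}\subset\widetilde H^{-1/2}(\Gamma)$ (with the continuous and elliptic bilinear form $\form{\cdot}{\cdot}=\dual{V\cdot}{\cdot}_\Gamma$) provides an a~priori limit $\widehat\Phi_\infty\in\widetilde H^{-1/2}(\Gamma)$ such that
\[
  \lim_{\ell\to\infty}\norm{\widehat\Phi_\infty-\widehat\Phi_\ell}{\widetilde H^{-1/2}(\Gamma)}=0.
\]
By the triangle inequality, this yields
\[
  \norm{\widehat\Phi_{\ell+1}-\widehat\Phi_\ell}{\widetilde H^{-1/2}(\Gamma)}\to 0,
\]
hence $\alpha_\ell\to 0$ as $\ell\to\infty$. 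Then Lemma~\ref{estred:lem:estreddef} applied to the estimator reduction of Lemma~\ref{estred:lem:weaksingani} gives $\lim_{\ell\to\infty}\est{\ell}{}=0$, as claimed.

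The main conceptual subtlety, and the only point where this proof differs from the isotropic case, is the nestedness verification in the first step: in contrast to isotropic bisection, the anisotropic rules (R1)--(R3) produce either two or four sons, and one must ensure that the uniform refinement $\widehat\mesh_{\ell+1}$ still sees every element of $\widehat\mesh_\ell$ as a union of its own elements. The $K$-mesh property (R5) and the no-hanging-nodes-of-higher-order property (R4) are precisely what guarantees this, and I would use them to reduce the verification to a finite case distinction over the local refinement patterns of a marked rectangle, as in~\cite{fp08,afp12}. Once nestedness is established, everything else is a direct application of Lemma~\ref{estred:lem:convortho} and Lemma~\ref{estred:lem:estreddef}.
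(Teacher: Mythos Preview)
Your proposal is correct and follows exactly the approach the paper intends: the paper does not give an explicit proof but simply writes ``As before, Lemma~\ref{estred:lem:estreddef} implies convergence of ABEM,'' meaning one combines the estimator reduction of Lemma~\ref{estred:lem:weaksingani} with the a~priori convergence of Lemma~\ref{estred:lem:convortho} exactly as in Proposition~\ref{estred:prop:aprioriweaksing}. Your attribution in the last paragraph is slightly off, though: the nestedness $\Pp^0(\widehat\mesh_\ell)\subseteq\Pp^0(\widehat\mesh_{\ell+1})$ follows from the purely geometric compatibility of the rectangular splits (R1)--(R3) with uniform refinement, whereas (R4)--(R5) are needed for the inverse estimate underlying Lemma~\ref{estred:lem:weaksingani}, not for nestedness itself.
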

%\clearpage
%%%%%%%%%%%%%%%%%%%%%%%%%%%%%%%%%%%%%%%%%%%%%%%%%%%%%%%%%%%%%%%%%%%%%%%%%%%%%%%%%%%%%%%%%%%%%%%%%%%%%%%
%%%%%%%%%%%%%%%%%%%%%%%%%%%%%%%%%%%%%%%%%%%%%%%%%%%%%%%%%%%%%%%%%%%%%%%%%%%%%%%%%%%%%%%%%%%%%%%%%%%%%%%
\section{Mesh refinement}\label{section:meshrefinement}
%%%%%%%%%%%%%%%%%%%%%%%%%%%%%%%%%%%%%%%%%%%%%%%%%%%%%%%%%%%%%%%%%%%%%%%%%%%%%%%%%%%%%%%%%%%%%%%%%%%%%%%
%%%%%%%%%%%%%%%%%%%%%%%%%%%%%%%%%%%%%%%%%%%%%%%%%%%%%%%%%%%%%%%%%%%%%%%%%%%%%%%%%%%%%%%%%%%%%%%%%%%%%%%
When it comes to the mathematical proof of optimal convergence rates
of ABEM (Section~\ref{section:convergence}), it is
clear that this requires certain properties of the mesh refinement which
go beyond the elementary properties from Section~\ref{section:estred:meshrefinementprelim}. While
those are sufficient to prove plain convergence of ABEM by means of the 
estimator reduction principle from Section~\ref{section:estred}, they formally do not prevent that marking of
\emph{one single} element $\MM_\ell=\{T\}\subset\TT_\ell$ results in a 
refinement $\TT_{\ell+1}$, where \emph{all} elements have been refined, i.e., 
$\TT_\ell\backslash\TT_{\ell+1}=\TT_\ell$. Moreover, the contemporary
mathematical proofs of optimal convergence rates require certain additional
properties. 

%%%%%%%%%%%%%%%%%%%%%%%%%%%%%%%%%%%%%%%%%%%%%%%%%%%%%%%%%%%%%%%%%%%%%%%%%%%%%%%%%%%%%%%%%%%%%%%%%%%%%%%
\subsection{General notation}
%%%%%%%%%%%%%%%%%%%%%%%%%%%%%%%%%%%%%%%%%%%%%%%%%%%%%%%%%%%%%%%%%%%%%%%%%%%%%%%%%%%%%%%%%%%%%%%%%%%%%%%
Suppose a fixed mesh refinement strategy $\refine(\cdot)$ and an admissible
mesh $\TT$, i.e., the mesh refinement $\refine(\cdot)$ can be used to refine $\TT$
and provides a refined admissible mesh.
For $\MM\subseteq\TT$ being a set of marked elements, we write
$\TT' = \refine(\TT,\MM)$ if $\TT'$ is the coarsest admissible mesh which
is obtained from $\TT$ by refinement of at least the marked elements $\MM$,
i.e., $\TT\backslash\TT'\supseteq\MM$. For some admissible mesh $\TT$,
we write $\TT'\in\refine(\TT)$, if there exists some $k\in\N_0$ and
sets of marked elements $\widetilde\MM_0,\ldots,\widetilde\MM_{k-1}$ as 
well as meshes $\widetilde\mesh_0,\widetilde\mesh_1,\ldots,\widetilde\mesh_k$ such that $\widetilde \MM_j\subseteq \widetilde\mesh_j$ and 
$\widetilde\mesh_{j+1}=\refine(\widetilde\mesh_j,\widetilde\MM_j)$ 
for all $j=0,\ldots,k-1$, with $\mesh=\widetilde\mesh_0$ and $\mesh'=\widetilde\mesh_k$.

%%%%%%%%%%%%%%%%%%%%%%%%%%%%%%%%%%%%%%%%%%%%%%%%%%%%%%%%%%%%%%%%%%%%%%%%%%%%%%%%%%%%%%%%%%%%%%%%%%%%%%%
\subsection{Optimality conditions on mesh refinement}\label{section:meshopt}
%%%%%%%%%%%%%%%%%%%%%%%%%%%%%%%%%%%%%%%%%%%%%%%%%%%%%%%%%%%%%%%%%%%%%%%%%%%%%%%%%%%%%%%%%%%%%%%%%%%%%%%
Besides the naive properties from Section~\ref{section:estred:meshrefinementprelim}, the contemporary 
mathematical proofs of optimal convergence rates require certain additional
properties of the mesh refinement. Suppose that $\TT_0$ is a given admissible
initial mesh for the adaptive algorithm and that $\TT_\ell$ for $\ell\ge1$ is 
obtained inductively by $\TT_{\ell} = \refine(\TT_{\ell-1},\MM_{\ell-1})$. 
Let $\eta_\ell$ be the a~posteriori error estimator used to mark elements 
$\MM_\ell\subseteq\TT_\ell$ for refinement. Then, the analysis of 
Section~\ref{section:convergence} relies on the following three properties
of $\refine(\cdot)$, where $\#(\cdot)$ denotes the number of elements of a finite
set:
\begin{itemize}
\item[\Large$\bullet$]
{\bf Bounded shape-regularity:}
The mesh refinement strategy has to ensure that all estimator related constants
in, e.g., reliability or efficiency estimates~\eqref{intro:reliable}--\eqref{intro:efficient}, remain uniformly bounded as $\ell\to\infty$.
\item[\Large$\bullet$]
{\bf Mesh-closure estimate:}
The number of refined elements can (at least in average and up to some multiplicative constant) 
be controlled by the number of marked elements in the sense that
\begin{align}\label{dp:meshclosure}
 \#\TT_{\ell+1} - \#\mesh_0 \leq \c{nvb}\sum_{k=0}^{\ell}\#\MM_k
\end{align}
for some constant $\c{nvb}>0$.
\item[\Large$\bullet$]
{\bf Overlay estimate:}
To compare the adaptively generated meshes $\TT_\ell$ with some (purely theoretical) optimal
mesh, one requires that for all $\TT_\star\in\refine(\TT_0)$ exists a coarsest common
refinement $\TT_\star\oplus\TT_\ell$ of both $\TT_\star$ and $\TT_\ell$ such that
\begin{align}\label{dp:overlay}
\#(\TT_\star\oplus\TT_\ell) \le \#\TT_\star + \#\TT_\ell - \#\TT_0.
\end{align}
\end{itemize}
Different methods for local mesh refinement are available in the literature. To
the best of our knowledge, only three strategies are available which ensure these
properties: for 2D BEM, the extended 1D bisection algorithm from~\cite{affkp13-A};
for 3D BEM, the 2D NVB algorithm\footnote{newest vertex bisection (NVB).}, see e.g.~\cite{steve08,kpp13},
as well as red-refinement with hanging nodes of maximum order $1$, 
see~\cite{bn2010}. In the following, we shall discuss the extended 1D bisection
algorithm from~\cite{affkp13-A} as well as the results of~\cite{kpp13} on 2D
NVB.

We close this section with some historical remarks. The mesh-closure estimate~\eqref{dp:meshclosure}
has first been proved in~\cite{bdd04} for 2D NVB and later
for NVB and general dimension $d\ge2$ in~\cite{steve08}. Either work requires an additional
assumption on the initial mesh $\TT_0$. For 2D, this assumption has recently
been removed in~\cite{kpp13}. The overlay estimate~\eqref{dp:overlay} first appeared in~\cite{steve07}
for 2D NVB. In~\cite{ckns}, the proof is generalized to NVB in arbitrary dimension $d\ge2$.
Bisection in 1D has only been considered and analyzed in~\cite{affkp13-A}. 
Even though the above mesh refinement strategies seem fairly arbitrary, to the
best of the authors' knowledge, NVB is the only refinement strategy for $d\ge2$
known to satisfy~\eqref{dp:meshclosure}--\eqref{dp:overlay}. Even the simple
red-green-blue refinement, see e.g.~\cite{c04}, fails to satisfy~\eqref{dp:overlay},
while the mesh-closure estimate~\eqref{dp:meshclosure} can still be proved,
see~\cite{kpp13} and the references therein.

%%%%%%%%%%%%%%%%%%%%%%%%%%%%%%%%%%%%%%%%%%%%%%%%%%%%%%%%%%%%%%%%%%%%%%%%%%%%%%%%%%%%%%%%%%%%%%%%%%%%%%%
\subsection{Extended 1D bisection for 2D BEM}
%%%%%%%%%%%%%%%%%%%%%%%%%%%%%%%%%%%%%%%%%%%%%%%%%%%%%%%%%%%%%%%%%%%%%%%%%%%%%%%%%%%%%%%%%%%%%%%%%%%%%%%

In 2D BEM, the constants in the a~priori or a~posteriori error analysis usually
depend on a uniform upper bound $\sigma>0$ of the shape-regularity constant
(or: bounded local mesh-ratio)
\begin{align}\label{dp:meshration}
 \frac{\diam(T)}{\diam(T')}
 \le \sigma
 \text{ for all neighbors }T,T'\in\TT_\ell\text{ and }\ell\ge0.
\end{align}
Since this property is not guaranteed by simple 1D bisection algorithms, it has
to be ensured explicitly. With the shape-regularity constant of the initial mesh
\begin{align*}
 \sigma_{\TT_0} := \max\set{\frac{\diam(T)}{\diam(T')}}{T,T'\in\TT_0\text{ with }\overline T\cap \overline T'\neq\emptyset},
\end{align*}
we use the following algorithm from~\cite{affkp13-A}.

\begin{algorithm}[Extended 1D bisection]\label{alg:bisect1d}\ \linebreak
\def\UU{\mathcal U}
\textsc{Input}: local mesh-ratio $\sigma_{\TT_0}$, current mesh $\mesh_\ell$, and
  set of marked elements $\MM_\ell\subseteq\mesh_\ell$.\\
\textsc{Output}: refined mesh $\mesh_{\ell+1}$.
\begin{itemize}
 \item[\rm(o)] Set counter $k:=0$ and define $\MM_\ell^{(0)}:=\MM_\ell$ .
 \item[\rm(i)] Define $\UU^{(k)}:=\bigcup_{T\in\MM_\ell^{(k)}}\set{T'\in\TT_\ell\backslash\MM_\ell^{(k)}
\text{ neighbor of }T}{\diam(T')>\sigma_{\TT_0}\,\diam(T)}$ and $\MM_\ell^{(k+1)}:=\MM_\ell^{(k)}\cup\UU^{(k)}$.

    \item[\rm(ii)] If $\MM_\ell^{(k)}\subsetneqq\MM_\ell^{(k+1)}$, increase counter $k\mapsto k+1$ and goto (i).
    \item[\rm(iii)] Otherwise bisect all elements $T\in\MM^{(k)}$ to obtain the new mesh $\mesh_{\ell+1}$.
    \end{itemize}
\end{algorithm}

The following result is proved in~\cite[Thm.~2.3]{affkp13-A}.

\begin{theorem}
  Suppose that $\mesh_0$ is a partition of $\Gamma$, and\linebreak $\left( \mesh_\ell \right)_{\ell\in\N_0}$ is generated by
  Algorithm~\ref{alg:bisect1d}, i.e., for all $\ell\in\N_0$ holds
  \begin{align*}
    \mesh_{\ell+1} = \refine\left( \mesh_\ell, \MM_\ell \right).
  \end{align*}
  Then, bounded shape regularity~\eqref{dp:meshration} with
$\sigma = 2\,\sigma_{\TT_0}$ is guaranteed.
As a consequence of bisection and~\eqref{dp:meshration}, only
finitely many shapes of node and element patches can occur.
 Moreover, the mesh closure estimate~\eqref{dp:meshclosure}
as well as the overlay estimate~\eqref{dp:overlay} are valid, where the constant 
$\c{nvb}>0$ depends only on $\TT_0$. Finally, the coarsest common refinement
$\TT_\star\oplus\TT_\ell$ of $\TT_\ell,\TT_\star\in\refine(\TT_0)$ is the
overlay
\begin{align}\label{dp:def:overlay}
\begin{split}
 \TT_\star\oplus\TT_\ell
 = \big\{T\in\TT_\star\cup\TT_\ell\,:\,&\forall T'\in\TT_\star\cup\TT_\ell\\
 &\quad(T'\subseteq T\,\Rightarrow T'=T)\big\},
\end{split}
\end{align}
i.e., the union of the locally finest elements.\qed
\end{theorem}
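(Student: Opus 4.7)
The plan is to establish the four assertions in turn, with the mesh-closure estimate as the central technical obstacle.

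First I would prove the shape regularity bound $\sigma = 2\sigma_{\TT_0}$ by induction on $\ell \geq 0$, with $\TT_0$ itself satisfying the bound since $\sigma_{\TT_0} \leq 2\sigma_{\TT_0}$. For the inductive step, consider any two neighbors $T, T' \in \TT_{\ell+1}$. If both elements arose from bisection of a parent in $\TT_\ell$ (or neither was bisected), the ratio bound is inherited from $\TT_\ell$. The nontrivial case is that $T$ is a child of a bisected $\widetilde T \in \MM_\ell^{(k)}$ while $T' \in \TT_\ell \setminus \MM_\ell^{(k)}$ survived unchanged. The stopping criterion of Algorithm~\ref{alg:bisect1d} (emptiness of $\UU^{(k)}$) gives $\diam(T') \leq \sigma_{\TT_0}\diam(\widetilde T)$, hence
\[
\frac{\diam(T')}{\diam(T)} = \frac{2\diam(T')}{\diam(\widetilde T)} \leq 2\sigma_{\TT_0},
\]
while the reverse ratio $\diam(T)/\diam(T') = \diam(\widetilde T)/(2\diam(T'))$ is bounded by $\sigma_{\TT_0}$ via the inductive hypothesis. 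Finiteness of patch shapes follows at once: in 1D a node patch contains two elements and an element patch at most three, and their diameters lie in a set of finite cardinality modulo an overall scale because bisection only produces diameters of the form $2^{-j}\diam(T^0)$ with $T^0 \in \TT_0$ within the ratio envelope $2\sigma_{\TT_0}$.

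For the mesh-closure estimate~\eqref{dp:meshclosure}, I would adapt the Binev-Dahmen-DeVore style counting argument to 1D bisection with extension. The key observation is that the extension step~(i) of Algorithm~\ref{alg:bisect1d} can only enlarge $\MM_\ell^{(k)}$ by elements of strictly larger diameter, hence of strictly smaller bisection depth, than the elements that triggered their inclusion. Assigning to each element a generation $\gen(T) \in \N_0$ equal to its bisection depth below its ancestor in $\TT_0$, one introduces a potential functional of the form $\Lambda_\ell = \sum_{T \in \TT_\ell} c^{-\gen(T)}$ with a suitable $c > 1$ chosen using $\sigma_{\TT_0}$. The extension cascade within a single call of Algorithm~\ref{alg:bisect1d} then has bounded reach per triggered element, and a telescoping/amortized argument over $\ell$ distributes the closure cost linearly over $\sum_{k=0}^{\ell}\#\MM_k$, yielding~\eqref{dp:meshclosure} with $\c{nvb}$ depending only on $\TT_0$ and $\sigma_{\TT_0}$.

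For the overlay estimate, I would first show that the set on the right-hand side of~\eqref{dp:def:overlay} is an admissible mesh refining both $\TT_\ell$ and $\TT_\star$. The refinement property is immediate from the definition. Admissibility uses that for any two neighbors $T, T'$ in the overlay coming, respectively, from $\TT_\ell$ and $\TT_\star$, the element $T'' \in \TT_\ell$ containing $T'$ satisfies $\diam(T') \leq \diam(T'')$, and the pair $(T, T'')$ lies in the admissible mesh $\TT_\ell$; this transfers the shape-regularity bound with constant $2\sigma_{\TT_0}$ to the overlay. The cardinality bound then follows by an element-wise count over $T^0 \in \TT_0$: in 1D, the coarsest common refinement of two bisection partitions of $T^0$ into $n_\ell(T^0)$ and $n_\star(T^0)$ subintervals has exactly $n_\ell(T^0) + n_\star(T^0) - 1$ pieces, by counting interior endpoints. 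Summing over $\TT_0$ yields~\eqref{dp:overlay}.

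The hard part will be the mesh-closure estimate. A single marked element can spawn an extension cascade reaching elements that were never marked in any step of Algorithm~\ref{opt:algorithm}, and absorbing this overhead into a uniform linear bound in $\sum_k \#\MM_k$ is delicate. The use of the global constant $\sigma_{\TT_0}$ in the extension criterion, rather than a locally adaptive one, is precisely what caps the reach of each cascade and makes the generation-based potential argument compatible with a clean amortized estimate; choosing a weaker, drifting constant would spoil the linear bound.
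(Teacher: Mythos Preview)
The paper does not prove this theorem; it states the result and refers to \cite[Thm.~2.3]{affkp13-A} for the proof (note the \verb|\qed| immediately after the statement). So there is no in-paper argument to compare against.

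Your outline is a reasonable reconstruction of how such a proof goes. The induction for $\sigma = 2\sigma_{\TT_0}$ is correct, and your identification of the mesh-closure estimate as the delicate point is accurate; the BDD-style amortized generation argument you sketch is indeed the standard mechanism, though your description is still schematic and would need the actual potential and the per-step bound written out. One small slip in the overlay part: you write that the coarsest common refinement of two bisection partitions of $T^0$ into $n_\ell(T^0)$ and $n_\star(T^0)$ pieces has \emph{exactly} $n_\ell(T^0)+n_\star(T^0)-1$ pieces. For bisection meshes the interior breakpoints can coincide (e.g.\ $\TT_\ell=\{[0,\tfrac12],[\tfrac12,1]\}$ and $\TT_\star=\{[0,\tfrac14],[\tfrac14,\tfrac12],[\tfrac12,1]\}$ overlay to three elements, not four), so only the inequality $\le n_\ell(T^0)+n_\star(T^0)-1$ holds. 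That inequality is all that~\eqref{dp:overlay} requires, so the argument still goes through once you replace ``exactly'' by ``at most''.
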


\begin{figure}
\centering%
\psfrag{T1}[cc][cc][1][1]{\tiny{\color{red}$T_1$}}%
\psfrag{TT0}[cc][cc][1][1]{\tiny{$\TT_0$}}%
\psfrag{TT1}[cc][cc][1][1]{\tiny{$\TT_\ell$}}%
\psfrag{TT2}[cc][cc][1][1]{\tiny{$\TT_{\ell+1}$}}%
\psfrag{1}[cc][cc][1][1]{\tiny{$1$}}%
\psfrag{1/2}[cc][cc][1][1]{\tiny{$1/2$}}%
\psfrag{1/4}[cc][cc][1][1]{\tiny{$1/4$}}%
\psfrag{1/8}[cc][cc][1][1]{\tiny{$1/8$}}%
\psfrag{0}[cc][cc][1][1]{\tiny{$0$}}%
\includegraphics[width=.46\textwidth]{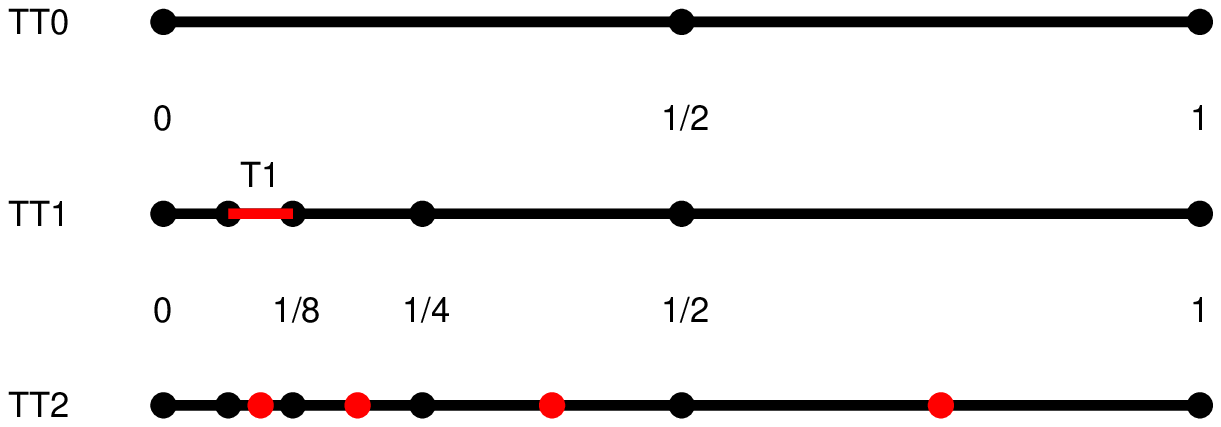}
\caption{For 1D bisection, refined elements $\TT_\ell\setminus \TT_{\ell+1}$ are bisected into two sons, whence $\#\MM_\ell\leq \#(\TT_\ell\setminus\TT_{\ell+1}) = \#\TT_{\ell+1}-\#\TT_\ell$. With $\sigma_{\TT_\ell}\le2\,\sigma_{\TT_0}$, the converse inequality $\#\TT_{\ell+1}-\#\TT_\ell\lesssim\#\MM_\ell$ cannot hold in general as the following elementary example proves: Let $\TT_0$ denote the partition of $[0,1]$ in two elements of length $1/2$, i.e., $\sigma_{\TT_0}=1$. Repeated marking of the leftmost elements of $\TT_0,\TT_1,\ldots,\TT_{\ell-1}$ generates the mesh $\TT_\ell$ with $\sigma_{\TT_\ell}=2$ and $\#\TT_\ell=\ell$. Marking the highlighted element $T_1 \in\TT_\ell$ results in the mesh $\TT_{\ell+1}:=\refine(\TT_\ell,\{T_1\})$,
where $\ell-1$ elements are refined to ensure $\sigma_{\TT_{\ell+1}}=2$.
Consequently, the number of additional refinements can be arbitrarily large,
and~\eqref{dp:counterexample1d} cannot hold in general.}
\label{fig:bisect1d}
\end{figure}

We note that, while the mesh-closure estimate~\eqref{dp:meshclosure} is true,
a stepwise variant
\begin{align}\label{dp:counterexample1d}
 \#\TT_{\ell+1}-\#\TT_\ell \le \c{nvb}\,\#\MM_\ell
 \quad\text{for all }\ell\in\N_0
\end{align}
cannot hold with an $\ell$-independent constant $\c{nvb}>0$. We refer to a 
simple counter example from~\cite{affkp13-A} which is also illustrated in
Fig.~\ref{fig:bisect1d}.
%%%%%%%%%%%%%%%%%%%%%%%%%%%%%%%%%%%%%%%%%%%%%%%%%%%%%%%%%%%%%%%%%%%%%%%%%%%%%%%%%%%%%%%%%%%%%%%%%%%%%%%
\subsection{2D newest vertex bisection for 3D BEM}
%%%%%%%%%%%%%%%%%%%%%%%%%%%%%%%%%%%%%%%%%%%%%%%%%%%%%%%%%%%%%%%%%%%%%%%%%%%%%%%%%%%%%%%%%%%%%%%%%%%%%%%

%------------------------------------------------------------------------------------------------------
\begin{figure*}[t]
  \centering
  \psfrag{T0}{}
  \psfrag{T1}{}
  \psfrag{T2}{}
  \psfrag{T3}{}
  \psfrag{T4}{}
  \psfrag{T12}{}
  \psfrag{T34}{}
  \includegraphics[width=35mm]{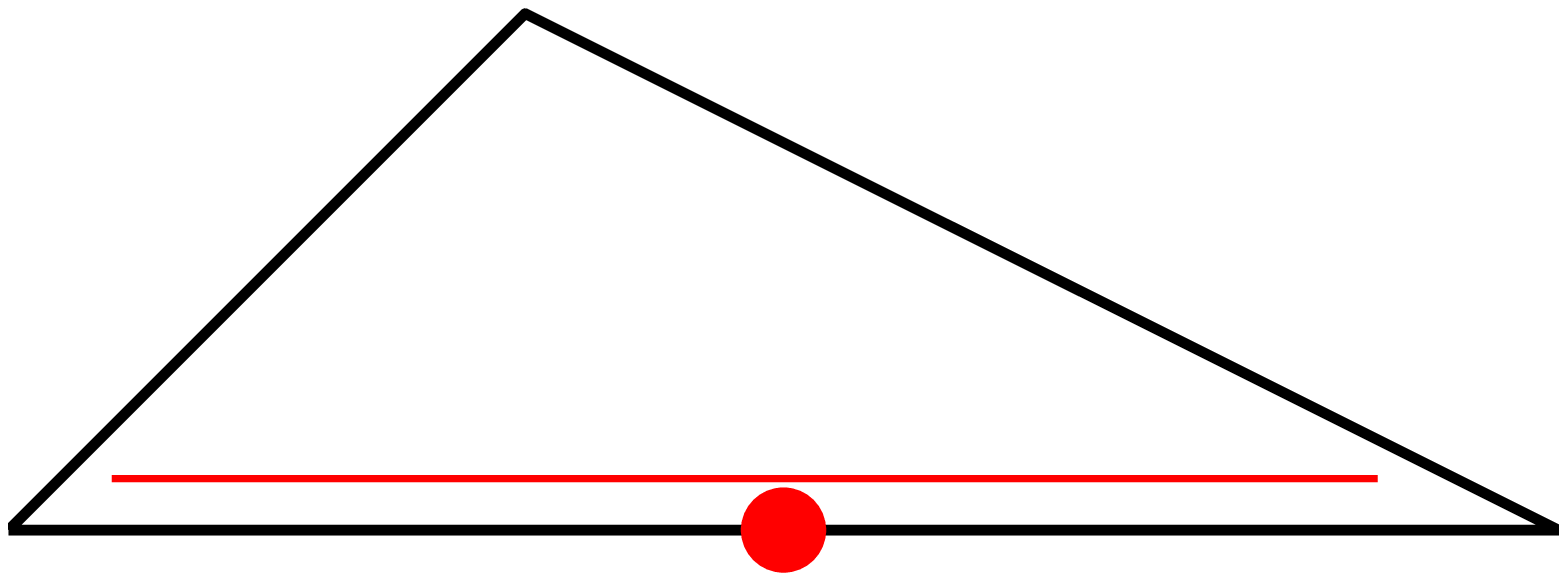} \quad
  \includegraphics[width=35mm]{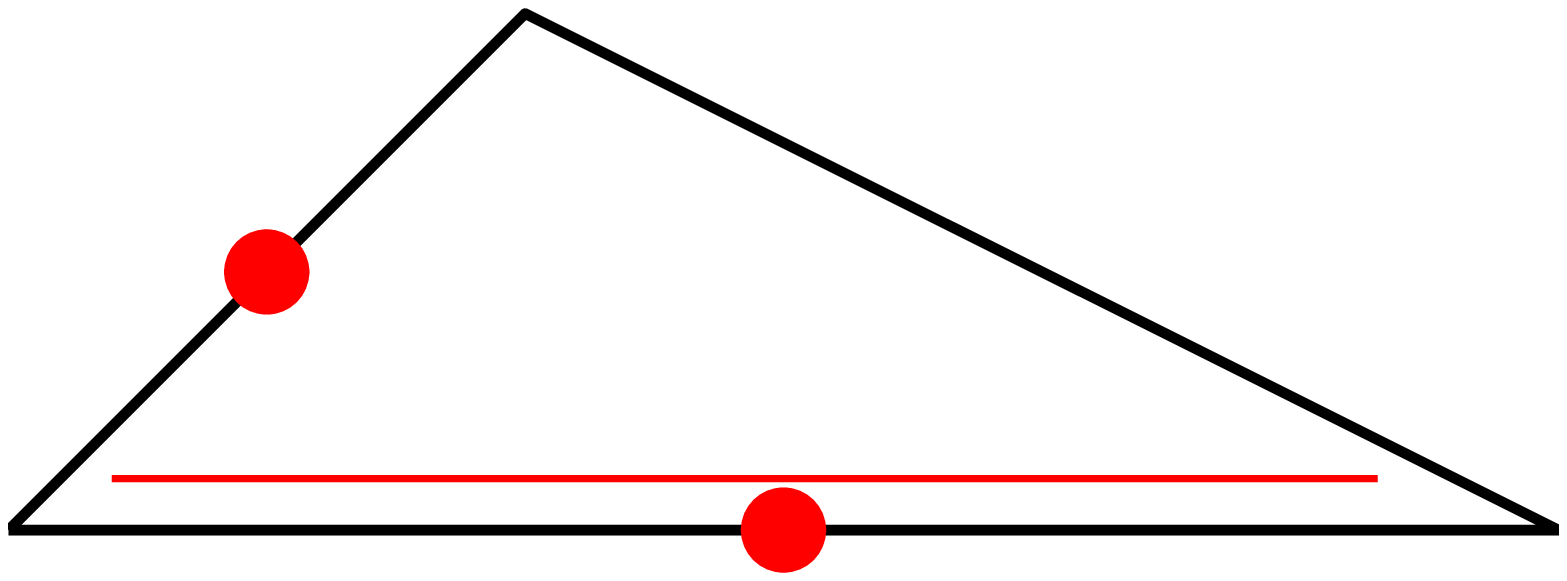} \quad
  \includegraphics[width=35mm]{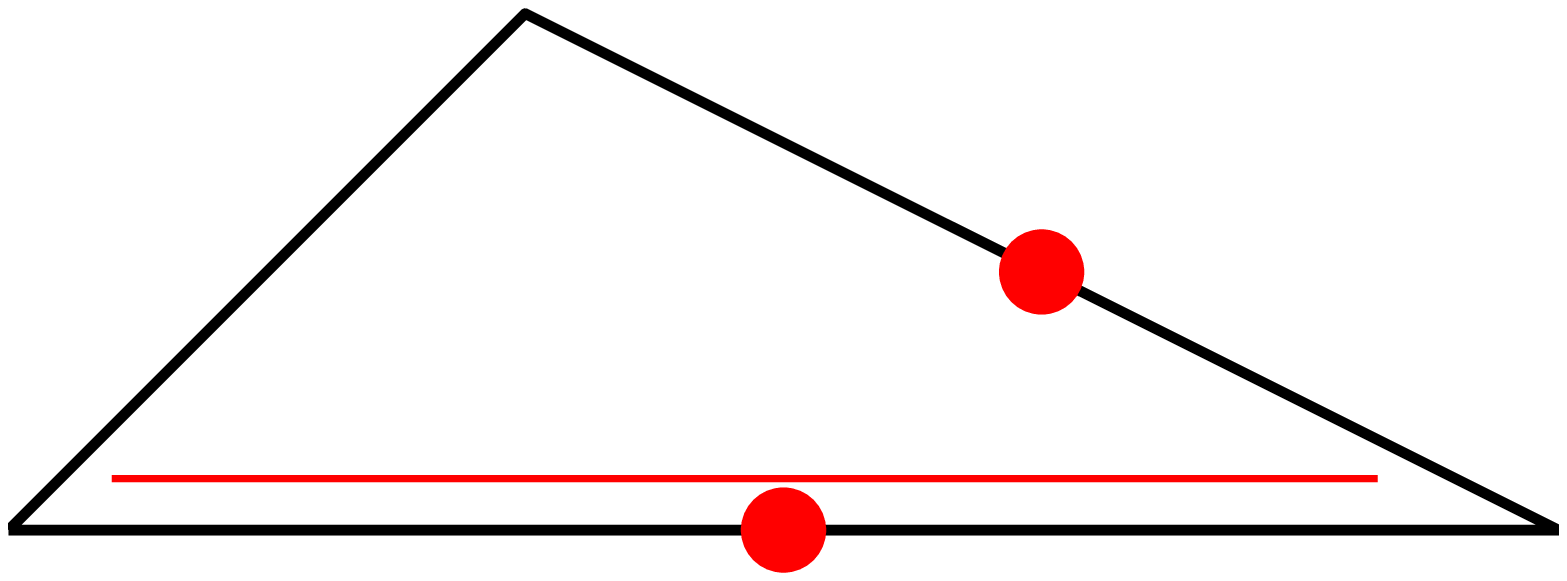} \quad
  \includegraphics[width=35mm]{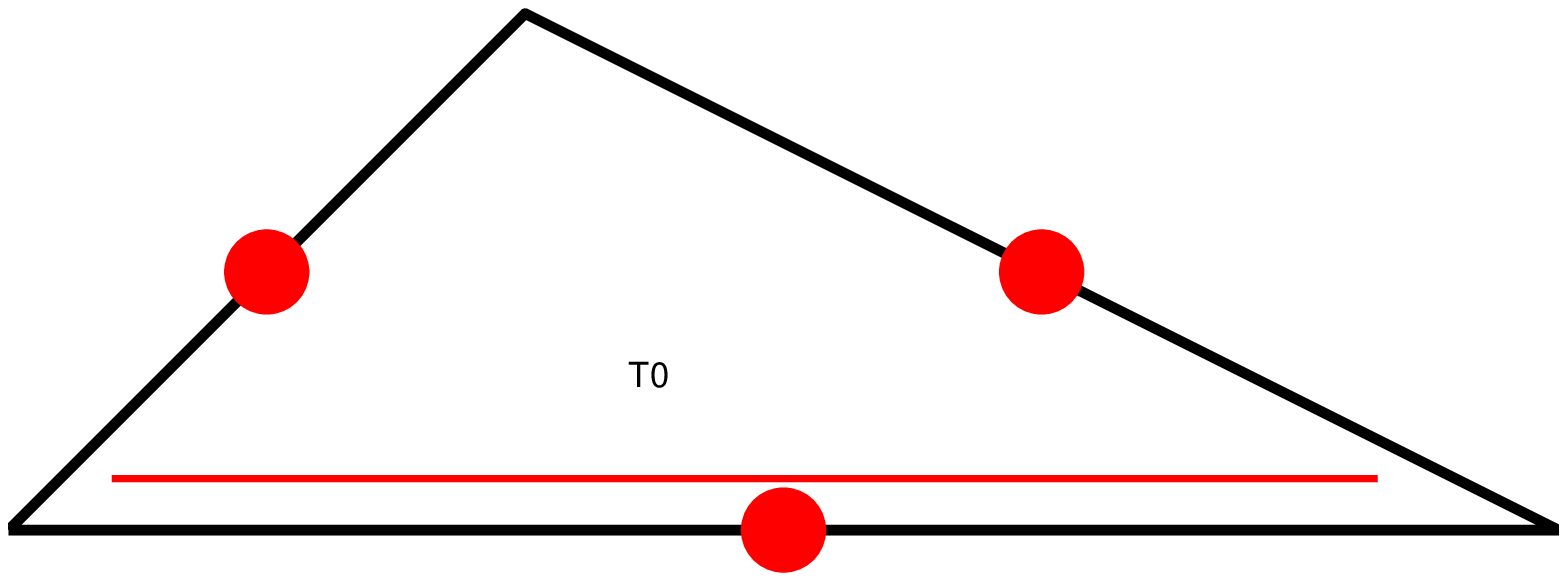} \\
  \includegraphics[width=35mm]{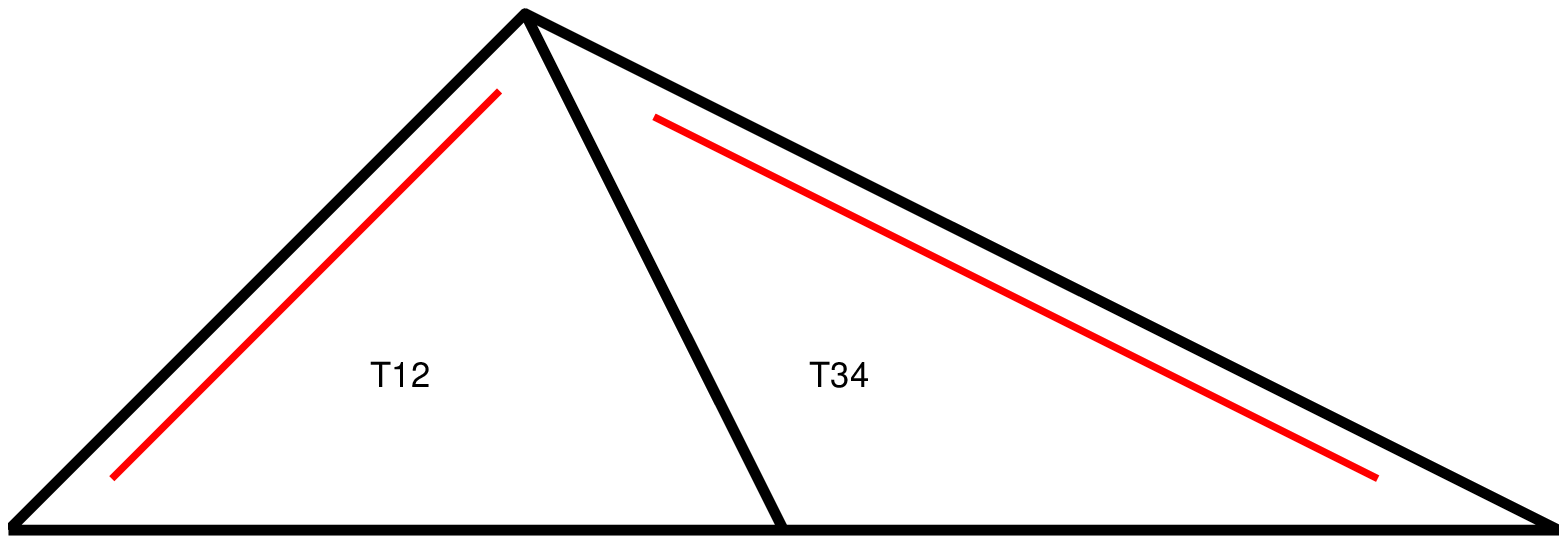} \quad
  \includegraphics[width=35mm]{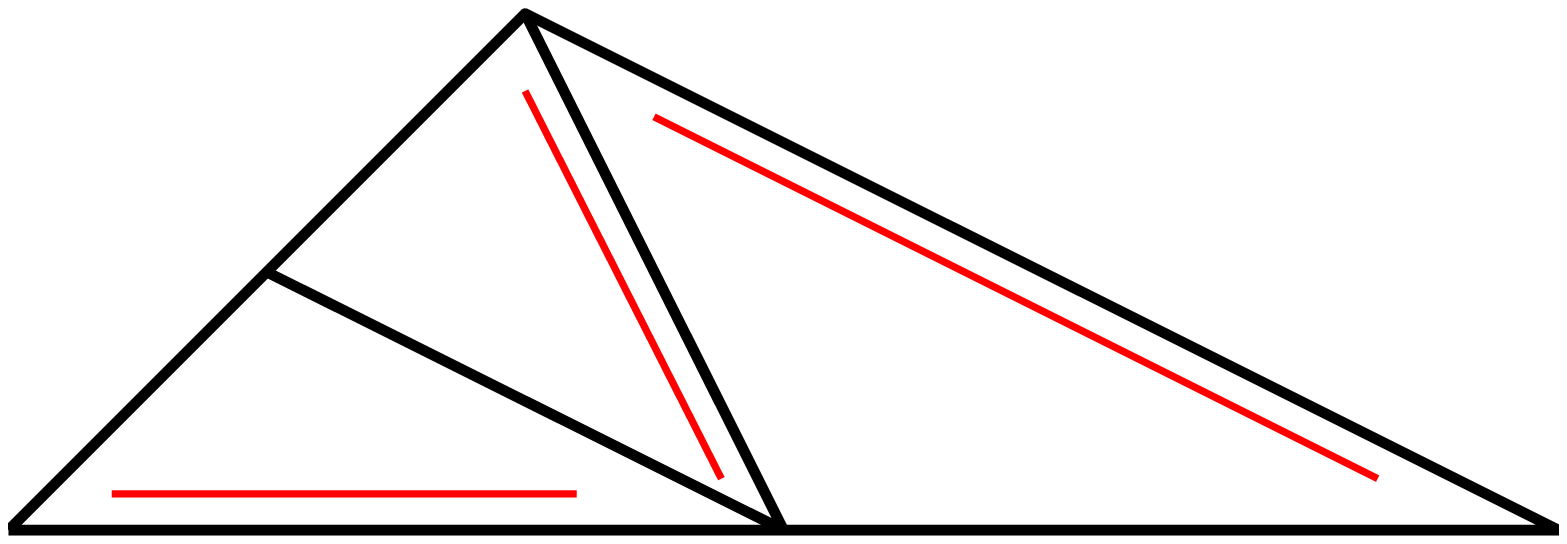}\quad
  \includegraphics[width=35mm]{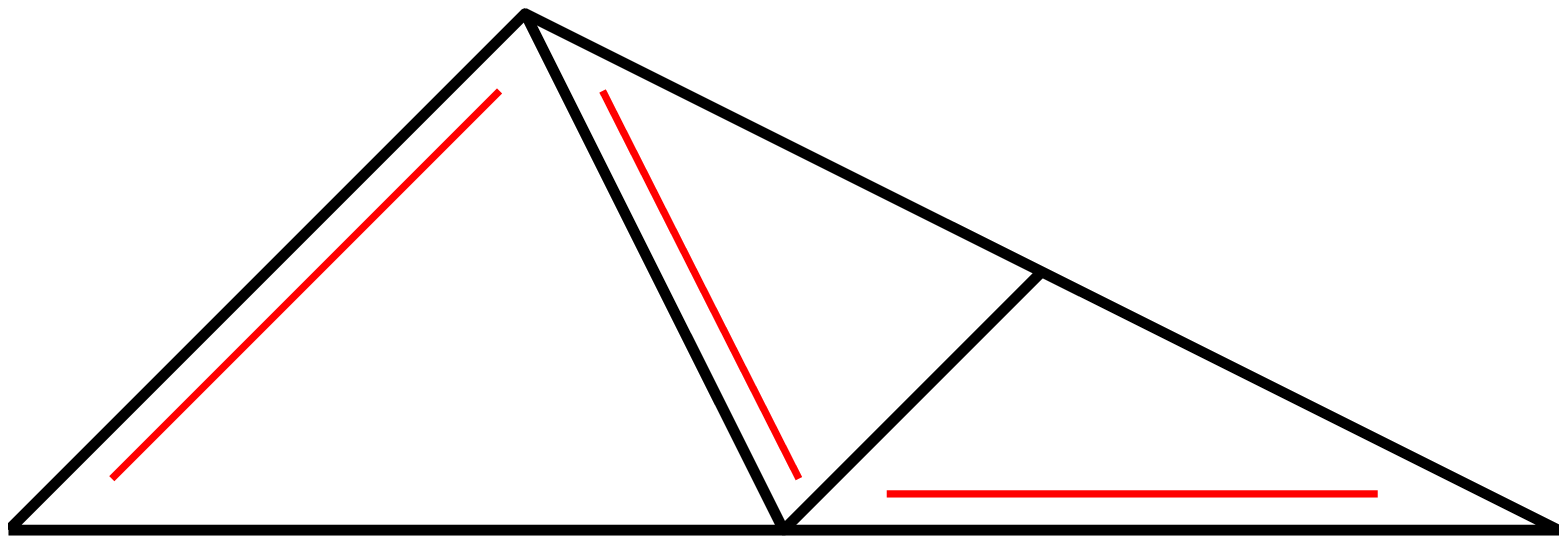}\quad
  \includegraphics[width=35mm]{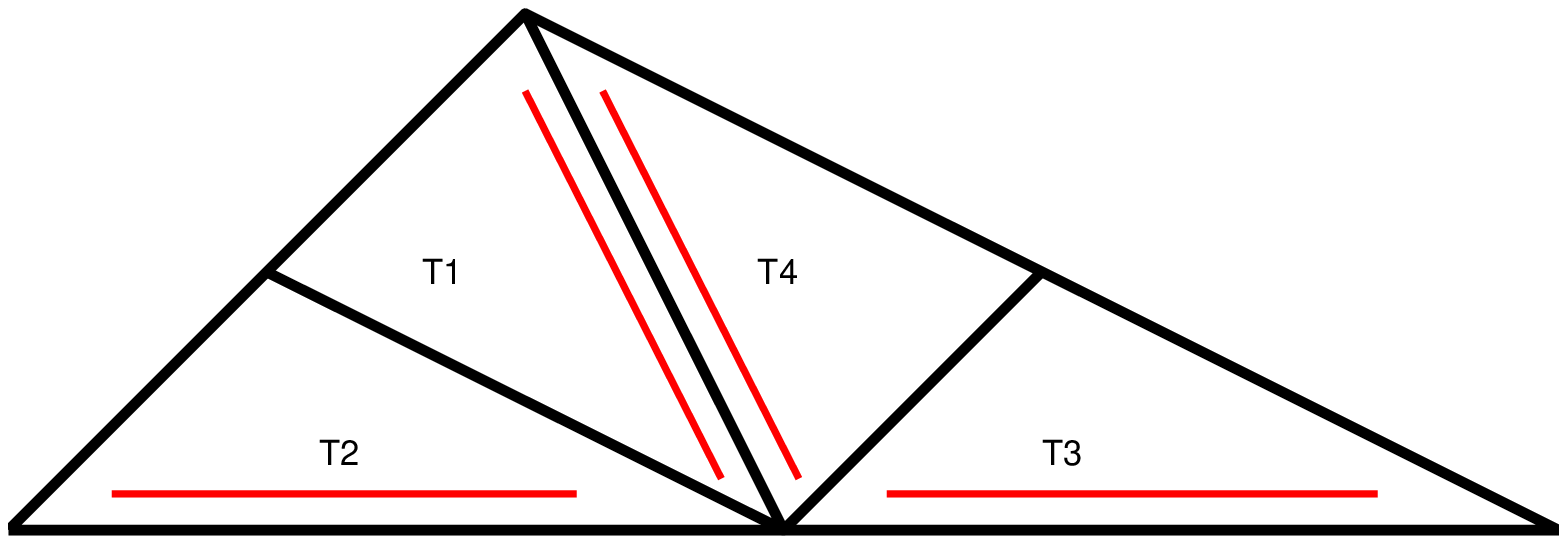}
  \caption{
  For each triangle $\el\in\mesh_\ell$, there is one fixed \emph{reference edge},
  indicated by the double line (left, top). Refinement of $\el$ is done by bisecting
  the reference edge, where its midpoint becomes a new node. The reference
  edges of the son triangles $\el'\in\mesh_{\ell+1}$ are opposite to this newest
  vertex (left, bottom).
  To avoid hanging nodes, one proceeds as follows:
  We assume that certain edges of $\el$, but at least the reference edge,
  are marked for refinement (top).
  Using iterated newest vertex bisection, the element is then split into
  $2$, $3$, or $4$ son triangles (bottom).
  If all elements are refined by three bisections (right, bottom), we obtain the
  so-called uniform bisec(3)-refinement which is denoted by $\widehat\mesh_\ell$.
  }
  \label{fig:nvb}
\end{figure*}

\begin{figure*}
 \centering
 \includegraphics[width=35mm]{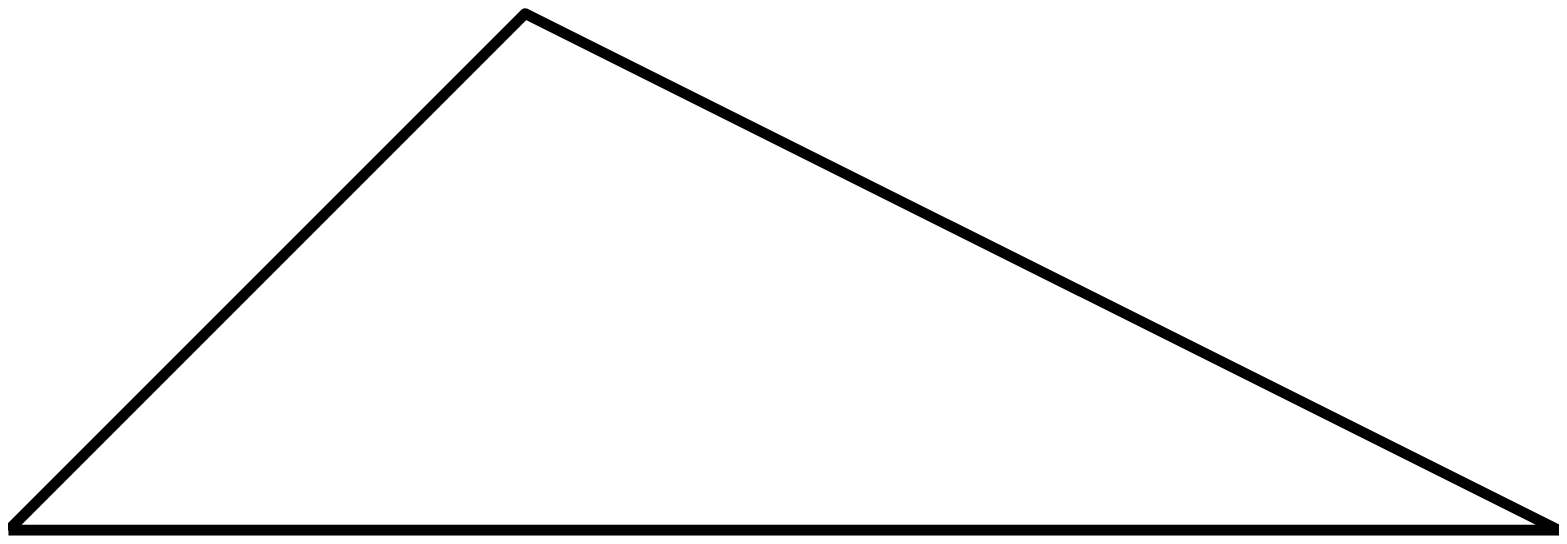}\quad
 \includegraphics[width=35mm]{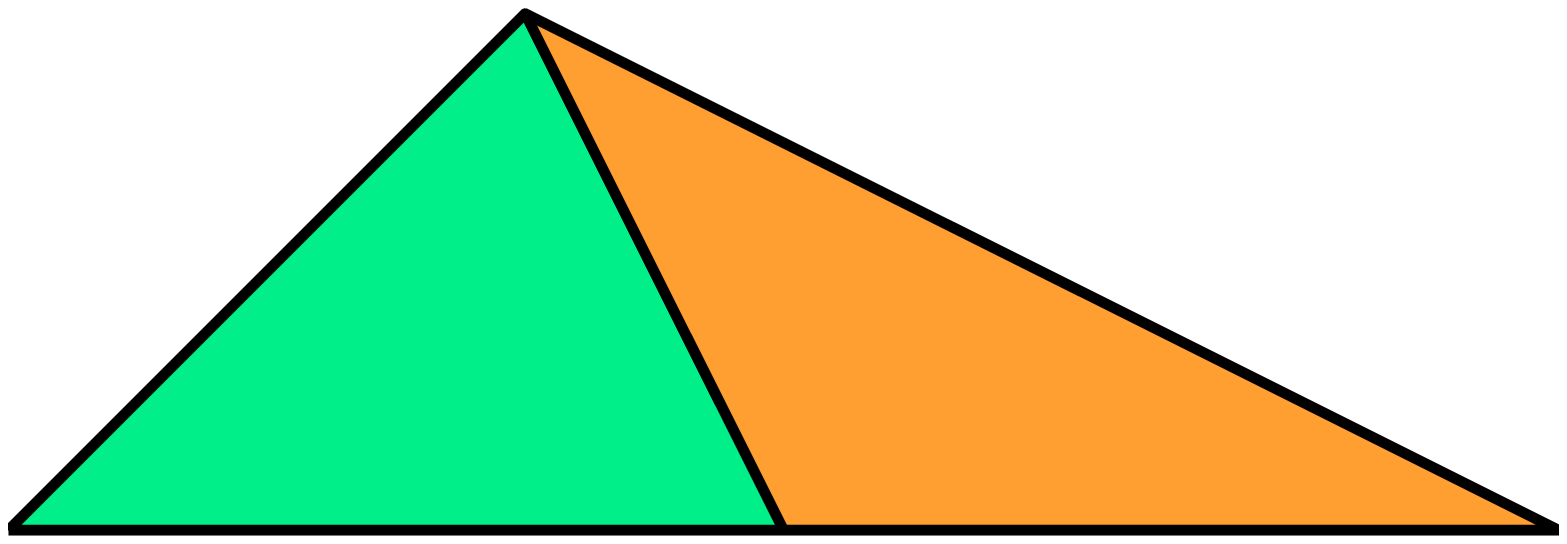}\quad
 \includegraphics[width=35mm]{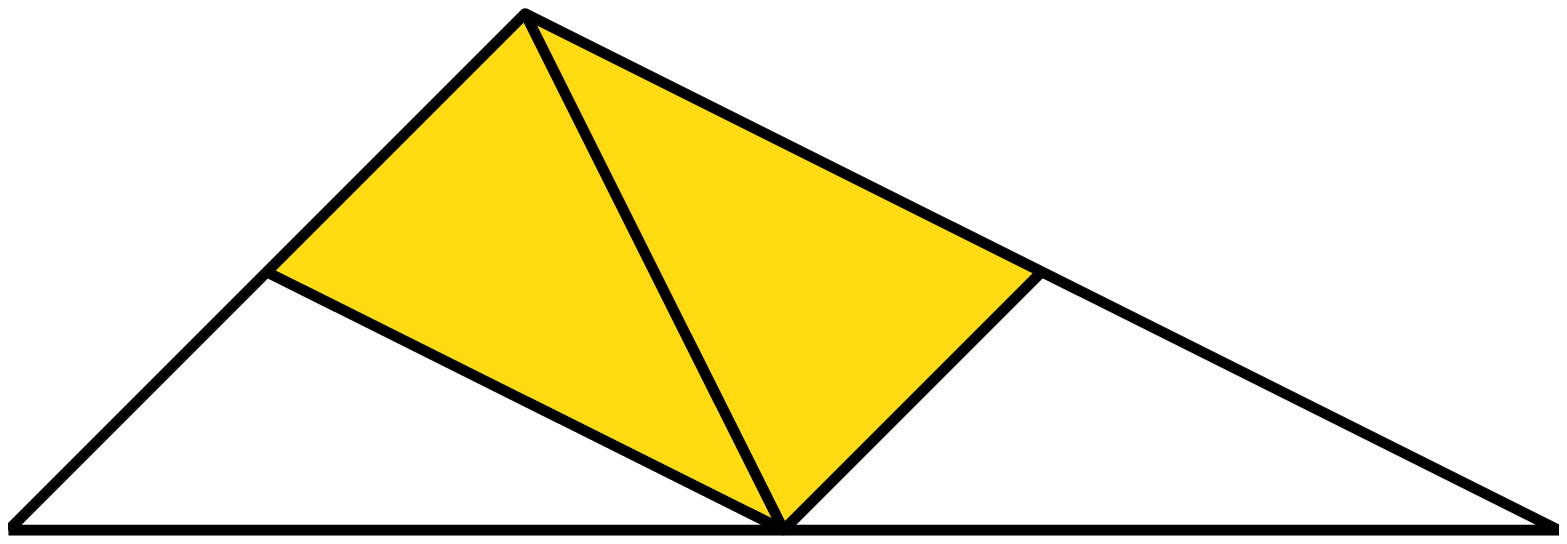}\quad
 \includegraphics[width=35mm]{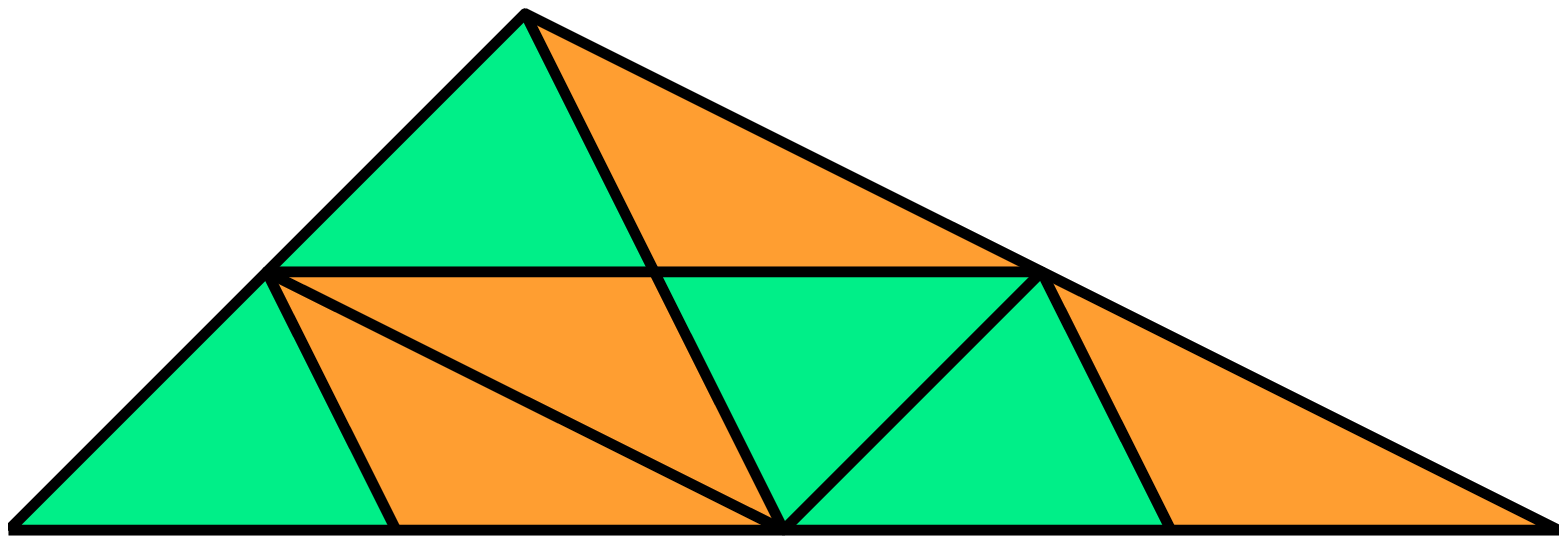}
 \caption{NVB refinement only leads to
 finitely many shapes of triangles for the family of all possible triangulations
 obtained by arbitrary newest vertex bisections. To see this,
 we start from a macro element (left), where the bottom edge is the
 reference edge. Using iterated newest vertex bisection, one observes
 that only four similarity classes of triangles occur, which
 are indicated by the coloring. After three steps of bisections
 (right), no additional similarity class appears.}
 \label{fig:nvb:equi}
\end{figure*}

%------------------------------------------------------------------------------------------------------
Denote by $\mesh_\ell$ a given mesh and by $\edges_\ell$ its edges. Suppose that
for each triangle $\el\in\mesh_\ell$, there is a so-called \textit{reference edge}
$\edge_\el\in\edges_\ell$ with $\edge_\el \subset \partial\overline\el$.
To refine a specific element $\el\in\mesh_\ell$, the midpoint $m_\edge$ of its reference edge
$\edge_\el$ becomes a new node, and $\el$ is bisected along $m_\edge$ and the node opposite to
$\edge_\el$ into its two sons, see Fig.~\ref{fig:nvb} (left). The edges opposite to $m_\edge$
become the reference edges of the two sons of $\el$.

If the marked elements $\MM_\ell$ of a mesh are refined according to this rule, the new mesh automatically
inherits a distribution of reference edges. Hence, only the initial mesh $\mesh_0$, with which
an adaptive algorithm would be initialized, needs to be equipped with a distribution of reference
edges.

In order to keep the mesh conforming, i.e., to avoid hanging nodes, different approaches
are available: Sewell~\cite{sewell} proposes to bisect $\el$ either if its reference edge
is on the boundary, or if it is \textit{compatibly divisible}, i.e., the neighbor $\el'$ on the other
side of the reference edge $\edge_\el$ of $\el$ also uses the common edge as reference edge.
This approach was refined by Mitchell in~\cite{mitchell91}, who proposes to recursively call
the bisection algorithm on the neighbor $\el'$ of $\el$ until a compatibly divisible element is found.
This approach is reasonable under certain conditions, however, situations exists where the recursion
per se cannot terminate, cf.~\cite{kossaczky}. However, if all elements in $\mesh_0$
are compatibly divisible, the recursion terminates on every following mesh $\mesh_\ell$.
Distributing the reference edges on $\mesh_0$ this way, i.e., all elements end up being compatibly
divisible, is always possible as proven in~\cite{bdd04}. However, no scalable algorithm is known
which performs this task. To circumvent this problem, as shown in~\cite{kpp13},
it is possible to cast the 2D NVB into an iterative algorithm,
which does not need a special distribution of the reference edges to terminate:

\begin{algorithm}[Iterative formulation of 2D NVB]\label{alg:nvb}\ \linebreak
    \textsc{Input}: mesh $\mesh_\ell$ and
  set of marked elements $\MM_\ell\subseteq\mesh_\ell$.\\
  \textsc{Output}: refined mesh $\mesh_{\ell+1}$.
  \begin{itemize}
    \item[\rm(o)] Set counter $k:=0$ and define set of marked reference edges
      $\MM_\ell^{(0)}:= \left\{ \edge_\el \mid \el\in\MM_\ell \right\}$.
    \item[\rm(i)] Define
      $\MM_\ell^{(k+1)} := \left\{ e_\el \mid
      \exists\edge\in\MM_{\ell}^{(k)} \text{ with } \edge\subset\partial\el
      \right\}$.
    \item[\rm(ii)] If $\MM_\ell^{(k)}\subsetneqq\MM_\ell^{(k+1)}$, increase counter $k\mapsto k+1$ and goto (i).
    \item[\rm(iii)] Otherwise and with $\MM^{(k)}_\ell$ being the set of marked edges,
	use newest vertex bisection to refine all elements $\el\in\mesh_\ell$ with $\edge_\el\in\MM_\ell^{(k)}$ according to Fig.~\ref{fig:nvb}
	to obtain the new mesh $\mesh_{\ell+1}$.
    \end{itemize}
\end{algorithm}

The following proposition collects the elementary properties of NVB, and we
also refer to Fig.~\ref{fig:nvb:equi}.

\begin{myproposition}\label{mesh:prop:nvb}
Algorithm~\ref{alg:nvb} terminates regardless of the distribution of reference edges. The output
$\mesh_{\ell+1} = \refine(\linebreak \mesh_\ell,\MM_\ell)$ is the coarsest conforming mesh such that all elements in $\MM_\ell$ are refined.
Moreover, NVB ensures that only finitely many shapes of elements (and hence also patches) may occur. In particular, NVB generated meshes are uniformly 
$\sigma$-shape regular, cf.~Section~\ref{section:bem:discrete}, where $\sigma>0$ depends only on
the initial mesh $\TT_0$.
\end{myproposition}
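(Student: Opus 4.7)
My plan is to verify the three assertions of Proposition~\ref{mesh:prop:nvb} in the order they are stated, since each essentially builds on the preceding one.

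First, termination of Algorithm~\ref{alg:nvb}. The key observation is that the iteration in step~(i) produces a monotone sequence of edge sets $\MM_\ell^{(0)}\subseteq\MM_\ell^{(1)}\subseteq\MM_\ell^{(2)}\subseteq\cdots\subseteq\edges_\ell$ of reference edges, and since $\edges_\ell$ is finite, this sequence must stabilize after finitely many steps, at which point step~(ii) exits the loop. This argument is independent of how reference edges are distributed on $\TT_\ell$, which is the main reason for recasting NVB iteratively as in~\cite{kpp13}.

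Second, that $\TT_{\ell+1}$ is the coarsest conforming refinement in which every $T\in\MM_\ell$ is refined. For conformity I would argue by contradiction: if a hanging node arises in $\TT_{\ell+1}$, it must be the midpoint of the reference edge $e_T$ of some refined $T$, which belongs to a neighbor $T'$ that was not split; but then the closure loop would have added a reference edge of $T'$ to $\MM_\ell^{(k+1)}$, contradicting the stopping criterion $\MM_\ell^{(k)}=\MM_\ell^{(k+1)}$. For minimality I would show that any conforming refinement $\TT'\in\refine(\TT_\ell)$ containing all elements of $\MM_\ell$ as bisected elements must itself bisect every reference edge in the final $\MM_\ell^{(k)}$ constructed by the algorithm; a straightforward induction on the iteration index $k$ suffices, using that refining $T$ forces the bisection of $e_T$ and hence of the reference edge of the neighbor across $e_T$ in order to restore conformity.

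Third, finiteness of similarity classes and the resulting uniform shape-regularity. Here the heart of the matter is the well-known observation illustrated in Fig.~\ref{fig:nvb:equi}: starting from a single triangle with a distinguished reference edge, three rounds of newest vertex bisection already exhaust all similarity classes that will ever appear (one gets at most four classes per macro element). I would simply record this as a finite computation---tracking the similarity class of each son as a function of its father's class and whether it sits on the left or right of the bisection---and conclude that the orbit of any element of $\TT_0$ under NVB consists of finitely many similarity classes. Since $\TT_0$ is itself finite, the set of similarity classes encountered in any $\TT_\ell\in\refine(\TT_0)$ is bounded by a constant depending only on $\TT_0$. Uniform $\sigma$-shape regularity is then immediate because the minimum-angle and diameter-to-inradius ratio are similarity invariants taking only finitely many values.

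The only genuinely delicate point is the minimality statement in the second assertion, since the other two parts reduce to monotone-set termination and a finite case analysis of NVB similarity classes. For minimality one must be careful that the iterative closure loop really only forces \emph{necessary} edge bisections and does not overshoot; this is precisely why the algorithm propagates via reference edges $e_{T'}$ of neighbors rather than via arbitrary common edges, so the induction on $k$ goes through cleanly.
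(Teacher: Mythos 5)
Your proof is correct. The paper states Proposition~\ref{mesh:prop:nvb} without an explicit proof, referring instead to~\cite{kpp13} for the iterative closure loop and to Fig.~\ref{fig:nvb:equi} for the enumeration of similarity classes; your three-part decomposition---monotone termination of the edge-closure in the finite set $\edges_\ell$, conformity and coarsest-ness forced by the stopping criterion together with an induction on the closure index $k$, and a finite orbit of similarity classes per macro element---is exactly the standard elaboration that the paper leaves implicit. One small point worth spelling out in your minimality step: after the induction shows that any conforming $\TT'\in\refine(\TT_\ell)$ bisecting all of $\MM_\ell$ must bisect every edge of the final set $\MM_\ell^{(k)}$, you still need the nestedness of the iterated-NVB cascade from Fig.~\ref{fig:nvb} (bisecting a superset of the marked edges of a triangle yields a refinement of the splitting obtained from the smaller set) to conclude that $\TT'$ refines $\TT_{\ell+1}$ element by element; without this remark the passage from edge sets to mesh containment is not quite closed.
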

%------------------------------------------------------------------------------------------------------
\subsubsection{Mesh closure and overlay estimate}
%------------------------------------------------------------------------------------------------------
Another look back to Algorithm~\ref{alg:nvb} reveals that, while only advised to refine elements of
$\MM_\ell$, it refines all the elements $T\in\TT_\ell$ with $e_T\in\MM_\ell^{(k)}$. It does this to circumvent the generation
of hanging nodes. We refer to a counter example in~\cite[p. 462]{nsv} that,
as in 1D, an elementary estimate of the type
\begin{align*}
  \#\mesh_{\ell+1} - \#\mesh_\ell \le \c{nvb}\, \#\MM_\ell
  \quad\text{for all }\ell\in\N_0
\end{align*}
cannot hold with an $\ell$-independent constant $\c{nvb}>0$. The following 
theorem is proved in~\cite{kpp13} for 2D NVB.

\begin{theorem}\label{thm:nvb:closure}
  Suppose that $\mesh_0$ is a mesh on $\Gamma$ with an arbitrary distribution of reference edges,
  and $\left( \mesh_\ell \right)_{\ell\in\N_0}$ is generated by
  Algorithm~\ref{alg:nvb}, i.e., for all $\ell\in\N_0$ holds
  \begin{align*}
    \mesh_{\ell+1} = \refine\left( \mesh_\ell, \MM_\ell \right).
  \end{align*}
  Then, the mesh-closure estimate~\eqref{dp:meshclosure} is valid, and the 
  constant $\c{nvb}>0$ depends only on $\TT_0$.
\end{theorem}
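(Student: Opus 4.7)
The plan is to follow the labeling strategy originally introduced by Binev--Dahmen--DeVore \cite{bdd04}, but adapted to handle an arbitrary reference-edge distribution on $\TT_0$ as in \cite{kpp13}. I would first assign to every triangle $T$ arising from NVB of $\TT_0$ a \emph{generation} $\gen(T)\in\N_0$, with $\gen(T)=0$ for $T\in\TT_0$ and $\gen(T')=\gen(T)+1$ whenever $T'$ is produced by bisecting $T$. By Proposition~\ref{mesh:prop:nvb}, only finitely many similarity classes (depending on $\TT_0$) of triangles can ever occur, so generations are comparable to $\log_2(|T_0|/|T|)$ with constants depending only on $\TT_0$.

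Next I would prove a \emph{compatibility lemma}: there exists an integer $k_0=k_0(\TT_0)$ such that once every macro element of $\TT_0$ has been bisected at least $k_0$ times, any subsequent mesh $\TT_\ell$ is \emph{BDD-compatible} in the sense that neighbouring triangles of equal generation share their respective reference edges. This is essentially the structural observation of \cite{kpp13}. The cost of this initial phase is bounded uniformly by a constant times $\#\TT_0$, which can be absorbed into the right-hand side of \eqref{dp:meshclosure} at the price of enlarging $\c{nvb}$ (one may use $\#\MM_0\ge 1$, otherwise the estimate is trivial).

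For the BDD-compatible regime I would carry out the classical chain-argument. Given any new vertex $v$ created during the execution of Algorithm~\ref{alg:nvb}, one follows the recursion in step~(i) backwards to trace a chain $T_0^v,T_1^v,\ldots,T_{n(v)}^v$ of elements whose reference edges were successively added to $\MM_\ell^{(k)}$, terminating at some element in $\bigcup_{k\le\ell}\MM_k$ that we call the initiator of $v$. The key estimate is a geometric damping property: along any such chain the generations are strictly monotone and the triangles' areas contract by a fixed factor per step, so that the total number of chains with a fixed initiator $T\in\MM_k$ is bounded by a constant depending only on the finite number of similarity classes from Proposition~\ref{mesh:prop:nvb}. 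Summing over $v$ and using that $\#\TT_{\ell+1}-\#\TT_0$ equals twice the number of bisections (hence of newly created vertices, up to boundary effects) then yields \eqref{dp:meshclosure}.

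The main obstacle is the first step. Without compatibility of $\TT_0$, the recursion~(i) in Algorithm~\ref{alg:nvb} can propagate long distances, and elements outside $\MM_\ell$ may be refined whose reference edges were never marked by any initiator. The delicate point is to show that the resulting extra closure work across all levels $\ell$ is uniformly bounded in terms of $\#\TT_0$ alone, so that the cumulative form of \eqref{dp:meshclosure} still holds with an $\ell$-independent constant. This requires a careful bookkeeping of ``pre-compatibility'' refinements and a proof that the generations of the elements involved can only differ by $\OO(1)$ from those of the elements eventually initiating the chains in the compatible regime. Once this transfer lemma is in place, the BDD chain count applies verbatim and concludes the proof.
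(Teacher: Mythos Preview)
The paper does not prove this theorem; it states the result and cites \cite{kpp13}, noting in the surrounding text that the original argument is due to \cite{bdd04} (for a compatible initial configuration) and was extended in \cite{steve08}. So there is no in-paper proof to compare against; your sketch is aiming at the same literature the paper defers to, and your reduction of an arbitrary $\TT_0$ to the BDD-compatible setting after a bounded amount of initial work is indeed the contribution of \cite{kpp13}.

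That said, your account of the BDD step contains a genuine error. You claim that ``the total number of chains with a fixed initiator $T\in\MM_k$ is bounded by a constant''. This is false: a single marked element can trigger an unbounded number of closure bisections, exactly as in the 1D analogue of Figure~\ref{fig:bisect1d} and the remark preceding Theorem~\ref{thm:nvb:closure} that a stepwise bound $\#\TT_{\ell+1}-\#\TT_\ell\le C\,\#\MM_\ell$ cannot hold. The actual BDD mechanism is a \emph{credit scheme}: each marked element $T$ distributes credits to all elements in a geometric neighbourhood, with weight decaying like a power of $\dist(\cdot,T)/|T|^{1/(d-1)}$; one then shows that (i) the total credit issued by a single marked element is a convergent geometric series, hence $O(1)$, and (ii) every element created by the closure receives at least a fixed positive amount of credit from \emph{some} marked element, possibly from an earlier level $k<\ell$. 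Summing yields \eqref{dp:meshclosure}. Your chain monotonicity observation is what makes (ii) work, but it does not give a per-initiator bound; the counting has to be amortised across all marked elements and all levels simultaneously.
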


Theorem~\ref{thm:nvb:closure} was first proved in~\cite{bdd04} for $d=2$, under the additional assumption that the
distribution of reference edges in $\mesh_0$ is such that all elements are compatibly divisible.
In~\cite{steve08}, the theorem was extended to $d\geq 2$, and in~\cite{demlow} it was shown to hold also if
additional refinements are made to keep the mesh mildly graded. 
The work~\cite{kpp13} finally removed the assumption on the special distribution 
of the reference edges in $d=2$.

The following theorem is proved in~\cite{bdd04} for $d=2$ and~\cite{ckns} for 
$d\ge3$. To guarantee termination of their recursive formulations of the NVB algorithm, these works require that
the distribution of reference edges in $\mesh_0$ is such that all elements are compatibly divisible. However, their proofs of the overlay estimate~\eqref{dp:overlay}
do not use this assumption and also apply to the inductive formulation
of 2D NVB from~\cite{kpp13}.

\begin{theorem}\label{thm:nvb:overlay}
  Suppose that $\mesh_0$ is a mesh on $\Gamma$ with an arbitrary distribution of reference edges
  and that $\left( \mesh_\ell \right)_{\ell\in\N_0}$ is generated by
  Algorithm~\ref{alg:nvb}, i.e., for all $\ell\in\N_0$ holds
  \begin{align*}
    \mesh_{\ell+1} = \refine\left( \mesh_\ell, \MM_\ell \right).
  \end{align*}
  Then, the overlay estimate~\eqref{dp:overlay} is valid.
  Moreover, the coarsest common refinement $\TT_\star\oplus\TT_\ell$ of $\TT_\ell,\TT_\star\in\refine(\TT_0)$ is the overlay~\eqref{dp:def:overlay}.
\end{theorem}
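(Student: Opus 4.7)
My plan is to base the proof on the binary-forest structure that 2D newest vertex bisection induces on the family $\refine(\TT_0)$. Concretely, every element that can ever appear as a triangle in some $\TT\in\refine(\TT_0)$ is a node of a rooted binary tree; its root is the unique ancestor $T_0\in\TT_0$ containing it, and its two children are the two sons produced by a single NVB bisection (with reference edges assigned as in Fig.~\ref{fig:nvb}). Putting these trees together, one obtains a forest $\mathcal F(\TT_0)$ indexed by $\TT_0$. The first step is to show that each $\TT\in\refine(\TT_0)$ corresponds bijectively to a choice, for every root $T_0\in\TT_0$, of a \emph{full} finite binary subtree $S(T_0,\TT)\subseteq\mathcal F(\TT_0)$ containing $T_0$ (full in the sense that every internal node has both its children present), and $\TT$ is recovered as the set of leaves of these subtrees. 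This is standard for NVB and can be extracted from the iterative description in Algorithm~\ref{alg:nvb} together with Proposition~\ref{mesh:prop:nvb}; the only subtle point is to verify that the iterative, non-recursive completion step in Algorithm~\ref{alg:nvb} does not generate elements outside of $\mathcal F(\TT_0)$, which is immediate because each completion step bisects along a reference edge.

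Second, I would identify $\TT_\star\oplus\TT_\ell$ as defined in~\eqref{dp:def:overlay} with the mesh whose subtrees are $S(T_0,\TT_\star)\cup S(T_0,\TT_\ell)$ for every $T_0\in\TT_0$. Indeed, an element $T\in\TT_\star\cup\TT_\ell$ is locally finest in the sense of~\eqref{dp:def:overlay} if and only if, inside its root tree, it is a leaf of $S(T_0,\TT_\star)\cup S(T_0,\TT_\ell)$ (two nodes of a binary tree are comparable under containment iff one is an ancestor of the other). Since the union of two full binary subtrees both containing the root is again a full binary subtree, this shows that $\TT_\star\oplus\TT_\ell$ is an element of $\refine(\TT_0)$ and, in particular, is a conforming NVB mesh.

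Third, the overlay estimate~\eqref{dp:overlay} reduces to an elementary counting identity on binary trees: in any finite full binary tree with $L$ leaves there are exactly $L-1$ internal nodes. Applied separately to each $T_0\in\TT_0$, and writing $L_\star(T_0), L_\ell(T_0), L_\oplus(T_0)$ for the number of leaves in $S(T_0,\TT_\star)$, $S(T_0,\TT_\ell)$, and their union, one has
\begin{align*}
L_\oplus(T_0)-1 &= \#\text{internal}(S(T_0,\TT_\star)\cup S(T_0,\TT_\ell)) \\
&\leq \#\text{internal}(S(T_0,\TT_\star)) + \#\text{internal}(S(T_0,\TT_\ell)) \\
&= (L_\star(T_0)-1)+(L_\ell(T_0)-1),
\end{align*}
so that $L_\oplus(T_0)\leq L_\star(T_0)+L_\ell(T_0)-1$. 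Summing over $T_0\in\TT_0$ yields exactly $\#(\TT_\star\oplus\TT_\ell)\leq \#\TT_\star+\#\TT_\ell-\#\TT_0$.

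Finally, for the coarsest-common-refinement claim, I would observe that any common refinement $\TT\in\refine(\TT_0)$ of both $\TT_\star$ and $\TT_\ell$ has associated subtrees $S(T_0,\TT)\supseteq S(T_0,\TT_\star)\cup S(T_0,\TT_\ell)$, hence each leaf of $\TT_\star\oplus\TT_\ell$ is either an element of $\TT$ or is refined in $\TT$; so $\TT\in\refine(\TT_\star\oplus\TT_\ell)$, showing minimality. I expect the main technical obstacle to be clean verification of the forest structure and its compatibility with the inductive completion loop of Algorithm~\ref{alg:nvb} (so that subtree unions indeed yield admissible, conforming NVB meshes); once that is in place, the overlay estimate is essentially combinatorial and the remaining arguments are short.
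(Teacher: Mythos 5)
The paper does not present its own proof of this theorem; it merely observes that the proofs in \cite{bdd04,ckns} (which use exactly the binary-forest bookkeeping you set up) do not invoke the compatibly-divisible assumption, and hence apply verbatim. Your approach therefore matches the one in the cited literature. The tree-counting step is correct: internal nodes of $S(T_0,\TT_\star)\cup S(T_0,\TT_\ell)$ are contained in (internal nodes of $S(T_0,\TT_\star))\,\cup\,$ (internal nodes of $S(T_0,\TT_\ell))$, since any vertex with a child in the union already lies in whichever subtree that child belongs to, and by fullness must then be internal there. Summing $L_\oplus-1\le (L_\star-1)+(L_\ell-1)$ over $\TT_0$ gives~\eqref{dp:overlay}.

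There is, however, a genuine gap in the step where you conclude that the leaf set of $\bigcup_{T_0}\bigl(S(T_0,\TT_\star)\cup S(T_0,\TT_\ell)\bigr)$ is an element of $\refine(\TT_0)$. You write that $\refine(\TT_0)$ ``corresponds bijectively'' to choices of full finite binary subforests containing the roots. This is false: the correspondence is only an injection, because not every full binary subforest has a \emph{conforming} leaf set (the whole purpose of the mesh-closure step in Algorithm~\ref{alg:nvb} is to repair hanging nodes created by arbitrary bisections). If the map were a bijection, union-of-subtrees would trivially yield an admissible mesh, but that is precisely the nontrivial content of the theorem. You must argue separately that the overlay is conforming. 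The standard argument runs roughly as follows: suppose $z$ is a hanging node, i.e.\ $z$ lies in the interior of an edge of some locally finest element $T$, say $T\in\TT_\star$, while $z$ is a vertex of some $T'\in\TT_\star\oplus\TT_\ell$. Conformity of $\TT_\star$ forces $T'\notin\TT_\star$, hence $T'\in\TT_\ell$, so $z$ is a node of $\TT_\ell$. Let $\widetilde T\in\TT_\ell$ contain $T$; conformity of $\TT_\ell$ forces $z$ to be a vertex of $\widetilde T$. A short induction on bisection generations shows that if $z$ is a vertex of $\widetilde T$ and $z\in\overline T$ for a bisection descendant $T\subseteq\widetilde T$, then $z$ is a vertex of $T$---contradicting that $z$ lies in the interior of an edge of $T$. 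Inserting this lemma closes the gap; the remainder of your outline, including the minimality argument via subtree inclusion, is sound.
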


%------------------------------------------------------------------------------------------------------
\subsubsection{$H^s$ stability of the $L_2$ projection}\label{section:L2projection:H1stability}
%------------------------------------------------------------------------------------------------------
If the $L_2(\Gamma)$ projection onto $\Sp^p(\mesh)$ (or $\wilde\Sp^p(\mesh)$)
is used for localization of a fractional
order Sobolev norm, it needs to fulfill the approximation estimate from Lemma~\ref{lem:Sp:apx}.
According to this Lemma, stability in $H^s(\Gamma)$ (or $\wilde H^s(\Gamma)$) is
a sufficient condition, and it is seen easily that it is also necessary.
Indeed, choosing $s=1$ in Lemma~\ref{lem:Sp:apx}, it holds
\begin{align*}
  \norm{\Pi_\mesh^p v}{H^1(\Gamma)} &\leq \norm{v}{H^1(\Gamma)} + \norm{v - \Pi_\mesh^p v}{H^1(\Gamma)}\\
  &\lesssim (1+\diam(\Gamma)) \norm{v}{H^1(\Gamma)}.
\end{align*}
By deeper mathematical results, it follows from this estimate that $\Pi_\mesh^p$ is
$H^s(\Gamma)$ stable. Hence there is no other way than analyzing $\Pi_\mesh^p$'s
stability in $H^1$.
For quasi-uniform meshes, it follows with arguments from~\cite{bx91} that
$\Pi_\mesh^p$ is $H^1(\Gamma)$-stable.
In fact, with an arbitrary $H^1(\Gamma)$-stable Cl\'ement-type operator $J_\mesh$
(e.g., the Scott-Zhang projection from~\ref{section:sz}), it follows 
with the inverse estimate from Lemma~\ref{lem:Sp:invest} that
\begin{align*}
  &\norm{\nabla\Pi_\mesh^p v}{L_2(\Gamma)} \leq \norm{\nabla(\Pi_\mesh^p - J_\mesh) v}{L_2(\Gamma)} +
  \norm{\nabla J_\mesh v}{L_2(\Gamma)}\\
  &\quad\lesssim \norm{h_\mesh^{-1}}{L_\infty(\Gamma)} \norm{(\Pi_\mesh^p - J_\mesh) v}{L_2(\Gamma)} + 
  \norm{\nabla J_\mesh v}{L_2(\Gamma)},
\end{align*}
and due to the projection property of $\Pi_\mesh^p$ and its $L_2(\Gamma)$-stability,
\begin{align*}
  \norm{(\Pi_\mesh^p - J_\mesh) v}{L_2(\Gamma)}
  &\leq \norm{(1 - J_\mesh) v}{L_2(\Gamma)}\\
  &\lesssim \norm{h_\mesh}{L_\infty(\Gamma)}\norm{v}{H^1(\Gamma)},
\end{align*}
where we have finally used the first-order approximation property of $J_\mesh$.
Combining these two estimates and regarding the fact that for quasi-uniform meshes
\begin{align*}
  \norm{h_\mesh^{-1}}{L_\infty(\Gamma)}\norm{h_\mesh}{L_\infty(\Gamma)} \lesssim 1,
\end{align*}
the $H^1(\Gamma)$-stability of $\Pi_\mesh^p$ follows.

Unfortunately, this argument cannot be used in this straight forward manner
on adaptively refined meshes. The $H^s(\Gamma)$-stability
can be shown, though, under certain conditions on the mesh. There are basically two approaches:
\begin{itemize}
  \item Imposing global or local growth-conditions on the mesh. This approach is used in the works
    \cite{bps02,c02,ct87,ej95,s01,s02}.
  \item Using only a sequence of adaptively generated meshes such that an equivalent mesh-size function
    can be used, which takes care of the fact that the mesh is not quasi-uniform.
    This approach is used in the works~\cite{by13,c04,kpp13,kpp13A}.
\end{itemize}
The strength of the first approach is that it can be used for an \textit{arbitrary} sequence of
meshes which does not have to be the output of an adaptive mesh refinement strategy. However,
certain growth-conditions may be too restrictive if already the coarsest mesh violates them.
Therefore, the second approach will yield more general results
when it comes to adaptive mesh refinement.
\begin{theorem}\label{thm:L2projection:cc}
  Suppose that $\mesh_0$ is a mesh on $\Gamma$ with an arbitrary distribution of reference edges.
  Then, if $\left( \mesh_\ell \right)_{\ell\in\N_0}$ is generated by
  Algorithm~\ref{alg:nvb}, the sequence of $L_2(\Gamma)$-ortho\-gonal projections $\Pi_\ell$ onto
  $\Sp^1(\mesh_\ell)$ is uniformly $H^1(\Gamma)$-stable, i.e., for all $\ell\in\N_0$ holds
  \begin{align*}
    \norm{\Pi_\ell v}{H^1(\Gamma)} \leq \c{stab}\norm{v}{H^1(\Gamma)},\quad
    \text{ for all } v\in H^1(\Gamma),
  \end{align*}
  and the constant $\c{stab}$ depends only on $\mesh_0$.
  The same result holds for the $L_2(\Gamma)$-projection onto $\wilde\Sp^p(\mesh)$.
\end{theorem}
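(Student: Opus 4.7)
The plan is to reduce the desired $H^1(\Gamma)$-stability to a weighted $L_2$-stability of $\Pi_\ell$ on a mesh-size scale and then to establish the latter by exploiting the hierarchical structure that newest vertex bisection imposes on the meshes $\mesh_\ell$. First, I would write $\Pi_\ell v = J_\ell v + \Pi_\ell(v - J_\ell v)$, where $J_\ell: H^1(\Gamma) \to \Sp^1(\mesh_\ell)$ denotes the Scott-Zhang projection from Section~\ref{section:sz}, which is $H^1$-stable and locally first-order approximating by Lemma~\ref{lem:sz}. For the first summand, $H^1$-stability is immediate. For the second, since $\Pi_\ell v - J_\ell v \in \Sp^1(\mesh_\ell)$, an element-wise application of the inverse estimate from Lemma~\ref{lem:Sp:invest} yields
\begin{equation*}
\norm{\nabla_\Gamma\bigl(\Pi_\ell v - J_\ell v\bigr)}{L_2(\Gamma)}^2 \lesssim \sum_{T\in\mesh_\ell} h_T^{-2}\,\norm{\Pi_\ell v - J_\ell v}{L_2(T)}^2.
\end{equation*}
Combining the local approximation property $\norm{v - J_\ell v}{L_2(T)} \lesssim h_T\,\norm{v}{H^1(\omega_T)}$ from Lemma~\ref{lem:sz} with the bounded overlap of patches $\omega_T$ under shape-regularity (Proposition~\ref{mesh:prop:nvb}), the theorem follows once the weighted $L_2$-stability
\begin{equation*}
\sum_{T\in\mesh_\ell} h_T^{-2}\,\norm{\Pi_\ell w}{L_2(T)}^2 \le C \sum_{T\in\mesh_\ell} h_T^{-2}\,\norm{w}{L_2(T)}^2 \quad (\star)
\end{equation*}
is established uniformly in $\ell\geq0$ and in $w\in L_2(\Gamma)$.

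Proving $(\star)$ is the main obstacle and the place where the specific NVB structure enters decisively. To each element $T\in\mesh_\ell$, NVB associates a generation $\gen(T)\in\N_0$ recording how many bisections separate $T$ from its ancestor $T_0\in\mesh_0$, and one has the quantitative equivalence $h_T \simeq 2^{-\gen(T)/(d-1)}\diam(T_0)$. The central structural fact, which is a direct consequence of the iterative closure step in Algorithm~\ref{alg:nvb}, is that generations of neighboring elements differ by at most a bounded number, with the bound depending only on $\mesh_0$. Equivalently, the mesh-size function $h_\ell$ is locally slowly varying along the whole refinement history.

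To exploit this, I would use a multilevel representation: let $\mesh_\ell^{(k)}$ denote the (conforming) coarsening of $\mesh_\ell$ whose elements are precisely the generation-$(\leq k)$ ancestors of elements of $\mesh_\ell$; this yields nested mesh subfamilies $\mesh_\ell^{(0)} \preceq \mesh_\ell^{(1)} \preceq \ldots \preceq \mesh_\ell$ with associated spaces $\XX_k := \Sp^1(\mesh_\ell^{(k)}) \subseteq \Sp^1(\mesh_\ell)$ and $L_2$-projections $Q_k: L_2(\Gamma) \to \XX_k$, $Q_{-1} := 0$. A telescoping identity $\Pi_\ell w = \sum_{k\geq 0} (Q_k - Q_{k-1})\Pi_\ell w$ combined with a strengthened Cauchy-Schwarz inequality between successive levels (relying on the finitely many similarity classes of NVB elements, cf.\ Proposition~\ref{mesh:prop:nvb} and Fig.~\ref{fig:nvb:equi}) is expected to give $(\star)$ in the spirit of the arguments from~\cite{by13,kpp13,kpp13A}. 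The technical heart is the level-wise estimate $\norm{h_\ell^{-1}(Q_k - Q_{k-1}) w}{L_2(\Gamma)}^2 \lesssim 4^{k/(d-1)}\norm{(Q_k - Q_{k-1})w}{L_2(\Gamma)}^2$ coupled with geometric decay in $k$, which is where the bounded generation-jump and the finite similarity classes are both essential.

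The case $\wilde\Sp^1(\mesh_\ell)$ follows along identical lines: one replaces $J_\ell$ by the boundary-preserving Scott-Zhang variant $\wilde J_\ell$ from Lemma~\ref{lem:sz}, which maps into $\wilde\Sp^1(\mesh_\ell)$ and is $\wilde H^1(\Gamma)$-stable, and uses the fact that NVB refines boundary edges into boundary edges, so that the multilevel hierarchy $\XX_k$ can be chosen within $\wilde\Sp^1$.
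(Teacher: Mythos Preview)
The paper does not prove this theorem itself; it defers to \cite{c04,kpp13,kpp13A} and, in the discussion immediately preceding the theorem, classifies those works as proceeding via ``an equivalent mesh-size function which takes care of the fact that the mesh is not quasi-uniform.'' Your first step---reducing $H^1$-stability to the weighted $L_2$-stability $(\star)$ through the Scott--Zhang projection and an inverse estimate---is exactly the standard opening move, and indeed the paper sketches it for quasi-uniform meshes just above the theorem. Up to this point your route and the cited proofs coincide.

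The divergence is in how $(\star)$ is established. The approach of \cite{c04,kpp13} is \emph{not} multilevel: one constructs a regularized weight $\tilde h_\ell\simeq h_\ell$ whose variation between neighboring elements is small enough that a Crouzeix--Thom\'ee type weighted-stability result \cite{ct87} applies directly, yielding $(\star)$ in one stroke; the only role of the NVB structure is to guarantee that such a $\tilde h_\ell$ exists with equivalence constants depending only on $\mesh_0$. Your telescoping-plus-strengthened-Cauchy--Schwarz route is closer in flavour to \cite{by13}, but the sketch has a genuine gap at the ``geometric decay in $k$'' step. For arbitrary $w\in L_2(\Gamma)$ the increments $(Q_k-Q_{k-1})w$ carry no a~priori decay, so after inserting the weight $h_\ell^{-1}$ (which behaves like $2^{k/(d-1)}$ on the level-$k$ region) you need a strengthened Cauchy--Schwarz inequality between levels with \emph{exponential} off-diagonal decay to make the double sum converge; ``finitely many similarity classes'' alone does not deliver this. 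That is precisely the delicate point, and the paper itself notes (Theorem~\ref{thm:L2projection:by} and the remark after it) that the analysis in \cite{by13} requires the stronger hypothesis $|\gen(T)-\gen(T')|\le 1$, which plain NVB does \emph{not} enforce and which ``needs to be enforced via additional refinements.'' Your correct observation that NVB bounds generation jumps by some constant $K(\mesh_0)$ is strictly weaker than what that machinery needs. A secondary issue is that the conformity of your truncated meshes $\mesh_\ell^{(k)}$ is asserted but not argued.

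In short: the reduction is right, but for $(\star)$ you should either follow \cite{c04,kpp13} and build the equivalent slowly-varying mesh-size function---this is short and is what the paper actually points to---or, if you insist on a multilevel argument, supply the missing exponential strengthened Cauchy--Schwarz estimate and explain why a bounded (rather than unit) generation jump suffices.
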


  The first proof of this type of result is due to Carsten\-sen~\cite{c04}. In the latter work, the distribution
  of reference edges is supposed to fulfill an additional assumption, and instead of 2D NVB, a modified 2D red-green-blue
  mesh refinement strategy is considered. In the work~\cite{kpp13}, the assumptions on the initial distribution
  of reference edges has been removed, and 2D NVB was considered as underlying refinement strategy. In~\cite{kpp13A} the analysis of~\cite{kpp13} has been
  generalized to NVB in arbitrary dimension $d\ge3$.

The last theorem can be employed in lowest-order\linebreak Galerkin boundary element methods,
but higher-order methods require the $H^1(\Gamma)$-stability of the $L_2(\Gamma)$-projection
onto $\Sp^p(\mesh)$. Results of this kind have been shown by Bank and Yserentant in~\cite{by13}.
To state their result, the concept of the so-called
\textit{level-function} $\gen_\ell:\mesh_\ell\rightarrow \N_0$ has to be introduced,
which measures the number of bisections needed to create a specific element.
For all $\el\in\mesh_0$, define $\gen_0(\el):=0$. Then, the two sons $\el_1$ and $\el_2$ of an element
$\el$ that arise due to a bisection, see Fig.~\ref{fig:nvb} (left),
have level $\gen_{\ell+1}(\el_1)= \gen_{\ell+1}(\el_2) := \gen_\ell(\el)+1$.
The following theorem is the main result of~\cite{by13}.

\begin{theorem}\label{thm:L2projection:by}
  Assume that for the sequence of meshes\linebreak
  $\left( \mesh_\ell \right)_{\ell\in\N_0}$ holds
  \begin{align*}
    \abs{\gen_\ell(\el) - \gen_\ell(\el')} \leq 1 \quad\text{ if }\overline\el\cap\overline{\el'}\neq 0.
  \end{align*}
  Then, the sequence $\left( \Pi_\ell^p \right)_{\ell\in\N_0}$ of $L_2(\Gamma)$-orthogonal projections onto
  $\Sp^p(\mesh_\ell)$ is uniformly $H^1(\Gamma)$ stable for $p\leq 12$ in $d=2$ and for $p\leq 7$ in $d=3$.
\end{theorem}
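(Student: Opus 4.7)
The plan is to follow the Bank--Yserentant strategy, reducing the global $H^1$-stability of $\Pi_\ell^p$ to a local linear-algebra statement on a finite family of reference configurations. I would begin by invoking a Scott--Zhang-type interpolant $J_\ell : H^1(\Gamma) \to \Sp^p(\mesh_\ell)$ from Lemma~\ref{lem:sz}, which is $H^1$-stable and satisfies the local approximation estimate $\norm{(1-J_\ell)v}{L_2(\el)} \lesssim h_\el \norm{v}{H^1(\omega_\el)}$. Writing $\Pi_\ell^p v = J_\ell v + \Pi_\ell^p(v - J_\ell v)$, applying the triangle inequality, and using the element-wise inverse estimate from Lemma~\ref{lem:Sp:invest} on the discrete function $\Pi_\ell^p(v-J_\ell v) \in \Sp^p(\mesh_\ell)$, the task reduces to establishing
\begin{align*}
 \sum_{\el \in \mesh_\ell} h_\el^{-2}\,\norm{\Pi_\ell^p(v-J_\ell v)}{L_2(\el)}^2 \lesssim \norm{v}{H^1(\Gamma)}^2.
\end{align*}

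The decisive ingredient is a \emph{weighted local $L_2$-stability} of the (nonlocal) projection, roughly of the form $\sum_\el h_\el^{-2}\norm{\Pi_\ell^p w}{L_2(\el)}^2 \lesssim \sum_\el h_\el^{-2}\norm{w}{L_2(\omega_\el)}^2$ for all $w\in L_2(\Gamma)$; once available, this combined with the local approximation property of $J_\ell$ and the uniformly finite overlap of patches closes the argument. The grading hypothesis $\abs{\gen_\ell(\el)-\gen_\ell(\el')}\leq 1$ enters decisively here: it forces the ratio of neighbouring mesh sizes into $\{1/2,1,2\}$, so that after affine scaling only finitely many macro-patch shapes can arise, and the weights $h_\el^{-2}$ are quasi-uniform on every individual patch. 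In particular, the grading condition is precisely what makes the mesh-dependent problem reducible to a finite family of size-independent questions.

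The main obstacle -- and the source of the thresholds $p\leq 12$ for $d=2$ and $p\leq 7$ for $d=3$ -- is the proof of this weighted local stability. After transformation to each of the finitely many reference patches, the inequality reduces to a spectral bound on an explicit finite block matrix whose entries are $L_2$-inner products of nodal basis functions of $\Sp^p$ across the patch, coupled with a diagonal weight matrix encoding the allowed mesh-size ratios. The required bound is a concrete eigenvalue inequality for each such matrix, depending polynomially on $p$; it is verified in~\cite{by13} by a combination of analytic manipulations and (partly computer-assisted) computation. The stated degree bounds are exactly the largest $p$ for which this inequality can be certified: beyond these thresholds, the off-diagonal mass-matrix blocks become too large relative to the diagonals and the local-to-global argument fails. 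Once the eigenvalue bound is in place for the admissible $p$, reassembling the local estimates and combining with the approximation property of $J_\ell$ delivers the claimed uniform $H^1$-stability.
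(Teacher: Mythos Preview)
The paper itself does not prove this theorem; it only states it and refers to~\cite{by13}. So the relevant question is whether your sketch faithfully captures the Bank--Yserentant argument.

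Your reduction step is sound and standard: writing $\Pi_\ell^p v = J_\ell v + \Pi_\ell^p(v-J_\ell v)$, invoking the inverse estimate, and reducing to the weighted $L_2$-bound $\sum_\el h_\el^{-2}\norm{\Pi_\ell^p w}{L_2(\el)}^2 \lesssim \sum_\el h_\el^{-2}\norm{w}{L_2(\el)}^2$ is exactly the right framework (this is the Bramble--Pasciak--Steinbach / Carstensen formulation).

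The gap is in how you propose to obtain the weighted $L_2$-stability. You write that ``after transformation to each of the finitely many reference patches, the inequality reduces to a spectral bound on an explicit finite block matrix.'' This cannot work as stated: $\Pi_\ell^p$ is a genuinely \emph{global} operator (the inverse of the full mass matrix), and no finite collection of patch-local matrix inequalities can by themselves yield a global weighted bound. If $w$ is supported on a single element, $\Pi_\ell^p w$ is supported on all of $\Gamma$, so a one-layer estimate of the form you wrote is simply false. What Bank--Yserentant actually prove is an \emph{exponential decay} property: the local spectral quantity you allude to (computed on reference patches, and depending on $p$) is a decay rate $q(p)<1$ such that the influence of $w|_{\el'}$ on $(\Pi_\ell^p w)|_\el$ is bounded by $q(p)^{k}$ when $\el$ and $\el'$ are $k$ layers apart. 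The grading hypothesis $|\gen_\ell(\el)-\gen_\ell(\el')|\le 1$ then bounds the growth of the weight $h_\el^{-2}$ by a factor $4$ per layer, and the global weighted estimate follows by summing a geometric series---which converges precisely when $4\,q(p)<1$. The thresholds $p\le 12$ resp.\ $p\le 7$ are exactly the largest degrees for which this product stays below $1$; it is not that ``off-diagonal mass-matrix blocks become too large'' on a single patch, but that the decay of the inverse mass matrix becomes too slow to beat the weight growth across arbitrarily many layers. Your sketch misses this decay-versus-growth mechanism, which is the heart of the argument.
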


The assumption on the level function in Theorem~\ref{thm:L2projection:by} suggests that also elements
sharing a vertex need to have a difference in their level-functions of at most $1$.
This assumption needs to be enforced via additional refinements, as suggested in~\cite{by13}.
%\clearpage
%%%%%%%%%%%%%%%%%%%%%%%%%%%%%%%%%%%%%%%%%%%%%%%%%%%%%%%%%%%%%%%%%%%%%%%%%%%%%%%%%%%%%%%%%%%%%%%%%%%%%%%
%%%%%%%%%%%%%%%%%%%%%%%%%%%%%%%%%%%%%%%%%%%%%%%%%%%%%%%%%%%%%%%%%%%%%%%%%%%%%%%%%%%%%%%%%%%%%%%%%%%%%%%
\section{Optimal convergence of adaptive BEM}\label{section:convergence}
%%%%%%%%%%%%%%%%%%%%%%%%%%%%%%%%%%%%%%%%%%%%%%%%%%%%%%%%%%%%%%%%%%%%%%%%%%%%%%%%%%%%%%%%%%%%%%%%%%%%%%%
%%%%%%%%%%%%%%%%%%%%%%%%%%%%%%%%%%%%%%%%%%%%%%%%%%%%%%%%%%%%%%%%%%%%%%%%%%%%%%%%%%%%%%%%%%%%%%%%%%%%%%%
Whereas plain convergence of error estimators was the concern of Section~\ref{section:estred},
this section deals with convergence of Algorithm~\ref{opt:algorithm} even with optimal rates.
The first result on convergence rates~\cite{bdd04} considered AFEM for the 2D Poisson model problem
and required an additional coarsening step which has later been
proved to be unnecessary~\cite{steve07}. The latter work introduced the 
assumption that the set of marked elements in Step~(iii) of Algorithm~\ref{opt:algorithm} has minimal
cardinality and proved that the marking criterion~\eqref{opt:bulkchasing} is (in some sense) even
necessary (see Lemma~\ref{opt:lem:bulkchasing} below).

For ABEM, optimal convergence rates for weighted-resi\-dual
error estimators have independently first been proved in~\cite{fkmp13,gantumur}.
While~\cite{fkmp13} considers ABEM for the 3D Laplacian on polyhedral domains,
\cite{gantumur} considers ABEM for general operators, but the analysis requires
smooth boundaries.

The goal of this section is to explain the concept of convergence with optimal rates and
to provide abstract results which cover the BEM model problems from
Section~\ref{opt:sec:example1}--\ref{opt:sec:example4}. Since most of the analysis can be done in
an abstract mathematical setting, we stick with the frame and the notation of the Lax-Milgram
lemma from Section~\ref{section:introduction}.

%%%%%%%%%%%%%%%%%%%%%%%%%%%%%%%%%%%%%%%%%%%%%%%%%%%%%%%%%%%%%%%%%%%%%%%%%%%%%%%%%%%%%%%%%%%%%%%%%%%%%%%
\subsection{Necessary approximation property}\label{section:opt:approx}
%%%%%%%%%%%%%%%%%%%%%%%%%%%%%%%%%%%%%%%%%%%%%%%%%%%%%%%%%%%%%%%%%%%%%%%%%%%%%%%%%%%%%%%%%%%%%%%%%%%%%%%
We suppose that the discrete spaces $\XX_\ell$ are nested in the sense that
$\XX_\ell\subseteq \XX_{\star}\subseteq \XX$ if $\mesh_{\star}\in \refine(\mesh_\ell)$ and that
they satisfy the following approximation property: For all $\mesh_\ell\in\refine(\mesh_0)$ and all
$\eps>0$, there exists $\mesh_\star\in\refine(\mesh_\ell)$ such that
\begin{align}\label{opt:eq:aproxprop}
  \norm{u-U_\star}{\XX}\leq \eps,
\end{align}
where $U_\star \in\XX_\star$ is the solution of~\eqref{intro:galerkin}  on the mesh $\mesh_\star$.
\begin{remark}
  Although one could theoretically construct spac\-es $\XX$, where assumption~\eqref{opt:eq:aproxprop}
  is violated, the authors are unaware of any practical example, where this is the case.
  In practice,~\eqref{opt:eq:aproxprop} is satisfied for $\mesh_\star$ being a sufficiently fine
  uniform refinement of $\mesh_\ell$.
\end{remark}

%------------------------------------------------------------------------------------------------------
\subsubsection{Assumptions on the adaptive algorithm}\label{section:opt:adaptive}
%------------------------------------------------------------------------------------------------------
Algorithm~\ref{opt:algorithm} needs to be modified to allow for convergence with optimal rates.
In contrast to Section~\ref{section:estred}, we now have to ensure that
we choose a set of minimal cardinality $\MM_\ell$ in Step~(iii) of Algorithm~\ref{opt:algorithm}
(see also Remark~\ref{rem:estred:algorithm} for details on the realization).
In step~(iv) of Algorithm~\ref{opt:algorithm}, we suppose that $\mesh_{\ell+1}$ is the coarsest
refinement of $\mesh_\ell$ such that all marked elements $\el\in\MM_\ell$ have been refined,
written $\mesh_{\ell+1}=\refine(\mesh_\ell,\MM_\ell)$. Finally, we suppose that the mesh refinement
strategy used satisfies the properties of Section~\ref{section:meshopt} plus the fact that it
produces only finitely many shapes of element patches. These assumptions are, for example, satisfied
for the bisection strategies discussed in Section~\ref{section:meshrefinement}.

%------------------------------------------------------------------------------------------------------
\subsubsection{Assumptions on the error estimator}\label{opt:section:assumptions}
%------------------------------------------------------------------------------------------------------
The following four assumptions on the error estimator $\est{\ell}{}$ are first found in~\cite{axioms}
and distilled from the literature on AFEM~\cite{dirichlet3d,ckns,nonsymm,dirichlet2d,steve07}
and ABEM~\cite{affkp13-A,ffkmp13,ffkmp13-A,fkmp13,gantumur}.
We will use these assumptions to prove the main results on convergence
(Theorem~\ref{opt:thm:convergence}) and optimal rates (Theorem~\ref{opt:thm:optimality}).
In Section~\ref{opt:sec:example1}--\ref{opt:sec:example4} below, these assumptions will
be verified for concrete model problems.

\begin{enumerate}
  \renewcommand{\theenumi}{{\rm A\arabic{enumi}}}%
  \renewcommand{\labelenumi}{({\rm A\arabic{enumi}})}%
  \item\label{opt:ass:stable} Stability on non-refined elements: There exists a constant
    $\setc{opt:stable}>0$ such that any refinement $\mesh_\star\in \refine(\mesh_\ell)$
    of $\mesh_\ell\in\refine(\mesh_0)$ satisfies
    \begin{align*}
      \Big| \Big(\sum_{\el\in\mesh_\ell\cap \mesh_\star} \est{\ell}{\el}^2\Big)^{1/2} &-
      \Big(\sum_{\el\in\mesh_\ell\cap \mesh_\star} \est{\star}{\el}^2\Big)^{1/2}\Big| \\
      &\leq \c{opt:stable}\norm{U_\star-U_\ell}{\XX}.
    \end{align*}

  \item\label{opt:ass:reduction} Reduction on refined elements: There exist constants
    $\setc{opt:reduction}>0$ and $0<q_{\rm red}< 1$ such that any refinement
    $\mesh_\star\in\linebreak \refine(\mesh_\ell)$  of $\mesh_\ell\in\refine(\mesh_0)$ satisfies
    \begin{align*}
      \sum_{\el\in\mesh_\star\setminus \mesh_\ell} \est{\star}{\el}^2 \leq q_{\rm red}
      \sum_{\el\in\mesh_\ell\setminus \mesh_\star} \est{\ell}{\el}^2
      + \c{opt:reduction}\norm{U_\star-U_\ell}{\XX}^2.
    \end{align*}

  \item\label{opt:ass:reliable} Reliability: There exists a constant $\setc{opt:reliable}>0$
    such that any mesh $\mesh_\ell\in\refine(\mesh_0)$ satisfies
    \begin{align*}
      \norm{u-U_\ell}{\XX}\leq \c{opt:reliable} \est{\ell}{}.
    \end{align*}

  \item\label{opt:ass:drel} Discrete reliability: There exist constants $\setc{opt:drel}>0$
    and $\setc{opt:refined}>0$ such that any refinement $\mesh_\star\in \refine(\mesh_\ell)$
    of $\mesh_\ell\in\refine(\mesh_0)$ satisfies
    \begin{align*}
      \norm{U_\star-U_\ell}{\XX}\leq \c{opt:drel}\Big(\sum_{\el\in\RR{\ell}{\star}}
      \est{\ell}{\el}^2\Big)^{1/2},
    \end{align*}
    where the set $\RR{\ell}{\star}\supseteq \mesh_\ell\setminus\mesh_\star$ satisfies\linebreak
    $\#\RR{\ell}{\star}\leq \c{opt:refined} \#(\mesh_\ell\setminus\mesh_\star)$.
\end{enumerate}

\begin{remark}
  The assumptions~\eqref{opt:ass:stable}--\eqref{opt:ass:reduction} are fulfilled by many error
  estimators with $h$-weighting factor, e.g., weighted residual error-estimators, $(h-h/2)$-type
  error estimator, and ZZ-type error estimators as shown implicitly in Section~\ref{section:estred}.
  They form the main ingredients of the abstract proof of estimator reduction~\eqref{estred:eq:estred}.
  Discrete reliability~\eqref{opt:ass:drel} is stronger than reliability~\eqref{opt:ass:reliable}
  as is shown in the following. So far, it is only proved for the weighted residual error estimator
  with $\RR{\ell}{\star}$ being the refined elements plus one additional element layer around them.
\end{remark}

\begin{mylemma}
  The discrete reliability~\eqref{opt:ass:drel} implies the reliability~\eqref{opt:ass:reliable}
  with $\c{opt:reliable}=\c{opt:drel}$.
\end{mylemma}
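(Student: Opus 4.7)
The plan is to derive reliability from discrete reliability by using the approximation property~\eqref{opt:eq:aproxprop} from Section~\ref{section:opt:approx} to pass to the limit through a sufficiently fine refinement. The key observation is that discrete reliability already provides the bound
\begin{align*}
\norm{U_\star - U_\ell}{\XX} \leq \c{opt:drel}\Big(\sum_{\el\in\RR{\ell}{\star}} \est{\ell}{\el}^2\Big)^{1/2} \leq \c{opt:drel}\,\est{\ell}{}
\end{align*}
for every refinement $\mesh_\star\in\refine(\mesh_\ell)$, since $\RR{\ell}{\star}\subseteq\mesh_\ell$ so the restricted sum is bounded by the full estimator.

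Given $\eps>0$, I would invoke~\eqref{opt:eq:aproxprop} to obtain a refinement $\mesh_\star\in\refine(\mesh_\ell)$ with $\norm{u-U_\star}{\XX}\leq \eps$. The triangle inequality then yields
\begin{align*}
\norm{u-U_\ell}{\XX}\leq \norm{u-U_\star}{\XX}+\norm{U_\star-U_\ell}{\XX}\leq \eps + \c{opt:drel}\,\est{\ell}{}.
\end{align*}
Since $\eps>0$ was arbitrary, letting $\eps\to 0$ gives reliability with constant $\c{opt:reliable}=\c{opt:drel}$.

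There is no real obstacle here; the argument is essentially a two-line triangle inequality combined with the standing approximation hypothesis. The only point worth flagging is that one genuinely needs~\eqref{opt:eq:aproxprop} (rather than just nestedness of the discrete spaces), because discrete reliability only compares two Galerkin solutions on meshes that are related by refinement, and we need to push $U_\star$ arbitrarily close to the continuous solution $u$ while staying inside the refinement hierarchy above $\mesh_\ell$.
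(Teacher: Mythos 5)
Your proof is correct and matches the paper's argument essentially verbatim: invoke the approximation property~\eqref{opt:eq:aproxprop} to find a refinement $\mesh_\star$ with $\norm{u-U_\star}{\XX}\leq\eps$, apply the triangle inequality together with discrete reliability~\eqref{opt:ass:drel} (noting $\RR{\ell}{\star}\subseteq\mesh_\ell$), and let $\eps\to 0$. Your closing remark about why nestedness alone would not suffice is a correct and useful observation, consistent with the paper's reliance on~\eqref{opt:eq:aproxprop}.
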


\begin{proof}
  Suppose~\eqref{opt:ass:drel} and use the approximation property~\eqref{opt:eq:aproxprop}
  for $\eps>0$ to see
  \begin{align*}
    \norm{u-U_\ell}{\XX}&\leq \norm{u-U_\star}{\XX}+ \norm{U_\star-U_\ell}{\XX}\\
    &\leq \eps + \c{opt:drel}\Big(\sum_{\el\in\RR{\ell}{\star}} \est{\ell}{\el}^2\Big)^{1/2}\leq
    \eps + \c{opt:drel}\est{\ell}{}.
  \end{align*}
  Since $\eps>0$ is arbitrary, this implies~\eqref{opt:ass:reliable} with
  $\c{opt:reliable}=\c{opt:drel}$.
  $\hfill\qed$
\end{proof}

%%%%%%%%%%%%%%%%%%%%%%%%%%%%%%%%%%%%%%%%%%%%%%%%%%%%%%%%%%%%%%%%%%%%%%%%%%%%%%%%%%%%%%%%%%%%%%%%%%%%%%%
\subsection{Convergence of error estimator and error}\label{opt:section:convergence}
%%%%%%%%%%%%%%%%%%%%%%%%%%%%%%%%%%%%%%%%%%%%%%%%%%%%%%%%%%%%%%%%%%%%%%%%%%%%%%%%%%%%%%%%%%%%%%%%%%%%%%%

\begin{theorem}\label{opt:thm:convergence}
  Suppose that the error estimator satisfies stability~\eqref{opt:ass:stable} and
  reduction~\eqref{opt:ass:reduction}. Then, Algorithm~\ref{opt:algorithm} drives the
  estimator to zero, i.e.
  \begin{align}\label{opt:eq:convergence}
    \lim_{\ell\to\infty} \est{\ell}{} = 0.
  \end{align}
  Suppose that the error estimator additionally satisfies reliability~\eqref{opt:ass:reliable}.
  Then, Algorithm~\ref{opt:algorithm} converges even $R$-linearly in the sense that there exist
  constants $\setc{opt:Rlin}>0$ and $0<q_{\rm R}<1$ such that
  \begin{align}\label{opt:eq:Rlin}
    \c{opt:reliable}^{-2}\norm{u\!-\!U_{\ell+n}}{\XX}^2\leq  \est{\ell+n}{}^2\leq
    \c{opt:Rlin}q_{\rm R}^n \est{\ell}{}^2\;\text{for all }\ell,n\in\N_0,
  \end{align}
  which particularly implies
  \begin{align}
    \norm{u-U_{\ell}}{\XX}^2\leq \c{opt:reliable}^2\c{opt:Rlin}\est{0}{}^2\,q_{\rm R}^\ell
    \quad\text{for all }\ell\in\N_0.
  \end{align}
  The constants $\c{opt:Rlin}$ and $q_{\rm R}$ depend only on $\theta$ as well as on the constants
  in~\eqref{opt:ass:stable}--\eqref{opt:ass:reliable}.
\end{theorem}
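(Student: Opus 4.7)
The plan is to first derive the workhorse estimate, namely an \emph{estimator reduction} of the form
\begin{align*}
 \est{\ell+1}{}^2 \le q_{\rm est}\,\est{\ell}{}^2 + \c{opt:estred:aux}\,\norm{U_{\ell+1}-U_\ell}{\XX}^2
\end{align*}
with some contraction constant $0<q_{\rm est}<1$. To obtain it, I would split $\est{\ell+1}{}^2$ into contributions on non-refined elements $\mesh_\ell\cap\mesh_{\ell+1}$ and on refined elements $\mesh_{\ell+1}\setminus\mesh_\ell$. Assumption~\eqref{opt:ass:stable} combined with a Young-type inequality $(a+b)^2\le(1+\delta)a^2+(1+\delta^{-1})b^2$ bounds the first contribution by $(1+\delta)\sum_{T\in\mesh_\ell\cap\mesh_{\ell+1}}\est{\ell}{T}^2 + C\norm{U_{\ell+1}-U_\ell}{\XX}^2$. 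Assumption~\eqref{opt:ass:reduction} directly bounds the second contribution. Writing $\sum_{T\in\mesh_\ell\cap\mesh_{\ell+1}}\est{\ell}{T}^2 = \est{\ell}{}^2 - \sum_{T\in\mesh_\ell\setminus\mesh_{\ell+1}}\est{\ell}{T}^2$ and invoking the bulk chasing~\eqref{opt:bulkchasing} together with $\MM_\ell\subseteq\mesh_\ell\setminus\mesh_{\ell+1}$, I can absorb a portion $\theta$ of $\est{\ell}{}^2$; choosing $\delta>0$ small enough so that $q_{\rm est}:=(1+\delta)-(1+\delta-q_{\rm red})\theta\in(0,1)$ then gives the desired reduction.

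For plain convergence~\eqref{opt:eq:convergence}, I would invoke the nestedness $\XX_\ell\subseteq\XX_{\ell+1}\subseteq\XX$ and Lemma~\ref{estred:lem:convortho} to obtain an a~priori Galerkin limit $U_\infty\in\XX$ with $\lim_{\ell\to\infty}\norm{U_\infty-U_\ell}{\XX}=0$. In particular, the perturbation term $\alpha_\ell^2:=\c{opt:estred:aux}\,\norm{U_{\ell+1}-U_\ell}{\XX}^2$ tends to $0$. Lemma~\ref{estred:lem:estreddef} (estimator reduction principle) then yields $\est{\ell}{}\to 0$.

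For the $R$-linear estimate~\eqref{opt:eq:Rlin}, I would introduce a total error $\Delta_\ell^2:=\norm{u-U_\ell}{\XX}^2+\gamma\,\est{\ell}{}^2$ with $\gamma>0$ to be fixed, and aim for a one-step contraction $\Delta_{\ell+1}^2\le q_{\rm R}\,\Delta_\ell^2$. Galerkin orthogonality applied to the nested test function $U_{\ell+1}-U_\ell\in\XX_{\ell+1}$ and ellipticity give the quasi-Pythagoras inequality $\norm{u-U_{\ell+1}}{\XX}^2 + c\,\norm{U_{\ell+1}-U_\ell}{\XX}^2 \le \norm{u-U_\ell}{\XX}^2$; combining this with the estimator reduction above and choosing $\gamma$ so small that the factor in front of $\norm{U_{\ell+1}-U_\ell}{\XX}^2$ is non-positive, one obtains $\norm{u-U_{\ell+1}}{\XX}^2+\gamma\,\est{\ell+1}{}^2 \le \norm{u-U_\ell}{\XX}^2 + \gamma q_{\rm est}\,\est{\ell}{}^2$. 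Splitting the error term as $\norm{u-U_\ell}{\XX}^2=(1-\beta)\norm{u-U_\ell}{\XX}^2+\beta\norm{u-U_\ell}{\XX}^2$ and bounding the latter via reliability~\eqref{opt:ass:reliable} by $\beta\c{opt:reliable}^2\,\est{\ell}{}^2$, I pick $\beta>0$ so small that $q_{\rm R}:=\max\{1-\beta,\,q_{\rm est}+\beta\c{opt:reliable}^2/\gamma\}<1$. Iterating $\Delta_{\ell+n}^2\le q_{\rm R}^n\Delta_\ell^2$ and bounding the initial total error by $\est{\ell}{}^2$ via reliability one final time gives the claimed $\est{\ell+n}{}^2\le \c{opt:Rlin}q_{\rm R}^n\est{\ell}{}^2$, and the leftmost inequality in~\eqref{opt:eq:Rlin} is just~\eqref{opt:ass:reliable} restated.

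The main obstacle will be the quasi-Pythagoras step: it is immediate in the symmetric case through the energy norm, but for a merely elliptic (non-symmetric) $b(\cdot,\cdot)$ the identity becomes only an inequality with a controllable defect, and one has to balance the Young parameter there against the parameter $\delta$ above and against $\beta, \gamma$ so that all constants stay admissible. This juggling, rather than any single estimate, is the delicate part; everything else is a bookkeeping exercise built on~\eqref{opt:ass:stable}--\eqref{opt:ass:reliable}, the bulk criterion, and Lemmas~\ref{estred:lem:estreddef}--\ref{estred:lem:convortho}.
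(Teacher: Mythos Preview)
Your derivation of the estimator reduction and of plain convergence~\eqref{opt:eq:convergence} coincides with the paper's argument (Lemma~\ref{opt:lem:estred} together with Lemmas~\ref{estred:lem:convortho} and~\ref{estred:lem:estreddef}).

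For the $R$-linear estimate~\eqref{opt:eq:Rlin} you take a genuinely different route, and here there is a gap. Your argument rests on a one-step quasi-Pythagoras inequality
\[
  \norm{u-U_{\ell+1}}{\XX}^2 + c\,\norm{U_{\ell+1}-U_\ell}{\XX}^2 \le \norm{u-U_\ell}{\XX}^2
\]
in the $\XX$-norm. Even in the symmetric case this is false in the $\XX$-norm (it is an identity only in the energy norm $\norm{\cdot}{b}$); and in the non-symmetric case no such one-step bound with coefficient $1$ on the right is available in general --- the ``controllable defect'' you allude to cannot simply be absorbed by a Young parameter. The paper explicitly flags this obstruction (cf.\ the discussion around~\eqref{opt:eq:pyth}) and avoids it altogether: instead of a one-step Pythagoras, it invokes the \emph{summed} quasi-orthogonality~\eqref{opt:eq:qosum},
\[
  \sum_{k=\ell}^\infty \norm{U_{k+1}-U_k}{\XX}^2 \le \c{opt:qosum}\,\norm{u-U_\ell}{\XX}^2,
\]
which holds for general continuous and elliptic (possibly non-symmetric) bilinear forms. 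The paper then sums the estimator reduction over $k=\ell+1,\dots,\ell+N$, combines it with~\eqref{opt:eq:qosum} and reliability to obtain $\sum_{k=\ell}^\infty \est{k}{}^2 \le \c{opt:Rlin}\,\est{\ell}{}^2$, and deduces~\eqref{opt:eq:Rlin} by a short induction from this summability. Your CKNS-style contraction of a combined quantity $\Delta_\ell^2$ would prove the theorem in the symmetric setting (after switching to the energy norm), but the paper's summability argument is what carries the result in the stated generality.
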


The proof of the above theorem is split into several steps. The first lemma
states that the assumptions~\eqref{opt:ass:stable} and~\eqref{opt:ass:reduction} imply the estimator
reduction~\eqref{estred:eq:estred} and thus render an abstract version of the results in
Lemmas~\ref{estred:lem:weaksing}--\ref{estred:lem:hypsingres}.
Such an estimate is first but implicitly found in~\cite{ckns}.
Together with the a~priori convergence of Lemma~\ref{estred:lem:convortho}, it proves that the
adaptive algorithm drives the estimator to zero~\eqref{opt:eq:convergence}. This so-called
\emph{estimator reduction principle} is discussed in Section~\ref{section:estred} and has been
proposed in~\cite{afp12} for $(h-h/2)$-type estimators and was generalized afterwards to data-perturbed 
BEM~\cite{afgkmp12,kop}, residual-based error estimators~\cite{fkmp13}, as well as the FEM-BEM
coupling~\cite{afpfembem,fembem}.
Instead of the respective concrete settings, the following proof relies only
on the abstract assumptions~\eqref{opt:ass:stable}--\eqref{opt:ass:reduction}
as well as the marking criterion~\eqref{opt:bulkchasing}.

\begin{mylemma}\label{opt:lem:estred}
  Suppose~\eqref{opt:ass:stable}--\eqref{opt:ass:reduction}. Then, there exist constants
  $\setc{opt:estred}>0$ and $0<q_{\rm est}<1$ such that Algorithm~\ref{opt:algorithm} satisfies
  \begin{align}\label{opt:eq:estred}
    \est{\ell+1}{}^2\leq q_{\rm est} \est{\ell}{}^2 + \c{opt:estred}\norm{U_{\ell+1}-U_\ell}{\XX}^2
  \end{align}
  for all $\ell\in\N$. The constant $q_{\rm est}$ depends only on $\theta$ and $q_{\rm red}$
  from~\eqref{opt:ass:reduction}. The constant $\c{opt:estred}$ depends additionally on
  $\c{opt:reduction}$ as well as $\c{opt:stable}$.
\end{mylemma}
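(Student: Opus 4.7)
The plan is to split the squared estimator on $\mesh_{\ell+1}$ into its contributions from non-refined and refined elements, apply the two axioms (A1) and (A2) to these pieces separately, and then absorb the resulting coarse-mesh contributions using the Dörfler bulk chasing criterion~\eqref{opt:bulkchasing}.

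First I will write
\[
\est{\ell+1}{}^2 = \sum_{\el\in\mesh_\ell\cap\mesh_{\ell+1}}\est{\ell+1}{\el}^2 + \sum_{\el\in\mesh_{\ell+1}\setminus\mesh_\ell}\est{\ell+1}{\el}^2.
\]
To the first sum I will apply (A1) combined with the Young inequality $(a+b)^2\le (1+\delta)a^2+(1+\delta^{-1})b^2$ (for a $\delta>0$ to be chosen), which yields
\[
\sum_{\el\in\mesh_\ell\cap\mesh_{\ell+1}}\!\!\est{\ell+1}{\el}^2 \le (1+\delta)\!\!\sum_{\el\in\mesh_\ell\cap\mesh_{\ell+1}}\!\!\est{\ell}{\el}^2 + (1+\delta^{-1})\c{opt:stable}^2\norm{U_{\ell+1}-U_\ell}{\XX}^2.
\]
To the second sum I will apply (A2) directly, giving a bound by $q_{\rm red}\sum_{\el\in\mesh_\ell\setminus\mesh_{\ell+1}}\est{\ell}{\el}^2$ plus a perturbation in $\norm{U_{\ell+1}-U_\ell}{\XX}^2$. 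Adding these and using $\sum_{\el\in\mesh_\ell\cap\mesh_{\ell+1}}\est{\ell}{\el}^2 = \est{\ell}{}^2 - \sum_{\el\in\mesh_\ell\setminus\mesh_{\ell+1}}\est{\ell}{\el}^2$ collects everything into the form
\[
\est{\ell+1}{}^2 \le (1+\delta)\est{\ell}{}^2 - \bigl((1+\delta) - q_{\rm red}\bigr)\!\!\sum_{\el\in\mesh_\ell\setminus\mesh_{\ell+1}}\!\!\est{\ell}{\el}^2 + C_\delta\norm{U_{\ell+1}-U_\ell}{\XX}^2.
\]

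Next I will exploit Step~(iv) of Algorithm~\ref{opt:algorithm}, which guarantees $\MM_\ell\subseteq\mesh_\ell\setminus\mesh_{\ell+1}$, and combine this with~\eqref{opt:bulkchasing} to obtain $\theta\est{\ell}{}^2 \le \sum_{\el\in\mesh_\ell\setminus\mesh_{\ell+1}}\est{\ell}{\el}^2$. Inserting this lower bound (provided the prefactor is nonnegative, which I ensure by choosing $\delta<1-q_{\rm red}$) produces
\[
\est{\ell+1}{}^2 \le \bigl((1+\delta)(1-\theta) + q_{\rm red}\theta + \delta\theta\bigr)\est{\ell}{}^2 + C_\delta\norm{U_{\ell+1}-U_\ell}{\XX}^2,
\]
so I define $q_{\rm est}:=(1+\delta)(1-\theta)+q_{\rm red}\theta$ (after combining terms) and $\c{opt:estred}:=C_\delta$.

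The only real issue, which is less an obstacle than a careful choice, is to pick $\delta>0$ small enough to ensure $q_{\rm est}<1$: this requires $\delta(1-\theta)<\theta(1-q_{\rm red})$, which is possible whenever $\theta>0$ and $q_{\rm red}<1$ (both supplied by Algorithm~\ref{opt:algorithm} and (A2)). A convenient choice is $\delta = \tfrac{1}{2}\theta(1-q_{\rm red})/\max\{1,1-\theta\}$, which gives a fully explicit contraction factor depending only on $\theta$ and $q_{\rm red}$, while $\c{opt:estred}$ then depends additionally on $\c{opt:stable}$ and $\c{opt:reduction}$, as stated.
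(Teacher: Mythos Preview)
Your proof is correct and follows essentially the same approach as the paper: split $\est{\ell+1}{}^2$ into contributions on refined and non-refined elements, apply (A2) and (A1) with Young's inequality respectively, and use the D\"orfler marking with $\MM_\ell\subseteq\mesh_\ell\setminus\mesh_{\ell+1}$ to obtain the contraction factor $q_{\rm est}=(1+\delta)(1-\theta)+\theta q_{\rm red}$ for sufficiently small $\delta>0$. Two cosmetic remarks: the prefactor $(1+\delta)-q_{\rm red}$ is always positive for $\delta>0$ since $q_{\rm red}<1$, so the side condition $\delta<1-q_{\rm red}$ is unnecessary, and the extra $+\delta\theta$ in your intermediate display is a slip that you correctly drop when stating the final $q_{\rm est}$.
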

\begin{proof}
  We split the error estimator into refined and non-refined elements and
  apply~\eqref{opt:ass:stable}--\eqref{opt:ass:reduction}. With Young's inequality
  $(a+b)^2\leq (1+\delta)a^2+ (1+\delta^{-1})b^2$ for all $a,b\in\R$ and all $\delta>0$, this shows
  \begin{align*}
    \est{\ell+1}{}^2&=\sum_{\el\in\mesh_{\ell+1}\setminus\mesh_\ell}\est{\ell+1}{\el}^2 +
    \sum_{\el\in\mesh_{\ell+1}\cap\mesh_\ell}\est{\ell+1}{\el}^2\\
    &\leq q_{\rm red}\sum_{\el\in\mesh_{\ell}\setminus\mesh_{\ell+1}}\est{\ell}{\el}^2 +
    (1+\delta)\sum_{\el\in\mesh_{\ell}\cap\mesh_{\ell+1}}\est{\ell}{\el}^2\\
    &\qquad +(\c{opt:reduction}+(1+\delta^{-1})\c{opt:stable}^2)\norm{U_{\ell+1}-U_\ell}{\XX}^2.
  \end{align*}
  The bulk chasing~\eqref{opt:bulkchasing} together with
  $\MM_\ell\subseteq \mesh_{\ell+1}\setminus\mesh_\ell$ implies
  \begin{align*}
    \est{\ell+1}{}^2&\leq (q_{\rm red}-(1+\delta))
    \sum_{\el\in\mesh_{\ell}\setminus\mesh_{\ell+1}}\est{\ell}{\el}^2 + (1+\delta)\est{\ell}{}^2\\
    &\qquad +(\c{opt:reduction}+(1+\delta^{-1})\c{opt:stable}^2)\norm{U_{\ell+1}-U_\ell}{\XX}^2\\
    &\leq ((1+\delta)-\theta((1+\delta)- q_{\rm red}))\est{\ell}{}^2 +
    \c{opt:estred}\norm{U_{\ell+1}-U_\ell}{\XX}^2,
  \end{align*}
  where $\c{opt:estred}:=(\c{opt:reduction}+(1+\delta^{-1})\c{opt:stable}^2)$ and
  $q_{\rm est}:=(1+\delta)-\theta((1+\delta)- q_{\rm red})\in (0,1)$ for $\delta>0$ sufficiently small.
  $\hfill\qed$
\end{proof}

With the previous results, we are able to prove the first part of Theorem~\ref{opt:thm:convergence}. 
\begin{proof}[of estimator convergence~\eqref{opt:eq:convergence}]
By assumption (Section~\ref{section:opt:approx}), the discrete spaces are nested.
Therefore, Lemma~\ref{estred:lem:convortho} proves a~priori convergence of $U_\ell$ towards
some limit $U_\infty$, and hence the perturbation term in~\eqref{opt:eq:estred} vanishes.
Consequently, Lemma~\ref{estred:lem:estreddef} applies and concludes the proof.
  $\hfill\qed$
\end{proof}

The next goal is the $R$-linear convergence~\eqref{opt:eq:Rlin}. To this end, the literature
usually employs the Pythagoras identity
\begin{align}\label{opt:eq:pyth}
  \norm{u-U_{\ell+1}}{\XX}^2+\norm{U_{\ell+1}-U_\ell}{\XX}^2=\norm{u-U_\ell}{\XX}^2,
\end{align}
see, e.g.,~\cite{ckns,fkmp13,gantumur,steve07}.
Under reliability~\eqref{opt:ass:reliable}, estimator convergence~\eqref{opt:eq:convergence} implies 
\begin{align}\label{opt:eq:convhelp}
  \lim_{\ell\to\infty}\norm{u-U_\ell}{\XX}=0.
\end{align}
Consequently,~\eqref{opt:eq:pyth} results in
\begin{align}\label{opt:eq:pyth2}
  \sum_{k=\ell}^\infty \norm{U_{k+1}-U_k}{\XX}^2 = \norm{u-U_\ell}{\XX}^2.
\end{align}
However,~\eqref{opt:eq:pyth} and thus~\eqref{opt:eq:pyth2} rely heavily on the symmetry of the
bilinear form $\form{\cdot}{\cdot}$. This may suffice for many problems, however,
when it comes to mixed boundary value problems or the FEM-BEM coupling,~\eqref{opt:eq:pyth} is
wrong in general.
The recent work~\cite{quasipyth} proves a weaker version of~\eqref{opt:eq:pyth2} which holds for
general continuous and elliptic bilinear forms $\form{\cdot}{\cdot}$ and is sufficient for the
upcoming analysis. With~\eqref{intro:continuous}--\eqref{intro:elliptic} and as the sequence
$(U_\ell)_{\ell\in\N_0}$ is convergent~\eqref{opt:eq:convhelp}, there exists a constant
$\setc{opt:qosum}>0$ such that all $\ell\in\N_0$ satisfy
\begin{align}\label{opt:eq:qosum}
  \sum_{k=\ell}^\infty \norm{U_{k+1}-U_k}{\XX}^2\leq \c{opt:qosum}\norm{u-U_\ell}{\XX}^2.
\end{align}
We note that~\eqref{opt:eq:qosum} is weaker than~\eqref{opt:eq:pyth2}, but avoids the use of the
symmetry of $\form{\cdot}{\cdot}$. This will be employed in the following proof.
\begin{proof}[of $R$-linear estimator convergence~\eqref{opt:eq:Rlin}]
  Let $N,\ell\in\N$. Use the estimator reduction~\eqref{opt:eq:estred} to see
  \begin{align*}
    \sum_{k=\ell+1}^{\ell+N} \est{k}{}^2 \leq \sum_{k=\ell+1}^{\ell+N} \Big(q_{\rm est}
    \est{k-1}{}^2 +\c{opt:estred}\norm{U_k-U_{k-1}}{\XX}^2\Big).
  \end{align*}
  This implies
  \begin{align*}
    (1-q_{\rm est})\sum_{k=\ell+1}^{\ell+N} \est{k}{}^2 \leq \est{\ell}{}^2 +\c{opt:estred}
    \sum_{k=\ell+1}^{\ell+N}\norm{U_k-U_{k-1}}{\XX}^2.
  \end{align*}
  The estimate~\eqref{opt:eq:qosum} together with reliability~\eqref{opt:ass:reliable}  then shows
  \begin{align*}
    \sum_{k=\ell}^{\ell+N} \est{k}{}^2\leq \frac{2+\c{opt:estred}\c{opt:qosum}
    \c{opt:reliable}^2}{1-q_{\rm est}}\est{\ell}{}^2.
  \end{align*}
  Since the right-hand side does not depend on $N$, there also holds with
  $\c{opt:Rlin}:=(2+\c{opt:estred}\c{opt:qosum}\c{opt:reliable}^2)(1-q_{\rm est})^{-1}\geq 1$
  \begin{align*}
    \sum_{k=\ell}^{\infty} \est{k}{}^2\leq\c{opt:Rlin}\est{\ell}{}^2.
  \end{align*}
  This result is employed several times to conclude the proof.
  Mathematical induction on $n\in\N$ shows
  \begin{align*}
    \est{\ell+n}{}^2&\leq\sum_{k=\ell+n-1}^{\infty} \est{k}{}^2-\est{\ell+n-1}{}^2\\
    &\leq (1-\c{opt:Rlin}^{-1})\sum_{k=\ell+n-1}^{\infty} \est{k}{}^2\\
    &\vdots \\
    &\leq (1-\c{opt:Rlin}^{-1})^n\sum_{k=\ell}^{\infty} \est{k}{}^2
    \leq \c{opt:Rlin}(1-\c{opt:Rlin}^{-1})^n\est{\ell}{}^2.
  \end{align*}
  This proves~\eqref{opt:eq:Rlin} with $q_{\rm R}=(1-\c{opt:Rlin}^{-1})$.
  $\hfill\qed$
\end{proof}

%%%%%%%%%%%%%%%%%%%%%%%%%%%%%%%%%%%%%%%%%%%%%%%%%%%%%%%%%%%%%%%%%%%%%%%%%%%%%%%%%%%%%%%%%%%%%%%%%%%%%%%
\subsection{Convergence with optimal rates}\label{opt:section:optimal}
%%%%%%%%%%%%%%%%%%%%%%%%%%%%%%%%%%%%%%%%%%%%%%%%%%%%%%%%%%%%%%%%%%%%%%%%%%%%%%%%%%%%%%%%%%%%%%%%%%%%%%%

Having fixed the error estimator $\est{}{}$ for Step~(ii) and the 
mesh refinement strategy for Step~(iv) of Algorithm~\ref{opt:algorithm},
the overall goal in this section is to prove algebraic convergence
rates. For $s>0$, we define the approximability norm
\begin{align}\label{opt:approxnorm}
  \norm{\est{}{}}{\A_s}:=\sup_{N\in\N_0}(N+1)^s\big(\inf_{\mesh_\star\in \refine(\mesh_0)\atop
  \#\mesh_\star -\#\mesh_0\leq N} \est{\star}{} \big)\in [0,\infty].
\end{align}
By definition, $\norm{\est{}{}}{\A_s}<\infty$ means that one can find 
a sequence $(\overline\TT_\ell)_{\ell\in\N_0}$ of meshes such that the corresponding
sequence of estimators $(\overline{\est{\ell}{}})_{\ell\in\N_0}$ satisfies
\begin{align}\label{eq:dp:convrate}
  \overline{\est{\ell}{}} 
  \lesssim (\#\overline\TT_\ell-\#\TT_0)^{-s}
  \quad\text{for all }\ell\in\N_0,
\end{align}
i.e., the estimators decay with algebraic rate $-s$. We note that the
meshes $\overline\TT_\ell$ are not necessarily nested.

The following theorem proves that for each $s>0$, 
$\norm{\est{}{}}{\A_s}<\infty$ is equivalent to~\eqref{eq:dp:convrate} 
for $\overline\TT_\ell:=\TT_\ell$ being the adaptively generated mesh,
i.e., each possible algebraic convergence rate (constrained by estimator
and mesh refinement) will in fact by achieved by Algorithm~\ref{opt:algorithm}
(which produces only nested meshes).
In particular, adaptive mesh refinement is superior to uniform mesh refinement.

\begin{theorem}\label{opt:thm:optimality}
  Suppose~\eqref{opt:ass:stable}--\eqref{opt:ass:reduction} as well as discrete
  reliability~\eqref{opt:ass:drel}. Let the adaptivity parameter satisfy
  $\theta<\theta_0:=(1+\c{opt:stable}^2\c{opt:drel}^2)^{-1}$. Then, Algorithm~\ref{opt:algorithm}
  converges with the best possible rate in the sense that for all $s>0$, it holds
  $\norm{\est{}{}}{\A_s}<\infty$ if and only if
  \begin{align}
    \est{\ell}{}\leq \c{opt:optimality} (\#\mesh_\ell-\#\mesh_0)^{-s}\quad\text{for all }\ell\in\N,
  \end{align}
  where $\setc{opt:optimality}>0$ depends only on the constants
  in~\eqref{opt:ass:stable}--\eqref{opt:ass:drel} as well as on $\norm{\est{}{}}{\A_s}$ and $\theta$.
\end{theorem}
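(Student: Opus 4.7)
The proof splits into two directions. The easy direction ($\eta_\ell\lesssim (\#\mesh_\ell-\#\mesh_0)^{-s}$ for the adaptive meshes implies $\norm{\est{}{}}{\A_s}<\infty$) follows at once from the definition \eqref{opt:approxnorm}, since each adaptive mesh $\mesh_\ell\in\refine(\mesh_0)$ is an admissible candidate in the infimum. The work is in the converse. The plan is to prove, for each step $\ell$, the existence of a comparison refinement $\mesh_\star$ such that (a) the set $\RR{\ell}{\star}$ from discrete reliability already satisfies the D\"orfler bulk criterion \eqref{opt:bulkchasing}, and (b) the cardinality $\#(\mesh_\ell\setminus \mesh_\star)$ is controlled by $\est{\ell}{}^{-1/s}$. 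Together with $R$-linear convergence from Theorem~\ref{opt:thm:convergence} and the mesh-closure estimate \eqref{dp:meshclosure}, this will yield the claim.

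The first key step is the \emph{optimality of the D\"orfler marking}: there exist $\kappa>0$ and $0<\theta_0\leq (1+\c{opt:stable}^2\c{opt:drel}^2)^{-1}$ such that whenever $\mesh_\star\in\refine(\mesh_\ell)$ satisfies $\est{\star}{}^2\leq\kappa\,\est{\ell}{}^2$, then
\begin{align*}
 \theta_0\,\est{\ell}{}^2\leq \sum_{\el\in\RR{\ell}{\star}}\est{\ell}{\el}^2.
\end{align*}
This is shown by splitting $\est{\ell}{}^2$ into contributions on $\RR{\ell}{\star}$ and on $\mesh_\ell\setminus\RR{\ell}{\star}\subseteq\mesh_\ell\cap\mesh_\star$, applying stability \eqref{opt:ass:stable} together with Young's inequality on the latter, and then bounding $\norm{U_\star-U_\ell}\XX^2$ via discrete reliability \eqref{opt:ass:drel}; the assumption $\theta<\theta_0$ then makes the resulting factor in front of $\est{\ell}{}^2$ strictly less than one and allows absorbing it. This is the main technical obstacle, as the constants must be tracked carefully to obtain the sharp threshold $\theta_0=(1+\c{opt:stable}^2\c{opt:drel}^2)^{-1}$.

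The second step constructs $\mesh_\star$ via the overlay. Given $\est{\ell}{}$, choose $\eps>0$ so that $\eps^2\leq\kappa\,\est{\ell}{}^2/C_{\mathrm{mon}}^2$, where $C_{\mathrm{mon}}$ is a monotonicity constant (to be established below). By definition of $\norm{\est{}{}}{\A_s}$, there is $\mesh_\eps\in\refine(\mesh_0)$ with $\est{\eps}{}\leq\eps$ and $\#\mesh_\eps-\#\mesh_0\leq \norm{\est{}{}}{\A_s}^{1/s}\eps^{-1/s}\lesssim \est{\ell}{}^{-1/s}$. Set $\mesh_\star:=\mesh_\ell\oplus\mesh_\eps$. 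The overlay estimate \eqref{dp:overlay} yields
\begin{align*}
 \#\mesh_\star-\#\mesh_\ell \leq \#\mesh_\eps-\#\mesh_0 \lesssim \est{\ell}{}^{-1/s}.
\end{align*}
A short monotonicity argument --- combining stability \eqref{opt:ass:stable}, reduction \eqref{opt:ass:reduction} applied to the pair $(\mesh_\eps,\mesh_\star)$, reliability \eqref{opt:ass:reliable}, and C\'ea's lemma \eqref{intro:cea} --- gives $\est{\star}{}\leq C_{\mathrm{mon}}\,\est{\eps}{}\leq C_{\mathrm{mon}}\,\eps$, so the smallness assumption $\est{\star}{}^2\leq\kappa\,\est{\ell}{}^2$ of Step~1 is satisfied.

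Finally, Step~1 ensures that $\RR{\ell}{\star}$ realizes the D\"orfler criterion with parameter $\theta_0\geq\theta$; hence the minimal set $\MM_\ell$ chosen in Algorithm~\ref{opt:algorithm} satisfies $\#\MM_\ell\leq\#\RR{\ell}{\star}\leq\c{opt:refined}\#(\mesh_\ell\setminus\mesh_\star)\lesssim\est{\ell}{}^{-1/s}$. Summing over all refinement steps and invoking the mesh-closure estimate \eqref{dp:meshclosure} together with $R$-linear estimator convergence \eqref{opt:eq:Rlin},
\begin{align*}
 \#\mesh_\ell-\#\mesh_0 &\lesssim \sum_{k=0}^{\ell-1}\#\MM_k \lesssim \sum_{k=0}^{\ell-1}\est{k}{}^{-1/s}\\
 &\lesssim \est{\ell}{}^{-1/s}\sum_{k=0}^{\ell-1}q_{\rm R}^{(\ell-k)/(2s)}\lesssim \est{\ell}{}^{-1/s},
\end{align*}
where the last estimate uses the geometric series bound coming from $0<q_{\rm R}<1$. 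Rearranging produces $\est{\ell}{}\leq \c{opt:optimality}\,(\#\mesh_\ell-\#\mesh_0)^{-s}$ and concludes the proof. $\hfill\qed$
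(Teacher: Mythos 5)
Your proof is correct and follows essentially the same approach as the paper: construct a comparison mesh $\mesh_\star=\mesh_\ell\oplus\mesh_\eps$ via the overlay and the definition of $\norm{\est{}{}}{\A_s}$, show $\est{\star}{}$ is small enough to trigger Lemma~\ref{opt:lem:bulkchasing}, bound $\#\MM_\ell$ by minimality, and sum with the mesh-closure estimate and $R$-linear convergence. The only small deviation is in the monotonicity step $\est{\star}{}\lesssim\est{\eps}{}$: you bound $\norm{U_\star-U_\eps}{\XX}$ via reliability~\eqref{opt:ass:reliable} plus C\'ea's lemma, whereas the paper uses discrete reliability~\eqref{opt:ass:drel} directly for the pair $(\mesh_\eps,\mesh_\star)$; both yield the same bound and are available under the stated hypotheses (since~\eqref{opt:ass:drel}$\Rightarrow$\eqref{opt:ass:reliable}).
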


An essential part of the proof of the above theorem is that the bulk chasing
criterion~\eqref{opt:bulkchasing} does not mark too many elements. This is the concern of the first
lemma, which states that if one observes linear convergence~\eqref{opt:eq:Rlin}, the refined elements
satisfy the bulk chasing~\eqref{opt:bulkchasing}. In this respect the marking
strategy~\eqref{opt:bulkchasing} appears to be sufficient as well as necessary for linear
convergence~\eqref{opt:eq:Rlin}. We note that at this stage the discrete
reliability~\eqref{opt:ass:drel} enters. This observation has first been proved for AFEM
in~\cite{steve07}. Unlike the AFEM literature~\cite{steve07,ckns}, our statement and proof relies
only on the error estimator and avoids the use of any efficiency estimate (or lower error bound)
for the estimator.

\begin{mylemma}\label{opt:lem:bulkchasing}
  Let the error estimator satisfy stability~\eqref{opt:ass:stable} and discrete
  reliability~\eqref{opt:ass:drel}. Then, there exists $0<\kappa_0<1$ such that any refinement
  $\mesh_\star\in\refine(\mesh_\ell)$ which satisfies
  \begin{align}\label{opt:eq:kappa}
    \est{\star}{}^2\leq \kappa_0 \est{\ell}{}^2
  \end{align}
  fulfils the bulk chasing~\eqref{opt:bulkchasing} in the sense
  \begin{align}\label{opt:eq:bulkopt}
    \theta\est{\ell}{}^2 \leq \sum_{\el\in\RR{\ell}{\star}}\est{\ell}{\el}^2
  \end{align}
  for all $0\leq \theta<\theta_0$. The constant $\theta_0$ is defined in
  Theorem~\ref{opt:thm:optimality} whereas $\RR{\ell}{\star}\subseteq \mesh_\ell$ is guaranteed
  by~\eqref{opt:ass:drel}.
\end{mylemma}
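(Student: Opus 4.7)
The plan is to bound $\est{\ell}{}^2$ from above by a convex combination of $\est{\star}{}^2$ and $\sum_{\el\in\RR{\ell}{\star}}\est{\ell}{\el}^2$, then absorb the first term via the hypothesis~\eqref{opt:eq:kappa}. First I would split the estimator sum using the inclusion $\mesh_\ell\setminus\RR{\ell}{\star}\subseteq\mesh_\ell\cap\mesh_\star$, which follows from $\RR{\ell}{\star}\supseteq\mesh_\ell\setminus\mesh_\star$ (guaranteed by~\eqref{opt:ass:drel}), to write
\begin{align*}
\est{\ell}{}^2 \le \sum_{\el\in\mesh_\ell\cap\mesh_\star}\est{\ell}{\el}^2 + \sum_{\el\in\RR{\ell}{\star}}\est{\ell}{\el}^2.
\end{align*}

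Next, I would apply stability~\eqref{opt:ass:stable} together with the Young inequality $(a+b)^2\le(1+\delta)a^2+(1+\delta^{-1})b^2$, valid for any $\delta>0$, to obtain
\begin{align*}
\sum_{\el\in\mesh_\ell\cap\mesh_\star}\est{\ell}{\el}^2 \le (1+\delta)\,\est{\star}{}^2 + (1+\delta^{-1})\,\c{opt:stable}^2\,\norm{U_\star-U_\ell}{\XX}^2,
\end{align*}
where I used $\sum_{\el\in\mesh_\ell\cap\mesh_\star}\est{\star}{\el}^2\le\est{\star}{}^2$. The discrete reliability~\eqref{opt:ass:drel} then controls the Galerkin difference by the localized estimator, namely $\norm{U_\star-U_\ell}{\XX}^2\le\c{opt:drel}^2\sum_{\el\in\RR{\ell}{\star}}\est{\ell}{\el}^2$. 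Invoking the hypothesis $\est{\star}{}^2\le\kappa_0\est{\ell}{}^2$ and rearranging yields
\begin{align*}
\bigl[1-(1+\delta)\kappa_0\bigr]\est{\ell}{}^2 \le \bigl[1+(1+\delta^{-1})\c{opt:stable}^2\c{opt:drel}^2\bigr]\sum_{\el\in\RR{\ell}{\star}}\est{\ell}{\el}^2.
\end{align*}

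It remains to show that, given $\theta<\theta_0=(1+\c{opt:stable}^2\c{opt:drel}^2)^{-1}$, the free parameters $\delta>0$ and $\kappa_0\in(0,1)$ can be chosen so that the prefactor
\begin{align*}
\Theta(\delta,\kappa_0):=\frac{1-(1+\delta)\kappa_0}{1+(1+\delta^{-1})\c{opt:stable}^2\c{opt:drel}^2}
\end{align*}
exceeds $\theta$. In the limit $\kappa_0\to 0$ and $\delta\to\infty$, one has $\Theta(\delta,\kappa_0)\to\theta_0$, so by continuity one first fixes $\delta$ sufficiently large so that the ratio with $\kappa_0=0$ is strictly larger than $\theta$, and then shrinks $\kappa_0$ to preserve the strict inequality while keeping $(1+\delta)\kappa_0<1$. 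This delivers an $\ell$-independent $\kappa_0\in(0,1)$ depending only on $\theta$, $\c{opt:stable}$, and $\c{opt:drel}$, and concludes~\eqref{opt:eq:bulkopt}.

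The main technical step is the combined use of~\eqref{opt:ass:stable} and~\eqref{opt:ass:drel}, which is straightforward once the right splitting is in place; the subtlety lies entirely in the optimization of $\Theta(\delta,\kappa_0)$, where the two limits $\delta\to\infty$ (needed to approach $\theta_0$) and $(1+\delta)\kappa_0<1$ (needed for a nontrivial estimate) must be balanced. This is precisely where the threshold $\theta<\theta_0$ becomes sharp.
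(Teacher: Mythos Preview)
Your proof is correct and follows essentially the same approach as the paper: split the estimator over refined and non-refined elements, apply stability~\eqref{opt:ass:stable} with Young's inequality, then discrete reliability~\eqref{opt:ass:drel}, and finally optimize the parameters $\delta$ and $\kappa_0$ to reach the threshold $\theta_0$. The only cosmetic difference is that the paper first splits over $\mesh_\ell\setminus\mesh_\star$ and $\mesh_\ell\cap\mesh_\star$ and invokes $\mesh_\ell\setminus\mesh_\star\subseteq\RR{\ell}{\star}$ at the end, whereas you use $\RR{\ell}{\star}$ in the initial splitting; both routes produce the identical key inequality and parameter discussion.
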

\begin{proof}
  Similar to the proof of the estimator reduction in Lem\-ma~\ref{opt:lem:estred}, we split the
  error estimator and apply~\eqref{opt:ass:stable} together with Young's inequality
  $(a+b)^2\leq (1+\delta)a^2+(1+\delta^{-1})b^2$ for all $a,b\in\R$ and $\delta>0$. This yields
  \begin{align*}
    \est{\ell}{}^2&=\sum_{\el\in\mesh_\ell\setminus\mesh_\star}\est{\ell}{\el}^2 +
    \sum_{\el\in\mesh_\ell\cap\mesh_\star}\est{\ell}{\el}^2\\
    &\leq \sum_{\el\in\mesh_\ell\setminus\mesh_\star}\est{\ell}{\el}^2 +(1+\delta) \est{\star}{}^2
    \\
    &\qquad+ (1+\delta^{-1}) \c{opt:stable}^2 \norm{U_\star-U_\ell}{\XX}^2.
  \end{align*}
  The assumption~\eqref{opt:eq:kappa} as well as~\eqref{opt:ass:drel} apply and show
  \begin{align*}
    \est{\ell}{}^2&\leq \sum_{\el\in\mesh_\ell\setminus\mesh_\star}\est{\ell}{\el}^2 +
    (1+\delta) \kappa_0\est{\ell}{}^2\\
    &\qquad+ (1+\delta^{-1}) \c{opt:stable}^2 \norm{U_\star-U_\ell}{\XX}^2\\
    &\leq (1+(1+\delta^{-1}) \c{opt:stable}^2\c{opt:drel}^2)\sum_{\el\in\RR{\ell}{\star}}
    \est{\ell}{\el}^2+(1+\delta) \kappa_0\est{\ell}{}^2.
  \end{align*}
  This implies
  \begin{align*}
    \frac{1-(1+\delta) \kappa_0}{1+(1+\delta^{-1}) \c{opt:stable}^2\c{opt:drel}^2}
    \est{\ell}{}^2\leq \sum_{\el\in\RR{\ell}{\star}}\est{\ell}{\el}^2.
  \end{align*}
  If $\theta<\theta_0$, there exist $\delta>0$ and $\kappa_0>0$ such that
  \begin{align*}
    \theta \leq \frac{1-(1+\delta) \kappa_0}{1+(1+\delta^{-1}) \c{opt:stable}^2\c{opt:drel}^2}\leq
    \frac{1}{1+\c{opt:stable}^2\c{opt:drel}^2}=: \theta_0.
  \end{align*}
  The combination of the last two estimates concludes the proof.
  $\hfill\qed$
\end{proof}

\begin{proof}[of Theorem~\ref{opt:thm:optimality}]
  The very technical proof of Theorem~\ref{opt:thm:optimality} is found in great detail
  in~\cite[Section~4]{axioms}. Therefore, we only provide a brief sketch here. Given $\lambda>0$
  and by use of $\norm{\est{}{}}{\A_s}<\infty$, 
  one can find a mesh  $\mesh_\eps\in\refine(\mesh_0)$ with
  $\est{\eps}{}^2\leq \eps^2 :=\lambda\est{\ell}{}^2$ and $\#\mesh_\eps-\#\mesh_0\lesssim \eps^{-1/s}$.
  The overlay property~\eqref{dp:overlay} proves for
  $\mesh_\star:=\mesh_\eps\oplus\mesh_\ell\in \refine(\mesh_\ell)$ that
  \begin{align}\label{opt:eq:optmesh1}
    \#(\mesh_\ell\setminus\mesh_\star)\leq \#\mesh_\eps-\#\mesh_0\lesssim \eps^{-1/s}.
  \end{align}
  The arguments of the proof of Lemma~\ref{opt:lem:estred} apply also for
  $\mesh_\star\in\refine(\mesh_\eps)$ and show
  \begin{align*}
    \est{\star}{}^2\lesssim \est{\eps}{}^2 + \norm{U_\star-U_\eps}{\XX}^2.
  \end{align*}
  Then, the discrete reliability~\eqref{opt:ass:drel} implies
  \begin{align*}
    \est{\star}{}^2\lesssim \est{\eps}{}^2\leq \eps^2= \lambda\est{\ell}{}^2.
  \end{align*}
  Finally, we choose $\lambda>0$ sufficiently small such that there holds
  \begin{align}\label{opt:eq:optmesh2}
    \est{\star}{}^2\leq \kappa_0 \est{\ell}{}^2.
  \end{align}
  Note that the choice of $\lambda$ depends only on the constants
  in~\eqref{opt:ass:stable}--\eqref{opt:ass:reduction} and~\eqref{opt:ass:drel}, as well as on
  $\norm{\est{}{}}{\A_s}$. With~\eqref{opt:eq:optmesh2}, Lemma~\ref{opt:lem:bulkchasing} applies
  and proves that $\RR{\ell}{\star}$ satisfies the bulk chasing~\eqref{opt:eq:bulkopt}. Since
  $\MM_\ell$ is chosen in Step~(iii) of Algorithm~\ref{opt:algorithm} as a set with minimal
  cardinality which satisfies the bulk chasing criterion, there holds with~\eqref{opt:eq:optmesh1}
  \begin{align*}
    \#\MM_\ell\leq \#\RR{\ell}{\star}\leq \c{opt:refined}\#(\mesh_\ell\setminus\mesh_\star)\lesssim
    \eps^{-1/s}= \lambda^{-1/{2s}}\est{\ell}{}^{-1/s}.
  \end{align*}
  Next, the mesh closure estimate~\eqref{dp:meshclosure} provides
  \begin{align*}
    \#\mesh_\ell-\#\mesh_0 \lesssim \sum_{k=0}^{\ell-1}\#\MM_k \lesssim
    \sum_{k=0}^{\ell-1}\#\est{k}{}^{-1/s}
    \quad\text{for all }\ell\in\N_0
  \end{align*}
  The $R$-linear convergence~\eqref{opt:eq:Rlin} shows
  $\est{\ell}{}^2\lesssim q_{\rm R}^{\ell-k}\est{k}{}^2$, which implies
  $\est{k}{}^{-1/s}\lesssim q_{\rm R}^{(\ell-k)/(2s)} \est{\ell}{}^{-1/s}$. By convergence of the
  geometric series, this concludes
  \begin{align*}
    \#\mesh_\ell-\#\mesh_0 \lesssim \est{\ell}{}^{-1/s} \sum_{k=0}^{\ell-1} q_{\rm R}^{(\ell-k)/(2s)}
    \leq \est{\ell}{}^{-1/s} \frac{1}{1-q_{\rm R}^{1/(2s)}}.
  \end{align*}
  Taking the estimate to the power of $-s$ shows
  \begin{align*}
    \est{\ell}{} \lesssim ( \#\mesh_\ell-\#\mesh_0 )^{-s}\quad\text{for all }\ell\in\N_0.
  \end{align*}
  This concludes the proof.
  $\hfill\qed$
\end{proof}

%%%%%%%%%%%%%%%%%%%%%%%%%%%%%%%%%%%%%%%%%%%%%%%%%%%%%%%%%%%%%%%%%%%%%%%%%%%%%%%%%%%%%%%%%%%%%%%%%%%%%%%
\subsection{Linear convergence of a $(h-h/2)$-type error estimator for the weakly
singular integral equation}\label{opt:sec:example1}
%%%%%%%%%%%%%%%%%%%%%%%%%%%%%%%%%%%%%%%%%%%%%%%%%%%%%%%%%%%%%%%%%%%%%%%%%%%%%%%%%%%%%%%%%%%%%%%%%%%%%%%
This section discusses the $(h-h/2)$ error estimator from~\cite{fp08},
cf.\ Section~\ref{section:est:hh2} and extends the estimator reduction result from
Section~\ref{section:estred:hh2weaksing} in the context of the abstract framework.
In the terms of the previous section, it holds according to Proposition~\ref{prop:galerkin:weaksing}
\begin{align}\label{opt:eq:weaksing}
  \form{\phi}{\psi}&:=\dual{V\phi}{\psi}_\Gamma\quad\text{for all }\phi,\psi\in\XX:=
  \widetilde H^{-1/2}(\Gamma),\nonumber\\
  F(\psi)&:= \dual{f}{\psi}_{\Gamma}\quad\text{for all }\psi\in\XX,
\end{align}
where $f\in H^{1/2}(\Gamma)$. The exact solution $\phi\in\XX$ satisfies
\begin{align}\label{opt:weaksing:continuous}
  \form{\phi}{\psi}=F(\psi)\quad\text{for all }\psi\in\XX.
\end{align}
Since the $(h-h/2)$ error-estimator defined in~\eqref{estred:def:hh2weaksing} uses the uniformly refined
mesh $\widehat\mesh_\ell:=\refine(\mesh_\ell,\mesh_\ell)$ instead of the original mesh $\mesh_\ell$,
it is natural to consider the discrete spaces $\XX_\ell:=\widehat\XX_\ell:=\Pp^p(\widehat \mesh_\ell)$
for the discrete Galerkin formulation~\eqref{intro:galerkin} which reads in this setting: find
$\widehat\Phi_\ell\in\widehat\XX_\ell$ such that
\begin{align}\label{opt:weaksing:discrete}
  \form{\widehat \Phi_\ell}{\Psi}=F(\Psi)\quad\text{for all }\Psi\in\widehat\XX_\ell.
\end{align}
Recall the $(h-h/2)$-type error estimator from Theorem~\ref{thm:def:hh2:weaksing}, i.e.,
\begin{align}\label{opt:est:weaksing:hh2}
  \est{\ell}{}^2:=\sum_{\el\in\mesh_\ell}\est{\ell}{\el}^2:=\sum_{\el\in\mesh_\ell}h_\el
  \norm{(1-\pi_\ell^p)\widehat\Phi_\ell}{L_2(\el)}^2,
\end{align}
where $h_\el:=|\el|^{1/(d-1)}\simeq {\rm diam}(\el)$. 
The next step is to prove the assumptions~\eqref{opt:ass:stable}--\eqref{opt:ass:reduction}
for $\est{\ell}{}$.

\begin{mylemma}\label{opt:weaksing:ass}
  The $(h-h/2)$ error estimator $\est{\ell}{}$ from~\eqref{opt:est:weaksing:hh2} satisfies
  stability~\eqref{opt:ass:stable} and reduction~\eqref{opt:ass:reduction}. The constants
  $\c{opt:stable},q_{\rm red},\c{opt:reduction}$ depend only on $\Gamma$, the polynomial
  degree $p$, and on the shape regularity of $\mesh_\ell$.
\end{mylemma}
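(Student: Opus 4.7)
The plan is to adapt the estimator reduction argument of Lemma~\ref{estred:lem:weaksing} to the more refined comparison between two discrete solutions $\widehat\Phi_\ell\in\Pp^p(\widehat\mesh_\ell)$ and $\widehat\Phi_\star\in\Pp^p(\widehat\mesh_\star)$ required by~\eqref{opt:ass:stable}--\eqref{opt:ass:reduction}. The three key ingredients are: (i) on $T\in\mesh_\ell\cap\mesh_\star$ the local $L_2$-projections coincide, $\pi_\ell^p|_T=\pi_\star^p|_T$, and the mesh-size function is unchanged, $h_\ell|_T=h_\star|_T$; (ii) for $T'\in\mesh_\ell\setminus\mesh_\star$ and its sons $T\in\mesh_\star\setminus\mesh_\ell$ with $T\subseteq T'$ one has $h_T\le 2^{-1/(d-1)}h_{T'}$ by~\eqref{dp2:estred}; (iii) since $\mesh_\star\in\refine(\mesh_\ell)$ also $\widehat\mesh_\star$ refines $\widehat\mesh_\ell$, so $\widehat\Phi_\ell,\widehat\Phi_\star\in\Pp^p(\widehat\mesh_\star)$ and the inverse estimate of Lemma~\ref{lem:Pp:invest} applies to their difference on the common fine mesh $\widehat\mesh_\star$, whose mesh-size function is equivalent (up to a factor $2$) to $h_\ell$ on $T\in\mesh_\ell\cap\mesh_\star$.

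For stability~\eqref{opt:ass:stable}, I would apply the reverse triangle inequality for the $\ell^2$-sum over $T\in\mesh_\ell\cap\mesh_\star$ together with (i) to obtain
\begin{align*}
 \Big|\Big(\!\!\!\sum_{T\in\mesh_\ell\cap\mesh_\star}\!\!\!\est{\ell}{T}^2\Big)^{1/2}\!\!-\Big(\!\!\!\sum_{T\in\mesh_\ell\cap\mesh_\star}\!\!\!\est{\star}{T}^2\Big)^{1/2}\Big|\le \norm{h_\ell^{1/2}(1-\pi_\ell^p)(\widehat\Phi_\ell-\widehat\Phi_\star)}{L_2(\Gamma)}.
\end{align*}
The $L_2$-contractivity of $(1-\pi_\ell^p)$ removes the projection, and by (iii) together with Lemma~\ref{lem:Pp:invest} applied to $\widehat\Phi_\ell-\widehat\Phi_\star\in\Pp^p(\widehat\mesh_\star)$ with weight $h_{\widehat\mesh_\star}\simeq h_\ell$ on $\mesh_\ell\cap\mesh_\star$, this is bounded by $\norm{\widehat\Phi_\ell-\widehat\Phi_\star}{\wilde H^{-1/2}(\Gamma)}=\norm{U_\ell-U_\star}{\XX}$.

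For the reduction~\eqref{opt:ass:reduction}, I would split $\widehat\Phi_\star=\widehat\Phi_\ell+(\widehat\Phi_\star-\widehat\Phi_\ell)$ inside the estimator on refined elements and apply Young's inequality $(a+b)^2\le(1+\delta)a^2+(1+\delta^{-1})b^2$ for some $\delta>0$. The part containing $\widehat\Phi_\star-\widehat\Phi_\ell$ is treated exactly as in the stability step: $L_2$-contractivity of $(1-\pi_\star^p)$ followed by the inverse estimate yields the contribution $\c{opt:reduction}\norm{U_\star-U_\ell}{\XX}^2$. For the remaining part, given $T\in\mesh_\star\setminus\mesh_\ell$, there is $T'\in\mesh_\ell\setminus\mesh_\star$ with $T\subseteq T'$. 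Since $\pi_\ell^p\widehat\Phi_\ell$ is polynomial on $T'$, its restriction to $T$ lies in $\Pp^p(T)$, so the best-approximation property of $\pi_\star^p$ gives $\norm{(1-\pi_\star^p)\widehat\Phi_\ell}{L_2(T)}\le\norm{(1-\pi_\ell^p)\widehat\Phi_\ell}{L_2(T)}$. Using $h_T\le 2^{-1/(d-1)}h_{T'}$ from (ii) and summing over all sons $T\subseteq T'$ collects the full $L_2(T')$-norm and yields
\begin{align*}
 \sum_{T\in\mesh_\star\setminus\mesh_\ell}h_T\norm{(1-\pi_\star^p)\widehat\Phi_\ell}{L_2(T)}^2\le 2^{-1/(d-1)}\sum_{T'\in\mesh_\ell\setminus\mesh_\star}\est{\ell}{T'}^2,
\end{align*}
so $q_{\rm red}:=(1+\delta)2^{-1/(d-1)}<1$ for $\delta$ small enough.

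The main technical obstacle is the careful bookkeeping between the four different mesh-size functions $h_\ell,h_\star,h_{\widehat\mesh_\ell},h_{\widehat\mesh_\star}$, in order to invoke Lemma~\ref{lem:Pp:invest} on the correct mesh while keeping the weight $h_\ell$ in front of the $L_2$-norm on the non-refined elements. This hinges on the observation that $\widehat\mesh_\ell$ and $\widehat\mesh_\star$ agree on $T\in\mesh_\ell\cap\mesh_\star$ because both coincide with the uniform refinement of $T$ there, so that $\widehat\Phi_\ell-\widehat\Phi_\star\in\Pp^p(\widehat\mesh_\star)$ and all occurring mesh-sizes on $\mesh_\ell\cap\mesh_\star$ are comparable up to the factor $2^{1/(d-1)}$ coming from uniform bisection.
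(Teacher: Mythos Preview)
Your proposal is correct and follows exactly the line of argument that the paper has in mind: its proof merely states that \eqref{opt:ass:stable}--\eqref{opt:ass:reduction} are implicitly contained in the proof of Lemma~\ref{estred:lem:weaksing}, and what you wrote is precisely the unpacking of that proof into the two separate axioms for an arbitrary refinement $\mesh_\star\in\refine(\mesh_\ell)$.

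One small cleanup regarding the mesh-size bookkeeping you flag at the end: rather than arguing that $h_{\widehat\mesh_\star}\simeq h_\ell$ on the non-refined elements and then invoking Lemma~\ref{lem:Pp:invest} ``locally'', it is cleaner to replace the weight $h_\ell$ by $h_\star$ before passing to all of $\Gamma$, since $h_\ell|_T=h_\star|_T$ for $T\in\mesh_\ell\cap\mesh_\star$; then $\norm{h_\star^{1/2}(\widehat\Phi_\ell-\widehat\Phi_\star)}{L_2(\Gamma)}\lesssim\norm{h_{\widehat\mesh_\star}^{1/2}(\widehat\Phi_\ell-\widehat\Phi_\star)}{L_2(\Gamma)}$ by~\eqref{ass2:hh2}, and Lemma~\ref{lem:Pp:invest} applies globally on $\widehat\mesh_\star$. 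This avoids any worry about the weight being too large on refined elements and makes the constant's dependence on shape regularity transparent.
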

\begin{proof}
  Stability~\eqref{opt:ass:stable} and reduction~\eqref{opt:ass:reduction} are implicitly shown
  in the proof of Lemma~\ref{estred:lem:weaksing}.
  $\hfill\qed$
\end{proof}

\begin{theorem}\label{opt:thm:weaksingconv}
  For all $0<\theta\leq 1$, Algorithm~\ref{opt:algorithm} with the $(h-h/2)$ error estimator
  $\est{\ell}{}$ converges in the sense
  \begin{align}\label{opt:weaksing:estconv}
    \lim_{\ell\to\infty}\est{\ell}{}=0.
  \end{align}
  If the saturation assumption (Assumption~\ref{ass:sata}) is satisfied, then $\est{\ell}{}$
  from~\eqref{opt:est:weaksing:hh2} satisfies reliability~\eqref{opt:ass:reliable}, and there
  holds $R$-linear convergence
  \begin{align}\label{opt:weaksing:rlin}
    \c{opt:reliable}^{-2}\norm{\phi-\Phi_{\ell+n}}{\widetilde H^{-1/2}(\Gamma)}^2\leq
    \est{\ell+n}{}^2\leq \c{opt:Rlin}q_{\rm R}^n \est{\ell}{}^2
  \end{align}
  for all $\ell,n\in\N_0$. The constant $\c{opt:reliable}$ depends only on the saturation constant
  $\c{sata}$, $\Gamma$, the shape regularity of $\mesh_\ell$, and the polynomial degree $p$.
  The constants $\c{opt:Rlin}$ and $q_{\rm R}$ depend additionally on $\theta$. 
\end{theorem}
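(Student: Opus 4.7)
The plan is to recognize Theorem~\ref{opt:thm:weaksingconv} as a direct corollary of the abstract convergence machinery of Theorem~\ref{opt:thm:convergence} applied to the concrete setting~\eqref{opt:eq:weaksing}. In the abstract notation, the Galerkin solution $U_\ell$ is identified with the fine-mesh discrete solution $\widehat\Phi_\ell\in\widehat\XX_\ell=\Pp^p(\widehat\mesh_\ell)$ of~\eqref{opt:weaksing:discrete}, and $\est{\ell}{}$ from~\eqref{opt:est:weaksing:hh2} is the abstract estimator. Lemma~\ref{opt:weaksing:ass} already supplies~\eqref{opt:ass:stable}--\eqref{opt:ass:reduction}, so the bulk of the work is to activate the abstract results.

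For the plain convergence~\eqref{opt:weaksing:estconv}, I would first invoke Lemma~\ref{opt:lem:estred}, whose assumptions are provided by Lemma~\ref{opt:weaksing:ass}, to obtain the estimator reduction~\eqref{opt:eq:estred} with perturbation $\alpha_\ell^2\simeq\|\widehat\Phi_{\ell+1}-\widehat\Phi_\ell\|_{\widetilde H^{-1/2}(\Gamma)}^2$. Since the mesh refinement of Section~\ref{section:meshrefinement} is compatible with uniform bisection, the uniform refinements inherit nestedness $\widehat\XX_\ell\subseteq\widehat\XX_{\ell+1}$, so Lemma~\ref{estred:lem:convortho} yields an a~priori limit $\widehat\Phi_\infty\in\widetilde H^{-1/2}(\Gamma)$ with $\|\widehat\Phi_\infty-\widehat\Phi_\ell\|_{\widetilde H^{-1/2}(\Gamma)}\to 0$, hence $\alpha_\ell\to 0$. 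Lemma~\ref{estred:lem:estreddef} then gives $\est{\ell}{}\to 0$, with no restriction on $\theta$.

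For the second part, I would derive reliability~\eqref{opt:ass:reliable} from the saturation assumption. Theorem~\ref{thm:hh2:weaksing} yields $\|\phi-\Phi_\ell\|_{\slo}\le(1-\c{sata}^2)^{-1/2}\|\widehat\Phi_\ell-\Phi_\ell\|_{\slo}$, and Galerkin best-approximation on $\widehat\XX_\ell\supseteq\XX(\mesh_\ell)$ gives $\|\phi-\widehat\Phi_\ell\|_{\slo}\le\|\phi-\Phi_\ell\|_{\slo}$. The estimator equivalences of Theorem~\ref{thm:def:hh2:weaksing} then bound $\|\widehat\Phi_\ell-\Phi_\ell\|_{\slo}\lesssim\est{\ell}{}$, and the norm equivalence from Theorem~\ref{thm:ellipticity} converts this into reliability for both $\|\phi-\widehat\Phi_\ell\|_{\widetilde H^{-1/2}(\Gamma)}$ and $\|\phi-\Phi_\ell\|_{\widetilde H^{-1/2}(\Gamma)}$. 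With~\eqref{opt:ass:stable}, \eqref{opt:ass:reduction} and~\eqref{opt:ass:reliable} in hand, the second half of Theorem~\ref{opt:thm:convergence} produces the geometric decay $\est{\ell+n}{}^2\le \c{opt:Rlin}\,q_{\rm R}^n\,\est{\ell}{}^2$, while the lower bound in~\eqref{opt:weaksing:rlin} is simply reliability applied at level $\ell+n$.

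The main obstacle I anticipate is cosmetic rather than structural: one has to be careful with the two coexisting Galerkin solutions $\Phi_\ell\in\Pp^p(\mesh_\ell)$ and $\widehat\Phi_\ell\in\Pp^p(\widehat\mesh_\ell)$, since the abstract framework of Section~\ref{opt:section:assumptions} has been set up with $\XX_\ell:=\widehat\XX_\ell$, while the statement~\eqref{opt:weaksing:rlin} prefers to express reliability in terms of $\Phi_{\ell+n}$. The saturation assumption is precisely what bridges these two viewpoints and makes both formulations equivalent. A secondary point of care is verifying that the nestedness $\widehat\XX_\ell\subseteq\widehat\XX_{\ell+1}$ required for the a~priori argument is indeed preserved by the bisection routines of Section~\ref{section:meshrefinement}, which amounts to the elementary observation that one additional uniform bisection step commutes with further local refinement in the sense of mesh ordering.
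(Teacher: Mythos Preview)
Your proposal is correct and follows essentially the same route as the paper: both derive~\eqref{opt:weaksing:estconv} and~\eqref{opt:weaksing:rlin} by feeding Lemma~\ref{opt:weaksing:ass} (for~\eqref{opt:ass:stable}--\eqref{opt:ass:reduction}) and Theorem~\ref{thm:hh2:weaksing} together with the norm equivalence $\norm{\cdot}{\slo}\simeq\norm{\cdot}{\widetilde H^{-1/2}(\Gamma)}$ (for~\eqref{opt:ass:reliable}) into the abstract Theorem~\ref{opt:thm:convergence}. Your additional care about the distinction between $\Phi_\ell$ and $\widehat\Phi_\ell$ and about the nestedness $\widehat\XX_\ell\subseteq\widehat\XX_{\ell+1}$ is well placed but not explicitly discussed in the paper's short proof, which simply invokes Theorem~\ref{opt:thm:convergence} directly.
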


\begin{proof}
  The convergence~\eqref{opt:weaksing:estconv} follows from Theorem~\ref{opt:thm:convergence},
  since $\est{\ell}{}$ satisfies the assumptions~\eqref{opt:ass:stable}--\eqref{opt:ass:reduction}.
  The reliability of $\est{\ell}{}$ under the saturation assumption is proved in
  Theorem~\ref{thm:hh2:weaksing} together with the equivalence of
  $\norm{\cdot}{\slo}\simeq \norm{\cdot}{\widetilde H^{-1/2}(\Gamma)}$. The remaining statement
  follows from Theorem~\ref{opt:thm:convergence}.
  $\hfill\qed$
\end{proof}

%%%%%%%%%%%%%%%%%%%%%%%%%%%%%%%%%%%%%%%%%%%%%%%%%%%%%%%%%%%%%%%%%%%%%%%%%%%%%%%%%%%%%%%%%%%%%%%%%%%%%%%
\subsection{Optimal convergence of weighted residual error estimator for the
weakly singular integral equation}\label{opt:sec:example2}
%%%%%%%%%%%%%%%%%%%%%%%%%%%%%%%%%%%%%%%%%%%%%%%%%%%%%%%%%%%%%%%%%%%%%%%%%%%%%%%%%%%%%%%%%%%%%%%%%%%%%%%
We consider the model problem~\eqref{opt:weaksing:continuous}, but in contrast to the previous
section, the discrete problem employs the original mesh $\mesh_\ell$ instead of its uniform
refinement $\widehat\mesh_\ell$, i.e., find $\Phi_\ell\in\XX_\ell:=\Pp^p(\mesh_\ell)$ such that
\begin{align}\label{opt:weaksing:discrete2}
  \form{\Phi_\ell}{\Psi}=F(\Psi)\quad\text{for all }\Psi\in\XX_\ell.
\end{align}
As in Section~\ref{section:estred:resweaksing}, the standard weighted residual error estimator
from Section~\ref{section:est:wres} reads
\begin{align}\label{opt:est:weaksing:res}
  \est{\ell}{}^2:=\sum_{\el\in\mesh_\ell}\est{\ell}{\el}^2:=\sum_{\el\in\mesh_\ell}h_\el
  \norm{\nabla_\Gamma(V\Phi_\ell-f)}{L_2(\el)}^2,
\end{align}
where $\nabla_\Gamma(\cdot)$ denotes the surface gradient on $\Gamma$.
Note that, while~\eqref{intro:weakform} and~\eqref{opt:weaksing:discrete2}
are well-stated for $f\in H^{1/2}(\Gamma)$, the definition of $\est{\ell}{}$
requires additional regularity $f\in H^1(\Gamma)$ of the data.

\begin{mylemma}\label{opt:lem:weaksing:res}
  The weighted residual error estimator $\est{\ell}{}$ from
  \eqref{opt:est:weaksing:res} satisfies
  the assumptions~\eqref{opt:ass:stable}--\eqref{opt:ass:drel}.
  The set $\RR{\ell}{\star}$ from~\eqref{opt:ass:drel} satisfies
  $\RR{\ell}{\star}:=\set{\el\in\mesh_\ell}{\exists \el^\prime\in\mesh_\ell\setminus\mesh_\star,
  \overline{\el}^\prime\cap\overline\el\neq \emptyset}$.
  The constant $\c{opt:refined}$ depends only on the shape regularity of the mesh $\mesh_\ell$,
  whereas the constants $\c{opt:stable},\c{opt:reduction},q_{\rm red},\c{opt:reliable},\linebreak \c{opt:drel}$
  depend additionally on $\Gamma$ and the polynomial degree $p$.
\end{mylemma}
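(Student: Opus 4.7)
Properties (A1) and (A2) are refinements of the estimator-reduction computations of Section~\ref{section:estred:resweaksing}. For (A1), since $h_\ell|_\el = h_\star|_\el$ on each non-refined $\el\in\mesh_\ell\cap\mesh_\star$, the inverse triangle inequality applied to the $\ell^2$-sum localizes the difference to $\norm{h_\ell^{1/2}\nabla_\Gamma V(\Phi_\star-\Phi_\ell)}{L_2(\Gamma)}$, and the discrete inverse-type estimate~\eqref{opt:eq:invest:discrete} for $V$ on the finer mesh $\mesh_\star$ bounds this by $\c{opt:weaksing:invest:discrete}\norm{\Phi_\star-\Phi_\ell}{\widetilde H^{-1/2}(\Gamma)}$. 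For (A2), each refined element $\el\in\mesh_\star\setminus\mesh_\ell$ has a unique coarse parent $\el'\in\mesh_\ell\setminus\mesh_\star$ with $h_\star(\el)\leq 2^{-1/(d-1)} h_\ell(\el')$; writing $V\Phi_\star-f=(V\Phi_\ell-f)+V(\Phi_\star-\Phi_\ell)$, Young's inequality and the same inverse estimate yield the reduction with $q_{\rm red}=(1+\delta)\cdot 2^{-1/(d-1)}<1$ for $\delta>0$ small. Reliability (A3) is exactly the statement of Theorem~\ref{thm:est:wres:weaksing:rel} and requires no new argument.

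The crux is discrete reliability (A4). My approach begins with ellipticity of $V$ and Galerkin orthogonality for $\Phi_\star$, yielding
\begin{align*}
\c{ell}\norm{\Phi_\star-\Phi_\ell}{\widetilde H^{-1/2}(\Gamma)}^2\leq \form{\Phi_\star-\Phi_\ell}{\Phi_\star-\Phi_\ell}= -\dual{R}{\Phi_\star-\Phi_\ell}_\Gamma
\end{align*}
with residual $R:=V\Phi_\ell-f$. Galerkin orthogonality for $\Phi_\ell$ against $\Pp^p(\mesh_\ell)$ lets me subtract an arbitrary $\Psi_\ell\in\Pp^p(\mesh_\ell)$ from the test function. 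I take $\Psi_\ell:=J_\ell(\Phi_\star-\Phi_\ell)$ for a local quasi-interpolation $J_\ell:\Pp^p(\mesh_\star)\to\Pp^p(\mesh_\ell)$ that is a projection onto $\Pp^p(\mesh_\ell)$ and whose action on each $\el\in\mesh_\ell$ depends only on the restriction of its argument to the patch $\omega_\el$. Because $(\Phi_\star-\Phi_\ell)|_{\omega_\el}$ lies in $\Pp^p(\mesh_\ell)|_{\omega_\el}$ whenever every element of $\omega_\el$ is non-refined, the residue $w:=(\Phi_\star-\Phi_\ell)-J_\ell(\Phi_\star-\Phi_\ell)$ is then supported inside $\bigcup\RR{\ell}{\star}$, with $\RR{\ell}{\star}$ collecting the refined elements together with a single layer of neighbors; shape-regularity yields $\#\RR{\ell}{\star}\lesssim\#(\mesh_\ell\setminus\mesh_\star)$.

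I continue via duality and Faermann-type localization: since $f\in H^1(\Gamma)$ and the inverse estimate~\eqref{opt:eq:invest:discrete} gives $\nabla_\Gamma R\in L_2(\Gamma)$, Friedrichs/Poincar\'e on node patches as in the proof of Theorem~\ref{thm:est:wres:weaksing:rel} yields $\norm{R-\pi_\el^0 R}{H^{1/2}(\omega_\el)}\lesssim h_\el^{1/2}\norm{\nabla_\Gamma R}{L_2(\omega_\el)}$. Coupled with the local $\widetilde H^{-1/2}$-stability of $J_\ell$ and a finite-overlap argument for the patches $\{\omega_\el\}_{\el\in\RR{\ell}{\star}}$, a patchwise Cauchy--Schwarz estimate produces
\begin{align*}
|\dual{R}{w}_\Gamma|\lesssim \Big(\sum_{\el\in\RR{\ell}{\star}}\est{\ell}{\el}^2\Big)^{1/2}\,\norm{\Phi_\star-\Phi_\ell}{\widetilde H^{-1/2}(\Gamma)},
\end{align*}
and dividing by $\norm{\Phi_\star-\Phi_\ell}{\widetilde H^{-1/2}(\Gamma)}$ delivers (A4).

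The main obstacle is the construction of $J_\ell$. It must simultaneously be the identity on $\Pp^p(\mesh_\ell)$-pieces of its input (so that $w$ is supported on $\RR{\ell}{\star}$) and be locally $\widetilde H^{-1/2}$-stable with a patchwise approximation estimate, \emph{uniformly in the number of intermediate refinement levels} between $\mesh_\ell$ and $\mesh_\star$. A naive elementwise $L_2$-projection from $\mesh_\star$ onto $\Pp^p(\mesh_\ell)$ fails on the second count: the discrete inverse estimate of Lemma~\ref{lem:Pp:invest} controls only $\norm{h_\star^{1/2}(\Phi_\star-\Phi_\ell)}{L_2(\Gamma)}$, while the argument would force the larger quantity $\norm{h_\ell^{1/2}(\Phi_\star-\Phi_\ell)}{L_2(\mathrm{refined})}$, and the ratio $h_\ell/h_\star$ is unbounded on multiply-refined elements. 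The remedy is to build $J_\ell$ as a Scott--Zhang-type operator defined via dual bases on the fine mesh $\mesh_\star$ but producing output in $\Pp^p(\mesh_\ell)$, as in~\cite{fkmp13,ffkmp13-A}. Once $J_\ell$ is in place with these properties, the rest of the argument is routine.
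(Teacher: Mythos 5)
Your proof correctly reconstructs the approach the paper outsources to its references: (A1)--(A2) via the triangle inequality and the inverse estimate~\eqref{opt:eq:invest:discrete}, as in Lemma~\ref{estred:lem:weaksingres}; (A3) from Theorem~\ref{thm:est:wres:weaksing:rel}; and (A4) via Galerkin orthogonality, a local quasi-interpolation onto $\Pp^p(\mesh_\ell)$, and Faermann-type localization, as in~\cite{fkmp13,gantumur,ffkmp13}. Your observation that a naive elementwise $L_2$-projection would force an $h_\ell$-weighted bound that the discrete inverse estimate on $\mesh_\star$ cannot deliver is precisely the technical obstacle resolved in those references.
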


\begin{proof}
  Reliability~\eqref{opt:ass:reliable} is well-known for $\est{\ell}{}$ since its invention
  in~\cite{cs96} for $d=2$ and~\cite{cms} for $d=3$.
  The assumptions~\eqref{opt:ass:stable}--\eqref{opt:ass:reduction} are shown in the proof of
  Lemma~\ref{estred:lem:weaksingres}.
  The proof of discrete reliability~\eqref{opt:ass:drel} analyzes the original reliability proof
  from~\cite{cms}. The technical proof is found 
  in~\cite{fkmp13,gantumur} for $p=0$ and in~\cite{ffkmp13} for general $p\ge0$.
  $\hfill\qed$
\end{proof}

\begin{theorem}\label{opt:thm:weaksing:res}
  For all $0<\theta\leq 1$, Algorithm~\ref{opt:algorithm} with the residual error estimator
  $\est{\ell}{}$ from~\eqref{opt:est:weaksing:res} converges in the sense
  \begin{align}\label{opt:weaksing:res:rlin}
    \c{opt:reliable}^{-2}\norm{\phi-\Phi_{\ell+n}}{\widetilde H^{-1/2}(\Gamma)}^2\leq
    \est{\ell+n}{}^2\leq \c{opt:Rlin}q_{\rm R}^n \est{\ell}{}^2
  \end{align}
  for all $\ell,n\in\N_0$. For $0<\theta<\theta_0$, Algorithm~\ref{opt:algorithm} converges
  with the best possible rate $s>0$ in the sense that $\norm{\est{}{}}{\A_s}<\infty$ if and only if
  \begin{align}
    \est{\ell}{}\leq \c{opt:optimality} (\#\mesh_\ell-\#\mesh_0)^{-s}\quad\text{for all }\ell\in\N,
  \end{align}
  where the constants $\c{opt:Rlin},q_{\rm R}$ depend only on $\Gamma$, the shape regularity of
  the meshes $\mesh_\ell$, the polynomial degree $p$, and $\theta$. The constant
  $\c{opt:optimality}>0$ depends additionally on $\norm{\est{}{}}{\A_s}$.
\end{theorem}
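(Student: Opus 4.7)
The plan is to derive Theorem~\ref{opt:thm:weaksing:res} as a direct specialization of the abstract results of Section~\ref{opt:section:convergence} and Section~\ref{opt:section:optimal}, leveraging the fact that all structural ingredients have already been established in Lemma~\ref{opt:lem:weaksing:res}. Since the weighted residual error estimator $\est{\ell}{}$ from~\eqref{opt:est:weaksing:res} satisfies stability~\eqref{opt:ass:stable}, reduction~\eqref{opt:ass:reduction}, reliability~\eqref{opt:ass:reliable}, and discrete reliability~\eqref{opt:ass:drel}, both halves of the theorem follow by invoking the corresponding abstract machinery.

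First, for the $R$-linear convergence~\eqref{opt:weaksing:res:rlin}, I would verify that the setting of Section~\ref{opt:section:convergence} applies. The nestedness of $\XX_\ell=\Pp^p(\mesh_\ell)\subseteq \Pp^p(\mesh_\star)=\XX_\star$ whenever $\mesh_\star\in\refine(\mesh_\ell)$ is immediate from the refinement strategy (Section~\ref{section:meshrefinement}), and the approximation property~\eqref{opt:eq:aproxprop} holds for uniform refinements by standard approximation theory for $\Pp^p$ on quasi-uniform meshes combined with the C\'ea lemma applied to $V$. With~\eqref{opt:ass:stable}--\eqref{opt:ass:reliable} at hand from Lemma~\ref{opt:lem:weaksing:res}, Theorem~\ref{opt:thm:convergence} applies verbatim and yields~\eqref{opt:weaksing:res:rlin} with reliability constant $\c{opt:reliable}$ from Lemma~\ref{opt:lem:weaksing:res} and $\c{opt:Rlin},q_{\rm R}$ depending only on $\theta$ and the constants in~\eqref{opt:ass:stable}--\eqref{opt:ass:reliable}.

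Second, for the optimal rates statement, I would apply Theorem~\ref{opt:thm:optimality} with $\theta_0=(1+\c{opt:stable}^2\c{opt:drel}^2)^{-1}$. Since the refinement strategy used (extended 1D bisection for $d=2$ or 2D NVB for $d=3$, cf.~Section~\ref{section:meshrefinement}) provides both the mesh-closure estimate~\eqref{dp:meshclosure} and the overlay estimate~\eqref{dp:overlay}, and since~\eqref{opt:ass:stable}--\eqref{opt:ass:reduction} together with discrete reliability~\eqref{opt:ass:drel} are already guaranteed by Lemma~\ref{opt:lem:weaksing:res}, Theorem~\ref{opt:thm:optimality} immediately delivers the equivalence between $\norm{\est{}{}}{\A_s}<\infty$ and the rate $\est{\ell}{}\leq \c{opt:optimality}(\#\mesh_\ell-\#\mesh_0)^{-s}$, with $\c{opt:optimality}$ depending on $\norm{\est{}{}}{\A_s}$ as well as on the constants from~\eqref{opt:ass:stable}--\eqref{opt:ass:drel}.

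The proof in this form is essentially a bookkeeping exercise, and no genuine obstacle arises here: all the heavy lifting has already been carried out (i) in the abstract Theorems~\ref{opt:thm:convergence}--\ref{opt:thm:optimality}, whose proofs required the delicate estimator reduction argument (Lemma~\ref{opt:lem:estred}) and the quasi-orthogonality~\eqref{opt:eq:qosum}; (ii) in Lemma~\ref{opt:lem:weaksing:res}, where the nontrivial discrete reliability~\eqref{opt:ass:drel} on the patch $\RR{\ell}{\star}$ was established via the refined analysis of~\cite{cms} (see~\cite{fkmp13,gantumur,ffkmp13}); and (iii) in Section~\ref{section:meshrefinement}, where bounded shape-regularity, mesh-closure, and overlay were verified for the chosen refinement strategies. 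The only thing to double-check is the dependence chain of constants: $\c{opt:Rlin},q_{\rm R}$ inherit dependencies on $\Gamma$, on shape-regularity of $\mesh_\ell$, on $p$, and on $\theta$, while $\c{opt:optimality}$ additionally depends on the approximability norm $\norm{\est{}{}}{\A_s}$, exactly as claimed.
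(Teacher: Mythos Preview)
Your proposal is correct and takes essentially the same approach as the paper's proof, which simply notes that Lemma~\ref{opt:lem:weaksing:res} verifies assumptions~\eqref{opt:ass:stable}--\eqref{opt:ass:drel} and then invokes Theorem~\ref{opt:thm:convergence} and Theorem~\ref{opt:thm:optimality}. Your version is more detailed in tracing the dependencies and the role of the mesh-refinement properties, but the logical structure is identical.
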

\begin{proof}
  Lemma~\ref{opt:lem:weaksing:res} shows that the
  assumption~\eqref{opt:ass:stable}--\eqref{opt:ass:drel} are satisfied.
  Theorem~\ref{opt:thm:convergence} and Theorem~\ref{opt:thm:optimality} prove the statements.
  $\hfill\qed$
\end{proof}

%%%%%%%%%%%%%%%%%%%%%%%%%%%%%%%%%%%%%%%%%%%%%%%%%%%%%%%%%%%%%%%%%%%%%%%%%%%%%%%%%%%%%%%%%%%%%%%%%%%%%%%
\subsection{Linear convergence of a $(h-h/2)$-type error estimator for the hypersingular
integral equation}\label{opt:sec:example3}
%%%%%%%%%%%%%%%%%%%%%%%%%%%%%%%%%%%%%%%%%%%%%%%%%%%%%%%%%%%%%%%%%%%%%%%%%%%%%%%%%%%%%%%%%%%%%%%%%%%%%%%
This section extends the estimator reduction result from Section~\ref{section:estred:hh2hypsing}
in the context of the abstract framework. In the terms of previous section, the variational
formulation reads according to Proposition~\ref{prop:galerkin:hypsing}
\begin{align}\label{opt:eq:hypsing}
  \begin{split}
    \form{u}{v}&:=
    \begin{cases}
      \dual{\hyp u}{v}_\Gamma &\text{ for } \Gamma\subsetneq\partial\Omega,\\
      \dual{Wu}{v}_\Gamma+\dual{u}{1}_\Gamma\dual{v}{1}_\Gamma
      &\text{ for } \Gamma = \partial\Omega,
    \end{cases}
    \\
    F(v)&:= \dual{\phi}{v}_{\Gamma}\quad\text{for all }u,v\in\XX:= \widetilde H^{1/2}(\Gamma),
  \end{split}
\end{align}
where $\phi\in H^{-1/2}(\Gamma)$, respectively
$\phi\in H^{-1/2}_0(\Gamma):=\set{\psi\in H^{-1/2}(\Gamma)}{\dual{\psi}{1}_\Gamma=0}$.
The exact solution $u\in\XX$ satisfies
\begin{align}\label{opt:hypsing:continuous}
  \form{u}{v}=F(v)\quad\text{for all }v\in\XX.
\end{align}
Since the $(h-h/2)$ error-estimator from Theorem~\ref{thm:def:hh2:hypsing} uses the uniformly
refined mesh $\widehat\mesh_\ell:=\refine(\mesh_\ell,\mesh_\ell)$ instead of the original mesh
$\mesh_\ell$, it is natural to consider the discrete spaces
$\XX_\ell:=\widehat\XX_\ell:=\wilde\Sp^p(\widehat \mesh_\ell)$ for the discrete Galerkin
formulation~\eqref{intro:galerkin} which reads in this setting: find
$\widehat U_\ell\in\widehat\XX_\ell$ such that
\begin{align}\label{opt:hypsing:discrete}
  \form{\widehat U_\ell}{V}=F(V)\quad\text{for all }V\in\widehat\XX_\ell.
\end{align}
Recall the $(h-h/2)$-type error estimator from Theorem~\ref{thm:def:hh2:hypsing}
\begin{align}\label{opt:est:hypsing:hh2}
  \est{\ell}{}^2:=\sum_{\el\in\mesh_\ell}\est{\ell}{\el}^2:=\sum_{\el\in\mesh_\ell}h_\el
  \norm{(1-\pi_\ell^p)\nabla_\Gamma\widehat U_\ell}{L_2(\el)}^2,
\end{align}
where $\nabla_\Gamma$ denotes the surface gradient on $\Gamma$.
The next step is to prove the assumptions~\eqref{opt:ass:stable}--\eqref{opt:ass:reduction}
for $\est{\ell}{}$.

\begin{mylemma}\label{opt:hypsing:ass}
  The $(h-h/2)$ error estimator $\est{\ell}{}$ from~\ref{opt:est:hypsing:hh2} satisfies
  stability~\eqref{opt:ass:stable} and reduction~\eqref{opt:ass:reduction}. The constants
  $\c{opt:stable},\linebreak q_{\rm red},\c{opt:reduction}$ depend only on $\Gamma$, the polynomial degree
  $p$, and on the shape regularity of $\mesh_\ell$.
\end{mylemma}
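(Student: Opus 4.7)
The plan is to mirror the structure of Lemma~\ref{opt:weaksing:ass}: both axioms \eqref{opt:ass:stable} and \eqref{opt:ass:reduction} are already implicit in the estimator reduction argument for Lemma~\ref{estred:lem:hypsing}, so the task is to extract them cleanly. Throughout, I will rely on three ingredients: first, $L_2$-stability of the projection $\pi_\ell^{p-1}$ (applied componentwise to $\nabla_\Gamma \widehat U_\ell$); second, the inverse estimate for the hypersingular energy space (Lemma~\ref{lem:Sp:invest}) applied on the uniformly refined mesh $\widehat\mesh_\star$ with $s=1/2$; and third, the contraction $h_\star|_{T'} \le 2^{-1/(d-1)} h_\ell|_{T}$ for refined elements, which comes from \eqref{dp2:estred}. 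Recall that for $\mesh_\star \in \refine(\mesh_\ell)$ the uniform refinement $\widehat\mesh_\star$ is a refinement of $\widehat\mesh_\ell$, so $\widehat U_\star - \widehat U_\ell \in \wilde\Sp^p(\widehat\mesh_\star)$, which is what makes the inverse estimate applicable.

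For stability \eqref{opt:ass:stable}, I would apply the reverse triangle inequality in $\ell^2$ on the set $\mesh_\ell \cap \mesh_\star$ and use that on any $T \in \mesh_\ell\cap\mesh_\star$ the local polynomial space $\Pp^{p-1}(T)$ is unchanged, hence $\pi_\star^{p-1}|_T = \pi_\ell^{p-1}|_T$ and $h_\star|_T = h_\ell|_T$. The difference collapses to
\begin{equation*}
\Big(\sum_{T \in \mesh_\ell\cap\mesh_\star} h_\ell|_T \,\norm{(1-\pi_\ell^{p-1})\nabla_\Gamma(\widehat U_\star-\widehat U_\ell)}{L_2(T)}^2\Big)^{1/2},
\end{equation*}
which by $L_2$-stability of $\pi_\ell^{p-1}$ is bounded by $\norm{h_\ell^{1/2}\nabla_\Gamma(\widehat U_\star-\widehat U_\ell)}{L_2(\Gamma)}$. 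Since on those elements $h_\ell \simeq h_{\widehat\star}$ up to a factor depending only on shape regularity, Lemma~\ref{lem:Sp:invest} applied to $\widehat U_\star-\widehat U_\ell \in \wilde\Sp^p(\widehat\mesh_\star)$ yields the bound $\c{opt:stable} \norm{\widehat U_\star-\widehat U_\ell}{\wilde H^{1/2}(\Gamma)}$, which is exactly $\c{opt:stable}\norm{U_\star-U_\ell}{\XX}$.

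For reduction \eqref{opt:ass:reduction}, I split $\widehat U_\star = \widehat U_\ell + (\widehat U_\star - \widehat U_\ell)$ on $\mesh_\star\setminus \mesh_\ell$ and apply Young's inequality with some small $\delta>0$. The first contribution is treated by the projection-best-approximation property $\norm{(1-\pi_\star^{p-1})\nabla_\Gamma\widehat U_\ell}{L_2(T')} \le \norm{(1-\pi_\ell^{p-1})\nabla_\Gamma\widehat U_\ell}{L_2(T')}$ (using $\Pp^{p-1}(\mesh_\ell) \subseteq \Pp^{p-1}(\mesh_\star)$), combined with the mesh-size contraction $h_\star|_{T'}\le 2^{-1/(d-1)}h_\ell|_{T}$ on the father $T\in\mesh_\ell\setminus\mesh_\star$; summing this over children gives the factor $(1+\delta)\,2^{-1/(d-1)}\sum_{T\in\mesh_\ell\setminus\mesh_\star}\est{\ell}{T}^2$, which for $\delta>0$ small enough produces $q_{\rm red}<1$. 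The second contribution is again handled by the inverse estimate of Lemma~\ref{lem:Sp:invest}, yielding $\c{opt:reduction}\norm{\widehat U_\star - \widehat U_\ell}{\wilde H^{1/2}(\Gamma)}^2 = \c{opt:reduction}\norm{U_\star - U_\ell}{\XX}^2$.

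The main technical point—and the only step where hypersingular BEM departs from its weakly singular counterpart—is the inverse estimate: one needs Lemma~\ref{lem:Sp:invest} applied on the uniformly refined mesh $\widehat\mesh_\star$ (not on $\mesh_\star$), and one must use shape regularity to compare $h_\ell$ with the local mesh-size of $\widehat\mesh_\star$ on unrefined elements, and $h_\star$ with the local mesh-size of $\widehat\mesh_\star$ on refined elements. Aside from that, all constants depend only on $\Gamma$, on the polynomial degree $p$, and on shape regularity, since the $(h-h/2)$ construction enforces the nestedness $\widehat\mesh_\ell\subseteq\widehat\mesh_\star$ automatically.
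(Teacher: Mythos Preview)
Your proposal is correct and follows essentially the same approach as the paper: the paper's proof simply refers back to Lemma~\ref{estred:lem:hypsing}, whose proof in turn says to repeat the argument of Lemma~\ref{estred:lem:weaksing} verbatim, replacing the inverse estimate of Lemma~\ref{lem:Pp:invest} by that of Lemma~\ref{lem:Sp:invest}. You have unpacked exactly this strategy, including the key technical point that the inverse estimate must be applied on $\widehat\mesh_\star$ to the discrete difference $\widehat U_\star-\widehat U_\ell\in\wilde\Sp^p(\widehat\mesh_\star)$.
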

\begin{proof}
  The statement is essentially proved in Lemma~\ref{estred:lem:hypsing}.
  $\hfill\qed$
\end{proof}

\begin{theorem}\label{opt:thm:hypsingconv}
  For all $0<\theta\leq 1$, Algorithm~\ref{opt:algorithm} with the\linebreak $(h-h/2)$ error estimator
  $\est{\ell}{}$ from~\eqref{opt:est:hypsing:hh2} converges in the sense
  \begin{align}\label{opt:hypsing:estconv}
    \lim_{\ell\to\infty}\est{\ell}{}=0.
  \end{align}
  If the saturation assumption (Assumption~\ref{ass:sata}) is satisfied, then $\est{\ell}{}$
  satisfies reliability~\eqref{opt:ass:reliable}, and there holds $R$-linear convergence
  \begin{align}\label{opt:hypsing:rlin}
    \c{opt:reliable}^{-2}\norm{u-U_{\ell+n}}{\widetilde H^{1/2}(\Gamma)}^2\leq
    \est{\ell+n}{}^2\leq \c{opt:Rlin}\est{\ell}{}^2\,q_{\rm R}^n 
  \end{align}
  for all $\ell,n\in\N_0$. The constant $\c{opt:reliable}$ depends only on the saturation constant
  $\c{sata}$, $\Gamma$, the shape regularity of $\mesh_\ell$, and the polynomial degree $p$.
  The constants  $\c{opt:Rlin}$ and $q_{\rm R}$ depend additionally on $\theta$.
\end{theorem}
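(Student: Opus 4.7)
The plan is to mirror the proof of Theorem~\ref{opt:thm:weaksingconv} line-by-line, since the overall structure of the two model problems is the same once the correct energy space ($\widetilde H^{1/2}(\Gamma)$ instead of $\widetilde H^{-1/2}(\Gamma)$) is in place. The starting point is to view $\est{\ell}{}$ from~\eqref{opt:est:hypsing:hh2} as a concrete instance of an abstract error estimator on the sequence of nested spaces $\widehat\XX_\ell=\wilde\Sp^p(\widehat\mesh_\ell)$ and to check the axioms~\eqref{opt:ass:stable}--\eqref{opt:ass:reliable} of Section~\ref{opt:section:assumptions}.

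First, I would invoke Lemma~\ref{opt:hypsing:ass}, which already verifies stability~\eqref{opt:ass:stable} and reduction~\eqref{opt:ass:reduction} for $\est{\ell}{}$. Together with nestedness $\widehat\XX_\ell\subseteq\widehat\XX_{\ell+1}$ (a consequence of $\mesh_{\ell+1}\in\refine(\mesh_\ell)$ and uniform refinement commuting with local refinement in the sense that $\widehat\mesh_{\ell+1}\in\refine(\widehat\mesh_\ell)$), the first part of Theorem~\ref{opt:thm:convergence} directly yields the estimator convergence~\eqref{opt:hypsing:estconv} for every $0<\theta\le1$, without invoking reliability or the saturation assumption.

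Next, under the saturation assumption~\ref{ass:sata}, reliability~\eqref{opt:ass:reliable} of $\est{\ell}{}$ follows from Theorem~\ref{thm:hh2:hypsing} combined with Theorem~\ref{thm:def:hh2:hypsing}, which establishes the equivalence $\est{\ell}{}\simeq\norm{\widehat U_\ell - U_\ell}{\hyp}$ between the localized $h^{1/2}$-weighted $L_2$-variant and the energy-norm $(h-h/2)$ quantity, and by Theorem~\ref{thm:ellipticity} the energy norm $\norm{\cdot}{\hyp}$ is equivalent to the natural norm $\norm{\cdot}{\widetilde H^{1/2}(\Gamma)}$ (with the stabilizing term in the closed-surface case). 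The constant $\c{opt:reliable}$ then depends only on $\c{sata}$, on $\Gamma$, on the uniform shape regularity of $\mesh_\ell$, and on the polynomial degree $p$.

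Finally, with the three axioms~\eqref{opt:ass:stable}, \eqref{opt:ass:reduction}, \eqref{opt:ass:reliable} at hand, the $R$-linear estimate~\eqref{opt:hypsing:rlin} is exactly the conclusion~\eqref{opt:eq:Rlin} of Theorem~\ref{opt:thm:convergence}, applied in the present setting; the constants $\c{opt:Rlin}$ and $q_{\rm R}$ inherit their dependence on $\theta$ and on the constants from Lemma~\ref{opt:hypsing:ass} and the reliability step. I do not anticipate any substantive obstacle: everything is a repackaging of the abstract theorems together with the hypersingular versions of the auxiliary estimates, and the only point requiring mild care is the bookkeeping on the closed-surface versus open-surface forms of the bilinear form $\form{\cdot}{\cdot}$ so that the energy-norm equivalence is applied correctly in both cases.
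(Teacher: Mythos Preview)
Your proposal is correct and follows essentially the same route as the paper's proof: invoke Lemma~\ref{opt:hypsing:ass} for~\eqref{opt:ass:stable}--\eqref{opt:ass:reduction} and apply Theorem~\ref{opt:thm:convergence} for~\eqref{opt:hypsing:estconv}; then, under the saturation assumption, deduce reliability from Theorem~\ref{thm:hh2:hypsing} together with the norm equivalence $\norm{\cdot}{\hyp}\simeq\norm{\cdot}{\widetilde H^{1/2}(\Gamma)}$, and apply Theorem~\ref{opt:thm:convergence} once more for~\eqref{opt:hypsing:rlin}. Your explicit mention of the estimator equivalence (Theorem~\ref{thm:def:hh2:hypsing} resp.\ Theorem~\ref{thm:def:hh2:hypsing:grad}) bridging the localized $\est{\ell}{}$ and the energy-norm quantity $\norm{\widehat U_\ell-U_\ell}{\hyp}$ is a detail the paper leaves implicit, but it is the same argument.
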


\begin{proof}
  The convergence~\eqref{opt:hypsing:estconv} follows from Theorem~\ref{opt:thm:convergence},
  since $\est{\ell}{}$ satisfies the assumptions~\eqref{opt:ass:stable}--\eqref{opt:ass:reduction}.
  The reliability of $\est{\ell}{}$ under the saturation assumption is proved in
  Theorem~\ref{thm:hh2:hypsing} together with the equivalence of
  $\norm{\cdot}{\hyp}\simeq \norm{\cdot}{\widetilde H^{1/2}(\Gamma)}$, and
  the remaining statement follows from Theorem~\ref{opt:thm:convergence}.
  $\hfill\qed$
\end{proof}

%%%%%%%%%%%%%%%%%%%%%%%%%%%%%%%%%%%%%%%%%%%%%%%%%%%%%%%%%%%%%%%%%%%%%%%%%%%%%%%%%%%%%%%%%%%%%%%%%%%%%%%
\subsection{Optimal convergence of weighted residual error estimator for the hyper
singular integral equation}\label{opt:sec:example4}
%%%%%%%%%%%%%%%%%%%%%%%%%%%%%%%%%%%%%%%%%%%%%%%%%%%%%%%%%%%%%%%%%%%%%%%%%%%%%%%%%%%%%%%%%%%%%%%%%%%%%%%
We consider the model problem~\eqref{opt:hypsing:continuous}, but in contrast to the previous
section, the discrete problem employs the original mesh $\mesh_\ell$ instead of its uniform
refinement $\widehat\mesh_\ell$, i.e., find $U_\ell\in\XX_\ell:=\wilde\Sp^p(\mesh_\ell)$ such that
\begin{align}\label{opt:hypsing:discrete2}
  \form{U_\ell}{V}=F(V)\quad\text{for all }V\in\XX_\ell.
\end{align}
As in Section~\ref{section:estred:reshypsing}, the weighted residual error estimator from
Section~\ref{section:est:wres} requires more regularity,
i.e., $\phi\in L_2(\Gamma)$ needs to be assumed. The error estimator then reads
\begin{align}\label{opt:est:hypsing:res}
  \est{\ell}{}^2:=\sum_{\el\in\mesh_\ell}\est{\ell}{\el}^2:=\sum_{\el\in\mesh_\ell}h_\el
  \norm{W U_\ell-\phi}{L_2(\el)}^2.
\end{align}

\begin{mylemma}\label{opt:lem:hypsing:res}
  The weighted residual error estimator $\est{\ell}{}$ from \eqref{opt:est:hypsing:res}
  satisfies the assumptions~\eqref{opt:ass:stable}--\eqref{opt:ass:drel}.
  Moreover,~\eqref{opt:ass:drel} holds with $\RR{\ell}{\star}:=\mesh_\ell\setminus\mesh_\star$
  and $\c{opt:refined}=1$. The constants $\c{opt:stable}$, $\c{opt:reduction}$, $q_{\rm red}$,
  $\c{opt:reliable}$, $\c{opt:drel}$ depend only on the shape regularity of the mesh $\mesh_\ell$,
  $\Gamma$ and the polynomial degree $p$.
\end{mylemma}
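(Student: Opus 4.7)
The plan is to verify the four assumptions separately, relying heavily on the inverse-type estimate from Lemma~\ref{estred:lem:invest} and a Scott-Zhang-type quasi-interpolation between the two discrete spaces.

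For stability~\eqref{opt:ass:stable} and reduction~\eqref{opt:ass:reduction}, the essential ingredients are already present in the proof of Lemma~\ref{estred:lem:hypsingres}. The inverse triangle inequality on the restricted sum $\sum_{T\in\mesh_\ell\cap\mesh_\star}$ reduces everything to $\|h_\ell^{1/2}\hyp(U_\star-U_\ell)\|_{L_2(\Gamma)}$, which by the discrete inverse estimate~\eqref{opt:eq:invest:discrete} is controlled by $\|U_\star-U_\ell\|_{\widetilde H^{1/2}(\Gamma)}$. This gives~\eqref{opt:ass:stable}. For~\eqref{opt:ass:reduction}, I use the same splitting as in Lemma~\ref{estred:lem:weaksing}: on refined elements $T\in\mesh_\star\setminus\mesh_\ell$ the mesh-size function shrinks as $h_\star|_T \le 2^{-1/(d-1)}h_\ell|_{T'}$ for the unique parent $T'\in\mesh_\ell\setminus\mesh_\star$, giving a factor $q_{\rm red}=2^{-1/(d-1)}(1+\delta)$ after a Young inequality, with the remainder absorbed into $\|U_\star-U_\ell\|_{\widetilde H^{1/2}(\Gamma)}^2$ via the same inverse estimate.

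Reliability~\eqref{opt:ass:reliable} is exactly Theorem~\ref{thm:est:wres:hypsing:rel}, so only the discrete reliability~\eqref{opt:ass:drel} with $\RR{\ell}{\star}=\mesh_\ell\setminus\mesh_\star$ and $\c{opt:refined}=1$ requires real work; this is the main obstacle. The plan is to apply ellipticity and Galerkin orthogonality to obtain
\begin{align*}
 \c{ell}\,\|U_\star-U_\ell\|_{\widetilde H^{1/2}(\Gamma)}^2
 &\le \form{U_\star-U_\ell}{U_\star-U_\ell - V_\ell}\\
 &= \dual{\phi-\hyp U_\ell}{(1-J_\ell)(U_\star-U_\ell)}_\Gamma
\end{align*}
for any $V_\ell=J_\ell(U_\star-U_\ell)\in\wilde\Sp^p(\mesh_\ell)$. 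The key is to choose a Scott-Zhang-type operator $J_\ell:\wilde\Sp^p(\mesh_\star)\to\wilde\Sp^p(\mesh_\ell)$ whose averaging domains for degrees of freedom are placed on non-refined elements whenever possible. Since non-refined elements $T\in\mesh_\ell\cap\mesh_\star$ belong to both meshes, the restriction of $U_\star-U_\ell$ to such a $T$ already lies in $\Pp^p(T)$, so by the projection property $(1-J_\ell)(U_\star-U_\ell)$ vanishes on $T$. Hence the duality reduces to a sum over refined elements only, and elementwise Cauchy-Schwarz together with the first-order approximation estimate $\|(1-J_\ell)v\|_{L_2(T)}\lesssim h_T^{1/2}\|v\|_{\widetilde H^{1/2}(\omega_T)}$ from Section~\ref{section:sz} gives
\begin{align*}
 \c{ell}\|U_\star-U_\ell\|_{\widetilde H^{1/2}(\Gamma)}^2
 \lesssim \Big(\!\!\sum_{T\in\mesh_\ell\setminus\mesh_\star}\!\! h_T\|\phi-\hyp U_\ell\|_{L_2(T)}^2\Big)^{\!1/2} \|U_\star-U_\ell\|_{\widetilde H^{1/2}(\Gamma)},
\end{align*}
after using finite overlap of the patches $\omega_T$ and $\wilde H^{1/2}$-stability of $J_\ell$. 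Absorbing one factor yields the claim.

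The delicate point, and the real obstacle, is engineering the quasi-interpolant $J_\ell$ so that (i) it is stable in $\widetilde H^{1/2}(\Gamma)$ and enjoys the local first-order approximation on $\Sp^p$-functions, and (ii) it acts as the identity on elements that are common to $\mesh_\ell$ and $\mesh_\star$ \emph{without} the modified patch $\omega_T$ escaping the refined region. The construction proceeds as in~\cite{affkp13}: for each node $z$ of $\mesh_\ell$, one assigns an averaging simplex $\el_z\in\mesh_\ell$ chosen inside a refined patch whenever $z$ is adjacent to a refined element, and otherwise in a non-refined neighbor. A careful check, exploiting the shape-regularity of $\mesh_\ell$ guaranteed by $\mathtt{refine}(\cdot)$, then ensures that the support of $(1-J_\ell)(U_\star-U_\ell)$ is contained in $\bigcup_{T\in\mesh_\ell\setminus\mesh_\star}T$, which is precisely what is needed to obtain~\eqref{opt:ass:drel} with $\RR{\ell}{\star}=\mesh_\ell\setminus\mesh_\star$ and $\c{opt:refined}=1$.
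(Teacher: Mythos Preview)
Your overall strategy matches the paper's (which defers to \cite{ffkmp13-A} for the discrete reliability via a Scott--Zhang argument), but there is an inconsistency in your construction that, as written in the final paragraph, spoils the sharp result. You first say the averaging domains should be ``placed on non-refined elements whenever possible''; in the last paragraph you instead choose $T_z$ ``inside a refined patch whenever $z$ is adjacent to a refined element''. These are opposite prescriptions, and only the first yields $\RR{\ell}{\star}=\mesh_\ell\setminus\mesh_\star$. With the second choice, any vertex $z$ shared by a non-refined element $T\in\mesh_\ell\cap\mesh_\star$ and a refined neighbour would get a refined averaging simplex $T_z\in\mesh_\ell\setminus\mesh_\star$; on such $T_z$ the function $U_\star-U_\ell$ is only $\mesh_\star$-piecewise polynomial, so the Scott--Zhang projection identity $(J_\ell(U_\star-U_\ell))(z)=(U_\star-U_\ell)(z)$ fails, and $(1-J_\ell)(U_\star-U_\ell)$ need not vanish on $T$. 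That prescription would only give discrete reliability with $\RR{\ell}{\star}$ equal to the refined elements plus one additional layer, not the claimed $\c{opt:refined}=1$.

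The fix is to stick with your first prescription: for each degree of freedom $z$, choose $T_z\in\mesh_\ell\cap\mesh_\star$ whenever such an element contains $z$ (which is always the case if $z$ lies on a non-refined element). Then every degree of freedom of any non-refined $T$ has a non-refined averaging simplex, $(U_\star-U_\ell)|_{T_z}\in\Pp^p(T_z)$, and biorthogonality gives $J_\ell(U_\star-U_\ell)=U_\star-U_\ell$ on $T$; hence the support of $(1-J_\ell)(U_\star-U_\ell)$ is contained exactly in $\bigcup(\mesh_\ell\setminus\mesh_\star)$. One further minor point: the local estimate $\|(1-J_\ell)v\|_{L_2(T)}\lesssim h_T^{1/2}\|v\|_{\widetilde H^{1/2}(\omega_T)}$ you invoke is not actually stated in Section~\ref{section:sz}. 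The clean route is to apply Cauchy--Schwarz with the weight $h_\ell^{1/2}$ to obtain $\bigl(\sum_{T\in\mesh_\ell\setminus\mesh_\star}\eta_\ell(T)^2\bigr)^{1/2}\,\|h_\ell^{-1/2}(1-J_\ell)(U_\star-U_\ell)\|_{L_2(\Gamma)}$ and then use the global bound $\|h_\ell^{-1/2}(1-J_\ell)w\|_{L_2(\Gamma)}\lesssim\|w\|_{\widetilde H^{1/2}(\Gamma)}$, which follows by interpolation from the local $L_2$- and $H^1$-approximation properties of $J_\ell$.
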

\begin{proof}
  Reliability~\eqref{opt:ass:reliable} is well-known for $\est{\ell}{}$ since its invention
  in~\cite{cs96} for $d=2$ and~\cite{cmps} for $d=3$.
  The assumptions~\eqref{opt:ass:stable}--\eqref{opt:ass:reduction} are shown in the proof of
  Lemma~\ref{estred:lem:hypsingres}.
  The proof of discrete reliability~\eqref{opt:ass:drel} employs the Scott-Zhang projection
  from Lemma~\ref{lem:sz} to obtain the local statement. The technical proof refines the
  arguments from~\cite{cmps} and is found in~\cite[Proposition~4]{ffkmp13-A}.
  Alternatively, the proof of~\cite{gantumur} built on the localization
  techniques from~\cite{f00,f02}. This, however, restricts the analysis to
  lowest-order elements $p=1$, where  $\RR{\ell}{\star}$ consists of
  $\TT_\ell\backslash\TT_\star$ plus one layer of non-refined elements.
  $\hfill\qed$
\end{proof}

\begin{theorem}\label{opt:thm:hypsing:res}
  For all $0<\theta\leq 1$, Algorithm~\ref{opt:algorithm} with the residual error estimator
  $\est{\ell}{}$ from~\eqref{opt:est:hypsing:res}  converges in the sense
  \begin{align}\label{opt:hypsing:res:rlin}
    \c{opt:reliable}^{-2}\norm{u-U_{\ell+n}}{\widetilde H^{1/2}(\Gamma)}^2\leq
    \est{\ell+n}{}^2\leq \c{opt:Rlin}q_{\rm R}^n \est{\ell}{}^2
  \end{align}
  for all $\ell,n\in\N_0$. For $0<\theta<\theta_0$, Algorithm~\ref{opt:algorithm} converges
  with the best possible rate $s>0$ in the sense that $\norm{\est{}{}}{\A_s}<\infty$ if and only if
  \begin{align}
    \est{\ell}{}\leq \c{opt:optimality} (\#\mesh_\ell-\#\mesh_0)^{-s}\quad\text{for all }\ell\in\N,
  \end{align}
  where the constants $\c{opt:Rlin},q_{\rm R}$ depend only on $\Gamma$, the shape regularity of
  the meshes $\mesh_\ell$, the polynomial degree $p$, and $\theta$. The constant
  $\c{opt:optimality}>0$ depends additionally on $\norm{\est{}{}}{\A_s}$.
\end{theorem}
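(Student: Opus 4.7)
The plan is to deduce Theorem~\ref{opt:thm:hypsing:res} as a direct corollary of the two abstract results Theorem~\ref{opt:thm:convergence} and Theorem~\ref{opt:thm:optimality}, with Lemma~\ref{opt:lem:hypsing:res} acting as the bridge that checks axioms (A1)--(A4) for the concrete weighted residual estimator $\est{\ell}{}$ defined in~\eqref{opt:est:hypsing:res}. The present theorem therefore decomposes into three clean bookkeeping steps, exactly in parallel to the proof of Theorem~\ref{opt:thm:weaksing:res}.

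First, I would invoke Lemma~\ref{opt:lem:hypsing:res} to record that $\est{\ell}{}$ satisfies stability~\eqref{opt:ass:stable} and reduction~\eqref{opt:ass:reduction} (whose constants are produced by the inverse-type estimate~\eqref{opt:eq:invest:discrete} applied to $\hyp$, as in the proof of Lemma~\ref{estred:lem:hypsingres}), classical reliability~\eqref{opt:ass:reliable} from Theorem~\ref{thm:est:wres:hypsing:rel}, and discrete reliability~\eqref{opt:ass:drel} with $\RR{\ell}{\star}=\mesh_\ell\setminus\mesh_\star$ and $\c{opt:refined}=1$. The dependencies of all involved constants are precisely the ones stated in Lemma~\ref{opt:lem:hypsing:res}, which match those promised by the present theorem.

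Second, for the $R$-linear statement~\eqref{opt:hypsing:res:rlin} I would apply Theorem~\ref{opt:thm:convergence}: axioms (A1)--(A3) suffice, so no restriction on the marking parameter beyond $0<\theta\le1$ is required. The lower bound in~\eqref{opt:hypsing:res:rlin} is simply reliability~\eqref{opt:ass:reliable}, while the upper bound comes from the contraction Lemma~\ref{opt:lem:estred} combined with the quasi-orthogonality~\eqref{opt:eq:qosum}. The constants $\c{opt:Rlin}$ and $q_{\rm R}$ are inherited from the abstract theorem and carry exactly the dependence on $\theta$, $\Gamma$, $p$, and shape-regularity claimed in the statement. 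For the optimality half I would apply Theorem~\ref{opt:thm:optimality}: the full axiom set (A1)--(A4) is in force by the first step, and the hypothesis $0<\theta<\theta_0=(1+\c{opt:stable}^2\c{opt:drel}^2)^{-1}$ is assumed. The theorem then yields the equivalence $\norm{\est{}{}}{\A_s}<\infty$ if and only if $\est{\ell}{}\le \c{opt:optimality}(\#\mesh_\ell-\#\mesh_0)^{-s}$, with $\c{opt:optimality}$ depending on the axiom constants, $\theta$, and $\norm{\est{}{}}{\A_s}$, exactly as claimed.

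The only genuinely non-routine ingredient is the discrete reliability bound in Lemma~\ref{opt:lem:hypsing:res}, which the present proof uses as a black box; this is where the main obstacle of the whole optimality story actually sits. The difficulty is to localize the non-local energy norm $\norm{U_\star-U_\ell}{\wilde H^{1/2}(\Gamma)}$ by contributions of $\est{\ell}{}$ supported only on the refined set $\mesh_\ell\setminus\mesh_\star$. The strategy (due to~\cite{cmps,ffkmp13-A} and, for $p=1$, to~\cite{gantumur}) is to write $U_\star-U_\ell\in\wilde\Sp^p(\mesh_\star)$, test Galerkin orthogonality against its image under an $\wilde H^{1/2}$-stable Scott--Zhang-type projection onto $\wilde\Sp^p(\mesh_\ell)$ (Lemma~\ref{lem:sz}), and exploit the approximation estimate of Lemma~\ref{lem:Sp:apx} to produce the $h_\ell^{1/2}$-weight that pairs against $\norm{h_\ell^{1/2}(\phi-\hyp U_\ell)}{L_2(\el)}$ on refined elements only; the Scott--Zhang construction with vanishing trace on $\partial(\bigcup(\mesh_\ell\setminus\mesh_\star))$ localizes the test function to the refined patch and thus yields $\RR{\ell}{\star}=\mesh_\ell\setminus\mesh_\star$ with $\c{opt:refined}=1$, completing the chain.
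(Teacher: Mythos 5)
Your proposal mirrors the paper's own proof exactly: it verifies axioms (A1)--(A4) via Lemma~\ref{opt:lem:hypsing:res} and then applies the abstract Theorems~\ref{opt:thm:convergence} and \ref{opt:thm:optimality}, with the same identification $\RR{\ell}{\star}=\mesh_\ell\setminus\mesh_\star$, $\c{opt:refined}=1$, and the same Scott--Zhang-based localization for discrete reliability cited from \cite{cmps,ffkmp13-A} (and \cite{gantumur} for $p=1$). No gap; the only difference is that you expand on the black-box discrete reliability argument, which the paper delegates entirely to Lemma~\ref{opt:lem:hypsing:res}.
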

\begin{proof}
  Lemma~\ref{opt:lem:hypsing:res} shows that the
  assumption~\eqref{opt:ass:stable}--\eqref{opt:ass:drel} are satisfied.
  Theorem~\ref{opt:thm:convergence} and Theorem~\ref{opt:thm:optimality} prove the statements.
  $\hfill\qed$
\end{proof}

%%%%%%%%%%%%%%%%%%%%%%%%%%%%%%%%%%%%%%%%%%%%%%%%%%%%%%%%%%%%%%%%%%%%%%%%%%%%%%%%%%%%%%%%%%%%%%%%%%%%%%%
\subsection{Inclusion of data approximation}\label{opt:section:data}
%%%%%%%%%%%%%%%%%%%%%%%%%%%%%%%%%%%%%%%%%%%%%%%%%%%%%%%%%%%%%%%%%%%%%%%%%%%%%%%%%%%%%%%%%%%%%%%%%%%%%%%
Also the data approximation, which is already discussed in Section~\ref{section:estred:data},
can be analyzed towards optimal convergence rates.
As in Section~\ref{section:estred:data}, we replace the right-hand side $F$ in~\eqref{intro:weakform}
with some computable approximation $F_\ell$ on any mesh $\mesh_\ell$ and solve
\begin{align}\label{opt:galerkind}
  \form{\Ud_\ell}{V}=F_\ell(V)\quad\text{for all }V\in\XX_\ell
\end{align}
instead of~\eqref{intro:galerkin}. To control the additional error $\norm{\Ud_\ell-U_\ell}{\XX}$
introduced by this approximation, the error estimator $\est{\ell}{}$ is extended by some
data approximation term
\begin{align}\label{opt:def:dataapprox}
  \data{\ell}{}^2:=\sum_{\el\in\mesh_\ell}\data{\ell}{\el}^2\geq
  C_{\rm data}^{-1}\norm{\Ud_\ell-U_\ell}{\XX}^2.
\end{align}
This is an abstract approach to the concrete results of Section~\ref{section:dataapproximation},
where several examples for $\data{\ell}{}$ are given.
The extended error estimator reads elementwise for all $\el\in\mesh_\ell$
\begin{align*}
  \estd{\ell}{\el}^2:=\est{\ell}{\el}^2+\data{\ell}{\el}^2,
\end{align*}
where $\est{\ell}{\el}$ uses the computable approximate solution $\Ud_\ell$ and the
approximate data $F_\ell$ instead of the non-computable solution $U_\ell$.
Note the difference with the notation of Section~\ref{section:dataapproximation}.
With this, the extension of Algorithm~\ref{opt:algorithm} reads:
\begin{algorithm}[adaptive mesh refinement]\label{opt:algorithmd}\ \linebreak
    \textsc{Input}: initial mesh $\mesh_0$ and adaptivity parameter $0<\theta\leq 1$.\\
  \textsc{Output}: sequence of solutions $(\Ud_\ell)_{\ell\in\N_0}$,
  sequence of estimators $(\estd{\ell}{})_{\ell\in\N_0}$, and sequence of meshes
  $(\mesh_\ell)_{\ell\in\N_0}$.\\
  \textsc{Iteration}: For all $\ell=0,1,2,3,\ldots$ do {\rm (i)}--{\rm (iv)}
  \begin{itemize}
    \item[\rm(i)] Compute solution $\Ud_\ell$ of~\eqref{opt:galerkind}.
    \item[\rm(ii)] Compute error indicators $\estd{\ell}{\el}$ for all elements $\el\in\mesh_\ell$.
    \item[\rm(iii)] Find a set of minimal cardinality $\MM_\ell\subseteq\mesh_\ell$ such that
    \begin{align}\label{opt:bulkchasingd}
    \theta\estd{\ell}{}^2\leq \sum_{\el\in\MM_\ell}\estd{\ell}{\el}^2.
    \end{align}
    \item[\rm(iv)] Refine (cf.~Section~\ref{section:opt:adaptive}) at least the marked
      elements to obtain the new mesh $\mesh_{\ell+1}:=\refine(\mesh_\ell,\MM_\ell)$.
    \end{itemize}
\end{algorithm}

To account for the new estimator term, we have to adopt the assumptions from
Section~\ref{opt:section:assumptions} slightly. To that end, we introduce a theoretical data
approximation term $\datad{\ell}{}^2$ which satisfies
$C_{\rm data}^{-1}\data{\ell}{}^2\leq \datad{\ell}{}^2\leq C_{\rm data}\data{\ell}{}^2$ for
some constant $C_{\rm data}>0$. The only reason for this is that we want to allow ourselves to
use a slightly different oscillation term for implementation than we use for the analysis.
This simplifies the realization of Algorithm~\ref{opt:algorithmd}.
\begin{enumerate}
    \renewcommand{\theenumi}{\mbox{$\widetilde{\rm A\arabic{enumi}}$}}%
    \renewcommand{\labelenumi}{(\mbox{$\widetilde{\rm A\arabic{enumi}}$})}%
  \item\label{opt:ass:stabled} Stability on non-refined elements: There exists a constant
    $\c{opt:stable}>0$ such that any refinement $\mesh_\star\in \refine(\mesh_\ell)$ of
    $\mesh_\ell\in\refine(\mesh_0)$ satisfies
    \begin{align*}
      \Big| \Big(\sum_{\el\in\mesh_\ell\cap \mesh_\star} &\estd{\ell}{\el}^2\Big)^{1/2} -
      \Big(\sum_{\el\in\mesh_\ell\cap \mesh_\star} \estd{\star}{\el}^2\Big)^{1/2}\Big|^2 \\
      &\leq \c{opt:stable}^{2}\Big( \norm{\Ud_\star-\Ud_\ell}{\XX}^2+\datad{\ell}{}^2-
      \datad{\star}{}^2\Big).
    \end{align*}

  \item\label{opt:ass:reductiond} Reduction on refined elements: There exist constants\linebreak
    $\c{opt:reduction}>0$ and $0<q_{\rm red}< 1$ such that any refinement
    $\mesh_\star\in \refine(\mesh_\ell)$  of $\mesh_\ell\in\refine(\mesh_0)$ satisfies
    \begin{align*}
      \sum_{\el\in\mesh_\star\setminus \mesh_\ell} \estd{\star}{\el}^2 &\leq q_{\rm red}
      \sum_{\el\in\mesh_\ell\setminus \mesh_\star} \estd{\ell}{\el}^2\\
      &\quad+ \c{opt:reduction}\Big(\norm{\Ud_\star-\Ud_\ell}{\XX}^2
      + \datad{\ell}{}^2-\datad{\star}{}^2\Big).
    \end{align*}

  \item\label{opt:ass:reliabled} Reliability: There exists a constant $\c{opt:reliable}>0$ such
    that any mesh $\mesh_\ell\in\refine(\mesh_0)$ satisfies
    \begin{align*}
      \norm{u-\Ud_\ell}{\XX}\leq \c{opt:reliable} \estd{\ell}{}.
    \end{align*}

  \item\label{opt:ass:dreld} Discrete reliability: There exist constants $\c{opt:drel}>0$
    and $\c{opt:refined}>0$ such that any refinement $\mesh_\star\in \refine(\mesh_\ell)$
    of $\mesh_\ell\in\refine(\mesh_0)$ satisfies
    \begin{align*}
      \norm{\Ud_\star-\Ud_\ell}{\XX}^2+\datad{\ell}{}^2-\datad{\star}{}^2\leq
      \c{opt:drel}^2\sum_{\el\in\RR{\ell}{\star}} \estd{\ell}{\el}^2,
    \end{align*}
    where the set $\RR{\ell}{\star}\supseteq \mesh_\ell\setminus\mesh_\star$ satisfies\linebreak
    $\#\RR{\ell}{\star}\leq \c{opt:refined} \#(\mesh_\ell\setminus\mesh_\star)$.
\end{enumerate}

Moreover, and in contrast to the unperturbed case in Section~\ref{opt:section:assumptions},
the a~priori convergence~\eqref{estred:eq:apriorigal} as well as the 
generalized Pythagoras estimate~\eqref{opt:eq:qosum}
are not available in this general setting. Hence, we also have to verify
\begin{enumerate}
    \renewcommand{\theenumi}{\mbox{$\widetilde{\rm A\arabic{enumi}}$}}%
    \renewcommand{\labelenumi}{(\mbox{$\widetilde{\rm A\arabic{enumi}}$})}%
    \setcounter{enumi}{4}
  \item \label{opt:ass:apriorid}A~priori convergence of data: There exists a continuous linear
    functional $F_\infty:\,\XX_\infty\to \R$ such that
    \begin{align*}
      \lim_{\ell\to\infty}\norm{F_\infty- F_\ell}{\XX_\ell^\prime}:= \lim_{\ell\to\infty}
      \sup_{V\in\XX_\ell\atop \norm{V}{\XX}=1} |F_\infty(V)-F_\ell(V)|=0,
    \end{align*}
    where $\XX_\infty:=\overline{\bigcup_{\ell\in\N_0}\XX_\ell}\subseteq \XX$
    (the closure is understood with respect to $\XX$). Moreover, there exists $\datad{\infty}{}\geq 0$ such that
    \begin{align*}
     \lim_{\ell\to\infty}\datad{\ell}{}=\datad{\infty}{}.
    \end{align*}

  \item \label{opt:ass:qosumd} Pythagoras estimate: For all $\eps>0$, there exists a constant
    $\c{opt:qosum}(\eps)>0$ such that for all $k\in\N$ holds
    \begin{align*}
      \sum_{k=\ell}^\infty \norm{\Ud_{k+1}-\Ud_k}{\XX}^2-\eps \estd{k}{}^2\leq
      \c{opt:qosum}(\eps)\estd{\ell}{}^2.
    \end{align*}
\end{enumerate}

%%%%%%%%%%%%%%%%%%%%%%%%%%%%%%%%%%%%%%%%%%%%%%%%%%%%%%%%%%%%%%%%%%%%%%%%%%%%%%%%%%%%%%%%%%%%%%%%%%%%%%%
\subsection{Data approximation and convergence of ABEM}\label{opt:section:data:conv}
%%%%%%%%%%%%%%%%%%%%%%%%%%%%%%%%%%%%%%%%%%%%%%%%%%%%%%%%%%%%%%%%%%%%%%%%%%%%%%%%%%%%%%%%%%%%%%%%%%%%%%%
The proofs in this section differ only slightly from the unperturbed case in
Section~\ref{opt:section:convergence}. Therefore, we only highlight the differences.

\begin{theorem}\label{opt:thm:convergenced}
  Suppose that there hold~\eqref{opt:ass:stabled}--\eqref{opt:ass:reductiond} as
  well\\as~\eqref{opt:ass:apriorid}. Then, Algorithm~\ref{opt:algorithmd} drives
  the estimator to zero, i.e.,
  \begin{align}\label{opt:eq:convergenced}
    \lim_{\ell\to\infty} \estd{\ell}{} = 0.
  \end{align}
  Suppose that the error estimator additionally satisfies~\eqref{opt:ass:reliabled}
  and~\eqref{opt:ass:qosumd}. Then, Algorithm~\ref{opt:algorithmd} converges even linearly in
  the sense that there exist constants $\c{opt:Rlin}>0$ and $0<q_{\rm R}<1$ such that
  \begin{align}\label{opt:eq:Rlind}
    \c{opt:reliable}^{-2}\norm{u-\Ud_{\ell+n}}{\XX}^2\leq  \estd{\ell+n}{}^2\leq
    \c{opt:Rlin}q_{\rm R}^n \estd{\ell}{}^2\;\text{for all }\ell,n\in\N_0,
  \end{align}
  which particularly implies
  \begin{align}
    \norm{u-\Ud_{\ell}}{\XX}^2\leq \c{opt:reliable}^2\c{opt:Rlin}\estd{0}{}^2\,q_{\rm R}^\ell
    \quad\text{for all }\ell\in\N_0.
  \end{align}
  The constants $\c{opt:Rlin}$ and $q_{\rm R}$ depend only on $\theta$ as well as on the constants
  in~\eqref{opt:ass:stabled}--\eqref{opt:ass:reliabled} and~\eqref{opt:ass:qosumd}.
\end{theorem}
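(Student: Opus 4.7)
The plan is to mirror the structure of the proof of Theorem~\ref{opt:thm:convergence}, but with three crucial modifications: (i) the estimator reduction must absorb the extra data-oscillation contribution, (ii) a~priori convergence of the perturbed Galerkin approximations $\Ud_\ell$ is no longer for free from nestedness alone but requires $(\widetilde{\rm A5})$, and (iii) the generalized Pythagoras estimate~\eqref{opt:eq:qosum} is replaced by its weakened form $(\widetilde{\rm A6})$. Splitting $\estd{\ell+1}{}^2$ into contributions on refined and non-refined elements and applying $(\widetilde{\rm A1})$ together with the Young inequality and $(\widetilde{\rm A2})$, followed by the bulk chasing~\eqref{opt:bulkchasingd} exactly as in the proof of Lemma~\ref{opt:lem:estred}, yields the perturbed estimator reduction
\begin{align*}
 \estd{\ell+1}{}^2 \le q_{\rm est}\,\estd{\ell}{}^2 + \c{opt:estred}\Big(\norm{\Ud_{\ell+1}-\Ud_\ell}{\XX}^2 + \datad{\ell}{}^2 - \datad{\ell+1}{}^2\Big),
\end{align*}
with $q_{\rm est}\in(0,1)$ and $\c{opt:estred}>0$ depending only on $\theta,q_{\rm red},\c{opt:reduction},\c{opt:stable}$.

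To establish $(\widetilde{\rm A5})$-driven a~priori convergence $\Ud_\ell\to\Ud_\infty$ in $\XX$, I would introduce the auxiliary problem of finding $\Ud_\infty\in\XX_\infty:=\overline{\bigcup_\ell\XX_\ell}$ with $b(\Ud_\infty,V)=F_\infty(V)$ for all $V\in\XX_\infty$ and its Galerkin projection $\Ud_\ell^\infty\in\XX_\ell$. Lemma~\ref{estred:lem:convortho} gives $\norm{\Ud_\infty-\Ud_\ell^\infty}{\XX}\to0$, while the C\'ea lemma applied to the residual $F_\ell-F_\infty$ yields $\norm{\Ud_\ell-\Ud_\ell^\infty}{\XX}\le\c{ell}^{-1}\norm{F_\ell-F_\infty}{\XX_\ell'}\to0$ by $(\widetilde{\rm A5})$. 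A triangle inequality thus shows $\Ud_\ell\to\Ud_\infty$ and hence $\norm{\Ud_{\ell+1}-\Ud_\ell}{\XX}\to0$. Combined with $\datad{\ell}{}^2-\datad{\ell+1}{}^2\to0$, which is immediate from $(\widetilde{\rm A5})$, the perturbation on the right-hand side of the estimator reduction vanishes and Lemma~\ref{estred:lem:estreddef} delivers~\eqref{opt:eq:convergenced}.

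For $R$-linear convergence, I would sum the estimator reduction on $k=\ell+1,\dots,\ell+N$, which gives
\begin{align*}
 (1-q_{\rm est})\sum_{k=\ell+1}^{\ell+N}\estd{k}{}^2 \le \estd{\ell}{}^2 + \c{opt:estred}\sum_{k=\ell+1}^{\ell+N}\norm{\Ud_k-\Ud_{k-1}}{\XX}^2 + \c{opt:estred}\big(\datad{\ell}{}^2-\datad{\ell+N}{}^2\big),
\end{align*}
where the data terms telescope and are bounded by $\datad{\ell}{}^2\le\estd{\ell}{}^2$. For the Galerkin increments, $(\widetilde{\rm A6})$ with $\eps:=(1-q_{\rm est})/(2\c{opt:estred})$ yields $\sum_{k\ge\ell}\norm{\Ud_{k+1}-\Ud_k}{\XX}^2\le\eps\sum_{k\ge\ell}\estd{k}{}^2+\c{opt:qosum}(\eps)\estd{\ell}{}^2$. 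Absorbing the $\eps$-term on the left, passing $N\to\infty$, and carrying out the same induction argument as in the proof of Theorem~\ref{opt:thm:convergence} produces the bound $\estd{\ell+n}{}^2\le\c{opt:Rlin}q_{\rm R}^n\estd{\ell}{}^2$. The lower bound in~\eqref{opt:eq:Rlind} is just reliability $(\widetilde{\rm A3})$.

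The main obstacle is the a~priori convergence step: the standard argument of Lemma~\ref{estred:lem:convortho} relies on \emph{identical} right-hand sides on all levels, which fails here. Decoupling the perturbation of the source from the nestedness of spaces via the auxiliary problem with limit functional $F_\infty$ is what $(\widetilde{\rm A5})$ is designed to enable, and handling this cleanly (in particular, verifying that $\norm{F_\ell-F_\infty}{\XX_\ell'}$ controls the Galerkin discrepancy uniformly in $\ell$) is the only nontrivial new ingredient compared to Theorem~\ref{opt:thm:convergence}.
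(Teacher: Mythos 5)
Your proposal matches the paper's proof closely in both structure and key ideas. The estimator-reduction step via $(\widetilde{\rm A1})$, $(\widetilde{\rm A2})$, Young's inequality and bulk chasing is exactly Lemma~\ref{opt:lem:estredd}; the a~priori convergence via the auxiliary problem with right-hand side $F_\infty$, Lemma~\ref{estred:lem:convortho} for the unperturbed sequence $\Ud_\ell^\infty$, and a stability estimate $\norm{\Ud_\ell-\Ud_\ell^\infty}{\XX}\lesssim\norm{F_\ell-F_\infty}{\XX_\ell'}$ is exactly Lemma~\ref{estred:lem:convorthod}; and the $R$-linear step by summing the estimator reduction, telescoping the data terms, invoking $(\widetilde{\rm A6})$ and absorbing the $\eps$-terms is the same as in the paper. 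Two small imprecisions worth noting: (i) where you invoke Lemma~\ref{estred:lem:estreddef} directly, the paper instead re-derives the uniform boundedness by a separate telescoping argument for $\sum_k q_{\rm est}^k\big(\datad{\ell-k-1}{}^2-\datad{\ell-k}{}^2\big)$ — your shortcut works because the data increments $\datad{\ell}{}^2-\datad{\ell+1}{}^2$ are nonnegative (so that they legitimately play the role of $\alpha_\ell^2$), but this monotonicity is an implicit structural fact rather than an explicit axiom, and one should say so; (ii) the bound on the telescoped data term is $\datad{\ell}{}^2\le C_{\rm data}\data{\ell}{}^2\le C_{\rm data}\estd{\ell}{}^2$, not $\datad{\ell}{}^2\le\estd{\ell}{}^2$, which only changes the constant $\c{opt:Rlin}$. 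Also, what you call a C\'ea estimate for $\norm{\Ud_\ell-\Ud_\ell^\infty}{\XX}$ is really a Lipschitz-stability estimate for the Galerkin operator with respect to the right-hand side; C\'ea compares discrete to continuous, not two discrete solutions on the same space.
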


Again, an important ingredient is the a~priori convergence of $\Ud_\ell$.

\begin{mylemma}[a~priori convergence]\label{estred:lem:convorthod}
  Suppose~\eqref{opt:ass:apriorid}. Then,\linebreak there exists $\Ud_\infty \in \XX$ such that
  Algorithm~\ref{opt:algorithmd} satisfies
  \begin{align}\label{opt:eq:apriorid}
    \lim_{\ell\to\infty}\norm{\Ud_\infty-\Ud_\ell}{\XX}=0.
  \end{align}
\end{mylemma}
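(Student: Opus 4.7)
The plan is to split the error $\Ud_\infty - \Ud_\ell$ into two parts: one caused purely by the growth of the discrete space (which is handled by the unperturbed a~priori convergence of Lemma~\ref{estred:lem:convortho}), and one caused by the perturbation of the right-hand side (which is controlled by the assumed convergence $\|F_\infty-F_\ell\|_{\XX_\ell'}\to 0$ from~\eqref{opt:ass:apriorid}).

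First, define the limit space $\XX_\infty=\overline{\bigcup_{\ell\in\N_0}\XX_\ell}\subseteq\XX$, which is a closed subspace of $\XX$ and therefore itself a Hilbert space on which $b(\cdot,\cdot)$ is continuous and elliptic. The Lax--Milgram lemma thus yields a unique $\Ud_\infty\in\XX_\infty$ with
\begin{align*}
 b(\Ud_\infty,V)=F_\infty(V)\quad\text{for all }V\in\XX_\infty.
\end{align*}
Next, introduce an auxiliary Galerkin sequence by letting $\widehat U_\ell\in\XX_\ell$ denote the Galerkin approximation of $\Ud_\infty$ with respect to the \emph{unperturbed} limit functional $F_\infty$, i.e., $b(\widehat U_\ell,V)=F_\infty(V)$ for all $V\in\XX_\ell$. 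Since the spaces $\XX_\ell$ are nested and exhaust $\XX_\infty$, Lemma~\ref{estred:lem:convortho} (applied with $\XX$ replaced by $\XX_\infty$ and exact solution $\Ud_\infty$) delivers
\begin{align*}
 \lim_{\ell\to\infty}\norm{\Ud_\infty-\widehat U_\ell}{\XX}=0.
\end{align*}

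For the remaining term $\widehat U_\ell-\Ud_\ell\in\XX_\ell$, the defining equations of $\widehat U_\ell$ and $\Ud_\ell$ give
\begin{align*}
 b(\widehat U_\ell-\Ud_\ell,V)=(F_\infty-F_\ell)(V)\quad\text{for all }V\in\XX_\ell,
\end{align*}
so ellipticity~\eqref{intro:elliptic} applied with $V=\widehat U_\ell-\Ud_\ell$ yields
\begin{align*}
 C_{\rm ell}\,\norm{\widehat U_\ell-\Ud_\ell}{\XX}\le \norm{F_\infty-F_\ell}{\XX_\ell'}\xrightarrow{\ell\to\infty}0
\end{align*}
by~\eqref{opt:ass:apriorid}. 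Combining the two contributions via the triangle inequality concludes $\norm{\Ud_\infty-\Ud_\ell}{\XX}\to 0$.

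The construction is essentially routine; the only delicate point is the definition of $\Ud_\infty$ and ensuring that the functional $F_\infty$, which by hypothesis is only defined on $\XX_\infty$, suffices to set up an auxiliary Galerkin problem whose solutions sit in $\XX_\ell\subseteq\XX_\infty$. This is exactly the reason why assumption~\eqref{opt:ass:apriorid} formulates the convergence of $F_\ell$ against a limit functional on $\XX_\infty$ rather than on all of $\XX$, and no further obstacle arises.
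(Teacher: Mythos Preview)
Your proof is correct and follows essentially the same route as the paper: introduce the auxiliary Galerkin solutions $\widehat U_\ell\in\XX_\ell$ with right-hand side $F_\infty$ (the paper calls these $U_{\ell,\infty}$), invoke Lemma~\ref{estred:lem:convortho} for their convergence to $\Ud_\infty$, and control $\widehat U_\ell-\Ud_\ell$ by ellipticity and the assumed convergence $\|F_\infty-F_\ell\|_{\XX_\ell'}\to0$. Your presentation is slightly more explicit in constructing $\Ud_\infty$ via Lax--Milgram on $\XX_\infty$, which is exactly what underlies the paper's appeal to Lemma~\ref{estred:lem:convortho}.
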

\begin{proof}
  Replace $F$ in~\eqref{intro:galerkin} by $F_\infty$ from~\eqref{opt:ass:apriorid} and consider the corresponding solution $(U_{\infty,\ell})_{\ell\in\N}$. Lemma~\ref{estred:lem:convortho} shows the existence of $\Ud_\infty\in\XX$ such that
\begin{align}\label{opt:eq:aprioriinfty}
 \lim_{\ell\to\infty}\norm{\Ud_\infty-U_{\ell,\infty}}{\XX} = 0.
\end{align}
From stability
\begin{align*}
 \norm{U_{\ell,\infty}-\Ud_\ell}{\XX} \leq \norm{F_\infty-F_\ell}{\XX_\ell^\prime},
\end{align*}
it follows
\begin{align*}
\norm{\Ud_\infty-\Ud_\ell}{\XX}&\leq \norm{\Ud_\infty-U_{\ell,\infty}}{\XX}+
 \norm{U_{\ell,\infty}-\Ud_\ell}{\XX}\\
 &\lesssim \norm{\Ud_\infty-U_{\ell,\infty}}{\XX}+\norm{F_\infty-F_\ell}{\XX_\ell^\prime}\to 0
\end{align*}
as $\ell\to\infty$ by assumption~\eqref{opt:ass:apriorid} and~\eqref{opt:eq:aprioriinfty}.
  $\hfill\qed$
\end{proof}

Also the estimator reduction follows accordingly.

\begin{mylemma}\label{opt:lem:estredd}
  Suppose~\eqref{opt:ass:stabled}--\eqref{opt:ass:reductiond}. Then, there
  exist constants $\c{opt:estred}>0$ and $0<q_{\rm est}<1$ such that Algorithm~\ref{opt:algorithmd}
  satisfies
  \begin{align}\label{opt:eq:estredd}
    \estd{\ell+1}{}^2\leq q_{\rm est} \estd{\ell}{}^2 +
    \c{opt:estred}\Big(\norm{\Ud_{\ell+1}-\Ud_\ell}{\XX}^2+\datad{\ell}{}^2-\datad{\ell+1}{}^2\Big)
  \end{align}
  for all $\ell\in\N$. The constant $q_{\rm est}$ depends only on $\theta$ and $q_{\rm red}$
  from~\eqref{opt:ass:reductiond}. The constant $\c{opt:estred}$ depends additionally on
  $\c{opt:reduction}$ as well as $\c{opt:stable}$.
\end{mylemma}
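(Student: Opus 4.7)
The plan is to mimic the proof of Lemma~\ref{opt:lem:estred} for the unperturbed case, with the axioms~\eqref{opt:ass:stabled}--\eqref{opt:ass:reductiond} replacing their unperturbed counterparts. The extra perturbation quantity $\datad{\ell}{}^2-\datad{\ell+1}{}^2$ on the right-hand side of~\eqref{opt:eq:estredd} is precisely the one appearing in~\eqref{opt:ass:stabled} and~\eqref{opt:ass:reductiond}, so structurally nothing changes.

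First, I would split the perturbed estimator over the refined and the non-refined elements:
\begin{align*}
  \estd{\ell+1}{}^2
  = \sum_{\el\in\mesh_{\ell+1}\setminus\mesh_\ell}\estd{\ell+1}{\el}^2
    + \sum_{\el\in\mesh_{\ell+1}\cap\mesh_\ell}\estd{\ell+1}{\el}^2.
\end{align*}
On the first sum I would apply the reduction property~\eqref{opt:ass:reductiond} with contraction constant $q_{\rm red}<1$, and on the second sum I would use stability~\eqref{opt:ass:stabled} combined with the Young inequality $(a+b)^2\le(1+\delta)a^2+(1+\delta^{-1})b^2$ for a small parameter $\delta>0$ to be chosen. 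Abbreviating $P_\ell:=\norm{\Ud_{\ell+1}-\Ud_\ell}{\XX}^2+\datad{\ell}{}^2-\datad{\ell+1}{}^2$, this yields
\begin{align*}
  \estd{\ell+1}{}^2
  &\le q_{\rm red}\!\!\sum_{\el\in\mesh_\ell\setminus\mesh_{\ell+1}}\!\!\estd{\ell}{\el}^2
    + (1+\delta)\!\!\sum_{\el\in\mesh_\ell\cap\mesh_{\ell+1}}\!\!\estd{\ell}{\el}^2\\
  &\qquad + \bigl(\c{opt:reduction}+(1+\delta^{-1})\c{opt:stable}^2\bigr)\,P_\ell.
\end{align*}

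Next, I would rewrite the two sums by adding and subtracting $(1+\delta)\sum_{\el\in\mesh_\ell\setminus\mesh_{\ell+1}}\estd{\ell}{\el}^2$ to obtain
\begin{align*}
  \estd{\ell+1}{}^2
  \le (1+\delta)\estd{\ell}{}^2 - ((1+\delta)-q_{\rm red})\!\!\sum_{\el\in\mesh_\ell\setminus\mesh_{\ell+1}}\!\!\estd{\ell}{\el}^2 + \c{opt:estred}\,P_\ell,
\end{align*}
where I set $\c{opt:estred}:=\c{opt:reduction}+(1+\delta^{-1})\c{opt:stable}^2$. Since $\MM_\ell\subseteq\mesh_\ell\setminus\mesh_{\ell+1}$, the D\"orfler/bulk chasing criterion~\eqref{opt:bulkchasingd} now implies $\sum_{\el\in\mesh_\ell\setminus\mesh_{\ell+1}}\estd{\ell}{\el}^2\ge\theta\estd{\ell}{}^2$. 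Inserting this bound and collecting terms gives
\begin{align*}
  \estd{\ell+1}{}^2 \le \bigl((1+\delta)-\theta((1+\delta)-q_{\rm red})\bigr)\estd{\ell}{}^2 + \c{opt:estred}\,P_\ell.
\end{align*}

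Finally, I would choose $\delta>0$ sufficiently small so that $q_{\rm est}:=(1+\delta)-\theta((1+\delta)-q_{\rm red})\in(0,1)$; this is possible for any $\theta\in(0,1]$ because in the limit $\delta\to 0$ the prefactor equals $1-\theta(1-q_{\rm red})<1$. The dependence of $q_{\rm est}$ on $\theta$ and $q_{\rm red}$ and of $\c{opt:estred}$ on $\c{opt:reduction}$, $\c{opt:stable}$ is exactly as claimed. There is no genuine obstacle here: the axioms~\eqref{opt:ass:stabled}--\eqref{opt:ass:reductiond} were designed so that the proof is a verbatim copy of the unperturbed one from Lemma~\ref{opt:lem:estred}, with the scalar $\norm{U_{\ell+1}-U_\ell}{\XX}^2$ replaced throughout by $P_\ell$.
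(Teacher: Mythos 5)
Your proof is correct and follows precisely the route the paper intends: the paper's proof of this lemma consists of a single sentence stating that it is identical to the proof of Lemma~\ref{opt:lem:estred}, and you have spelled out exactly that argument with the abstract perturbation term $P_\ell=\norm{\Ud_{\ell+1}-\Ud_\ell}{\XX}^2+\datad{\ell}{}^2-\datad{\ell+1}{}^2$ substituted in place of $\norm{U_{\ell+1}-U_\ell}{\XX}^2$. The splitting over $\mesh_{\ell+1}\setminus\mesh_\ell$ and $\mesh_{\ell+1}\cap\mesh_\ell$, the application of~\eqref{opt:ass:reductiond} and~\eqref{opt:ass:stabled} via Young's inequality, the use of D\"orfler marking, and the choice of $\delta$ small enough all match.
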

\begin{proof}
  The proof is identical to that of Lemma~\ref{opt:lem:estred}.
  $\hfill\qed$
\end{proof}

This implies the first part of Theorem~\ref{opt:thm:convergenced}. 
\begin{proof}[of estimator convergence~\eqref{opt:eq:convergenced}]
 Due to~\eqref{opt:ass:apriorid}, there holds $\datad{\ell}{}^2-\datad{\ell+1}{}^2\to 0$ as $\ell\to\infty$. With this and the arguments of the proof of~\eqref{opt:eq:convergence}, we see with a~priori
  convergence~\eqref{opt:eq:apriorid} for the limes superior
  $\overline \lim_{\ell\to\infty}\estd{\ell+1}{}^2=0$ or
  $\overline \lim_{\ell\to\infty}\estd{\ell+1}{}^2=\infty$.
  To rule out the second option, apply the estimator reduction iteratively to see
  \begin{align*}
    \estd{\ell}{}^2
    &\leq q_{\rm est}^\ell\estd{0}{}^2 + \c{opt:estred}\sum_{k=0}^{\ell-1} q_{\rm est}^k
    \Big(\norm{\Ud_{\ell-k}-\Ud_{\ell-k-1}}{\XX}^2\\
    &\qquad\qquad\qquad\qquad\qquad+\datad{\ell-k-1}{}^2-\datad{\ell-k}{}^2\Big)\\
    &\leq q_{\rm est}^\ell\estd{0}{}^2 + \c{opt:estred}\Big(\datad{0}{}^2+\sum_{k=0}^{\ell-1}
    q_{\rm est}^k \norm{\Ud_{\ell-k}-\Ud_{\ell-k-1}}{\XX}^2\Big),
  \end{align*}
  by exploiting the telescoping series.
  The a~priori convergence of Lemma~\ref{estred:lem:convorthod} implies
  $\sup_{\ell\in\N} \norm{\Ud_{\ell}-\Ud_{\ell-1}}{\XX}^2\leq C_{\rm max}<\infty$. This and the
  convergence of the geometric series show
  \begin{align*}
    \estd{\ell}{}^2\leq  \estd{0}{}^2 + 2\c{opt:estred}(\data{0}{}^2+C_{\rm max})<\infty\quad
    \text{for all }\ell\in\N
  \end{align*}
  and consequently $\overline \lim_{\ell\to\infty}\eta_\ell^2=0$.
  This concludes the proof of~\eqref{opt:eq:convergenced}.
  $\hfill\qed$
\end{proof}

The convergence~\eqref{opt:eq:convergenced} leads us to the $R$-linear convergence.
\begin{proof}[of $R$-linear estimator convergence~\eqref{opt:eq:Rlind}]
  Let $N,\ell\in\N$. Use the estimator reduction~\eqref{opt:eq:estredd} to see
\begin{align*}
\sum_{k=\ell+1}^{\ell+N} \estd{k}{}^2 \leq \sum_{k=\ell+1}^{\ell+N} \Big(q_{\rm est}\estd{k-1}{}^2 +\c{opt:estred}&\big(\norm{\Ud_k-\Ud_{k-1}}{\XX}^2\\
&+\datad{k-1}{}^2-\datad{k}{}^2\big)\Big).
\end{align*}
By use of the telescoping series, this implies
\begin{align*}
(1-q_{\rm est}&-\c{opt:estred}\eps)\sum_{k=\ell+1}^{\ell+N} \estd{k}{}^2\\
&\leq \estd{\ell}{}^2 
+\c{opt:estred}\Big(\datad{\ell}{}^2+\sum_{k=\ell+1}^{\ell+N}(\norm{\Ud_k-\Ud_{k-1}}{\XX}^2-\eps\estd{k}{}^2)\Big).
\end{align*}
The assumption~\eqref{opt:ass:qosumd} together with $C_{\rm data}^{-1}\datad{\ell}{}^2\leq \data{\ell}{}^2 \leq \estd{\ell}{}^2$  then shows
\begin{align*}
\sum_{k=\ell}^{\ell+N} \estd{k}{}^2\leq \frac{2+\c{opt:estred}(\c{opt:qosum}(\eps)+C_{\rm data})}{1-q_{\rm est}-\c{opt:estred}\eps}\estd{\ell}{}^2:=\c{opt:Rlin}\estd{\ell}{}^2.
\end{align*}
Clearly, $\c{opt:Rlin}\geq 1$ for sufficiently small $\eps>0$. Moreover, the right-hand side is independent of $N$ and hence
\begin{align*}
\sum_{k=\ell}^{\infty} \estd{k}{}^2\leq\c{opt:Rlin}\estd{\ell}{}^2.
\end{align*}
The remainder of the proof follows as in the proof of~\eqref{opt:eq:Rlin}.
  $\hfill\qed$
\end{proof}

%%%%%%%%%%%%%%%%%%%%%%%%%%%%%%%%%%%%%%%%%%%%%%%%%%%%%%%%%%%%%%%%%%%%%%%%%%%%%%%%%%%%%%%%%%%%%%%%%%%%%%%
\subsection{Data approximation and optimal rates}\label{opt:section:data:optimal}
%%%%%%%%%%%%%%%%%%%%%%%%%%%%%%%%%%%%%%%%%%%%%%%%%%%%%%%%%%%%%%%%%%%%%%%%%%%%%%%%%%%%%%%%%%%%%%%%%%%%%%%

The approximability norm now also contains the data approximation term $\data{\ell}{}$, i.e.,
\begin{align}\label{opt:approxnormd}
\norm{\estd{}{}}{\A_s}:=\sup_{N\in\N_0}(N+1)^s\big(\inf_{\mesh_\star\in \refine(\mesh_0)\atop \#\mesh_\star -\#\mesh_0\leq N} \estd{\star}{} \big)\in [0,\infty].
\end{align}

This allows us to formulate the following theorem.
\begin{theorem}\label{opt:thm:optimalityd}
Suppose~\eqref{opt:ass:stabled}--\eqref{opt:ass:reductiond} as well as discrete reliability~\eqref{opt:ass:dreld} and~\eqref{opt:ass:apriorid}--\eqref{opt:ass:qosumd}. Let the adaptivity parameter satisfy $\theta<\theta_0:=(1+\c{opt:stable}^2\c{opt:drel}^2)^{-1}$. Then, Algorithm~\ref{opt:algorithm} converges with the best possible rate in the sense
that for all $s>0$, it holds $\norm{\estd{}{}}{\A_s}<\infty$ if and only if
\begin{align}
\est{\ell}{}\leq \c{opt:optimality} (\#\mesh_\ell-\#\mesh_0)^{-s}\quad\text{for all }\ell\in\N,
\end{align}
where $\c{opt:optimality}>0$ depends only on the constants in~\eqref{opt:ass:stabled}--\eqref{opt:ass:qosumd} as well as on $\norm{\estd{}{}}{\A_s}$ and $\theta$.
\end{theorem}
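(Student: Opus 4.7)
\textbf{Proof plan for Theorem~\ref{opt:thm:optimalityd}.}
The plan is to mirror the strategy of Theorem~\ref{opt:thm:optimality}, replacing each appeal to (A1)--(A4) by the corresponding data-aware variant (\mbox{$\widetilde{\rm A1}$})--(\mbox{$\widetilde{\rm A4}$}) and relying on (\mbox{$\widetilde{\rm A5}$})--(\mbox{$\widetilde{\rm A6}$}) to provide the a~priori convergence and quasi-orthogonality that drove the argument in the unperturbed case. The converse direction ``algorithm achieves rate $s$ $\Rightarrow \norm{\estd{}{}}{\A_s}<\infty$'' is immediate from the definition of the approximation class and the nestedness of the meshes produced by Algorithm~\ref{opt:algorithmd}, so the real work lies in the forward direction.

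First I would establish the perturbed analogue of Lemma~\ref{opt:lem:bulkchasing}: for any $\theta<\theta_0=(1+\c{opt:stable}^2\c{opt:drel}^2)^{-1}$ there exists $\kappa_0\in(0,1)$ such that whenever $\mesh_\star\in\refine(\mesh_\ell)$ satisfies $\estd{\star}{}^2\leq \kappa_0\estd{\ell}{}^2$, the refined set $\RR{\ell}{\star}$ guaranteed by (\mbox{$\widetilde{\rm A4}$}) fulfils the bulk criterion $\theta\estd{\ell}{}^2\leq \sum_{T\in\RR{\ell}{\star}}\estd{\ell}{T}^2$. The proof proceeds exactly as for Lemma~\ref{opt:lem:bulkchasing}: split $\estd{\ell}{}^2$ into its parts on $\mesh_\ell\setminus\mesh_\star$ and $\mesh_\ell\cap\mesh_\star$, invoke (\mbox{$\widetilde{\rm A1}$}) with Young's inequality on the latter, and then use (\mbox{$\widetilde{\rm A4}$}) to absorb the combined term $\norm{\Ud_\star-\Ud_\ell}{\XX}^2+\datad{\ell}{}^2-\datad{\star}{}^2$ back into $\sum_{T\in\RR{\ell}{\star}}\estd{\ell}{T}^2$; the factor $1+(1+\delta^{-1})\c{opt:stable}^2\c{opt:drel}^2$ arising from the combination gives exactly the threshold $\theta_0$.

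Next I would run the overlay/comparison step. Fix $\ell\in\N$ and $\lambda>0$ (to be chosen). By definition of $\norm{\estd{}{}}{\A_s}$, there is a mesh $\mesh_\eps\in\refine(\mesh_0)$ with $\estd{\eps}{}^2\leq \eps^2:=\lambda\estd{\ell}{}^2$ and $\#\mesh_\eps-\#\mesh_0\lesssim \eps^{-1/s}\norm{\estd{}{}}{\A_s}^{1/s}$. Define $\mesh_\star:=\mesh_\eps\oplus\mesh_\ell\in\refine(\mesh_\ell)$; the overlay estimate~\eqref{dp:overlay} gives $\#(\mesh_\ell\setminus\mesh_\star)\leq \#\mesh_\star-\#\mesh_\ell\leq \#\mesh_\eps-\#\mesh_0\lesssim \eps^{-1/s}$. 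Viewing $\mesh_\star$ as a refinement of $\mesh_\eps$ and combining (\mbox{$\widetilde{\rm A1}$})--(\mbox{$\widetilde{\rm A2}$}) in the familiar way yields
\begin{align*}
\estd{\star}{}^2\lesssim \estd{\eps}{}^2+\norm{\Ud_\star-\Ud_\eps}{\XX}^2+\datad{\eps}{}^2-\datad{\star}{}^2,
\end{align*}
and (\mbox{$\widetilde{\rm A4}$}) applied to the pair $(\mesh_\eps,\mesh_\star)$ bounds the trailing three terms by $\c{opt:drel}^2\estd{\eps}{}^2$; hence $\estd{\star}{}^2\lesssim \estd{\eps}{}^2\leq \lambda\estd{\ell}{}^2$. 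Fix $\lambda$ so small (depending only on the constants in (\mbox{$\widetilde{\rm A1}$})--(\mbox{$\widetilde{\rm A4}$}) and on $\kappa_0$) that $\estd{\star}{}^2\leq \kappa_0\estd{\ell}{}^2$. The perturbed bulk chasing lemma then forces $\theta\estd{\ell}{}^2\leq\sum_{T\in\RR{\ell}{\star}}\estd{\ell}{T}^2$, so the Dörfler set $\MM_\ell$ selected in Step~(iii) of Algorithm~\ref{opt:algorithmd} with minimal cardinality satisfies $\#\MM_\ell\leq \#\RR{\ell}{\star}\leq \c{opt:refined}\#(\mesh_\ell\setminus\mesh_\star)\lesssim \estd{\ell}{}^{-1/s}$.

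Finally I would combine the mesh-closure estimate~\eqref{dp:meshclosure} with the linear convergence from Theorem~\ref{opt:thm:convergenced}, which is available because (\mbox{$\widetilde{\rm A3}$}), (\mbox{$\widetilde{\rm A5}$}) and (\mbox{$\widetilde{\rm A6}$}) are in force. Writing
\begin{align*}
\#\mesh_\ell-\#\mesh_0\lesssim \sum_{k=0}^{\ell-1}\#\MM_k\lesssim \sum_{k=0}^{\ell-1}\estd{k}{}^{-1/s}\lesssim \estd{\ell}{}^{-1/s}\sum_{k=0}^{\ell-1}q_{\rm R}^{(\ell-k)/(2s)},
\end{align*}
the geometric series converges and taking the $(-s)$-th power yields $\estd{\ell}{}\leq \c{opt:optimality}(\#\mesh_\ell-\#\mesh_0)^{-s}$. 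The main obstacle I expect is the verification step that (\mbox{$\widetilde{\rm A1}$}), (\mbox{$\widetilde{\rm A2}$}) and (\mbox{$\widetilde{\rm A4}$}) fit together in the overlay comparison $\estd{\star}{}\lesssim \estd{\eps}{}$, since the data-oscillation increments $\datad{\eps}{}^2-\datad{\star}{}^2$ enter on both sides of the relevant estimates and must be tracked so that they cancel cleanly rather than contaminate the contraction factor. Everything else is a careful book-keeping transfer of the unperturbed proof, with (\mbox{$\widetilde{\rm A5}$}) replacing Lemma~\ref{estred:lem:convortho} and (\mbox{$\widetilde{\rm A6}$}) replacing the Pythagoras/quasi-orthogonality input used to derive $R$-linear convergence.
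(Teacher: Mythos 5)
Your proposal follows essentially the same approach as the paper, whose own proof of Theorem~\ref{opt:thm:optimalityd} simply states that it ``combines only the previous results and is therefore identical to the proof'' of the unperturbed Theorem~\ref{opt:thm:optimality}; you have correctly identified the ingredients that must be swapped in (Lemma~\ref{opt:lem:bulkchasingd} for Lemma~\ref{opt:lem:bulkchasing}, (\mbox{$\widetilde{\rm A5}$})--(\mbox{$\widetilde{\rm A6}$}) for the a~priori convergence and quasi-orthogonality, and Theorem~\ref{opt:thm:convergenced} for the $R$-linear decay). The worry you flag at the end — tracking $\datad{\eps}{}^2-\datad{\star}{}^2$ — is in fact already resolved by the structure of (\mbox{$\widetilde{\rm A4}$}), which bundles the data increment together with $\norm{\Ud_\star-\Ud_\eps}{\XX}^2$ precisely so that a single application of discrete reliability absorbs both; your overlay comparison $\estd{\star}{}^2\lesssim\estd{\eps}{}^2$ is thus correct as written, and no additional bookkeeping is needed.
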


The optimality of the marking criterion still holds with data approximation.
\begin{mylemma}\label{opt:lem:bulkchasingd}
Let the error estimator satisfy stability~\eqref{opt:ass:stabled} and discrete reliability~\eqref{opt:ass:dreld}. Then, there exists $0<\kappa_0<1$ such that any refinement $\mesh_\star\in\refine(\mesh_\ell)$ which satisfies
\begin{align}\label{opt:eq:kappad}
\estd{\star}{}^2\leq \kappa_0 \estd{\ell}{}^2,
\end{align}
fulfils the bulk chasing~\eqref{opt:bulkchasingd} in the sense
\begin{align}\label{opt:eq:bulkoptd}
\theta\estd{\ell}{}^2 \leq \sum_{\el\in\RR{\ell}{\star}}\estd{\ell}{\el}^2
\end{align}
for all $0\leq \theta<\theta_0$. The constant $\theta_0$ is defined in Theorem~\ref{opt:thm:optimalityd} whereas $\RR{\ell}{\star}\subseteq \mesh_\ell$ is defined in~\eqref{opt:ass:dreld}.
\end{mylemma}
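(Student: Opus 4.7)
The proof will mirror the one of Lemma~\ref{opt:lem:bulkchasing} almost verbatim, with the assumptions~\eqref{opt:ass:stable} and~\eqref{opt:ass:drel} replaced by their data-perturbed counterparts~\eqref{opt:ass:stabled} and~\eqref{opt:ass:dreld}. The key observation is that the extra term $\datad{\ell}{}^2-\datad{\star}{}^2$ enters precisely in the same way on both sides of the chain, so it can be absorbed using the discrete reliability bound.

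First, I would split the full estimator into contributions on refined and non-refined elements,
\begin{align*}
 \estd{\ell}{}^2=\sum_{T\in\mesh_\ell\setminus\mesh_\star}\estd{\ell}{T}^2+\sum_{T\in\mesh_\ell\cap\mesh_\star}\estd{\ell}{T}^2,
\end{align*}
and apply~\eqref{opt:ass:stabled} together with the Young inequality $(a+b)^2\le(1+\delta)a^2+(1+\delta^{-1})b^2$ on the second sum, to obtain
\begin{align*}
 \sum_{T\in\mesh_\ell\cap\mesh_\star}\!\!\estd{\ell}{T}^2
 \le (1+\delta)\,\estd{\star}{}^2
 + (1+\delta^{-1})\c{opt:stable}^2\bigl(\norm{\Ud_\star-\Ud_\ell}{\XX}^2+\datad{\ell}{}^2-\datad{\star}{}^2\bigr).
\end{align*}

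Next, I would invoke the hypothesis~\eqref{opt:eq:kappad} on the first summand $(1+\delta)\estd{\star}{}^2\le(1+\delta)\kappa_0\estd{\ell}{}^2$, and the discrete reliability~\eqref{opt:ass:dreld} on the perturbation term, giving
\begin{align*}
 \norm{\Ud_\star-\Ud_\ell}{\XX}^2+\datad{\ell}{}^2-\datad{\star}{}^2 \le \c{opt:drel}^2\sum_{T\in\RR{\ell}{\star}}\estd{\ell}{T}^2.
\end{align*}
Since $\mesh_\ell\setminus\mesh_\star\subseteq\RR{\ell}{\star}$, the first sum in the splitting is also dominated by $\sum_{T\in\RR{\ell}{\star}}\estd{\ell}{T}^2$. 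Collecting terms yields
\begin{align*}
 \bigl(1-(1+\delta)\kappa_0\bigr)\estd{\ell}{}^2 \le \bigl(1+(1+\delta^{-1})\c{opt:stable}^2\c{opt:drel}^2\bigr)\sum_{T\in\RR{\ell}{\star}}\estd{\ell}{T}^2.
\end{align*}

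Finally, I would choose the free parameters. For $\theta<\theta_0=(1+\c{opt:stable}^2\c{opt:drel}^2)^{-1}$, a continuity argument in $\delta$ ensures that $(1+(1+\delta^{-1})\c{opt:stable}^2\c{opt:drel}^2)^{-1}>\theta$ for $\delta>0$ sufficiently large; then one selects $\kappa_0\in(0,1)$ so small that $(1-(1+\delta)\kappa_0)/(1+(1+\delta^{-1})\c{opt:stable}^2\c{opt:drel}^2)\ge\theta$. This delivers~\eqref{opt:eq:bulkoptd}. No genuine obstacle arises: the only point where one needs to be slightly careful is confirming that~\eqref{opt:ass:dreld} produces a single bound covering both $\norm{\Ud_\star-\Ud_\ell}{\XX}^2$ \emph{and} the data jump $\datad{\ell}{}^2-\datad{\star}{}^2$ in the form demanded by the stability estimate, which is exactly why those two quantities were packed together in the assumptions.
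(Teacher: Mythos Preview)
Your proposal is correct and is precisely the argument the paper has in mind: the paper's own proof merely states that it is identical to that of Lemma~\ref{opt:lem:bulkchasing}, and what you wrote out is exactly that proof with~\eqref{opt:ass:stable},~\eqref{opt:ass:drel} replaced by~\eqref{opt:ass:stabled},~\eqref{opt:ass:dreld}. Your closing remark about why $\norm{\Ud_\star-\Ud_\ell}{\XX}^2$ and $\datad{\ell}{}^2-\datad{\star}{}^2$ are bundled together in both assumptions is spot on and is the only place where one might pause.
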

\begin{proof}
The proof is identical to that of Lemma~\ref{opt:lem:bulkchasing}.
  $\hfill\qed$
\end{proof}

\begin{proof}[of Theorem~\ref{opt:thm:optimalityd}]
The proof combines only the previous results and is therefore identical to the proof of Theorem~\ref{opt:thm:optimalityd}.
  $\hfill\qed$
\end{proof}

The final lemma proves that the overall best rate is now determined  by the respective best rates for data approximation terms and for the non-perturbed problem.

\begin{mylemma}\label{opt:lem:approxchar}
 Suppose that for $s_1,s_2>0$, there holds
 \begin{align}\label{opt:eq:unperturbedbest}
  \sup_{N\in\N_0}(N+1)^{s_1}\big(\inf_{\mesh_\star\in \refine(\mesh_0)\atop \#\mesh_\star -\#\mesh_0\leq N} \est{\star}{} \big)<\infty
 \end{align}
as well as
\begin{align}\label{opt:eq:databest}
  \sup_{N\in\N_0}(N+1)^{s_2}\big(\inf_{\mesh_\star\in \refine(\mesh_0)\atop \#\mesh_\star -\#\mesh_0\leq N} \data{\star}{} \big)<\infty.
\end{align}
Then, this implies $ \norm{\estd{}{}}{\A_s}<\infty$ for $s:=\min\{s_1,s_2\}$. Conversely,  $ \norm{\estd{}{}}{\A_s}<\infty$ implies~\eqref{opt:eq:unperturbedbest}--\eqref{opt:eq:databest} with $s_1=s=s_2$.
\end{mylemma}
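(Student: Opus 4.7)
The converse implication is immediate. For every $\mesh_\star\in\refine(\mesh_0)$ the elementwise definition $\estd{\star}{\el}^2=\est{\star}{\el}^2+\data{\star}{\el}^2$ yields $\est{\star}{}\le\estd{\star}{}$ and $\data{\star}{}\le\estd{\star}{}$. Taking the infimum over $\mesh_\star$ with $\#\mesh_\star-\#\mesh_0\le N$, multiplying by $(N+1)^s$, and passing to the supremum over $N$ gives
$\sup_{N}(N+1)^{s}\inf\est{\star}{}\le \norm{\estd{}{}}{\A_s}$ and the analogous bound for $\data{}{}$, so $\norm{\estd{}{}}{\A_s}<\infty$ forces~\eqref{opt:eq:unperturbedbest} and~\eqref{opt:eq:databest} with $s_1=s_2=s$.

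The forward implication I would prove by an overlay argument. Fix $N\in\N_0$ and set $N':=\lfloor N/2\rfloor$. Using~\eqref{opt:eq:unperturbedbest}--\eqref{opt:eq:databest}, I pick meshes $\widehat\mesh_1,\widehat\mesh_2\in\refine(\mesh_0)$ with $\#\widehat\mesh_j-\#\mesh_0\le N'$ that nearly realize the respective infima, so that $\est{\widehat\mesh_1}{}\le 2\,C_1\,(N'+1)^{-s_1}$ and $\data{\widehat\mesh_2}{}\le 2\,C_2\,(N'+1)^{-s_2}$. The candidate is the coarsest common refinement $\mesh_\star:=\widehat\mesh_1\oplus\widehat\mesh_2$, which exists by the discussion in Section~\ref{section:meshopt}. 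The overlay estimate~\eqref{dp:overlay} yields $\#\mesh_\star-\#\mesh_0\le 2N'\le N$, so $\mesh_\star$ is admissible for the infimum defining $\norm{\estd{}{}}{\A_s}$, and it will suffice to prove
$$\estd{\mesh_\star}{}^2=\est{\mesh_\star}{}^2+\data{\mesh_\star}{}^2\lesssim \est{\widehat\mesh_1}{}^2+\data{\widehat\mesh_2}{}^2\lesssim (N+1)^{-2\min\{s_1,s_2\}}.$$

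The analytic core therefore consists of two separate quasi-monotonicity bounds under refinement: $\data{\mesh_\star}{}\lesssim\data{\widehat\mesh_2}{}$ and $\est{\mesh_\star}{}\lesssim\est{\widehat\mesh_1}{}$. The data bound is expected because in every concrete realization of Sections~\ref{opt:sec:example2d}--\ref{opt:sec:example4d} the term $\data{\ell}{}$ is an $h_\ell^{1/2}$-weighted $L_2$-norm of a projection residual, which is genuinely monotone under refinement; abstractly it is also encoded in the implicit nonnegativity of $\datad{\widehat\mesh_2}{}^2-\datad{\mesh_\star}{}^2$ appearing on the left of~\eqref{opt:ass:dreld} combined with the equivalence $\data{}{}\simeq\datad{}{}$ fixed after~\eqref{opt:def:dataapprox}. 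The estimator bound is the hard part: applying~\eqref{opt:ass:stabled}, \eqref{opt:ass:reductiond}, \eqref{opt:ass:dreld} to $\estd{}{}$ via the Young-inequality argument of Lemma~\ref{opt:lem:estred} only delivers $\estd{\mesh_\star}{}^2\lesssim\estd{\widehat\mesh_1}{}^2=\est{\widehat\mesh_1}{}^2+\data{\widehat\mesh_1}{}^2$, and the trailing $\data{\widehat\mesh_1}{}^2$ cannot be controlled by the rate $s_2$ because $\widehat\mesh_1$ was only chosen to minimize $\est{}{}$.

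The resolution I would adopt is to verify, in each of the concrete examples, that $\est{}{}$ alone already satisfies stability, reduction, and discrete reliability analogous to~\eqref{opt:ass:stabled}--\eqref{opt:ass:dreld}, with $\data{}{}$-perturbation terms on the right that can be absorbed into the nonnegative gap $\data{\widehat\mesh_1}{}^2-\data{\mesh_\star}{}^2$ via the data-monotonicity established above; this gives $\est{\mesh_\star}{}^2\lesssim\est{\widehat\mesh_1}{}^2$ directly. Combining this with $\data{\mesh_\star}{}^2\lesssim\data{\widehat\mesh_2}{}^2$ yields the displayed estimate and hence $\norm{\estd{}{}}{\A_s}<\infty$ with $s=\min\{s_1,s_2\}$, the hidden constant depending only on $C_1$, $C_2$, the overlay constant from~\eqref{dp:overlay}, and the two quasi-monotonicity constants.
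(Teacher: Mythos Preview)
The paper does not actually prove this lemma; it cites~\cite{ffkmp13} and states only that the argument ``essentially only relies on the overlay estimate~\eqref{dp:overlay}.'' Your overall strategy---the converse via $\est{\star}{},\data{\star}{}\le\estd{\star}{}$ and the forward direction via the overlay $\mesh_\star=\widehat\mesh_1\oplus\widehat\mesh_2$ together with separate quasi-monotonicity bounds for $\est{}{}$ and $\data{}{}$---is the correct framework and matches this hint.

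There is, however, a gap in your resolution of the hard estimate $\est{\mesh_\star}{}\lesssim\est{\widehat\mesh_1}{}$. You propose to verify stability, reduction, and discrete reliability for the perturbed residual $\est{}{}$ alone and to absorb the resulting $\data{}{}$-perturbations into the ``nonnegative gap $\data{\widehat\mesh_1}{}^2-\data{\mesh_\star}{}^2$.'' This absorption does not close: carrying out the computation (e.g., in the setting of Section~\ref{opt:sec:example2d}) yields only $\est{\mesh_\star}{}^2\lesssim\est{\widehat\mesh_1}{}^2+\data{\widehat\mesh_1}{}^2$, because bounding $\norm{\Ud_\star-\Ud_{\widehat\mesh_1}}{\XX}$ via~\eqref{opt:ass:dreld} feeds back the full $\estd{\widehat\mesh_1}{}^2$, and the gap $\datad{\widehat\mesh_1}{}^2-\datad{\mesh_\star}{}^2\ge0$ adds to rather than cancels the data contribution. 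Since $\widehat\mesh_1$ was selected to minimize only $\est{}{}$, the term $\data{\widehat\mesh_1}{}$ is uncontrolled and the argument stalls. The clean fix---consistent with the phrase ``for the non-perturbed problem'' preceding the lemma---is to detour through the \emph{unperturbed} estimator: Lemma~\ref{data:eststab:weaksing} (resp.~Lemma~\ref{data:eststab:hypsing}) gives the equivalence $\estd{\ell}{}^2\simeq(\eta^{\rm unp}_\ell)^2+\data{\ell}{}^2$, where $\eta^{\rm unp}$ is the estimator of Section~\ref{opt:sec:example2} (resp.~\ref{opt:sec:example4}); this unperturbed estimator is genuinely quasi-monotone by~\eqref{opt:ass:stable}--\eqref{opt:ass:drel} from Lemma~\ref{opt:lem:weaksing:res} (resp.~\ref{opt:lem:hypsing:res}), so the overlay argument closes with $(\eta^{\rm unp}_{\mesh_\star})^2+\data{\mesh_\star}{}^2\lesssim(\eta^{\rm unp}_{\widehat\mesh_1})^2+\data{\widehat\mesh_2}{}^2$ and no leftover data terms.
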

\begin{proof}
   The proof is technical and can be found in~\cite{ffkmp13},
   but essentially only relies on the overlay estimate~\eqref{dp:overlay}.
  $\hfill\qed$
\end{proof}

%%%%%%%%%%%%%%%%%%%%%%%%%%%%%%%%%%%%%%%%%%%%%%%%%%%%%%%%%%%%%%%%%%%%%%%%%%%%%%%%%%%%%%%%%%%%%%%%%%%%%%%
\subsection{Optimal convergence of weighted residual error estimator for the weakly singular integral equation with data approximation}\label{opt:sec:example2d}
%%%%%%%%%%%%%%%%%%%%%%%%%%%%%%%%%%%%%%%%%%%%%%%%%%%%%%%%%%%%%%%%%%%%%%%%%%%%%%%%%%%%%%%%%%%%%%%%%%%%%%%
As in Section~\ref{section:estred:resweaksingdata}, we consider the model problem from Proposition~\ref{prop:galerkin:dirichlet}, i.e.
\begin{align*}
  \form{\phi}{\psi}&:=\dual{V\phi}{\psi}\quad\text{for all }\phi,\psi\in\XX:=\widetilde
  H^{-1/2}(\Gamma),\\
  F(\psi)&:=\dual{(1/2+K)f}{\psi}\quad\text{for all }\phi\in\XX,
\end{align*}
where $f\in H^1(\Gamma)$. In contrast to Section~\ref{section:estred:resweaksingdata},
the data approximation is done via the Scott-Zhang operator
$J_\ell^{p+1}:=J_{\mesh_\ell}^{p+1}:\, L_2(\Gamma)\to \Sp^{p+1}(\mesh_\ell)$
from Lemma~\ref{lem:sz}. We define
\begin{align*}
  F_\ell(\psi):=\dual{(1/2+K)J_\ell^{p+1} f}{\psi}\quad\text{for all }\ell\in\N_0.
\end{align*}
With $\XX_\ell:=\Pp^p(\mesh_\ell)$, the discrete version~\eqref{opt:galerkind} reads: Find $\Phid_\ell\in\XX_\ell$ such that
\begin{align*}
 \form{\Phid_\ell}{\Psi}=F_\ell(\Psi)\quad\text{for all }\Psi\in\XX_\ell.
\end{align*}

There holds $J_\ell f\in H^1(\Gamma)$ and hence the standard\linebreak
weighted residual error estimator reads
\begin{align*}
\begin{split}
\est{\ell}{}^2&:=\sum_{\el\in\mesh_\ell}\est{\ell}{\el}^2\\
&:=\sum_{\el\in\mesh_\ell}h_\el \norm{\nabla_\Gamma(V\Phid_\ell-(1/2+K)J_\ell^{p+1} f)}{L_2(\el)}^2,
\end{split}
\end{align*}
where $\nabla_\Gamma(\cdot)$ denotes the surface gradient on $\Gamma$. 
The data approximation term is defined as
\begin{align*}
 \data{\ell}{}^2:=\sum_{\el\in\mesh_\ell} \data{\ell}{\el}^2:=\sum_{\el\in\mesh_\ell} h_\el\norm{(1-\pi_\ell^p)\nabla_\Gamma f}{L_2(\el)}^2.
\end{align*}
It is proved in Lemma~\ref{data:errstab:weaksing}, that $C_{\rm data}^{-1}\norm{\Phi_\ell-\Phid_\ell}{\XX}^2\leq \data{\ell}{}^2$, where the constant $C_{\rm data}=\c{data:errstab:weaksing:nvb}>0$ depends only on the polynomial degree $p$, on $\mesh_0$ (since Proposition~\ref{mesh:prop:nvb} states that Algorithm~\ref{alg:nvb} produces only finitely many different shapes of element patches), and on the shape regularity of $\mesh_\ell$.
Altogether, the extended error estimator reads
\begin{align}\label{opt:est:weaksing:res:data}
\begin{split}
  \estd{\ell}{}^2&=\norm{h_\ell^{1/2}\nabla_\Gamma(V\Phid_\ell-(1/2+K)J_\ell^{p+1} f)}{L_2(\Gamma)}^2\\
 &\qquad+\norm{h_\ell^{1/2}(1-\pi_\ell^p)\nabla_\Gamma f}{L_2(\el)}^2.
 \end{split}
\end{align}
For the abstract analysis of Section~\ref{opt:section:data}, we define an elementwise equivalent data approximation term, which is only of theoretical purpose and does not have to be computed at all.
This term is defined as
\begin{align*}
 \datad{\ell}{}^2=\norm{\widetilde{h}_\ell^{1/2}(1-\pi_\ell^p)\nabla_\Gamma f}{L_2(\el)}^2,
\end{align*}
where we exchanged the mesh-size function $h_\ell$ with the modified mesh-size function $\widetilde h_\ell$ from~\cite[Section~8]{axioms} in the data approximation term. This modified mesh-width function satisfies the following.
\begin{mylemma}\label{opt:lem:modmesh}
   The modified mesh-size function $\widetilde h_\ell \in \Pp^0(\mesh_\ell)$ from~\cite[Section~8]{axioms} satisfies for $\mesh_\star\in\refine(\mesh_\ell)$
 the following properties {\rm (i)--(iii)}:
 \begin{enumerate}
  \item[(i)] Elementwise equivalence: $C_{\rm h}^{-1}h_\ell|_T\leq \widetilde h_\ell|_T \leq h_\ell|_T$ for all $T\in\TT_\ell$;
  \item[(ii)] Monotonicity: $\widetilde h_{\star}|_T\leq \widetilde h_\ell|_T$ for all $T\in\TT_\ell$;
  \item[(iii)] Reduction: $\widetilde h_{\star}\leq q_{\rm h} \widetilde h_\ell$ on $\RR{\ell}{\star}$,
 \end{enumerate}
where $\RR\ell\star \supseteq \TT_\ell\setminus\TT_\star$ is defined as 
\begin{align}\label{opt:eq:rrdata}
 \begin{split}
  \RR{\ell}{\star}:=\{&\el\in\mesh_\ell\,:\,\text{exists } \el_1=\el,\el_2,\ldots,\el_6\\
  &\text{with } \el_6\in\mesh_\ell\setminus\mesh_\star\text{ and } \overline{\el}_j\cap\overline{\el}_{j+1}\neq \emptyset\\
  &\text{for all }j=0,\ldots,4\}
  \end{split}
\end{align}
which roughly means $\TT_\ell\setminus\TT_\star$ plus five additional layers of elements around $\TT_\ell\setminus\TT_\star$. The constants $C_{\rm h}>0$ and $0<q_{\rm h}<1$ depend only on the $\sigma_0$-shape regularity of $\TT_0$ and the space dimension $d$.
\end{mylemma}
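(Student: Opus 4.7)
The plan is to define $\widetilde h_\ell \in \Pp^0(\mesh_\ell)$ by a distance-weighted local modification of the standard mesh-size, e.g.
\[
  \widetilde h_\ell|_T := \min\{\mu^{k_\ell(T,T')}\,h_\ell|_{T'}\,:\,T'\in\mesh_\ell\},
\]
where $k_\ell(T,T')$ denotes the combinatorial element-to-element distance in $\mesh_\ell$ and $\mu\in(0,1)$ is a fixed constant to be calibrated later. The motivation is that $\widetilde h_\ell|_T$ should strictly decrease whenever \emph{any} element within the $5$-layer neighborhood of $T$ is refined, not just $T$ itself. Uniform $\sigma_0$-shape regularity of the refinement strategies of Section~\ref{section:meshrefinement} (Proposition~\ref{mesh:prop:nvb} and Algorithm~\ref{alg:bisect1d}) together with the fact that only finitely many local patch shapes arise keeps all resulting constants dependent only on $\sigma_0$ and $d$.

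Property~(i) then follows easily: the upper bound is immediate by choosing $T'=T$; for the lower bound, shape regularity forces $h_\ell|_{T'} \simeq h_\ell|_T$ within any fixed-depth patch, while the exponential weight $\mu^{k_\ell(T,T')}$ implies that the minimum is attained at some $T'$ with $k_\ell(T,T') \leq K = K(\sigma_0,\mu)$, giving the constant $C_h = C_h(\sigma_0,d,\mu)$. For property~(ii), fix $T \in \mesh_\ell$ and $T^\star \in \mesh_\star$ with $T^\star \subseteq T$: for each $T' \in \mesh_\ell$ one selects a sub-element $T'' \in \mesh_\star$ of $T'$ with $h_\star|_{T''} \leq h_\ell|_{T'}$ (by \eqref{dp2:estred}) and $k_\star(T^\star,T'') \leq k_\ell(T,T')$; taking the minimum on both sides then yields $\widetilde h_\star|_{T^\star} \leq \widetilde h_\ell|_T$. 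For property~(iii), if $T \in \RR{\ell}{\star}$, then \eqref{opt:eq:rrdata} provides $T_6 \in \mesh_\ell \setminus \mesh_\star$ with $k_\ell(T,T_6) \leq 5$, and any child of $T_6$ in $\mesh_\star$ has mesh-width at most $2^{-1/(d-1)} h_\ell|_{T_6}$ by \eqref{dp2:estred}; plugging this specific element into the minimum defining $\widetilde h_\star|_{T^\star}$ and comparing with $\widetilde h_\ell|_T$ via shape regularity produces a contraction factor $q_h < 1$ depending only on $\sigma_0$ and $d$, provided $\mu$ was chosen small enough.

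The hard part is the monotonicity of the combinatorial distance under refinement, $k_\star(T^\star,T'') \leq k_\ell(T,T')$, used both in (ii) and in the final reduction estimate for (iii): one has to verify that refinement can only shorten chains of neighbor elements (not create unavoidable detours), which requires careful use of the explicit local refinement patterns of NVB or Algorithm~\ref{alg:bisect1d} together with uniform shape regularity, as worked out in \cite[Section~8]{axioms}. A secondary technical point is balancing the calibration of $\mu$: small enough that the contraction in (iii) overcomes the shape-regularity blow-up, yet bounded away from $0$ so that the constant $C_h$ in (i) remains finite; both constraints reduce to explicit but tedious computations in the finitely many admissible local patch shapes.
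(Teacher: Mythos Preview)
The paper itself offers no proof of this lemma; it states the result and defers both the construction of $\widetilde h_\ell$ and the verification of (i)--(iii) entirely to \cite[Section~8]{axioms}. Your attempt at an explicit formula is natural in spirit, but the construction you propose does not work.

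With $\mu\in(0,1)$ the lower bound in (i) fails outright: on a quasi-uniform mesh with $N$ elements all $h_\ell|_{T'}$ are comparable, so $\min_{T'}\mu^{k_\ell(T,T')}h_\ell|_{T'}$ is attained at \emph{maximal} combinatorial distance and is of order $\mu^{cN}h_\ell|_T\to 0$ as $N\to\infty$. Your assertion that the minimum is attained within a bounded patch is backwards --- for $\mu<1$ the weight $\mu^k$ rewards large $k$, not small $k$. Moreover, the distance monotonicity $k_\star(T^\star,T'')\le k_\ell(T,T')$ that you isolate as the ``hard part'' is simply false: refinement lengthens combinatorial chains rather than shortening them. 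In $1$D take $T=[0,1]$, $T'=[1,2]$, bisect only $T$, and set $T^\star=[0,\tfrac12]$; then the only $T''\subseteq T'$ is $T'$ itself and $k_\star(T^\star,T')=2>1=k_\ell(T,T')$. The reverse inequality $k_\star\ge k_\ell$ \emph{does} hold (fathers of fine neighbors are coarse neighbors or equal) and would rescue your argument for (ii) when $\mu<1$, but (i) is already broken in that regime. Switching to $\mu>1$ does not help either: once $\mu$ exceeds the local mesh-ratio bound guaranteed by shape regularity, the minimum collapses to $h_\ell|_T$, and then (iii) fails on the non-refined elements of $\RR{\ell}{\star}$. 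In short, no calibration of $\mu$ makes this particular distance-weighted minimum deliver all three properties simultaneously; the construction in \cite{axioms} is genuinely different from the one you sketch.
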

Due to~(i) in the above Lemma~\ref{opt:lem:modmesh}, there holds obviously $\datad{\ell}{}\simeq \data{\ell}{}$.

\begin{mylemma}\label{opt:lem:weaksing:resd}
 The weighted residual error estimator $\estd{\ell}{}$ from \eqref{opt:est:weaksing:res:data} satisfies the assumptions~\eqref{opt:ass:stabled}--\eqref{opt:ass:qosumd}.
 The set $\RR{\ell}{\star}$ from~\eqref{opt:ass:drel} is defined in~\eqref{opt:eq:rrdata}.
 The constant $\c{opt:refined}$ depends only on the shape regularity of the mesh $\mesh_\ell$, whereas the constants $\c{opt:stable}$, $\c{opt:reduction}$, $q_{\rm red}$, $\c{opt:reliable}$, $\c{opt:drel}$, $\c{opt:qosum}$ depend additionally on $\Gamma$ and the polynomial degree $p$.
\end{mylemma}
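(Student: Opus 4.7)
The plan is to verify the six abstract assumptions (\ref{opt:ass:stabled})--(\ref{opt:ass:qosumd}) one by one, piggy-backing on the unperturbed analysis of Lemma~\ref{opt:lem:weaksing:res} and inserting the necessary perturbation terms coming from the Scott--Zhang discretization of $f$. First I would write $\estd{\ell}{}^2 = \widetilde r_\ell^2 + \datad{\ell}{}^2 + \mathrm{(equivalence)}$, where $\widetilde r_\ell$ abbreviates the weighted residual of $\Phid_\ell$ against $(1/2+K)J_\ell^{p+1}f$, and use the elementwise equivalence $\datad{\ell}{}\simeq \data{\ell}{}$ from Lemma~\ref{opt:lem:modmesh}(i) so that the analysis may be carried out with $\datad{\ell}{}$ throughout.

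For (\ref{opt:ass:stabled}) and (\ref{opt:ass:reductiond}), I would handle the residual block exactly as in the proof of Lemma~\ref{estred:lem:weaksingres}: triangle inequality on non-refined elements, mesh-size contraction \eqref{dp2:estred} on refined elements, and the discrete inverse estimate \eqref{opt:eq:invest:discrete} for $V$ applied to $\Phid_\star-\Phid_\ell$; the new ingredient is that $f_\ell = J_\ell^{p+1}f$ changes with $\ell$, so one additionally applies the continuous inverse estimate~\eqref{opt:eq:invest} to the term $\nablag(1/2+K)(J_\star^{p+1}-J_\ell^{p+1})f$, bounds it by $\norm{(J_\star^{p+1}-J_\ell^{p+1})f}{H^{1/2}(\Gamma)}$, and then uses $H^{1/2}$-stability of the Scott--Zhang operator from Lemma~\ref{lem:sz} to control this by $\datad{\ell}{}^2-\datad{\star}{}^2$ (this is the standard trick: $\norm{(J_\star-J_\ell)f}{H^{1/2}}\lesssim \norm{(1-J_\ell)f}{H^{1/2}}$, which in turn is bounded by $\norm{h_\ell^{1/2}(1-\pi_\ell^p)\nablag f}{L_2(\Gamma)}$ by Lemma~\ref{lem:Sp:apx} and~\cite[Prop.~8]{dirichlet3d}). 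For the data block, monotonicity Lemma~\ref{opt:lem:modmesh}(ii) gives stability on non-refined elements for free, while the reduction on refined (more precisely, on $\RR{\ell}{\star}$) elements is exactly property (iii) of Lemma~\ref{opt:lem:modmesh}.

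For (\ref{opt:ass:reliabled}), I would combine the standard reliability of the weighted residual estimator (Theorem~\ref{thm:est:wres:weaksing:rel}) with the triangle inequality $\norm{\phi-\Phid_\ell}{\wilde H^{-1/2}(\Gamma)}\le \norm{\phi-\Phi_\ell}{\wilde H^{-1/2}(\Gamma)}+\norm{\Phi_\ell-\Phid_\ell}{\wilde H^{-1/2}(\Gamma)}$ and control the second summand via Lemma~\ref{data:errstab:weaksing}. For (\ref{opt:ass:dreld}), the residual part inherits the discrete reliability proved in~\cite{ffkmp13,fkmp13} (cited in Lemma~\ref{opt:lem:weaksing:res}); the extra work is to also bound $\datad{\ell}{}^2-\datad{\star}{}^2$ by $\sum_{T\in\RR{\ell}{\star}}\estd{\ell}{T}^2$. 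Here the modified mesh-size enters crucially: by monotonicity (ii) and reduction (iii), the difference $\widetilde h_\ell - \widetilde h_\star$ is supported in $\RR{\ell}{\star}$ and bounded by $(1-q_\mathrm{h})\widetilde h_\ell$, so the difference of the two oscillation sums telescopes into $\sum_{T\in\RR{\ell}{\star}}\widetilde h_\ell(T)\norm{(1-\pi_\ell^p)\nablag f}{L_2(T)}^2\le \sum_{T\in\RR{\ell}{\star}}\datad{\ell}{T}^2$, exactly what is needed (one enlarges $\RR{\ell}{\star}$ from the set~\eqref{opt:eq:rrdata} if necessary; by construction $\#\RR{\ell}{\star}\lesssim \#(\TT_\ell\setminus\TT_\star)$).

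For (\ref{opt:ass:apriorid}), Lemma~\ref{estred:lem:szconv} gives $J_\ell^{p+1}f\to J_\infty^{p+1}f$ in $H^{1/2}(\Gamma)$, so $F_\ell\to F_\infty:=\dual{(1/2+K)J_\infty^{p+1}f}{\cdot}_\Gamma$ in the dual norm thanks to the continuity of $1/2+K$; the convergence of $\datad{\ell}{}$ itself follows from pointwise convergence of $\pi_\ell^p\nablag f$ together with dominated convergence and monotonicity of $\widetilde h_\ell$. Finally, (\ref{opt:ass:qosumd}) is the main conceptual obstacle, because the bilinear form is not obviously symmetric at the perturbed level: I would argue as in~\cite{quasipyth}, namely, use ellipticity and continuity of $b(\cdot,\cdot)$ together with the just-established a priori convergence $\Phid_\ell\to \Phid_\infty$ (Lemma~\ref{estred:lem:convorthod}) to produce a summable bound $\sum_{k\ge\ell}\norm{\Phid_{k+1}-\Phid_k}{\XX}^2\lesssim \norm{\Phid_\infty-\Phid_\ell}{\XX}^2$ and then estimate $\norm{\Phid_\infty-\Phid_\ell}{\XX}^2\lesssim \estd{\ell}{}^2 + \eps\sum_k \estd{k}{}^2$ by combining reliability with the data-dependent stability estimate from Lemma~\ref{data:errstab:weaksing}; absorbing the $\eps$-term then yields (\ref{opt:ass:qosumd}). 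The delicate point throughout, and the one I expect to consume most technical effort, is keeping the constants in the (\ref{opt:ass:stabled})/(\ref{opt:ass:reductiond})/(\ref{opt:ass:dreld}) triple consistent with the \emph{same} $\widetilde h_\ell$, so that the telescoping trick $\datad{\ell}{}^2-\datad{\star}{}^2\ge 0$ can be used as a genuinely non-negative perturbation slot on the right-hand sides.
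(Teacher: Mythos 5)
Your verification of (A3tilde), (A4tilde), (A5tilde) and the general plan are sound, and your treatment of the data oscillation block via Lemma~\ref{opt:lem:modmesh} is exactly right. There is however a genuine gap in your handling of the Scott--Zhang contribution inside the \emph{residual} block for (\ref{opt:ass:stabled})--(\ref{opt:ass:reductiond}), and a misidentification of the difficulty in (\ref{opt:ass:qosumd}).

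In (\ref{opt:ass:stabled})--(\ref{opt:ass:reductiond}), after the inverse estimate~\eqref{opt:eq:invest} you must bound $\norm{(J_\star^{p+1}-J_\ell^{p+1})f}{H^{1/2}(\Gamma)}^2$ by the specific combination $\datad{\ell}{}^2-\datad{\star}{}^2$, not merely by $\datad{\ell}{}^2$. Your ``standard trick'' $\norm{(J_\star-J_\ell)f}{H^{1/2}}\lesssim\norm{(1-J_\ell)f}{H^{1/2}}\lesssim\norm{h_\ell^{1/2}(1-\pi_\ell^p)\nablag f}{L_2(\Gamma)}$ is a \emph{global} estimate and delivers only $\datad{\ell}{}^2$ on the right, which destroys the telescoping structure that the estimator reduction (Lemma~\ref{opt:lem:estredd}) and the Pythagoras estimate (\ref{opt:ass:qosumd}) depend on. What the paper uses instead is a \emph{local} Scott--Zhang approximation estimate (cited there from~\cite[Lemma~2]{ffkmp13}), valid because $(J_\star f)|_\el=(J_\ell f)|_\el$ on elements whose patches did not change, which gives
\begin{align*}
  \norm{J_\star f-J_\ell f}{H^{1/2}(\Gamma)}^2
  \lesssim \norm{h_\ell^{1/2}(1-\pi_\ell^p)\nablag f}{L_2(\bigcup\RR{\ell}{\star})}^2
  \simeq \norm{\widetilde h_\ell^{1/2}(1-\pi_\ell^p)\nablag f}{L_2(\bigcup\RR{\ell}{\star})}^2,
\end{align*}
and only \emph{then} does the reduction $\widetilde h_\star\leq q_{\rm h}\widetilde h_\ell$ on $\RR{\ell}{\star}$ and the monotonicity of $\widetilde h$ (Lemma~\ref{opt:lem:modmesh}) convert this localized quantity into $\datad{\ell}{}^2-\datad{\star}{}^2$ via the key estimate~\eqref{opt:eq:ingredient}. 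Without localizing the Scott--Zhang perturbation to $\RR{\ell}{\star}$, the argument does not close.

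For (\ref{opt:ass:qosumd}), you attribute the obstacle to non-symmetry of $b(\cdot,\cdot)$, but $b(\phi,\psi)=\dual{V\phi}{\psi}_\Gamma$ \emph{is} symmetric here; the difficulty is instead that the right-hand side $F_\ell$ changes with $\ell$, so Galerkin orthogonality $\form{\Phid_{k+1}-\Phid_k}{V}=0$ for $V\in\XX_k$ fails. The paper's route is more direct than the~\cite{quasipyth}-style argument you sketch: introduce the surrogate continuous solutions $\phid_k$ with $V\phid_k=(1/2+K)J_kf$, so that $\form{\phid_{k+1}-\Phid_{k+1}}{\Phid_{k+1}-\Phid_k}=0$ holds (since $\Phid_k,\Phid_{k+1}\in\XX_{k+1}$), yielding a genuine Pythagoras identity. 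Then Young's inequality and the bound $\norm{\phid_{k+1}-\phid_k}{\slp}^2\lesssim\datad{k}{}^2-\datad{k+1}{}^2$ (again a consequence of the \emph{local} Scott--Zhang estimate) allow the telescoping sum to close. Your argument via the a priori limit and a summability bound is not obviously recoverable without also invoking the surrogate solutions, since the broken orthogonality means $\sum_k\norm{\Phid_{k+1}-\Phid_k}{\XX}^2$ need not be controlled by $\norm{\Phid_\infty-\Phid_\ell}{\XX}^2$ alone.
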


\begin{proof}[of~\eqref{opt:ass:stabled}--\eqref{opt:ass:reductiond}]
 The assumptions~\eqref{opt:ass:stabled}--\eqref{opt:ass:reductiond} are proved very similar to the assumptions~\eqref{opt:ass:stable}--\eqref{opt:ass:reduction} for the unperturbed case in Lemma~\ref{opt:lem:weaksing:resd}. The proof can be found in great detail in~\cite{ffkmp13} and is only sketched
 in the following.
 
 The main difference to the proofs of~\eqref{opt:ass:stable}--\eqref{opt:ass:stable} in the unperturbed case of Lemma~\ref{opt:lem:weaksing:res} is that one obtains analogously to the proof of Lemma~\ref{data:eststab:weaksing} the additional term
 \begin{align*}
  \norm{J_\star f -J_\ell f}{H^{1/2}(\Gamma)}^2
 \end{align*}
on the right-hand side. 
The elementwise equivalence of $h_\ell$ and $\widetilde h_\ell$ from Lemma~\ref{opt:lem:modmesh} together with an approximation property of the Scott-Zhang projection proved in\linebreak
\cite[Lemma~2]{ffkmp13} imply
\begin{align*}
  \norm{J_\star f -J_\ell f}{H^{1/2}(\Gamma)}^2&\lesssim\norm{h_\ell^{1/2}(1-\pi_\ell)\nabla_\Gamma f}{L_2(\bigcup \RR{\ell}{\star})}^2\\
  &\simeq \norm{\widetilde h_\ell^{1/2}(1-\pi_\ell)\nabla_\Gamma f}{L_2(\bigcup \RR{\ell}{\star})}^2
\end{align*}
The fact that $\widetilde h_\star\leq q_{\rm h}\widetilde h_\ell$ on $\RR{\ell}{\star}$ together with the monotonicity of $\widetilde h_\ell$ show
\begin{align*}
 (1-q_{\rm h})\widetilde h_\ell|_{\RR{\ell}{\star}}\leq \widetilde h_\ell -\widetilde h_\star 
\end{align*}
pointwise almost everywhere on ${\RR{\ell}{\star}}$. Hence, we get
\begin{align}\label{opt:eq:ingredient}
\begin{split}
 \norm{\widetilde h_\ell^{1/2}&(1-\pi_\ell)\nabla_\Gamma f}{L_2(\bigcup \RR{\ell}{\star})}^2\\
 &=\int_{\bigcup \RR{\ell}{\star}} \widetilde h_\ell\big((1-\pi_\ell)\nabla_\Gamma f\big)^2\,dx\\
 &\leq (1-q_{\rm h})^{-1} \int_{\Gamma} (\widetilde h_\ell-\widetilde h_\star)\big((1-\pi_\ell)\nabla_\Gamma f\big)^2\,dx\\
 &\lesssim \norm{\widetilde h_\ell^{1/2}(1-\pi_\ell)\nabla_\Gamma f}{L_2(\Gamma)}^2 -\norm{\widetilde h_\star^{1/2}(1-\pi_\ell)\nabla_\Gamma f}{L_2(\Gamma)}^2\\
 &\leq \datad{\ell}{}^2-\datad{\star}{}^2.
 \end{split}
\end{align}
This result is the main ingredient of the proof.
  $\hfill\qed$
\end{proof}

\begin{proof}[of~\eqref{opt:ass:reliabled}]
 Define $\phi_\ell\in H^{-1/2}(\Gamma)$ as solution of~\eqref{opt:weaksing:continuous} when replacing the right-hand side with $F_\ell:=(1/2+K)J_\ell f$.
 By definition of $\data{\ell}{}$ and the reliability of the unperturbed problem~\eqref{opt:ass:reliable} from Lemma~\ref{opt:lem:weaksing:res}, there holds
  \begin{align*}
 \norm{\phi-\Phid_\ell}{\XX}&\leq \norm{\phi-\phid_\ell}{\XX}+\norm{\phid_\ell-\Phid_\ell}{\XX}\\
 &\lesssim \estd{\ell}{}+\data{\ell}{},
 \end{align*}
 where we used Lemma~\ref{data:errstab:weaksing} for the last estimate. This proves~\eqref{opt:ass:reliabled}.
  $\hfill\qed$
 \end{proof}

\begin{proof}[of~\eqref{opt:ass:dreld}]
The proof is very similar to the unperturbed case but additionally utilizes Lemma~\ref{data:errstab:weaksing}. It can be found in~\cite{ffkmp13}. 
  $\hfill\qed$
\end{proof}

\begin{proof}[of~\eqref{opt:ass:apriorid}]
The proof of a~priori convergence~\eqref{opt:ass:apriorid} for $F_\ell:=(1/2+K)J_\ell f$ uses the a~priori convergence of the Scott-Zhang projection proved in Lemma~\ref{estred:lem:szconv} in the sense
\begin{align*}
 \lim_{\ell\to\infty} J_\ell f = J_\infty f\in H^{1/2}(\Gamma).
\end{align*}
This and the stability of $K:\,H^{1/2}(\Gamma)\to H^{1/2}(\Gamma)$ imply for $F_\infty:=(1/2+K)J_\infty f$
\begin{align*}
 \norm{F_\infty-F_\ell}{H^{1/2}(\Gamma)}\lesssim \norm{J_\infty f -J_\ell f}{H^{1/2}(\Gamma)}\to 0
\end{align*}
as $\ell\to\infty$.

We define $\datad{\infty}{}:=\lim_{\ell\to\infty}\norm{\widetilde{h}_\ell^{1/2}(1-\pi_\infty^p)\nabla_\Gamma f}{L_2(\Gamma)}^2$,\linebreak
where $\pi_\infty^p\,:L_2(\Gamma)\to\XX_\infty$ is the $L_2$-orthogonal projection.
By definition of $\XX_\infty$, there holds $\lim_{\ell\to\infty}\datad{\ell}{}=\datad{\infty}{}$. This concludes the proof.
  $\hfill\qed$
\end{proof}

\begin{proof}[of~\eqref{opt:ass:qosumd}]
 Define $\phid_k\in H^{-1/2}(\Gamma)$ as solution of~\eqref{opt:weaksing:continuous} when replacing the right-hand side with $F_k:=(1/2+K)J_k f$.
 Then, there holds for all $k\in\N_0$ that $\form{\phi_{k+1}-\Phid_{k+1}}{\Phid_{k+1}-\Phid_k}=0$ and hence
 \begin{align*}
  \norm{\Phid_{k+1}-\Phid_k}{\slp}^2 &= \norm{\phid_{k+1}-\Phid_k}{\slp}^2 \\
  &\qquad- \norm{\phid_{k+1}-\Phid_{k+1}}{\slp}^2.
 \end{align*}
Young's inequality $(a+b)^2\leq (1+\delta)a^2+(1-\delta^{-1})b^2$ for all $a,b\in\R$ and $\delta>0$ shows
\begin{align*}
  \norm{\Phid_{k+1}&-\Phid_k}{\slp}^2 \\
  &\leq (1+\delta)\norm{\phid_k-\Phid_k}{\slp}^2 - \norm{\phid_{k+1}-\Phid_{k+1}}{\slp}^2\\
  &\quad+ (1+\delta^{-1})\norm{\phid_{k+1}-\phid_k}{\slp}^2.
\end{align*}
The stability of the problem~\eqref{intro:weakform} implies $\norm{\phid_{k+1}-\phid_k}{\slp}^2\simeq \norm{\phid_{k+1}-\phid_k}{\slp}^2\lesssim \norm{J_{k+1}f - J_k f}{H^{1/2}(\Gamma)}$ and as in the proof of~\eqref{opt:ass:stabled}--\eqref{opt:ass:reductiond} above, we see
\begin{align*}
 \norm{\phid_{k+1}-\phid_k}{\slp}^2\lesssim \datad{k}{}^2-\datad{k+1}{}^2.
\end{align*}
As in the proof of~\eqref{opt:ass:reliabled}, one shows that $\norm{\phid_k-\Phid_k}{\slp}\leq \c{opt:reliable}\estd{k}{}$.
Altogether, this shows for $\eps\c{opt:reliable}^{-1}=\delta$
\begin{align*}
 \sum_{k=\ell}^\infty &(\norm{\Phid_{k+1}-\Phid_k}{\slp}^2-\eps\estd{k}{}^2)\\
 &\leq
  \sum_{k=\ell}^\infty \big(\norm{\Phid_{k+1}-\Phid_{k}}{\slp}^2-\delta\norm{\phid_k-\Phid_k}{\slp}^2\big)\\
  &\lesssim   
   \sum_{k=\ell}^\infty \big(\norm{\phid_{k}-\Phid_k}{\slp}^2 - \norm{\phid_{k+1}-\Phid_{k+1}}{\slp}^2\\
  &\quad+ \datad{k}{}^2-\datad{k+1}{}^2\big).
\end{align*}
The telescoping series reveals
\begin{align*}
 \sum_{k=\ell}^\infty (\norm{\Phid_{k+1}&-\Phid_k}{\slp}^2-\eps\estd{k}{}^2)\\
 &\lesssim
 \norm{\phid_{\ell}-\Phid_\ell}{\slp}^2 + \datad{\ell}{}^2
 \lesssim \estd{\ell}{}^2.
\end{align*}
This concludes the proof.
  $\hfill\qed$
\end{proof}

\begin{theorem}\label{opt:thm:weaksing:resd}
For $0<\theta\leq 1$, Algorithm~\ref{opt:algorithm} with the residual error estimator $\estd{\ell}{}$ from~\eqref{opt:est:weaksing:res:data} converges in the sense of
\begin{align}\label{opt:weaksing:res:rlind}
\c{opt:reliable}^{-2}\norm{\phi-\Phid_{\ell+n}}{H^{-1/2}(\Gamma)}^2\leq \estd{\ell+n}{}^2\leq \c{opt:Rlin}q_{\rm R}^n \estd{\ell}{}^2
\end{align}
for all $\ell,n\in\N_0$. For $0<\theta<\theta_0$, Algorithm~\ref{opt:algorithm} converges with the best possible rate $s>0$ in the sense that
$\norm{\estd{}{}}{\A_s}<\infty$ if and only if
\begin{align}
\estd{\ell}{}\leq \c{opt:optimality} (\#\mesh_\ell-\#\mesh_0)^{-s}\quad\text{for all }\ell\in\N,
\end{align}
where the constants $\c{opt:Rlin},q_{\rm R}$ depend only on $\Gamma$, the shape regularity of the meshes $\mesh_\ell$, the polynomial degree $p$, and $\theta$. The constant $\c{opt:optimality}>0$ depends additionally on $\norm{\estd{}{}}{\A_s}$.
\end{theorem}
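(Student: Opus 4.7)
The plan is to derive Theorem~\ref{opt:thm:weaksing:resd} as a direct corollary of the abstract framework developed in Sections~\ref{opt:section:data:conv}--\ref{opt:section:data:optimal}. The key observation is that Lemma~\ref{opt:lem:weaksing:resd} already verifies the six abstract assumptions~\eqref{opt:ass:stabled}--\eqref{opt:ass:qosumd} for the concrete estimator $\estd{\ell}{}$ from~\eqref{opt:est:weaksing:res:data}. Consequently, the two statements will follow, respectively, from Theorem~\ref{opt:thm:convergenced} (linear convergence) and Theorem~\ref{opt:thm:optimalityd} (optimal rates), with the constants inheriting their dependencies through these abstract results.

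First, to establish the R-linear convergence~\eqref{opt:weaksing:res:rlind}, I would invoke the second part of Theorem~\ref{opt:thm:convergenced}. Stability~\eqref{opt:ass:stabled}, reduction~\eqref{opt:ass:reductiond}, reliability~\eqref{opt:ass:reliabled}, a~priori data convergence~\eqref{opt:ass:apriorid}, and the generalized Pythagoras estimate~\eqref{opt:ass:qosumd} are all supplied by Lemma~\ref{opt:lem:weaksing:resd}. Theorem~\ref{opt:thm:convergenced} then simultaneously yields the reliability-type lower bound $\c{opt:reliable}^{-2}\norm{\phi-\Phid_{\ell+n}}{\widetilde H^{-1/2}(\Gamma)}^2 \leq \estd{\ell+n}{}^2$ and the geometric decay $\estd{\ell+n}{}^2 \leq \c{opt:Rlin}q_{\rm R}^n\estd{\ell}{}^2$ for any $0<\theta\le 1$, with constants depending only on $\Gamma$, shape regularity, $p$, and $\theta$, exactly as claimed.

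Second, for the characterization of optimal convergence rates, I would apply Theorem~\ref{opt:thm:optimalityd}. Besides the ingredients already used above, this requires the discrete reliability~\eqref{opt:ass:dreld} together with the marking threshold $\theta<\theta_0$. Lemma~\ref{opt:lem:weaksing:resd} delivers~\eqref{opt:ass:dreld} with the set $\RR{\ell}{\star}$ defined in~\eqref{opt:eq:rrdata} and with $\#\RR{\ell}{\star}\le\c{opt:refined}\#(\TT_\ell\setminus\TT_\star)$, where $\c{opt:refined}$ depends solely on shape regularity; this is precisely what Theorem~\ref{opt:thm:optimalityd} needs. The resulting equivalence then reads $\norm{\estd{}{}}{\A_s}<\infty$ iff $\estd{\ell}{}\leq \c{opt:optimality}(\#\mesh_\ell-\#\mesh_0)^{-s}$ for all $\ell\in\N$, with $\c{opt:optimality}$ additionally depending on $\norm{\estd{}{}}{\A_s}$.

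The main obstacle does not sit within this proof but rather inside the verification performed in Lemma~\ref{opt:lem:weaksing:resd}. In particular, the delicate points are (a) absorbing the data oscillation difference $\norm{J_\star f-J_\ell f}{H^{1/2}(\Gamma)}^2$ into $\datad{\ell}{}^2-\datad{\star}{}^2$ in the proofs of~\eqref{opt:ass:stabled}--\eqref{opt:ass:reductiond}, which hinges on the modified mesh-size function $\widetilde h_\ell$ from Lemma~\ref{opt:lem:modmesh} and on the telescoping estimate~\eqref{opt:eq:ingredient}; and (b) ensuring the quasi-orthogonality~\eqref{opt:ass:qosumd} in the perturbed setting, where the Galerkin orthogonality produces the telescoping $\norm{\phid_{k}-\Phid_k}{V}^2 - \norm{\phid_{k+1}-\Phid_{k+1}}{V}^2$ with a harmless residual term again dominated by $\datad{k}{}^2-\datad{k+1}{}^2$. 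Once these technical pieces are in place, Theorem~\ref{opt:thm:weaksing:resd} itself reduces essentially to citing Theorems~\ref{opt:thm:convergenced} and~\ref{opt:thm:optimalityd}.
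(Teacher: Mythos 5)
Your proposal is correct and matches the paper's own proof, which likewise verifies the assumptions~\eqref{opt:ass:stabled}--\eqref{opt:ass:qosumd} via Lemma~\ref{opt:lem:weaksing:resd} and then cites Theorems~\ref{opt:thm:convergenced} and~\ref{opt:thm:optimalityd} to conclude. Your additional remarks about the technical difficulties being located inside Lemma~\ref{opt:lem:weaksing:resd} rather than in this theorem are accurate but not part of the paper's (two-sentence) proof.
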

\begin{proof}
 Lemma~\ref{opt:lem:weaksing:resd} shows that the assumption~\eqref{opt:ass:stabled}--\eqref{opt:ass:qosumd} are satisfied. Theorem~\ref{opt:thm:convergenced} and Theorem~\ref{opt:thm:optimalityd} prove the statements.
  $\hfill\qed$
\end{proof}

%%%%%%%%%%%%%%%%%%%%%%%%%%%%%%%%%%%%%%%%%%%%%%%%%%%%%%%%%%%%%%%%%%%%%%%%%%%%%%%%%%%%%%%%%%%%%%%%%%%%%%%
\subsection{Optimal convergence of weighted residual error estimator for the hypersingular integral equation with data approximation}\label{opt:sec:example4d}
%%%%%%%%%%%%%%%%%%%%%%%%%%%%%%%%%%%%%%%%%%%%%%%%%%%%%%%%%%%%%%%%%%%%%%%%%%%%%%%%%%%%%%%%%%%%%%%%%%%%%%%
As in Section~\ref{section:estred:reshypsingdata}, we consider the model problem from Proposition~\ref{prop:galerkin:neumann}, i.e.
\begin{align*}
 \form{u}{v}&:=\dual{Wu}{v}_\Gamma + \dual{u}{1}_\Gamma\dual{v}{1}_\Gamma\quad\text{for all }u,v\in\XX,\\
  F(v)&:=\dual{(1/2-K^\prime)\phi}{v}\quad\text{for all }v\in\XX:= H^{1/2}(\Gamma),
\end{align*}
where $\phi\in L_2(\Gamma)$. The data approximation is done via the $L_2$-orthogonal projection $\pi_\ell:=\pi^{p-1}_{\mesh_\ell}:\, L_2(\Gamma)\to \Pp^{p-1}(\mesh_\ell)$. We define
\begin{align*}
 F_\ell(\psi):=\dual{(1/2-K^\prime)\pi_\ell \phi}{\psi}\quad\text{for all }\ell\in\N_0.
\end{align*}
With $\XX_\ell:=\Sp^p(\mesh_\ell)$ for $p\geq 1$, the discrete version~\eqref{opt:galerkind} of~\eqref{opt:hypsing:continuous} reads: Find $\Ud_\ell\in\XX_\ell$ such that
\begin{align*}
 \form{\Ud_\ell}{V}=F_\ell(V)\quad\text{for all }V\in\XX_\ell.
\end{align*}

The standard weighted residual error estimator reads
\begin{align*}
\est{\ell}{}^2&:=\sum_{\el\in\mesh_\ell}\est{\ell}{\el}^2\\
&:=\sum_{\el\in\mesh_\ell}h_\el \norm{W\Ud_\ell-(1/2-K^\prime)\pi_\ell \phi)}{L_2(\el)}^2.
\end{align*}
The data approximation term is defined as
\begin{align*}
 \data{\ell}{}^2:=\sum_{\el\in\mesh_\ell} \data{\ell}{\el}^2:=\sum_{\el\in\mesh_\ell} h_\el\norm{(1-\pi_\ell)\phi}{L_2(\el)}^2,
\end{align*}
with $\datad{\ell}{}=\data{\ell}{}$.
Lemma~\ref{data:errstab:hypsing} shows that $C_{\rm data}^{-1}\norm{U_\ell-\Ud_\ell}{\XX}^2\leq \data{\ell}{}^2$, where the constant $C_{\rm data}=\c{data:errstab:hypsing}>0$ depends only on the polynomial degree $p$ and on the shape regularity of $\mesh_\ell$.
Altogether, the extended error estimator reads
\begin{align}\label{opt:est:hypsing:res:data}
\begin{split}
 \estd{\ell}{}^2&=\norm{h_\ell^{1/2}(W\Ud_\ell-(1/2-K^\prime)\pi_\ell \phi)}{L_2(\Gamma)}^2\\
 &\qquad+\norm{h_\ell^{1/2}(1-\pi_\ell)\phi}{L_2(\el)}^2.
 \end{split}
\end{align}

\begin{mylemma}\label{opt:lem:hypsing:resd}
  The weighted residual error estimator $\estd{\ell}{}$ from \eqref{opt:est:hypsing:res:data}
  satisfies the assumptions~\eqref{opt:ass:stabled}--\eqref{opt:ass:qosumd}.
  The set $\RR{\ell}{\star}$ from~\eqref{opt:ass:dreld} satisfies $\RR{\ell}{\star}:=\mesh_\ell\setminus\mesh_\star$ and $\c{opt:refined}=1$. The constants $\c{opt:stable}$, $\c{opt:reduction}$, $q_{\rm red}$, $\c{opt:reliable}$, $\c{opt:drel}$, $\c{opt:qosum}$ depend on $\Gamma$, the shape regularity of $\mesh_\ell$, and the polynomial degree $p$.
\end{mylemma}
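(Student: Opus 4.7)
The plan is to verify each of the six assumptions $(\widetilde{A1})$--$(\widetilde{A6})$ in order, following the blueprint of Lemma~\ref{opt:lem:weaksing:resd} but exploiting two simplifications specific to the hypersingular setting: first, the $L_2$-orthogonal projection $\pi_\ell := \pi_\ell^{p-1}$ is $\mesh_\ell$-local, so that $(\pi_\star-\pi_\ell)\phi$ vanishes on every non-refined element and the data term is elementwise monotone; second, the bilinear form $b(u,v) = \dual{\hyp u}{v}_\Gamma + \dual{u}{1}_\Gamma\dual{v}{1}_\Gamma$ is symmetric, so the exact Pythagoras identity is available. These are precisely the reasons that $\RR{\ell}{\star} = \mesh_\ell\setminus\mesh_\star$ works and that no modified mesh-size function à la Lemma~\ref{opt:lem:modmesh} is required.

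For $(\widetilde{A1})$ and $(\widetilde{A2})$, I would split $\estd{\ell+1}{}$ into refined and non-refined contributions, apply the triangle inequality as in the proof of Lemma~\ref{estred:lem:hypsingres}, and use the mesh-size reduction~\eqref{dp2:estred} to produce the contraction factor $q_{\rm red} < 1$ on the refined elements. The extra cross terms involve $\hyp(\Ud_\star-\Ud_\ell)$ and $(1/2-K')(\pi_\star-\pi_\ell)\phi$, and each is absorbed by the discrete inverse estimates~\eqref{opt:eq:invest:discrete} from Lemma~\ref{estred:lem:invest}, yielding $\norm{\Ud_\star-\Ud_\ell}{\XX}$ and $\norm{(\pi_\star-\pi_\ell)\phi}{\wilde H^{-1/2}(\Gamma)}$ respectively. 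Monotonicity of the data term together with the observation $(\pi_\star-\pi_\ell)\phi|_T = 0$ for $T\in\mesh_\ell\cap\mesh_\star$ gives the key estimate $\norm{h_\ell^{1/2}(\pi_\star-\pi_\ell)\phi}{L_2(\Gamma)}^2 \le \data{\ell}{}^2 - \data{\star}{}^2$, and the duality $\norm{\psi}{\wilde H^{-1/2}(\Gamma)} \lesssim \norm{h_\ell^{1/2}\psi}{L_2(\Gamma)}$ for $\psi\in\Pp^{p-1}(\mesh_\star)$ with piecewise vanishing integral over $\mesh_\ell$ (which holds since $\Pp^0(\mesh_\ell) \subseteq \Pp^{p-1}(\mesh_\ell)$) closes the argument.

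For $(\widetilde{A3})$ and $(\widetilde{A4})$, I would introduce the auxiliary exact solution $\widetilde u_\ell \in \XX$ corresponding to right-hand side $(1/2-K')\pi_\ell\phi$ and its Galerkin approximation in $\XX_\star$. Reliability of the unperturbed weighted residual estimator from Theorem~\ref{thm:est:wres:hypsing:rel} gives $\norm{\widetilde u_\ell - \Ud_\ell}{\XX} \lesssim \est{\ell}{}$, while $\norm{u - \widetilde u_\ell}{\XX} \lesssim \norm{(1-\pi_\ell)\phi}{H^{-1/2}(\Gamma)} \lesssim \data{\ell}{}$ via the $L_2$-projection approximation of Lemma~\ref{lem:L2:Pp:apx}. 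Summing yields $(\widetilde{A3})$. For $(\widetilde{A4})$, I would split $\Ud_\star - \Ud_\ell$ through the auxiliary $\Ud_\star^{(\ell)} \in \XX_\star$ solving the $\pi_\ell\phi$-problem in $\XX_\star$. The summand $\Ud_\star - \Ud_\star^{(\ell)}$ is controlled by stability and the $\data{\ell}{}^2 - \data{\star}{}^2$ estimate from the previous step; the summand $\Ud_\star^{(\ell)} - \Ud_\ell$ is handled exactly as in the proof of the discrete reliability~\eqref{opt:ass:drel} from Lemma~\ref{opt:lem:hypsing:res} with $\RR{\ell}{\star} = \mesh_\ell\setminus\mesh_\star$, using the Scott-Zhang interpolant to localize the residual on the refined set.

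For $(\widetilde{A5})$, nestedness of $\Pp^{p-1}(\mesh_\ell)$ makes $(\pi_\ell\phi)_\ell$ a Cauchy sequence of $L_2$-orthogonal projections onto nested subspaces, hence $\pi_\ell\phi \to \pi_\infty\phi$ in $L_2(\Gamma)$ where $\pi_\infty$ projects onto the closure; consequently $\norm{F_\infty - F_\ell}{\XX_\ell'} \lesssim \norm{(\pi_\infty - \pi_\ell)\phi}{H^{-1/2}(\Gamma)} \to 0$, and $\data{\ell}{}$ converges by elementwise monotonicity and Lebesgue's dominated convergence theorem. Finally, for $(\widetilde{A6})$, symmetry of $b(\cdot,\cdot)$ and the Galerkin orthogonality $b(\widetilde u_{k+1} - \Ud_{k+1}, V) = 0$ for $V\in\XX_{k+1}\supset\XX_k$ give the exact Pythagoras identity $\norm{\Ud_{k+1} - \Ud_k}{b}^2 = \norm{\widetilde u_{k+1} - \Ud_k}{b}^2 - \norm{\widetilde u_{k+1} - \Ud_{k+1}}{b}^2$; applying Young's inequality and the bound $\norm{\widetilde u_{k+1} - \widetilde u_k}{b}^2 \lesssim \data{k}{}^2 - \data{k+1}{}^2$ produces a telescoping sum whose tail is controlled exactly as in the proof of $(\widetilde{A6})$ for the weakly singular case. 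The main obstacle I expect is the bookkeeping in $(\widetilde{A4})$: while each building block is available in the literature, tracking the constants so that the data oscillation reduction $\data{\ell}{}^2 - \data{\star}{}^2$ appears with the right sign alongside the discrete estimator on $\mesh_\ell\setminus\mesh_\star$ requires the careful choice of test functions in the Scott-Zhang localization, and this is where the interplay of the $L_2$-projection's locality and the non-local operator $K'$ has to be exploited most carefully.
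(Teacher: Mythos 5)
Your proposal is correct and follows the same route the paper takes, namely verifying $(\widetilde{\rm A1})$--$(\widetilde{\rm A6})$ by mirroring the weakly singular case while exploiting exactly the two structural simplifications you identify: the locality of $\pi_\ell^{p-1}$ (so $\RR{\ell}{\star}=\mesh_\ell\setminus\mesh_\star$ and no modified mesh-size function is needed) and symmetry of $b(\cdot,\cdot)$. The paper's own proof is far terser, deferring $(\widetilde{\rm A1})$--$(\widetilde{\rm A4})$ to~\cite{ffkmp13-A} and spelling out only $(\widetilde{\rm A5})$ and the key step $\norm{u_{k+1}-u_k}{H^{1/2}(\Gamma)}^2\lesssim\data{k}{}^2-\data{k+1}{}^2$ in $(\widetilde{\rm A6})$; you supply precisely the missing details, and your observation that the data-oscillation difference controls $\norm{h_\ell^{1/2}(\pi_\star-\pi_\ell)\phi}{L_2(\Gamma)}^2$ is the correct local $L_2$-Pythagoras argument the paper uses implicitly.
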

\begin{proof}
  The assumptions~\eqref{opt:ass:stabled}--\eqref{opt:ass:reliabled} are straightforward to\linebreak
  prove.
  A detailed proof is found in~\cite{ffkmp13-A}.
  The proof of the discrete reliability is very similar to the unperturbed case and is also found in~\cite{ffkmp13-A}. To see~\eqref{opt:ass:apriorid}, define
 $\Pp^p(\mesh_\infty):=\overline{\bigcup_{\ell\in\N_0}\Pp^p(\mesh_\ell)}\subseteq L_2(\Gamma)$ and $F_\infty(v):=\dual{(1/2-K^\prime)\pi_\infty \phi}{v}_\Gamma$, where $\pi_\infty:\,L_2(\Gamma)\to\Pp^p(\mesh_\infty)$ denotes the $L_2$-orthogonal projection. By definition and with the stability of
 $K^\prime:\linebreak H^{-1/2}(\Gamma)\to H^{-1/2}(\Gamma)$, there holds
 \begin{align*}
 \norm{F_\infty-F_\ell}{\XX_\ell^\prime}&\leq\norm{(1/2-K^\prime)(\pi_\infty-\pi_\ell)\phi}{H^{-1/2}(\Gamma)}\\
 &\lesssim 
 \norm{(\pi_\infty-\pi_\ell)\phi}{L_2(\Gamma)}^2.
 \end{align*}
The term on the right-hand side tends to zero as $\ell\to\infty$. The convergence of $\datad{\ell}{}$ follows as in the weakly singular case in the proof of Lemma~\ref{opt:lem:weaksing:resd}. This shows~\eqref{opt:ass:apriorid}.
Finally,~\eqref{opt:ass:qosumd} follows analogously to the proof for the weakly singular case in Lemma~\ref{opt:lem:weaksing:resd}. 
With $u_k\in H^{1/2}(\Gamma)$ the solution of~\eqref{opt:hypsing:continuous} with right-hand side $F_k:=(1/2-K^\prime)\pi_k \phi$, the only difference is the proof of $\norm{u_{k+1}-u_k}{H^{1/2}(\Gamma)}\lesssim\data{k}{}^2-\data{k+1}{}^2$, which is much easier now. By ellipticity of $\form{\cdot}{\cdot}$, there holds
\begin{align*}
 \norm{u_{k+1}-u_k}{H^{1/2}(\Gamma)}&\lesssim\norm{(\pi_{k+1}-\pi_k)\phi}{H^{-1/2}(\Gamma)} \\
 &= 
 \norm{(1-\pi_k)\pi_{k+1}\phi}{H^{-1/2}(\Gamma)}.
\end{align*}
The approximation property of the $L_2$-orthogonal projection $\pi_k$ (see Lemma~\ref{lem:L2:Pp:apx}) implies
\begin{align*}
 \norm{(1-\pi_k)\pi_{k+1}\phi}{H^{-1/2}(\Gamma)}&\lesssim\norm{h_\ell^{1/2}(\pi_{k+1}-\pi_k)\phi}{L_2(\Gamma)}\\
 &=\norm{h_\ell^{1/2}(\pi_{k+1}-\pi_k)\phi}{L_2(\bigcup(\TT_k\setminus\TT_{k+1})},
\end{align*}
where we used the elementwise definition of $\pi_k$ and $\pi_{k+1}=\pi_k$ on $\TT_k\cap\TT_{k+1}$. There exists a constant $0<q<1$ which depends on the space dimension $d$, such that $h_{k+1}|_T\leq q h_k|_T$ for all $T\in\RR{k}{k+1}=\TT_k\setminus\TT_{k+1}$. Hence there holds
\begin{align*}
 (1-q)h_k|_{\RR{k}{k+1}}\leq h_k-h_{k+1}
\end{align*}
and therefore
\begin{align*}
 \norm{h_\ell^{1/2}(\pi_{k+1}-\pi_k)\phi}{L_2(\bigcup(\RR{k}{k+1})}^2\lesssim \data{k}{}^2-\data{k+1}{}^2.
\end{align*}
The remainder follows analogously to the proof for the weak\-ly singular case in Lemma~\ref{opt:lem:weaksing:resd}.
  $\hfill\qed$
\end{proof}

\begin{theorem}\label{opt:thm:hypsing:resd}
For all $0<\theta\leq 1$, Algorithm~\ref{opt:algorithmd} with the residual error estimator $\estd{\ell}{}$ from~\eqref{opt:est:hypsing:res:data} converges in the sense
\begin{align}\label{opt:hypsing:res:rlind}
\c{opt:reliable}^{-2}\norm{u-\Ud_{\ell+n}}{H^{1/2}(\Gamma)}^2\leq \estd{\ell+n}{}^2\leq \c{opt:Rlin}q_{\rm R}^n \estd{\ell}{}^2
\end{align}
for all $\ell,n\in\N_0$. For $0<\theta<\theta_0$, Algorithm~\ref{opt:algorithmd} converges with the best possible rate $s>0$ in the sense that
$\norm{\estd{}{}}{\A_s}<\infty$ if and only if
\begin{align}
\estd{\ell}{}\leq \c{opt:optimality} (\#\mesh_\ell-\#\mesh_0)^{-s}\quad\text{for all }\ell\in\N,
\end{align}
where the constants $\c{opt:Rlin},q_{\rm R}$ depend only on $\Gamma$, the shape regularity of the meshes $\mesh_\ell$, the polynomial degree $p$, and $\theta$. The constant $\c{opt:optimality}>0$ depends additionally on $\norm{\estd{}{}}{\A_s}$.
\end{theorem}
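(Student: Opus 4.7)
The plan is to carry out essentially the same argument as used for Theorem~\ref{opt:thm:weaksing:resd}, since the theorem is precisely the hypersingular counterpart of the weakly singular case and both fit into the abstract framework of Sections~\ref{opt:section:data:conv}--\ref{opt:section:data:optimal}. First, I would invoke Lemma~\ref{opt:lem:hypsing:resd}, which has already done the heavy lifting of verifying all six axioms~\eqref{opt:ass:stabled}--\eqref{opt:ass:qosumd} for the weighted residual estimator $\estd{\ell}{}$ with data-oscillation $\data{\ell}{}=\datad{\ell}{}$, together with the refinement set $\RR{\ell}{\star}=\mesh_\ell\setminus\mesh_\star$ and $\c{opt:refined}=1$. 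No further verification of these axioms is needed at this stage.

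Next, linear convergence~\eqref{opt:hypsing:res:rlind} follows directly from Theorem~\ref{opt:thm:convergenced}: the lower bound in~\eqref{opt:hypsing:res:rlind} is reliability~\eqref{opt:ass:reliabled}, and the upper bound is the $R$-linear estimator reduction provided by the abstract theorem, using~\eqref{opt:ass:stabled}--\eqref{opt:ass:reductiond} together with~\eqref{opt:ass:reliabled}--\eqref{opt:ass:qosumd} and a~priori convergence of data~\eqref{opt:ass:apriorid}. Observe that the threshold $\theta_0$ is determined through $\c{opt:stable}$ and $\c{opt:drel}$, which by Lemma~\ref{opt:lem:hypsing:resd} depend only on $\Gamma$, shape regularity, and the polynomial degree $p$, as asserted.

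For the optimal-rate characterization, I would apply Theorem~\ref{opt:thm:optimalityd} for $\theta<\theta_0$. Since discrete reliability~\eqref{opt:ass:dreld} is part of the conclusion of Lemma~\ref{opt:lem:hypsing:resd}, all hypotheses of Theorem~\ref{opt:thm:optimalityd} are satisfied, and one obtains the equivalence $\norm{\estd{}{}}{\A_s}<\infty \iff \estd{\ell}{} \le \c{opt:optimality}(\#\mesh_\ell-\#\mesh_0)^{-s}$ along the adaptively generated sequence. Tracking the dependencies through the abstract theorems shows that $\c{opt:Rlin}, q_{\rm R}$ depend only on $\Gamma$, $p$, shape regularity, and $\theta$, while $\c{opt:optimality}$ additionally depends on $\norm{\estd{}{}}{\A_s}$, which is exactly the dependence claimed.

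Since every ingredient is already packaged in Lemma~\ref{opt:lem:hypsing:resd} and in the abstract Theorems~\ref{opt:thm:convergenced}--\ref{opt:thm:optimalityd}, there is no genuine obstacle; the only book-keeping concern is to confirm that the mesh refinement strategy in use (2D NVB for $d=3$ or extended 1D bisection for $d=2$, as discussed in Section~\ref{section:meshrefinement}) satisfies the overlay and mesh-closure estimates~\eqref{dp:overlay}, \eqref{dp:meshclosure} needed by Theorem~\ref{opt:thm:optimalityd}, which is granted by the standing assumption of Section~\ref{section:opt:adaptive}.
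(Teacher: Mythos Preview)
Your proposal is correct and follows exactly the same approach as the paper: invoke Lemma~\ref{opt:lem:hypsing:resd} to verify the axioms~\eqref{opt:ass:stabled}--\eqref{opt:ass:qosumd}, then apply the abstract Theorems~\ref{opt:thm:convergenced} and~\ref{opt:thm:optimalityd}. The paper's proof is in fact just these two sentences, and your additional remarks on constant dependencies and the standing mesh-refinement assumptions are consistent elaborations.
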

\begin{proof}
 Lemma~\ref{opt:lem:hypsing:resd} shows that the assumption~\eqref{opt:ass:stabled}--\eqref{opt:ass:qosumd} are satisfied. Theorem~\ref{opt:thm:convergenced} and Theorem~\ref{opt:thm:optimalityd} prove the statements.
  $\hfill\qed$
\end{proof}
%\clearpage
%%%%%%%%%%%%%%%%%%%%%%%%%%%%%%%%%%%%%%%%%%%%%%%%%%%%%%%%%%%%%%%%%%%%%%%%%%%%%%%%%%%%%%%%%%%%%%%%%%%%%%%
%%%%%%%%%%%%%%%%%%%%%%%%%%%%%%%%%%%%%%%%%%%%%%%%%%%%%%%%%%%%%%%%%%%%%%%%%%%%%%%%%%%%%%%%%%%%%%%%%%%%%%%
\section{Implementational details}\label{section:implementation}
%%%%%%%%%%%%%%%%%%%%%%%%%%%%%%%%%%%%%%%%%%%%%%%%%%%%%%%%%%%%%%%%%%%%%%%%%%%%%%%%%%%%%%%%%%%%%%%%%%%%%%%
%%%%%%%%%%%%%%%%%%%%%%%%%%%%%%%%%%%%%%%%%%%%%%%%%%%%%%%%%%%%%%%%%%%%%%%%%%%%%%%%%%%%%%%%%%%%%%%%%%%%%%%
This section deals with implementational issues for the $L_2$-orthogonal
projection $\pi_\ell^p$ and the
Scott-Zhang operator $J_\ell$ as well as for some of the error
estimators discussed above. For the ease of presentation, we consider only $d=2$ and
give precise examples for the lowest-order cases $p\in\{0,1\}$.
However, the following considerations are elementary and most of the ideas directly transfer
to higher-order discretizations as well as $d\geq 3$.
%%%%%%%%%%%%%%%%%%%%%%%%%%%%%%%%%%%%%%%%%%%%%%%%%%%%%%%%%%%%%%%%%%%%%%%%%%%%%%%%%%%%%%%%%%%%%%%%%%%%%%%
\subsection{Implementation of the $L_2$-orthogonal projection $\pi_\ell^p$}\label{section:implementation:l2proj}
%%%%%%%%%%%%%%%%%%%%%%%%%%%%%%%%%%%%%%%%%%%%%%%%%%%%%%%%%%%%%%%%%%%%%%%%%%%%%%%%%%%%%%%%%%%%%%%%%%%%%%%
Let $\{\Psi_j^{\rm ref}\}_{j=1}^{m} \in\Pp^p(\elref)$ denote a basis of the
space of piecewise polynomials on the reference element $\elref$ with
\begin{align}
  m := \dim(\Pp^p(\elref)) 
  = \begin{cases}
    p+1 & d=2, \\
    \frac12(p+1)(p+2) & d=3.
  \end{cases}
\end{align}
Define the mass matrix $\massmat_{\elref}\in\R^{m\times m}$ associated to
the reference element $\elref$ by 
\begin{align}
  (\massmat_{\elref})_{jk} = \dual{\Psi_k^{\rm ref}}{\Psi_j^{\rm ref}}_{\elref}.
\end{align}
Recall the affine mapping $F_\el : \elref \to \el$ from
Section~\ref{section:bem:discrete}, which maps the reference
element $\elref$ to $\el\in\mesh_\ell$.
Define the basis $\{\Psi_j^\el\}_{j=1}^m$ of $\Pp^p(\el)$ by $\Psi_j^\el \circ
F_\el = \Psi^{\rm ref}_j$. Let $\massmat_\el \in\R^{m\times m}$ denote the local
mass matrix with entries
\begin{align}
  (\massmat_\el)_{jk} := \dual{\Psi_k^\el}{\Psi_j^\el}_\el.
\end{align}
By using the transformation $F_\el$, the computation of the entries
$(\massmat_\el)_{jk}$ can be reduced to the computation of
$(\massmat_{\elref})_{jk}$, i.e.,
\begin{align}
\begin{split}
  \dual{\Psi_k^\el}{\Psi_j^\el}_\el &= \int_\el \Psi_k^\el \Psi_j^\el \,dx 
  = \sqrt{\det(B_\el^T B_\el)}\int_{\elref} \Psi^{\rm ref}_k\Psi^{\rm ref}_j
  \,dy  \\ &= \sqrt{\det(B_\el^T B_\el)}
  \dual{\Psi^{\rm ref}_k}{\Psi^{\rm ref}_j}_{\elref}.
\end{split}
\end{align}
Hence,
\begin{align}
  \massmat_\el = \sqrt{\det(B_\el^T B_\el)} \massmat_{\elref} =
  \frac{|\el|}{|\elref|} \massmat_{\elref}.
\end{align}
Note that $\massmat_{\elref}$ can be computed analytically.
This is useful in practice, as we have to compute $\massmat_{\elref}$ only once
and then multiply it by $|\el|/|\elref|$ to get $\massmat_\el$ for
each element $\el\in\mesh_\ell$.

Let $g\in L_2(\Gamma)$ and let $\pi_\ell^p : L_2(\Gamma) \to \Pp^p(\mesh_\ell)$
denote the $L_2$-orthogonal projection onto $\Pp^p(\mesh_\ell)$. Then, $\pi_\ell^p
g$ satisfies
\begin{align}\label{eq:implementation:l2proj}
  \dual{\pi_\ell^p g}{\Psi}_\Gamma = \dual{g}{\Psi}_\Gamma \quad\text{for all }
  \Psi\in\Pp^p(\mesh_\ell).
\end{align}
A basis $\{\Psi_j\}_{j=1}^\dimPp$ with $\dimPp:=m \, \#\mesh_\ell$ is given by
the combination of all basis functions $\Psi_k^\el$ for each element
$\el\in\mesh_\ell$.
Define the mass matrix $\massmat \in \R^{\dimPp\times \dimPp}$ by 
\begin{align}
  \massmat_{jk} = \dual{\Psi_k}{\Psi_j}_\Gamma.
\end{align}
Then,~\eqref{eq:implementation:l2proj} is equivalent to 
\begin{align}
  \massmat \mathbf{x} = \mathbf{g} \quad\text{with } \mathbf g_j :=
  \dual{g}{\Psi_j}_\Gamma,
\end{align}
where $\pi_\ell^p g = \sum_{j=1}^\dimPp \mathbf x_j \Psi_j$.
However, a simple calculation shows that $\pi_\ell^p$ is local in the sense that
\begin{align}
  (\pi_\ell^p) g|_\el = \pi_{\ell,\el}^p g =  \pi_{\ell,\el}^p (g|_\el),
\end{align}
where $\pi_{\ell,\el}^p$ denotes the $L_2$-orthogonal projection onto $\Pp^p(\el)$.
This allows us to reduce the computation of the orthogonal projection $\pi_\ell^p
g$ to the solution of the local problems
\begin{align}
  \massmat_\el \mathbf x_\el = \mathbf g_\el \quad\text{for all }\el\in\mesh_\ell,
\end{align}
where $(\mathbf g_\el)_j = \dual{g}{\Psi_j^\el}_\el$ and $\pi_{\ell,\el}^p g =
\sum_{j=1}^m (\mathbf x_\el)_j \Psi_j^\el$.
The coefficients $\dual{g}{\Psi_j}_\el$ can be computed by use of, e.g., Gaussian
quadrature rules, cf.~\cite{stroud}, as follows: First, we transform the
integral over $\el$ to the reference element $\elref$. Then, we apply an appropriate
quadrature rule of order $q\in\N$ with weights $\{w_i\}_{i=1}^q$ and evaluation points
$\{y_i\}_{i=1}^q \subseteq \elref$, i.e.
\begin{align}
  \dual{g}{\Psi_j^\el}_\el = \int_{\elref} g\circ F_\el \Psi^{\rm ref}_j \,dy
  \approx \sum_{j=1}^q w_i (g\circ F_\el)(y_i) \Psi^{\rm ref}_j(y_i).
\end{align}

We give some examples for the matrix $\massmat_\el$ for $d=2,3$ and $p=0,1$. 
Let $d=2$ with the reference element $\elref = (-1,1)$. For $p=0$ and the basis
function $\Psi^{\rm ref}_1(x) = 1$ for $x\in\elref$, we get
\begin{align}
  \massmat_\el = |T|.
\end{align}
For $d=2$ and $p=1$ with the basis
\begin{align*}
  \Psi^{\rm ref}_1(x) = 1 \quad\text{and}\quad \Psi^{\rm ref}_2(x) = x
  \quad\text{for } x\in\elref,
\end{align*}
the local mass matrix reads
\begin{align}
  \massmat_\el = |T| \begin{pmatrix}
    1 & 0 \\
    0 & \tfrac13
  \end{pmatrix}.
\end{align}

Let $d=3$ with reference element given by
\begin{align*}
  \elref = {\rm conv}\{ (0,0),(1,0),(0,1)\}.
\end{align*}
For $p=0$ with basis $\Psi^{\rm ref}_1(x,y) = 1$ for $(x,y)\in\elref$, we have
\begin{align}
  \massmat_\el = |T|.
\end{align}
For $p=1$ with basis
\begin{align*}
  \Psi^{\rm ref}_1(x,y) = 1, \quad \Psi^{\rm ref}_2(x,y) = x, \quad
  \Psi^{\rm ref}_3(x,y) = y
\end{align*}
for $(x,y)\in\elref$, we get
\begin{align}
  \dual{\Psi_k^\el}{\Psi_j^\el}_\el = 2|T| \int_0^1 \int_0^{1-x}
  \Psi^{\rm ref}_k\Psi^{\rm ref}_j \,dy\,dx.
\end{align}
Thus, the local mass matrix reads
\begin{align}
  \massmat_\el = \frac{|T|}{12} \begin{pmatrix}
    12 & 4 & 4 \\
    4 & 2 & 1 \\
    4 & 1 & 2
  \end{pmatrix}.
\end{align}

%%%%%%%%%%%%%%%%%%%%%%%%%%%%%%%%%%%%%%%%%%%%%%%%%%%%%%%%%%%%%%%%%%%%%%%%%%%%%%%%%%%%%%%%%%%%%%%%%%%%%%%
\subsection{Implementation of the Scott-Zhang projection $J_\mesh$}\label{section:implementation:sz}
%%%%%%%%%%%%%%%%%%%%%%%%%%%%%%%%%%%%%%%%%%%%%%%%%%%%%%%%%%%%%%%%%%%%%%%%%%%%%%%%%%%%%%%%%%%%%%%%%%%%%%%
The implementation of the Scott-Zhang projection defined in Section~\ref{section:sz} requires the computation of an $L_2$-dual basis
and a numerical integration.
We stick with the setting and notations of Section~\ref{section:sz}.
Suppose that $\el_i\in\mesh$ is the element chosen for the computation
of $J_\mesh v$ at the node $z_i$ and $\left\{ \phi_{i,j} \right\}_{j=1}^d$ is 
the nodal basis of $\Pp^1(\el_i)$. 
Recall the definition
\begin{align*}
  \int_{\el_i} \psi_{i,k}\phi_{i,j}\,dx = \delta_{k,j}
\end{align*}
of the dual basis $\{\psi_{i,k}\}_{k=1}^d$ from~\eqref{eq:dualbasis}.
Hilbert space theory predicts the representation
\begin{align*}
  \psi_{i,k} = \sum_{m=1}^d a_{k,m}^{(i)} \phi_{i,m} \quad\text{with}\quad a_{k,m}^{(i)}\in\R.
\end{align*}
Using the duality~\eqref{eq:dualbasis}, the
coefficients $a_{k,m}^{(i)}$ can be computed by solving the $d$ systems of $d\times d$ linear equations
\begin{align*}
  \sum_{m=1}^d a_{k,m}^{(i)} \int_{\el_i}\phi_{i,m}(x)\phi_{i,j} = \delta_{k,j}.
\end{align*}
Denoting by $\mathbf{A}^{(i)}\in\R^{d\times d}$ the matrix with
$\mathbf{A}^{(i)}_{k,m} = a_{k,m}^{(i)}$, this yields
\begin{align*}
  \mathbf{A}^{(i)} &= \abs{\el_i}^{-1}
  \begin{pmatrix}
    4 & -2\\
    -2 & 4
  \end{pmatrix}
  \text{ for } d=2,\\
  \mathbf{A}^{(i)} &= \abs{\el_i}^{-1}
  \begin{pmatrix}
    18 & -6 & -6 \\
    -6 & 18 & -6 \\
    -6 & -6 & 18
  \end{pmatrix}
  \text{ for } d=3.
\end{align*}
Let $\psi_i$ denote the dual basis function $\psi_{i,k}$ with $k\in\left\{ 1, \dots, d \right\}$ such that
$\phi_{i,k}(z_i) = 1$. Hence,
\begin{align*}
  \psi_i = \sum_{m=1}^d \mathbf{A}_{k,m}^{(i)} \phi_{i,m}.
\end{align*}
To compute $\int_{\el_i}\psi_i v\,dx$, standard quadrature rules on intervals respectively 
triangles can be used,
cf.~\cite{stroud}.

%%%%%%%%%%%%%%%%%%%%%%%%%%%%%%%%%%%%%%%%%%%%%%%%%%%%%%%%%%%%%%%%%%%%%%%%%%%%%%%%%%%%%%%%%%%%%%%%%%%%%%%
\subsection{Assumptions on uniform refinement for $d=2$}
%%%%%%%%%%%%%%%%%%%%%%%%%%%%%%%%%%%%%%%%%%%%%%%%%%%%%%%%%%%%%%%%%%%%%%%%%%%%%%%%%%%%%%%%%%%%%%%%%%%%%%%
We suppose that the uniform refinement $\widehat \mesh_\ell$ of $\mesh_\ell$ is
obtained by splitting each element $T\in\mesh_\ell$ into $k$ sons
$T'\in\widehat\mesh_\ell$ for some fixed $k\geq 2$. A natural approach for $d=2$ employs $k=2$
and ensures $h_{\widehat\mesh_\ell} = h_{\mesh_\ell}/2$ for the respective mesh-size
functions $h_\ell,\widehat h_\ell : \Gamma\to\R$ with, e.g.,
$h_\ell|_T = h_T = \diam(T)$ for all $T\in\mesh_\ell$.
For the remainder of this section, let $\el^+,\el^- \in\widehat\mesh_\ell$ denote
the unique elements with $\overline\el^+\cup\overline\el^- = \overline\el$ for
all $T\in\mesh_\ell$.

%%%%%%%%%%%%%%%%%%%%%%%%%%%%%%%%%%%%%%%%%%%%%%%%%%%%%%%%%%%%%%%%%%%%%%%%%%%%%%%%%%%%%%%%%%%%%%%%%%%%%%%
\subsection{Two-level estimator}
%%%%%%%%%%%%%%%%%%%%%%%%%%%%%%%%%%%%%%%%%%%%%%%%%%%%%%%%%%%%%%%%%%%%%%%%%%%%%%%%%%%%%%%%%%%%%%%%%%%%%%%
Recall the hierarchical two-level decomposition
\begin{align}\label{eq:implementation:decomposition}
  \widehat\XX_\mesh = \XX_\mesh \oplus \ZZ_\mesh
\end{align}
from~\eqref{2level}, where $Z$ is further decomposed into
\begin{align}
  \ZZ_\mesh = \ZZ_{\mesh,1} \oplus \cdots \oplus \ZZ_{\mesh,L} \quad\text{with } \dim(\ZZ_{\mesh,i}) = 1.
\end{align}
Let $\Psi_j\neq 0$ denote an appropriate element of $\ZZ_{\mesh,j}$. The basis functions $\Psi_j$, hence $\ZZ_{\mesh,j}$, will be specified accordingly for the
weakly singular integral equation as well as the hypersingular integral equation
later on.
Note that $\widehat\XX_\mesh = \XX_{\widehat\mesh}$ denotes the space
associated to the uniformly refined mesh $\widehat\mesh$.

The computation of the local indicators
\begin{align}
  \est{j}{} = \norm{P_j(\widehat U - U)}{b},
\end{align}
where $b(\cdot,\cdot)$ denotes the bilinear form corresponding to the
weakly singular or hypersingular integral equation, requires the representation
of the projection operators $P_j$. 
Since $\ZZ_{\mesh,j}$ is one-dimensional, the subsequent identity
follows immediately from the definition of $P_j$,
\begin{align}
  P_j \widehat V = \frac{b(\widehat V,\Psi_j)}{\norm{\Psi_j}{b}^2} \Psi_j
  \quad\text{for all } \widehat V \in \widehat \XX_\mesh.
\end{align}
With $\widehat V = \widehat U-U$, Lemma~\ref{lem:ASMprop} shows
\begin{align}
  \est{j}{} = \frac{|f(\Psi_j) - b(U,\Psi_j)|}{\norm{\Psi_j}{b}^2}.
\end{align}
The computation of $\est{j}{}$ involves the assembling of the Galerkin matrix
corresponding to $b(\cdot,\cdot)$ with ansatz space $\XX_\mesh$ and test function
$\Psi_j$ as well as the computation of $b(\Psi_j,\Psi_j)$.
Therefore, we have to compute the entries
\begin{align*}
  b(V_k,\Psi_j) \quad\text{as well as}\quad b(\Psi_j,\Psi_j),
\end{align*}
where $\{V_k\}_{k=1}^{\dim(\XX_\mesh)}$ denotes a basis of $\XX_\mesh$.
We note that this
quantities can be obtained from the Galerkin matrix with respect to the fine
space $\widehat\XX_\mesh$.
%------------------------------------------------------------------------------------------------------
\subsubsection{2D weakly singular integral equation}
%------------------------------------------------------------------------------------------------------
We consider $\XX_\mesh = \Pp^0(\mesh)$ and $\widehat\XX_\mesh = \XX_{\widehat\mesh} = \Pp^0(\widehat\mesh)$.
For each $T_j \in\mesh$, let $T_{j}^\pm \in \widehat\mesh$ denote the two elements
with $\overline T_{j}^+ \cup \overline T_{j}^- = \overline T_j$ and $|T_{j}^\pm| =
|T_j|/2$.
Define the basis function $\Psi_j\in\widehat \XX_\mesh$ by
\begin{align}
  \Psi_j|_{T_{j}^+} = +1 \quad \Psi_j|_{T_{j}^-} = -1 \quad \Psi_j|_{\Gamma
  \backslash\overline T_j} = 0.
\end{align}
It holds
\begin{align}
  \dual{\Psi_j}1_T = 0.
\end{align}
The stability of the corresponding
decomposition~\eqref{eq:implementation:decomposition} is proved
in~\cite{hms01} resp.~\cite{effp09} for $d=2$ and~\cite{eh06} for $d=3$.

%------------------------------------------------------------------------------------------------------
\subsubsection{2D hypersingular integral equation}
%------------------------------------------------------------------------------------------------------
We consider $\XX_\mesh = \Sp^1(\mesh)$ and $\widehat \XX_\mesh = \Sp^1(\widehat\mesh)$.
For each $T_j \in\mesh$, let $T_{j}^\pm \in \widehat\mesh$ denote the two elements
with $\overline T_{j}^+ \cup \overline T_{j}^- = \overline T$ and $|T_{j}^\pm| =
|T_j|/2$. 
Define the midpoint $m_j = \overline T_{j}^+\cap \overline T_{j}^-$ and the
basis function $\Psi_j\in\widehat \XX_\mesh$ by
\begin{align}
  \Psi_j(m_j) = 1 \quad\text{and}\quad \Psi_j|_{\Gamma\backslash \overline T_j}
  = 0.
\end{align}
The stability of the corresponding
decomposition~\eqref{eq:implementation:decomposition} is proved
in~\cite{hms01} resp.~\cite{effp09} for $d=2$ and~\cite{h02}
resp.~\cite{affkp13} for $d=3$.
%%%%%%%%%%%%%%%%%%%%%%%%%%%%%%%%%%%%%%%%%%%%%%%%%%%%%%%%%%%%%%%%%%%%%%%%%%%%%%%%%%%%%%%%%%%%%%%%%%%%%%%
\subsection{$(h-h/2)$ error estimators in 2D}\label{section:implementation:hh2}
%%%%%%%%%%%%%%%%%%%%%%%%%%%%%%%%%%%%%%%%%%%%%%%%%%%%%%%%%%%%%%%%%%%%%%%%%%%%%%%%%%%%%%%%%%%%%%%%%%%%%%%
For the $(h-h/2)$-based error estimators $\hhhalfweak_\ell$,
$\hhhalfweaktilde_\ell$ from Section~\ref{section:est:hh2}, we have to compute local
refinement indicators of the form
\begin{align*}
  h_T \norm{\widehat \Psi}{L_2(T)}^2 \quad\text{for all } T\in\mesh_\ell 
\end{align*}
with $\widehat \Psi \in \Pp^p(\widehat\mesh_\ell)$ and $h_T = \diam(T)$.

Let $T^\pm \in \widehat \mesh_\ell$ denote the son elements of the
father $T\in\mesh_\ell$, i.e., $\overline T = \overline T^+ \cup \overline T^-$.
We define the local mesh $\localmesh$ as the restriction of $\widehat\mesh_\ell$
to $T$, i.e., $\localmesh := \{T^+,T^-\}$.
Let $\{\widehat\Psi_j\}_{j=1}^{2(p+1)}$ denote a basis of the local subspace
$\Pp^p(\localmesh)$. With the representation of $\widehat \Psi
\in\Pp^p(\localmesh)$ on the father element $T\in\mesh_\ell$
\begin{align}
  \widehat \Psi|_T = \sum_{j=1}^{2(p+1)} \alpha_j \widehat\Psi_j,
\end{align}
and the local mass matrix $\massmathat_T \in \R_{\rm sym}^{2(p+1)\times 2(p+1)}$ with
entries
\begin{align}
  (\massmathat_T)_{jk} = \dual{\widehat\Psi_j}{\widehat\Psi_k}_{T},
\end{align}
the computation of the local indicators read
\begin{align}\label{eq:implementation:localind}
  h_T \norm{\widehat \Psi}{L_2(T)}^2 = h_T \alpha^T \massmathat_T \alpha.
\end{align}
In the following, this observation is employed for the weakly singular and hypersingular
integral equation. Note that the estimators for the hypersingular integral
equation require the computation of the arclength derivative $\nabla_\Gamma(\cdot)$ of a discrete
function.

%------------------------------------------------------------------------------------------------------
\subsubsection{2D weakly singular integral equation}\label{section:implementation:hh2:weak}
%------------------------------------------------------------------------------------------------------
We consider the local refinement indicators of the estimator
\begin{align}
  \hhhalfweak_\ell^2 = \sum_{T\in\mesh_\ell} h_T \norm{\widehat
  \Phi_\ell-\Phi_\ell}{L_2(T)}^2 =:
  \sum_{T\in\mesh_\ell} h_T \norm{\widehat \Psi}{L_2(T)}^2,
\end{align}
where $\Phi_\ell \in \Pp^p(\mesh_\ell)$ and $\widehat\Phi_\ell
\in\Pp^p(\widehat\mesh_\ell)$ are the respective Galerkin solutions. 

For the lowest-order case $p=0$, we choose the characteristic functions as
basis, i.e., $\widehat\Psi_i$ is the characteristic function on the element $T_i \in
\localmesh$.
The corresponding local mass matrix reads $\massmathat_T = h_T/2 \, \idmat$, where $\idmat$ denotes
the $2\times 2$ identity matrix.
Hence, the computation of $\hhhalfweak_\ell^2(T) = h_T \norm{\widehat
\Phi_\ell-\Phi_\ell}{L_2(T)}^2$ reads
\begin{align}
\begin{split}
  \hhhalfweak_\ell^2(T) &= h_T \alpha^T\cdot \massmathat_T \alpha = h_T^2/2 \,
  \alpha^T\cdot \idmat \alpha \\
  &= \frac{h_T^2}2 \Big( (\widehat
  \Phi_\ell|_{T^+} - \Phi_\ell|_T)^2 + (\widehat
  \Phi_\ell|_{T^-} - \Phi_\ell|_T)^2 \Big).
\end{split}
\end{align}

The computation of the local refinement indicators
\begin{align}\label{eq:implementation:mutildeslp}
  \hhhalfweaktilde_\ell^2(T) = h_T \norm{\widehat\Phi_\ell-
  \pi_\ell\widehat\Phi_\ell}{L_2(T)}^2
\end{align}
of the estimator $\hhhalfweaktilde_\ell$ is done in the same manner.
For the computation of $\pi_\ell\widehat\Phi_\ell$ we proceed as in
Section~\ref{section:implementation:l2proj}.
Let $\Psi_T$ denote the characteristic function on $T\in\mesh_\ell$. It holds
$\dual{\widehat\Phi_\ell}{\Psi_T}_T = h_T/2(\widehat \Phi_\ell|_{T^+} +
\widehat\Phi_\ell|_{T^-})$. Hence, 
\begin{align}
  \pi_\ell\widehat\Phi_\ell|_T = \frac12 (\widehat\Phi_\ell|_{T^+} +
  \widehat\Phi_\ell|_{T^-}).
\end{align}
For the local refinement indicators in~\eqref{eq:implementation:mutildeslp}, we
get
\begin{align}
\begin{split}
  \hhhalfweaktilde_\ell^2(T) &= \frac{h_T^2}2 ( \widehat\Phi_\ell|_{T^+} -
  \frac12 (\widehat\Phi_\ell|_{T^+} + \widehat\Phi_\ell|_{T^-}))^2  \\
  &\qquad  +\frac{h_T^2}2 ( \widehat\Phi_\ell|_{T^-} -
  \frac12 (\widehat\Phi_\ell|_{T^+} + \widehat\Phi_\ell|_{T^-}))^2 \\
  &= \frac{h_T^2}4 ( \widehat\Phi_\ell|_{T^+}-\widehat\Phi_\ell|_{T^-})^2.
\end{split}
\end{align}
From a practical point of view, the estimator $\hhhalfweaktilde_\ell$ is more
attractive than $\hhhalfweak_\ell$, since the computation involves only the Galerkin
solution on the fine mesh $\widehat\mesh_\ell$.

For $p=1$, we use the basis $\{\widehat\Psi_j\}_{j=1}^{4} \subseteq
\Pp^1(\localmesh)$ with
\begin{align}\label{eq:implementation:basisP1hat}
  \widehat\Psi_1|_{\el^+} &= 1, & \widehat\Psi_1|_{\el^-}&=0,\nonumber \\
  \widehat\Psi_2|_{\el^+} &= 0, & \widehat\Psi_1|_{\el^-}&=1, \\
  \widehat\Psi_3|_{\el^+} \circ F_{\el^+}(x) &= x, & \widehat\Psi_3|_{\el^-}&=0,\nonumber \\
  \widehat\Psi_4|_{\el^+} &=0, & \widehat\Psi_4|_{\el^-} \circ F_{\el^-}(x) &= x,\nonumber
\end{align}
for all $x\in\elref = (-1,1)$.
The local mass matrix $\massmathat_\el$ then reads
\begin{align}
  \massmathat_\el = \frac{h_\el}2 \begin{pmatrix}
    1 & 0 & 0 & 0 \\
    0 & 1 & 0 & 0 \\
    0 & 0 & \tfrac13 & 0 \\
    0 & 0 & 0 & \tfrac13
  \end{pmatrix},
\end{align}
and with $\widehat\Psi =\widehat\Phi_\ell|_\el - \Phi_\ell|_\el = \sum_{j=1}^4 \alpha_j \widehat\Psi_j$, we get
\begin{align}
  \hhhalfweak_\ell^2(\el) = \frac{h_\el}2 \norm{\widehat\Psi}{L_2(\el)}^2 = 
  \frac{h_\el^2}2 ( \alpha_1^2 + \alpha_2^2 + \frac13(\alpha_3^2+\alpha_4^2)).
\end{align}
For the computation of the local error indicator $\hhhalfweaktilde_\ell^2(\el)$,
we proceed as before and compute $\pi_\ell^1$. Let $\beta\in\R^4$ satisfy
$\widehat\Phi_\ell|_\el = \sum_{j=1}^4 \beta_j \widehat\Psi_j$. We end up with
\begin{align}
  \hhhalfweaktilde_\ell^2(\el) = \frac{h_\el^2}4 ( (\beta_1-\beta_2)^2 +
  \frac13(\beta_3-\beta_4)^2).
\end{align}

%------------------------------------------------------------------------------------------------------
\subsubsection{2D hypersingular integral equation}
%------------------------------------------------------------------------------------------------------
We consider the local refinement indicators of the estimator
\begin{align}\label{eq:implementation:hh2hyp}
  \hhhalfhyp_\ell^2 = \sum_{T\in\mesh_\ell} h_T \norm{\nabla_\Gamma (\widehat
  U_\ell -  U_\ell)}{L_2(T)}^2 =:
  \sum_{T\in\mesh_\ell} h_T \norm{\widehat \Psi}{L_2(T)}^2,
\end{align}
where $U_\ell\in\Sp^{p+1}(\mesh_\ell)$ resp. $\widehat
U_\ell\in\Sp^{p+1}(\widehat\mesh_\ell)$ are the respective Galerkin solutions. Hence, 
$\widehat \Psi := \nabla_\Gamma (\widehat U_\ell-U_\ell) \in \Pp^p(\widehat
\mesh_\ell)$.
To apply~\eqref{eq:implementation:localind}, it remains to provide a formula to
compute the arclength derivative $\nabla_\Gamma
(\widehat U_\ell-U_\ell)|_T \in \Pp^p(\localmesh)$.

Let $p=0$ and let $\{\eta_1,\eta_2,\eta_3\}$ denote a basis of the local subspace
$\Sp^{1}(\localmesh)$. We define the matrix $\localgradinPP_T \in\R^{2\times 3}$,
which represents the gradients of the basis $\{\eta_j\}$ with respect to the
basis $\{\widehat\Psi_1,\widehat\Psi_2\}$ of $\Pp^0(\localmesh)$, i.e.
\begin{align}\label{eq:implementation:defgradinPP}
  \nabla_\Gamma \eta_j = \sum_{k=1}^{2} (\localgradinPP_T)_{kj} \widehat\Psi_k.
\end{align}
Then, $\nabla_\Gamma (\widehat U_\ell-U_\ell) = \sum_j \beta_j \nabla_\Gamma \eta_j$, where $\alpha =
\localgradinPP_T \beta$.
Testing equation~\eqref{eq:implementation:defgradinPP} with $\widehat\Psi_j$ in
$L_2(T)$, we obtain the equivalent matrix
equation
\begin{align}
  \localgradproj_T = \massmathat_T \localgradinPP_T
\end{align}
with $(\localgradproj_T)_{jk} = \dual{\nabla_\Gamma \eta_k}{\widehat\Psi_j}_T$. Hence,
$\localgradinPP_T = (\massmathat_T)^{-1} \localgradproj_T$.
Together with~\eqref{eq:implementation:localind}, the computation of the local
indicators in~\eqref{eq:implementation:hh2hyp} with $\widehat\Psi = \nabla_\Gamma (\widehat U-U)$ reads
\begin{align}\label{eq:implementation:hh2hyp2}
\begin{split}
  h_T \norm{\widehat \Psi}{L_2(T)}^2 &= h_T \alpha^T \massmathat_T\alpha = h_T
  \beta^T \localgradinPP_T^T \massmathat_T \localgradinPP_T \beta \\
  &= h_T \beta^T (\massmathat_T^{-1} \localgradproj_T)^T \massmathat_T
  \massmathat_T^{-1}
  \localgradproj_T \beta \\
  &= h_T \beta^T \localgradproj_T^T \massmathat_T^{-1} \localgradproj_T \beta.
\end{split}
\end{align}
Let $x_1,x_2$ denote the endpoints
of the element $T\in\mesh_\ell$ and let $x_3 = (x_1+x_2)/2$ denote the midpoint of
$T$. We choose the nodal basis $\{\eta_j\}$ with respect to the nodes $\{x_j\}$.
Let $\widehat\Psi_1,\widehat\Psi_2$ denote the characteristic functions on $T^+,T^- \in \widehat
\mesh_\ell$ with $\overline T^+ \cup \overline T^- = \overline T$ and
$x_1\in\overline T^+$, $x_2\in\overline T^-$.
Then,
\begin{align*}
  \localgradproj_T = \begin{pmatrix}
    -1 & 0 & +1 \\
    0 & +1 & -1
  \end{pmatrix},
  \qquad 
  \massmathat_T = \frac{h_T}2 \begin{pmatrix}
    1 & 0 \\
    0 & 1
  \end{pmatrix},
\end{align*}
and~\eqref{eq:implementation:hh2hyp2} becomes with $\beta_j = (\widehat
U_\ell-U_\ell)(x_j)$
\begin{align}\begin{split}
  h_T \beta^T \localgradproj_T^T \massmathat_T^{-1} \localgradproj_T \beta
  &= 2 \beta^T \begin{pmatrix}
    1 & 0 & -1 \\
    0 & 1 & -1 \\
    -1 & -1 & 2
  \end{pmatrix}\beta\\
  &= 2(\beta_1-\beta_3)^2 + 2(\beta_2-\beta_3)^2.
\end{split}\end{align}
Using $U_\ell(x_3) = (U_\ell(x_1)+U(x_2))/2$, the indicator
$\hhhalfhyp_\ell(T)^2$ is computed by
\begin{align}
\begin{split}
  \hhhalfhyp_\ell(T)^2 &= 2(\widehat U_\ell(x_1)-\widehat U_\ell(x_3) +
  (U_\ell(x_2)-U_\ell(x_1))/2)^2 \\
  &\qquad + 2(\widehat U_\ell(x_2)-\widehat U_\ell(x_3) +
  (U_\ell(x_1)-U_\ell(x_2))/2)^2.
\end{split}
\end{align}
For the computation of the local error indicators
\begin{align}
  \hhhalfhyptilde_\ell^2(T) = h_T \norm{(1-\pi_\ell)\nabla_\Gamma \widehat
  U_\ell}{L_2(T)}^2  =: h_T \norm{\widehat\Psi}{L_2(T)}^2,
\end{align}
we proceed as in Section~\ref{section:implementation:hh2:weak} to compute the
$L_2$-projection
\begin{align}
  \pi_\ell \nabla_\Gamma \widehat U_\ell = \frac12 \Big( (\nabla_\Gamma
  \widehat U_\ell)|_{\el^+} + (\nabla_\Gamma
  \widehat U_\ell)|_{\el^-} \Big).
\end{align}
With $\alpha$ resp. $\beta$ given by $\widehat\Psi = \sum_{j=1}^2 \alpha_j
\widehat\Psi_j$ resp. $\widehat U_\ell = \sum_{j=1}^3 \beta_j \eta_j$, a
straightforward computation shows that
\begin{align}
  \alpha = \frac12 \begin{pmatrix}
    1 & -1 \\
    -1 & 1
  \end{pmatrix}
  \localgradinPP_\el \beta,
\end{align}
Putting this into~\eqref{eq:implementation:localind}, we get
\begin{align}
  h_T \norm{\widehat\Psi}{L_2(T)}^2 = (2\beta_3-\beta_1-\beta_2)^2.
\end{align}
Using $\beta_j = \widehat U_\ell(x_j)$, the local refinement
indicators become
\begin{align}
  \hhhalfhyptilde_\ell^2(T) = ( 2 \widehat U_\ell(x_3) -\widehat U_\ell(x_1) -\widehat U_\ell(x_2))^2.
\end{align}
Again, the computation of $\hhhalfhyptilde_\ell$ is more attractive compared to
the computation $\hhhalfhyp_\ell$, since only the solution $\widehat U_\ell$ on
the fine mesh is needed.

Let $p=1$ and let $\{\widehat\Psi_j\}_{j=1}^4$ be given as
in~\eqref{eq:implementation:basisP1hat}.
We choose the basis $\{\eta_j\}_{j=1}^5$, where $\eta_1,\eta_2,\eta_3$ are the
(linear) nodal basis functions with respect to $x_1,x_2,x_3$, and $\eta_4 :=
\eta_1\eta_3$ as well as $\eta_5:=\eta_2\eta_3$.
Arguing as for $p=0$, we obtain
\begin{align}
  \localgradproj_\el = \begin{pmatrix}
    -1 & 0 & 1 & 0 & 0 \\
    0 & 1 & -1 & 0 & 0 \\
    0 & 0 & 0 & -\tfrac13 & 0 \\
    0 & 0 & 0 & 0 & -\tfrac13 
  \end{pmatrix}.
\end{align}
Using~\eqref{eq:implementation:hh2hyp2} with $\widehat\Psi := \nabla_\Gamma(\widehat
U_\ell-U_\ell)|_\el =\sum_{j=1}^5 \beta_j \nabla_\Gamma \eta_j$, we infer 
\begin{align}
  \hhhalfhyp_\ell^2(\el) = 2( (\beta_1-\beta_3)^2 + (\beta_2-\beta_3)^2 +
  \frac13(\beta_4^2+\beta_5^2)).
\end{align}
For the computation of the local error indicators of $\hhhalfhyptilde_\ell$, let
$\beta\in\R^5$ be given by $\widehat U_\ell = \sum_{j=1}^5 \beta_j \eta_j$. We
proceed as in the case $p=0$ and get
\begin{align}
  \hhhalfhyptilde_\ell^2(\el) = (2\beta_3-\beta_1-\beta_2)^2 +
  \frac13(\beta_4-\beta_5)^2.
\end{align}

Similar results hold for $d=3$ resp. $p>1$.

%%%%%%%%%%%%%%%%%%%%%%%%%%%%%%%%%%%%%%%%%%%%%%%%%%%%%%%%%%%%%%%%%%%%%%%%%%%%%%%%%%%%%%%%%%%%%%%%%%%%%%%
\subsection{Weighted residual error estimator}
%%%%%%%%%%%%%%%%%%%%%%%%%%%%%%%%%%%%%%%%%%%%%%%%%%%%%%%%%%%%%%%%%%%%%%%%%%%%%%%%%%%%%%%%%%%%%%%%%%%%%%%

%------------------------------------------------------------------------------------------------------
\subsubsection{2D weakly singular integral equation}
%------------------------------------------------------------------------------------------------------
We consider the computation of the local error indicators 
\begin{align}\label{eq:implementation:defestres}
  \estres_\ell^2(\el) = h_\el \norm{\nabla_\Gamma(\slo\Phi_\ell-f)}{L_2(\el)}^2
  =: h_\el
  \norm{\nabla_\Gamma R_\ell}{L_2(\el)}^2
\end{align}
of the weighted-residual error estimator for $d=2$.
By the definition of $\nabla_\Gamma(\cdot)$, we have
\begin{align}
  (\nabla_\Gamma R_\ell) \circ F_\el = 2h_\el^{-1} (\nabla_\Gamma(R_\ell\circ F_\el)),
\end{align}
where $F_\el$ denotes the affine mapping from $\elref = (-1,1)$ to $\el = {\rm
conv}\{x_1,x_2\}$.
For~\eqref{eq:implementation:defestres}, this yields
\begin{align}
  \begin{split}
  h_\el \norm{\nabla_\Gamma R_\ell}{L_2(\el)}^2 &= h_\el \int_\el (\nabla_\Gamma
  R_\ell)^2 \,dx\\
  &=2\int_{\elref} ( \nabla_\Gamma(R_\ell\circ F_\el))^2 \,dy.
  \end{split}
\end{align}
Recall that $\nabla_\Gamma (\cdot) = (\cdot)'$, where $(\cdot)'$ denotes the
arclength derivative.
We approximate $R_\ell\circ F_\el$ by some polynomial $\Psi\in\Pp^{2q}(\elref)$ with
$q\geq 1$, i.e.
\begin{align}\label{eq:implementation:resapprox1}
  \int_{\elref} (\nabla_\Gamma(R_\ell\circ F_\el))^2 \,dy \approx \int_{\elref}
  (\Psi')^2 \,dy.
\end{align}
Note hat $\Psi' \in\Pp^{2q-1}(\elref)$, whence $(\Psi')^2 \in \Pp^{4q-2}$.
To compute the integral on the right-hand side of~\eqref{eq:implementation:resapprox1},
we use a $2q$-point Gaussian quadrature rule, which is exact for polynomials of
order $2(2q)-1$, and thus for $(\Psi')^2$.
Let $\{y_i\}_{i=1}^{2q}$ denote the quadrature nodes on $\elref$ with
corresponding weights\linebreak
$\{w_i\}_{i=1}^{2q}$. This leads for the local error indicators
\begin{align}\label{eq:implementation:resapprox2}
  h_T \norm{\nabla_\Gamma R_\ell}{L_2(T)}^2 \approx 2 \int_{\elref} (\Psi')^2 \,dy =
  2\sum_{i=1}^{2q} w_i( \Psi'(y_i))^2.
\end{align}
For the construction of $\Psi\in\Pp^{2q}(\elref)$ by interpolation, we use the
$2q$ points $\{y_i\}_{i=1}^{2q}$ from the Gaussian quadrature rule plus the
midpoint of the reference element $y_{2q+1} = 0$.
According to~\eqref{eq:implementation:resapprox2}, we need to evaluate $\Psi'$
at the points $y_i$ for $i=1,\dots,2q$. 
To that end, let $\{L_i\}_{i=1}^{2q+1}$ denote the Lagrange basis with respect
to the points $\{y_i\}_{i=1}^{2q+1}$.
It holds 
\begin{align*}
  \Psi = \sum_{k=1}^{2q+1} \beta_k L_k
\end{align*}
with $\beta_k := \Psi(y_k)$.
Define the matrix $\lagrangemat' \in \R^{2q\times (2q+1)}$ by
\begin{align}
  (\lagrangemat')_{jk} := L'_k(y_j).
\end{align}
Then, $\alpha_j := \Psi'(y_j)$ can be obtained by the matrix-vector
multiplication
\begin{align}
  \alpha = \lagrangemat' \beta.
\end{align}
We consider the lowest-order case $p=0$, where we use a 2-point Gaussian
quadrature rule ($q=1$) on $\elref$ with $y_1 = -1/\sqrt{3}$, $y_2 = 1/\sqrt{3}$
and weights $w_1=1=w_2$. The derivatives of the Lagrange basis $L_1,L_2,L_3$ with respect to the
points $y_1,y_2,y_3:=0$ are given by
\begin{align}
  L_1'(y) &= 3y-\frac{\sqrt{3}}2,  & L_2'(y) &= 3y+\frac{\sqrt{3}}2, &
  L_3'&=1-3y^2.
\end{align}
For $\alpha_1 = \Psi'(y_1), \alpha_2 = \Psi'(y_2)$, we get
\begin{align}
\begin{split}
  \alpha &= \begin{pmatrix}
    L_1'(y_1) & L_2'(y_1) & L_3'(y_1) \\
    L_1'(y_2) & L_2'(y_2) & L_3'(y_2)
  \end{pmatrix} \beta \\
  &= \sqrt{\frac34}\begin{pmatrix}
    -3 & -1 & 4 \\
    1 & 3 & -4
  \end{pmatrix}\beta.
\end{split}
\end{align}
Altogether, we approximate the local error indicators $\estres_\ell^2(T)$ by
\begin{align}
  \rho_\ell^2(T) \approx \sum_{i=1}^2 w_i \alpha_i^2 = \frac34 \beta^T
  \begin{pmatrix}
    10 & 6 & -16 \\
    6 & 10 & -16 \\
    -16 & -16 & 32
  \end{pmatrix}
  \beta,
\end{align}
where $\beta_j := (R_\ell\circ F_T)(y_j)$ for $j=1,2,3$.
The advantage of this approach is that we do not need to evaluate the
arclength derivative $R_\ell'$ of the function $R_\ell$ numerically, but instead evaluate the function
at specific points directly.

In practice, it suffices to use a small number of quadrature points $q$. For
$p=0$, we stress that the choice $q=1$ is sufficient. 
Define $\estres_{\ell}^{(q)}$ as the residual error estimator
$\estres_\ell$ with the difference that the local indicators $\rho_\ell(T)^2$ are approximated as
given above, and $2q$ denotes the number of points in the chosen Gaussian
quadrature rule.
On uniformly refined meshes, we stress that for a sufficiently smooth
  function $R_\ell$ it holds
\begin{align}
  |\rho_\ell-\rho_\ell^{(q)}| \lesssim N^{-(q+1/2)} \quad\text{with }
  N=\#\mesh_\ell.
\end{align}
Thus for $p=1$, we use $q=2$, i.e., four quadrature points $y_1,y_2,y_3,y_4$
and the additional evaluation point $y_5:=0$.

%------------------------------------------------------------------------------------------------------
\subsubsection{2D hypersingular integral equation}
%------------------------------------------------------------------------------------------------------
We consider the computation of the local refinement indicators 
\begin{align}
  \estres_\ell^2(\el) = h_\el \norm{\hyp U_\ell - (1/2-\dlo')\phi}{L_2(\el)}^2 =:
  h_\el \norm{R_\ell}{L_2(\el)}^2
\end{align}
of the weighted residual error estimator $\estres_\ell$ for $d=2$.
Let $F_\el$ denote the affine mapping from $\elref = (-1,1)$ to $\el$.
We use a $q$-point Gaussian quadrature rule with nodes $\{y_i\}_{i=1}^q$ and
weights $\{w_i\}_{i=1}^q$ on the reference element $\elref$,
which is exact for polynomials in $\Pp^{2q-1}(\elref)$.
The local refinement indicator $\estres_\ell^2(\el)$ is approximated by
\begin{align}
  \begin{split}
  h_\el \norm{R_\ell}{L_2(\el)}^2 &= \frac{h_\el}2 \int_{\elref} ( R_\ell \circ
  F_\el)^2 \, dx\\
  &\approx \frac{h_\el}2  \sum_{i=1}^q w_i (R_\ell\circ
  F_\el)^2(y_i).
  \end{split}
\end{align}
Define $\estres_\ell^{(q)}$ as $\estres_\ell$ with the difference that the local
refinement indicators $\estres_\ell^2(\el)$ are approximated as given above. On
uniformly refined meshes and for sufficiently smooth residual $R_\ell$, we
stress that
\begin{align}
  |\estres_\ell-\estres_\ell^{(q)}| \lesssim N^{-(2q+1/2)} \quad\text{with }
  N=\#\,\mesh_\ell.
\end{align}
For the lowest order case $p=0$, it is sufficient to use $q=2$ quadrature
points, whereas for $p=1$ it is sufficient to use $q=3$ quadrature points.
%clearpage
%%%%%%%%%%%%%%%%%%%%%%%%%%%%%%%%%%%%%%%%%%%%%%%%%%%%%%%%%%%%%%%%%%%%%%%%%%%%%%%%%%%%%%%%%%%%%%%%%%%%%%%
%%%%%%%%%%%%%%%%%%%%%%%%%%%%%%%%%%%%%%%%%%%%%%%%%%%%%%%%%%%%%%%%%%%%%%%%%%%%%%%%%%%%%%%%%%%%%%%%%%%%%%%
\section{Conclusion}\label{section:conclusion}
%%%%%%%%%%%%%%%%%%%%%%%%%%%%%%%%%%%%%%%%%%%%%%%%%%%%%%%%%%%%%%%%%%%%%%%%%%%%%%%%%%%%%%%%%%%%%%%%%%%%%%%
%%%%%%%%%%%%%%%%%%%%%%%%%%%%%%%%%%%%%%%%%%%%%%%%%%%%%%%%%%%%%%%%%%%%%%%%%%%%%%%%%%%%%%%%%%%%%%%%%%%%%%%
In this work, we presented all a~posteriori error estimators for Galerkin BEM that are available in the
mathematical literature. Up to now, it is known how to
apply contemporary convergence and optimality analysis
only to several of them, and an overview of this analysis and its application to
these estimators was given.

Although all estimators behave well in numerical experiments
(cf. the references given above, where the respective estimators haven been introduced),
they differ with respect to
overhead in implementation,
computational expense, and mathematically guaranteed convergence as well as optimality of the
related ABEM algorithms.
The choice is also affected by the regularity of the right-hand side data.

With respect to these requirements, a short summary of the advantages and disadvantages of
the estimators follows. We consider only estimators that have been mathematically analyzed on
locally refined meshes.

The $\bm{(h-h/2)}$\textbf{-type estimators} (Section~\ref{section:est:hh2})
are, in general, structurally easy. They require
nearly no overhead for their implementation and can be used
to steer an\-isotropic mesh refinement in a straightforward manner
(Section~\ref{section:estred:anisotropic}), which are their biggest
advantages. Also, there is no additional requirement on the data.
They are always efficient, but reliability is equivalent to the saturation assumption,
which is still an open problem. They are proven to converge, but the convergence is not
proven to be optimal.

\textbf{Averaging estimators} (Section~\ref{section:averaging})
are globally equivalent to $(h-h/2)$-type estimators, and the implementationally and
computationally interesting variants are even locally equivalent to their $(h-h/2)$ counterparts.

Advantages of the $\bm{ZZ}$\textbf{-type estimators} (Section~\ref{section:zzest})
are that the implementation is also very easy,
and as they avoid artificial mesh refinement, they are much cheaper than $(h-h/2)$-type estimators.
While reliability relies on an appropriate saturation assumption, efficiency involves
higher-order terms and is therefore weaker than for $(h-h/2)$-type estimators.
In addition, it is not clear how to steer anisotropic refinement.
Convergence is proven, but optimality remains open.

The \textbf{two-level estimators} (Section~\ref{section:est:2level})
also need the assembly of the Galerkin data on a finer mesh.
Moreover, their reliability depends also on the saturation assumption. 
Up to now, nothing is known about convergence or optimality.
However, their great strength is that they are analyzed
with respect to $hp$-methods and that they are reliable on anisotropic refinement in the case
of weakly singular integral operators (assuming saturation).

The \textbf{weighted residual estimators} (Section~\ref{section:est:wres})
are the only ones which are proven to
converge with optimal rates. As they are also reliable, they are preferred in theory.
However, their disadvantage is that their requirements on the data
is quite strong and that the implementation requires a certain amount of overhead.
Anisotropic meshes can be used, but there is no analysis in this respect.

The \textbf{local double norm estimators} (Section~\ref{section:est:faermann})
are the only estimators that are
known to be efficient \textit{and} reliable without any further assumption on data
or saturation. However, their stable implementation is non-trivial,
and nothing is known with respect to convergence or optimality.

\textbf{Approximation of the given right-hand side} data renders an important aspect for BEM
implementations and can be used and mathematically controlled in adaptive boundary element
methods on isotropic meshes.
Convergence and optimality is proven as long as the data satisfy additional regularity and
appropriate approximation operators are used. There is no analysis on anisotropic meshes.

Several important \textbf{open problems} in this field center around
anisotropic mesh refinement.
There is a need for estimators which are reliable on anisotropic
meshes (two-level estimators for weakly singular operators are
reliable on an\-isotropic meshes under the saturation assumption),
and optimality theory needs to be extended in this respect.
To that end, suitable mesh refinement strategies need to be developed and analyzed.
Furthermore, the approximation of
smooth geometries in isoparametric BEM algorithms,
its incorporation into ABEM, as well as a mathematical analysis with respect to
convergence and optimality remains an interesting open problem.
Also, as stated in the introduction, competitive BEM algorithms need to employ fast methods
for matrix compression. Hence, it is an inevitable task to control the introduced error
in adaptive BEM algorithms.
\bibliographystyle{plain}
\bibliography{literature}

\end{document}